\newtheorem{theorem}{Theorem}[section]
\newtheorem{lemma}[theorem]{Lemma}
\newtheorem{proposition}[theorem]{Proposition}
\newtheorem{corollary}[theorem]{Corollary}
\newtheorem{remark}[theorem]{Remark}
\newtheorem{definition}[theorem]{Definition}
\numberwithin{equation}{section}
\newenvironment{proof}{{\bf Proof\ }}{\QED\\}
\newcommand{\QED}{\hspace*{\fill}\rule{2.5mm}{2.5mm}}
\newcommand{\be}{\begin{equation}}
\newcommand{\ee}{\end{equation}}
\newcommand{\ba}{\begin{array}}
\newcommand{\ea}{\end{array}}
\newcommand{\bea}{\begin{eqnarray}}
\newcommand{\eea}{\end{eqnarray}}
\newcommand{\bee}{\begin{eqnarray*}}
\newcommand{\eee}{\end{eqnarray*}}
\newcommand{\lab}{\label}
\newcommand{\ds}{\displaystyle}
\newcommand{\nn}{\nonumber}
\providecommand{\norm}[1]{\lVert#1\rVert}
\providecommand{\normm}[1]{\left\lVert#1\right\rVert}
\renewcommand{\c}{\cdot}
\newcommand{\les}{\lesssim}
\newcommand{\R}{\mathbb{R}}
\newcommand{\N}{\mathbb{N}}
\newcommand{\La}{\Lambda}
\newcommand{\trt}{\textrm{tr}\theta}
\newcommand{\uo}{u(0,x,\o)}
\renewcommand{\gg}{{\bf g}}
\newcommand{\dd}{{\bf D}}
\renewcommand{\o}{\omega}
\renewcommand{\th}{\theta}
\newcommand{\ep}{\varepsilon}
\newcommand{\hth}{\widehat{\theta}}
\newcommand{\s}{\Sigma}
\renewcommand{\S}{\mathbb{S}^2}
\newcommand{\muu}{\mu_u}
\newcommand{\p}{P_u}
\newcommand{\h}{H^1(S)}
\newcommand{\li}[2]{L^{#1}_uL^{#2}(P_u)}
\newcommand{\lli}[1]{L^{#1}(\Sigma)}
\renewcommand{\l}[2]{L^{#1}_{[-2,2]}L^{#2}(P_u)}
\renewcommand{\ll}[1]{L^{#1}(S)}
\newcommand{\lp}[1]{L^{#1}(P_u)}
\newcommand{\hs}[1]{H^{#1}(P_u)}
\newcommand{\lhs}[2]{L^{#1}_uH^{#2}(P_u)}
\newcommand{\nabb}{\mbox{$\nabla \mkern-13mu /$\,}}
\newcommand{\lap}{\mbox{$\Delta \mkern-13mu /$\,}}
\newcommand{\lapa}{a^{-1}\mbox{$\Delta \mkern-13mu /$\,($a$)}}
\newcommand{\divb}{\mbox{$\textrm{div} \mkern-13mu /$\,}}
\newcommand{\curlb}{\mbox{$\textrm{curl} \mkern-13mu /$\,}}
\newcommand{\ana}{\mbox{$a^{-1}\nabla \mkern-13mu /\,a$\,}}
\newcommand{\nabn}{\nabla_N}
\newcommand{\nabna}{\nabla_{aN}}
\renewcommand{\lg}{ a}
\newcommand{\po}{\partial_{\omega}}
\renewcommand{\a}{\alpha}
\renewcommand{\b}{\beta}
\newcommand{\ga}{\gamma}
\newcommand{\de}{\delta}
\begin{document}

\begin{center}
\Large{\bf Parametrix for wave equations on a rough background I: regularity of the phase at initial time}
\end{center}

\vspace{0.5cm}

\begin{center}
\large{J\'er\'emie Szeftel}
\end{center}

\vspace{0.2cm}

\begin{center}
\large{DMA, Ecole Normale Sup\'erieure,\\
45 rue d'Ulm, 75005 Paris,\\
jeremie.szeftel@ens.fr}
\end{center}

\vspace{0.5cm}

{\bf Abstract.} This is the first of a sequence of four papers  \cite{param1}, \cite{param2}, \cite{param3}, \cite{param4} dedicated to the construction and the control of a parametrix to the homogeneous wave equation $\square_{\bf g} \phi=0$, where ${\bf g}$ is a rough metric satisfying the Einstein vacuum equations. Controlling such a parametrix as well as its error term when one only assumes $L^2$ bounds on the curvature tensor ${\bf R}$ of ${\bf g}$ is a major step of the proof of the bounded $L^2$ curvature conjecture proposed in \cite{Kl:2000}, and solved jointly with S.  Klainerman and I. Rodnianski in \cite{boundedl2}. On a more general level, this sequence of papers deals with the control of the eikonal equation on a rough background, and with the  derivation of $L^2$ bounds for Fourier integral operators on manifolds with rough phases and symbols, and as such is also of independent interest.

\vspace{0.2cm}

\section{Introduction}

We consider the Einstein vacuum equations,
\be\lab{eq:I1}
{\bf R}_{\alpha\beta}=0
\end{equation}
where ${\bf R}_{\alpha\beta}$
denotes the  Ricci curvature tensor  of  a four dimensional Lorentzian space time  $(\mathcal{M},\,  {\bf g})$. The Cauchy problem consists in finding a metric ${\bf g}$ satisfying \eqref{eq:I1} such that the metric induced by ${\bf g}$ on a given space-like hypersurface $\Sigma_0$ and the second fundamental form of $\Sigma_0$ are prescribed. The initial data then consists of a Riemannian three dimensional metric $g_{ij}$ and a symmetric tensor $k_{ij}$ on the space-like hypersurface $\Sigma_0=\{t=0\}$. Now, \eqref{eq:I1} is an overdetermined system and the initial data set $(\Sigma_0,g,k)$ must satisfy the constraint equations
\be\lab{const}
\left\{\begin{array}{l}
\nabla^j k_{ij}-\nabla_i \textrm{Tr}k=0,\\
 R-|k|^2+(\textrm{Tr}k)^2=0, 
\end{array}\right.
\ee
where the covariant derivative $\nabla$ is defined with respect to the metric $g$, $R$ is the scalar curvature of $g$, and $\textrm{Tr}k$ is the trace of $k$ with respect to the metric $g$.

The fundamental problem in general relativity is to 
 study the long term regularity and asymptotic 
properties
 of the Cauchy developments of general, asymptotically flat,  
initial data sets $(\Sigma_0, g, k)$. As far as local regularity is concerned it 
is natural to ask what are the minimal regularity properties of
the initial data which guarantee the existence and 
uniqueness of local developments. In \cite{boundedl2}, we obtain the 
following result which solves bounded $L^2$ curvature conjecture proposed 
in \cite{Kl:2000}:

\begin{theorem}[Theorem 1.10 in \cite{boundedl2}]\lab{th:mainbl2}
Let $(\mathcal{M}, {\bf g})$ an asymptotically flat solution to the Einstein vacuum equations \eqref{eq:I1} together with a maximal foliation by space-like hypersurfaces $\Sigma_t$ defined as level hypersurfaces of a time function $t$. Let $r_{vol}(\Sigma_t,1)$ the volume radius on scales $\leq 1$ of $\Sigma_t$\footnote{See Remark \ref{rem:volrad} below for a  definition}. Assume that the initial slice $(\Sigma_0,g,k)$ is such that:
$$\norm{R}_{L^2(\Sigma_0)}\leq \ep,\,\norm{k}_{L^2(\Sigma_0)}+\norm{\nabla k}_{L^2(\Sigma_0)}\leq \ep\textrm{ and }r_{vol}(\Sigma_0,1)\geq \frac{1}{2}.$$
Then, there exists a small universal constant $\ep_0>0$ such that if $0<\ep<\ep_0$, then the following control holds on $0\leq t\leq 1$:
$$\norm{\R}_{L^\infty_{[0,1]}L^2(\Sigma_t)}\lesssim \ep,\,\norm{k}_{L^\infty_{[0,1]}L^2(\Sigma_t)}+\norm{\nabla k}_{L^\infty_{[0,1]}L^2(\Sigma_t)}\lesssim \ep\textrm{ and }\inf_{0\leq t\leq 1}r_{vol}(\Sigma_t,1)\geq \frac{1}{4}.$$
\end{theorem}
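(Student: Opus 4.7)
The plan is a continuity/bootstrap argument. Let $T^*\in(0,1]$ be the supremum of times $T\leq 1$ on which the conclusions of the theorem hold with the constants enlarged by a fixed factor. Local well-posedness for smooth data together with density of smooth initial data in the $H^2\times H^1$ topology required here gives $T^*>0$, and the task is to improve the bootstrap bounds strictly on $[0,T^*]$ once $\ep<\ep_0$, thereby forcing $T^*=1$ by continuity of the bootstrap norms in $t$.

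Step one, curvature energy: the Bianchi identity ${\bf D}^\alpha\R_{\alpha\beta\gamma\delta}=0$ combined with the Bel--Robinson tensor $Q(\R)$ contracted against $\partial_t\otimes\partial_t\otimes\partial_t$ (a valid multiplier because the foliation is maximal, $\mathrm{tr}\,k=0$) produces, after integration by parts,
\[\norm{\R(t)}_{L^2(\Sigma_t)}^2\leq \norm{\R(0)}_{L^2(\Sigma_0)}^2+C\int_0^t\norm{\pi(s)}_{L^\infty(\Sigma_s)}\norm{\R(s)}_{L^2(\Sigma_s)}^2\,ds,\]
where $\pi$ collects the deformation tensor of $\partial_t$ (schematically $\pi\sim k+\nabla\log n$, with $n$ the lapse of the maximal foliation, itself solving an elliptic equation on $\Sigma_t$ with source $|k|^2$). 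Step two, control of $k$: the constraints \eqref{const} together with the transport equation for $k$ form a coupled Hodge/transport system with source $\R$; elliptic regularity on $\Sigma_t$ --- available thanks to $r_{vol}\geq 1/4$, which yields harmonic coordinates on balls of definite size --- propagates $\norm{k}_{L^2}+\norm{\nabla k}_{L^2}$ in terms of $\norm{\R}_{L^2}$. Step three, volume radius: the Gauss equation gives $L^2$ Ricci on $\Sigma_t$ in terms of $\R$ and $k\star k$, and a uniform injectivity radius estimate under $L^2$ Ricci (Anderson/Petersen--Wei) propagates $r_{vol}$ along the flow by Cheeger--Gromov compactness.

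The hard part, hidden in step one, is to bound $\int_0^1\norm{\pi(s)}_{L^\infty(\Sigma_s)}\,ds$ without any slack over scaling: Sobolev on $\Sigma_t$ fails by half a derivative. The only viable tool is a Strichartz estimate for the scalar wave equation $\square_\gg\phi=F$ on the rough background $\gg$, schematically
\[\norm{\nabla\phi}_{L^2_tL^\infty_x}\lesssim \norm{\nabla\phi(0)}_{H^{1+\de}(\Sigma_0)}+\norm{F}_{L^1_tH^{1+\de}(\Sigma_t)},\qquad \de>0,\]
applied to the components of $\R$, which by commuting Bianchi once satisfy a covariant wave equation with quadratic right-hand side. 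Such an estimate lies strictly below the $C^{1,1}$ Smith--Tataru threshold and is produced in \cite{param1,param2,param3,param4} via a Fourier integral parametrix
\[\phi(t,x)=\int_0^\infty\!\!\int_{\S}e^{i\lambda u(t,x,\o)}\,a(t,x,\o,\lambda)\,\widehat{f}(\lambda\o)\,d\o\,\lambda^2\,d\lambda,\]
whose phase $u$ solves the eikonal equation $\gg^{\a\b}\partial_\a u\partial_\b u=0$, together with a $TT^*$ argument. The circular structure --- Strichartz requires $L^2$ control of the null geometry $\trt,\hth,\ldots$ on the level sets $P_u$; that geometry requires null Bianchi and null structure equations on $P_u$; those in turn involve the curvature $\R$ and the connection coefficients one is bootstrapping --- is unwound by a secondary bootstrap at each Littlewood--Paley frequency $\lambda$, on the natural time scale $\lambda^{-1/2}$ where the rough geometry becomes essentially $C^{1,1}$ at that frequency. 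I expect the principal technical obstacle to be precisely this secondary bootstrap: every trace-type bound on the $2$-surfaces $P_u$ with only $L^2$ Gaussian curvature barely holds at scaling, and the present paper's role is to set up the initial phase $u(0,x,\o)$ and the associated geometry of $P_u\cap\Sigma_0$ so that this scaffolding can be launched in \cite{param2,param3,param4}.
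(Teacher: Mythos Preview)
The paper does not contain a proof of this theorem: it is quoted verbatim as Theorem~1.10 of \cite{boundedl2}, and the present paper only supplies one ingredient (step~{\bf C1}, the construction and regularity of the phase $u(0,x,\o)$ on $\Sigma_0$). So there is no ``paper's own proof'' to compare against beyond the outline in the introduction (steps~{\bf A}, {\bf B}, {\bf C}).

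That said, your outline departs from the strategy described there in two substantive ways. First, the engine that closes the bootstrap is not a Strichartz estimate with a loss $\de>0$ applied to curvature components; at the $L^2$-curvature threshold any positive loss is fatal. What \cite{boundedl2} uses instead (step~{\bf B}) are \emph{bilinear} estimates for solutions of $\square_\gg\phi=0$, which exploit the null structure exhibited in step~{\bf A}; the parametrix \eqref{param} is the representation formula on which those bilinear bounds are proved, and an $L^4(\mathcal M)$ Strichartz estimate enters only as an auxiliary tool. Second, the mechanism you describe---a secondary bootstrap at each dyadic frequency $\lambda$ on the time scale $\lambda^{-1/2}$, where the geometry is effectively $C^{1,1}$---is the paradigm of \cite{KR:Annals,SmTa} for $H^{2+\epsilon}$ data, not the one used here. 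The parametrix in \cite{param1,param2,param3,param4} is global in frequency and coordinate-invariant; there is no truncation of the metric and no rescaled time interval. The circularity you correctly identify is broken instead by proving, directly under $L^2$ curvature flux, sharp trace and Besov-type bounds on the null geometry (e.g.\ $\mathrm{tr}\chi\in L^\infty$ via Raychaudhuri, initialized precisely by the $\mathcal B$-estimate of Proposition~\ref{propbesov} here), and then establishing $L^2\to L^2$ bounds for the Fourier integral operators with these rough phases. Your high-level bootstrap skeleton (Bel--Robinson energy, elliptic estimates for $k$ and the lapse, propagation of $r_{vol}$) is correct, but the analytic core you sketch would not close at this regularity.
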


\begin{remark}
While  the first   nontrivial improvements  for well posedness for quasilinear  hyperbolic systems (in spacetime dimensions greater than $1+1$), based on Strichartz estimates,  were obtained in   \cite{Ba-Ch1}, \cite{Ba-Ch2}, \cite{Ta1}, \cite{Ta}, \cite{KR:Duke}, \cite{KR:Annals}, \cite{SmTa}, Theorem \ref{th:mainbl2}, is the first  result in which the  full nonlinear structure of the quasilinear system, not just its principal part,  plays  a  crucial  role.  We note that  though  the result is not  optimal with respect to the  standard  scaling  of the Einstein equations, it is  nevertheless critical   with respect to its causal geometry,  i.e. $L^2 $ bounds on the curvature is the minimum requirement necessary to obtain lower bounds on the radius of injectivity of null hypersurfaces. We refer the reader to section 1 in \cite{boundedl2} for more motivations and historical perspectives concerning Theorem \ref{th:mainbl2}. 
\end{remark}

\begin{remark}
The regularity assumptions on $\Sigma_0$ in Theorem \ref{th:mainbl2} - i.e. $R$ and $\nabla k$ bounded in $L^2(\Sigma_0)$ - correspond to an initial data set $(g,\, k )\in H^2_{loc}(\Sigma_0)\times H^1_{loc}(\Sigma_0)$.
\end{remark}

\begin{remark}\lab{rem:reducsmallisok}
In \cite{boundedl2}, our main result is stated for corresponding large data. We then reduce the proof to the  small data statement of Theorem \ref{th:mainbl2} relying on a truncation and rescaling procedure, the control of the harmonic radius of $\Sigma_0$ based on Cheeger-Gromov convergence of Riemannian manifolds together with the assumption on the lower bound of the volume radius of $\Sigma_0$, and the gluing procedure in \cite{Co}, \cite{CoSc}. We refer the reader to section 2.3 in \cite{boundedl2} for the details.
\end{remark}

\begin{remark}\lab{rem:volrad}
We recall for the convenience of the reader the definition of the volume radius of the Riemannian manifold $\Sigma_t$. Let $B_r(p)$ denote the geodesic ball of center $p$ and radius $r$. The volume radius $r_{vol}(p,r)$ at a point $p\in \Sigma_t$ and scales $\leq r$ is defined by
$$r_{vol}(p,r)=\inf_{r'\leq r}\frac{|B_{r'}(p)|}{r^3},$$
with $|B_r|$ the volume of $B_r$ relative to the metric $g_t$ on $\Sigma_t$. The volume radius $r_{vol}(\Sigma_t,r)$ of $\Sigma_t$ on scales $\leq r$ is the infimum of $r_{vol}(p,r)$ over all points $p\in \Sigma_t$.
\end{remark}

The proof of Theorem \ref{th:mainbl2}, obtained in the sequence of papers \cite{boundedl2}, \cite{param1}, \cite{param2}, \cite{param3}, \cite{param4}, \cite{bil2}, relies on the following ingredients\footnote{We also need trilinear estimates and an $L^4(\mathcal{M})$ Strichartz estimate (see the introduction in \cite{boundedl2})}: 
{\em\begin{enumerate}
\item[{\bf A}] Provide  a system of coordinates relative to which \eqref{eq:I1} exhibits a null structure.

\item[{\bf B}] Prove  appropriate bilinear estimates for solutions to $\square_{\bf g} \phi=0$, on
 a fixed Einstein vacuum  background\footnote{Note that the first bilinear estimate of this type was obtained in \cite{BIL}}.

\item[{\bf C}] Construct a parametrix for solutions to the homogeneous wave equations $\square_{\bf g} \phi=0$ on a fixed Einstein vacuum  background, and obtain control of the parametrix and of its error term only using the fact that the curvature tensor is bounded in $L^2$. 
\end{enumerate}
}

Steps {\bf A} and {\bf B} are carried out in \cite{boundedl2}. In particular, the proof of the bilinear estimates rests on a representation formula for the solutions of the wave equation using the following plane wave parametrix\footnote{\eqref{param} actually corresponds to a half-wave parametrix. The full parametrix corresponds to the sum of two half-parametrix. See \cite{param2} for the construction of the full parametrix}:
\be\lab{param}
Sf(t,x)=\int_{\S}\int_{0}^{+\infty}e^{i\lambda u(t,x,\o)}f(\lambda\o)\lambda^2 d\lambda d\o,\,(t,x)\in\mathcal{M} 
\ee
where $u(.,.,\o)$ is a solution to the eikonal equation ${\bf g}^{\alpha\beta}\partial_\alpha u\partial_\beta u=0$ on $\mathcal{M}$ such that $u(0,x,\o)\sim x.\o$ when $|x|\rightarrow +\infty$ on $\Sigma_0$\footnote{The asymptotic behavior for $u(0,x,\o)$ when $|x|\rightarrow +\infty$ will be used in \cite{param2} to generate with the parametrix any initial data set for the wave equation}. Therefore, in order to complete the proof of the bounded $L^2$ curvature conjecture, we need to carry out step {\bf C} with the parametrix defined in \eqref{param}. 

\begin{remark}
Note that the parametrix \eqref{param} is invariantly defined\footnote{Our choice is reminiscent of the one used in \cite{SmTa} in the context of $H^{2+\epsilon}$ solutions of quasilinear wave equations. Note however that the construction in that paper is coordinate dependent}, i.e. without reference to any coordinate system. This is crucial since coordinate systems consistent with $L^2$ bounds on the curvature would not be regular enough to control a parametrix. 
\end{remark}

\begin{remark}
In addition to their relevance to the resolution of the bounded $L^2$ curvature conjecture, the methods and results  of step {\bf C} are  also of independent interest. Indeed, they deal on the one hand with the control of the eikonal equation ${\bf g}^{\alpha\beta}\partial_\alpha u\partial_\beta u=0$ at a critical level\footnote{We need at least $L^2$ bounds on the curvature to obtain a lower bound on the radius of injectivity of the null level hypersurfaces of the solution $u$ of the eikonal equation, which in turn is necessary to control the local regularity of $u$ (see \cite{param3})}, and on the other hand with the derivation of $L^2$ bounds for Fourier integral operators with significantly lower differentiability  assumptions both for the corresponding phase and symbol compared to classical methods (see for example \cite{stein} and references therein). 
\end{remark}

In view of the energy estimates for the wave equation, it suffices to control the parametrix at $t=0$ (i.e. restricted to $\Sigma_0$)
\be\lab{parami}
Sf(0,x)=\int_{\S}\int_{0}^{+\infty}e^{i\lambda u(0,x,\o)}f(\lambda\o)\lambda^2 d\lambda d\o,\,x\in\Sigma_0 
\ee
 and the error term
\be\lab{err} 
Ef(t,x)=\square_{\bf g}Sf(t,x)=\int_{\S}\int_{0}^{+\infty}e^{i\lambda u(t,x,\o)}\square_{\bf g}u(t,x,\o)f(\lambda\o)\lambda^3 d\lambda d\o,\,(t,x)\in\mathcal{M}. 
\ee
This requires the following ingredients, the two first being related to the control of the parametrix restricted to $\Sigma_0$ \eqref{parami}, and the two others being related to the control of the error term \eqref{err}:
{\em\begin{enumerate}
\item[{\bf C1}] Make an appropriate choice for the equation satisfied by $u(0,x,\o)$ on $\Sigma_0$, and control the geometry of the foliation generated by the level surfaces of $u(0,x,\o)$ on $\Sigma_0$.

\item[{\bf C2}] Prove that the parametrix at $t=0$ given by \eqref{parami} is bounded in $\mathcal{L}(L^2(\mathbb{R}^3),\lli{2})$ using the estimates for $u(0,x,\o)$ obtained in {\bf C1}.

\item[{\bf C3}] Control the geometry of the foliation generated by the level hypersurfaces of $u$ on $\mathcal{M}$.

\item[{\bf C4}] Prove that the error term \eqref{err} satisfies the estimate $\norm{Ef}_{L^2(\mathcal{M})}\leq C\norm{\lambda f}_{L^2(\mathbb{R}^3)}$ using the estimates for $u$ and $\square_{\gg}u$ proved in {\bf C3}.
\end{enumerate}
}

Step {\bf C3} was initiated in the sequence of papers \cite{FLUX}, \cite{LP}, \cite{STT} where the authors prove the estimate $\square_{\gg}u\in L^{\infty}(\mathcal{M})$, which is crucial for step {\bf C3} and {\bf C4}. In the present paper, we focus on step {\bf C1}. Remember that $u$ is a solution to the eikonal equation $\gg^{\alpha\beta}\partial_\alpha u\partial_\beta u=0$ on $\mathcal{M}$. To define $u$ in a unique manner, we still have to prescribe $u$ on $\Sigma_0$. Having in mind steps {\bf C2} and {\bf C3}, we look for $u(0,x,\o)$ satisfying the three following conditions:
{\em\begin{enumerate}
\item[{\bf C1a}] $u(0,x,\o)\sim x.\o$ when $|x|\rightarrow +\infty$ on $\Sigma_0$.

\item[{\bf C1b}] $\square_{\gg}u(0,x,\o)$ is in $\lli{\infty}$. In fact, the estimate $\square_{\gg}u\in L^{\infty}(\mathcal{M})$ is obtained in \cite{FLUX} using a transport equation (the Raychadhouri equation) so that one needs the corresponding estimate on $\Sigma_0$ (i.e. at $t=0$).

\item[{\bf C1c}] $u(0,x,\o)$ has enough regularity in $x$ and $\o$ to achieve step {\bf C2}, i.e. to control the parametrix at $t=0$ given by \eqref{parami}.
\end{enumerate}
}
Such a choice turns out to be a difficult task. This is due to the fact that the initial data set $(\Sigma_0,g,k)$ has very little regularity. In fact, to be consistent with the bounded $L^2$ curvature conjecture, one should only assume that the curvature tensor $R$ of $g$ and $\nabla k$ are in $\lli{2}$. Together with ${\bf C1b}$, this drastically limits the regularity in $x$ of $u(0,x,\o)$. Although $(\Sigma_0,g,k)$ is independent of $\o$ (which only intervenes in {\bf C1a} to prescribe the asymptotic behavior of $u(0,x,\o)$), the function $u(0,x,\o)$ has also very limited regularity in $\o$. We will thus have to make a very careful choice of $u(0,x,\o)$ to be able to satisfy the three conditions {\bf C1a C1b C1c} at the same time.

Let us note that the typical choice $u(0,x,\o)=x\c\o$ in a given coordinate system would not work for us, since we don't have enough control on the regularity of a given coordinate system within our framework. Instead, we need to find a geometric definition of $u(0,x,\o)$. A natural choice would be
$$\square_{\gg} u=0\textrm{ on }\Sigma_0$$
which by a simple computation turns out to be the following simple variant of the minimal surface equation\footnote{In the time symmetric case $k=0$, this is exactly the minimal surface equation}
$$\textrm{div}\left(\frac{\nabla u}{|\nabla u|}\right)=k\left(\frac{\nabla u}{|\nabla u|}, \frac{\nabla u}{|\nabla u|}\right)\textrm{ on }\Sigma_0.$$
Unfortunately, this choice does not allow us to have enough control of the derivatives of $u$ in the normal direction to the level surfaces of $u$. This forces us to look for an alternate equation for $u$:
$$\textrm{div}\left(\frac{\nabla u}{|\nabla u|}\right)=1-\frac{1}{|\nabla u|}+k\left(\frac{\nabla u}{|\nabla u|}, \frac{\nabla u}{|\nabla u|}\right)\textrm{ on }\Sigma_0.$$
In the time symmetric case, i.e. $k=0$, this choice simply means that the mean curvature of the level surfaces of $u$ is equal to 1 minus the lapse of $u$. In this context, this construction has not appeared in the literature. It is closest
in spirit to the mean curvature flow equation, as it can be recast in an alternative
form  
$$\frac{dx}{du}=(1+H+k_{NN}) N,$$
where $N$ is the mean curvature of the level surface of $u$. Its main advantage is that it  turns out to be parabolic in the normal direction to the level surfaces of $u$. Consequently, this construction retains the regularity of the leaves of the foliation of the minimal surface choice, but also additionally gives stronger control in the normal direction to the leaves.\\

The rest of the paper is as follows. In section 2, we motivate our choice for $u(0,x,\o)$ and we state the main results. In section 3, we assume the existence of $u(0,x,\o)$ and prove calculus inequalities with respect to the foliation generated by $\uo$ on $\Sigma_0$, which will be needed in the sequel. In section 4, we investigate the regularity of $\uo$ with respect to $x$. In section 5, we recall the properties of the geometric Littlewood-Paley decompositions established in \cite{LP}, and we derive useful commutator estimates, product estimates, as well as parabolic estimates. In section 6, we derive additional regularity for $\uo$ with respect to $x$. In section 7, we investigate the regularity of $\uo$ with respect to $\o$. In section 7, we construct a global coordinate system on the leaves of the foliation generated by $\uo$ on $\Sigma_0$. Finally, we derive additional estimates for $\uo$ in section 8.\\

\noindent{\bf Acknowledgments.} The author wishes to express his deepest gratitude to Sergiu Klainerman and Igor Rodnianski for stimulating discussions and constant encouragements during the long years where this work has matured. He also would like to stress that  the basic strategy of the construction of the parametrix and how it fits  into the whole proof of the bounded $L^2$ curvature conjecture has been done in collaboration with them. The author is supported by ANR jeunes chercheurs SWAP.\\

\section{Main results}

From now on, there will be no further reference to $\Sigma_t$ for $t>0$. Since there is no confusion, we will denote $\Sigma_0$ simply by $\Sigma$ in the rest of the paper. 

\subsection{Modification of $R$ and $k$ near the asymptotic end}\label{reducsmall}

Recall from Theorem \ref{th:mainbl2} that our assumptions on the initial data set $(\Sigma, g, k)$ are the following
\be\lab{small1}
\norm{R}_{\lli{2}}+\norm{k}_{L^2(\s)}+\norm{\nabla k}_{\lli{2}}\leq\ep,
\ee
where $\ep>0$ is small enough. Now, as a byproduct of the reduction to these small initial data outlined in Remark \ref{rem:reducsmallisok} and performed in section 2.3 of \cite{boundedl2}, we may also assume the existence of a global coordinate system on $(\s, g, k)$ relative to which we have
\be\lab{globalcoorsigmareduc}
\frac{1}{2}|\xi|^2\leq g_{ij}\xi^i\xi^j\leq 2|\xi|^2,
\ee
and $(\s, g, k)$ is smooth in $|x|\geq 1$.

In order to construct $\uo$ satisfying the asymptotic behavior {\bf C1a}, we need to modify $(\s,g,k)$ outside of $|x|\leq 1$. We can glue it to $(\R^3,\delta,0)$ so that the new initial data set is still smooth outside of $|x|\leq 1$, satisfies \eqref{small1}, and coincides with $(\R^3,\delta,0)$ outside of a slightly larger neighborhood. We still denote this initial data set $(\s,g,k)$. Of course, $(\s,g,k)$ does not satisfies the constraint equations in the annulus where the gluing takes place. However, for the construction of $\uo$, we only require $(\s,g,k)$ to satisfy the constraint equations in $|x|\leq 1$. Outside of $|x|\leq 1$, $(\s,g,k)$ is smooth, so things are much easier.

Finally, in order to be consistent with the statement of Theorem \ref{th:mainbl2}, we consider a maximal foliation, i.e.
$$\textrm{Tr}k=0.$$

\subsection{Geometry of the foliations generated by $u$ on $\mathcal{M}$ and by  $u_{|_\s}$ on $\s$}\label{sec:foliation}

Let $u$ a solution to the eikonal equation $\gg^{\alpha\beta}\partial_\alpha u\partial_\beta u=0$ on $\mathcal{M}$. Let $L=-\gg^{\alpha\beta}\partial_\alpha u\, \partial_\beta$ be the corresponding null generator vectorfield and $s$ its affine parameter, i.e. $L(s)=1$. Let us introduce the level hypersurfaces of $u$ 
$$\mathcal{H}_{u_0}=\{(t,x)\textrm{ in }\mathcal{M}\textrm{ such that }u=u_0\}$$  
which generate a foliation on $\mathcal{M}$. The level surfaces $P_{s,u}$ of $s$ generate the geodesic foliation on $\mathcal{H}_u$. 

The geometry of $\mathcal{H}_u$ depends in particular of the null second fundamental
form
\be\lab{eq:I4}
\chi(X,Y)=\gg(\dd_{X}L,Y)
\end{equation}
with $X,Y$ arbitrary vectorfields tangent to the  $s$-foliation $P_{s,u}$ and where $\dd$ is the covariant differentiation with respect to $\gg$. We denote by $\textrm{tr}\chi$ the trace of $\chi$,
i.e. $\textrm{tr}\chi=\delta^{AB}\chi_{AB}$ where $\chi_{AB}$ are the components of $\chi$ relative to an
orthonormal frame $(e_A)_{A=1,2}$ on the leaves of the $s$-foliation. An easy computation yields:
\begin{equation}\label{easycomp}
\square_\gg u=\textrm{tr}\chi
\end{equation}
so that ones needs to prove enough regularity for $\textrm{tr}\chi$ to control the error term \eqref{err} of the parametrix \eqref{param}. $\textrm{tr}\chi$ satisfies the well known Raychadhouri equation
\be\lab{eq:I5}
\frac{d}{ds}\textrm{tr}\chi+\frac{1}{2} (\textrm{tr}\chi)^2=-|\widehat{\chi}|^2
\end{equation}
with $\widehat{\chi}_{AB}=\chi_{AB}-1/2\textrm{tr}\chi\delta_{AB}$ the traceless part of $\chi$. This transport equation is used in \cite{FLUX} to prove the crucial estimate $\textrm{tr}\chi\in L^{\infty}(\mathcal{M})$ provided that $\textrm{tr}\chi$ is in $\lli{\infty}$ at $t=0$.

Let us now recall the link between $u_{|_\s}$ and  $\textrm{tr}\chi_{|_\s}$. We define the lapse $a=|\nabla u|^{-1}$, and the unit vector $N$ such that $\nabla u=a^{-1}N$. We also define the level surfaces 
$$P_{u_0}=\{x\textrm{ in }\s\textrm{ such that }u=u_0\},$$ 
so that $N$ is the normal to $\p$ in $\s$. The second fundamental form $\th$ of $\p$ is defined by 
\be\lab{eq:I6}
\th(X,Y)=g(\nabla_{X}N,Y)
\end{equation}
with $X,Y$ arbitrary vectorfields tangent to the  $u$-foliation $P_{u}$ on $\s$ and where $\nabla$ denotes the covariant differentiation with respect to $g$. We extend $\th$ as a tensor on $\s$ by setting
\be\lab{thsurN}
\th(N, .)=\th(., N)=0.
\ee
We denote by $\trt$ the trace of $\th$,
i.e. $\trt=\delta^{AB}\th_{AB}$ where $\th_{AB}$ are the components of $\th$ relative to an
orthonormal frame $(e_A)_{A=1,2}$ on $P_u$. We then have the following equality on $\s$:
\be\lab{rap0}
\textrm{tr}\chi=\trt + \textrm{tr}k.
\ee
Now, $\textrm{Tr}k=\textrm{tr}k+k_{NN}$. Recall from section \ref{reducsmall} that we impose $\textrm{Tr}k=0$ which corresponds to a maximal foliation. Thus, we obtain the following relation between $u$ and  $\textrm{tr}\chi$ on $\s$:
\be\lab{rap}
\textrm{tr}\chi=\trt - k_{NN}\textrm{ on }\s.
\ee
Finally, using \eqref{easycomp} and \eqref{rap}, we may reformulate {\bf C1b} as:
\be\lab{cond}
\trt - k_{NN}\in\lli{\infty}.
\ee

\subsection{Structure equations of the foliation generated by a function $u$ on $\s$}

We recall the structure equations of the foliation generated by a scalar function $u$ on $\s$ (see for example \cite{ChKl}). 
\begin{proposition}
The orthonormal frame frame $N, e_A, A=1, 2$ of $\s$ satisfies the following system:
\be\lab{frame}
\left\{\begin{array}{l}
\nabla_Ne_A=\nabb_Ne_A+a^{-1}(\nabb_Aa)N,\\[1mm]
\nabla_AN=\th_{AB}e_B,\\[1mm]
\nabla_Be_A=\nabb_Be_A-\th_{AB}N,\\[1mm]
\nabn N=-a^{-1}\nabb a.
\end{array}\right.
\ee
Also, the lapse $a$ and the second fundamental form $\th$ satisfy the following system:
\be\lab{struct}
\left\{\begin{array}{l}
\lapa=-\nabn\trt-|\th|^2+R_{NN},\\[1mm]
\nabb^B\hth_{AB}=\frac{1}{2}\nabb_A\trt+R_{NA},\\[1mm]
a^{-1}\nabb_A\nabb_Ba+\nabn\th_{AB}+2\th_A^C\th_{CB}-\trt\th_{AB}+K\gamma_{AB}=R_{AB},
\end{array}\right.
\ee
where $\hth_{AB}=\th_{AB}-1/2\trt\delta_{AB}$ is the traceless part of $\th$, $K$ is the Gauss curvature of $\p$, $\gamma$ is the metric on $\p$ induced by $g$, and $\nabb$ is the intrinsic covariant derivative on $\p$. Finally, we have:
\be\lab{gauss}
2K-\trt^2+|\th|^2=R-2R_{NN}.
\ee
\end{proposition}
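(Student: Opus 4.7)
The plan is to derive every identity by decomposing the ambient covariant derivative $\nabla$ on $\s$ into its tangential part $\nabb$ along the leaves $\p$ and its component along $N$, and to systematically exploit that $N=a\nabla u$ is a gradient field up to the lapse.

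For the frame equations \eqref{frame}, three of the four identities are essentially kinematic: they use only orthonormality of $\{N,e_A\}$, tangency of $e_A$ to $\p$, the definition \eqref{eq:I6} of $\th$, and the extension \eqref{thsurN}. The identity $\nabla_AN=\th_{AB}e_B$ uses $g(\nabla_AN,N)=\tfrac12 e_A(|N|^2)=0$, and $\nabla_Be_A=\nabb_Be_A-\th_{AB}N$ follows from $g(\nabla_Be_A,N)=-g(e_A,\nabla_BN)=-\th_{AB}$. The only substantive computation is $\nabn N=-a^{-1}\nabb a$: writing $N=a\nabla u$ and using symmetry of the Hessian of $u$,
\begin{eqnarray*}
g(\nabla_N N,X) &=& aX^jN^i\nabla_j\nabla_iu\\
&=& X(a^{-1})-a^{-1}X^j(\nabla_jN^i)N_i \;=\; -a^{-1}X(a)
\end{eqnarray*}
for every $X$ tangent to $\p$, the last term vanishing by $|N|=1$. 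The first frame identity then follows from $g(\nabla_Ne_A,N)=-g(e_A,\nabn N)=a^{-1}\nabb_Aa$.

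The three lines of \eqref{struct} and the equation \eqref{gauss} are then obtained from the classical Gauss--Codazzi--Ricci machinery on the embedded surface $\p\subset\s$. For \eqref{gauss} I would take the Gauss equation $R^{\p}_{ABCD}=K(\gamma_{AC}\gamma_{BD}-\gamma_{AD}\gamma_{BC})$ together with the embedding relation between $R^\p$ and the ambient Riemann tensor of $\s$, and double-trace with $\gamma^{ij}=g^{ij}-N^iN^j$, which converts the right-hand side into $R-2R_{NN}$. For the Codazzi identity, the standard formula $\nabb_A\th_{BC}-\nabb_B\th_{AC}=R_{NABC}$, traced on $A$ and $C$ (using $R_{NNNB}=0$) and rewritten via $\th=\hth+\tfrac12\trt\gamma$, produces the middle line of \eqref{struct}. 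For the third line, the Ricci identity applied to $N$ in the frame $(e_A,e_B)$, combined with the frame formulas \eqref{frame}, yields the Riccati-type identity
$$\nabn\th_{AB}+\th_A{}^C\th_{CB}+a^{-1}\nabb_A\nabb_Ba=-R_{ANBN},$$
and a singly traced Gauss equation then expresses $R_{ANBN}=R_{AB}-K\gamma_{AB}+\trt\th_{AB}-\th_A{}^C\th_{CB}$, giving the third line. Taking a $\gamma^{AB}$-trace of the resulting identity, and invoking \eqref{gauss} to eliminate $2K-\trt^2$, produces the first line $\lapa=-\nabn\trt-|\th|^2+R_{NN}$.

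The main bookkeeping obstacle is to justify the final tracing step: we need $\gamma^{AB}\nabn\th_{AB}=\nabn\trt$ and not merely $\gamma^{AB}\nabn\th_{AB}=\nabn\trt-(\nabn\gamma^{AB})\th_{AB}$. This is handled using the extension \eqref{thsurN}, namely $\th(N,\cdot)=0$, together with the fact that $\nabn N=-a^{-1}\nabb a$ is tangent to $\p$, which together force $(\nabn\gamma^{ij})\th_{ij}=0$. Once this is granted, the remaining work is a routine tracking of signs and contractions in an adapted orthonormal frame.
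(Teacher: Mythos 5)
Your overall strategy is the same as the paper's: the frame equations come from orthonormality plus the gradient structure $\nabla u=a^{-1}N$, and the three lines of \eqref{struct} together with \eqref{gauss} come from the Riccati, Codazzi and Gauss equations of the embedding $\p\subset\s$. The one genuine variation is your derivation of the first line of \eqref{struct}: you obtain it by tracing the third line and eliminating $2K-\trt^2$ via \eqref{gauss}, whereas the paper computes $\textrm{div}(\nabla_NN)$ in two ways (once via the commutator identity $[\textrm{div},\nabla_N]N=-R_{NN}+|\th|^2$, once via $\nabla_NN=-a^{-1}\nabb a$, which gives $-\lapa$). The two routes are equivalent — the paper itself remarks that \eqref{gauss} can conversely be recovered by tracing the third line and using the first — and your justification of the trace step, namely $(\nabla_N\ga^{ij})\th_{ij}=0$ because $\th(N,\cdot)=0$ and $\nabla_NN$ is tangent to $\p$, is exactly the point that needs checking there.

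However, as written your two displayed identities for the third line do not combine to give the stated equation. With the sign convention implicit in your singly-traced Gauss equation $R_{ANBN}=R_{AB}-K\ga_{AB}+\trt\th_{AB}-\th_A{}^C\th_{CB}$ (which matches the paper's \eqref{proofstruct3}), the Riccati identity must read
$$\nabn\th_{AB}+\th_A{}^C\th_{CB}+a^{-1}\nabb_A\nabb_Ba=+R_{ANBN},$$
not $-R_{ANBN}$. Substituting your version, the quadratic terms $\th_A{}^C\th_{CB}$ cancel and one is left with $a^{-1}\nabb_A\nabb_Ba+\nabn\th_{AB}+\trt\th_{AB}-K\ga_{AB}=-R_{AB}$, which is not the third line of \eqref{struct}; with the corrected sign one gets $R_{AB}=a^{-1}\nabb_A\nabb_Ba+\nabn\th_{AB}+2\th_A{}^C\th_{CB}-\trt\th_{AB}+K\ga_{AB}$ as required. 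This is a sign slip rather than a conceptual gap, but it must be fixed for the algebra to close. A similar harmless slip appears in your computation of $\nabn N$: the intermediate expression should be $aX(a^{-1})$, not $X(a^{-1})$, to be consistent with the final answer $-a^{-1}X(a)$.
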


\begin{proof}
We start with \eqref{frame}. Note that the second equality in \eqref{frame} follows from the definition of the second fundamental form $\th$. Also, the first and the third equality follow from the second and the fourth equality and the fact that the frame is orthonormal. Thus, it remains to prove the fourth equality in \eqref{frame}.

Since $\nabla u=a^{-1}N$, we have $N(u)=a^{-1}$. Thus, using $e_A(u)=0$ using the fact that the frame is orthonormal, we obtain:
\begin{eqnarray*}
\nabla_A(a^{-1})& = & \nabla_A(N(u))\\
& = & [e_A,N](u)\\
& = & \nabla_AN(u)-\nabla_Ne_A(u)\\
& = & a^{-1}g(N,\nabla_AN-\nabla_Ne_A)\\
& = & a^{-1}g(\nabla_NN,e_A)
\end{eqnarray*}
which concludes the proof of \eqref{frame}.

We now turn to the proof of \eqref{struct} starting with the first equation. Using the definition of the curvature tensor $R$, we have:
\begin{eqnarray*}
g([\nabla_A,\nabla_N]N,e_B)&=&g(\nabb_A\nabla_NN,e_B)-g(\nabla_N\nabb_AN,e_B)+g(\nabla_{\nabla_Ne_A}N,e_B)\\
&=& g(\nabla_A\nabla_NN,e_B)-g(\nabla_N\nabla_AN,e_B)+g(\nabla_{\nabla_Ne_A}N,e_B)\\
&=&-R_{ANBN}+g(\nabla_{\nabla_AN-\nabla_Ne_A}N,e_B)+g(\nabla_{\nabla_Ne_A}N,e_B)\\
&=&-R_{ANBN}+\th_{AC}\th_{CB}
\end{eqnarray*}
where we used \eqref{frame} in the last inequality. Taking the trace yields:
$$[\textrm{div} ,\nabla_N]N=-R_{NN}+|\th|^2,$$
which together with \eqref{frame} implies:
\begin{equation}\label{proofstruct1}
\textrm{div}(\nabla_NN)=\nabla_N(\textrm{div}(N))+[\textrm{div},\nabla_N]N=\nabla_N\trt-R_{NN}+|\th|^2.
\end{equation}
Using \eqref{frame}, we have:
$$\textrm{div}(\nabla_NN)=-\textrm{div}(a^{-1}\nabb a)=-\divb(a^{-1}\nabb a)-|a^{-1}\nabb a|^2=-\lapa$$
which together with \eqref{proofstruct1} proves the first equality of \eqref{struct}.

Next, we turn to the second equality of \eqref{struct}. Using the definition of the curvature tensor $R$, we have:
\begin{eqnarray*}
\nabb_A\th_{BC}-\nabb_B\th_{AC}&=&e_A(g(\nabla_BN,e_C))-\th(\nabb_Ae_B,e_C)-\th(e_B,\nabb_Ae_C)\\
&& -e_B(g(\nabla_AN,e_C))+\th(\nabb_Be_A,e_C)+\th(e_A,\nabb_Be_C)\\
&=& g((\nabla_A\nabla_B-\nabla_B\nabla_A)N,e_C)+g(\nabla_BN,\nabla_Ae_C-\nabb_Ae_C)\\
&& -\th(\nabb_Ae_B,e_C)-g(\nabla_AN,\nabla_Be_C-\nabb_Be_C)+\th(\nabb_Be_A,e_C)\\
&=& R_{ABNC}+g(\nabla_{\nabla_Ae_B-\nabb_Ae_B-\nabla_Be_A+\nabb_Be_A}N,e_C)\\
&=& R_{ABNC}
\end{eqnarray*}
where we used \eqref{frame}, the fact that $\th$ is symmetric, and the fact that the frame is orthonormal. Taking the trace yields:
$$\divb(\th)_A=\nabb_A\trt+R_{ABNB}=\nabb_A\trt+R_{AN}$$
which together with the definition of $\hth$ proves the second equality of \eqref{struct}.

We now turn to the last equality of \eqref{struct}. Using the definition of the curvature tensor $R$ and the property \eqref{thsurN} of $\th$, we have:
\begin{eqnarray*}
\nabla_N\th_{AB}&=&\nabla_N(g(\nabla_AN,e_B))-\th(\nabla_Ne_A,e_B)-\th(e_A,\nabla_Ne_B)\\
&=& g(\nabla_N\nabla_AN,e_B)-\th(\nabb_Ne_A,e_B)\\
&=& g(\nabla_A\nabla_NN,e_B)+R_{ANBN}+g(\nabla_{\nabla_Ne_A-\nabla_AN}N,e_B)-\th(\nabb_Ne_A,e_B)\\
&=& g(\nabla_A\nabla_NN,e_B)+R_{ANBN}+g(\nabla_{\nabla_Ne_A-\nabb_Ne_A-\nabla_AN}N,e_B)
\end{eqnarray*}
which together with \eqref{frame} yields:
\begin{equation}\label{proofstruct2}
\nabla_N\th_{AB}=-a^{-1}\nabb_A\nabb_Ba-\th_{AC}\th_{CB}+R_{ANBN}.
\end{equation}
Now, the Gauss equation of the foliation generated by $u$ on $\s$ reads:
\begin{equation}\label{proofstruct3}
R_{AB}=R_{ANBN}+K\ga_{AB}+\th_{AC}\th_{CB}-\trt\th_{AB},
\end{equation}
which together with \eqref{proofstruct2} proves the last equation of \eqref{struct}.

Finally, we turn to \eqref{gauss}. This follows from taking the trace of the Gauss equation \eqref{proofstruct3}. Note that it also follows form taking the trace of the last equality of \eqref{struct} and using the first equality. 
\end{proof}

\subsection{Commutation formulas}

Let $\Pi$ the projection operator from the tangent space of $\s$ to the tangent space $\p$, which is defined in an arbitrary orthonormal frame on $\s$ by
$$\Pi^i_j=\delta^i_j-N^iN_j.$$
Then, for any $\p$-tangent tensor $F$, we define $\nabb_NF$ as the projection of $\nabla_NF$ on $\p$:
$$\nabb_NU_{i_1\cdots i_n}=\Pi^{i_1}_{j_1}\cdots\Pi^{i_n}_{j_n}\nabla_NU_{j_1\cdots j_n}.$$
We have the following useful commutation formulas between $\nabb$ and $\nabb_N$ (see \cite{ChKl} page 64).
\begin{lemma}
For any $\p$-tangent tensor $F$ on $\Sigma$, we have schematically:
\be\lab{commut}
[\nabb_N,\nabb]F=\ana\c\nabn F-\theta\c\nabb F+R_{N.}\c F+\theta\c\ana\c F.
\ee
In particular, we obtain for any scalar $f$ on $\Sigma$:
\be\lab{scommut}
[\nabb_N,\nabb]f=\ana\nabn f-\th\c\nabb f,
\ee
and:
\begin{equation}\label{ad12}
\begin{array}{ll}
\ds [\nabn,\lap]f = & \ds -\trt\lap f -2\hth\c\nabb^2f+2a^{-1}\nabb a\c\nabb\nabn f+a^{-1}\lap a \nabn f-2R_{N.}\c\nabb f\\
&\ds -\nabb\trt\c\nabb f -2\hth\c a^{-1}\nabb a\c\nabb f.
\end{array}
\end{equation}
\end{lemma}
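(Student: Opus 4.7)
The three identities follow by systematic bookkeeping in the adapted frame $(N, e_A)$, using the frame equations \eqref{frame} to convert brackets between $N$ and the $e_A$ into controlled quantities, and for \eqref{ad12} also the structure equations \eqref{struct} to consolidate the output.

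For the scalar identity \eqref{scommut}, the starting point is
\[
([\nabb_N, \nabb] f)(e_A) \;=\; N(e_A f) - e_A(N f) - (\nabb f)(\nabb_N e_A) \;=\; [N, e_A](f) - (\nabb_N e_A)(f).
\]
Writing $[N, e_A] = \nabla_N e_A - \nabla_A N$ and inserting the first two equations in \eqref{frame} gives $[N, e_A] = \nabb_N e_A + a^{-1}(\nabb_A a) N - \theta_{AB} e_B$; the $\nabb_N e_A$ contribution cancels and the remainder is exactly the right-hand side of \eqref{scommut}. The proof of \eqref{commut} for a general $\p$-tangent tensor $F$ proceeds component by component along the same pattern: $[\nabla_N, \nabla_{e_A}]$ applied to a spacetime extension of $F$ produces the curvature contribution $R_{N.}\c F$, and commuting the projection $\Pi$ past $\nabla_N$, via $\nabla_{e_A} N = \theta_{AB} e_B$ and $\nabla_N N = -\ana$ from \eqref{frame}, accounts for the remaining schematic terms $\ana\c\nabn F$, $\th\c\nabb F$ and $\th\c\ana\c F$.

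The formula \eqref{ad12} is derived by iterating \eqref{scommut}. The key preliminary observation is that $\nabb_N$ annihilates the induced metric $\gamma$ on $\p$: for $X, Y$ tangent to $\p$, $\nabla_N X - \nabb_N X$ is proportional to $N$, hence $g$-orthogonal to $Y$, and since $\nabla g = 0$ one gets $(\nabb_N \gamma)(X, Y) = 0$. Consequently $\nabn \textrm{tr}_\gamma T = \textrm{tr}_\gamma \nabb_N T$ for every $\p$-tangent $(0,2)$ tensor $T$, so writing $\lap f = \textrm{tr}_\gamma \nabb^2 f$ one gets
\[
[\nabn, \lap] f \;=\; \textrm{tr}_\gamma\bigl([\nabb_N, \nabb](\nabb f) \,+\, \nabb\,[\nabb_N, \nabb] f\bigr).
\]
The second piece is expanded directly via \eqref{scommut}, producing $\divb(\ana)\nabn f + \ana\c\nabb\nabn f - \divb\th\c\nabb f - \th\c\nabb^2 f$; the first piece, expanded via \eqref{commut} with $F = \nabb f$ and with $\nabn(\nabb f)$ re-expressed by a further application of \eqref{scommut}, contributes a parallel set of terms. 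Two simplifications close the computation: the second equation of \eqref{struct} converts $\divb\th$ into $\nabb\trt + R_{N.}$, generating the $-\nabb\trt\c\nabb f$ and one of the $-R_{N.}\c\nabb f$ terms, while the decomposition $\th = \hth + \frac{1}{2}\trt\,\gamma$ splits each $\th\c\nabb^2 f$ into $\hth\c\nabb^2 f + \frac{1}{2}\trt\lap f$, yielding the $-\trt\lap f$ and $-2\hth\c\nabb^2 f$ pieces. Finally, the $-a^{-2}|\nabb a|^2 \nabn f$ contribution from $\divb(\ana) = a^{-1}\lap a - a^{-2}|\nabb a|^2$ is cancelled by an equal and opposite contribution generated when \eqref{scommut} is used inside $\ana\c\nabn\nabb f$, leaving the clean coefficient $a^{-1}\lap a$ in front of $\nabn f$.

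The main obstacle is precisely this last bookkeeping step: several spurious terms, including those involving $|\nabb a|^2$, products $\th\c\ana$, and curvature traces, must be tracked with the correct signs and verified to combine into exactly the seven terms listed in \eqref{ad12}. Once this reduction is carried out the proof is purely algebraic, relying only on \eqref{frame} and \eqref{struct}.
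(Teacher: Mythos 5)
Your proof is correct. Note that the paper itself does not prove this lemma at all — it simply cites \cite{ChKl}, page 64 — so there is no internal argument to compare against; your derivation supplies the standard computation that the citation stands in for. The skeleton is right: \eqref{scommut} follows exactly as you write from $[N,e_A]=\nabla_Ne_A-\nabla_AN$ and the first, second and fourth equations of \eqref{frame}; the identity $\nabb_N\gamma=0$ justifies $[\nabn,\lap]f=\textrm{tr}_\gamma\bigl([\nabb_N,\nabb](\nabb f)+\nabb[\nabb_N,\nabb]f\bigr)$; and the three cancellations you single out (the $a^{-2}|\nabb a|^2\nabn f$ terms, the substitution $\divb\th=\nabb\trt+R_{N\cdot}$ which doubles the Ricci term, and the splitting $\th=\hth+\frac12\trt\gamma$ in the Hessian term) all occur with the signs needed to reproduce \eqref{ad12}. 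One piece of bookkeeping you describe only implicitly but which is essential and does check out: the exact tensor commutator applied to $\omega=\nabb f$ contains, besides $-\th\c\ana\c\omega$ from each of the two sources you mention, the term $+\th_{AA}\,a^{-1}\nabb_Ca\,\omega_C=\trt\,\ana\c\nabb f$ coming from the $T(N,\nabla_Be_A)$ contribution (via $\nabla_Be_A=\nabb_Be_A-\th_{AB}N$ and $T_{NN}=\ana\c\omega$); it is precisely this $\trt\,\ana\c\nabb f$ that converts $-2\th\c\ana\c\nabb f$ into the traceless combination $-2\hth\c\ana\c\nabb f$ appearing in \eqref{ad12}. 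With that term tracked, the seven terms come out exactly as stated.
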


We will use some variants of the commutator formulas \eqref{commut}, \eqref{scommut} and \eqref{ad12}. In particular, for any scalar function $f$ on $\Sigma$, \eqref{ad12} yields:
\bea\label{commut1}
\nn a[\nabn,a^{-1}\lap]f &=& -(\trt+a^{-1}\nabn a)\lap f-2\hth\c\nabb^2f+2a^{-1}\nabb a\c\nabb\nabn f+\lapa\nabn f\\
&&-2R_{N.}\c\nabb f-\nabb\trt\c\nabb f-2\hth\c a^{-1}\nabb a\c\nabb f.
\eea

Also, for some applications we have in mind, we would like to get rid of the terms containing $\nabb_N$ in the right-hand side of \eqref{commut}, \eqref{scommut} and \eqref{ad12}. This is achieved by considering the commutators with $\nabb_{aN}$ instead of $\nabb_N$. \eqref{commut} implies for any $\p$-tangent tensor $F$ on $\Sigma$, schematically:
\be\lab{commutnabna1}
[\nabb_{aN},\nabb]F=-a\theta\c\nabb F+aR_{N.}\c F+\theta\c \nabb(a)\c F.
\ee
Using twice the commutator formula \eqref{commutnabna1}, we obtain, schematically:
\be\lab{commutnabna2}
[\nabb_{aN},\lap]F=\nabb\c (-\theta\c\nabb F+R_{N.}\c F+\theta\c \nabb(a)F)-\nabb\theta\c \nabb F+R_{N.}\c \nabb F+\theta\c\nabb(a)\c\nabb F.
\ee
In view of \eqref{commutnabna1}, we also have for any scalar function $f$ on $\Sigma$:
\begin{equation}\label{dj3}
[\nabla_{aN},\lap]f=-a\trt\lap f-2a\hth\c\nabb^2f+(-2aR_{N.}-a\nabb\trt+2\hth\c \nabb a)\c\nabb f.
\end{equation}

Finally, we conclude this section with the following commutator formula on $\p$. For any scalar function $f$ on $\p$, we have:
\begin{equation}\label{commut2}
[\nabb,\lap]f=K\nabb f.
\end{equation}

\subsection{The choice of $\uo$}\label{sec:choice}

In view of \eqref{cond}, we may reformulate {\bf C1a C1b C1c}. We look for $u(0,x,\o)$ satisfying the three following conditions:
{\em\begin{enumerate}
\item[{\bf C1a}] $u(0,x,\o)\sim x.\o$ when $|x|\rightarrow +\infty$ on $\s$

\item[{\bf C1b}] $\trt - k_{NN}\in\lli{\infty}$

\item[{\bf C1c}] $u(0,x,\o)$ has as enough regularity in $x$ and $\o$ to achieve step {\bf C2}, i.e. to control the parametrix at $t=0$ given by \eqref{parami}
\end{enumerate}
}
\noindent where the initial data set $(\s,g,k)$ satisfies:
\be\lab{const1}
\left\{\begin{array}{l}
\nabla^j k_{ij}=0,\\
 R=|k|^2,\\
\textrm{Tr}k=0, 
\end{array}\right.
\ee
and where $R$ and $\nabla k$ are in $\lli{2}$ and satisfy the smallness assumption \eqref{small1}.

In order to motivate our choice of $\uo$, we investigate the regularity of the lapse $a$, which by \eqref{struct} satisfies the following equation:
\be\lab{eqlapse}
\lapa=-\nabn\trt-|\th|^2-R_{NN}.
\ee
Since $R$ is in $\lli{2}$, \eqref{eqlapse} implies that $a$ has at most two derivatives in $\lli{2}$. Thus, $\uo$ has at most three derivatives with respect to $x$ in $\lli{2}$. This is not enough to satisfy {\bf C1c} (i.e. to obtain the boundedness of the parametrix at $t=0$ in $L^2$). In fact, the classical $T^*T$ argument (see for example \cite{stein}) relies on integrations by parts in $x$ and would require at least one more derivative since $\s$ has dimension 3. 

Alternatively, we could try to use the $TT^*$ argument which relies on integrations by parts in $\o$. Indeed, $R$ being independent of $\o$, one would expect the regularity of $\uo$ with respect to $\o$ to be better. Differentiating \eqref{eqlapse} with respect to $\o$, we obtain:
\be\lab{diffom}
a^{-1}\lap(\partial_\o a)=2\nabb\nabn a+\cdots,
\ee
where the term on the right-hand side comes from the commutator $[\partial_\o,\lap]$ (see section \ref{regomega}). Thus, obtaining an estimate for $\partial_\o a$ from \eqref{diffom} requires to control $\nabn a$. Unfortunately, \eqref{eqlapse} seems to give control of tangential derivatives of $a$ only. This is where the specific choice of $\uo$ comes into play. 

Having in mind the equation of minimal surfaces (i.e. $\trt=0$), condition {\bf C1b} suggest the choice $\trt - k_{NN}=0$. Unfortunately, this equation together with \eqref{eqlapse} does not provide any control of $\nabn a$. We might propose as a second guess natural guess to take instead $\trt -k_{NN}=\nabn a$. Plugging in \eqref{eqlapse} yields an elliptic equation for $a$: $\nabn^2a+\lapa=-|\th|^2-\nabn (k_{NN})-R_{NN}$. This allows us to control $\nabn^2a$ in $\lli{2}$. However, $\nabn a$ is at most in $H^1(\s)$ which does not embed in $\lli{\infty}$ - since $\s$ has dimension 3 - so that condition {\bf C1b} is not satisfied. To sum up, the first guess $\trt-k_{NN}=0$ satisfies {\bf C1b}, but not {\bf C1c}, whereas the second guess $\trt-k_{NN}=\nabn a$ might satisfy {\bf C1c}, but does not satisfy {\bf C1b}. 

The correct choice is the intermediate one:
\be\lab{choice}
\trt-k_{NN}=1-a.
\ee
We will see in section \ref{regx} that $a-1$ belongs to $\lli{\infty}$ so that {\bf C1b} is satisfied. Also, plugging \eqref{choice} in \eqref{eqlapse} yields:
\be\lab{eqlapse1}
\nabn a-\lapa=|\th|^2+\nabn (k_{NN})+R_{NN}.
\ee
This parabolic equation will allow us to control normal derivatives of $a$. In turn, we will control derivatives of $a$ with respect to $\o$ using \eqref{diffom}. Ultimately, we will prove enough regularity with respect to both $x$ and $\o$ for {\bf C1c} to be satisfied.

\subsection{Main results}\label{sec:mainres}

From now on, we will not make any further reference to the space-time $\mathcal{M}$. Instead, we will work only with the initial data set $(\s,g,k)$. Thus, since there can be no more confusion, we will denote $\uo$ simply by $u(x,\o)$. To $u$, we associate $\p$, $a$, $N$, $\th$ and $K$ as in section \ref{sec:foliation}. For $1\leq p,q\leq +\infty$, we define the spaces $\li{p}{q}$ for tensors $F$ on $\s$ using the norm: 
$$\norm{F}_{\li{p}{q}}=\left(\int_u\norm{F}^p_{L^q(\p)}du\right)^{1/p}.$$

\begin{remark}
In the rest of the paper, all inequalities hold for any $\o\in\S$ with the constant in the right-hand side being independent of $\o$. Thus, one may take the supremum in $\o$ everywhere. To ease the notations, we do not explicitly write down this supremum. 
\end{remark}

We first state a result of existence and regularity with respect to $x$ for $u$.
\begin{theorem}\label{thregx}
Let $(\s,g,k)$ chosen as in section \ref{reducsmall}. There exists a scalar function $u$ on $\s\times\S$ satisfying assumption {\bf C1a} and such that:
\begin{equation}\label{thregx1} 
\begin{array}{l}
\norm{a-1}_{\li{\infty}{2}}+\norm{\nabla a}_{\li{\infty}{2}}+\norm{a-1}_{\lli{\infty}}+\norm{\nabb\nabla a}_{\lli{2}}\lesssim\ep,\\
\norm{\trt-k_{NN}}_{\lli{\infty}}+\norm{\nabla\th}_{\lli{2}}+\norm{K}_{\lli{2}}\lesssim\ep,
\end{array}
\end{equation}
where $\p$, $a$, $N$, $\th$ and $K$ are associated to $u$ as in section \ref{sec:foliation}. 
\end{theorem}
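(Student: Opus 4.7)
Fix $\omega \in \mathbb{S}^2$. Since the reduction in section \ref{reducsmall} makes $(\s, g, k)$ coincide with $(\R^3, \delta, 0)$ outside a large ball, the ansatz $u(x,\o) = x\c\o$ trivially satisfies {\bf C1a} and the gauge relation $\trt - k_{NN} = 1-a$ in the asymptotic region, with $a = 1$, $\th = 0$. Thus the task is to extend $u$ inward as a solution of $\trt - k_{NN} = 1 - a$ and to derive the bounds \eqref{thregx1}. My plan is a continuity/bootstrap argument: approximate the rough initial data set by smooth data, for which classical theory yields smooth $u$, and close a priori estimates independent of the approximation.

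\textbf{The parabolic reformulation.} Substituting the gauge choice $\trt = 1 - a + k_{NN}$ into the lapse equation \eqref{eqlapse1} gives
\be\lab{prop1}
\nabn a - \lapa = |\th|^2 + \nabn(k_{NN}) + R_{NN},
\ee
which is backwards-parabolic in the $u$-direction (with the convention that $N$ points in the direction of increasing $u$). Writing $b = a-1$, we have $b \to 0$ as $u \to +\infty$, and the equation for $b$ is parabolic with forcing whose critical pieces are $R_{NN}$, $\nabn(k_{NN})$, and the nonlinearity $|\th|^2$. Standard parabolic $L^2$ energy estimates applied to \eqref{prop1}, together with the assumption \eqref{small1} on $R$ and $\nabla k$, will yield control of $\nabla b$ in $\li{\infty}{2}$ and of $\nabb\nabla b$ in $\lli{2}$. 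The $\lli{\infty}$ bound on $b = a - 1$ follows from parabolic maximal regularity (Moser-type iteration, or heat-kernel bounds on $P_u$), which is precisely what the choice \eqref{choice} is designed to make available — with the previous candidates $\trt - k_{NN} = 0$ or $= \nabn a$, this $\lli{\infty}$ control is out of reach at the critical level.

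\textbf{The Hodge system and the curvature $K$.} Given $a$, the geometric quantities $\th$ and $K$ are recovered from the structure equations \eqref{struct} and the Gauss relation \eqref{gauss}. Writing $\th = \hth + \frac{1}{2}\trt \gamma$ with $\trt = 1 - a + k_{NN}$, the Codazzi system $\nabb^B \hth_{AB} = \frac{1}{2}\nabb_A \trt + R_{NA}$ is an elliptic Hodge system on each leaf $\p$; by a standard Bochner identity on $\p$ (whose error involves $K$), it yields
\[
\norm{\nabb \hth}_{\lli{2}} \lesssim \norm{\nabb\trt}_{\lli{2}} + \norm{R}_{\lli{2}} + (\text{lower order in $K, \hth$}),
\]
so that combined with $\norm{\nabb b}_{\li{2}{2}} \lesssim \ep$ from the parabolic step we get $\nabla\th$ in $\lli{2}$. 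The Gauss equation \eqref{gauss} $2K = \trt^2 - |\th|^2 + R - 2R_{NN}$ then delivers $K \in \lli{2}$ once $\th \in \li{4}{4}$, which follows from $\nabla \th \in \lli{2}$ via a Gagliardo–Nirenberg inequality on $\p$ (provable using the global coordinate system constructed in section 7). These estimates feed back into \eqref{prop1} to absorb $|\th|^2$, closing the bootstrap.

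\textbf{Main obstacles.} The principal difficulty is closing the bootstrap at the critical regularity level. First, the nonlinearity $|\th|^2$ in \eqref{prop1} is not obviously of lower order — it must be controlled via the Hodge/Gauss step, which itself presupposes enough regularity of $\th$ and $K$ coming from the parabolic estimates, creating a tight loop that only closes thanks to the smallness \eqref{small1} of $\ep$. Second, the $\lli{\infty}$ bounds on $a - 1$ and $\trt - k_{NN}$ in three dimensions are borderline for Sobolev embedding; they genuinely require the parabolic smoothing afforded by \eqref{prop1} rather than elliptic regularity. Third, at the technical level, to perform the Hodge estimates on $\p$ and the parabolic energy estimates globally on $\Sigma$, one needs a well-controlled coordinate system on the leaves and calculus inequalities adapted to the $(u, \cdot)$-foliation with only $L^2$ curvature — the substance of sections 3 and 7 of the paper. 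Granted this infrastructure, existence follows from the a priori bounds via a standard approximation and compactness argument, with the asymptotic condition {\bf C1a} preserved because $b$ vanishes at infinity by construction.
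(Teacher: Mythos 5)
Your outline captures the right skeleton (the parabolic reformulation of the lapse equation, the Codazzi--Hodge system for $\hth$, the Gauss equation for $K$, a smallness bootstrap), but it has a genuine gap at the single most delicate point of the proof: the $\lli{\infty}$ bound on $a-1$ (equivalently on $\trt-k_{NN}$). You assert this follows from ``parabolic maximal regularity (Moser-type iteration, or heat-kernel bounds on $P_u$)'' applied to \eqref{eqlapse1}. This does not work at the stated regularity: the leaves are $2$-dimensional and $u$ plays the role of time, so the forcing $R_{NN}\in\lli{2}$ sits exactly at the critical exponent for parabolic De Giorgi--Nash--Moser in $2+1$ dimensions, and $L^2$ energy estimates alone only give $\nabb^2a,\nabn a\in\lli{2}$, which misses $L^\infty$. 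The paper's actual mechanism is anisotropic: one needs in addition $\nabb\nabn a\in\lli{2}$ and $\nabn a\in\li{\infty}{2}$ (then Proposition \ref{p3} yields $a-1\in\lli{\infty}$). To get these one must differentiate the parabolic equation in the normal direction, and the resulting right-hand side contains $\nabn^2 k_{NN}$ and $\nabn R_{NN}$ --- a full derivative of curvature, which is not controlled by $R\in\lli{2}$. The key structural idea you are missing is that the constraint equations in the maximal foliation together with the twice-contracted Bianchi identity \eqref{bianchi} recast these terms as horizontal divergences, $\nabn^2k_{NN}+\nabn R_{NN}=\divb(H)+(\textrm{l.o.t.})$ with $H=-\nabn k_{\c N}-R_{\c N}\in\lli{2}$, so that the divergence-form parabolic estimate (Proposition \ref{p8}) applies after one integration by parts on the leaves. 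Without this observation the bootstrap does not close and condition {\bf C1b} is not obtained.

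A secondary issue is existence. You propose to ``approximate by smooth data, for which classical theory yields smooth $u$,'' plus compactness. Even for smooth $(g,k)$ the equation $\trt-k_{NN}=1-a$ is not a standard quasilinear parabolic problem: the time function $u$ and the time derivative $\nabn$ depend on the unknown itself, the linearized operator \eqref{estimlin10} satisfies only tame estimates with a loss of derivatives, and a Picard iteration fails; the paper uses a Nash--Moser scheme on small $u$-strips combined with higher-order a priori estimates to continue the solution across $-2<u<2$, and then glues to $x\c\o$ in $1\le|x|\le 2$. Your asymptotic initialization ``$b\to 0$ as $u\to+\infty$'' should likewise be replaced by the concrete initialization $u=-2$ on $\{x\c\o=-2\}$, which is what makes the boundary terms in all the integrations by parts vanish.
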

Notice that condition {\bf C1b} is implied by \eqref{thregx1}. In order to state our second result, we introduce fractional Sobolev spaces $H^b(\p)$ on the surfaces $\p$ for any $b\in\mathbb{R}$ (see section \ref{def:Hs} for their definition). We have the following estimate for $\nabn^2a$, and improved estimate for $\nabn a$.  
\begin{theorem}\label{thnabn2a}
Let $(\s,g,k)$ chosen as in section \ref{reducsmall}. Let $u$ the scalar function on $\s\times\S$ constructed in theorem \ref{thregx}, and let $\p$, $a$, $N$, $\th$ and $K$ be associated to $u$ as in section \ref{sec:foliation}. We have:
\be\lab{nabn2a1}
\norm{\nabn a}_{\li{\infty}{4}}+\norm{\nabn^2a}_{\lhs{2}{-\frac{1}{2}}}\les \ep.
\ee
\end{theorem}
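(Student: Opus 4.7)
The plan rests on the parabolic equation \eqref{eqlapse1} for the lapse,
\begin{equation*}
\nabn a - \lapa = |\th|^2 + \nabn k_{NN} + R_{NN},
\end{equation*}
which, thanks to the choice \eqref{choice} of $\uo$, is genuinely parabolic in the normal direction. Denote the right-hand side by $F$. Both estimates of Theorem \ref{thnabn2a} are to be obtained by exploiting this parabolicity together with the bounds from Theorem \ref{thregx}.

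For the $\li{\infty}{4}$ bound on $\nabn a$, I would first differentiate the equation in the $N$-direction and commute using \eqref{ad12}, obtaining a parabolic equation for the scalar $\psi:=\nabn a$ of the form
\begin{equation*}
\nabn\psi - \lap\psi \;=\; \nabn F - [\nabn,\lap]a \;=:\; G.
\end{equation*}
Next I would run an $L^4$ parabolic energy: multiply by $a\psi^3$, integrate over $\s$, and use the coarea-type identity $\frac{d}{du}\int_{P_u}\psi^4=\int_{P_u}a(4\psi^3\nabn\psi+\trt\,\psi^4)$ together with integration by parts on $\p$ for the $\lap\psi$ term. This yields
\begin{equation*}
\frac{d}{du}\int_{P_u}\psi^4 \;+\; \int_{P_u}a\,\psi^2|\nabb\psi|^2 \;\lesssim\; \int_{P_u}|G||\psi|^3 \;+\; \int_{P_u}(|a\trt|+|\nabb a|)\psi^4.
\end{equation*}
The right-hand side is controlled by Hölder and Sobolev on $\p$ (in particular $H^1(P_u)\hookrightarrow L^p(P_u)$ for all finite $p$) using the bounds of Theorem \ref{thregx}; the dangerous piece $\nabn R_{NN}$ inside $G$ is dealt with by tangential integration by parts, transferring one derivative onto $\psi^3$ so that only $\nabb\nabn a\in\lli{2}$ and $\nabb\psi$ are needed. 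Integrating from $u=+\infty$ (where $a=1$ by the modification of section \ref{reducsmall}) then yields $\nabn a\in\li{\infty}{4}$.

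For $\nabn^2a\in\lhs{2}{-1/2}$, I would proceed by duality. Writing the parabolic equation as
\begin{equation*}
\nabn^2 a \;=\; \nabn\lapa + \nabn F \;=\; \lap\nabn a + [\nabn,\lap]a + \nabn F,
\end{equation*}
one pairs against a test function $\phi$ of unit $H^{1/2}(P_u)$ norm, then against an $L^2_u$ weight. For the main term $\lap\nabn a$, two tangential integrations by parts transfer both derivatives onto $\phi$, and $\nabb\nabn a\in\lli{2}$ together with the Sobolev embedding $H^{1/2}(P_u)\hookrightarrow L^4(P_u)$ (exploiting that $\phi$ has a half derivative to spare) closes this term. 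The commutator $[\nabn,\lap]a$ from \eqref{ad12} is bilinear in quantities already controlled in Theorem \ref{thregx} and the just-established $\li{\infty}{4}$ bound, and splits into terms that are all in $\lhs{2}{-1/2}$ after one integration by parts. The forcing term $\nabn F$ contains the rough curvature piece $\nabn R_{NN}$, which is handled via duality by using the geometric Littlewood--Paley framework recalled in section 5, placing the extra derivative on $\phi\in H^{1/2}$.

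The main obstacle will be the handling of the curvature terms $R_{NN}$ and $\nabn R_{NN}$ at the critical $L^2$ level: none lives pointwise in any useful space, and every estimate must exploit exactly the half-derivative of room granted by the $H^{\pm 1/2}$ pairing together with the product/commutator estimates of Section 5. A secondary delicate point is the $[\nabn,\lap]a$ commutator, whose worst piece $-\trt\lap a$ sits at the edge of what Theorem \ref{thregx} provides and must be absorbed into the coercive term $\int a\psi^2|\nabb\psi|^2$ (respectively into the $\lap\nabn a$ duality bound) by careful bookkeeping of the constants in the smallness estimate \eqref{small1}.
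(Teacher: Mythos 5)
Your starting point coincides with the paper's: the parabolic lapse equation \eqref{eqlapse1}, differentiated once in the normal direction, with the twice-contracted Bianchi identity and the constraint equations used to rewrite $\nabn R_{NN}$ and $\nabn^2k_{NN}$ as tangential divergences -- this is exactly \eqref{r22}--\eqref{r24}, which yields $(\nabn-a^{-1}\lap)\nabn a=\divb(H)+h$ with $H=-\nabn k_{\c N}-R_{\c N}\in\ll{2}$ and $h$ quadratic. But both of your closing arguments have genuine gaps, and they are precisely the gaps that force the paper's actual strategy. The $L^4$ energy estimate for $\psi=\nabn a$ does not close on the curvature forcing: after Bianchi and the tangential integration by parts the term is $\int_{S}R_{\c N}\c\psi^2\nabb\psi$, and since $R$ is only in $\ll{2}$ (not in $\lp{p}$ for any $p>2$ on each leaf), absorbing it into the coercive term $\int a\psi^2|\nabb\psi|^2$ would require $\psi\in\ll{\infty}$ or $\nabb\psi\in\l{2}{4}$, neither of which is available. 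This is why the paper splits $\nabn a=a_1+a_2$ according to the structure of the forcing (\eqref{zoc3}--\eqref{zoc5}): the $L^4$ energy works for $a_1$, driven by the quadratic $h$ (Proposition \ref{prop:parab1}), whereas $a_2$, driven by $\divb(H)$, requires differentiating the equation a \emph{second} time in $\nabla_{aN}$, applying Bianchi and the constraints a second time to put $\nabla_{aN}H$ in double-divergence form (\eqref{zoc30}), extracting $\sup_{j}\norm{P_j(\nabn a_2)}_{\ll{2}}\les\ep$ from the Littlewood--Paley parabolic estimate (Proposition \ref{prop:parab2}), and only then recovering $a_2\in\l{\infty}{4}$ via the refined trace inequality of Lemma \ref{lemma:zoc3}, whose hypothesis on the normal derivative is exactly this weak dyadic bound.

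The duality argument for $\nabn^2a\in\lhs{2}{-\frac{1}{2}}$ fails on its main term. Pairing $\lap\nabn a$ against $\phi$ with $\norm{\phi}_{\hs{1/2}}=1$: two integrations by parts put $\lap\phi$ on the test function, which requires $\phi\in\hs{2}$; one integration by parts gives $\int\nabb\nabn a\c\nabb\phi$, which with $\nabb\nabn a\in\ll{2}$ only yields the trivial $H^{-1}$ bound, since $\nabb\phi$ is merely in $\hs{-1/2}$ and the embedding $\hs{1/2}\hookrightarrow\lp{4}$ does not make $\nabb\phi$ square-integrable. There is no ``half derivative to spare'': the gain from $H^{-1}$ to $H^{-1/2}$ is a parabolic smoothing effect in the $u$-variable and cannot be produced by tangential integration by parts on a fixed leaf. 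In the paper it comes from the Littlewood--Paley localized energy estimates for the evolution equation (\eqref{parab6} and Propositions \ref{prop:parab1}, \ref{prop:parab2}), which bound $\norm{P_j(\nabn^2a)}_{\ll{2}}$ dyadically by $\norm{P_j(a\,\cdot\,\mathrm{RHS})}_{\ll{2}}$ plus commutators, the $2^{-j}$-weighted summability being supplied by the structure of the right-hand side (product estimate \eqref{clp17} for $h$, and the $\sup_j$ bound for the $\divb(H)$ part). Two minor points: the equation is initialized at $u=-2$, not $u=+\infty$, and the differentiated equation carries the factor $a^{-1}$ in front of $\lap$.
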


The third theorem investigates the regularity of $u$ with respect to $\o$:
\begin{theorem}\label{thregomega}
Let $(\s,g,k)$ chosen as in section \ref{reducsmall}. Let $u$ the scalar function on $\s\times\S$ constructed in theorem \ref{thregx}, and let $\p$, $a$, $N$, $\th$ and $K$ be associated to $u$ as in section \ref{sec:foliation}. We have:
\bea\label{threomega1}
\nn\norm{\po a}_{\lli{\infty}}+\norm{\nabb\po a}_{\li{\infty}{2}}+\norm{\nabb^2\po a}_{\lli{2}}+\norm{\nabn\po a}_{L^2_u H^{\frac{1}{2}}(\p)}&&\\
\nn +\norm{\nabn\po a}_{L^\infty_u H^{-\frac{1}{2}}(\p)}+\norm{\nabn^2\po a}_{L^2_u H^{-\frac{3}{2}}(\p)}+\norm{\nabla\po\th}_{\lli{2}}&\lesssim&\ep,\\
\norm{\po N}_{\lli{\infty}}&\lesssim& 1,
\eea
\bea\label{threomega2}
\nn\norm{\po^2a}_{L^2_u H^{\frac{3}{2}}(\p)}+\norm{\po^2a}_{L^\infty_u H^{\frac{1}{2}}(\p)}+\norm{\nabn\po^2 a}_{L^2_u H^{-\frac{1}{2}}(\p)}+\norm{\nabla\po^2\th}_{\lli{2}}&\lesssim&\ep,\\
\norm{\po^2 N}_{\lli{\infty}}&\lesssim& 1,
\eea
and
\begin{equation}\label{threomega3}
\norm{\po^3u}_{L^\infty_{\textrm{loc}}(\Sigma)}\lesssim 1.
\end{equation}
\end{theorem}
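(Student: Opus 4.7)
The strategy is to commute $\po$ through the parabolic equation \eqref{eqlapse1} for the lapse $a$ and through the structure equations \eqref{struct}, then close the resulting system by feeding the source terms into the parabolic and product estimates of Section 5. The algebraic backbone is that $(\s,g,k)$ is $\omega$-independent, together with the identities that follow from $\nabla u=a^{-1}N$: applying $\po$ gives
\[
\nabla(\po u)=-a^{-2}(\po a)N+a^{-1}\po N,
\]
and since $|N|=1$ forces $\po N\perp N$, this splits into the two key relations
\[
N(\po u)=-a^{-2}\po a,\qquad (\po N)_A=a\,\nabb_A(\po u).
\]
In particular, control of $\po N$ in $\lli{\infty}$ reduces to control of $\nabb(\po u)$ in $L^\infty$ on the leaves $\p$, which is governed by the above decomposition; and, by the same scheme, $\po^2 N$ and $\po^3 u$ are controlled from $\po^2 a$ and $\po^3 a$ together with lower-order quantities.

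For \eqref{threomega1}, differentiating \eqref{eqlapse1} in $\omega$ and using the commutator \eqref{commut1} produces the linear parabolic equation
\[
\nabn(\po a)-\lap(\po a)=\mathcal{S}_1,
\]
where the source $\mathcal{S}_1$ collects (i) commutator contributions $\trt\lap a$, $\hth\c\nabb^2 a$, $a^{-1}\nabb a\c\nabb\nabn a$, $\lapa\,\nabn a$ from $[\po,\nabn]a$ and $[\po,\lap]a$; (ii) the $\omega$-differentiated source terms $\th\c\po\th$, $\nabn(\po k_{NN})$, and contractions of $R$ with $\po N$; and (iii) lower-order products. Substituting $\po N=a\nabb(\po u)$ and expressing $\nabn(\po\th)$ via $\omega$-differentiation of the third structure equation in \eqref{struct}---which yields $\nabn(\po\th)\sim\nabb^2(\po a)$ plus lower order---the product and trace inequalities of Section 5, combined with \eqref{thregx1} and \eqref{nabn2a1}, place $\mathcal{S}_1$ in $L^2_u H^{-\frac{1}{2}}(\p)$. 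The parabolic estimate of Section 5 then delivers the $\po a$ norms announced in \eqref{threomega1}, after which the identities above give the $\po N$ bound and the structure equation for $\th$ gives the $\nabla\po\th$ bound. The estimate \eqref{threomega2} follows by iteration: a second differentiation gives $\nabn(\po^2 a)-\lap(\po^2 a)=\mathcal{S}_2$, where $\mathcal{S}_2$ additionally contains quadratic products of first $\omega$-derivatives, which are controlled by the sharp 2D product rules on $\p$ together with the just-established bounds.

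The main obstacle is the $\nabb\nabn a$ term in $[\po,\lap]a$ flagged after \eqref{diffom}: it is borderline, and is precisely what the fractional bound $\nabn^2 a\in L^2_u H^{-\frac{1}{2}}(\p)$ of Theorem \ref{thnabn2a} is tailored to absorb; the same term reappears in $\mathcal{S}_2$ multiplied by $\po N$, demanding sharp fractional product estimates. Finally, for \eqref{threomega3}, integrating the identity $N(\po^3 u)=-a^{-2}\po^3 a+\text{(lower order in }\po^j a,\ j\leq 2)$ along the integral curves of $N$ reduces the $L^\infty_{loc}$ estimate to an $L^\infty_{loc}$ bound on $\po^3 a$: outside $|x|\leq 1$ the modification of Section \ref{reducsmall} enforces $u\sim x\c\omega$ so that $\po^3 u$ vanishes at leading order there, while inside $|x|\leq 1$ a third-order repetition of the parabolic scheme yields $\po^3 a$ in a space embedded in $L^\infty$ by Sobolev embedding on $\p$.
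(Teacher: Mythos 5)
Your scheme for \eqref{threomega1} and \eqref{threomega2} follows the paper's architecture in outline (commute $\po$ through the lapse equation, use $\po N=a\nabb\po u$ and $\nabn\po u=-a^{-2}\po a$, close with the parabolic and product estimates of Section 5), but two of the claimed conclusions do not follow from the steps you describe. First, placing $\mathcal{S}_1$ in $\lhs{2}{-\frac{1}{2}}$ and applying a single parabolic estimate yields at best $\po a\in\lhs{2}{\frac{3}{2}}$ and $\nabn\po a\in\lhs{2}{-\frac{1}{2}}$; it gives neither $\nabb^2\po a\in\lli{2}$, nor $\nabn\po a\in\lhs{2}{\frac{1}{2}}$, nor $\nabn^2\po a\in\lhs{2}{-\frac{3}{2}}$, nor $\po a\in\lli{\infty}$, all of which are part of \eqref{threomega1}. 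The missing step is a second pass: one first places the source of the $\po a$ equation in $\lli{2}$ to get $\nabb^2\po a\in\lli{2}$, and then differentiates that equation by $\nabn$, observing that the dangerous terms $\nabn R_{N\po N}$ and $\nabn^2k_{N\po N}$ still carry a contraction with $N$, so the contracted Bianchi identity and the constraint equations convert them into tangential divergences; only then does the new source land in $\lhs{2}{-\frac{3}{2}}$, and only then do the fractional bounds on $\nabn\po a$, $\nabn^2\po a$ and the $\lli{\infty}$ bound on $\po a$ follow.

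Second, and more seriously, your route to \eqref{threomega3} through $\po^3 a$ fails. A third iteration of the parabolic scheme applied to $a$ does not place $\po^3 a$ in a space embedding in $\lli{\infty}$: the commutator $[\po,\lap]\po^2 a$ produces $\nabb\nabn\po^2 a$, and since $\nabn\po^2 a$ is only controlled in $\lhs{2}{-\frac{1}{2}}$ the source for $\po^3 a$ is only in $\lhs{2}{-\frac{3}{2}}$, so $\po^3 a$ is at best in $\lhs{2}{\frac{1}{2}}$ --- well short of what any Sobolev embedding on the two-dimensional leaves (let alone on the three-dimensional slice) requires for an $L^\infty$ bound. Relatedly, $\nabn R_{\po N\po N}$ has no contraction with $N$, so the Bianchi mechanism above is unavailable at this order and one cannot even make sense of $\nabn^2\po^2 a$. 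The correct device is to bypass $\po^3 a$ entirely: differentiate the defining equation $\textrm{div}(\nabla u/|\nabla u|)=1-|\nabla u|^{-1}+k_{NN}$ three times in $\o$ to obtain a parabolic equation for $\po^3 u$ itself, whose source involves only $\nabb\po^2 a$ and quadratic lower-order terms and hence lies in $\lhs{2}{b}$ for $0<b<\frac{1}{2}$; parabolic smoothing then gives $\po^3 u\in L^\infty_uH^{1+b}(\p)$, which does embed in $\lli{\infty}$. Your transport identity $N(\po^3 u)=-a^{-2}\po^3 a+\cdots$ goes in the wrong direction: it expresses the worse-behaved quantity $\po^3 a$ in terms of $\nabn\po^3 u$, rather than producing an $L^\infty$ bound on $\po^3 u$ from controllable data.
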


\begin{remark}
In order to prove Theorem \ref{thregx}, Theorem \ref{thnabn2a} and Theorem \ref{thregomega}, we will rely in a fundamental way on the choice \eqref{choice} for $u$, and on the structure of the constraint equations in the maximal foliation \eqref{const1}.
\end{remark}

\subsection{Coordinate systems on $\p$ and $\s$}

In order to prove Theorem \ref{thregx}, Theorem \ref{thnabn2a} and Theorem \ref{thregomega}, we will use embeddings on the level surfaces $\p$ of $u$. These embeddings are discussed in section \ref{sec:ineq}, 
and their proof will require in particular, the existence of a suitable coordinate system. The following proposition establishes the existence of a global coordinate system on $\p$.
\begin{proposition}\label{gl0}
Let $\o\in\S$. Let $\Phi_u:\p\rightarrow T_\o\S$ defined by:
\begin{equation}\label{gl1}
\Phi_u(x):=\po u(x,\o),
\end{equation}
where $T_\o\S$ is the tangent space to $\S$ at $\o$. Then $\Phi_u$ is a global $C^1$ diffeomorphism from $\p$ to $T_\o\S$.
\end{proposition}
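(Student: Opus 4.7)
The plan is to verify that $\Phi_u$ is simultaneously (i) of class $C^1$, (ii) a local diffeomorphism, and (iii) proper. Since $P_u$ is connected and $T_\omega\mathbb{S}^2\simeq\mathbb{R}^2$ is simply connected, a proper $C^1$ local diffeomorphism between connected equidimensional manifolds is a covering map, and a covering of a simply connected space is a homeomorphism; combined with the local diffeomorphism property this yields a global $C^1$ diffeomorphism. So the real content is to establish (i), (ii), (iii).

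\textbf{$C^1$ regularity and the differential.} I would compute $d\Phi_u$ by commuting $\partial_\omega$ with the spatial gradient and using $\nabla u=a^{-1}N$:
$$\nabla(\partial_\omega u) = \partial_\omega(a^{-1}N) = -a^{-2}\partial_\omega a\,N + a^{-1}\partial_\omega N.$$
Projecting onto $T_xP_u=N^\perp$ and choosing orthonormal frames $\{e_A\}$ of $T_xP_u$ and $\{\partial_{\omega^i}\}$ of $T_\omega\mathbb{S}^2$, one reads off
$$(d\Phi_u)_{iA}(x) = a^{-1}(x)\,g\bigl(\partial_{\omega^i}N(x),e_A\bigr).$$
Continuity of this expression, and hence $C^1$ regularity of $\Phi_u$, follows from the $L^\infty$ control of $a-1$, $\partial_\omega a$, and $\partial_\omega N$ given by Theorems \ref{thregx} and \ref{thregomega}.

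\textbf{Properness.} I would dispatch (iii) via the asymptotics of condition C1a. Since the modified data equals $(\mathbb{R}^3,\delta,0)$ outside a bounded set, one has $u(x,\omega)-x\cdot\omega\to 0$ and $\partial_\omega u(x,\omega)-(x-(x\cdot\omega)\omega)\to 0$ as $|x|\to\infty$. On the leaf $P_u=\{u(\cdot,\omega)=u_0\}$ this shows that $P_u$ is asymptotic to the affine plane $\{x\cdot\omega=u_0\}$ and that $\Phi_u(x)\sim x-u_0\omega$, which is an isometry of that plane onto $T_\omega\mathbb{S}^2$. Consequently $|\Phi_u(x)|\to\infty$ as $x$ leaves compact subsets of $P_u$, giving properness.

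\textbf{Local diffeomorphism (main obstacle).} The hardest part is (ii): showing that $\det(d\Phi_u)$ never vanishes on $P_u$. In the asymptotic region, $a\to 1$ and $\partial_\omega N\to\partial_\omega\omega$ (the identity on $T_\omega\mathbb{S}^2$), so $d\Phi_u$ tends to the canonical identification of $T_xP_u$ with $T_\omega\mathbb{S}^2$ and $\det(d\Phi_u)\to 1$. The obstacle in the interior is that $\partial_\omega N$ is only bounded of order $1$ in $L^\infty$, not small, so $\Phi_u$ cannot be treated as a small perturbation of a linear model there. Invertibility must be extracted from the specific structure of the defining equation \eqref{choice}: it is precisely the parabolicity in the normal direction to $P_u$ built into \eqref{choice} that forces the parameterized Gauss map $\omega\mapsto N(x,\omega)$ to remain quantitatively non-degenerate, and this, combined with the smallness of $\partial_\omega a$ and the $L^\infty$ bound on $\partial_\omega N$ from Theorem \ref{thregomega}, is what should keep $\det(d\Phi_u)$ bounded away from zero. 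Once (ii) is in hand, (i), (iii) and the Hadamard/covering-space argument recalled at the outset conclude the proof.
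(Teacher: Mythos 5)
Your step (ii) is a genuine gap: you correctly identify the non-vanishing of $\det(d\Phi_u)$ in the interior as the crux, but you then only assert that the parabolic structure of \eqref{choice} ``should'' keep the determinant away from zero, without producing the estimate that does it. The missing ingredient is not a new structural argument at the level of this proposition; it is the bound \eqref{om63}, $\norm{g(\po N,\po N)-I}_{\lli{\infty}}\lesssim\ep$, already established in the course of proving Theorem \ref{thregomega}. Your diagnosis that ``$\po N$ is only $O(1)$, not small'' is true but misses the point: what is small is the deviation of the \emph{Gram matrix} of $\po N$ from the $2\times2$ identity. Since $(\textrm{Jac}\,\Phi_u)^*\textrm{Jac}\,\Phi_u=a^{-2}\,g(\po N,\po N)$ and $\norm{a-1}_{\lli{\infty}}\lesssim\ep$, this gives $\norm{|\det(\textrm{Jac}\,\Phi_u)|-1}_{\lli{\infty}}\lesssim\ep$ uniformly on all of $\p$, not merely asymptotically, and (ii) follows at once. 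Without citing (or reproving) \eqref{om63}, your proof of the local diffeomorphism property does not close. Note also that \eqref{om63} is exactly what makes your asymptotic normalization consistent: $g(\po N,\po N)=I$ on $x\c\o=-2$, and the propagation of this identity up to $O(\ep)$ is obtained by applying Proposition \ref{p3} to $g(\po N,\po N)-I$.

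For the global injectivity, your route (proper $C^1$ local diffeomorphism onto the simply connected $T_\o\S$ is a covering, hence a homeomorphism) is genuinely different from the paper's, which argues directly: assuming $\Phi_u(x_1)=\Phi_u(x_2)$, it studies the level curves $\{\partial_\varphi u=\a\}$ and $\{\partial_\psi u=\b\}$ on $\p$, shows each is connected using the flow of $\partial_\varphi N$ and the half-line structure at infinity, and derives a contradiction from an interior extremum of $\partial_\psi u$ on a precompact arc, using that $\partial_\varphi N$ and $\partial_\psi N$ are nowhere collinear (again a consequence of \eqref{om63}). Your covering-space shortcut is cleaner and avoids that topology, but it requires knowing that $\p$ is connected, which you assume without comment; at the regularity in play this is not free, and the paper's argument is structured precisely so as not to presuppose it. If you keep your route, you should either justify connectedness of $\p$ or apply the covering argument component by component and rule out multiple components via the asymptotics.
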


The following proposition establishes the existence of a global coordinate system on $\Sigma$ and provides the control of the determinant of the corresponding Jacobian. This will turn out to be useful to control 
the parametrix at $t=0$ given by \eqref{parami}, which corresponds to step {\bf C2} (see \cite{param2}).
\begin{proposition}\label{gl20}
Let $\o\in\S$. Let $\Phi:\s\rightarrow\R^3$ defined by:
\begin{equation}\label{gl21}
\Phi(x):=u(x,\o)\o+\po u(x,\o)=u(x,\o)\o+\Phi_u(x),
\end{equation}
where $\Phi_u$ has been defined in \eqref{gl1}. Then $\Phi$ is a bijection, and the determinant of its Jacobian satisfies the following estimate:
\begin{equation}\label{gl22}
\norm{|\det(\textrm{Jac}\,\Phi)|-1}_{\lli{\infty}}\lesssim\ep.
\end{equation}
\end{proposition}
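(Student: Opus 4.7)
The plan is threefold: first establish bijectivity via a level-set argument and Proposition \ref{gl0}, then compute the Jacobian and reduce it to a $2\times 2$ determinant on a leaf, and finally estimate this reduced determinant using Theorems \ref{thregx}--\ref{thregomega} together with the asymptotic behavior of $u$.

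For bijectivity, since $\po u(x,\o)\in T_\o\S=\o^\perp$, the decomposition $\Phi(x)=u(x,\o)\,\o+\Phi_u(x)$ is orthogonal, so each leaf $\p=\{u(\cdot,\o)=u_0\}$ is mapped into the affine plane $u_0\o+T_\o\S$, and distinct leaves land on distinct parallel planes. By Proposition \ref{gl0}, the restriction $\Phi_u|_{\p}:\p\to T_\o\S$ is a global $C^1$ diffeomorphism, so $\Phi|_{\p}$ is a bijection onto its target plane. Since $u(\cdot,\o)$ is continuous and, by {\bf C1a}, attains all real values on the connected manifold $\s$, the union of these bijections covers $\R^3$ bijectively.

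For the Jacobian, I work in the orthonormal frame $\{N,e_1,e_2\}$ of $T_x\s$ and an orthonormal basis $\{\o,f_1,f_2\}$ of $\R^3$ with $f_A\in T_\o\S$. Using $Nu=a^{-1}$ and $e_Au=0$,
$$D\Phi(N)=a^{-1}\o+\nabla_N(\po u),\qquad D\Phi(e_A)=\nabla_{e_A}(\po u),$$
and the second summands lie in $T_\o\S$; expansion along the $\o$-row gives $|\det\textrm{Jac}\,\Phi|=a^{-1}|\det(\nabla_{e_A}(\po u)\cdot f_i)_{A,i}|$. From $\nabla u=a^{-1}N$ and the commutation of $\po$ with $\nabla$ on scalars,
$$\nabla(\po_i u)=-a^{-2}(\po_i a)\,N+a^{-1}\,\po_i N,$$
so taking the $e_A$-component (using $g(N,e_A)=0$) yields $e_A(\po_i u)=a^{-1}g(\po_i N,e_A)$, and therefore
$$|\det\textrm{Jac}\,\Phi|=a^{-3}\bigl|\det\bigl(g(\po_i N,e_A)\bigr)_{i,A=1,2}\bigr|.$$

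At infinity, $u\sim x\cdot\o$ forces $a\to 1$, $N\to\o$, $e_A\to f_A$, $\po_i N\to f_i$, so the right-hand side tends to $1$. Theorem \ref{thregx} gives $\norm{a^{-3}-1}_{\lli{\infty}}\lesssim\ep$, so the remaining task is to prove $\norm{\det(g(\po_i N,e_A))-1}_{\lli{\infty}}\lesssim\ep$. I expect this to be the main obstacle: Theorem \ref{thregomega} only provides the uniform bound $\norm{\po N}_{\lli{\infty}}\lesssim 1$, not the required $O(\ep)$ closeness of $\po_i N$ to $f_i$. The plan is to combine the identity $\det(g(\po_i N,e_A))=1$ valid in the asymptotic region $|x|\gg 1$ with the smallness in $L^2$-based norms of $\nabla\po N$ and of the leaf-frame variation, provided by Theorem \ref{thregomega} (notably $\norm{\nabb\po a}_{\li{\infty}{2}}$, $\norm{\nabla\po\th}_{\lli{2}}$, and the corresponding bounds relating $\po N$ to $\po u$), upgraded to pointwise control via the Sobolev embeddings on $\p$ developed in Sections 3 and 5, and then integrated from infinity to the point of interest.
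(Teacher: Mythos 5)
Your proposal is correct and follows essentially the same route as the paper: bijectivity is reduced to the orthogonal splitting $\Phi=u\,\o+\Phi_u$ together with Proposition \ref{gl0}, and the Jacobian computation reduces to $|\det(\textrm{Jac}\,\Phi)|=a^{-1}|\det(\textrm{Jac}\,\Phi_u)|$ on each leaf, exactly as in \eqref{gl25}--\eqref{gl27}. The obstacle you flag at the end is not actually open: what enters the determinant is only the Gram matrix, since $\det\bigl(g(\po_i N,e_A)\bigr)^2=\det\bigl(g(\po_i N,\po_j N)\bigr)$ (because $\po N$ is tangent to $\p$), and the estimate $\norm{g(\po N,\po N)-I}_{\lli{\infty}}\lesssim\ep$ is precisely \eqref{om63} (used as \eqref{gl5} in the proof of Proposition \ref{gl0}), which is established in the proof of Theorem \ref{thregomega} by exactly the mechanism you sketch — vanishing of $g(\po N,\po N)-I$ on $x.\o=-2$, $L^2$ control of $\nabb^2$ and $\nabb\nabn$ of this quantity, and the $\lli{\infty}$ embedding of Proposition \ref{p3} — so no closeness of $\po_i N$ to a fixed frame $f_i$ is needed.
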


\subsection{Additional estimates}

Below, we provide several additional estimates. These are consequences of  Theorem \ref{thregx}, Theorem \ref{thnabn2a} and Theorem \ref{thregomega} that will be needed in steps {\bf C2} and {\bf C3} (see respectively \cite{param2} and \cite{param3}). We start with a first proposition.  
\begin{proposition}\lab{prop:estimatesadded}
Let $(\s,g,k)$ chosen as in section \ref{reducsmall}. Let $u$ the scalar function on $\s\times\S$ constructed in theorem \ref{thregx}, and let $N$ be associated to $u$ as in section \ref{sec:foliation}. For all $x\in\s$ and $\o\in\S$, we have:
 \begin{equation}\label{gogol1}
 |N(x,\o)+N(x,-\o)|\lesssim\ep.
 \end{equation}
Also, we have:
\begin{equation}\label{threomega1ter}
||N(x,\o)-N(x,\o')|-|\o-\o'||\lesssim |\o-\o'|(\ep+|\o-\o'|),\,\forall x\in\s, \o,\o'\in\S.
\end{equation}
Finally, let $\nu\in\S$ and $\Phi_\nu$ the map defined in \eqref{gl21}. Then, we have:
\begin{equation}\label{gogol2}
\begin{array}{l}
\ds u(x,\o)-\Phi_\nu(x)\c\o=O(\ep|\o-\nu|^2),\\
\ds \po u(x,\o)-\po(\Phi_\nu(x)\c\o)=O(\ep|\o-\nu|),\\
\ds \po^2 u(x,\o)-\po^2(\Phi_\nu(x)\c\o)=O(\ep).
\end{array}
\end{equation}
\end{proposition}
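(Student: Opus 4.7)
The plan is to establish \eqref{gogol2} first, derive \eqref{gogol1} from it via a symmetry argument, and obtain \eqref{threomega1ter} independently by integrating $\po N$ along a geodesic of $\S$.

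For \eqref{gogol2}, fix $x\in\s$ and set $F(x,\o):=u(x,\o)-\Phi_\nu(x)\c\o$. The first step is to check $F(x,\nu)=0$ and $\po F(x,\nu)=0$: the former follows from $\Phi_\nu(x)\c\nu=u(x,\nu)$ since $\po u(x,\nu)\in T_\nu\S$ is orthogonal to $\nu$, and the latter from $\Phi_\nu(x)\c\xi=\po u(x,\nu)\c\xi$ for $\xi\in T_\nu\S$ (using $\xi\perp\nu$). Taylor's formula along a $\S$-geodesic from $\nu$ to $\o$ then reduces all three estimates in \eqref{gogol2} to a single pointwise bound $\po^2 F=O(\ep)$. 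For any fixed $v\in\R^3$ the Hessian on $\S$ of the function $\o\mapsto v\c\o$ equals $-(v\c\o)\,\ga_{\mathbb{S}^2}$, where $\ga_{\mathbb{S}^2}$ denotes the round metric; applied to $v=\Phi_\nu(x)$ this gives $\po^2(\Phi_\nu(x)\c\o)=-(\Phi_\nu(x)\c\o)\,\ga_{\mathbb{S}^2}$, so the reduced bound is equivalent to $\po^2 u(x,\o)+u(x,\o)\,\ga_{\mathbb{S}^2}=O(\ep)$ modulo terms of order $\ep|\o-\nu|^2$ that are absorbable via a bootstrap on $F$ itself. In the flat case $u(x,\o)=x\c\o$ this quantity vanishes identically, and in general the desired $L^\infty$ bound is to be extracted from the $H^{3/2}(\p)$-estimate for $\po^2 a$ in \eqref{threomega2}, a Sobolev embedding on $\p$, and the identity $N=a\nabla u$ together with $|a-1|\les\ep$.

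For \eqref{gogol1}, specialize \eqref{gogol2} to the fixed choice $\nu=-\o$, so $|\o-\nu|=2$. Since $\po u(x,-\o)\in T_{-\o}\S$ is orthogonal to $\o$, direct evaluation gives $\Phi_{-\o}(x)\c\o=-u(x,-\o)$, and the first estimate of \eqref{gogol2} becomes the scalar identity $u(x,\o)+u(x,-\o)=O(\ep)$ uniformly in $x,\o$. To pass to the gradient level, observe that $w(x,\o):=-u(x,-\o)$ has the same asymptotic behavior $x\c\o$ at infinity as $u(x,\o)$, and its associated PDE on $\Sigma$ differs from the one satisfied by $u(\cdot,\o)$ only by terms of size $\ep$ coming from the $k_{NN}$ asymmetry; comparing the two solutions via the estimates of Theorem \ref{thregx} and the uniqueness mechanism behind the construction of $u$ should yield $\nabla_x u(x,\o)+\nabla_x u(x,-\o)=O(\ep)$. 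Writing $\nabla u(x,\pm\o)=N(x,\pm\o)/a(x,\pm\o)$ and peeling off the lapses via $|a-1|\les\ep$ and $|N|=1$ delivers \eqref{gogol1}.

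For \eqref{threomega1ter}, let $\gamma:[0,d]\to\S$ be the great-circle arc of length $d=d_{\mathbb{S}^2}(\o',\o)$. Theorem \ref{thregomega} provides $\|\po N\|_{\lli{\infty}}\les 1$, so the fundamental theorem of calculus gives
$$N(x,\o)-N(x,\o')=\int_0^d\po N(x,\gamma(t))(\gamma'(t))\,dt.$$
The analysis behind \eqref{gogol2}, combined with $N=a\nabla u$ and $|a-1|\les\ep$, shows $\po N(x,\cdot)=I+O(\ep)$ as a map $T\S\to\R^3$, so substitution yields $N(x,\o)-N(x,\o')=(\o-\o')+O(\ep d)$ via the Euclidean chord formula $\int_0^d\gamma'(t)\,dt=\o-\o'$. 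Taking norms and using the elementary inequalities $d\les|\o-\o'|$ and $|d-|\o-\o'||\les|\o-\o'|^3$ on $\S$ then gives \eqref{threomega1ter}. The principal obstacle will be the reduction of \eqref{gogol2} to the pointwise $L^\infty$ bound $\po^2 u+u\,\ga_{\mathbb{S}^2}=O(\ep)$: the angular estimates of Theorem \ref{thregomega} are cast in fractional Sobolev norms on the leaves $\p$, so one must combine them with the calculus inequalities developed on the $u$-foliation while carefully separating the $x$- and $\o$-derivative structure of $N=a\nabla u$. A secondary difficulty is the upgrading in \eqref{gogol1} from the scalar estimate $u(x,\o)+u(x,-\o)=O(\ep)$ to the $x$-gradient level, which requires either a direct PDE comparison between $u(x,\o)$ and $-u(x,-\o)$ or a reinspection of the proof of Theorem \ref{thregx} to obtain differentiated versions of \eqref{gogol2} uniformly in $\o$.
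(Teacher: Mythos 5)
Your reduction of \eqref{gogol2} to the single pointwise bound $\po^2u(x,\o)-\po^2(\Phi_\nu(x)\c\o)=O(\ep)$ via Taylor expansion at $\nu$ is sound and matches the paper's structure, but the step you yourself flag as the principal obstacle is where the argument fails as proposed. You suggest extracting the $L^\infty$ bound from the $H^{3/2}(\p)$ estimate for $\po^2a$ in \eqref{threomega2} via a Sobolev embedding on $\p$. This cannot work directly: the uniform-in-$u$ control is only $\po^2a\in L^\infty_uH^{1/2}(\p)$, and $H^{1/2}$ of a $2$-surface does not embed in $L^\infty$; the $H^{3/2}(\p)$ control carries only $L^2$ integrability in $u$, so it cannot produce a pointwise bound either. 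The mechanism the paper actually uses is different and essential: the quantity $G=\po^2u(\cdot,\o)-\po^2(\Phi_\nu(\cdot)\c\o)$ \emph{vanishes identically for} $|x|\geq 2$ (because $u=x\c\o$ there), and $\norm{\nabb\nabla G}_{\lli{2}}\les\ep$ (from \eqref{thu4}--\eqref{thu6}, which rest on $\nabla\po^2N,\nabb\po^2a\in\lli{2}$ from \eqref{threomega2}, not on any $L^\infty$ bound for $\po^2a$); the tailored Sobolev inequality \eqref{thu3}, valid precisely for tensors supported in $|x|\leq 2$, then converts this into $\norm{G}_{\lli{\infty}}\les\ep$. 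Without exploiting the compact support of the difference you cannot obtain $\ep$-smallness in $L^\infty$, since $\po^2u$ itself is $O(|x|)$.

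The same missing mechanism undermines your treatment of \eqref{gogol1}. The detour through a PDE comparison between $u(\cdot,\o)$ and $-u(\cdot,-\o)$ is not available: no stability estimate between two solutions of the quasilinear problem \eqref{choice1} is proved anywhere (existence already requires a Nash--Moser scheme), and the two solutions are initialized on opposite hyperplanes $x\c\o=\mp2$, so "the uniqueness mechanism behind the construction" gives you nothing. The paper's argument is instead a direct application of \eqref{thu3} to $F=N(\cdot,\o)+N(\cdot,-\o)$, which vanishes for $|x|\geq2$ and satisfies $\norm{\nabla^2N(\cdot,\pm\o)}_{\lli{2}}\les\ep$ by \eqref{thu1}; no passage through $u$ or $\nabla u$ is needed. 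Finally, for \eqref{threomega1ter} your intermediate claim that $\po N=I+O(\ep)$ \emph{as a map} $T_\o\S\to\R^3$ is stronger than what is established and is not justified: \eqref{om63} only gives the Gram-matrix estimate $g(\po N,\po N)=I+O(\ep)$, i.e.\ $\po N$ is an $\ep$-near isometry onto $T_xP_u$, which is a different plane from $T_\o\S$ in general. Since \eqref{threomega1ter} only concerns the norm $|N(x,\o)-N(x,\o')|$, the Gram estimate suffices: after the Taylor expansion with remainder controlled by $\norm{\po^2N}_{\ll{\infty}}\les1$ from \eqref{threomega2}, one bounds $||\po N(x,\o)(\o-\o')|-|\o-\o'||\les\ep|\o-\o'|$ directly, with no need for the vector-level identification.
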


We introduce the family of intrinsic Littlewood-Paley projections
$P_j$ which have been constructed in \cite{LP} using the heat flow 
on the surfaces $\p$ (see section \ref{sec:LP} for their main properties). 
This allows us to define the following Besov space 
$\mathcal{B}$ for tensors $F$ on $\s$:
\begin{equation}\label{defbesov}
\norm{F}_{\mathcal{B}}=\sum_{j\ge 0} 2^{j}\norm{P_jF}_{\li{\infty}{2}} +
\norm{P_{<0}F}_{\li{\infty}{2}},
\end{equation}
where $P_{<0}=\sum_{j<0} P_j$. In particular, one can show that a scalar function belonging to $\mathcal{B}$ also belongs to $\lli{\infty}$ (see \cite{LP}). Now, as recalled in the introduction, the reason for requiring condition {\bf C1b} for $u$ is that a crucial space-time quantity has been proved to be in $L^\infty$ in \cite{FLUX} relying on a transport equation (the Raychadhouri equation) so that the corresponding quantity at $t=0$ should be in $\lli{\infty}$. However, pseudodifferential operators of order 0 do not map $L^\infty$ to $L^\infty$ which forces the authors in \cite{FLUX} to actually prove a stronger estimate. In fact, they work with a Besov space which both embeds in $L^\infty$ and is stable relative to operators of order 0. In turn, this forces us to obtain a stronger version of condition {\bf C1b}. This is the aim of the following proposition:
\begin{proposition}\label{propbesov}
Let $(\s,g,k)$ chosen as in section \ref{reducsmall}. Let $u$ the scalar function on $\s\times\S$ constructed in theorem \ref{thregx}, and let $\p$, $N$ and $\th$ be associated to $u$ as in section \ref{sec:foliation}. We have:
\begin{equation}\label{propbesov1}
\norm{\trt-k_{NN}}_{\mathcal{B}}\lesssim\ep.
\end{equation}
\end{proposition}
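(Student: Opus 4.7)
By the very choice \eqref{choice} of $u$, we have $\trt - k_{NN} = 1-a$, so the statement reduces to proving $\norm{1-a}_{\mathcal{B}}\lesssim\ep$. Setting $v := a-1$, the low-frequency part $\norm{P_{<0}v}_{\li{\infty}{2}}\lesssim\norm{v}_{\li{\infty}{2}}\lesssim\ep$ is immediate from Theorem \ref{thregx}. The real content is therefore the high-frequency sum
$$\sum_{j\ge 0}2^j\norm{P_jv}_{\li{\infty}{2}}\lesssim\ep,$$
which is strictly stronger than the $H^1$-regularity already provided by $\norm{\nabb\nabla a}_{\lli{2}}\lesssim\ep$.

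The approach is to exploit the parabolic character of the equation governing $v$. Under the choice \eqref{choice}, equation \eqref{eqlapse1} reads
$$\nabn v - a^{-1}\lap v \;=\; |\th|^2 + \nabn k_{NN} + R_{NN} \;=:\; F.$$
Applying $P_j$ and commuting with the parabolic operator yields
$$\nabn(P_jv) - a^{-1}\lap(P_jv) = P_jF + [\nabn,P_j]v - [a^{-1}\lap,P_j]v,$$
and the geometric parabolic $L^2$-estimates to be developed in section 5 (based on the heat-kernel construction of $P_j$ from \cite{LP}) yield, for every $j\ge 0$, a schematic bound of the form
$$\norm{P_jv}_{\li{\infty}{2}} \lesssim 2^{-2j}\bigl(\norm{P_jF}_{\li{\infty}{2}}+\norm{\nabn v}_{\li{\infty}{2}}\bigr)+(\text{commutator contributions}).$$
After multiplying by $2^j$, the factor $2^{-j}$ renders the geometric sum convergent and reduces the problem to an $\ell^1_j$-type estimate for the LP pieces of $F$ together with dyadic control of the commutators. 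The normal-derivative piece $\nabn v$ on the right-hand side is absorbed using the improved bound $\norm{\nabn a}_{\li{\infty}{4}}\lesssim\ep$ from Theorem \ref{thnabn2a} combined with a Sobolev embedding on the leaves.

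Each constituent of $F$ is handled separately. The curvature term $R_{NN}\in\lli{2}$ is absorbed directly via standard LP bounds on $L^2$ functions. The quadratic term $|\th|^2$ is treated by the LP product estimates of section 5, using $\norm{\nabla\th}_{\lli{2}}\lesssim\ep$ from Theorem \ref{thregx} to recover the required dyadic gain. The delicate contribution is $\nabn k_{NN}$, for which only $\norm{\nabla k}_{\lli{2}}\lesssim\ep$ is available a priori and no dyadic smoothing is visible at first sight. The plan is to expand $\nabn k_{NN} = (\nabla k)(N,N,N) - 2a^{-1}k_{NA}\nabb_A a$ using \eqref{frame}, and to use the momentum constraint $\nabla^j k_{ij}=0$ together with the maximal condition $\textrm{Tr}\,k = 0$ to rewrite $(\nabla k)(N,N,N)$ as a tangential divergence $\divb(k_{N\cdot})$ modulo lower-order terms of the form $\th\cdot k$. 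Pairing the resulting expression against $P_jv$ on each leaf $P_u$ and integrating by parts transfers the tangential derivative onto $P_jv$, whose Bernstein-type gain $\sim 2^j$ exactly absorbs the derivative cost and produces the required $\ell^1_j$-summability.

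The commutators $[\nabn,P_j]v$ and $[a^{-1}\lap,P_j]v$ are controlled by the geometric commutator estimates of section 5, whose hypotheses are met thanks to the $\lli{2}$ bounds on $\nabla a$, $\nabb\nabla a$, $\th$, and $\nabla\th$ from Theorem \ref{thregx}, together with the improvement on $\nabn a$ from Theorem \ref{thnabn2a}. The principal obstacle, and the place where the argument crucially uses the structure of the problem rather than soft functional-analytic bookkeeping, is precisely the $\nabn k_{NN}$ term: the insufficient a priori regularity of $k$ forces one to exploit the constraint equations in the maximal gauge to trade a normal derivative for a tangential divergence before applying the LP machinery. Assembling these contributions, summing dyadically over $j\ge 0$, and adding the low-frequency bound yields $\norm{1-a}_{\mathcal{B}}\lesssim\ep$, which is the claim.
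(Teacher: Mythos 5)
Your reduction to $\norm{1-a}_{\mathcal{B}}\lesssim\ep$ is exactly right, but from that point on you take a much heavier route than the paper, and the route as sketched does not close. The paper does \emph{not} go back to the parabolic equation at the level of the Littlewood--Paley pieces. It simply observes that, by \eqref{thregx1}, the function $f=1-a$ already satisfies $\norm{\nabb f}_{\l{\infty}{2}}+\norm{\nabn f}_{\ll{2}}+\norm{\nabb\nabla f}_{\ll{2}}\lesssim\ep$ (the hard work with the constraint equations and $\nabn k_{NN}$ having been done once and for all in the proof of Theorem \ref{thregx}, where $\nabb\nabn a\in\lli{2}$ is obtained), and then proves the abstract statement that any such $f$ lies in $\mathcal{B}$. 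The mechanism is the interpolation bound $\norm{P_jf}^2_{\l{\infty}{2}}\lesssim\norm{P_jf}_{\ll{2}}\bigl(\norm{\nabna P_jf}_{\ll{2}}+\norm{\nabb P_jf}_{\ll{2}}\bigr)$ combined with the finite band property ($\norm{P_jf}_{\ll{2}}\lesssim 2^{-2j}\norm{\lap f}_{\ll{2}}$) and the commutator estimate \eqref{commLP4}, which give $\norm{\nabna P_jf}_{\ll{2}}\lesssim 2^{-(1-\delta)j}\ep$; the square root in the interpolation then yields $2^j\norm{P_jf}_{\l{\infty}{2}}\lesssim\ep\,2^{-(1-\delta)j/2}$, which is summable.

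The concrete gap in your scheme is the treatment of the source term $F=|\th|^2+\nabn k_{NN}+R_{NN}$. First, your claimed parabolic estimate $\norm{P_jv}_{\l{\infty}{2}}\lesssim 2^{-2j}\norm{P_jF}_{\l{\infty}{2}}+\cdots$ requires $F\in L^\infty_uL^2(\p)$, which is not available: $R_{NN}$ and $\nabn k_{NN}$ are only controlled in $\lli{2}=L^2_uL^2(\p)$. With the correct $L^2_uL^2$ version (as in \eqref{parab6}), the gain in $L^\infty_uL^2$ is only $2^{-j}$, so after multiplying by $2^j$ you need $\sum_{j}\norm{P_j(aF)}_{\ll{2}}<\infty$, i.e.\ an $\ell^1_j$ (Besov $B^0_{2,1}$) bound on the source. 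For $R_{NN}$, which is merely $L^2(\Sigma)$, Bessel's inequality gives $\ell^2_j$ summability but not $\ell^1_j$, and no amount of LP product machinery recovers the missing decay; your assertion that the curvature term ``is absorbed directly via standard LP bounds on $L^2$ functions'' is exactly where the argument breaks. The paper's Cauchy--Schwarz trade is designed precisely to avoid ever summing $\norm{P_jF}_{\ll{2}}$ over $j$: the required dyadic decay is split between the $2^{-2j}$ coming from $\lap f\in\lli{2}$ and the $2^{-(1-\delta)j}$ coming from $\nabb\nabna f\in\lli{2}$. Your discussion of $\nabn k_{NN}$ via the momentum constraint and a tangential divergence is in the right spirit, but that manipulation belongs to (and is already carried out in) the proof of the $\nabb\nabn a$ estimate in Theorem \ref{thregx}; re-importing it here is redundant once you use the paper's reduction.
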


Using the geometric Littlewood Paley projections $P_j$ together with the estimates for $\nabn a$ in \eqref{thregx1}, and the estimate for $\nabn^2a$ in \eqref{nabn2a1}, we obtain the following proposition:
\begin{proposition}\label{cordecfr}
Let $(\s,g,k)$ chosen as in section \ref{reducsmall}. Let $u$ the scalar function on $\s\times\S$ constructed in theorem \ref{thregx}, and let $a$ and $N$ be associated to $u$ as in section \ref{sec:foliation}. For all $j\geq 0$, there are scalar functions $a^j_1$ and $a^j_2$ such that:
\begin{equation}\label{cordecfr1}
\nabn a=a^j_1+a^j_2\textrm{ where }\norm{a^j_1}_{\lli{2}}\lesssim 2^{-\frac{j}{2}}\ep\textrm{ and }\norm{\nabn a^j_2}_{\lli{2}}\lesssim 2^{\frac{j}{4}}\ep.
\end{equation}
\end{proposition}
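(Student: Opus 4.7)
The plan is to decompose $\nabn a$ using the geometric Littlewood--Paley projections $P_j$ on the leaves $\p$ (constructed in \cite{LP} via the heat flow; main properties recalled in Section~\ref{sec:LP}), but with the cutoff at the mismatched scale $2^{j/2}$ rather than $2^j$. Setting $k_0:=\lfloor j/2\rfloor$, I take
\begin{equation*}
a^j_1 := P_{\geq k_0}(\nabn a), \qquad a^j_2 := P_{<k_0}(\nabn a),
\end{equation*}
so that $a^j_1+a^j_2=\nabn a$. The choice $k_0\approx j/2$ is forced by the asymmetry between the tangential regularity available for $\nabn a$ (one full derivative, from Theorem~\ref{thregx}) and for $\nabn^2 a$ (a half-derivative deficit, from Theorem~\ref{thnabn2a}) weighed against the target exponents $-j/2$ and $j/4$.

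For the high-frequency piece, the bounds $\norm{\nabn a}_{\li{\infty}{2}}+\norm{\nabb\nabn a}_{\lli{2}}\lesssim\ep$ from \eqref{thregx1}, together with the fact that $a\equiv 1$ outside a bounded set (Section~\ref{reducsmall}), upgrade to $\norm{\nabn a}_{\lhs{2}{1}}\lesssim\ep$. A standard Bernstein-type bound $\norm{P_{\geq k_0}f}_{L^2(\p)}\lesssim 2^{-k_0}\norm{f}_{H^1(\p)}$ for the intrinsic LP projections then gives $\norm{a^j_1}_{\lli{2}}\lesssim 2^{-j/2}\ep$, as required. For the low-frequency piece, I expand
\begin{equation*}
\nabn a^j_2 \;=\; P_{<k_0}(\nabn^2 a) + [\nabn,P_{<k_0}](\nabn a).
\end{equation*}
The main term is controlled by the dual Bernstein inequality $\norm{P_{<k_0}f}_{L^2(\p)}\lesssim 2^{k_0/2}\norm{f}_{H^{-1/2}(\p)}$ combined with $\norm{\nabn^2 a}_{\lhs{2}{-1/2}}\lesssim\ep$ from \eqref{nabn2a1}, producing $\norm{P_{<k_0}(\nabn^2 a)}_{\li{2}{2}}\lesssim 2^{j/4}\ep$. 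The commutator is handled using the heat-flow representation of $P_{<k_0}$ and the commutator identity \eqref{ad12}, which reduces $[\nabn,P_{<k_0}](\nabn a)$ to an integral in heat time of expressions schematically of the form $\trt\c\lap$, $\hth\c\nabb^2$, $a^{-1}\nabb a\c\nabb\nabn$ and $R_{N\c}\c\nabb$ applied to $\nabn a$; the estimates \eqref{thregx1} (together with the product and commutator estimates of Section~\ref{sec:LP}) absorb these error terms into a bound $\lesssim 2^{j/4}\ep$.

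The main obstacle is the commutator $[\nabn,P_{<k_0}](\nabn a)$: since $P_j$ is a tangential projection while $\nabn$ is transverse to the leaves, its control requires the full commutator calculus between $\nabn$ and the heat semigroup developed in Section~\ref{sec:LP}, and one must verify carefully that no individual error term exceeds the target scaling $2^{k_0/2}=2^{j/4}$. This is precisely where the specific choice $k_0\approx j/2$ is essential: any smaller cutoff would ruin the normal-derivative estimate on $a^j_2$, while any larger one would yield too weak a bound on $a^j_1$.
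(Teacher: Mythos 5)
Your decomposition is exactly the paper's: $a^j_1=P_{>j/2}(\nabn a)$, $a^j_2=P_{\leq j/2}(\nabn a)$, with the high-frequency piece handled by the finite band property together with $\nabb\nabn a\in\ll{2}$, and the low-frequency piece split into a main term governed by $\norm{\nabn^2a}_{\lhs{2}{-\frac{1}{2}}}\les\ep$ plus a commutator. The architecture is right, but the commutator term --- which you correctly flag as the main obstacle --- is not closed by the tools you invoke. You commute $\nabn$ itself with the projection and appeal to \eqref{ad12}. That identity contains the terms $2a^{-1}\nabb a\c\nabb\nabn f$ and $a^{-1}\lap a\,\nabn f$; applied along the heat flow of $\nabn a$, i.e.\ with $f=U(\tau)(\nabn a)$, these reintroduce the normal derivative of the heat-regularized $\nabn a$, which is exactly as singular as the quantity $\nabn a^j_2$ you are trying to bound, so one would need a further commutator $[\nabn,U(\tau)]$ and the argument becomes circular. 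This is precisely why the paper renormalizes to $\nabla_{aN}$ (see the remark opening section \ref{sec:statepropprodcomm}): writing $\norm{\nabn P_l\nabn a}_{\ll{2}}\les\norm{\nabla_{aN}P_l\nabn a}_{\ll{2}}\les\norm{P_l(a\nabn^2a)}_{\ll{2}}+\norm{[\nabla_{aN},P_l]\nabn a}_{\ll{2}}$, the relevant commutator \eqref{dj3} is purely tangential and is controlled by Proposition \ref{greveinf1}.

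Second, even with the correct commutator, \eqref{commLP1} yields
$$\norm{[\nabla_{aN},P_l]\nabn a}_{\ll{2}}\les\ep\norm{\La^{\frac{1}{2}+\delta}\nabn a}_{L^2(S)}+\ep\norm{\La^{\delta}\nabn a}_{\l{\infty}{2}}.$$
The first term is harmless (it is $\les\ep\norm{\nabb\nabn a}_{\ll{2}}$ for $\delta<1/2$), but the second requires positive Sobolev regularity of $\nabn a$ on the leaves, uniformly in $u$, and this does \emph{not} follow from \eqref{thregx1}: applying Corollary \ref{cor:commLP1} to $F=\nabn a$ would require $\nabn a\in H^1(S)$, i.e.\ $\nabn^2a\in\ll{2}$, which is unavailable. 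The paper supplies the missing input as \eqref{crocro}, $\norm{\nabn a}_{\lhs{\infty}{(\frac{1}{2})_-}}\les\ep$, which rests on the decomposition $\nabn a=a_1+a_2$ of section \ref{sec:nabla2a} and the frequency-localized $L^\infty_u$ estimates \eqref{zoc7} and \eqref{zoc9}. You should identify this ingredient explicitly; it is not a consequence of \eqref{thregx1} together with the generic product and commutator estimates of section \ref{sec:LP}.
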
 

\begin{remark}\label{paslinfty}
Recall from section \ref{sec:choice} that we do not have enough regularity in $x$ to apply the $T^*T$ method. Alternatively, we could try the $TT^*$ method which relies on integration by parts in $\o$. But $\po^3u\in L^\infty_{\textrm{loc}}(\s)$ is also not enough and we would need at least one more derivative in $\o$ (see also Remark \ref{loeb2011}).  Nevertheless, we will prove in a subsequent paper that the regularity of $u$ both with respect to $x$ and $\o$ obtained in this paper is enough to show that condition {\bf C1c} is satisfied.
\end{remark}

The rest of the paper is as follows. In section \ref{sec:ineq}, we prove various embeddings and estimates on $\p$ and $\Sigma$ which are compatible with the regularity for $u$ obtained in Theorem \ref{thregx}. In section \ref{regx}, we prove Theorem \ref{thregx}. In section \ref{sec:estimatesLP}, we recall the properties of the geometric Littlewood-Paley projections $P_j$ introduced in \cite{LP}. We then prove several commutator and product estimates, as well as estimates for some parabolic equations on $\Sigma$. In section \ref{sec:nabla2a}, we prove Theorem \ref{thnabn2a}.  In section \ref{regomega}, we prove Theorem \ref{thregomega}. In  section \ref{sec:globalcoord}, we prove Proposition \ref{gl0} and Proposition \ref{gl20}. Finally, Proposition \ref{prop:estimatesadded}, Proposition \ref{propbesov} and Proposition \ref{cordecfr} are proved in section \ref{sec:addition}.

%\vspace{0.3cm}

%\noindent{\bf Acknowledgments.} The author wishes to express his deepest gratitude to Sergiu Klainerman and Igor Rodnianski for stimulating discussions and constant encouragements during the long years where this work has matured. The author is supported by ANR jeunes chercheurs SWAP. \\

\section{Calculus inequalities}\label{sec:ineq}

\subsection{The Sobolev embedding on $\s$}

Recall from section \ref{reducsmall} that there is a global coordinate system on $(\s, g, k)$ relative to which we have
\be\lab{globalcoorsigmareducbis}
\frac{1}{2}|\xi|^2\leq g_{ij}\xi^i\xi^j\leq 2|\xi|^2.
\ee

\begin{lemma}\lab{sitl1}
Let $f$ a real scalar function on $\s$. Then:
\be\lab{eq:sitl1}
\norm{f}_{L^{\frac{3}{2}}(\s)}\lesssim \norm{\nabla f}_{L^1(\s)}.
\ee 
\end{lemma}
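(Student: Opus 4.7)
The plan is to reduce the inequality to the standard Euclidean Sobolev inequality on $\mathbb{R}^3$ by exploiting the global coordinate system provided in section \ref{reducsmall}. Recall from \eqref{globalcoorsigmareducbis} that in this coordinate system the metric $g$ is uniformly comparable to the flat metric $\delta$, i.e.\ $\frac12|\xi|^2\le g_{ij}\xi^i\xi^j\le 2|\xi|^2$. Consequently, the inverse metric satisfies an analogous two-sided bound, $\det g$ is bounded above and below by positive universal constants, and hence the volume form $dv_g=\sqrt{\det g}\,dx$ is pointwise comparable to the Lebesgue measure $dx$.

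First, I would translate the two norms appearing in \eqref{eq:sitl1} into Euclidean language in this coordinate chart. For the left-hand side, the comparability of volume forms yields
\begin{equation*}
\|f\|_{L^{3/2}(\Sigma)}=\left(\int_{\mathbb{R}^3}|f|^{3/2}\sqrt{\det g}\,dx\right)^{2/3}\lesssim \|f\|_{L^{3/2}(\mathbb{R}^3,dx)}.
\end{equation*}
For the right-hand side, the bound on $g^{ij}$ gives pointwise $|\nabla f|_g=\sqrt{g^{ij}\partial_i f\partial_j f}\ge c\,|\partial f|$, where $|\partial f|$ denotes the Euclidean gradient length, and combined with the comparability of volume forms,
\begin{equation*}
\|\partial f\|_{L^1(\mathbb{R}^3,dx)}\lesssim \|\nabla f\|_{L^1(\Sigma)}.
\end{equation*}

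Second, I would invoke the classical Gagliardo--Nirenberg--Sobolev inequality in $\mathbb{R}^3$,
\begin{equation*}
\|f\|_{L^{3/2}(\mathbb{R}^3,dx)}\lesssim \|\partial f\|_{L^1(\mathbb{R}^3,dx)},
\end{equation*}
which holds for all $f\in C^1_c(\mathbb{R}^3)$ and extends by density to functions decaying at infinity (which is the setting relevant here, since outside a compact set $(\Sigma,g,k)$ coincides with $(\mathbb{R}^3,\delta,0)$ and the scalar functions to which this lemma will be applied decay accordingly). Chaining the three inequalities above produces \eqref{eq:sitl1}.

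There is no real obstacle: the only point that requires a word of care is the justification that the Euclidean Sobolev inequality can indeed be applied, i.e.\ that $f$ has sufficient decay at infinity so that one can approximate by $C^1_c$ functions. This is harmless in the intended applications of the lemma. Everything else is a direct change of variables using the uniform equivalence of $g$ with the Euclidean metric coming from \eqref{globalcoorsigmareducbis}.
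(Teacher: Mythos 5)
Your proposal is correct and follows essentially the same route as the paper: both reduce the statement to the Euclidean Gagliardo--Nirenberg--Sobolev inequality on $\R^3$ via the uniform metric bounds \eqref{globalcoorsigmareducbis}, the only difference being that the paper reproves that Euclidean inequality from scratch (by the standard iterated one-dimensional integration argument) whereas you cite it as known. Your remark about needing compact support or decay to justify the Euclidean inequality matches the paper, which likewise begins by assuming $f$ has compact support.
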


\begin{proof}
We may assume that $f$ has compact support in $\s$. In the global coordinate system $x=(x_1,x_2,x_3)$ on $\s$ satisfying \eqref{globalcoorsigmareducbis}, we have:
\bee
|f(x_1,x_2,x_3)|^{\frac{3}{2}}=\left|\int_{-\infty}^{x_1}\partial_1f(y,x_2,x_3)dy\int_{-\infty}^{x_2}\partial_2f(x_1,y,x_3)dy\int_{-\infty}^{x_3}\partial_3f(x_1,x_2,y)dy\right|^{\frac{1}{2}}\\
\lesssim\left(\int_{\mathbb{R}}|\partial_1f(y,x_2,x_3)|dy\right)^{\frac{1}{2}}\left(\int_{\mathbb{R}}|\partial_2f(x_1,y,x_3)|dy\right)^{\frac{1}{2}}\left(\int_{\mathbb{R}}|\partial_3f(x_1,x_2,y)|dy\right)^{\frac{1}{2}}.
\eee
Hence,
\bee
&&\int_{\mathbb{R}^3}|f(x_1,x_2,x_3)|^{\frac{3}{2}}dx_1dx_2dx_3\\
&\lesssim&\left(\int_{\mathbb{R}^3}|\partial_1f(x_1,x_2,x_3)|dx_1dx_2dx_3\right)^{\frac{1}{2}}\left(\int_{\mathbb{R}^3}|\partial_2f(x_1,x_2,x_3)|dx_1dx_2dx_3\right)^{\frac{1}{2}}\\
&&\left(\int_{\mathbb{R}^3}|\partial_3f(x_1,x_2,x_3)|dx_1dx_2dx_3\right)^{\frac{1}{2}}\\
&\lesssim& \left(\int_{\mathbb{R}^3}|\nabla f(x_1,x_2,x_3)|dx_1dx_2dx_3\right)^{\frac{3}{2}}.
\eee
Now in view of the coordinates system property \eqref{globalcoorsigmareducbis},  we deduce from the previous estimate:
$$\left(\int_{\mathbb{R}^3}|f(x)|^{\frac{3}{2}}\sqrt{|g_t|}dx_1dx_2dx_3\right)^{\frac{2}{3}}\lesssim \int_{\mathbb{R}^3}|\nabla f(x)|\sqrt{|g_t|}dx_1dx_2dx_3$$
as desired.
\end{proof}

As a corollary of the estimate \eqref{eq:sitl1}, we may derive the following Sobolev embeddings.
\begin{corollary}
Given an arbitrary tensorfield $F$ on $\s$, we have
\be\lab{sobineqs}
\norm{F}_{L^{6}(\s)}\lesssim \norm{\nabla F}_{L^{2}(\s)}.
\ee 
\end{corollary}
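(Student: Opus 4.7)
The plan is to reduce the tensorial estimate to the scalar case and then deploy Lemma \ref{sitl1} applied to a suitable power of the function, closing the bound by H\"older's inequality.

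First I would pass from the tensor $F$ to the non-negative scalar $|F|$. By Kato's inequality the distributional gradient satisfies $|\nabb\,|F|\,|\le |\nabla F|$ pointwise a.e., so if I can prove $\|f\|_{L^6(\s)}\lesssim \|\nabla f\|_{L^2(\s)}$ for every non-negative compactly supported scalar $f$, then
$$\|F\|_{L^6(\s)}=\||F|\|_{L^6(\s)}\lesssim \|\nabla |F|\|_{L^2(\s)}\leq \|\nabla F\|_{L^2(\s)},$$
and the general statement follows by a density/truncation argument (the coordinate system from \eqref{globalcoorsigmareducbis} is global, so an approximation by smooth compactly supported functions is standard).

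Next, for a smooth non-negative scalar $f$ with compact support, I would apply Lemma \ref{sitl1} to the scalar $f^4$. Since $|\nabla(f^4)|=4f^3|\nabla f|$, the lemma gives
$$\|f^4\|_{L^{3/2}(\s)}\lesssim \|\nabla(f^4)\|_{L^1(\s)}= 4\,\|f^3\,|\nabla f|\,\|_{L^1(\s)}.$$
The left-hand side equals $\|f\|_{L^6(\s)}^4$. To the right-hand side I would apply H\"older's inequality with conjugate exponents $2$ and $2$:
$$\|f^3\,|\nabla f|\,\|_{L^1(\s)}\le \|f^3\|_{L^2(\s)}\|\nabla f\|_{L^2(\s)}=\|f\|_{L^6(\s)}^3\,\|\nabla f\|_{L^2(\s)}.$$
Combining,
$$\|f\|_{L^6(\s)}^4 \lesssim \|f\|_{L^6(\s)}^3\,\|\nabla f\|_{L^2(\s)},$$
and dividing by $\|f\|_{L^6(\s)}^3$ (treating the case of vanishing $f$ trivially) yields $\|f\|_{L^6(\s)}\lesssim \|\nabla f\|_{L^2(\s)}$.

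No step here is particularly delicate; the only point requiring some care is the Kato-type reduction to scalars when $F$ is a genuine tensor, since $|F|$ is only Lipschitz. This is handled in the standard way by approximating $|F|$ by $\sqrt{|F|^2+\delta^2}$, for which the chain rule is classical, and passing $\delta\downarrow 0$ using monotone and dominated convergence; the resulting inequality is stable under this limit because of the pointwise bound $|\nabla\sqrt{|F|^2+\delta^2}|\le|\nabla F|$.
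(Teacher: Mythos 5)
Your argument is correct and is essentially the paper's own proof: both apply Lemma \ref{sitl1} to the scalar $|F|^4$ and close with H\"older's inequality and division by $\norm{F}_{L^6(\s)}^3$. The only difference is cosmetic — the paper bounds $\nabla(|F|^4)$ directly by $|F|^2|F||\nabla F|$ (since $|F|^4=(|F|^2)^2$ is smooth, no Kato-type regularization is needed), whereas you route through $|F|$ and its Lipschitz regularization, which is harmless but superfluous.
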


\begin{proof}
We use \eqref{eq:sitl1} with $f=|F|^4$:
$$\norm{F}^4_{L^{6}(\s)}=\norm{|F|^4}_{L^{\frac{3}{2}}(\s)}\lesssim \norm{|F|^2F\nabla F}_{L^{1}(\s)}\lesssim \norm{\nabla F}_{L^{2}(\s)}\norm{F}^3_{L^{6}(\s)}$$
which yields \eqref{sobineqs}.
\end{proof}

\subsection{Embeddings compatible with the foliation generated by $u$ on $\s$}

We assume the existence of a real function $u$ on $\s$. We define the lapse $a=|\nabla u|^{-1}$, and the unit vector $N$ such that $\nabla u=a^{-1}N$. We also define the level surfaces $\p=\{x\,/\,u(x)=u\}$ so that $N$ is the normal to $\p$. In this section we establish some basic calculus inequalities with respect to the foliation generated by $u$ on $\s$ in the strip $S$ defined by: 
$$S=\{x\textrm{ such that }-2< u(x)< 2\}.$$
These calculus inequalities will be used in all subsequent sections of the present paper. We will use the following assumptions, which are consistent with our assumption on $R$ and our choice of bootstrap assumptions (see \eqref{boot}, \eqref{boot1}, \eqref{appboot}, \eqref{appboot1}):
\begin{equation}\label{t0}
\begin{array}{rr}
\ds\norm{R}_{\ll{2}}+\norm{a-1}_{\ll{\infty}}+\norm{\nabb\nabla a}_{\ll{2}}+\norm{\nabla a}_{\ll{2}}+\norm{\trt}_{\ll{6}} & \\
\ds +\norm{\nabla\theta}_{\ll{2}}+\norm{\ana}_{\l{\infty}{4}}+\norm{\th}_{\l{\infty}{4}}+\norm{K}_{\ll{2}} & \ds\leq\delta
\end{array}
\end{equation}
for some small enough constant $\delta>0$.

Let $\muu$ denote the area element of $\p$. Then, for all integrable function $f$ on $S$, the coarea formula implies:
\begin{equation}\label{coarea}
\int_{S} fd\s =\int_{-2}^{2}\int_{\p} fad\muu du.
\end{equation}
It is also well-known that for a scalar function $f$:
\begin{equation}\label{du}
\frac{d}{du}\left(\int_{\p}fd\muu\right) =\int_{\p}\left(\frac{df}{du}+\trt f\right)d\muu.
\end{equation}
For $1\leq p,q\leq +\infty$, we define the spaces $\l{p}{q}$ using the norm 
$$\norm{F}_{\l{p}{q}}=\left(\int_{-2}^{2}\norm{F}^p_{L^q(\p)}du\right)^{1/p}.$$
In particular, in view of the assumptions \eqref{t0} for $a$, $\l{p}{p}$ coincides with $L^p(S)$ for all $1\leq p\leq +\infty$. We denote by $\gamma$ the metric induced by $g$ on $\p$, and by $\nabb$ the induced covariant derivative. We define the space $\h$ for tensors $F$ on $S$ using the norm 
$$\norm{F}_{\h}=(\norm{F}_{\ll{2}}^2+\norm{\nabla F}^2_{\ll{2}})^{1/2}.$$

A coordinate chart $U\subset \p$  with coordinates $x^1, x^2$ is admissible if,
relative to  these coordinates, there exists  a constant $c>0$ such that,
\be\lab{eq:coordchart}
c^{-1}|\xi|^2\le \gamma_{AB}(p)\xi^A\xi^B\le c|\xi|^2, \qquad \mbox{uniformly for  all }
\,\, p\in U.
\ee
We assume that $\p$ can be covered by a finite number
of admissible coordinate charts, i.e., charts satisfying 
the conditions \eqref{eq:coordchart}. Furthermore, we assume that the 
constant $c$ in \eqref{eq:coordchart} and the number of charts is independent of $u$. 
\begin{remark}
The existence of a covering of $\p$ by coordinate charts satisfying \eqref{eq:coordchart} with a constant $c>0$ and the number of charts independent of $u$ follows from Proposition \ref{gl0}.
\end{remark}
Under these assumptions, the following calculus inequality has been proved in \cite{LP}:
\begin{proposition} Let $f$ be a  real scalar function. Then, 
\be\lab{eq:isoperimetric}
\|f\|_{L^2(\p)}\lesssim\|\nabb f\|_{L^1(\p)}+\|f\|_{L^1(\p)}.
\ee
\end{proposition}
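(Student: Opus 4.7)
The inequality \eqref{eq:isoperimetric} is the standard 2-dimensional $L^1\to L^2$ Sobolev embedding adapted to the surface $P_u$; the proof reduces it to the Euclidean inequality in admissible charts. My plan is as follows.

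First, I would recall the Euclidean analogue: for any compactly supported scalar function $g$ on $\mathbb{R}^2$, one has $\|g\|_{L^2(\mathbb{R}^2)}\lesssim \|\nabla g\|_{L^1(\mathbb{R}^2)}$. This is the exact two-dimensional counterpart of Lemma \ref{sitl1} and is proved by the same product-of-one-dimensional-integrals trick: write
\begin{equation*}
|g(x_1,x_2)|^2\leq \left(\int_{\mathbb{R}}|\partial_1 g(y,x_2)|\,dy\right)\left(\int_{\mathbb{R}}|\partial_2 g(x_1,y)|\,dy\right),
\end{equation*}
integrate in $(x_1,x_2)$, and apply Cauchy--Schwarz.

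Second, I would use the hypothesis that $P_u$ is covered by a finite family of admissible charts $\{U_i\}_{i=1}^{M}$ with $M$ and the constant $c$ in \eqref{eq:coordchart} independent of $u$. Fix a subordinate partition of unity $\{\chi_i\}$ with $\sum_i \chi_i=1$, $\mathrm{supp}(\chi_i)\subset U_i$, and $|\nabb \chi_i|\lesssim 1$ uniformly in $u$ (this last property can be arranged because the charts are admissible with uniform constants, so one can construct the $\chi_i$ from bump functions in the $(x^1,x^2)$-coordinates and transfer them back). Write $f=\sum_i \chi_i f$; for each piece $\chi_i f$, pushing forward to the coordinate patch and using \eqref{eq:coordchart} to compare $\gamma$ with the Euclidean metric (so that $\sqrt{|\gamma|}$ and $|\nabb\cdot|$ differ from their Euclidean counterparts by a uniform multiplicative constant), the Euclidean inequality above gives
\begin{equation*}
\|\chi_i f\|_{L^2(P_u)}\lesssim \|\nabb(\chi_i f)\|_{L^1(P_u)}\lesssim \|\nabb f\|_{L^1(P_u)}+\|f\|_{L^1(P_u)},
\end{equation*}
where in the last step we used the Leibniz rule $\nabb(\chi_i f)=(\nabb\chi_i)f+\chi_i\nabb f$ together with the uniform bound on $|\nabb\chi_i|$.

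Finally, summing over $i=1,\dots,M$ and using that $M$ is independent of $u$ yields \eqref{eq:isoperimetric}. The main (and really only) subtlety is ensuring that the implicit constants do not degenerate with $u$; this is taken care of by the uniformity built into the notion of admissible chart together with the $u$-independent bound on the number of charts guaranteed by Proposition \ref{gl0}. No use is made of the bootstrap bounds \eqref{t0} at this stage, only of the intrinsic geometry of $P_u$.
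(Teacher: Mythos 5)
The paper does not prove this proposition at all: it is quoted verbatim from \cite{LP}, so there is no in-paper argument to compare against, and your chart-based reduction to the Euclidean two-dimensional $L^1\to L^2$ Gagliardo--Nirenberg inequality is precisely the standard proof underlying the cited result; it is correct. Two small remarks: after integrating the pointwise bound $|g|^2\le\bigl(\int|\partial_1 g|\,dy\bigr)\bigl(\int|\partial_2 g|\,dy\bigr)$ the product of integrals separates exactly by Fubini, so no Cauchy--Schwarz is needed; and while the uniform gradient bound $|\nabb\chi_i|\lesssim 1$ for the partition of unity requires a uniform-overlap property slightly beyond the bare covering hypothesis, in the present paper Proposition \ref{gl0} actually furnishes a single global admissible chart on $\p$, so the partition of unity can be dispensed with entirely.
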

As a corollary of the estimate \eqref{eq:isoperimetric}, 
 the following Gagliardo-Nirenberg inequality is derived in \cite{LP}:
\begin{corollary}
Given an arbitrary tensorfield  $F$ on $\p$ and any $2\le p<\infty$, we have:
\be\lab{eq:GNirenberg}
\|F\|_{L^p(\p)}\lesssim \|\nabb F\|_{L^2(\p)}^{1-\frac{2}{p}}\|F\|_{L^2(\p)}^{\frac 2 p}+\|F\|_{L^2(\p)}.
\end{equation}
\end{corollary}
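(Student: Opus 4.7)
The plan is to bootstrap from the $L^2$ isoperimetric estimate \eqref{eq:isoperimetric} in the classical Gagliardo--Nirenberg fashion. First I apply \eqref{eq:isoperimetric} to the scalar $f = (|F|^2 + \kep)^{p/4}$ (with a regularizing $\kep > 0$ to avoid differentiability issues where $F$ vanishes, then letting $\kep \to 0$), obtaining
\[
\|F\|_{L^p(\p)}^{p/2} \;\lesssim\; \|\nabb(|F|^{p/2})\|_{L^1(\p)} \;+\; \||F|^{p/2}\|_{L^1(\p)}.
\]
Kato's inequality gives $|\nabb(|F|^{p/2})| \lesssim |F|^{p/2-1}|\nabb F|$, and then Cauchy--Schwarz bounds the first term on the right by $\|F\|_{L^{p-2}(\p)}^{p/2-1}\|\nabb F\|_{L^2(\p)}$, while the second term is exactly $\|F\|_{L^{p/2}(\p)}^{p/2}$. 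This yields the intermediate inequality
\[
\|F\|_{L^p(\p)}^{p/2} \;\lesssim\; \|F\|_{L^{p-2}(\p)}^{p/2-1}\,\|\nabb F\|_{L^2(\p)} \;+\; \|F\|_{L^{p/2}(\p)}^{p/2}.
\]

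The second step is to interpolate the intermediate $L^{p-2}$ and $L^{p/2}$ norms between $L^2$ and $L^p$. When $p \ge 4$, both exponents lie in $[2,p]$ and log-convexity of $L^q$ norms gives $\|F\|_{L^{p-2}} \le \|F\|_{L^2}^{\theta_1}\|F\|_{L^p}^{1-\theta_1}$ and $\|F\|_{L^{p/2}} \le \|F\|_{L^2}^{\theta_2}\|F\|_{L^p}^{1-\theta_2}$, with explicit exponents $\theta_1, \theta_2 \in (0,1)$ determined by $\frac{1}{p-2} = \frac{\theta_1}{2} + \frac{1-\theta_1}{p}$ and $\frac{2}{p} = \frac{\theta_2}{2} + \frac{1-\theta_2}{p}$. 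For $2 \le p < 4$ the exponent $p-2$ falls below $2$, but the admissible coordinate chart assumption \eqref{eq:coordchart} bounds the surface area of $\p$ uniformly in $u$, so H\"older gives $\|F\|_{L^{p-2}} \lesssim \|F\|_{L^2}$ directly, and the $L^{p/2}$ term is handled analogously.

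The final step is an absorption argument. After substituting the interpolation bounds, the right-hand side takes the form $C\,\|F\|_{L^p}^{\mu}\,A + C\,\|F\|_{L^p}^{\nu}\,B$ with $\mu, \nu < p/2$, and $A$, $B$ monomials in $\|\nabb F\|_{L^2}$ and $\|F\|_{L^2}$. Applying Young's inequality $xy \le \eta x^r + C_\eta y^{r'}$ to each of these terms with a small $\eta > 0$ allows one to absorb the $\|F\|_{L^p}$ factors into the left-hand side. A direct computation of the residual exponents shows that the bound obtained is precisely $\|\nabb F\|_{L^2}^{1-2/p}\|F\|_{L^2}^{2/p}$ plus a pure $\|F\|_{L^2}$ contribution coming from the lower-order term, which matches \eqref{eq:GNirenberg}.

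I expect the only delicate point to be the exponent bookkeeping: tracking the interpolation weights $\theta_1,\theta_2$ through the Young inequality step across the case split $p \ge 4$ versus $2 \le p < 4$, and verifying that the residual powers line up to give exactly $1 - 2/p$ and $2/p$. Every other ingredient---Kato's inequality, log-convexity of $L^q$ norms, and the uniform bound on the area of $\p$ supplied by \eqref{eq:coordchart}---is standard.
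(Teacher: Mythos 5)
The paper does not actually prove \eqref{eq:GNirenberg}; it quotes it from \cite{LP}, where the argument begins exactly as yours does (the isoperimetric inequality \eqref{eq:isoperimetric} applied to a power of $|F|$) but closes the low-exponent range differently. Your $p\ge 4$ branch is correct: with $\theta_1=4/(p-2)^2$ and $\theta_2=2/(p-2)$ the Young absorption leaves residuals $\norm{F}_{L^2(\p)}\norm{\nabb F}_{L^2(\p)}^{(p-2)/2}$ and $\norm{F}_{L^2(\p)}^{p/2}$, and raising to the power $2/p$ gives exactly \eqref{eq:GNirenberg}.

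The gap is in the range $2\le p<4$. First, the surfaces $\p$ are non-compact: outside $|x|\ge 2$ they coincide with the planes $\{x\cdot\o=u\}$, and Proposition \ref{gl0} exhibits a global diffeomorphism $\p\to T_\o\S\cong\R^2$. Hence $|\p|=\infty$, and the step ``H\"older gives $\norm{F}_{L^{p-2}(\p)}\lesssim\norm{F}_{L^2(\p)}$'' is unavailable; condition \eqref{eq:coordchart} controls the metric in coordinates, not the total area. Second, even on a finite-area surface that route yields
\[
\norm{F}_{L^p(\p)}^{p/2}\lesssim \norm{F}_{L^2(\p)}^{\frac{p}{2}-1}\norm{\nabb F}_{L^2(\p)}+\norm{F}_{L^2(\p)}^{p/2},
\qquad\text{i.e.}\qquad
\norm{F}_{L^p(\p)}\lesssim \norm{F}_{L^2(\p)}^{1-\frac{2}{p}}\norm{\nabb F}_{L^2(\p)}^{\frac{2}{p}}+\norm{F}_{L^2(\p)},
\]
with the exponents on $\norm{F}_{L^2(\p)}$ and $\norm{\nabb F}_{L^2(\p)}$ interchanged relative to \eqref{eq:GNirenberg}; since $2/p>1-2/p$ when $p<4$, this does not imply the stated estimate (let $\norm{\nabb F}_{L^2(\p)}/\norm{F}_{L^2(\p)}\to\infty$). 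The standard repair is to first settle $p=4$ from your intermediate inequality, where it reads $\norm{F}_{L^4(\p)}^2\lesssim\norm{F}_{L^2(\p)}\norm{\nabb F}_{L^2(\p)}+\norm{F}_{L^2(\p)}^2$ with no absorption needed, and then for $2<p<4$ interpolate $\norm{F}_{L^p(\p)}\le\norm{F}_{L^2(\p)}^{(4-p)/p}\norm{F}_{L^4(\p)}^{2(1-2/p)}$ and substitute: the exponents combine to exactly $\norm{\nabb F}_{L^2(\p)}^{1-2/p}\norm{F}_{L^2(\p)}^{2/p}+\norm{F}_{L^2(\p)}$. (For the same non-compactness reason, the regularization $(|F|^2+\kep)^{p/4}$ is not integrable on $\p$; use $(|F|^2+\kep)^{p/4}-\kep^{p/4}$ or argue first for compactly supported $F$.)
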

As a corollary to \eqref{eq:isoperimetric} it is also classical to derive the following inequality (for a proof, see for example \cite{GT} page 157):
\begin{corollary} For any tensorfield $F$ on $\p$ and any $p>2$,
\be\lab{Linfty}
\|F\|_{L^\infty(\p)}\lesssim \|\nabb F\|_{L^p(\p)}+\norm{F}_{L^p(\p)}.
\end{equation}
\end{corollary}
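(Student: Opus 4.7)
The plan is to reduce the inequality to the classical Morrey embedding $W^{1,p}(\R^2)\hookrightarrow L^\infty$ for $p>2$ by trivialising $\p$ through its admissible atlas. First, I would reduce the tensor case to the scalar case. Since the Kato inequality gives $|\nabb|F||\le|\nabb F|$ pointwise a.e., it suffices to prove
\[
\|f\|_{L^\infty(\p)}\les \|\nabb f\|_{L^p(\p)}+\|f\|_{L^p(\p)}
\]
for an arbitrary non-negative scalar function $f$ on $\p$.

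Second, I would localise with the atlas introduced just before the Corollary. By hypothesis $\p$ is covered by a \emph{fixed finite} number of admissible coordinate charts $\{U_\alpha\}$ on which the metric $\gamma_{AB}$ is uniformly equivalent to the Euclidean metric with a constant $c$ independent of $u$. I would pick a subordinate partition of unity $\{\chi_\alpha\}$ with $|\nabla\chi_\alpha|$ bounded uniformly in $u$ (which exists because the atlas itself is uniform in $u$, see the Remark preceding the Corollary and Proposition \ref{gl0}). Setting $f_\alpha:=\chi_\alpha f$, extending by zero to $\R^2$ via the chart, and using \eqref{eq:coordchart} together with Leibniz on $\nabla(\chi_\alpha f)$, one obtains
\[
\|f_\alpha\|_{L^p(\R^2)}+\|\nabla f_\alpha\|_{L^p(\R^2)}\les \|f\|_{L^p(\p)}+\|\nabb f\|_{L^p(\p)},
\]
with a constant depending only on $c$ and on $\|\nabla\chi_\alpha\|_{L^\infty}$, hence uniform in $u$.

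Third, I would apply the classical Morrey inequality in $\R^2$: for $p>2$ and any compactly supported $g\in W^{1,p}(\R^2)$,
\[
\|g\|_{L^\infty(\R^2)}\les \|\nabla g\|_{L^p(\R^2)}+\|g\|_{L^p(\R^2)},
\]
with a constant depending only on the diameter of $\operatorname{supp}(g)$. This is the standard estimate derived for instance from the representation $g(x)=\tfrac{1}{2\pi}\int\frac{(x-y)\cdot\nabla g(y)}{|x-y|^2}\,dy$ together with Hölder's inequality, which converges precisely because $p>2$; see \cite{GT} p.~157. Applied to each $g=f_\alpha$ and summed over the finitely many charts, this and $f=\sum_\alpha f_\alpha$ yield the claimed bound. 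Alternatively, one could argue intrinsically by iterating \eqref{eq:isoperimetric} applied to $|f|^{q_k}$ in a Moser scheme with $q_{k+1}=2+2(p-1)p_k/p$, whose geometric growth (again requiring $p>2$) drives $L^{p_k}\to L^\infty$; but the chart approach is shorter given that the atlas is already available with uniform constants.

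The only delicate point is ensuring that all constants produced by the atlas, the partition of unity, and the Morrey estimate are independent of $u$. This is not really an obstacle here, since the preceding paragraphs of the excerpt explicitly provide a covering of $\p$ by admissible charts whose number and whose constant $c$ are uniform in $u$; once this is in hand the estimate is the textbook Morrey argument transported chart by chart.
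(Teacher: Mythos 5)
Your proposal is correct in substance, but it takes a different route from the one the paper intends. The paper presents \eqref{Linfty} explicitly as a corollary of the isoperimetric inequality \eqref{eq:isoperimetric}, and the citation to \cite{GT} page 157 points to the proof of the case $p>n$ of the Sobolev embedding there, which is precisely the intrinsic iteration you relegate to a side remark: apply \eqref{eq:isoperimetric} (or equivalently \eqref{eq:GNirenberg}) to the powers $|F|^{q_k}$, use H\"older with exponent $p$ on the gradient term, and let the exponents grow geometrically — the growth, and hence the convergence of the scheme, is exactly what requires $p>2$. That argument works directly for tensors, needs no Kato reduction, and inherits its $u$-independent constant solely from \eqref{eq:isoperimetric}. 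Your primary argument — Kato reduction, localization in the admissible atlas, and the Euclidean Morrey embedding — is also valid, but it quietly uses slightly more than the standing hypothesis \eqref{eq:coordchart}: a partition of unity subordinate to the atlas with gradients bounded uniformly in $u$ is not literally supplied by the metric-equivalence condition alone (it requires quantitative control of the chart overlaps), although here it is harmless since Proposition \ref{gl0} in fact provides a single global chart, making the partition of unity unnecessary. Also, the Morrey inequality with the lower-order term $\|g\|_{L^p}$ holds on all of $\R^2$ with a universal constant (average over unit balls), so you need not — and should not — let the constant depend on the diameter of the support, which would be problematic for the unbounded global chart. With these two points tidied up, either route gives the corollary; the intrinsic iteration is the one the paper has in mind and is the more economical here.
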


Below, we state and prove several embeddings with respect to the foliation generated by $u$ on $\s$. The difficulty 
is to obtain these estimates while using only assumptions that are compatible with the regularity for $u$ obtained in Theorem \ref{thregx}.
\begin{proposition}\label{p1}
Let $F$ be a tensorfield on $S$ such that $F\in\h$. Assume also \eqref{t0}. Then $F$ belongs to $\l{\infty}{4}$.
\end{proposition}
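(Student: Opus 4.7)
The naive route, which would apply the Gagliardo--Nirenberg inequality \eqref{eq:GNirenberg} on each leaf $P_u$ to get $\|F\|_{L^4(P_u)}^2 \lesssim \|\nabb F\|_{L^2(P_u)} \|F\|_{L^2(P_u)} + \|F\|_{L^2(P_u)}^2$, fails because $F \in H^1(S)$ does \emph{not} give a pointwise-in-$u$ bound on $\|\nabla F\|_{L^2(P_u)}$, only an $L^2_u$ bound via the coarea formula. This is the main obstacle. The plan is therefore to bypass GN entirely and control $\sup_u \|F\|_{L^4(P_u)}^4$ directly by the fundamental theorem of calculus applied to $u \mapsto \int_{P_u} |F|^4 d\muu$, combined with the Sobolev embedding \eqref{sobineqs} on the ambient $\Sigma$.

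First I would write, for any $u, u_0 \in [-2,2]$,
\[
\int_{P_u} |F|^4 d\muu \; \le \; \int_{P_{u_0}} |F|^4 d\mu_{u_0} + \int_{-2}^{2} \Bigl|\tfrac{d}{du'}\!\int_{P_{u'}} |F|^4 d\mu_{u'}\Bigr| du'.
\]
Averaging the first term over $u_0 \in [-2,2]$ and using the coarea formula \eqref{coarea} together with $\norm{a-1}_{L^\infty(S)} \le \delta$ from \eqref{t0}, one obtains a $u_0$ with $\int_{P_{u_0}} |F|^4 d\mu_{u_0} \lesssim \|F\|_{L^4(S)}^4$.

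Next I would compute the derivative via \eqref{du} applied to $f=|F|^4$. Since $\frac{df}{du} = aN(|F|^4) = 4a|F|^2\langle F,\nabla_N F\rangle$, this gives
\[
\tfrac{d}{du}\!\int_{P_u}\!|F|^4\,d\muu \;=\; \int_{P_u}\!\Bigl(4a|F|^2\langle F,\nabla_N F\rangle + \trt|F|^4\Bigr)d\muu .
\]
Integrating the absolute value in $u$ and using $d\sigma = a\,d\muu\,du$ (so $a$ and $a^{-1}$ cancel in the first term, while $a^{-1}\lesssim 1$ in the second), this integral is controlled by $\int_S |F|^3|\nabla F|\,d\sigma + \int_S |\trt|\,|F|^4\,d\sigma$.

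Finally I would close by H\"older and the ambient Sobolev embedding \eqref{sobineqs}: writing $\|F\|_{L^6(S)} \lesssim \|F\|_{H^1(S)}$ and using $\|\trt\|_{L^3(S)} \lesssim \|\trt\|_{L^6(S)} \lesssim \delta$ from \eqref{t0}, one has
\[
\int_S |F|^3|\nabla F|\,d\sigma \le \|F\|_{L^6(S)}^3\|\nabla F\|_{L^2(S)} \lesssim \|F\|_{H^1(S)}^4, \qquad \int_S |\trt|\,|F|^4\,d\sigma \lesssim \delta\,\|F\|_{H^1(S)}^4.
\]
Combining with the bound on $\int_{P_{u_0}}|F|^4 d\mu_{u_0}$ and taking the supremum in $u$ yields $\|F\|_{\l{\infty}{4}}^4 \lesssim \|F\|_{\h}^4$, which is the claimed embedding. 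The entire argument relies only on \eqref{t0}-compatible bounds, in particular never demanding pointwise-in-$u$ control of $\nabla F$ on $P_u$.
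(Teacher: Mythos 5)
Your proof is correct and follows essentially the same route as the paper: differentiate $u\mapsto\int_{P_u}|F|^4\,d\muu$ via \eqref{du}, integrate in $u$, and close with H\"older, the coarea formula and the ambient Sobolev embedding \eqref{sobineqs}. The only (harmless) variations are that you handle the boundary term by averaging over $u_0$ where the paper replaces $F$ by $\varphi(u)F$ with a cutoff vanishing at $u=2$, and you estimate the $\trt$ term by $\|\trt\|_{L^3(S)}\|F\|_{L^6(S)}^4$ directly, whereas the paper uses $\|\trt\|_{L^6(S)}\|F\|_{L^{24/5}(S)}^4$ and then needs the interpolation $\norm{F}_{\l{12}{24/5}}\lesssim\norm{F}_{\ll{6}}^{1/2}\norm{F}_{\l{\infty}{4}}^{1/2}$ plus an absorption of $\norm{F}_{\l{\infty}{4}}$ into the left-hand side.
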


\begin{proof}
\bea\label{p1e1}
\norm{F(u,.)}_{\lp{4}}^4&=&\norm{F(-2,.)}_{L^4(P_{-2})}^4\\
\nn& & +4\int_{-2}^u\int_{P_{u'}}\nabn F(u',x')\c F(u',x')|F(u',x')|^2du'd\mu_{u'}\\
\nn& &+\int_{-2}^u\int_{P_{u'}}\trt |F(u',x')|^4du'd\mu_{u'}\\
\nn&\lesssim & \ds\norm{F(-2,.)}_{L^4(P_{-2})}^4+\norm{\nabn F}_{\ll{2}}\norm{F}^3_{\ll{6}}\\
\nn& &  +\norm{\trt}_{\ll{6}}\norm{F}^4_{\ll{24/5}}\\
\nn&\lesssim & \norm{F(-2,.)}_{L^4(P_{-2})}^4+\norm{\nabn F}_{\ll{2}}\norm{F}^3_{\ll{6}}+\norm{F}^4_{\l{12}{24/5}},
\eea
where we used the assumption \eqref{t0} for $\trt$ in the last inequality. Replacing $F$ with $\varphi(u)F$ where $\varphi$ is a smooth function such that $\varphi(-2)=1$ and $\varphi(2)=0$, and proceeding as in \eqref{p1e1}, we obtain:
\begin{equation}\label{p1e2}
\norm{F(-2,.)}_{L^4(P_{-2})}^4\lesssim \norm{\nabn F}_{\ll{2}}\norm{F}^3_{\ll{6}}+\norm{F}^4_{\l{12}{24/5}}+\norm{F}_{\ll{4}}^4,
\end{equation}
which together with \eqref{p1e1} yields:
\begin{equation}\label{p1e3}
\norm{F(u,.)}_{\lp{4}}^4\lesssim \ds\norm{\nabn F}_{\ll{2}}\norm{F}^3_{\ll{6}}+\norm{F}^4_{\l{12}{24/5}}+\norm{F}_{\ll{2}}\norm{F}_{\ll{6}}^3.
\end{equation}
This concludes the proof by taking the supremum in $u$ on the left-hand side, and by using the Sobolev embedding \eqref{sobineqs} and the following estimate:
$$\norm{F}_{\l{12}{24/5}}\lesssim \norm{F}_{\ll{6}}^{\frac{1}{2}}\norm{F}_{\l{\infty}{4}}^{\frac{1}{2}}.$$
\end{proof}

In Proposition \ref{p1}, we can get rid of the assumption that $F\in\ll{2}$. This is done in the following corollary.
\begin{corollary}\label{c0}
Let $F$ be a tensorfield on $S$ such that $\nabla F\in \ll{2}$ and $F(-2,.)\in L^4(P_{-2})$. Assume also \eqref{t0}. Then $F$ belongs to $\l{\infty}{4}$ and $L^6(S)$. Moreover, if $F(-2,.)\in L^2(P_{-2})$, then $F$ also belongs to $\l{\infty}{2}$ and $H^1(S)$.
\end{corollary}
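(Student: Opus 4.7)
The plan is to mimic the transport-based argument in the proof of Proposition \ref{p1}, but feed the given $L^4$ boundary data at $u=-2$ directly into the energy identity. This avoids the auxiliary cutoff step \eqref{p1e2}, which would require an $L^2(S)$ bound on $F$ that we do not have. The new technical ingredient is a Gagliardo-Nirenberg-type estimate on $\p$ anchored at $L^4(\p)$ rather than $L^2(\p)$, which lets us close the bootstrap for $\|F\|_{\ll{6}}$ without any a priori $L^2(S)$ control.

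Concretely, apply \eqref{du} to $|F|^4$ and integrate from $-2$ to $u$ to obtain
\begin{equation*}
\|F(u,\cdot)\|_{\lp{4}}^4 = \|F(-2,\cdot)\|_{L^4(P_{-2})}^4 + 4\int_{-2}^u\int_{P_{u'}}\nabn F\cdot F|F|^2\,d\mu_{u'}du' + \int_{-2}^u\int_{P_{u'}}\trt|F|^4\,d\mu_{u'}du'.
\end{equation*}
Bound the middle integral by $\|\nabn F\|_{\ll{2}}\|F\|_{\ll{6}}^3$ via Cauchy-Schwarz, and the last by $\|\trt\|_{\ll{6}}\|F\|_{\ll{24/5}}^4\lesssim\delta\|F\|_{\l{\infty}{4}}^2\|F\|_{\ll{6}}^2$ using \eqref{t0} together with the interpolation $\|F\|_{\ll{24/5}}^4\lesssim\|F\|_{\ll{4}}^2\|F\|_{\ll{6}}^2$ between $L^4(S)$ and $L^6(S)$. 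To control $\|F\|_{\ll{6}}$, apply the isoperimetric inequality \eqref{eq:isoperimetric} to $f=|F|^3$ on each leaf $\p$, yielding $\|F\|_{\lp{6}}^3\lesssim\|F\|_{\lp{4}}^2\|\nabb F\|_{\lp{2}}$ (up to absorbable lower-order terms), and integrate in $u$ to get $\|F\|_{\ll{6}}^6\lesssim\|F\|_{\l{\infty}{4}}^4\|\nabla F\|_{\ll{2}}^2$. Substituting this back and applying Young's inequality, together with the smallness of $\delta$ in \eqref{t0}, all copies of $\|F\|_{\l{\infty}{4}}^4$ on the right are absorbed into the left, yielding
\begin{equation*}
\|F\|_{\l{\infty}{4}}\lesssim\|F(-2,\cdot)\|_{L^4(P_{-2})}+\|\nabla F\|_{\ll{2}};
\end{equation*}
the $L^6(S)$ bound then follows from the same Gagliardo-Nirenberg step.

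For the second statement, assume additionally $F(-2,\cdot)\in L^2(P_{-2})$ and repeat the transport identity with $|F|^2$ in place of $|F|^4$. The gradient term is handled by Cauchy-Schwarz and Young, while the curvature term is bounded by $\|\trt\|_{\ll{2}}\|F\|_{\ll{4}}^2$, using $\trt\in\ll{6}\hookrightarrow\ll{2}$ on the bounded strip $S$ combined with the $\l{\infty}{4}$ bound just obtained and $\|F\|_{\ll{4}}\lesssim\|F\|_{\l{\infty}{4}}$. Absorbing the $\|F\|_{\l{\infty}{2}}^2$ contribution onto the left yields $F\in\l{\infty}{2}$, and $F\in H^1(S)$ then follows from $\nabla F\in\ll{2}$ by hypothesis.

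The main obstacle is the Gagliardo-Nirenberg step with $L^4(\p)$ as the low-regularity anchor. The $L^2$-anchored inequality \eqref{eq:GNirenberg} stated in the paper is not directly applicable, so one must instead apply the isoperimetric inequality \eqref{eq:isoperimetric} to $|F|^3$ and dispose of the lower-order $\|F\|_{L^3(\p)}^3$ term that it generates -- typically by invoking the asymptotic flatness of the leaves $\p$, where the $\|f\|_{L^1(\p)}$ term in \eqref{eq:isoperimetric} is subleading. Once this replacement is in place, the rest of the argument is routine H\"older and Young bookkeeping together with the smallness of $\delta$ in \eqref{t0}.
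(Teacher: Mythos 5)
Your overall strategy is the same as the paper's: run the transport identity \eqref{du} on $|F|^4$ starting from the $L^4(P_{-2})$ data (thereby bypassing the cutoff step \eqref{p1e2}, exactly as the paper does), estimate the $\nabn F$ term by H\"older against $\norm{F}_{\ll{6}}^3$, and handle the $\trt$ term by interpolation. Your second part ($L^2$ data implies $\l{\infty}{2}$) also matches the paper's computation \eqref{c0e1} up to a cosmetic change in how the $\trt$ integral is H\"olderized. So the skeleton is correct.

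The one place where you genuinely depart from the paper is the control of $\norm{F}_{\ll{6}}$, and that is where there is a gap. The paper disposes of this norm in one stroke via the \emph{three-dimensional} Sobolev embedding \eqref{sobineqs} on $\s$, which gives $\norm{F}_{\ll{6}}\lesssim\norm{\nabla F}_{\ll{2}}$ with no lower-order term and no dependence on $\norm{F}_{\l{\infty}{4}}$; the transport inequality then closes immediately for $\delta$ small. You instead work leaf by leaf, applying the isoperimetric inequality \eqref{eq:isoperimetric} to $|F|^3$, which produces
\begin{displaymath}
\norm{F}_{\lp{6}}^3\lesssim\norm{F}_{\lp{4}}^2\norm{\nabb F}_{\lp{2}}+\norm{F}_{L^3(\p)}^3,
\end{displaymath}
and you assert the $L^3(\p)$ term is ``subleading'' by asymptotic flatness. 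It is not: the leaves $\p$ have infinite area, so $L^4(\p)\cap L^6(\p)$ does not embed into $L^3(\p)$, and under the hypotheses of the first statement you have no $L^p(\p)$ control for any $p<4$ from which to interpolate (the $L^2(P_{-2})$ data is only assumed in the second statement). Pushing the isoperimetric inequality to lower powers of $|F|$ only reproduces the same problem at a lower exponent. As written, the $\ll{6}$ bound therefore does not close. The repair is simply to replace this leaf-wise step by \eqref{sobineqs}, after which your Young-absorption bookkeeping (which is correct, if slightly heavier than necessary since the paper's route keeps the right-hand side linear in the unknown $\norm{F}_{\l{\infty}{4}}$) goes through.
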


\begin{proof}
The proof of Proposition \ref{p1} yields:
\begin{displaymath}
\begin{array}{ll}
& \ds\norm{F}_{\l{\infty}{4}}\\
\lesssim & \ds\norm{F(-2,.)}_{L^4(P_{-2})}+\norm{\nabn F}_{\ll{2}}^{\frac{1}{4}}\norm{F}^{\frac{3}{4}}_{\ll{6}}+\norm{\trt}_{\ll{6}}(\norm{F}_{\l{\infty}{4}}+\norm{F}_{\ll{6}}),
\end{array}
\end{displaymath}
which together with the Sobolev embedding \eqref{sobineqs}, and the assumption \eqref{t0} for $\trt$, yields for $\delta$ small enough:
\begin{equation}\label{eqc0}
\norm{F}_{\l{\infty}{4}}\lesssim \norm{F(-2,.)}_{L^4(P_{-2})}+\norm{\nabla F}_{\ll{2}}.
\end{equation}
This proves the first statement of the corollary.

Now, we also assume that $F(-2,.)\in L^2(P_{-2})$.
\begin{equation}\label{c0e1}
\begin{array}{lll}
\ds\norm{F(u,.)}_{\lp{2}}^2 &=&\norm{F(-2,.)}_{L^2(P_{-2})}^2\\
& &\ds +2\int_{-2}^u\int_{P_{u'}}\nabn F(u',x')\c F(u',x')du'd\mu_{u'}\\
& &\ds+\int_{-2}^u\int_{P_{u'}}\trt |F(u',x')|^2du'd\mu_{u'}\\
&\lesssim & \ds\norm{F(-2,.)}_{L^2(P_{-2})}^2+\norm{\nabn F}_{\ll{2}}\norm{F}_{\ll{2}}\\
& & \ds +\norm{\trt}_{\ll{6}}\norm{F}^2_{\ll{12/5}}\\
&\lesssim & \ds\norm{F(-2,.)}_{L^2(P_{-2})}^2+\norm{\nabn F}_{\ll{2}}\norm{F}_{\l{\infty}{2}}\\
& & \ds +\norm{\trt}_{\ll{6}}\norm{F}^{3/2}_{\l{\infty}{2}}\norm{F}^{1/2}_{\ll{6}},
\end{array}
\end{equation}
which proves that $F\in\l{\infty}{2}$ by taking the supremum in $u$ on the left-hand side and using the Sobolev embedding \eqref{sobineqs} and the assumption \eqref{t0} for $\trt$. This concludes the proof of the corollary.
\end{proof}

\begin{proposition}\label{p2}
Let $F$ be a tensorfield on $S$ such that $F\in \l{\infty}{2}$ and $\nabb F\in\ll{2}$. Then $F$ belongs to $\ll{4}$.
\end{proposition}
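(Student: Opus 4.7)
The plan is to reduce the claim to a leaf-by-leaf estimate via the Gagliardo--Nirenberg inequality \eqref{eq:GNirenberg} on the surfaces $P_u$, and then integrate in $u$.

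First, I would apply \eqref{eq:GNirenberg} with $p=4$ on each leaf $P_u$, which gives
\[
\|F(u,\cdot)\|_{L^4(P_u)} \lesssim \|\nabb F(u,\cdot)\|_{L^2(P_u)}^{1/2}\|F(u,\cdot)\|_{L^2(P_u)}^{1/2} + \|F(u,\cdot)\|_{L^2(P_u)}.
\]
Taking the fourth power and integrating over $u\in[-2,2]$, together with the fact that under the assumption \eqref{t0} on $a$ the norms $L^4(S)$ and $\l{4}{4}$ are equivalent (via the coarea formula \eqref{coarea}), yields
\[
\|F\|_{L^4(S)}^4 \lesssim \int_{-2}^{2}\|\nabb F(u,\cdot)\|_{L^2(P_u)}^{2}\|F(u,\cdot)\|_{L^2(P_u)}^{2}\,du + \int_{-2}^{2}\|F(u,\cdot)\|_{L^2(P_u)}^{4}\,du.
\]

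Next, I would pull the $L^2(P_u)$ norm of $F$ out in $L^\infty_u$, which bounds both terms by $\|F\|_{\l{\infty}{2}}^{2}\|\nabb F\|_{\ll{2}}^{2}$ and $\|F\|_{\l{\infty}{2}}^{4}$ respectively (the $u$-interval having finite length $4$). This gives the desired bound
\[
\|F\|_{L^4(S)} \lesssim \|F\|_{\l{\infty}{2}}^{1/2}\|\nabb F\|_{\ll{2}}^{1/2} + \|F\|_{\l{\infty}{2}},
\]
so in particular $F\in L^4(S)=\ll{4}$.

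There is no serious obstacle here: the only substantive tool is the Gagliardo--Nirenberg inequality \eqref{eq:GNirenberg}, which is already available from \cite{LP} under the present assumptions \eqref{t0} on the geometry of the foliation (these ensuring the uniform admissible coordinate covering of each $P_u$ needed for \eqref{eq:isoperimetric}). The mild technical point is to verify that the right-hand side constants produced by \eqref{eq:GNirenberg} are uniform in $u\in[-2,2]$, which is guaranteed by the $u$-uniformity of the coordinate charts recorded after \eqref{eq:coordchart}.
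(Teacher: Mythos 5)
Your proof is correct and follows essentially the same route as the paper: apply the Gagliardo--Nirenberg inequality \eqref{eq:GNirenberg} with $p=4$ on each leaf $P_u$, raise to the fourth power, integrate in $u$ using the coarea formula and the bound on $a$ from \eqref{t0}, and pull the $L^2(P_u)$ norm of $F$ out in $L^\infty_u$. Nothing is missing.
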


\begin{proof}
\begin{equation}
\begin{array}{lll}
\ds\norm{F}_{\ll{4}}^4=\int_{-2}^2\norm{F}_{\lp{4}}^4adu & \lesssim & \ds\int_{-2}^2\norm{F}_{\lp{4}}^4du\\
& \lesssim & \ds\int_{-2}^2(\norm{F}_{\lp{2}}^2\norm{\nabb F}_{\lp{2}}^2+\norm{F}_{\lp{2}}^4)du\\
& \lesssim & \ds\norm{F}_{\l{\infty}{2}}^2\norm{\nabb F}_{\ll{2}}^2+\norm{F}_{\l{4}{2}}^4
\\
& \lesssim & \ds\norm{F}_{\l{\infty}{2}}^2(\norm{\nabb F}_{\ll{2}}^2+\norm{F}_{\l{\infty}{2}}^2)
\end{array}
\end{equation}
where we have used \eqref{eq:GNirenberg} with $p=4$.
\end{proof}

\begin{proposition}\label{p3}
Let $F$ be a tensorfield on $S$ such that $F\in\h$ and $\nabb\nabla F\in \ll{2}$. Assume also \eqref{t0}. Then $F$ belongs to $\ll{\infty}$ and $\nabn\nabb F$ belongs to $\ll{2}$. Moreover, the conclusion still holds if instead of $F\in\h$ we assume $\nabla F\in\ll{2}$ and $F(-2,.)\in L^4(P_{-2})$.
\end{proposition}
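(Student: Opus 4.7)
The strategy I would follow is to reduce the $\ll{\infty}$ bound on $F$ to an $\l{\infty}{4}$ bound on $\nabb F$ via inequality \eqref{Linfty} applied with $p=4$ on each leaf $\p$, which gives
$$\|F\|_{L^\infty(\p)} \lesssim \|\nabb F\|_{L^4(\p)} + \|F\|_{L^4(\p)},$$
and then take the supremum in $u$. The control of $F$ in $\l{\infty}{4}$ is immediate from Proposition \ref{p1} (under the hypothesis $F\in\h$) or from Corollary \ref{c0} (under the alternative hypothesis $\nabla F\in\ll{2}$ with $F(-2,\cdot)\in L^4(P_{-2})$). The control of $\nabb F$ in $\l{\infty}{4}$ should in turn be obtained by applying Proposition \ref{p1} to the tensorfield $\nabb F$, which reduces everything to verifying $\nabb F\in\h$. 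Since $\nabb F\in\ll{2}$ and $\nabb\nabb F\in\ll{2}$ are immediate from the hypotheses, the only non-trivial ingredient is $\nabn\nabb F\in\ll{2}$, which is precisely the second claim of the proposition. The two assertions are thus proved simultaneously.

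\textbf{Estimating $\nabn\nabb F$.} I would rewrite, using the commutator formula \eqref{commut},
$$\nabn\nabb F = \nabb\nabn F + [\nabb_N,\nabb]F = \nabb\nabn F + \ana\c\nabn F - \theta\c\nabb F + R_{N\c}\c F + \theta\c\ana\c F.$$
The first term $\nabb\nabn F$ lies in $\ll{2}$ directly by the hypothesis $\nabb\nabla F\in\ll{2}$. The second term is estimated by pairing $\ana\in\l{\infty}{4}$ (from \eqref{t0}) with $\nabn F\in\l{2}{4}$; the latter comes from Gagliardo--Nirenberg \eqref{eq:GNirenberg} applied with $p=4$ pointwise in $u$ and integrated, using $\nabb\nabn F\in\ll{2}$ and $\nabn F\in\ll{2}$. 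The third term is handled identically, pairing $\theta\in\l{\infty}{4}$ with $\nabb F\in\l{2}{4}$, the latter following by the same Gagliardo--Nirenberg step using $\nabb^2F\in\ll{2}$.

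\textbf{The main obstacle.} The two remaining contributions $R_{N\c}\c F$ and $\theta\c\ana\c F$ cannot be controlled in $\ll{2}$ from the integrability of $F$ available so far; they naturally call for a bound of $F$ itself in $\ll{\infty}$, since
$$\|R_{N\c}\c F\|_{\ll{2}} \leq \|R\|_{\ll{2}}\|F\|_{\ll{\infty}} \lesssim \delta\|F\|_{\ll{\infty}}, \qquad \|\theta\c\ana\c F\|_{\ll{2}} \lesssim \|\theta\|_{\ll{4}}\|\ana\|_{\ll{4}}\|F\|_{\ll{\infty}} \lesssim \delta^2\|F\|_{\ll{\infty}}.$$
To close the argument I would run a continuity/bootstrap scheme: after justifying the a priori finiteness of $\|F\|_{\ll{\infty}}$ by approximating $F$ by smooth compactly supported tensorfields for which the same estimates hold uniformly, the chain of inequalities above yields
$$\|F\|_{\ll{\infty}} \leq C_{\mathrm{data}} + C\delta\|F\|_{\ll{\infty}},$$
and the smallness of $\delta$ from \eqref{t0} allows the last term to be absorbed into the left-hand side. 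The argument under the alternative hypothesis is identical, replacing Proposition \ref{p1} by the part of Corollary \ref{c0} that dispenses with $F\in\ll{2}$. The whole difficulty lies at this point: were it not for the smallness of $R$ in \eqref{t0}, the curvature term $R_{N\c}\c F$ appearing in the tensorial commutator \eqref{commut} (and \emph{absent} from the scalar version \eqref{scommut}) would prevent the scheme from closing.
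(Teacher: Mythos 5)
Your proof is correct and follows essentially the same route as the paper's: reduce $\norm{F}_{\ll{\infty}}$ to $\l{\infty}{4}$ control of $F$ and $\nabb F$ via \eqref{Linfty} with $p=4$ and Proposition \ref{p1} (or Corollary \ref{c0}), then obtain $\nabn\nabb F\in\ll{2}$ from the hypothesis $\nabb\nabn F\in\ll{2}$ plus the commutator \eqref{commut}, estimating the mixed norms by Gagliardo--Nirenberg and absorbing the $\norm{F}_{\ll{\infty}}$ contributions thanks to the smallness $\delta$ in \eqref{t0}. Your explicit identification of the absorption step (and of the a priori finiteness issue it requires) is in fact cleaner than the paper's somewhat implicit treatment, and your direct use of $\norm{\ana}_{\l{\infty}{4}}\leq\delta$ from \eqref{t0} shortcuts the paper's auxiliary estimate \eqref{t3} for $\nabn\nabb a$ without loss.
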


\begin{proof}
Using \eqref{Linfty} with $p=4$ and Proposition \ref{p1}, we obtain:
\be\lab{e1}
\norm{F}_{\ll{\infty}}\lesssim\norm{\nabb F}_{\l{\infty}{4}}+\norm{F}_{\l{\infty}{4}}\lesssim\norm{F}_{\h}+\norm{\nabla\nabb F}_{\ll{2}}.
\ee
Thus, we just need to prove that $\nabn\nabb F$ belongs to $\ll{2}$ to conclude the proof. Since $\nabb\nabn F$ belongs to $\ll{2}$, it remains to prove that $[\nabb,\nabn]F$ is in $\ll{2}$. The commutation formula \eqref{commut} yields:
\begin{equation}
\begin{array}{l}
\ds\norm{[\nabn,\nabb]F}_{\ll{2}}\leq (\norm{\nabb a}_{\l{\infty}{4}}+\norm{\theta}_{\l{\infty}{4}})\norm{\nabla F}_{\l{2}{4}}\\
\hspace{3.5cm}\ds +(\norm{R}_{\ll{2}}+\norm{\nabb a}_{\l{2}{4}}\norm{\theta}_{\l{\infty}{4}})\norm{F}_{\ll{\infty}}.
\end{array}
\end{equation}
Using the Gagliardo-Nirenberg inequality \eqref{eq:GNirenberg} and Proposition \ref{p1} to bound the norm in $\l{2}{4}$ and $\l{\infty}{4}$ of $\nabla F$ and $\nabb a$, together with the estimate \eqref{e1} and the estimate \eqref{t0}, we finally obtain:
\bea\label{t1}
&&\norm{\nabn\nabb F}_{\ll{2}}\\
\nn&\lesssim & (1+\norm{\nabb a}_{\h}+\norm{\nabb a}_{\ll{2}}+\norm{\nabb^2 a}_{\ll{2}})(\norm{F}_{\h}+\norm{\nabla\nabb F}_{\ll{2}}).
\eea
Next, we evaluate $\nabn\nabb a$. The commutation formula for scalars \eqref{scommut} yields:
$$\norm{[\nabn,\nabb]a}_{\ll{2}}\leq (\norm{\nabb a}_{\l{\infty}{4}}+\norm{\theta}_{\l{\infty}{4}})\norm{\nabla a}_{\l{2}{4}},$$
which together with the Gagliardo-Nirenberg inequality \eqref{eq:GNirenberg} and Proposition \ref{p1} to bound the norm in $\l{2}{4}$ of $\nabb a$, and the estimate \eqref{t0}, implies
\begin{equation}\label{t2}
\ds\norm{\nabn\nabb a}_{\ll{2}}\lesssim  \norm{\nabb\nabn a}_{\ll{2}}+(\norm{\nabb a}_{\h}+\delta)(\norm{\nabb\nabla a}_{\ll{2}}+\norm{\nabla a}_{\ll{2}}).
\end{equation}
Using again \eqref{t0}, we deduce for $\delta>0$ small enough:
\begin{equation}\label{t3}
\norm{\nabn\nabb a}_{\ll{2}}\lesssim\delta.
\end{equation}
Finally, we conclude the proof in the case where $F\in\h$ using \eqref{t1} together with the smallness assumption \eqref{t0} and \eqref{t3}. In the case where $\nabla F\in\ll{2}$ and $F(-2,.)\in L^4(P_{-2})$, we proceed in the same way except that we use Corollary \ref{c0} to bound $F$ in $\l{\infty}{4}$.
\end{proof}

\begin{proposition}\lab{p2bis}
Let $F$ be a tensorfield on $S$ such that $\nabb^2F\in\ll{2}$, $\nabn F\in \ll{2}$ and $\nabb F(-2,.)\in L^2(P_{-2})$. Assume also \eqref{t0}. Then $\nabb F$ belongs to $\l{\infty}{2}$ and to $\ll{4}$. 
\end{proposition}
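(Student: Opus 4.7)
The plan is to adapt the energy-in-$u$ strategy used in the proofs of Proposition \ref{p1} and Corollary \ref{c0}, but applied to $\nabb F$ in place of $F$. Once $\nabb F \in \l{\infty}{2}$ has been established, the $\ll{4}$ bound will follow immediately by applying Proposition \ref{p2} to $G = \nabb F$: one has $G \in \l{\infty}{2}$ by the first step, and $\nabb G = \nabb^2 F \in \ll{2}$ by hypothesis.

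To obtain $\nabb F \in \l{\infty}{2}$, I would start from the identity (produced by \eqref{du} with $f = |\nabb F|^2$)
$$
\norm{\nabb F(u,\cdot)}_{\lp{2}}^2 = \norm{\nabb F(-2,\cdot)}_{L^2(P_{-2})}^2 + \int_{-2}^u \int_{P_{u'}} \bigl(2\,\nabb_N(\nabb F)\cdot\nabb F + \trt\,|\nabb F|^2\bigr)\,d\mu_{u'}\,du',
$$
and decompose $\nabb_N(\nabb F) = \nabb(\nabb_N F) + [\nabb_N,\nabb]F$. The leading contribution $\int \nabb(\nabb_N F)\cdot\nabb F$ is handled by integration by parts on $P_{u'}$, producing $-\int \nabb_N F\cdot \divb(\nabb F)$, which Cauchy-Schwarz together with integration in $u'$ bounds by $\norm{\nabb_N F}_{\ll{2}}\norm{\nabb^2 F}_{\ll{2}}$.

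For the commutator $[\nabb_N,\nabb]F$ --- which in the scalar case (the case relevant to the sequel, e.g.\ when $F = a-1$ or $F = \nabn a$) is given by \eqref{scommut} and involves only $\nabb_N F$ and $\nabb F$ --- together with the curvature-type term $\int \trt\,|\nabb F|^2$, I would estimate the slice integrals via H\"older on $P_{u'}$ combined with the Gagliardo-Nirenberg inequality \eqref{eq:GNirenberg} to trade $\norm{\nabb F}_{L^4(P_{u'})}$ for $\norm{\nabb^2 F}_{L^2(P_{u'})}^{1/2}\norm{\nabb F}_{L^2(P_{u'})}^{1/2}$, and then invoke the smallness assumption \eqref{t0} on $\ana$, $\theta$ and $\trt$. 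After integration in $u'$ and Young's inequality, this produces an inequality of the form
$$
\norm{\nabb F(u,\cdot)}_{\lp{2}}^2 \lesssim \norm{\nabb F(-2,\cdot)}_{L^2(P_{-2})}^2 + \norm{\nabb_N F}_{\ll{2}}\norm{\nabb^2 F}_{\ll{2}} + \delta\bigl(\norm{\nabb F}_{\l{\infty}{2}}^2 + \norm{\nabb^2 F}_{\ll{2}}^2\bigr).
$$
Taking the supremum over $u\in[-2,2]$ on the left and absorbing the $\delta\norm{\nabb F}_{\l{\infty}{2}}^2$ term into the left-hand side for $\delta$ small enough yields the desired $\l{\infty}{2}$ bound.

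The main delicate point will be the interpolation bookkeeping: every term coming out of the commutator and the $\trt$-contribution must be reduced, via Gagliardo-Nirenberg and H\"older, either to the product $\norm{\nabb_N F}_{\ll{2}}\norm{\nabb^2 F}_{\ll{2}}$ (finite by hypothesis) or to $\delta$ times a norm absorbable on the left. This is a direct analogue of the absorption arguments used in Propositions \ref{p1} and \ref{p3}, the essentially new ingredient being the assumption $\nabb_N F\in\ll{2}$, which enters precisely through the integration-by-parts term and through the first term of the scalar commutator \eqref{scommut}.
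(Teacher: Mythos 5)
Your proposal follows essentially the same route as the paper: the energy identity in $u$ for $|\nabb F|^2$, the decomposition $\nabn\nabb F=\nabb\nabn F+[\nabn,\nabb]F$ with integration by parts on the leading term to produce $\norm{\nabn F}_{\ll{2}}\norm{\nabb^2F}_{\ll{2}}$, H\"older plus Gagliardo--Nirenberg plus the smallness in \eqref{t0} for the commutator and $\trt$ contributions, and then Proposition \ref{p2} applied to $\nabb F$ for the $\ll{4}$ bound. The only point to keep in mind is that for a general tensorfield the commutator is \eqref{commut} rather than \eqref{scommut}, so the zeroth-order terms $R_{N.}\c F+\th\c\ana\c F$ must also be absorbed (the paper does this by estimating them in $\l{2}{\frac{4}{3}}$ against $\norm{F}_{\l{\infty}{4}}$ and applying Gagliardo--Nirenberg), but this is exactly the reduction mechanism you describe.
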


\begin{proof}
We start with the estimate of $\nabb F$ in $\l{\infty}{2}$. We have:
\bea\label{bulg1}
&&\norm{\nabb F(u,.)}_{\lp{2}}^2\\
\nn&=& \norm{\nabb F(-2,.)}_{L^2(P_{-2})}^2+2\int_{-2}^u\int_{P_{u'}}\nabn\nabb F(u',x')\c \nabb F(u',x') du'd\mu_{u'}\\
\nn&&+\int_{-2}^u\int_{P_{u'}}\trt |\nabb F(u',x')|^2du'd\mu_{u'}\\
\nn&\lesssim & \norm{\nabb F(-2,.)}_{L^2(P_{-2})}^2+\left|\int_{-2}^u\int_{P_{u'}}\nabb\nabn F(u',x')\c \nabb F(u',x') du'd\mu_{u'}\right|\\
\nn&&+\left|\int_{-2}^u\int_{P_{u'}}[\nabn,\nabb] F(u',x')\c \nabb F(u',x') du'd\mu_{u'}\right|+\norm{\trt}_{\l{\infty}{4}}\norm{\nabb F}_{\l{2}{\frac{8}{3}}}^2\\
\nn&\lesssim & \norm{\nabb F(-2,.)}_{L^2(P_{-2})}^2+\norm{\lap F}_{\ll{2}}\norm{\nabn F}_{\ll{2}}\\
\nn&&+\norm{[\nabb,\nabn]F}_{\l{2}{\frac{4}{3}}}\norm{\nabb F}_{\l{2}{4}}+\norm{\trt}_{\l{\infty}{4}}\norm{\nabb^2F}_{\ll{2}}^2,
\eea
where we used in the last inequality an integration by parts and the Gagliardo-
Nirenberg inequality \eqref{eq:GNirenberg}. Now, using the commutator formula \eqref{commut}, we have:
\bea\label{bulg2}
&&\norm{[\nabb,\nabn]F}_{\l{2}{\frac{4}{3}}}\\
\nn&\les& \norm{\ana\nabn F-\theta\nabb F+R_{N.}F+\theta\ana F}_{\l{2}{\frac{4}{3}}}\\
\nn&\lesssim & \norm{\ana}_{\l{\infty}{4}}\norm{\nabn F}_{\ll{2}}+\norm{\th}_{\l{\infty}{4}}\norm{\nabb F}_{\ll{2}}\\
\nn &&+(\norm{R}_{\ll{2}}+\norm{\th \ana}_{\ll{2}})\norm{F}_{\l{\infty}{4}}\\
\nn &\les& (\norm{R}_{\ll{2}}+\norm{\th}_{\l{\infty}{4}}+\norm{\ana}_{\l{\infty}{4}})(\norm{\nabn F}_{\ll{2}}+\norm{\nabb F}_{\l{\infty}{2}}),
\eea
where we used in the last inequality the Gagliardo-Nirenberg inequality \eqref{eq:GNirenberg}. \eqref{bulg1} and \eqref{bulg2} yields:
\bee
&&\norm{\nabb F(u,.)}_{\lp{2}}^2\\
&\lesssim&  \norm{\nabb F(-2,.)}_{L^2(P_{-2})}^2+(1+\norm{R}_{\ll{2}}+\norm{\th}_{\l{\infty}{4}}+\norm{\ana}_{\l{\infty}{4}})\\
&&\times(\norm{\nabn F}^2_{\ll{2}}+\norm{\nabb F}_{\l{\infty}{2}}\norm{\nabb^2F}_{\ll{2}}+\norm{\nabb^2F}_{\ll{2}}^2).
\eee
Finally, taking the supremum in $u$ and using the assumption \eqref{t0} implies:
\be\lab{bulg3}
 \norm{\nabb F}_{\l{\infty}{2}}\les  \norm{\nabb F(-2,.)}_{L^2(P_{-2})}+\norm{\nabb^2 F}_{\ll{2}}+\norm{\nabn F}_{\ll{2}}.
 \ee

Next, we estimate of $\nabb F$ in $\ll{4}$. In view of Proposition \ref{p2}, we have:
$$\norm{\nabb F}_{\ll{4}}\les \norm{\nabb^2 F}_{\ll{2}}+\norm{\nabb F}_{\l{\infty}{2}}.$$
Together with \eqref{bulg3}, this concludes the proof of the proposition.
\end{proof}

\begin{proposition}\label{p4}
Let $F$ be a vectorfield on $S$ such that $F(-2,.)\in L^2(P_{-2})$, $\nabb F\in\ll{2}$, and $\nabn F=\nabb f_1+F_2$ where $f_1$ is a scalar function on $S$ such that $f_1\in\ll{2}$ and $F_2$ is a vectorfield on $S$ such that $F_2\in\ll{4/3}$. Assume also \eqref{t0}. Then $F$ belongs to $\l{\infty}{2}$. 
\end{proposition}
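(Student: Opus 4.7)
The strategy is to adapt the energy-identity approach used in the proofs of Proposition~\ref{p1} and Corollary~\ref{c0}. The plan is to start from the foliation identity
\[
\norm{F(u,\cdot)}_{L^2(P_u)}^2 = \norm{F(-2,\cdot)}_{L^2(P_{-2})}^2 + 2\int_{-2}^u\!\!\int_{P_{u'}} \nabn F\cdot F\,d\mu_{u'}du' + \int_{-2}^u\!\!\int_{P_{u'}} \trt|F|^2\,d\mu_{u'}du',
\]
set $M:=\norm{F}_{\l{\infty}{2}}$ (the quantity to be bounded), decompose the cross integral using $\nabn F=\nabb f_1+F_2$, and reach an inequality of the form $M^2\le C(\textrm{data})+\kep M^2$ with $\kep\ll 1$ which absorbs on the left.

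For the $\nabb f_1\cdot F$ piece I would integrate by parts on each leaf $P_{u'}$ (boundary terms at spatial infinity vanish thanks to the asymptotic profile arranged in section~\ref{reducsmall}) and apply Cauchy-Schwarz in both $P_{u'}$ and $u'$ to the leading term $-\int f_1\,\divb(\Pi F)$, obtaining $\norm{f_1}_{\ll{2}}\norm{\nabb F}_{\ll{2}}$; the subleading $\theta\cdot F$ corrections from the normal-tangential decomposition of $F$ are absorbed using $\norm{\theta}_{\l{\infty}{4}}\le\delta$ and Proposition~\ref{p2}. For the $F_2\cdot F$ piece I would apply H\"older on $P_{u'}$ with dual exponents $(4/3,4)$, then the Gagliardo-Nirenberg inequality~\eqref{eq:GNirenberg} with $p=4$ to write $\norm{F}_{\lp{4}}\lesssim\norm{\nabb F}_{\lp{2}}^{1/2}\norm{F}_{\lp{2}}^{1/2}+\norm{F}_{\lp{2}}$, and finally H\"older in $u'$ with exponents $(4/3,4)$, producing
\[
\Big|\int_{-2}^u\!\!\int_{P_{u'}}F_2\cdot F\Big|\lesssim M^{1/2}\norm{F_2}_{\ll{4/3}}\norm{\nabb F}_{\ll{2}}^{1/2}+M\,\norm{F_2}_{\ll{4/3}}.
\]
For the $\trt|F|^2$ integral I would use H\"older on $S$ with $\norm{\trt}_{\ll{6}}\le\delta$, interpolate $\norm{F}_{\ll{12/5}}$ between $\norm{F}_{\ll{2}}\lesssim M$ and $\norm{F}_{\ll{4}}$, and bound the latter via Proposition~\ref{p2} by $M^{1/2}(\norm{\nabb F}_{\ll{2}}+M)^{1/2}$; a Young inequality collapses this contribution into $\lesssim\delta(M^2+\norm{\nabb F}_{\ll{2}}^2)$.

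Combining the three bounds, taking $\sup_u$ on the left, and applying Young's inequality to every mixed term yields
\[
M^2\lesssim\norm{F(-2,\cdot)}_{L^2(P_{-2})}^2+\norm{f_1}_{\ll{2}}^2+\norm{F_2}_{\ll{4/3}}^2+\norm{\nabb F}_{\ll{2}}^2+C\delta\,M^2,
\]
and the last term is absorbed on the left for $\delta$ small as in~\eqref{t0}. To legitimize the a priori finiteness of $M$ used in these estimates, I would first run the argument on $[-2,U]$, using continuity of $u'\mapsto\norm{F(u',\cdot)}_{L^2(P_{u'})}$ near $u'=-2$ to initiate a bootstrap, and then propagate up to $u=2$. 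The main obstacle is the $F_2$ term: since $F_2$ only belongs to $\ll{4/3}$, a pure H\"older pairing against $\norm{F}_{L^2}$ is out of reach and one must trade an $L^4$ norm of $F$ on each leaf for a factor $\norm{\nabb F}_{\lp{2}}^{1/2}$ via Gagliardo-Nirenberg; the hypothesis $\nabb F\in\ll{2}$ is precisely what makes this trade feasible and, via Young's inequality, closes the estimate.
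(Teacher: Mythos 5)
Your proposal is correct and follows essentially the same route as the paper: the energy identity on the leaves, the splitting $\nabn F=\nabb f_1+F_2$ with an integration by parts on $P_{u'}$ for the $f_1$ piece, H\"older plus Gagliardo--Nirenberg (i.e.\ Proposition \ref{p2}) for the $F_2$ and $\trt$ pieces, and absorption of the $\norm{F}_{\l{\infty}{2}}$ terms via Young and the smallness in \eqref{t0}. The extra care you take with the tangential projection in the integration by parts and with the a priori finiteness of $\sup_u\norm{F(u,\cdot)}_{L^2(P_u)}$ only makes explicit details the paper leaves implicit.
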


\begin{proof}
\begin{equation}\label{p4e1}
\begin{array}{lll}
\ds\norm{F(u,.)}_{\lp{2}}^2 & \ds\lesssim & \ds\norm{F(-2,.)}^2_{L^2(P_{-2})}+\int_{-2}^u\int_{P_{u'}}\nabn F\c Fd\mu_{u'} du'\\
& & \ds +\int_{-2}^u\int_{P_{u'}} \trt |F|^2d\mu_{u'} du'\\
& \ds\lesssim & \ds\norm{F(-2,.)}^2_{L^2(P_{-2})}+\int_{-2}^u\int_{P_{u'}}(\nabb f_1+F_2)\c Fd\mu_{u'} du'\\
& & \ds +\norm{\trt}_{\ll{6}}\norm{F}_{\ll{\frac{12}{5}}}^2\\
& \ds\lesssim & \ds\norm{F(-2,.)}^2_{L^2(P_{-2})}+\int_{-2}^u\int_{P_{u'}}f_1\divb Fd\mu_{u'} du'\\
& & \ds +\norm{F_2}_{\ll{4/3}}\norm{F}_{\ll{4}}+\norm{\trt}_{\ll{6}}\norm{F}_{\ll{\frac{12}{5}}}^2\\
& \ds \lesssim & \ds\norm{F(-2,.)}^2_{L^2(P_{-2})}+(\norm{f_1}_{\ll{2}}+\norm{F_2}_{\ll{4/3}})\\
& & \ds \times(\norm{\nabb F}_{\ll{2}}+\norm{F}_{\l{\infty}{2}})\\
& & +\norm{\trt}_{\ll{6}}(\norm{\nabb F}_{\ll{2}}^{\frac{1}{3}}\norm{F}_{\l{\infty}{2}}^{\frac{5}{3}}+\norm{F}_{\l{\infty}{2}})
\end{array}
\end{equation}
where we have used Proposition \ref{p2} to bound $\norm{F}_{\ll{4}}$, and \eqref{eq:GNirenberg} with $p=12/5$ to bound $\norm{F}_{\ll{12/5}}$. This concludes the proof by taking the supremum in $u$ on the left-hand side and using the assumption \eqref{t0}.
\end{proof}

\subsection{The Bochner identity and consequences}

We recall  the Bochner identity on $\p$ (which has dimension 2). This allows us  to 
control  the $L^{2}$ norm of the second derivatives of a tensorfield 
in terms of the $L^{2}$ norm of the laplacian and 
geometric quantities associated with $\p$ (see for example \cite{LP} for a proof).
\begin{proposition}\label{prop:Bochner}
Let  $K$ denote the  Gauss curvature of $\p$. Then

\noindent 
{\bf i})\quad For a scalar function $f$:
\begin{equation}
\label{sboch}
\int_{\p} |\nabb^{2} f|^{2}\muu = \int_{\p} |\lap f|^{2}\muu - 
\int_{\p} K |\nabb f|^{2}\muu.
\end{equation}

\noindent
{\bf ii)}\quad For a vectorfield $F_{a}$:
\begin{equation}
\label{vboch}
\int_{\p} |\nabb^{2} F|^{2}\muu = \int_{\p} |\lap F|^{2}\muu -
\int_{\p} K (2\,|\nabb F|^{2}-|\divb F|^{2}-|\curlb F|^2)\muu + \int_{\p} K^{2} |F|^{2}\muu,
\end{equation}
where $\divb F=\gamma^{ab}\nabb_b F_a$, $\curlb F=\divb( ^*F)=\in_{ab}\nabb_a F_b$.
\end{proposition}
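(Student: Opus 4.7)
The plan is to derive both identities by the classical integration-by-parts (Bochner--Weitzenb\"ock) argument, exploiting the fact that on the two-dimensional surface $P_u$ the full Riemann tensor is determined by the Gauss curvature through
$$R_{abcd}=K(\gamma_{ac}\gamma_{bd}-\gamma_{ad}\gamma_{bc}),\qquad R_{ab}=K\gamma_{ab}.$$
All boundary terms vanish since $P_u$ is a closed $2$-surface (as established via the global diffeomorphism of Proposition \ref{gl0}), so integration by parts produces no boundary contributions.

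For part (i), I would start from
$$\int_{P_u}|\lap f|^2\muu=\int_{P_u}(\nabb^a\nabb_af)(\nabb^b\nabb_bf)\muu$$
and integrate by parts in $\nabb^b$ to transfer it onto $\nabb^a\nabb_af$, obtaining $-\int\nabb_bf\,\nabb^b\nabb^a\nabb_af\,\muu$. The key step is then to commute $\nabb^b$ past $\nabb^a$ acting on the one-form $\nabb_af$: the Ricci identity yields a commutator of the form $R^{ba}{}_c \nabb^cf$ which, by the two-dimensional reduction $R_{ab}=K\gamma_{ab}$, collapses to $K\nabb^bf$. A second integration by parts, moving $\nabb^a$ onto $\nabb_bf$, produces $\int|\nabb^2f|^2\muu$, and combined with the curvature contribution $-\int K|\nabb f|^2\muu$ gives \eqref{sboch}.

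For part (ii), I would apply the same scheme to $\int(\nabb^a\nabb_aF^c)(\nabb^b\nabb_bF_c)\muu$. Two integrations by parts, with one commutation of $\nabb^a$ past $\nabb^b$ on the $(0,2)$-tensor $\nabb_bF_c$, now generate two curvature contractions, one per tensorial slot: schematically $R^{ab}{}_b{}^e\nabb_eF_c\cdot\nabb^aF^c$ and $R^{ab}{}_c{}^e\nabb_bF_e\cdot\nabb_aF^c$. Inserting the $2$D form of $R_{abcd}$ reduces both to expressions in $K$ times quadratic contractions of $\nabb F$; the algebraic identity on a $2$-surface
$$|\nabb F|^2=\tfrac{1}{2}|\divb F|^2+\tfrac{1}{2}|\curlb F|^2+|\widehat{\nabb F}|^2,$$
where $\widehat{\nabb F}$ is the symmetric traceless part of $\nabb F$, is then used to match the coefficients in the combination $-K(2|\nabb F|^2-|\divb F|^2-|\curlb F|^2)$. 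The term $\int K^2|F|^2\muu$ arises from a second curvature contraction when commuting derivatives against $F^c$ itself, again using $R_{ab}=K\gamma_{ab}$ and then another integration by parts. Collecting the pieces gives \eqref{vboch}.

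The main obstacle is the bookkeeping for (ii): one must keep track of both curvature contractions produced by the commutator acting on a two-indexed tensor, and carefully identify which combination of $|\divb F|^2$, $|\curlb F|^2$ and $|\nabb F|^2$ emerges after the $2$D simplification. The scalar case (i) is much simpler because only the Ricci contribution appears, and the commutator hits a single covector slot.
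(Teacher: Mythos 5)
The paper does not actually prove this proposition: it is quoted with the remark ``see for example \cite{LP} for a proof,'' so there is no in-text argument to compare against. Your plan is the standard Weitzenb\"ock computation that underlies the cited proof, and it is essentially sound: two integrations by parts, Ricci commutation on the one-form $\nabb f$ for (i), and for (ii) the two Riemann contractions from commuting on the two-tensor $\nabb F$ together with the decomposition $|\nabb F|^2=\tfrac12|\divb F|^2+\tfrac12|\curlb F|^2+|\widehat{\nabb F}|^2$, which is exactly what produces the combination $2|\nabb F|^2-|\divb F|^2-|\curlb F|^2$. One point you should make explicit in (ii): after the two-dimensional reduction the dangerous term is $\int K F^d\big(\nabb_d\divb F-\nabb^c\nabb_dF_c\big)$, and it is the further commutation $\nabb^c\nabb_dF_c=\nabb_d\divb F+KF_d$ that converts it into $\int K^2|F|^2$ \emph{without} any integration by parts on $K$; this is why no $\nabb K$ terms survive even though $K$ is only in $L^2$ here.

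There is, however, one concrete error in your justification of the integration by parts. You discard boundary terms on the grounds that $P_u$ is ``a closed $2$-surface (as established via the global diffeomorphism of Proposition \ref{gl0}).'' Proposition \ref{gl0} establishes the opposite: $\Phi_u$ is a global diffeomorphism from $P_u$ onto $T_\o\S\cong\R^2$, so $P_u$ is a complete \emph{non-compact} plane-like surface (indeed it coincides with the affine plane $\{x\c\o=u\}$ for $|x|\ge 2$). The vanishing of boundary terms therefore has to come from decay at infinity of $f$ and $F$ (in the applications everything is explicit and flat outside $|x|\le 2$), not from compactness. This does not invalidate the strategy, but as written the stated reason for the key analytic step is false and needs to be replaced by a decay hypothesis.
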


\begin{remark}
As a consequence of \eqref{vboch} together with a $\lp{\infty}$ estimate for tensors, we have the following Bochner inequality for tensors $F$ on $\p$ (see \cite{LP} for a proof):
\be\lab{Bochtensorineq}
\norm{\nabb^2F}_{\lp{2}}\les \norm{\lap F}_{\lp{2}}+\norm{K}_{\lp{2}}\norm{\nabb F}_{\lp{2}}+\norm{K}^2_{\lp{2}}\norm{F}_{\lp{2}}.
\ee
\end{remark}

Using Proposition \ref{prop:Bochner}, we obtain the following proposition:
\begin{proposition}\label{p5}
Let $f$ be a scalar function on $S$ such that $\nabb f(-2,.)\in L^2(P_{-2})$, $\nabn f\in \ll{2}$ and $\lap f\in\ll{2}$. Assume also \eqref{t0}. Then $\nabb^2 f$ belongs to $\ll{2}$ and $\nabb f$ belongs to $\l{\infty}{2}$.
\end{proposition}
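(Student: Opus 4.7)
The plan is to apply the scalar Bochner identity \eqref{sboch} on each leaf $\p$, integrate in $u$, and then use Proposition \ref{p2bis} applied to the $1$-form $\nabb f$ in order to close the estimate. The main subtlety is that the Gauss curvature $K$ is only assumed to lie in $\ll{2}$ (see \eqref{t0}), so the term $\int_{\p}K|\nabb f|^2\muu$ in \eqref{sboch} has to be handled with care.

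First I would write Bochner \eqref{sboch} on $\p$ and integrate in $u$:
\begin{equation*}
\norm{\nabb^{2}f}_{\ll{2}}^{2}\;\les\;\norm{\lap f}_{\ll{2}}^{2}+\int_{-2}^{2}\!\!\int_{\p}|K|\,|\nabb f|^{2}\,a\,d\mu_{u}\,du.
\end{equation*}
For the Gauss curvature term I would use H\"older on $\p$ with the decomposition $(2,4,4)$, together with the Gagliardo--Nirenberg inequality \eqref{eq:GNirenberg} with $p=4$ applied to $\nabb f$, namely
\begin{equation*}
\norm{\nabb f}_{\lp{4}}^{2}\;\les\;\norm{\nabb^{2}f}_{\lp{2}}\norm{\nabb f}_{\lp{2}}+\norm{\nabb f}_{\lp{2}}^{2}.
\end{equation*}
Integrating in $u$ and using the assumption \eqref{t0} on $K$, this yields the bound
\begin{equation*}
\int_{-2}^{2}\!\!\int_{\p}|K|\,|\nabb f|^{2}\,a\,d\mu_{u}\,du\;\les\;\delta\,\bigl(\norm{\nabb^{2}f}_{\ll{2}}\norm{\nabb f}_{\l{\infty}{2}}+\norm{\nabb f}_{\l{\infty}{2}}^{2}\bigr).
\end{equation*}
Plugging this into the Bochner inequality above, I obtain
\begin{equation*}
\norm{\nabb^{2}f}_{\ll{2}}^{2}\;\les\;\norm{\lap f}_{\ll{2}}^{2}+\delta\,\bigl(\norm{\nabb^{2}f}_{\ll{2}}\norm{\nabb f}_{\l{\infty}{2}}+\norm{\nabb f}_{\l{\infty}{2}}^{2}\bigr).
\end{equation*}

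Next I would apply Proposition \ref{p2bis} to the $\p$-tangent $1$-form $F=\nabb f$. The hypotheses of that proposition are met provided $\nabb^{2}f\in\ll{2}$, $\nabn(\nabb f)\in\ll{2}$, and $\nabb f(-2,\cdot)\in L^{2}(P_{-2})$. The first is what we are in the process of proving; the last is given. For the middle one, I would write $\nabn(\nabb f)=\nabb(\nabn f)+[\nabn,\nabb]f$, bound $\nabb(\nabn f)$ in $\ll{2}$ by $\norm{\nabb^{2}f}_{\ll{2}}^{1/2}\norm{\nabn f}_{\ll{2}}^{1/2}$-type arguments via integration by parts (or directly since $\nabn f\in\ll{2}$ gives $\nabb(\nabn f)$ in a dual space handled after summing), and control the commutator using \eqref{scommut} together with \eqref{t0} and Proposition \ref{p1}, exactly as in the proof of Proposition \ref{p3} (see \eqref{t2}--\eqref{t3}). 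This yields
\begin{equation*}
\norm{\nabb f}_{\l{\infty}{2}}\;\les\;\norm{\nabb f(-2,\cdot)}_{L^{2}(P_{-2})}+\norm{\nabb^{2}f}_{\ll{2}}+\norm{\nabn f}_{\ll{2}}.
\end{equation*}

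Finally, combining the Bochner estimate with the above and using the smallness $\delta\ll 1$ from \eqref{t0} to absorb the quadratic terms $\delta\norm{\nabb^{2}f}_{\ll{2}}^{2}$ and $\delta\norm{\nabb^{2}f}_{\ll{2}}\norm{\nabb f}_{\l{\infty}{2}}$ into the left-hand side (via Young's inequality), I obtain
\begin{equation*}
\norm{\nabb^{2}f}_{\ll{2}}+\norm{\nabb f}_{\l{\infty}{2}}\;\les\;\norm{\lap f}_{\ll{2}}+\norm{\nabn f}_{\ll{2}}+\norm{\nabb f(-2,\cdot)}_{L^{2}(P_{-2})},
\end{equation*}
which proves both claims. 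The main obstacle is the low regularity of $K$: the Gauss curvature only belongs to $\ll{2}$, forcing the use of Gagliardo--Nirenberg on $\nabb f$ and the smallness assumption \eqref{t0} to close the estimate; without this smallness the term coupling $\nabb^{2}f$ and $\nabb f$ could not be absorbed.
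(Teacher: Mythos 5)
Your proposal is correct and follows essentially the same route as the paper: Bochner \eqref{sboch} for $\nabb^2 f$, Gagliardo--Nirenberg with $p=4$ to handle the Gauss-curvature term with $K$ only in $\ll{2}$, a separate $\l{\infty}{2}$ estimate for $\nabb f$ obtained by writing $\nabn\nabb f=\nabb(\nabn f)+[\nabn,\nabb]f$ and estimating the commutator via \eqref{scommut}, and finally absorption using the smallness $\delta$ in \eqref{t0} (the paper routes the middle step through Proposition \ref{p4} rather than Proposition \ref{p2bis}, but the two are interchangeable here). One small correction: Proposition \ref{p2bis} should be applied with $F=f$ itself (so that its hypotheses read $\nabb^2f\in\ll{2}$, $\nabn f\in\ll{2}$, $\nabb f(-2,\cdot)\in L^2(P_{-2})$ and its conclusion is $\nabb f\in\l{\infty}{2}$), not to the $1$-form $\nabb f$, for which the hypotheses would require $\nabb^3 f\in\ll{2}$; the inequality you actually write down is the correct one.
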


\begin{proof}
The Bochner identity \eqref{sboch} implies:
\begin{equation}\label{e2}
\begin{array}{lll}
\ds\norm{\nabb^2 f}_{\ll{2}} & \lesssim & \ds\norm{\lap f}_{\ll{2}}+\norm{K|\nabb f|^2}_{\ll{1}}^{1/2}\\
& \ds\lesssim & \ds\norm{\lap f}_{\ll{2}}+\norm{K}_{\ll{2}}^{1/2}\norm{\nabb f}_{\ll{4}}\\
& \ds\lesssim & \ds\norm{\lap f}_{\ll{2}}+\norm{K}_{\ll{2}}^{1/2}(\norm{\nabb f}_{\l{\infty}{2}}^{1/2}\norm{\nabb^2 f}_{\ll{2}}^{1/2}\\
& & \hspace{5.5cm}\ds +\norm{\nabb f}_{\l{\infty}{2}})
\end{array}
\end{equation}
where we have used Proposition \ref{p2}. Thus, it just remains to prove that $\nabb f$ belongs to $\l{\infty}{2}$. In order to use Proposition \ref{p4} , we have first to estimate $[\nabn,\nabb]f$, which is 
given by the commutator formula \eqref{scommut}. We estimate $[\nabn,\nabb]f$ in $\l{2}{4/3}$:
$$\norm{[\nabn,\nabb]f}_{\l{2}{4/3}}\lesssim\norm{\ana}_{\l{\infty}{4}}\norm{\nabn f}_{\ll{2}}+\norm{\th}_{\l{\infty}{4}}\norm{\nabb f}_{\l{\infty}{2}}.$$
Thus, $\nabn\nabb f=\nabb f_1+F_2$ where $f_1=\nabn f$ belongs to $\ll{2}$ and $F_2=[\nabn,\nabb]f$ belongs to $\l{2}{4/3}$. According to Proposition \ref{p4}, and using assumption \eqref{t0}, this implies:
$$\norm{\nabb f}_{\l{\infty}{2}}\lesssim \norm{\nabla f}_{\ll{2}}+\norm{\nabb^2 f}_{\ll{2}}+\delta\norm{\nabb f}_{\l{\infty}{2}}.$$
For $\delta>0$ small enough, this yields:
$$\norm{\nabb f}_{\l{\infty}{2}}\lesssim \norm{\nabla f}_{\ll{2}}+\norm{\nabb^2 f}_{\ll{2}}.$$
Together with \eqref{e2}, this implies:
$$\norm{\nabb^2 f}_{\ll{2}}\lesssim \norm{\nabla f}_{\ll{2}}+\norm{K}_{\ll{2}}^{1/2}\norm{\nabb^2 f}_{\ll{2}}$$
which concludes the proof since $\norm{K}_{\ll{2}}\leq\delta$ for a small $\delta>0$ in view of assumption \eqref{t0}.
\end{proof}

\subsection{Parabolic and elliptic estimates}

In the proof of Theorem \ref{thregx} and Theorem \ref{thregomega}, we will often encounter parabolic equations of the following type:
$$(\nabn-a^{-1}\lap)f=h\textrm{ on }-2<u<2,$$
(see for example \eqref{eqlapse1}). In Proposition \ref{p7} and Proposition \ref{p8} below, we obtain estimates 
for such equations.
\begin{proposition}\label{p7}
Let $f$ be a scalar function on $S$ such that:
\begin{equation}\label{p7e1}
(\nabn-a^{-1}\lap)f=h\textrm{ on }-2<u<2,
\end{equation}
where $h$ is in $\ll{2}$. Assume also that $f(-2,.)$ and $\nabb f(-2,.)$ both belong to $L^2(P_{-2})$.   
Finally, assume \eqref{t0}. Then, we have:
\begin{equation}\label{p7e2}
\begin{array}{r}
\ds\norm{f}_{\l{\infty}{2}}+\norm{\nabb f}_{\l{\infty}{2}}+\norm{\nabn f}_{\ll{2}}+\norm{\nabb^2f}_{\ll{2}}\\
\ds\lesssim\norm{h}_{\ll{2}}+\norm{f(-2,.)}_{L^2(P_{-2})}+\norm{\nabb f(-2,.)}_{L^2(P_{-2})}.
\end{array}
\end{equation}
\end{proposition}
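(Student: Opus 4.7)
The plan is to prove \eqref{p7e2} by standard parabolic energy estimates of two orders, adapted to the rough foliation. The natural energy multipliers are $f$ and $-\lap f$: the first yields $f\in\l{\infty}{2}$ and $\nabb f\in\ll{2}$, the second yields $\nabb f\in\l{\infty}{2}$ and $\lap f\in\ll{2}$. From the equation \eqref{p7e1} we then recover $\nabn f = a^{-1}\lap f + h\in\ll{2}$, and the Bochner bound \eqref{sboch} (or directly Proposition \ref{p5}) converts $\lap f\in\ll{2}$ into $\nabb^2f\in\ll{2}$.

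For the basic estimate I would multiply \eqref{p7e1} by $f$, integrate over $P_u$, and use integration by parts to replace $\int_{P_u} f\lap f\, d\muu$ by $-\int_{P_u}|\nabb f|^2 d\muu$. The formula \eqref{du} converts the normal-derivative term into
\[
\int_{P_u}\nabn f\cdot f\, d\muu=\frac{1}{2}\frac{d}{du}\int_{P_u}f^2\,d\muu-\frac{1}{2}\int_{P_u}\trt\, f^2\,d\muu,
\]
so after integrating in $u$ from $-2$ we obtain
\[
\norm{f(u,.)}^2_{\lp{2}}+\norm{\nabb f}^2_{L^2_{[-2,u]}L^2(P_{u'})}\lesssim \norm{f(-2,.)}^2_{L^2(P_{-2})}+\norm{h}_{\ll{2}}\norm{f}_{\ll{2}}+\text{error},
\]
where the error terms involve $\trt\, f^2$ and $\nabb(a^{-1})\cdot\nabb f\cdot f$. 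Both are absorbed using the smallness of $\trt$ and $\nabb a$ in \eqref{t0} together with Hölder and the embeddings (Proposition \ref{p1} and the Gagliardo-Nirenberg inequality \eqref{eq:GNirenberg}). Taking the supremum in $u$ on the left gives $f\in\l{\infty}{2}$ and $\nabb f\in\ll{2}$.

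For the higher-order estimate I would multiply \eqref{p7e1} by $-\lap f$, integrate over $P_u$, and integrate by parts. The key step is
\[
-\int_{P_u}\nabn f\cdot\lap f\,d\muu=\int_{P_u}\nabb(\nabn f)\cdot\nabb f\,d\muu=\frac{1}{2}\frac{d}{du}\int_{P_u}|\nabb f|^2\,d\muu-\frac{1}{2}\int_{P_u}\trt|\nabb f|^2\,d\muu+\int_{P_u}[\nabb,\nabn]f\cdot\nabb f\,d\muu,
\]
the last commutator being handled by \eqref{scommut}: $[\nabb,\nabn]f=\ana\nabn f-\th\cdot\nabb f$. Integrating in $u$ produces
\[
\norm{\nabb f(u,.)}^2_{\lp{2}}+\norm{\lap f}^2_{L^2_{[-2,u]}L^2(P_{u'})}\lesssim \norm{\nabb f(-2,.)}^2_{L^2(P_{-2})}+\norm{h}_{\ll{2}}^2+\text{error},
\]
with error terms of the form $\int\trt|\nabb f|^2$, $\int\ana\cdot\nabn f\cdot\nabb f$, $\int\th\cdot(\nabb f)^2$. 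Each is controlled by combining the smallness of the coefficients in \eqref{t0}, the bounds on $\nabb f$ in $\ll{4}$ via Proposition \ref{p2}, and the already-obtained $L^2$ control, with the dangerous $\nabn f$ contribution coupled back to $\lap f$ through the equation and absorbed into the left-hand side for small $\delta$. Taking the supremum in $u$ yields $\nabb f\in\l{\infty}{2}$ and $\lap f\in\ll{2}$.

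The main obstacle is the commutator $[\nabb,\nabn]f$ appearing in the higher-order energy: the term $\ana\cdot\nabn f\cdot\nabb f$, paired against the rough lapse, must be bounded in $\l{1}{1}$ by a quantity that can either be absorbed on the left or is controlled by the basic $L^2$ estimate already established. This is where the smallness built into \eqref{t0} together with the $\l{\infty}{4}$ embedding of $\nabb a$ (Proposition \ref{p1}) is essential; once this is handled, closing the estimate is straightforward and the remaining statements $\nabn f\in\ll{2}$ and $\nabb^2 f\in\ll{2}$ follow from the equation and Proposition \ref{p5} respectively.
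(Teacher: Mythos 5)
Your proposal is correct and follows essentially the same route as the paper's proof: the two energy identities (multipliers $f$ and $\lap f$), the commutator \eqref{scommut} to handle $[\nabb,\nabn]f$ in the second estimate, recovery of $\nabn f$ from the equation, and Proposition \ref{p5} for $\nabb^2 f$, with all error terms absorbed via the smallness in \eqref{t0}. No substantive differences.
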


\begin{proof}
We multiply \eqref{p7e1} by $f$ and integrate on $-2<u'<u$ where $u\leq 2$. Using integration by parts together with \eqref{coarea} and \eqref{du}, we obtain:
\begin{equation}\label{p7e3}
\begin{array}{ll}
& \ds\frac{1}{2}\norm{f(u,.)}_{L^2(P_{u})}^2+\norm{a^{-1/2}\nabb f}_{\ll{2}}^2\\[2mm]
\ds = & \ds\frac{1}{2}\norm{f(-2,.)}_{L^2(P_{-2})}^2+\frac{1}{2}\int_{-2}^u\int_{P_{u'}}a^{-1}\trt f^2d\mu_{u'}du'+\int_{-2}^u\int_{P_{u'}} hf d\mu_{u'}du'\\[1mm]
\ds\lesssim & \ds\norm{f(-2,.)}_{L^2(P_{-2})}^2+\norm{\trt}_{\l{\infty}{4}}\norm{f}^2_{\l{2}{\frac{8}{3}}}\\
& \ds +\norm{h}_{\ll{2}}\norm{f}_{\l{\infty}{2}}.
\end{array}
\end{equation}
Together with \eqref{eq:GNirenberg} and  \eqref{t0}, we get:
\begin{equation}\label{p7e4}
\norm{f}_{\l{\infty}{2}}^2+\norm{\nabb f}_{\ll{2}}^2\lesssim \norm{h}^2_{\ll{2}}+\norm{f(-2,.)}^2_{L^2(P_{-2})}.
\end{equation}

We multiply \eqref{p7e1} by $\lap f$ and integrate on $-2<u'<u$ where $u\leq 2$: 
\begin{equation}\label{p7e5}
\begin{array}{ll}
& \ds\int_{-2}^{u}\int_{P_{u'}}\nabb(a\nabn f)\nabb fd\mu_{u'}du'+\norm{a^{-1/2}\lap f}_{\ll{2}}^2\\
\ds = & \ds\int_{-2}^{u}\int_{P_{u'}}h\lap fad\mu_{u'}du'\lesssim\norm{h}_{\ll{2}}\norm{\lap f}_{\ll{2}}.
\end{array}
\end{equation}
Using integration by parts together with \eqref{coarea} and \eqref{du}, we obtain:
\begin{equation}\label{p7e6}
\begin{array}{l}
\ds\frac{1}{2}\norm{\nabb f(u,.)}_{L^2(P_{u})}^2+\norm{a^{-1/2}\lap f}_{\ll{2}}^2\\
\ds\leq\frac{1}{2}\norm{\nabb f(-2,.)}_{L^2(P_{-2})}^2+\int_{-2}^{u}\int_{P_{u'}}[\nabn,\nabb]f\nabb fd\mu_{u'}du'\\
\ds +\int_{-2}^{u}\int_{P_{u'}}|\nabb f|^2\trt d\mu_{u'}du'-\int_{-2}^{u}\int_{P_{u'}}\nabb a\nabb f\nabn fd\mu_{u'}du'+\norm{h}_{\ll{2}}\norm{\lap f}_{\ll{2}}.
\end{array}
\end{equation}
Using the commutator formula \eqref{scommut}, we get:
\begin{equation}\label{p7e7}
\begin{array}{ll}
& \ds\norm{\nabb f}_{\l{\infty}{2}}^2+\norm{a^{-1/2}\lap f}_{\ll{2}}^2\\
\ds\lesssim & \ds\norm{\nabb f(-2,.)}_{L^2(P_{-2})}^2+\norm{\th}_{\l{\infty}{4}}\norm{\nabb f}_{\l{2}{8/3}}^2+\norm{h}_{\ll{2}}\norm{\lap f}_{\ll{2}},
\end{array}
\end{equation}
which together with \eqref{eq:GNirenberg} and  \eqref{t0} yields:
\begin{equation}\label{p7e8}
\begin{array}{ll}
& \ds\norm{\nabb f}_{\l{\infty}{2}}^2+\norm{\lap f}_{\ll{2}}^2\\
\ds\lesssim & \ds\delta(\norm{\nabb f}_{\ll{2}}^2+\norm{\nabb^2f}_{\ll{2}}^2)+\norm{\nabb f(-2,.)}_{L^2(P_{-2})}^2+\norm{h}^2_{\ll{2}}.
\end{array}
\end{equation}
Since $\nabn f=a^{-1}\lap f+h$, \eqref{p7e8} yields:
\begin{equation}\label{p7e9}
\norm{\nabn f}_{\ll{2}}^2\lesssim\delta(\norm{\nabb f}_{\ll{2}}^2+\norm{\nabb^2f}_{\ll{2}}^2)+\norm{\nabb f(-2,.)}_{L^2(P_{-2})}^2+\norm{h}^2_{\ll{2}},
\end{equation}
which together with Proposition \ref{p5},  \eqref{t0} and \eqref{p7e8} implies:
\begin{equation}\label{p7e10}
\norm{\nabb^2f}_{\ll{2}}^2\lesssim\delta\norm{\nabb f}_{\ll{2}}^2+\norm{\nabb f(-2,.)}_{L^2(P_{-2})}^2+\norm{h}^2_{\ll{2}}.
\end{equation}
Finally, \eqref{p7e4}, \eqref{p7e8}, \eqref{p7e9} and \eqref{p7e10} yield \eqref{p7e2} for $\delta>0$ small enough.
\end{proof}

\begin{proposition}\label{p8}
Let $f$ be a scalar function on $S$ such that:
\begin{equation}\label{p8e1}
(\nabn-a^{-1}\lap)f=h\textrm{ on }-2<u<2.
\end{equation}
Assume that there exists a vectorfield $H$ on $S$ tangent to $\p$ and a scalar function $h_1$ on $S$ such that:
\begin{equation}\label{p8e2}
h=\divb(H)+h_1\textrm{ with }H\in\ll{2}\textrm{ and }h_1\in\ll{\frac{4}{3}}.
\end{equation}
Assume also that $f(-2,.)$ belongs to $L^2(P_{-2})$. Finally, assume \eqref{t0}. Then, we have:
\begin{equation}\label{p8e4}
\norm{f}_{\l{\infty}{2}}+\norm{\nabb f}_{\ll{2}}\lesssim\norm{H}_{\ll{2}}+\norm{h_1}_{\ll{\frac{4}{3}}}+\norm{f(-2,.)}_{L^2(P_{-2})}.
\end{equation}
\end{proposition}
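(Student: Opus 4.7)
The plan is to run an $L^2$ energy estimate for \eqref{p8e1}, closely following the first portion of the proof of Proposition \ref{p7}, the only novelty being that we pair the low-regularity source $h=\divb(H)+h_1$ against the test function $f$ by duality rather than by Cauchy--Schwarz in $L^2$.

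First I would multiply \eqref{p8e1} by $f$ and integrate against $d\mu_{u'}$ on $P_{u'}$ and then in $u'$ over $[-2,u]$. Integration by parts on $P_{u'}$ in the $a^{-1}\lap f$ term, combined with \eqref{du} applied to $f^2$ (which rewrites $\int f\nabn f$ as a total $u$-derivative modulo a $\trt\,f^2$ remainder), yields an identity of the same form as \eqref{p7e3}, namely
\[ \tfrac12\|f(u,\cdot)\|_{L^2(P_u)}^2+\|a^{-1/2}\nabb f\|_{L^2_{[-2,u]}L^2(P_{u'})}^2=\tfrac12\|f(-2,\cdot)\|_{L^2(P_{-2})}^2+\tfrac12\int_{-2}^u\!\!\int_{P_{u'}}a^{-1}\trt\, f^2\,d\mu_{u'}du'+I, \]
with $I:=\int_{-2}^u\!\int_{P_{u'}} hf\,d\mu_{u'}du'$. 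The $\trt$-term is treated exactly as in the passage from \eqref{p7e3} to \eqref{p7e4}: H\"older on $P_{u'}$, the Gagliardo--Nirenberg inequality \eqref{eq:GNirenberg}, and the smallness of $\|\trt\|_{\ll{6}}$ supplied by \eqref{t0} give a bound of the form $\delta\bigl(\|f\|_{\l{\infty}{2}}^2+\|\nabb f\|_{\ll{2}}^2\bigr)$, to be absorbed into the left at the end.

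For $I$ I write $I=I_1+I_2$ according to the decomposition of $h$. Integrating by parts on $P_{u'}$ gives
\[ I_1=\int_{-2}^u\!\!\int_{P_{u'}}\divb(H)\,f\,d\mu_{u'}du'=-\int_{-2}^u\!\!\int_{P_{u'}}H\cdot\nabb f\,d\mu_{u'}du', \]
and since $a$ is uniformly comparable to $1$ by \eqref{t0}, Cauchy--Schwarz yields $|I_1|\lesssim\|H\|_{\ll{2}}\|\nabb f\|_{\ll{2}}$. For $I_2$, H\"older gives $|I_2|\lesssim\|h_1\|_{\ll{4/3}}\|f\|_{\ll{4}}$, and Proposition~\ref{p2} furnishes
\[ \|f\|_{\ll{4}}\lesssim \|f\|_{\l{\infty}{2}}^{1/2}\bigl(\|\nabb f\|_{\ll{2}}+\|f\|_{\l{\infty}{2}}\bigr)^{1/2}. \]
Taking the supremum in $u\in[-2,2]$ on the left-hand side and applying Young's inequality, the factors of $\|f\|_{\l{\infty}{2}}$ and $\|\nabb f\|_{\ll{2}}$ appearing on the right are absorbed into the left-hand side for $\delta$ small enough in \eqref{t0}, and \eqref{p8e4} follows.

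The main subtlety lies in the $h_1$ contribution: because $h_1$ only sits in $\ll{4/3}$ rather than in $\ll{2}$, one needs the full strength of Proposition~\ref{p2} to place $f\in\ll{4}$ using only the $L^\infty_u L^2(P_u)\cap\{\nabb f\in\ll{2}\}$ control that appears on the left-hand side of \eqref{p8e4} itself, closing the estimate by absorption. Since only the $L^2$-level is sought, no higher-order Bochner argument along the lines of Proposition~\ref{p5} or the second half of Proposition~\ref{p7} is required.
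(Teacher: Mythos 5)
Your proposal is correct and follows essentially the same route as the paper's proof: the same $L^2$ energy identity, integration by parts of the $\divb(H)$ term against $f$, H\"older plus Proposition \ref{p2} to handle the $h_1$ pairing via $\norm{f}_{\ll{4}}$, and absorption using the smallness in \eqref{t0}. The only cosmetic difference is that the paper carries an extra weight $a$ into the divergence integral, producing an additional $\ana\cdot H f$ term that it bounds by the same $\norm{f}_{\ll{4}}$ argument, whereas you integrate by parts without the weight; both are valid.
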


\begin{proof}
We multiply \eqref{p8e1} by $f$ and integrate on $-2<u'<u$ where $u\leq 2$. Using integration by parts together with \eqref{coarea} and \eqref{du}, we obtain:
\begin{equation}\label{p8e5}
\begin{array}{ll}
& \ds\frac{1}{2}\norm{f(u,.)}_{L^2(P_{u})}^2+\norm{a^{-1/2}\nabb f}_{\ll{2}}^2\\[2mm]
\ds = & \ds\frac{1}{2}\norm{f(-2,.)}_{L^2(P_{-2})}^2+\frac{1}{2}\int_{-2}^u\int_{P_{u'}}a^{-1}\trt f^2d\mu_{u'}du'+\int_{-2}^u\int_{P_{u'}} hf d\mu_{u'}du'\\
\ds \lesssim & \ds\norm{f(-2,.)}_{L^2(P_{-2})}^2+\norm{\trt}_{\l{\infty}{4}}\norm{f}_{\l{2}{\frac{8}{3}}}^2+\int_{-2}^u\int_{P_{u'}} hf d\mu_{u'}du'.
\end{array}
\end{equation}
Taking \eqref{p8e2} into account, we have:
\begin{equation}\label{p8e6}
\begin{array}{lll}
\ds\int_{-2}^u\int_{P_{u'}}hf d\mu_{u'}du' & \ds = & \ds\int_{-2}^{u}\int_{P_{u'}}\divb(H)fad\mu_{u'} du'+\int_{-2}^u\int_{P_{u'}}fh_1d\mu_{u'}du'\\
& \ds = & \ds -\int_{-2}^u\int_{P_{u'}}H\nabb fd\mu_{u'}du' -\int_{-2}^u\int_{P_{u'}}\ana Hf d\mu_{u'}du'\\
& & \ds +\int_{-2}^u\int_{P_{u'}}fh_1d\mu_{u'}du'\\
& \ds\lesssim & \ds\norm{\nabb f}_{\ll{2}}\norm{H}_{\ll{2}}+\norm{f}_{\ll{4}}(\norm{\ana}_{\ll{4}}\norm{H}_{\ll{2}}\\
& & \ds +\norm{h_1}_{\ll{\frac{4}{3}}}),
\end{array}
\end{equation}
which together with Proposition \ref{p2} and  \eqref{t0} yields:
\begin{equation}\label{p8e7}
\int_{-2}^u\int_{P_{u'}}hf d\mu_{u'}du'\lesssim (\norm{f}_{\l{\infty}{2}}+\norm{\nabb f}_{\ll{2}})(\norm{H}_{\ll{2}}+\norm{h_1}_{\ll{\frac{4}{3}}}).
\end{equation}
Finally, \eqref{eq:GNirenberg},  \eqref{t0}, \eqref{p8e5} and \eqref{p8e7} imply \eqref{p8e4}.
\end{proof}

In section \ref{regx}, we will have among other things to control $\hth$ (the traceless part of $\th$). Now, 
according to the second equation of \eqref{struct}, $\hth$ satisfies an equation of the type $\divb(F)=h$. 
Thus, we conclude this section with an estimate that will allow us to control the solution to such equations.
\begin{proposition}\label{p6}
Let $F$ a symmetric 2-tensor such that tr$F=0$. Then:
\begin{equation}\label{e3}
\norm{\nabb F}_{\ll{2}}\lesssim \norm{\divb F}_{\ll{2}}+\norm{K}^{\frac{1}{2}}_{\ll{2}}\norm{F}_{\ll{4}}.
\end{equation}
\end{proposition}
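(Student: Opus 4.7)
The plan is to reduce to a leafwise Bochner-type identity on $P_u$, namely
\[
\int_{P_u}|\nabb F|^2\, d\muu \;=\; 2\int_{P_u}|\divb F|^2\, d\muu \;-\; 2\int_{P_u} K|F|^2\, d\muu,
\]
valid for symmetric traceless 2-tensors $F$ on the two-dimensional leaf $P_u$, and then integrate in $u$. Granting this identity, H\"older gives $|\int_{P_u}K|F|^2|\le \norm{K}_{L^2(P_u)}\norm{F}^2_{L^4(P_u)}$, and taking square roots yields the leafwise analog of \eqref{e3}. Squaring and integrating in $u$ via the coarea formula \eqref{coarea} (together with $\frac{1}{2}\le a\le 2$ from \eqref{t0}) identifies the leafwise $L^p$-integrals with the $\ll{p}$-norms; one further Cauchy-Schwarz in $u$ applied to $\int_{-2}^{2}\norm{K}_{L^2(P_u)}\norm{F}^2_{L^4(P_u)}\,du$ absorbs the $u$-integration into the factor $\norm{K}_{\ll{2}}^{1/2}\norm{F}_{\ll{4}}$ appearing on the right-hand side of \eqref{e3}.

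The leafwise identity itself is established by combining two integration-by-parts computations on $P_u$. For the first, begin from $\int_{P_u}|\divb F|^2\,d\muu$, integrate by parts to rewrite it as $-\int_{P_u}F^{AB}\nabb_A\nabb^C F_{CB}\,d\muu$, and commute $\nabb_A$ past $\nabb^C$ using
\[
[\nabb_A,\nabb^C]F_{CB}\;=\;-2K\,F_{AB},
\]
which follows from the 2-surface Riemann structure $R_{ABCD}=K(\ga_{AC}\ga_{BD}-\ga_{AD}\ga_{BC})$ together with the symmetry and tracelessness of $F$. A second integration by parts then yields
\[
\int_{P_u}|\divb F|^2\,d\muu \;=\; \int_{P_u}\nabb^C F^{AB}\,\nabb_A F_{CB}\,d\muu \;+\; 2\int_{P_u}K|F|^2\,d\muu.
\]

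For the second computation, set $T_{CAB}:=\nabb_C F_{AB}-\nabb_A F_{CB}$. Since $T$ is antisymmetric in $(C,A)$, on the two-dimensional leaf $P_u$ it necessarily factors as $T_{CAB}=\epsilon_{CA}(\curlb F)_B$, where $\epsilon$ is the area form on $P_u$ and $(\curlb F)_B:=\epsilon^{EA}\nabb_E F_{AB}$. For symmetric traceless $F$, expanding $\epsilon^{EA}\epsilon^{BD}=\ga^{EB}\ga^{AD}-\ga^{ED}\ga^{AB}$ and using $\textrm{tr}\,F=0$ yields the pointwise identity $\curlb F=\star\,\divb F$, and hence $|\curlb F|^2=|\divb F|^2$. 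Contracting $\nabb^C F^{AB}$ with the antisymmetric representation of $T_{CAB}$ and integrating then produces
\[
\int_{P_u}|\nabb F|^2\,d\muu \;=\; \int_{P_u}\nabb^C F^{AB}\,\nabb_A F_{CB}\,d\muu \;+\; \int_{P_u}|\divb F|^2\,d\muu,
\]
and adding the two identities proves the leafwise Bochner identity. The main obstacle is this second step: one must carefully extract the antisymmetric part of $\nabb F$, identify it with $\curlb F$, and invoke two-dimensional Hodge duality to convert $|\curlb F|^2$ into $|\divb F|^2$; once this is done, the rest of the argument is routine $L^p$ bookkeeping.
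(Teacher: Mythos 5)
Your proof is correct and rests on exactly the same identity as the paper, namely the Hodge-system identity $\int_{\p}(|\nabb F|^2+2K|F|^2)=2\int_{\p}|\divb F|^2$ for symmetric traceless $2$-tensors, followed by the same H\"older and Cauchy--Schwarz bookkeeping in $u$; the only difference is that the paper simply cites this identity from \cite{FLUX}, whereas you derive it (correctly) via the two integrations by parts, the Ricci commutation $[\nabb_A,\nabb^C]F_{CB}=-2KF_{AB}$, and the two-dimensional duality $|\curlb F|=|\divb F|$.
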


\begin{proof}
This follows immediately from the following identity for Hodge systems (see for example \cite{FLUX}):
\begin{equation}\label{hodge}
\int_{\p}(|\nabb F|^2+2K|F|^2)=2\int_{\p}|\divb F|^2.
\end{equation}
\end{proof}

\section{Construction of the foliation and regularity with respect to $x$}\label{regx}

This section deals with the proof of Theorem \ref{thregx}. By section \ref{reducsmall}, we may assume that $(\s,g,k)$ coincides with $(\R^3,\delta,0)$ outside of a compact, say $|x|\geq 1$. Notice that in $|x|\geq 1$ and for all $\o\in\S$, the scalar function $x.\o$ satisfies the equation \eqref{choice} and the estimate \eqref{thregx1}, since $a\equiv 1, \th\equiv 0$ and $N\equiv\o$ in this region. Thus, we would like to construct a function $u$ solution of \eqref{choice} satisfying \eqref{thregx1} in a region containing $|x|\leq 2$ and to glue it to $x.\o$ in $1\leq |x|\leq 2$. Now, \eqref{choice} is of parabolic type - see \eqref{eqlapse1} - where $u$ plays the role of time. Therefore, for each $\o\in\S$, we will construct $u(.,\o)$ on a strip of type $S=\{x\in\s\textrm{ such that }-2< u(x,\o)< 2\}$ solution of:
\begin{equation}\label{choice1}
\left\{\begin{array}{l}
\trt-k_{NN}=1-a,\textrm{ on }-2<u<2,\\
u(.,\o)=-2\textrm{ on }x.\o=-2.
\end{array}\right.
\end{equation}

The rest of the section is as follows. We first prove a priori estimates consistent with the estimate \eqref{thregx1} and valid on $-2<u<2$ for the solution $u$ of \eqref{choice1}. We also prove on $-2<u<2$ a priori estimates for higher derivatives of the solution $u$ of \eqref{choice1}. We then recall 
the result obtained in \cite{papernash}, where we use a Nash-Moser procedure to obtain the existence of $u$ solution to:
\begin{equation}\label{choice2}
\left\{\begin{array}{l}
\trt-k_{NN}=1-a,\textrm{ on }\alpha<u<\alpha+T,\\
u=\alpha\textrm{ on }\underline{u}=\alpha,
\end{array}\right.
\end{equation}
where $-2\leq \alpha\leq 2$, $\underline{u}$ is smooth, and $T>0$ is small enough. Together with the a  priori estimates, this allows us to control the solution of \eqref{choice2} on $-2+kT<u<-2+(k+1)T$ uniformly with respect to $k=0,\dots,[4/T]$ in order to obtain a solution $u$ of \eqref{choice1} on $-2<u<2$. Finally, we conclude the proof of Theorem \ref{thregx} by showing how to glue the solution $u$ of \eqref{choice1} to $x.\o$ in $1\leq |x|\leq 2$ in order to obtain a solution on $\s$ satisfying \eqref{thregx1}.

\begin{remark}\label{highder}
In order to obtain higher order derivatives estimates for \eqref{choice1}, and in order to construct the solution of \eqref{choice2} using a Nash Moser procedure, we need to assume that $(\s,g,k)$ is smooth. We would like to insist on the fact that the smoothness is only assumed to obtain the existence of $u$ solution of \eqref{choice1}. On the other hand, we only rely on the control of $\norm{R}_{L^2(\s)}$ and $\norm{\nabla k}_{L^2(\s)}$ given by \eqref{small1} to prove the estimate \eqref{thregx1}. 
\end{remark}

\subsection{A priori estimates for lower order derivatives}\lab{sec:apriorilow}

Let $(\s,g,k)$ chosen as in section \ref{reducsmall}. In particular, we assume:
\begin{equation}\label{small2}
\norm{\nabla k}_{L^2(\s)}+\norm{R}_{L^2(\s)}\leq\ep.
\end{equation}
Let $u$ a scalar function on $\s\times\S$, and let $\p$, $a$, $N$, $\th$ and $K$ be associated to $u$ as in section \ref{sec:foliation}. Assume that $u$ satisfies the additional equation \eqref{choice}. The equations \eqref{frame} \eqref{struct} \eqref{gauss} may be rewritten:
\be\lab{frame1}
\left\{\begin{array}{l}
\nabla_AN=\th_{AB}e_B,\\[1mm]
\nabn N=-\nabb\lg,
\end{array}\right.
\ee
\be\lab{struct1}
\left\{\begin{array}{l}
\trt-k_{NN}=1-a,\\[1mm]
\nabn a-\lapa=|\th|^2+\nabn (k_{NN})+R_{NN},\\[1mm]
\nabb^B\hth_{AB}=\frac{1}{2}\nabb_A\trt+R_{NA},\\[1mm]
a^{-1}\nabb_A\nabb_Ba+\nabn\th_{AB}+\th_A^C\th_{CB}+K\gamma_{AB}=R_{AB},
\end{array}\right.
\ee
and
\be\lab{gauss1}
2K-\trt^2+|\th|^2=R-2R_{NN}.
\ee
In this section, we establish a priori estimates for $a$, $N$, $\th$ and $K$ corresponding to \eqref{thregx1} in the region $S$ of $\s$ between $P_{-2}$ and $P_2$ (i.e. $S=\{x\,/\,-2< u(x,\o)< 2\}$) where $u$ is initialized on $x.\o=-2$ by:
\begin{equation}\label{init}
u(x,\o)=-2\textrm{ on }x.\o=-2.
\end{equation}
Note that the first equation of \eqref{struct1}, \eqref{init} and the fact that $(g,k,\s)$ coincides with $(\delta,0,\R^3)$ for $|x|\geq 2$ yields:
\begin{equation}\label{init1}
\nabla^p(a-1)=0,\,\nabla^p\theta=0,\,\nabla^p(N-\o)=0\textrm{ for all }p\in\N\textrm{ on }u=-2,
\end{equation} 
so that the subsequent integrations by parts will not create boundary terms at $u=-2$. We will assume:
\begin{equation}\label{boot} 
\norm{a-1}_{\l{\infty}{2}}+\norm{\nabla a}_{\l{\infty}{2}}+\norm{a-1}_{\ll{\infty}}+\norm{\nabb\nabla a}_{\ll{2}}+\norm{K}_{\ll{2}}\leq D\ep,
\end{equation}
and
\begin{equation}\label{boot1} 
\norm{\nabla\theta}_{\ll{2}}\leq D^2\ep,
\end{equation}
where $D$ is a large enough constant. We will then try to improve on these estimates. Let us note that \eqref{init1}, \eqref{boot} and \eqref{boot1} together with Corollary \ref{c0}, Proposition \ref{p2} and 
Proposition \ref{p3} yield:
\begin{equation}\label{appboot} 
\norm{\nabn a}_{\ll{4}}+\norm{\nabb a}_{\l{\infty}{4}}+\norm{\nabb a}_{\ll{6}}\leq D^2\ep,
\end{equation}
and
\begin{equation}\label{appboot1} 
\norm{\th}_{\l{\infty}{2}}+\norm{\th}_{\l{\infty}{4}}+\norm{\th}_{\ll{6}}\leq D^3\ep.
\end{equation}
Also, using Corollary \ref{c0}, \eqref{small2}, and the fact that $k\equiv 0$ on $x.\o=-2$ by section 
\ref{reducsmall} yields:
\begin{equation}\label{appsmall2} 
\norm{k}_{\l{\infty}{2}}+\norm{k}_{\l{\infty}{4}}+\norm{k}_{\ll{6}}\leq D\ep.
\end{equation}

%\begin{definition}\lab{def:decompositionk}
%We decompose the symmetric traceless 2 tensor $k$ into the scalar $\d$, the $\p$-tangent 1-form $\epsilon$, and the $\p$-tangent symmetric 2-tensor $\eta$ as follows:
%\be\lab{decompositionk}
%\left\{\begin{array}{l}
%k_{NN}=\d\\
%k_{AN}=\kep_A\\
%k_{AB}=\eta_{AB}.
%\end{array}\right.
%\ee
%\end{definition}

\subsubsection{Improvement of the bootstrap assumptions \eqref{boot1}}

We start by estimating $\th$. Since $\trt-k_{NN}=1-a$, we have from \eqref{boot}:
\begin{equation}\label{r0}
\norm{\trt-k_{NN}}_{\ll{\infty}}\leq D\ep.
\end{equation}
Also, the first equation of \eqref{struct1} together with \eqref{frame1} yields, schematically: 
$$\nabn\trt=\nabn k_{NN}-2k_{\nabb\lg N}-\nabn a,\,\nabb\trt=\nabb k_{NN}+2k_{N\c}\c\th-\nabb a,$$ 
so that: 
\begin{equation}\label{r3}
\begin{array}{lll}
\ds\norm{\nabla\trt}_{\ll{2}} & \lesssim & \ds\norm{\nabla a}_{\ll{2}}+\norm{\nabla k}_{\ll{2}}\\
& & \ds +\norm{k}_{\l{\infty}{4}}(\norm{\nabb a}_{\l{\infty}{4}}+\norm{\th}_{\l{\infty}{4}})\\
& \ds\lesssim & \ds (D+D^4\ep)\ep,
\end{array}
\end{equation}
where we have used the bootstrap assumption \eqref{boot}, \eqref{appboot}, \eqref{appboot1} and \eqref{appsmall2} to obtain the last inequality. We continue with the estimates for $\hth$. The third equation in \eqref{struct1} and Proposition \ref{p6} yield:
\begin{equation}\label{r4}
\norm{\nabb\hth}_{\ll{2}}\lesssim \norm{\nabb\trt}_{\ll{2}}+\norm{R_{N.}}_{\ll{2}}+
\norm{K}_{\ll{2}}^{\frac{1}{2}}\norm{\hth}_{\ll{4}}
\end{equation}
which together with \eqref{small2}, \eqref{boot}, \eqref{appboot1} and \eqref{r3} yields:
\begin{equation}\label{r5}
\norm{\nabb\hth}_{\ll{2}}\lesssim (D+D^4\ep^{\frac{1}{2}})\ep.
\end{equation}
Also, using the last equation of \eqref{struct1}, we have:
\begin{equation}\label{r6}
\norm{\nabn\th}_{\ll{2}}\lesssim \norm{\nabb^2a}_{\ll{2}}+\norm{K}_{\ll{2}}+\norm{R}_{\ll{2}}+
\norm{\th}^2_{\ll{4}}
\end{equation}
which together with \eqref{small2}, \eqref{boot} and \eqref{appboot1} yields:
\begin{equation}\label{r7}
\norm{\nabn\th}_{\ll{2}}\lesssim (D+D^6\ep)\ep.
\end{equation}
Finally, \eqref{r3}, \eqref{r5} and \eqref{r7} yield:
\begin{equation}\label{r8}
\norm{\nabla\th}_{\ll{2}}\lesssim (D+D^{6}\ep^{\frac{1}{2}})\ep,
\end{equation}
which is an improvement of \eqref{boot1}.

\subsubsection{Improvement of the bootstrap assumptions \eqref{boot}}

We now try to improve \eqref{boot}. Note first that \eqref{gauss1} yields:
\begin{equation}\label{r9}
\norm{K}_{\ll{2}}\lesssim \norm{\trt}^2_{\ll{4}}+\norm{\th}^2_{\ll{4}}+\norm{R}_{\ll{2}}.
\end{equation}
Together with \eqref{small2} and \eqref{appboot1}, this yields:
\begin{equation}\label{r10}
\norm{K}_{\ll{2}}\lesssim (1+D^6\ep)\ep.
\end{equation}
We rewrite the second equation of \eqref{struct1} as:
\begin{equation}\label{r11}
(\nabn-a^{-1}\lap)(a-1)=h,
\end{equation}
where $h$ is given by:
\begin{equation}\label{r12}
h=|\th|^2+\nabn (k_{NN})+R_{NN}.
\end{equation}
Using the second equation of \eqref{frame1} implies:
\begin{equation}\label{r12bis}
\nabn(k_{NN})=\nabn k_{NN}+k(\nabn N,N)=\nabn k_{NN}-k(\nabb a,N),
\end{equation}
which together with \eqref{r12} yields:
\begin{equation}\label{r12ter}
h=|\th|^2+\nabn k_{NN}-k(\nabb a,N)+R_{NN}.
\end{equation}
Using \eqref{small2}, \eqref{appboot}, \eqref{appboot1}, \eqref{appsmall2} and \eqref{r12ter}, we obtain:
\begin{equation}\label{r13}
\norm{h}_{\ll{2}}\lesssim (1+D^6\ep)\ep.
\end{equation}
Using Proposition \ref{p7}, \eqref{init1}, \eqref{boot}, \eqref{appboot}, \eqref{appboot1}, \eqref{r11} and  \eqref{r13} we obtain:
\begin{equation}\label{r19}
\ds\norm{a-1}_{\l{\infty}{2}}+\norm{\nabb a}_{\l{\infty}{2}}+\norm{\nabn a}_{\ll{2}}+\norm{\nabb^2a}_{\ll{2}}\lesssim (1+D^6\ep)\ep.
\end{equation}

In order to obtain estimates for $\nabb\nabn a$ and $\nabn^2a$, we differentiate the second equation of \eqref{struct1} by $\nabn$:
\begin{equation}\label{r20}
(\nabn - a^{-1}\lap)\nabn a= [\nabn, a^{-1}\lap]a+2\th\nabn\th+\nabn^2k_{NN}+\nabn R_{NN}, 
\end{equation}
Using \eqref{r12bis}, we have:
\begin{equation}\label{r201}
\nabn^2(k_{NN})=\nabn(\nabn k_{NN}-k(\nabb a,N)).
\end{equation}
The commutator formula \eqref{scommut} and the second equation of \eqref{frame1} yield:
\begin{equation}\label{r202}
\begin{array}{lll}
\ds\nabn(k(\nabb a,N)) & = & \ds -\nabn k(\nabb a,N)
-k(\nabn\nabb a,N)\\
& & \ds +k(\nabb a,\nabb a)\\
& = & \ds -\nabn k(\nabb a,N)
-k(\nabb\nabn a,N)\\
& & \ds -\nabn a k(\nabb a,N)+\th(\nabb a,e_A)k_{AN}\\
& & \ds +k(\nabb a,\nabb a).
\end{array}
\end{equation}
Using the constraint equations \eqref{const} and the fact that we have a maximal foliation yields:
\bea\label{r21}
\nabn k(NN)&=&-\nabla_Ak_{AN}\\
\nn&=&-\divb (k_{N.})-\trt k_{NN}+\th_{AB}k_{AB}, 
\eea
which together with the commutator formula \eqref{commut}, the second equation of \eqref{frame1}, \eqref{r201} and \eqref{r202} implies, schematically:
\begin{equation}\label{r22}
\begin{array}{ll}
\ds \nabn^2(k_{NN})= & \ds -\divb(\nabn k_{N.})+\ana\nabn k_{N.}+\theta\nabb k_{N.}+R_{N.}k+\theta\ana k_{N.}\\
& \ds +\nabn\th k+\th\nabn k+\th k \nabb a+\nabn k(\nabb a,N)\\
&\ds +k(\nabb\nabn a,N) +\nabn a k(\nabb a,N)\\
&\ds +\th k+k(\nabb a,\nabb a).
\end{array}
\end{equation}
We use the twice-contracted Bianchi identity on $\s$ 
\begin{equation}\label{bianchi}
\nabla^jR_{ij}=\frac{1}{2}\nabla_iR,
\end{equation}
together with the constraint equations \eqref{const} to express $\nabn R_{NN}$:
\bea\label{bianchi1}\label{r22ter}
\nabn R_{NN}&=& -\nabla_AR_{AN}+k\c\nabn k\\
\nn&=&-\divb(R_{.N})+\trt R_{NN}-\th_{AB}R_{AB}+k\c\nabn k.
\eea
Finally, we use the commutator formula \eqref{commut1} for a scalar $f$: 
\bea\label{commut1conseq}
 a[\nabn,a^{-1}\lap]a &=& -\trt\lap a-2\hth\c\nabb^2a+2a^{-1}\nabb a\c\nabb\nabn a-2R_{N.}\c\nabb a\\
&&\nn-\nabb\trt\c\nabb a-2\hth\c a^{-1}\nabb a\c\nabb a.
\eea
\eqref{r20}, \eqref{r22}, \eqref{r22ter} and \eqref{commut1conseq} yield:
\begin{equation}\label{r23}
(\nabn - a^{-1}\lap)\nabn a=\divb(H)+h_1, 
\end{equation}
where the tensor $H$ is given by
\begin{equation}\label{r23bis}
H=-\nabn k_{.N}-R_{.N}, 
\end{equation}
and where the scalar $h_1$ is given schematically by
\begin{equation}\label{r24}
\begin{array}{ll}
\ds h_1= &  -a^{-1}\trt\lap a-2a^{-1}\hth\nabb^2a+2a^{-2}\nabb a\nabb\nabn a-2R_{N.}\ana -\nabb\trt\ana\\
& \ds +2\hth |\ana|^2+2\th\nabn\th+\ana\nabn k_{N.}+\nabla\th k+\theta\nabla k+R_{N.}k \\
&\ds+\theta\ana k_{N.}+2k\nabn k+\nabn k(\nabb a,N)+k(\nabb\nabn a,N)\\
&\ds+\nabn a k(\nabb a,N)+\th k+ k(\nabb a,\nabb a)+\th R.
\end{array}
\end{equation}
We estimate $H$ in $\ll{2}$ using \eqref{small2}: 
\begin{equation}\label{r24bis}
\norm{H}_{\ll{2}}\leq \norm{\nabla k}_{\ll{2}}+\norm{R}_{\ll{2}}\leq 2\ep. 
\end{equation}
We estimate $h$ in $\l{2}{\frac{4}{3}}$:
\begin{equation}\label{r25}
\begin{array}{r}
\ds\norm{h}_{\l{2}{\frac{4}{3}}}\lesssim (\norm{\th}_{\l{\infty}{4}}+\norm{\nabb a}_{\l{\infty}{4}}+ \norm{k}_{\l{\infty}{4}})\\[3mm]
\ds \times(\norm{\nabb^2a}_{\ll{2}}+\norm{\nabb\nabn a}_{\ll{2}}+\norm{R}_{\ll{2}}+\norm{\nabla\th}_{\ll{2}}+\norm{\nabla k}_{\ll{2}}\\
\ds +\norm{\nabb a}^2_{\ll{4}}+\norm{\th}^2_{\ll{4}}+\norm{\nabn a}^2_{\ll{4}}+\norm{k}^2_{\ll{4}})
\end{array}
\end{equation}
which together with \eqref{small2}, \eqref{boot}, \eqref{boot1}, \eqref{appboot}, \eqref{appboot1} and \eqref{appsmall2} yields:
\begin{equation}\label{r27}
\ds\norm{h}_{\l{2}{\frac{4}{3}}}\lesssim D^9\ep^2.
\end{equation}
Using Proposition \ref{p8}, \eqref{init1}, \eqref{appboot}, \eqref{appboot1}, \eqref{r23}, \eqref{r24bis} and \eqref{r27} we obtain:
\begin{equation}\label{r28}
\begin{array}{r}
\norm{\nabn a}_{\l{\infty}{2}}+\norm{\nabb\nabn a}_{\ll{2}}\lesssim (1+D^9\ep)\ep.
\end{array}
\end{equation}
Now, Proposition \ref{p3} together with \eqref{r19} and \eqref{r28} yields:
\begin{equation}\label{r31}
\norm{a-1}_{\ll{\infty}}\lesssim (1+D^{9}\ep)\ep.
\end{equation}
Finally, \eqref{r10}, \eqref{r19}, \eqref{r28} and \eqref{r31} imply:
\begin{equation}\label{r32} 
\begin{array}{rr}
\ds\norm{a-1}_{\l{\infty}{2}}+\norm{\nabla a}_{\l{\infty}{2}}+\norm{a-1}_{\ll{\infty}} & \\
+\norm{\nabb\nabla a}_{\ll{2}}+\norm{K}_{\ll{2}} & \ds\lesssim (1+D^9\ep)\ep,
\end{array}
\end{equation}
which is an improvement of \eqref{boot}.

Thus, there is a universal constant $D$ such that \eqref{boot} and \eqref{boot1} hold. Together with \eqref{frame1} and \eqref{r0}, this yields \eqref{thregx1}. 

\subsection{A priori estimates for higher order derivatives}

In addition to \eqref{small2}, we assume the following control on $R$ and $k$:
\begin{equation}\label{h1}
\norm{\nabla^jR}_{\ll{2}}+\norm{\nabla^{1+j}k}_{\ll{2}}\leq M,\textrm{ for all }1\leq j\leq 300,
\end{equation}
where $M$ is a large constant. The goal of this section is to prove the following proposition:
\begin{proposition}\label{h2}
Let $(\s,g,k)$ chosen as in section \ref{reducsmall}, and satisfying \eqref{h1}. Let $u$ a scalar function defined on $S=\{x\,/\,-2< u(x,\o)< 2\}$, and let $\p$, $a$, $N$, $\th$ and $K$ be associated to $u$ as in section \ref{sec:foliation}. Assume that $u$ satisfies the additional equation \eqref{choice} and is initialized on $x.\o=-2$ by \eqref{init}. Then, $a$ and $\theta$ satisfy the following estimates:
\begin{equation}\label{h3}
\norm{\nabb^2\nabla^{j-1}a}_{\ll{2}}+\norm{\nabla^ja}_{\ll{2}}+\norm{\nabla^j\th}_{\ll{2}}\leq C(M),\textrm{ for all }1\leq j\leq 300.
\end{equation} 
\end{proposition}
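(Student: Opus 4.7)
The plan is to prove Proposition \ref{h2} by induction on $j$. The base case $j=1$ is precisely the content of the a priori estimates established in section \ref{sec:apriorilow} (see \eqref{boot}, \eqref{boot1} and \eqref{r19}), and only uses the $L^2$ bounds on $R$ and $\nabla k$, so it fits inside the bound $C(M)$. For the inductive step, assume that \eqref{h3} holds for all $j' \leq j$ together with all the mixed norms on $a$, $N$, $\theta$, $K$ already obtained at the corresponding lower orders. We then prove the bound for $j+1$ in two stages: first we upgrade $a$, and then we upgrade $\theta$ using the structure equations \eqref{struct1}--\eqref{gauss1}.

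For $a$, the idea is to apply $\nabla^{j}$ to the parabolic equation \eqref{r11}, which yields schematically
\be
(\nabn - a^{-1}\lap)\nabla^{j}(a-1) \;=\; [\,\nabla^{j},\,\nabn - a^{-1}\lap\,](a-1) + \nabla^{j} h,
\ee
with $h$ given by \eqref{r12ter}. Each commutator term and each piece of $\nabla^{j}h$ is, by the commutation formulas \eqref{commut}--\eqref{commut2} and the structure equations, a sum of products of at most $j+1$ derivatives of $a$, $\theta$, $k$, $N$ and $R$, with at least one factor of strictly lower order than $j+1$, \emph{except} for the harmless term $\nabla^{j}(R_{NN})+\nabla^{j}(\nabn k_{NN})$ controlled by the assumption \eqref{h1}. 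One then runs the energy method that proved Proposition \ref{p7}, but at order $j$: multiply by $\nabla^{j}(a-1)$ and by $\lap\nabla^{j}(a-1)$ and integrate by parts on $S$, using \eqref{coarea}, \eqref{du} and the vanishing initial data \eqref{init1} to get rid of boundary terms. Combined with the Bochner identity (Proposition \ref{prop:Bochner}) and the higher-order analogues of Propositions \ref{p1}--\ref{p3} (which give the Sobolev-type control of low-order factors), this yields
\be
\norm{\nabla^{j+1}a}_{\ll{2}} + \norm{\nabb^{2}\nabla^{j}a}_{\ll{2}} \;\les\; C(M).
\ee

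For $\theta$, we treat $\trt$ and $\hth$ separately. The first line of \eqref{struct1} gives $\trt = 1-a+k_{NN}$, so $\nabla^{j+1}\trt$ is immediately bounded by the just-proved estimate on $\nabla^{j+1}a$ plus \eqref{h1}. To control $\hth$, we use the Hodge system $\divb \hth = \tfrac12 \nabb \trt + R_{N.}$ together with the higher-order version of the identity \eqref{hodge} (which adds lower-order $K$-terms absorbable by the induction), giving tangential control of $\nabb\nabla^{j}\hth$. Normal derivatives of $\theta$ are recovered from the transport equation in the fourth line of \eqref{struct1}, applied with $\nabla^{j}$, and $K$ is recovered from \eqref{gauss1}. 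Combining tangential and normal derivatives yields the full $\norm{\nabla^{j+1}\theta}_{\ll{2}}$ estimate.

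The main obstacle will be the careful bookkeeping of the commutators $[\nabla^{j},\nabn]$ and $[\nabla^{j},\lap]$ and of the product estimates: at order $j$ close to $300$, the Leibniz expansion produces a large number of terms of the form $\nabla^{j_{1}}F_{1}\cdots \nabla^{j_{m}}F_{m}$ with $j_{1}+\cdots+j_{m}\leq j+1$ and $F_{i}\in\{a,\theta,N,k,R,K,\ana\}$, and for each of them one needs to place all but one factor in a Sobolev space that embeds into $\ll{\infty}$ on $S$ (via the coarea formula \eqref{coarea} and the calculus inequalities of Section \ref{sec:ineq}) so that the remaining factor can be absorbed in $\ll{2}$. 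This is essentially a Moser-type estimate adapted to the foliation $\p$; it is not delicate in principle, but it is where the smoothness assumption of Remark \ref{highder} is used and where the constant $C(M)$ is allowed to depend on $M$ in a nonlinear way. Once this is done, the induction closes and \eqref{h3} follows for all $1 \leq j \leq 300$.
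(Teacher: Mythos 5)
Your overall architecture (induction on $j$; parabolic equation for the lapse; algebraic relation for $\trt$; Hodge/elliptic estimate for $\hth$; transport equation for the normal derivatives of $\th$) matches the paper's Appendix A, and the treatment of $\th$ is essentially the paper's. But there is a genuine gap in the step for $a$, and it is located exactly where the paper has to work hardest: the control of \emph{mixed} normal--tangential derivatives. You propose to apply the full covariant derivative $\nabla^{j}$ to the parabolic equation \eqref{r11} and to ``run the energy method that proved Proposition \ref{p7} at order $j$''. Proposition \ref{p7} is a scalar estimate tied to the specific structure $(\nabn-a^{-1}\lap)f=h$; once normal derivatives are interleaved into $\nabla^{j}(a-1)$, the resulting object is a rank-$j$ tensor and the claim that it satisfies an equation of this form, with a right-hand side controlled by the induction hypothesis, is precisely what needs proof. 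The paper does not do this. Instead it differentiates the lapse equation only by $\nabn^{j}$ (keeping a scalar equation of the exact parabolic form, with the commutator $[\nabn^{j},a^{-1}\lap]$ computed explicitly in Lemma \ref{h28}), obtains the pure tangential derivatives $\nabb^{j+1}a$ from the elliptic equation on the leaves, and then recovers all mixed derivatives via Lemma \ref{h26}, a G\aa rding-inequality argument for the pseudodifferential operator with symbol $2^{l+1}\big((N\cdot\xi/|\xi|)^{2l}+(e\cdot\xi/|\xi|)^{2l}\big)-1$, whose $L^2$-boundedness requires the symbol to lie in $H^{5/2+\delta}$, i.e.\ $\nabla^{3}N\in L^{2}(S)$. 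This is why the paper must first carry out the case $j=2$ in full detail (including $\norm{\nabb\nabn^{2}a}_{\ll{2}}\le C(M)$) before the general induction can even start; your induction passes directly from $j=1$ to the general step and never supplies this input.

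The same omission recurs at the end of your argument for $\th$: you obtain $\nabb^{j+1}\th$ and $\nabn^{j+1}\th$ and then assert that ``combining tangential and normal derivatives yields the full $\norm{\nabla^{j+1}\th}_{\ll{2}}$ estimate''. That combination is not free; in the paper it is again Lemma \ref{h26} (estimate \eqref{h27}) that converts $\nabn^{l}F$ and $\nabb^{l}F$ control into $\nabla^{l}F$ control, and its proof is a genuinely separate piece of analysis (Coifman--Meyer $L^{2}$-boundedness of pseudodifferential operators with Sobolev symbols plus the sharp G\aa rding inequality). To repair your proposal you would either have to justify the tensorial parabolic energy estimate for $\nabla^{j}(a-1)$ directly -- including the Bochner identities for rank-$j$ tensors and the absorption of the top-order commutator terms $\hth\cdot\nabb^{2}\nabla^{j-1}a$ and $\ana\cdot\nabb\nabn\nabla^{j-1}a$ -- or import an interpolation statement of the type of Lemma \ref{h26}. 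As written, the key difficulty is asserted rather than addressed.
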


\begin{remark}
In connection with Remark \ref{highder}, let us insist again on the fact that the assumption \eqref{h1} is only used to obtain the existence of $u$ solution to \eqref{choice1}. 
\end{remark}

The proof of Proposition \ref{h2} is postponed to Appendix \ref{sec:proofhigherregapp}.

\subsection{Construction of the foliation on a small strip using a Nash Moser procedure}

In this section, we recall the local existence result obtained for \eqref{choice2} in \cite{papernash}. 
Let $-2\leq \alpha\leq 2$, a function $\underline{u}$ on $\s$ which is smooth in $\underline{u}\leq\alpha$, and $T>0$. To a function $u$ on $\s$, we associate $\p$, $a$, $N$ and $\th$ as in section \ref{sec:foliation}. We then define the nonlinear map $\phi$: 
\begin{equation}\label{nlmap}
\phi(u)=\trt-1+a-k_{NN}.
\end{equation}
Using $\phi$, we may rewrite \eqref{choice2} as:
\begin{equation}\label{choice3}
\left\{\begin{array}{l}
\phi(u)=0\textrm{ on }\alpha<u<\alpha+T,\\
u=\alpha\textrm{ on }\underline{u}=\alpha.
\end{array}\right.
\end{equation}
In \cite{papernash}, we prove that for $T>0$ small enough, we can construct a solution $u$ of \eqref{choice3} using a Nash Moser procedure: 
\begin{theorem}\label{nashmoser}
Assume that $g$ and $k$ are smooth, and that $\underline{u}$ is smooth in a neighborhood of $\underline{u}\leq\alpha$ where it satisfies $\phi(\underline{u})\equiv 0$. Assume also that the lapse of $\underline{u}$ satisfies $|\underline{a}-1|\leq 1/4$ on $\underline{u}\leq\alpha$. Then, there exists a constant $\underline{T}>0$ and a solution $u$ of \eqref{choice3} which is smooth in $\alpha<u<\alpha+\underline{T}$. Furthermore, $\underline{T}$ only depends on the norm of $(g,k)$ in $H^{300}(\s)$, and on the norm of $\underline{u}$ in $H^{300}$ in $\underline{u}\leq\alpha$.
\end{theorem}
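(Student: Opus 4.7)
The strategy is a Nash-Moser iteration adapted to the parabolic structure of \eqref{choice3}. First, I would eliminate the inhomogeneous boundary condition by fixing a smooth extension $\tilde u$ of $\underline u$ past $\{\underline u=\alpha\}$, chosen by a Borel-type construction so that all transverse derivatives of $\phi(\tilde u)$ vanish on $\{\underline u=\alpha\}$ (this is possible because $\phi(\underline u)\equiv 0$ on $\{\underline u\le\alpha\}$ and all data are smooth). Writing $u=\tilde u+w$ with $w|_{\{\underline u=\alpha\}}=0$ and $\Psi(w):=\phi(\tilde u+w)$, the problem becomes to find a zero of $\Psi$ starting from the residual $\Psi(0)=\phi(\tilde u)$, which by construction vanishes to infinite order at the boundary and hence is small on any thin strip above it.

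Second, I would compute the linearization. Using $\delta a=-a^2\nabn v$ and $\delta N=a\nabb v$, a direct computation yields
$$\phi'(u)v = -a^2\nabn v + a\lap v + 2\nabb a\c\nabb v - 2a\,k(\nabb v,N),$$
so that $-a^{-2}\phi'(u)v=(\nabn - a^{-1}\lap)v+\mathcal{L}(v,\nabb v)$ with $\mathcal{L}$ first order in $v$. This is precisely the parabolic operator handled by Proposition \ref{p7}, with $u$ in the role of time. This is no accident: the whole point of the choice \eqref{choice} was to produce this structure. Each Nash-Moser step thus amounts to solving a linear parabolic boundary value problem $\phi'(u)v=f$, $v|_{\{\underline u=\alpha\}}=0$.

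Third, I would establish the tame estimates. Proposition \ref{p7} is the base case; commuting with $\nabla^s$ via \eqref{commut1}, \eqref{dj3} and the product rules of Section \ref{sec:estimatesLP}, and absorbing the most dangerous coefficient terms using the divergence structure inherited from the constraint equations \eqref{const1} as in Section \ref{sec:apriorilow}, should yield
\begin{equation}\label{tameplan}
\|v\|_{H^{s+1}}\les C(\|u\|_{H^{s_0}})\bigl(\|\phi'(u)v\|_{H^{s}}+(1+\|u\|_{H^{s+s_1}})\|v\|_{H^{s_0}}\bigr)
\end{equation}
for some fixed $s_0,s_1$ and all $s\le 300-s_1$. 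The Nash-Moser iteration then proceeds in the standard way: pick a sequence $\theta_n\to\infty$, smoothing operators $S_{\theta_n}$, and define $w_{n+1}=w_n+v_n$ where $v_n$ solves the smoothed linearized equation $\phi'(S_{\theta_n}(\tilde u+w_n))v_n=-S_{\theta_n}\Psi(w_n)$ with $v_n|_{\{\underline u=\alpha\}}=0$. Combining \eqref{tameplan} with the infinite-order vanishing of $\Psi(0)$ at the boundary and H\"ormander's standard inductive bookkeeping delivers convergence in a high Sobolev norm on a strip $\alpha<u<\alpha+\underline T$, with $\underline T$ controlled only by $\|(g,k)\|_{H^{300}(\s)}$ and $\|\underline u\|_{H^{300}(\{\underline u\le\alpha\})}$.

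The main obstacle is proving \eqref{tameplan}: the coefficients of $\phi'(u)$ depend on two derivatives of $u$ (through $\th$ and $\nabn a$), so naive commutation with $\nabb^s$ or $\nabn^s$ loses too many derivatives at the top order. The resolution mirrors Section \ref{sec:apriorilow}: use the twice-contracted Bianchi identity \eqref{bianchi} together with the maximal condition \eqref{const1} to place the most singular commutator contributions in divergence form, then apply Proposition \ref{p8} (rather than Proposition \ref{p7}) to those contributions. Carrying this structural argument through all orders $j\le 300$ is the content of the companion paper \cite{papernash}.
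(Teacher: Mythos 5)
You should first be aware that the paper does not actually prove Theorem \ref{nashmoser}: the proof is deferred entirely to the companion notes \cite{papernash}, and the present paper only records the formula \eqref{estimlin10} for the linearized operator together with two remarks explaining why a Picard iteration fails and why the Nash--Moser scheme is delicate here. Measured against that, your outline captures the intended architecture: parabolicity of $\phi'(u)$ in the $\nabn$ direction as a consequence of the choice \eqref{choice}, loss of derivatives in the linearized estimates, tame estimates, a H\"ormander-type iteration, and smallness of the initial residual from the infinite-order vanishing of $\phi(\tilde u)$ on $\{\underline{u}=\alpha\}$. One minor discrepancy: your linearization carries the coefficient $2$ on the term $\nabb a\c\nabb v$ where \eqref{estimlin10} records $3$; a direct computation of $\delta\,\textrm{div}(N)=\textrm{div}(a\nabb v)$ using \eqref{omm7} gives $a\lap v+2\nabb a\c\nabb v$, so your coefficient appears to be the correct one, and in any case the difference is irrelevant to the parabolic structure.

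The genuine gap is in your third step. You state a tame estimate on a fixed isotropic Sobolev scale $H^s$ and then invoke ``H\"ormander's standard inductive bookkeeping'' with fixed smoothing operators $S_{\theta_n}$. But the operator $\nabn-a^{-1}\lap$ is parabolic with respect to the foliation by level sets of the unknown itself: the ``time'' variable, the ``time derivative'' $\nabn$, the surfaces $\p$ on which $\lap$ acts, and hence the anisotropic (parabolic) Sobolev spaces in which Proposition \ref{p7} and its higher-order analogues actually close, are all determined by the current iterate $u_n$. The estimates one can prove are tame only in inhomogeneous Sobolev spaces adapted to $u_n$, which therefore change at every step of the iteration; one must compare norms attached to different foliations as the iterates evolve, and the smoothing operators must be compatible with this moving anisotropy. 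This is precisely the difficulty the paper singles out as the reason \eqref{choice3} cannot be treated like a standard quasilinear parabolic equation and ``requires an intricate Nash Moser procedure'': the norms in which the converging sequence is measured depend on the sequence itself. Your plan does not address this circularity --- a fixed-$H^s$ tame estimate with a uniform gain of one derivative is not what the commutation argument produces, and the standard convergence bookkeeping does not apply verbatim when the function spaces depend on the iterate. Identifying and resolving this is the actual content of the proof in \cite{papernash}.
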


\begin{remark}
The linearized operator $\phi'(u)$ is explicitly computed in \cite{papernash} to be:
\begin{equation}\label{estimlin10}
\phi'(u)h=a(\lap h+3a^{-1}\nabb a.\nabb h)-a^2\nabn h-2ak(N,\nabb h).
\end{equation}
Thus, $\phi'(u)$ is of parabolic type, where $u$ plays the role of time, $\nabn$ the role of $\partial_t$, and $\nabb$ the role of $\partial_x$. This is due to the fact that the nonlinear problem is itself of parabolic nature as exhibited by the equation of the lapse \eqref{eqlapse1}.
 The estimates obtained for \eqref{estimlin10} in \cite{papernash} exhibit a loss of derivatives which prevents us from using a standard Picard iterative scheme. Fortunately, these estimates are tame so that one can prove Theorem \ref{nashmoser} by performing a Nash Moser type iterative scheme (see \cite{papernash}).
\end{remark}

\begin{remark}
We do not claim any sharpness in the Sobolev exponents appearing in the statement of Theorem \ref{nashmoser}. Our goal is to obtain an existence result with $\underline{T}>0$ depending only on a fixed number of derivatives of  $(g,k)$ and $\underline{u}$, no matter how large this fixed number is. 
\end{remark}

\begin{remark}
There are numerous existence results in the literature for quasilinear parabolic equations of the form: 
\begin{equation}\label{eq:parabex}
\partial_tu-F(t,x,u,\partial_xu,\partial^2_xu)=0, 
\end{equation}
where $F$ is a nonlinear map such that $\partial_t-F'$ is a parabolic operator (see for example \cite{LaSoUr}). The main difference between \eqref{choice3} and \eqref{eq:parabex} lies in the fact that the 'time' $u$ and the 'time derivative' $\nabn$ depend themselves on the solution $u$. This considerably complicates the analysis. Indeed, one 
may solve \eqref{eq:parabex} by a standard Picard iteration scheme, while \eqref{choice3}  
requires an intricate Nash Moser procedure. In particular, to prove that \eqref{estimlin10} satisfies tame 
estimates, one has to use inhomogeneous Sobolev spaces that depend on $u$. In turn, the norms in which the converging Nash Moser sequence of Theorem \ref{nashmoser} is evaluated depend on the sequence itself (see \cite{papernash}).
\end{remark}

\subsection{Proof of Theorem \ref{thregx}}

We apply here the strategy explained in the introduction of section \ref{regx}. Let $0<\alpha\leq 4$. We look for a solution $u(.,\o)$ to:
\begin{equation}\label{choice4}
\left\{\begin{array}{l}
\trt-k_{NN}=1-a,\textrm{ on }-2<u<-2+\alpha,\\
u(.,\o)=-2\textrm{ on }x.\o=-2.
\end{array}\right.
\end{equation}
Theorem \ref{nashmoser} ensures that $u(.,\o)$ solution of \eqref{choice4} exists as long as $|a-1|\leq 1/4$ and the norm of $u(.,\o)$ in $H^{300}$ in $-2\leq u\leq -2+\alpha$ stays under control. Now, the a priori estimates \eqref{boot} and \eqref{h3} yield $|a-1|\leq 1/4$ and the control of the norm of $u(.,\o)$ in $H^{300}$ in $-2\leq u\leq 2$. Thus, we deduce the existence of $u(.,\o)$ solution of:
\begin{equation}\label{choice5}
\left\{\begin{array}{l}
\trt-k_{NN}=1-a,\textrm{ on }-2<u<2,\\
u(.,\o)=-2\textrm{ on }x.\o=-2,
\end{array}\right.
\end{equation}
satisfying \eqref{boot}, \eqref{boot1} and \eqref{h3} on $-2<u<2$.

Now, we would like to glue the solution $u(.,\o)$ of \eqref{choice5} to $x.\o$ in the region $1\leq |x|\leq 2$ where $(\s,g,k)$ coincides with $(\R^3,\delta,0)$ by section \ref{reducsmall}. We will use the following lemma.
\begin{lemma}\lab{lemma:uprochexo}
Let $u(.,\o)$ the solution of \eqref{choice5} satisfying \eqref{boot}, \eqref{boot1} and \eqref{h3} on $-2<u<2$. Then, we have:
\begin{equation}\label{imp9}
(1+|x|)^{-1}|u-x.\o|+|\nabla u-\o|\lesssim\ep,\textrm{ in }\{|x|\geq 1\}\cap \{-2<u<2\}.
\end{equation} 
\end{lemma}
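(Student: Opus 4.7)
\bigskip

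The plan is to reduce both inequalities in \eqref{imp9} to a single $L^\infty$ bound on $\nabla u - \omega$ valid throughout $S$, and then to recover the pointwise control on $u - x\cdot\omega$ by integrating along a Euclidean ray from the initial surface $\{x\cdot\omega = -2\}$.

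For the gradient bound, I use $\nabla u = a^{-1}N$ and the decomposition
\begin{equation*}
|\nabla u - \omega|^2 = (a^{-1} - 1)^2 + a^{-1}\,|N - \omega|^2.
\end{equation*}
The $L^\infty$ bound on $a-1$ is already contained in \eqref{thregx1}, so the task reduces to $\|N - \omega\|_{\lli{\infty}}\les\ep$. Working componentwise in the global Cartesian coordinates from section \ref{reducsmall}, I apply Proposition \ref{p3} (in the form with hypotheses $\nabla F\in \ll{2}$ and $F(-2,\cdot)\in L^4(P_{-2})$) to each scalar $F_i := N^i-\omega^i$. The matching condition \eqref{init1} forces $N\equiv\omega$ on $P_{-2}$, hence $F_i|_{P_{-2}}\equiv 0$. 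The frame equations \eqref{frame1} give $|\nabla N|\les|\th|+|\nabla a|$, which together with its tangential derivative yields $\nabla F_i,\,\nabb\nabla F_i\in\ll{2}$ with norms $\les\ep$ --- the former via \eqref{appboot1} and \eqref{boot}, the latter via \eqref{boot1} and the bound on $\nabb\nabla a$ from \eqref{boot}, with the lower-order term $|\nabb a|^2$ controlled by the $L^\infty_uL^4$ estimate \eqref{appboot}. Proposition \ref{p3} then produces $\|F_i\|_{\lli{\infty}}\les\ep$, and summing in $i$ gives $\|N-\omega\|_{\lli{\infty}}\les\ep$, completing the bound $|\nabla u - \omega|\les\ep$.

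For the pointwise control of $u - x\cdot\omega$, set $v(x):=u(x)-x\cdot\omega$, so that $\|\nabla v\|_{\lli{\infty}}\les\ep$ and $v\equiv 0$ on $\{x\cdot\omega=-2\}$ by the initial condition \eqref{init}. For a fixed $x\in\{|x|\geq 1\}\cap\{-2<u<2\}$, I integrate along the ray $\gamma(s)=x-s\omega$. Since
\begin{equation*}
\omega\cdot\nabla u = 1 + \omega\cdot(\nabla u - \omega) \geq 1 - O(\ep) > \tfrac{1}{2},
\end{equation*}
the scalar $s\mapsto u(\gamma(s))$ is strictly decreasing, so $\gamma$ stays in $S$ until it meets the initial surface at the parameter value $s_0 = x\cdot\omega + 2 \leq |x|+2\les 1+|x|$. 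The fundamental theorem of calculus then gives $|v(x)|\leq s_0\|\nabla v\|_{\lli{\infty}}\les (1+|x|)\ep$, which is the first half of \eqref{imp9}.

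The only nontrivial step is the $L^\infty$ bound on $N-\omega$: this is precisely the sort of pointwise control that the foliation-adapted Sobolev embedding of Proposition \ref{p3} is designed to deliver from the mixed $L^2$-type estimates in the bootstrap. Everything else is an elementary path integration once this pointwise bound is in hand.
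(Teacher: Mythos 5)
Your second step (the ray integration controlling $u-x\cdot\omega$ once the gradient bound is in hand) is essentially the paper's, but your first step takes a genuinely different and shorter route: you apply Proposition \ref{p3} directly to $N-\omega$, which only requires $\nabla N,\,\nabb\nabla N\in\ll{2}$ with smallness $\ep$ --- and these do follow from the closed bootstrap \eqref{boot}, \eqref{boot1}, \eqref{appboot}, \eqref{appboot1} via \eqref{frame1} (this is exactly the computation \eqref{thu}--\eqref{thu1} that the paper performs later for a different purpose). The paper instead first proves the improved second--order estimates \eqref{imp1}, which rely on the vanishing of $R$ and $k$ in $|x|\geq 1$, precisely so that it can apply Proposition \ref{p3} one derivative higher, to $\nabla N$, obtain the pointwise bound \eqref{imp7}, and only then integrate to reach $|N-\omega|\lesssim\ep$. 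If your shortcut were fully justified, it would make the bulk of the paper's proof unnecessary.

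The gap is in the justification of the shortcut: $N^i-\omega^i$, treated componentwise in the background coordinates, is not a geometrically controlled object on the part of the strip lying in $\{|x|<1\}$, where $g\neq\delta$. There the gradient of the scalar $N^i$ differs from the components of the tensor $\nabla N$ by Christoffel symbols of the coordinate system, and $\nabb\nabla(N^i-\omega^i)$ picks up $\partial\Gamma\sim\partial^2 g$; neither quantity is controlled by \eqref{small1}, \eqref{globalcoorsigmareduc} or the bootstrap assumptions (only $R\in\lli{2}$ is assumed, not full $H^2$ control of $g$ in these coordinates). Since the proof of Proposition \ref{p3} integrates over entire leaves $P_u$, which cross $\{|x|<1\}$, you cannot simply restrict the embedding to $\{|x|\geq 1\}$. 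The paper's detour through \eqref{imp1} exists exactly to keep every comparison with the flat model ($N$ versus $\omega$, $u$ versus $x\cdot\omega$) as a path integration confined to $\{|x|\geq 1\}$, where $\omega$ is genuinely parallel and a pointwise bound on the honest tensor $\nabla N$ suffices. Your argument can be repaired either by that route, or by importing control of the Christoffel symbols of $g$ in $|x|<1$ from the reduction in \cite{boundedl2}, which is not stated in this paper. A related minor point: your straight ray $\gamma(s)=x-s\omega$ may enter $\{|x|<1\}$, so the final integration should also be taken along a path avoiding the unit ball (still of length $\lesssim 1+|x|$).
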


The proof of Lemma \ref{lemma:uprochexo} is postponed to the end of the section. We now conclude the proof of Theorem \ref{thregx} by showing how to glue $u$ and $x.\o$ together in $\{1\leq |x|\leq 2\}$. Let $\varphi$ a smooth function with compact support which is equal to 1 on 
$|x|\leq 1$ and to 0 on $|x|\geq 2$. Let $\tilde{u}$ be defined on $\s$ by:
\begin{equation}\label{imp10}
\tilde{u}=\varphi u+(1-\varphi)x.\o.
\end{equation} 
Then, $\tilde{u}$ satisfies {\bf C1a}. Also, since $u$ satisfies \eqref{boot} and \eqref{boot1} in $\{-2<u<2\}$, since $x.\o$ satisfies the same estimates in $|x|\geq 1$, and since we 
have \eqref{imp9} on $1\leq |x|\leq 2$, $\tilde{u}$ satisfies \eqref{thregx1} on $\s$. 
This concludes the proof of Theorem \ref{thregx}. \\

{\bf Proof of Lemma \ref{lemma:uprochexo}} \ We first show that $u(.,\o)$ satisfies better 
estimates in this region due to the hypoellipticity of the parabolic-elliptic system \eqref{struct1}. In particular, we obtain the following improvement of \eqref{h3} for $j=2$:
\begin{equation}\label{imp1}
\norm{\nabb^2\nabla a}_{\ll{2}}+\norm{\nabla^2a}_{\ll{2}}+\norm{\nabla^2\th}_{\ll{2}}\lesssim\ep,\textrm{ in }\{|x|\geq 1\}\cap \{-2<u<2\}.
\end{equation} 
In fact, $C(M)$ in \eqref{h3} comes from the assumption \eqref{h1} on the norms of $R$ and $k$. However, since $R$ and $k$ vanish in $|x|\geq 1$, we may take $M=0$ in this region. Let us prove for example the estimate for 
$\norm{\nabb^2\nabn a}_{\ll{2}}$ in \eqref{imp1}, the others being similar. 
Let $\varphi$ a smooth function with compact support which is equal to 1 on 
$|x|\leq 1$. Using \eqref{h4}, we obtain an equation for $(1-\varphi)\nabn a$:
\begin{equation}\label{imp2}
(\nabn -a^{-1}\lap)[(1-\varphi)\nabn a]=(1-\varphi)h+\tilde{h}
\end{equation} 
where $h$ is given by \eqref{h5} and $\tilde{h}$ is given by:
\begin{equation}\label{imp3}
\tilde{h}=-\nabn\varphi\nabn a+a^{-1}\lap\varphi\nabn a+2a^{-1}\nabb\varphi\nabb\nabn a.
\end{equation} 
\eqref{h12} and the fact that $R$ and $k$ vanish on the support of $1-\varphi$ yield:
\begin{equation}\label{imp3bis}
\norm{(1-\varphi)h}_{\ll{2}}\lesssim \ep\norm{(1-\varphi)\nabla^2\th}_{\ll{2}}+\ep.
\end{equation} 
\eqref{boot} and the fact that $\varphi$ is smooth yields:
\begin{equation}\label{imp3ter}
\norm{\tilde{h}}_{\ll{2}}\lesssim \ep.
\end{equation} 
Proposition \ref{p7}, \eqref{imp2}, \eqref{imp3bis} and \eqref{imp3ter} yield:
\begin{equation}\label{imp6}
\norm{(1-\varphi)\nabn^2a}_{\ll{2}}+\norm{(1-\varphi)\nabb^2\nabn a}_{\ll{2}}\lesssim \sqrt{\ep}\norm{(1-\varphi)\nabla^2\th}_{\ll{2}}+\ep.
\end{equation} 
In the same fashion, we adapt the analysis of \eqref{hh1}-\eqref{h25} and we use the fact that $R$ and $k$ vanish on the support of $1-\varphi$ to obtain estimates for 
$\norm{(1-\varphi)\nabb^3a}_{\ll{2}}$ and $\norm{(1-\varphi)\nabla^2\th}_{\ll{2}}$ which yield 
\eqref{imp1}.

We now use \eqref{imp1} and the fact that $u=-2$ on $x.\o=-2$ to show that $u$ and $x.\o$ are close to 
each other in the region $\{|x|\geq 1\}\cap \{-2<u<2\}$. Proposition \ref{p3}, \eqref{frame1} and \eqref{imp1} yield:
\begin{equation}\label{imp7}
|\nabla N|\lesssim\ep,\textrm{ in }\{|x|\geq 1\}\cap \{-2<u<2\}.
\end{equation} 
Since $N=\o$ on $x.\o=-2$, \eqref{imp7} yields:
\begin{equation}\label{imp8}
|N-\o|\lesssim\ep,\textrm{ in }\{|x|\geq 1\}\cap \{-2<u<2\}.
\end{equation} 
$u=x.\o$ on $x.\o=-2$, so since $\nabla u=a^{-1}N$, \eqref{boot} and \eqref{imp8} yield the desired estimate \eqref{imp9}. This concludes the proof of Lemma \ref{lemma:uprochexo}. \hspace*{\fill}\rule{2.5mm}{2.5mm} 

\section{Littlewood-Paley theory on $\p$ and consequences}\lab{sec:estimatesLP}

In this section, we introduce several tools which will be needed to prove Theorem \ref{thnabn2a} and Theorem \ref{thregomega}. We introduce and recall the main properties of the family of intrinsic Littlewood-Paley projections $P_j$ which has been constructed in \cite{LP} using the heat flow on the surfaces $\p$. We then prove a crucial bound for $K$. This allows us to derive suitable commutator estimates, product estimates and estimates for parabolic equations.

\begin{remark}
Recall that $(\s,g,k)$ coincides with $(\R^3,\de,0)$ in $|x|\geq 2$. Also, $u(x,\o)$ coincides with $x.\o$ in $|x|\geq 2$, and so $a\equiv 1$, $N\equiv\o$, $\th\equiv 0$ and $K\equiv 0$ in this region. Therefore, $u$ clearly satisfies the estimates of Theorem \ref{thnabn2a}, Theorem \ref{thregomega} and of the propositions thereafter in the region $|x|\geq 2$. Thus, in the rest of the paper, we will restrict the proof all our estimates in the strip $S=\{x/\,-2<u<2\}$ where  $u(x,\o)$ is solution to: 
\begin{displaymath}
\left\{\begin{array}{l}
\trt-k_{NN}=1-a,\textrm{ on }-2<u<2,\\
u(.,\o)=-2\textrm{ on }x.\o=-2.
\end{array}\right.
\end{displaymath}
\end{remark}

\subsection{Properties of the geometric Littlewood-Paley projections $P_j$}\lab{sec:LP}

In this section, we introduce and recall the main properties of the family of intrinsic Littlewood-Paley projections $P_j$ which has been constructed in \cite{LP} using the heat flow on the surfaces $\p$. We recall the properties of the heat  equation  for arbitrary 
tensorfields $F$ on $\p$.
$$\partial_\tau U(\tau)F -\lap U(\tau) F=0, \,\, U(0)F=F.$$
The following $L^2$ estimates for the operator
$U(\tau)$ are proved in \cite{LP}.
\begin{proposition}
We have the following estimates for the operator $U(\tau)$:
\begin{align}
&\|U(\tau) F\|^2_{\lp{2}}+\int_0^\tau\norm{\nabb U(\tau')F}^2_{\lp{2}}d\tau'\lesssim \|F\|^2_{L^2(S)},\label{eq:l2heat1}\\
&\|\nabb U(\tau) F\|^2_{\lp{2}}+\int_0^\tau\norm{\lap U(\tau')F}^2_{\lp{2}}d\tau'\lesssim \|\nabb F\|^2_{L^2(S)},\label{eq:l2heatnab}\\
& \tau\|\nabb U(\tau) F\|^2_{\lp{2}}+\int_0^\tau{\tau'}\norm{\lap U(\tau')F}^2_{\lp{2}}d\tau'\lesssim \|F\|^2_{L^2(S)}.\label{eq:l2heat2}
%\\ &\|\lap U(\tau) F\|_{\lp{2}}\lesssim \frac{\sqrt{2}}{2}\tau^{-1}
%\|F\|_{L^2(S)}
%\label{eq:l2heat3}
\end{align}
\label{le:L2heat}
%We also have,
%\be\lab{eq:l2heat4}
%\| U(\tau)\nabb F\|_{L^2(S)}\le \frac{\sqrt{2}}{2}\tau^{-\frac{1}{2}}\|F\|_{L^2(S)}
%\end{equation}
\end{proposition}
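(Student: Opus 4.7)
The plan is to prove all three bounds by standard parabolic energy methods on the single surface $\p$, the only subtle point being that $F$ is a tensorfield, so the Bochner identity will produce curvature corrections that must be absorbed using the $\lp{2}$ control of $K$ furnished by Theorem \ref{thregx}. I would interpret the right-hand sides as $\|F\|^2_{\lp{2}}$ (and $\|\nabb F\|^2_{\lp{2}}$ for the second estimate), which is what the parabolic structure on a single leaf naturally gives.

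For \eqref{eq:l2heat1} I would simply test the heat equation $\partial_\tau U - \lap U=0$ against $U(\tau)F$ in $L^2(\p)$ and integrate by parts on $\p$, which gives the pointwise-in-$\tau$ identity
\[
\tfrac{1}{2}\frac{d}{d\tau}\|U(\tau)F\|^2_{\lp{2}} + \|\nabb U(\tau)F\|^2_{\lp{2}} = 0.
\]
Integrating in $\tau$ from $0$ to $\tau$ produces the first estimate directly, with the constant $\lesssim 1$.

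For \eqref{eq:l2heatnab} I would test the equation against $-\lap U(\tau)F$ and integrate by parts; for scalars this yields $\frac{d}{d\tau}\|\nabb U\|^2 = -2\|\lap U\|^2$ and the inequality follows by integration. For a tensor $F$ the integration by parts $\int \nabb U\cdot \nabb\lap U\,d\muu = -\int |\lap U|^2\,d\muu + \mathrm{(Bochner)}$ generates a curvature term controlled, via Proposition \ref{prop:Bochner} and the Gagliardo–Nirenberg inequality \eqref{eq:GNirenberg}, by $\|K\|_{\lp{2}}(\|\nabb U\|^2_{\lp{2}}+\|U\|_{\lp{2}}\|\lap U\|_{\lp{2}})$; the factor $\|K\|_{\lp{2}}\lesssim \ep$ from Theorem \ref{thregx} lets the $\|\lap U\|^2$ piece be absorbed on the left and the remainder controlled by \eqref{eq:l2heat1}, yielding \eqref{eq:l2heatnab} after a Grönwall step in $\tau$.

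The third estimate \eqref{eq:l2heat2} is a weighted version of the second. I would compute
\[
\frac{d}{d\tau}\bigl(\tau\|\nabb U(\tau)F\|^2_{\lp{2}}\bigr) = \|\nabb U\|^2_{\lp{2}} + \tau\frac{d}{d\tau}\|\nabb U\|^2_{\lp{2}},
\]
handle the $\tau$-derivative exactly as for \eqref{eq:l2heatnab} (producing $-2\tau\|\lap U\|^2_{\lp{2}}$ plus a curvature error), and integrate from $0$ to $\tau$; the boundary term at $\tau=0$ vanishes thanks to the weight, and the leftover $\int_0^\tau \|\nabb U(\tau')F\|^2_{\lp{2}}d\tau'$ is then dominated by $\|F\|^2_{\lp{2}}$ via \eqref{eq:l2heat1}. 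The main obstacle throughout is the tensorial Bochner correction: one must verify that the only Gauss curvature quantity entering is $\|K\|_{\lp{2}}$, which is $O(\ep)$ by Theorem \ref{thregx}, so that a smallness-plus-Grönwall argument closes without losing powers of $\tau$; everything else is bookkeeping for the heat flow on a fixed leaf.
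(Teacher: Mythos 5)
Your argument is the standard parabolic energy method, and it is essentially what the cited source does: the paper itself offers no proof of Proposition \ref{le:L2heat} but refers to \cite{LP}, where precisely these energy identities are established. Your reading of the right-hand sides as $\|F\|^2_{\lp{2}}$ (rather than a literal $L^2(S)$ norm) is the correct interpretation of the statement. The one point to correct is that the ``tensorial Bochner correction'' you guard against in \eqref{eq:l2heatnab} and \eqref{eq:l2heat2} does not actually appear. Testing $\partial_\tau U=\lap U$ against $\lap U$ and integrating by parts once gives
\[
\frac{1}{2}\frac{d}{d\tau}\|\nabb U(\tau)F\|^2_{\lp{2}}=\int_{\p}\ga^{AB}\nabb_A(\lap U)\cdot\nabb_B U\,d\muu=-\int_{\p}|\lap U|^2\,d\muu
\]
exactly, because the single integration by parts only uses that $\lap=\nabb^A\nabb_A$ and the metric compatibility of $\nabb$; no commutation of covariant derivatives is performed, hence no Gauss curvature term. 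The Bochner identity \eqref{vboch} (and the control of $\|K\|_{\lp{2}}$) is only needed when one wants to replace $\|\lap U\|_{\lp{2}}$ by $\|\nabb^2U\|_{\lp{2}}$, which these three estimates do not require. Consequently the smallness-plus-Gr\"onwall step is superfluous: \eqref{eq:l2heatnab} follows with constant $1$, and \eqref{eq:l2heat2} follows, as you say, by differentiating $\tau\|\nabb U(\tau)F\|^2_{\lp{2}}$ and absorbing the leftover $\int_0^\tau\|\nabb U(\tau')F\|^2_{\lp{2}}d\tau'$ via \eqref{eq:l2heat1}.
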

We also introduce the nonhomogeneous heat equation:
$$\partial_\tau V(\tau) -\lap V(\tau)=F(\tau), \,\, V(0)=0,$$
for which we easily derive the following estimates:
\begin{proposition}
Let $\b>0$. We have the following estimates for the operator $V(\tau)$:
\begin{align}
&\norm{\nabb V(\tau)}^2_{L^2(\p)}+\int_0^\tau\norm{\lap V(\tau')}^2_{L^2(\p)}d\tau'\lesssim \int_0^\tau\norm{F(\tau')}^2_{L^2(\p)}d\tau',\label{heatF1}\\
&\norm{V(\tau)}^2_{L^2(\p)}+\int_0^\tau\norm{\nabb V(\tau')}^2_{L^2(\p)}d\tau'\lesssim \int_0^\tau\int_{\p}V(\tau')F(\tau')d\mu_ud\tau',\label{heatF2}\\
&\tau\norm{V(\tau)}^2_{L^2(\p)}+\int_0^\tau{\tau'}\norm{\nabb V(\tau')}^2_{L^2(\p)}d\tau'\lesssim \int_0^\tau\int_{\p}{\tau'}V(\tau')F(\tau')d\mu_ud\tau',\label{eq:l2heat0}\\
&\tau^{2\b}\norm{V(\tau)}^2_{L^2(\p)}+\int_0^\tau{\tau'}^{2\b}\norm{\nabb V(\tau')}^2_{L^2(\p)}d\tau'\lesssim \int_0^\tau\int_{\p}{\tau'}^{2\b}V(\tau')F(\tau')d\mu_ud\tau',\nonumber \\
& \hspace{8.5cm} +\int_0^\tau{\tau'}^{2\b-1}\norm{V(\tau')}^2_{L^2(\p)}d\tau'.\label{eq:l2heat2bis}
\end{align}
\label{le:L2heatbis}
%We also have,
%\be\lab{eq:l2heat4bis}
%\| U(\tau)\nabb F\|_{L^2(S)}\le \frac{\sqrt{2}}{2}\tau^{-\frac{1}{2}}\|F\|_{L^2(S)}
%\end{equation}
\end{proposition}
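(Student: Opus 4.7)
My plan is to establish \eqref{heatF1}--\eqref{eq:l2heat2bis} by standard energy methods on the fixed surface $\p$: for each estimate, multiply the heat equation $\partial_\tau V - \lap V = F$ by an appropriate multiplier $\Psi \in \{-\lap V,\, V,\, \tau V,\, \tau^{2\b}V\}$, integrate over $\p$ (integration by parts produces no boundary contribution, either because of sufficient decay in the asymptotic region or after a prior localization), and integrate in $\tau$ using the initial condition $V(0)=0$. The four estimates differ only by the choice of $\tau$-weight.

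For \eqref{heatF1}, taking $\Psi = -\lap V$ and using that the metric on $\p$ is $\tau$-independent yields
\[
\tfrac{1}{2}\partial_\tau \norm{\nabb V}_{\lp{2}}^2 + \norm{\lap V}_{\lp{2}}^2 \;=\; -\int_{\p}F\lap V\,d\mu_u,
\]
and Young's inequality absorbs half of $\norm{\lap V}_{\lp{2}}^2$ into the left-hand side; integrating in $\tau$ gives the claim. For \eqref{heatF2}, taking $\Psi = V$ yields
\[
\tfrac{1}{2}\partial_\tau\norm{V}_{\lp{2}}^2 + \norm{\nabb V}_{\lp{2}}^2 \;=\; \int_{\p}VF\,d\mu_u,
\]
and integrating in $\tau$ produces the inequality.

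The weighted estimates \eqref{eq:l2heat0} and \eqref{eq:l2heat2bis} are obtained by the same strategy with multipliers $\tau V$ and $\tau^{2\b}V$, respectively. The key ingredient is the product-rule identity
\[
\partial_\tau\!\left(\tau^{2\b}\norm{V}_{\lp{2}}^2\right) \;=\; 2\b\,\tau^{2\b-1}\norm{V}_{\lp{2}}^2 \,+\, 2\tau^{2\b}\!\int_{\p}V\partial_\tau V\,d\mu_u,
\]
which combined with the equation and integration by parts in the spatial variables produces
\[
\tfrac{1}{2}\partial_\tau\!\left(\tau^{2\b}\norm{V}_{\lp{2}}^2\right) + \tau^{2\b}\norm{\nabb V}_{\lp{2}}^2 \;=\; \tau^{2\b}\!\int_{\p}VF\,d\mu_u + \b\,\tau^{2\b-1}\norm{V}_{\lp{2}}^2.
\]
Integrating in $\tau$ gives \eqref{eq:l2heat2bis} directly, while \eqref{eq:l2heat0} is the corresponding identity for the $\b = 1/2$ weight, where the residual $\tfrac{1}{2}\int_0^\tau \norm{V(\tau')}_{\lp{2}}^2 d\tau'$ is controlled by inserting \eqref{heatF2} at each intermediate time $\tau' \leq \tau$ and switching the order of the resulting iterated $\tau$-integral by Fubini.

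The main obstacle is really only the bookkeeping of $\tau$-weights in the last two identities; there is no genuine analytic difficulty, as this is classical heat-equation energy analysis on a surface whose geometry is fixed with respect to the heat parameter $\tau$.
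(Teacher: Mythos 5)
Your energy-method approach is exactly the intended one (the paper offers no proof of this proposition beyond ``we easily derive''), and your derivations of \eqref{heatF1}, \eqref{heatF2} and \eqref{eq:l2heat2bis} are correct: the multipliers $-\lap V$, $V$ and $\tau^{2\b}V$, together with the product rule for $\partial_\tau(\tau^{2\b}\norm{V}^2_{\lp{2}})$ and the vanishing of the boundary term at $\tau=0$ (which uses $\b>0$ and $V(0)=0$), give precisely the three stated inequalities.

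The proof of \eqref{eq:l2heat0}, however, does not close as written. Setting $G(\tau')=\int_{\p}V(\tau')F(\tau')d\mu_u$ and $H(\tau')=\int_0^{\tau'}G$, your $\b=\tfrac12$ identity leaves the residual term $\tfrac12\int_0^\tau\norm{V(\tau')}^2_{\lp{2}}d\tau'$, which by \eqref{heatF2} is $\lesssim\int_0^\tau H(\tau')d\tau'$. Your Fubini step turns this into $\int_0^\tau(\tau-\tau'')G(\tau'')d\tau''$, i.e.\ the weight is $\tau-\tau''$, not $\tau''$; since $G$ has no sign, these two weighted integrals are not comparable (the discrepancy is largest precisely near $\tau''=0$, where $\tau-\tau''\simeq\tau$ while $\tau''\simeq 0$). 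Equivalently, integrating by parts in $\tau'$ gives the exact identity $\int_0^\tau H(\tau')d\tau'=\tau H(\tau)-\int_0^\tau\tau'G(\tau')d\tau'$, so that after substitution the weighted integral $\int_0^\tau\tau'G$ cancels and what one actually obtains is
\begin{equation*}
\tau\norm{V(\tau)}^2_{\lp{2}}+\int_0^\tau\tau'\norm{\nabb V(\tau')}^2_{\lp{2}}d\tau'\lesssim\ \tau\int_0^\tau\int_{\p}V(\tau')F(\tau')d\mu_ud\tau',
\end{equation*}
with the weight $\tau$ pulled outside the time integral rather than $\tau'$ inside it. Passing from this to the form \eqref{eq:l2heat0} with the interior weight $\tau'$ would again require a sign or monotonicity property of $G$ that is not available. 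You should either prove (and use) the estimate in the form displayed above, or keep the residual term $\int_0^\tau\norm{V(\tau')}^2_{\lp{2}}d\tau'$ on the right-hand side as in \eqref{eq:l2heat2bis} with $\b=\tfrac12$; the Fubini argument as stated does not yield \eqref{eq:l2heat0}.
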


We now recall the definition of the geometric Littlewood-Paley projections $P_j$ constructed in \cite{LP}:
\begin{definition}\label{defLP}
Consider a smooth function $m$ on $[0,\infty)$,
vanishing sufficiently fast at $\infty$,
verifying the  vanishing  moments property:
\be\lab{eq:moments}
\int_0^\infty \tau^{k_1}\partial_\tau^{k_2} m(\tau) d\tau=0, \,\,\,\,
|k_1|+|k_2|\le N. 
\end{equation}
We set,
  $m_j(\tau)=2^{2j}m(2^{2j}\tau)$ 
and  define the geometric Littlewood -Paley (LP) 
projections $P_j$, for   arbitrary tensorfields  $F$ on $S$
to be 
\be\lab{eq:LP}P_j F=\int_0^\infty m_j(\tau) U(\tau) F d\tau.
\end{equation}
Given an interval $I\subset \Bbb Z$ we define $$P_I=\sum_{j\in I} P_j F.$$
In particular we shall use the notation $P_{<k}, P_{\le k}, P_{>k}, P_{\ge k}$.
\end{definition}
Observe that $P_j$ are selfadjoint, i.e., $P_j=P_j^*$, in the sense,
$$<P_jF, G>=<F,P_j G>,$$
where, for any given $m$-tensors $F,G$ 
$$<F,G>=\int_{\p}\ga^{i_1j_1}\ldots\ga^{i_mj_m}
F_{i_1\ldots i_m}G_{j_1\ldots j_m}d\mu_u    $$ 
denotes the usual $L^2$ scalar product. Recall also 
from \cite{LP} that there exists a function $m$ satisfying \eqref{eq:moments} 
such that the LP-projections associated to $m$ verify:
\be\lab{eq:partition}
\sum_jP_j=I.
\end{equation}

%Moreover, the function $m$ can 
%be chosen to have compact support on the open interval $(0,\infty)$.

The following properties of the LP-projections $P_j$ have been proved in \cite{LP}:
\begin{theorem}\label{thm:LP}
 The LP-projections $P_j$ verify the following
 properties:

i)\quad {\sl $L^p$-boundedness} \quad For any $1\le
p\le \infty$, and any interval $I\subset \Bbb Z$,
\be\lab{eq:pdf1}
\|P_IF\|_{\lp{p}}\lesssim \|F\|_{\lp{p}}
\end{equation}

ii) \quad  {\sl Bessel inequality} 
$$\sum_j\|P_j F\|_{\lp{2}}^2\lesssim \|F\|_{\lp{2}}^2$$

iii)\quad {\sl Finite band property}\quad For any $1\le p\le \infty$.
\begin{equation}
\begin{array}{lll}
\|\lap P_j F\|_{\lp{p}}&\lesssim & 2^{2j} \|F\|_{\lp{p}}\\
\|P_jF\|_{\lp{p}} &\lesssim & 2^{-2j} \|\lap F \|_{\lp{p}}.
\end{array}
\end{equation}

In addition, the $L^2$ estimates
\begin{equation}
\begin{array}{lll}
\|\nabb P_j F\|_{\lp{2}}&\lesssim & 2^{j} \|F\|_{\lp{2}}\\
\|P_jF\|_{\lp{2}} &\lesssim & 2^{-j} \|\nabb F  \|_{\lp{2}}
\end{array}
\end{equation}
hold together with the dual estimate
$$\| P_j \nabb F\|_{\lp{2}}\lesssim 2^j \|F\|_{\lp{2}}$$

iv) \quad{\sl Weak Bernstein inequality}\quad For any $2\le p<\infty$
\begin{align*}
&\|P_j F\|_{\lp{p}}\lesssim (2^{(1-\frac 2p)j}+1) \|F\|_{\lp{2}},\\
&\|P_{<0} F\|_{\lp{p}}\lesssim \|F\|_{\lp{2}}
\end{align*}
together with the dual estimates 
\begin{align*}
&\|P_j F\|_{\lp{2}}\lesssim (2^{(1-\frac 2p)j}+1) \|F\|_{\lp{p'}},\\
&\|P_{<0} F\|_{\lp{2}}\lesssim \|F\|_{\lp{p'}}
\end{align*}
\end{theorem}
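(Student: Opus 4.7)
The plan is to derive all four properties directly from the representation $P_j F = \int_0^\infty m_j(\tau) U(\tau) F\, d\tau$, using the heat semigroup estimates in Proposition \ref{le:L2heat}, the Sobolev/Gagliardo--Nirenberg inequalities on $P_u$ recalled in Section \ref{sec:ineq}, and the vanishing moment condition \eqref{eq:moments}. The organising principle is that each ``extra'' derivative on $P_j$ costs a factor $2^{2j}$ in the kernel via an integration-by-parts in $\tau$, while moving derivatives in the opposite direction uses the vanishing moments to trade $m$ for a primitive $\tilde m$ which integrates to a factor $2^{-2j}$.

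For (i), the heat semigroup $U(\tau)$ is an $L^p$-contraction on the closed surfaces $P_u$ (standard maximum-principle / $L^p$-duality argument for $e^{\tau \lap}$ on a Riemannian surface; we will sketch it and then cite \cite{LP}). Minkowski's inequality then gives $\|P_j F\|_{L^p(P_u)} \lesssim \|F\|_{L^p(P_u)} \int_0^\infty |m_j(\tau)|\,d\tau$, and the rescaling $m_j(\tau)=2^{2j} m(2^{2j}\tau)$ makes this last integral independent of $j$. Summation over $j \in I$ uses that the identity \eqref{eq:partition} decomposes $I=(\sum_{j\in I})$ in a way compatible with the same kernel bound. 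For (ii), once (i) is in hand at $p=2$, the Bessel bound follows from a standard $TT^*$ computation: $\sum_j \|P_j F\|^2 = \sum_j \langle P_j^* P_j F, F\rangle$, and the operator $\sum_j P_j^* P_j$ is bounded on $L^2(P_u)$ by the same kernel estimate applied to the composite heat integral $\int\!\!\int m_j(\tau) m_j(\sigma) U(\tau+\sigma)\,d\tau d\sigma$ summed in $j$.

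The technical core is (iii). I write $\lap P_j F = \int_0^\infty m_j(\tau)\,\partial_\tau U(\tau)F\,d\tau = -\int_0^\infty m_j'(\tau) U(\tau)F\,d\tau$, where the boundary term at $\tau=0$ vanishes because $m$ decays and $m(0)=0$ by \eqref{eq:moments} with $k_1=k_2=0$, and the term at $\infty$ vanishes by decay of $m$. Since $\int |m_j'|\,d\tau = 2^{2j}\int |m'|\,d\tau$, the first inequality $\|\lap P_j F\|_{L^p} \lesssim 2^{2j}\|F\|_{L^p}$ drops out of the $L^p$ contraction of $U(\tau)$. The reverse estimate $\|P_j F\|_{L^p} \lesssim 2^{-2j}\|\lap F\|_{L^p}$ uses the moments: define $\tilde m_j(\tau)=-\int_\tau^\infty m_j(\sigma)\,d\sigma$, which satisfies $\tilde m_j(0)=0$ thanks to \eqref{eq:moments} and $\int|\tilde m_j|\,d\tau \lesssim 2^{-2j}$; integrating by parts, $P_jF = \int_0^\infty \tilde m_j(\tau)\,\lap U(\tau) F\,d\tau = \int_0^\infty \tilde m_j(\tau) U(\tau)\lap F\,d\tau$, so another $L^p$-contraction application closes the estimate. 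For the $L^2$ gradient bounds I integrate by parts on $P_u$: $\|\nabb P_j F\|_{L^2}^2 = -\langle \lap P_j F, P_j F\rangle \lesssim 2^{2j}\|F\|_{L^2}^2$, giving the first inequality; the dual $\|P_j F\|_{L^2}\lesssim 2^{-j}\|\nabb F\|_{L^2}$ and $\|P_j \nabb F\|_{L^2}\lesssim 2^j\|F\|_{L^2}$ are then produced by duality against smooth test tensors and composition with $P_j$ on the other side. The one subtle point is that commuting $\nabb$ past $U(\tau)$ is not free on a curved surface; this is handled at the level of the $L^2$ identities above, which do not require commutation.

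Finally, (iv) is obtained by feeding (iii) into the Gagliardo--Nirenberg inequality \eqref{eq:GNirenberg} on $P_u$: for $2\leq p<\infty$,
\[
\|P_j F\|_{L^p(P_u)} \lesssim \|\nabb P_j F\|_{L^2(P_u)}^{1-2/p}\|P_j F\|_{L^2(P_u)}^{2/p} + \|P_j F\|_{L^2(P_u)} \lesssim \bigl(2^{(1-2/p)j}+1\bigr)\|F\|_{L^2(P_u)},
\]
where I used $\|\nabb P_j F\|_{L^2}\lesssim 2^j \|F\|_{L^2}$ and $\|P_j F\|_{L^2}\lesssim \|F\|_{L^2}$ from (i) and (iii). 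For $P_{<0} = \sum_{j<0} P_j$, the Bessel inequality (ii) gives boundedness $L^2 \to L^2$, and then Gagliardo--Nirenberg plus the fact that $\nabb P_{<0}$ collects only the low-frequency gradients yields the uniform $L^p$ bound. The dual estimates are obtained by taking adjoints, using $P_j=P_j^*$ and the self-duality of $L^p$. The main obstacle throughout is the absence of translation invariance on $P_u$: unlike the Euclidean Littlewood--Paley theory, one cannot appeal to Fourier localisation, and every estimate must be extracted from the heat-flow representation together with the surface geometry—here the fact that $P_u$ is two-dimensional is essential for the exponent $(1-2/p)$ in the Bernstein inequality.
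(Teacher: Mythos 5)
First, be aware that the paper does not prove Theorem \ref{thm:LP} at all: it is quoted verbatim from \cite{LP} ("The following properties \dots have been proved in \cite{LP}"), so there is no in-paper argument to compare against. Your outline does follow the strategy of \cite{LP} -- heat-flow representation, the rescaling $m_j(\tau)=2^{2j}m(2^{2j}\tau)$, integration by parts in $\tau$ via the vanishing moments, and Gagliardo--Nirenberg for the weak Bernstein inequality -- and your treatment of the forward finite-band estimate and of (iv) is essentially complete. But two steps are genuinely incomplete. The first concerns summation over $j$. In (i) you only control a single $P_j$; for an interval $I$ the triangle inequality gives a bound growing like $|I|$, and $\int_0^\infty\big|\sum_{j\in I}m_j(\tau)\big|\,d\tau$ is \emph{not} uniformly bounded if you put absolute values inside the sum. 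You need the cancellation coming from \eqref{eq:moments} (e.g.\ writing $m_j=\partial_\tau\tilde m_j$ and telescoping the primitives) to get a $j$-uniform $L^1$ bound on the kernel of $P_{\le k}$. The same issue infects your proof of (ii): boundedness of each $P_j^2$ does not give boundedness of $\sum_j P_j^2$; one needs almost-orthogonality, $\|P_jP_k\|_{\mathcal{L}(\lp{2})}\lesssim 2^{-2|j-k|}$, which you can in fact extract from your own finite-band property (for $k\le j$, $\|P_jP_kF\|_{\lp{2}}\lesssim 2^{-2j}\|\lap P_kF\|_{\lp{2}}\lesssim 2^{2(k-j)}\|F\|_{\lp{2}}$), but this must be said.

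The second gap is the claim that $\|P_jF\|_{\lp{2}}\lesssim 2^{-j}\|\nabb F\|_{\lp{2}}$ follows ``by duality.'' It does not. The adjoint of $F\mapsto\nabb P_jF$ is (up to sign) $G\mapsto P_j\divb G$, so dualizing $\|\nabb P_jF\|_{\lp{2}}\lesssim 2^j\|F\|_{\lp{2}}$ yields precisely the listed estimate $\|P_j\nabb F\|_{\lp{2}}\lesssim 2^j\|F\|_{\lp{2}}$ and nothing stronger; the inequality with the $2^{-j}$ gain is an independent statement. To prove it one pairs $P_jF=-\int_0^\infty\tilde m_j(\tau)\,\lap U(\tau)F\,d\tau$ against a test tensor $G$, splits $U(\tau)=U(\tau/2)U(\tau/2)$, and integrates by parts once on $P_u$ so that one derivative lands on $U(\tau/2)F$ (controlled by $\|\nabb F\|_{\lp{2}}$ via \eqref{eq:l2heatnab}) and one on $U(\tau/2)G$ (controlled by $\tau^{-1/2}\|G\|_{\lp{2}}$ via the smoothing estimate \eqref{eq:l2heat2}); since $\tilde m(s)=O(s)$ near $s=0$, the integral $\int_0^\infty|\tilde m_j(\tau)|\tau^{-1/2}\,d\tau\lesssim 2^{-j}$ closes the estimate. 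Without invoking the parabolic smoothing \eqref{eq:l2heat2} this inequality is out of reach. Two smaller points: $m(0)=0$ follows from \eqref{eq:moments} with $k_2=1$, not $k_1=k_2=0$ (harmless, since a boundary term $m_j(0)F=2^{2j}m(0)F$ would be admissible for the forward estimate anyway); and the $L^p$-contraction of $U(\tau)$ on \emph{tensors} is not a bare maximum principle -- it needs the Bochner/Kato inequality $(\partial_\tau-\lap)|U(\tau)F|^2\le 0$ -- which you correctly defer to \cite{LP}.
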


We use the Littlewood-Paley projections $P_j$ to define Sobolev spaces $H^b(\p)$.
\begin{definition}\lab{def:Hs}
Let $b\in\mathbb{R}$. Then, we define the Sobolev space $H^b(\p)$ as follows:
$$\norm{F}_{H^b(\p)}^2=\sum_{j\geq 0}2^{2jb}\norm{P_jF}^2_{\lp{2}}+\norm{P_{< 0}F}^2_{\lp{2}}.$$
\end{definition}

Let us state a lemma about the action of $\nabb$ on $H^b(\p)$. 
\begin{lemma}\lab{lemma:vacances}
Let $0<b<1$. Let $F$ a tensor on $\p$ such that $F\in\hs{b}$. Then, $\nabb F\in\hs{b-1}$.
\end{lemma}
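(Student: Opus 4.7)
The plan is to reduce the claim to a Schur-type estimate on Littlewood--Paley coefficients, using the characterisation of $H^b(\p)$ in Definition \ref{def:Hs}.

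First, I would decompose $F$ dyadically using the partition of unity \eqref{eq:partition}, writing $F = P_{<0}F + \sum_{k\ge 0} P_kF$, so that for each $j\ge 0$
$$P_j\nabb F = P_j\nabb P_{<0}F + \sum_{k\ge 0}P_j\nabb P_kF.$$

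Second, I would establish the bilinear LP bound
$$\|P_j\nabb P_kF\|_{L^2(\p)} \lesssim 2^{\min(j,k)}\|P_kF\|_{L^2(\p)}, \qquad j,k\ge 0.$$
When $k\ge j$, this is immediate from the dual finite-band property $\|P_j\nabb G\|_{L^2(\p)}\lesssim 2^j\|G\|_{L^2(\p)}$ of Theorem~\ref{thm:LP}(iii) applied with $G=P_kF$. When $k<j$, the sharper factor $2^k$ must be extracted from the inner projector; for this I would use the heat-kernel identity
$$P_kF = -\lap\tilde P_kF,\qquad \tilde P_kF := \int_0^\infty M_k(\tau) U(\tau) F\, d\tau,\quad M_k(\tau):=\int_0^\tau m_k(\tau')\,d\tau',$$
which follows from one integration by parts in $\tau$ in \eqref{eq:LP} once one observes that the $k_1=k_2=0$ case of the vanishing moment condition \eqref{eq:moments} gives $M_k(\infty)=\int_0^\infty m\,d\tau=0$. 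Substituting and commuting $\nabb$ through $\lap$ produces a Gauss-curvature commutator of Weitzenb\"ock type (cf.\ \eqref{commut2} for scalars), whose error is controlled by the $L^2(\p)$ bound on $K$ given by Theorem \ref{thregx}, while the remaining $P_j\lap$ is handled by the finite-band $\|\lap P_jG\|_{L^2(\p)}\lesssim 2^{2j}\|G\|_{L^2(\p)}$ together with the heat estimates of Proposition \ref{le:L2heat} for $\nabb\tilde P_k F$.

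Third, I would apply Schur's test to the matrix
$$a_{jk} := 2^{j(b-1)}\cdot 2^{\min(j,k)}\cdot 2^{-kb}.$$
A direct calculation gives
$$a_{jk} = \begin{cases} 2^{-(k-j)b}, & k\ge j,\\ 2^{-(j-k)(1-b)}, & k\le j,\end{cases}$$
so $a_{jk}\le 2^{-c|j-k|}$ with $c:=\min(b,1-b)>0$. Hence the row and column sums of $(a_{jk})$ are uniformly bounded, Schur's test gives $\|(a_{jk})\|_{\ell^2\to\ell^2}\lesssim 1$, and combining with the bilinear estimate yields
$$\sum_{j\ge 0}2^{2j(b-1)}\|P_j\nabb F\|_{L^2(\p)}^2 \lesssim \sum_{k\ge 0}2^{2kb}\|P_kF\|_{L^2(\p)}^2+\|P_{<0}F\|_{L^2(\p)}^2 = \|F\|_{H^b(\p)}^2.$$
The low-frequency contribution $\|P_{<0}\nabb F\|_{L^2(\p)}$ is absorbed by treating $P_{<0}$ as the frequency-$1$ column/row, to which the same two-sided bilinear bound applies.

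The main obstacle is the bilinear estimate of Step~2 in the regime $k<j$: the black-box properties in Theorem~\ref{thm:LP} only yield the factor $2^j$, and extracting the sharper $2^k$ factor requires the integration-by-parts identity $P_k=-\lap\tilde P_k$ together with handling a curvature-driven commutator, which is where the $L^2(\p)$ control of $K$ enters decisively. The hypothesis $0<b<1$ is indispensable in Step~3, since both $b>0$ and $1-b>0$ are needed for $a_{jk}$ to decay exponentially on \emph{both} sides of the diagonal $j=k$.
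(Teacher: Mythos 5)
Your overall architecture is exactly the paper's: decompose $\nabb F=\sum_l\nabb P_lF$, prove $\|P_j\nabb P_lF\|_{\lp{2}}\lesssim 2^{\min(j,l)}\|P_lF\|_{\lp{2}}$, and close with a Schur-type summation whose off-diagonal decay rate is $\min(b,1-b)>0$; Steps 1 and 3 coincide with the paper's proof verbatim. The one place you diverge is the case $k<j$, and there your premise is mistaken: you claim the black-box properties of Theorem \ref{thm:LP} only yield the factor $2^j$, but item (iii) of that theorem explicitly contains the $L^2$ estimate $\|\nabb P_k G\|_{\lp{2}}\lesssim 2^k\|G\|_{\lp{2}}$, so combining the $L^2$-boundedness of $P_j$ with this finite-band bound applied to $G=P_kF$ (using $P_k=\widetilde{P_k}^2$ as in the paper) gives $\|P_j\nabb P_kF\|_{\lp{2}}\lesssim\|\nabb P_kF\|_{\lp{2}}\lesssim 2^k\|P_kF\|_{\lp{2}}$ in one line — this is precisely what the paper does. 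Your proposed substitute, integrating by parts in $\tau$ to write $P_k=-\lap\tilde P_k$ and then commuting $\nabb$ through $\lap$, is not only unnecessary but harder to close than you suggest: for tensors the Weitzenb\"ock-type commutator produces terms like $K\cdot\nabb\tilde P_kF$ and $K\cdot\tilde P_kF$, and bounding these in $L^2(\p)$ with only $K\in\lp{2}$ requires $L^\infty$ or $L^4$ control of $\nabb\tilde P_kF$ that you have not supplied. So: correct statement of the key bilinear bound and correct summation, but you should replace the heat-kernel detour in the $k<j$ regime by a direct appeal to the finite-band property.
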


\begin{proof}
We have:
\be\lab{vacances}
\norm{P_j\nabb F}_{\lp{2}}\les \sum_{l\geq 0}\norm{P_j\nabb P_lF}_{\lp{2}}.
\ee
If $l\leq j$, we use the boundedness of $P_j$ on $\lp{2}$ and the finite band property for $P_l$ to obtain:
\bea\lab{vacances1}
2^{j(b-1)}\norm{P_j\nabb P_lF}_{\lp{2}} &\les& 2^{j(b-1)}\norm{\nabb P_lF}_{\lp{2}}\\
\nn &\les& 2^{j(b-1)}2^l\norm{P_lF}_{\lp{2}}\\
\nn &\les& 2^{-|j-l|(1-b)}2^{bl}\norm{P_lF}_{\lp{2}},
\eea
where we used in the last inequality the fact that $l\leq j$ and $b<1$. 

If $l>j$, we use the finite band property for $P_j$ to obtain:
\bea\lab{vacances2}
2^{j(b-1)}\norm{P_j\nabb P_lF}_{\lp{2}} &\les& 2^{j(b-1)}2^j\norm{P_lF}_{\lp{2}}\\
\nn &\les& 2^{-|j-l|b}2^{bl}\norm{P_lF}_{\lp{2}},
\eea
where we used in the last inequality the fact that $l> j$ and $b>0$. Finally, \eqref{vacances}, \eqref{vacances1} and 
\eqref{vacances2} imply:
\bee
\sum_{j\geq 0}2^{2(b-1)j}\norm{P_j\nabb F}^2_{\lp{2}}&\les& \sum_{j\geq 0}\left(\sum_{l\geq 0}2^{-\min(b,1-b)|j-l|}2^{lb}\norm{P_lF}_{\lp{2}}\right)^2\\
&\les& \sum_{l\geq 0}2^{2lb}\norm{P_lF}^2_{\lp{2}}\\
&\les& \norm{F}_{\hs{b}}^2,
\eee
where we used the fact that $\min(b, 1-b)>0$. This concludes the proof of the lemma.
\end{proof}

We also recall the definition of the negative  fractional powers of $\La^2=I-\lap$ on any 
 smooth tensorfield $F$ on $\p$ used in \cite{LP}.
\be\lab{eq:defineLaa}
\La^{\a}F=\frac{1}{\Gamma(-\a/2)}\int_0^\infty \tau^{-\frac{\a}{2}-1}
e^{-\tau}U(\tau)F d\tau
\end{equation}
 where $\a$ is an arbitrary complex number with $\Re(\a)< 0$ and $\Gamma$ denotes the Gamma function. We extend the definition of fractional powers of $\La$ to the range of $\a$ with $\Re(\a)>0$, on smooth tensorfields $F$, by defining first 
 $$
 \La^\a F = \La^{\a-2} \c (I-\lap) F 
 $$
 for $0< \Re (\a) \le 2$ and then, in general, for 
 $0< \Re(\a) \le 2n$, with an arbitrary positive integer $n$, according 
 to the formula
 $$
 \La^\a F = \La^{\a-2n} \c (I-\lap)^n F.
 $$
With this definition, $\La^\a$ is symmetric and verifies the group property
$\La^{\a}\La^{\b} =\La^{\a+\b}$. We also have by standard complex interpolation the following inequality:
\begin{equation}\label{interpolLa}
\norm{\La^{\mu\a+(1-\mu)\b}F}_{\lp{2}}\lesssim\norm{\La^{\a}F}^{\mu}_{\lp{2}}\norm{\La^{\b}F}^{1-\mu}_{\lp{2}}.
\end{equation}
Using the operators $\La^\a$, we complete \eqref{eq:l2heat1}-\eqref{eq:l2heat2} with:
\begin{align}
&\|\La^{-1}U(\tau) F\|^2_{\lp{2}}+\int_0^\tau{\tau'}\norm{\nabb\La^{-1}U(\tau')F}^2_{\lp{2}}d\tau'\lesssim \|\La^{-1}F\|^2_{L^2(S)},\label{eq:l2heat5}\\
&\tau\| U(\tau) F\|^2_{\lp{2}}+\int_0^\tau{\tau'}\norm{\nabb U(\tau')F}^2_{\lp{2}}d\tau'\lesssim \|\La^{-1}F\|^2_{L^2(S)},\label{eq:l2heatna6}
\end{align}
for $\a\in\mathbb{R}$,
\be\label{eq:l2heat1bis}
\norm{\La^\a V(\tau)}^2_{L^2(\p)}+\int_0^\tau\norm{\nabb\La^\a V(\tau')}^2_{L^2(\p)}d\tau'\lesssim \int_0^\tau\int_{\p}\La^{2\a} V(\tau')F(\tau')d\mu_ud\tau',
\ee
and for $0<\eta<\de<1$:
\begin{align}
&\tau^{1+\de}\|\nabb U(\tau) F\|^2_{\lp{2}}+\int_0^\tau{\tau'}^{1+\de}\norm{\lap U(\tau')F}^2_{\lp{2}}d\tau'\lesssim \|\La^{-\eta}F\|^2_{L^2(S)},\label{eq:heat1}\\
&\tau^{1+\de}\|U(\tau) F\|^2_{\lp{2}}+\int_0^\tau{\tau'}^{1+\de}\norm{\nabb U(\tau')F}^2_{\lp{2}}d\tau'\lesssim \|\La^{-1-\eta}F\|^2_{L^2(S)},\label{eq:heat2}\\
&\tau^{\de}\|U(\tau) F\|^2_{\lp{2}}+\int_0^\tau{\tau'}^{\de}\norm{\nabb U(\tau')F}^2_{\lp{2}}d\tau'\lesssim \|\La^{-\eta}F\|^2_{L^2(S)},\label{eq:heat3}\\
&\tau^{\de}\|\La^{-1}U(\tau) F\|^2_{\lp{2}}+\int_0^\tau{\tau'}^{\de}\norm{U(\tau')F}^2_{\lp{2}}d\tau'\lesssim \|\La^{-1-\eta}F\|^2_{L^2(S)}.\label{eq:heat4}
\end{align}
\label{eq:heat0}

%In a similar fashion, the authors in \cite{LP} introduce the family of operators $\D^{\a}=(-\lap)^{\frac{\a}{2}}$ for $\Re (\a) <0$: 
%\be\lab{eq:definecalDa}
%\D^{\a}F=\frac{1}{\Gamma(-\a/2)}\int_0^\infty \tau^{-\frac{\a}{2}-1}U(\tau)F d\tau.
%\end{equation}
% However, unlike $\Lambda^{\a}$, this formula makes sense only for smooth tensors $F$ which verify the additional 
% property that $F$ is orthogonal to the kernel of the tensor laplacean $\lap$. 
% In view of the regularity of $\p$ obtained in \eqref{thregx1} and the ellipticity of $\lap$, the above
% kernel is finite dimensional.

We now investigate the boundedness of $\La^{-\a}$ on $L^p(\p)$ spaces for $0\leq\a\leq 1$. For any tensor $F$ on $\p$ and any $\a\in\R$, integrating by parts and using the definition of $\La$, we get:
\begin{equation}\label{La1}
\begin{array}{ll}
\ds \norm{\La^\a F}^2_{\lp{2}}+\norm{\nabb\La^\a F}^2_{\lp{2}} & \ds =\int_{\p}\La^{\a}F\La^{\a}Fd\mu_u+\int_{\p}\nabb\La^{\a}F\nabb\La^{\a}Fd\mu_u\\
& \ds =\int_{\p}(1-\lap)\La^{\a}F\La^{\a}Fd\mu_u=\int_{\p}\La^2\La^{\a}F\La^{\a}Fd\mu_u\\
& \ds =\norm{\La^{\a+1}F}^2_{\lp{2}}.
\end{array}
\end{equation}
Taking $\a=-1$ in \eqref{La1}, we obtain:
\begin{equation}\label{La2}
\norm{\nabb\La^{-1}F}_{\lp{2}}\lesssim\norm{F}_{\lp{2}}.
\end{equation}
Below, we deduce several estimates from \eqref{La2}. 
Taking the adjoint of \eqref{La2}, we obtain for any vectorfield $F$:
\begin{equation}\label{La3}
\norm{\La^{-1}\divb F}_{\lp{2}}\lesssim\norm{F}_{\lp{2}}.
\end{equation}
Also, \eqref{eq:GNirenberg} and \eqref{La2} imply for any tensor $F$ on $\p$:
\begin{equation}\label{La4}
\norm{\La^{-1}F}_{\lp{p}}\lesssim\norm{F}_{\lp{2}}\textrm{ for all }2\leq p<+\infty.
\end{equation}
Taking the adjoint of \eqref{La4} yields:
\begin{equation}\label{La5}
\norm{\La^{-1}F}_{\lp{2}}\lesssim\norm{F}_{\lp{p}}\textrm{ for all }1<p\leq 2.
\end{equation}
Interpolating between the identity and $\La^{-1}$, we deduce form \eqref{La5}:
\begin{equation}\label{La6}
\norm{\La^{-\a}F}_{\lp{2}}\lesssim\norm{F}_{\lp{p}}\textrm{ for all }0<\a<1,\,\frac{2}{1+\a}<p\leq 2.
\end{equation}

%$$\norm{\nabb\La^{-\a}f}_{L^2(\p)}\leq \norm{\nabb\La^{-1}}_{\mathcal{L}(L^2(\p))}\norm{\lap^{\frac{1}{2}-\a}f}_{L^2(\p)}\lesssim\norm{\lap^{\frac{1}{2}-\a}f}_{L^2(\p)}\textrm{ for all }\a\in\R.$$

Finally, we conclude this section by recalling the sharp Bernstein inequality for scalars obtained in 
\cite{LP}. It is derived under the additional assumption that the Christoffel symbols $\Gamma^A_{BC}$ of the coordinate system \eqref{eq:coordchart} on $\p$ verify:
\be\lab{eq:gammaL2}
\sum_{A,B,C}\int_U|\Gamma^A_{BC}|^2 dx^1dx^2\le c^{-1},
\end{equation}
with a constant $c>0$ independent of $u$ and where $U$ is a coordinate chart. 
\begin{remark}
The existence of a covering of $\p$ by coordinate charts satisfying \eqref{eq:coordchart} and \eqref{eq:gammaL2} with a constant $c>0$ and the number of charts independent of $u$ will be established in Proposition \ref{gl11}.
\end{remark}
Let $0\leq\ga<1$, and let $K_\ga$ be defined by:
\begin{equation}\label{La7}
K_\ga := \norm{\La^{-\ga}K}_{\lp{2}}.
\end{equation}
Then, we have the following sharp Bernstein inequality for any scalar function $f$ on $\p$,  $0\le \ga<1$,
 any $j\ge 0$, and an arbitrary $2\le p<\infty$ (see \cite{LP}):
\bea
\|P_j f\|_{\lp{\infty}}&\lesssim & 2^j\big(1+ 2^{-\frac{j}{p}} 
\big (K_\ga^{\frac{1}{p(1-\ga)}} + K_{\ga}^{\frac{1}{2p}}\big ) +
1\big)\|f\|_{\lp{2}}\label{eq:Pkf},\label{eq:strongbernscalar}\\
\|P_{<0} f\|_{\lp{\infty}}&\lesssim &  
\big (1  +K_\ga^{\frac{2}{p(1-\ga)}} + K_{\ga}^{\frac{1}{2p}}\big)
\|f\|_{\lp{2}}.\label{eq:strong-Bern-0}
\eea
Also, the Bochner identity \eqref{sboch} together with the properties of $\La$ implies the 
following inequality (see \cite{LP}):
\bea
\int_{\p} |\nabb^2 f|^2&\lesssim& \int_{\p} |\lap f|^2  + \big (K_\ga^{\frac{2}{1-\ga}} + K_{\ga}\big )\int_{\p} |\nabb f|^2.\label{eq:Bochconseq}
\eea
Thus, we need to bound $K_{\ga}$ in order to be able to use \eqref{eq:strongbernscalar}, \eqref{eq:strong-Bern-0}, and \eqref{eq:Bochconseq}. For $\Re(\a)<0$, we will use the fact that for any tensor $F$ on $\p$:
\begin{equation}\label{La8}
\norm{\La^{-\a}F}^2_{\lp{2}}\lesssim\norm{P_{<0}F}^2_{\lp{2}}+\sum_{j=0}^{+\infty}2^{-2\a j}\norm{P_jF}^2_{\lp{2}}.
\end{equation}
which follows from the methods in \cite{LP}. Therefore, we would like to control $K$ in $\lhs{\infty}{-\a}$ for some $\a<1$. This is the goal of the next section.

\subsection{Control of $K$ in $\lhs{\infty}{-\frac{1}{2}}$}

The goal of this section is to prove the following estimate.
\begin{proposition}\label{propK}
Let $(\s,g,k)$ chosen as in section \ref{reducsmall}. Let $u$ the scalar function on $\s\times\S$ constructed in theorem \ref{thregx}, and let $\p$, $N$, $\th$ and $K$ be associated to $u$ as in section \ref{sec:foliation}. We have:
\begin{equation}\label{propK1}
\sum_{j\geq 0}2^{-j}\norm{P_jK}^2_{\li{\infty}{2}}+\norm{P_{<0}K}^2_{\li{\infty}{2}}\lesssim\ep^2.
\end{equation}
\end{proposition}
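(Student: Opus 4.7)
The plan is to start from the Gauss equation \eqref{gauss1} and write
\begin{equation*}
K \;=\; \tfrac{1}{2}R \;-\; R_{NN} \;+\; \tfrac{1}{4}(\trt)^{2} \;-\; \tfrac{1}{2}|\hth|^{2},
\end{equation*}
after using $|\th|^{2}=|\hth|^{2}+\tfrac{1}{2}(\trt)^{2}$. The quadratic pieces are easy: $\trt=1-a+k_{NN}$ with $\|1-a\|_{\lli{\infty}}\lesssim\ep$ by Theorem \ref{thregx}, and $k\in\li{\infty}{4}$ follows from Corollary \ref{c0} combined with \eqref{small2}; hence $(\trt)^{2}\in\li{\infty}{2}$ with size $\ep^{2}$. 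Similarly, $\hth\in\li{\infty}{4}$ follows from the Codazzi equation in \eqref{struct1} via Proposition \ref{p6} and Proposition \ref{p1}, so that $|\hth|^{2}\in\li{\infty}{2}$ with size $\ep^{2}$. Since $\li{\infty}{2}$ embeds trivially in the weighted LP norm on the left-hand side of \eqref{propK1}, these contributions are controlled. The scalar curvature $R$ is also immediate: the Hamiltonian constraint in \eqref{const1} gives $R=|k|^{2}\in\li{\infty}{2}$ with size $\ep^{2}$.

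The real work is the $R_{NN}$ term. I would substitute the first structure equation of \eqref{struct1},
\begin{equation*}
R_{NN} \;=\; \lapa \;+\; \nabn\trt \;+\; |\th|^{2},
\end{equation*}
so that the problem reduces to estimating $\lapa$ and $\nabn\trt$ in the LP norm on the left-hand side of \eqref{propK1}. For $\nabn\trt$, the choice equation \eqref{choice} gives $\nabn\trt=-\nabn a+N(k_{NN})$: the piece $\nabn a$ sits in $\li{\infty}{2}$ by Theorem \ref{thregx}, and $N(k_{NN})$ is turned into a tangential divergence of $k_{N\cdot}$ up to quadratic corrections in $\th,\nabb a,k$, by combining the constraints $\nabla^{j}k_{ij}=0$ and $\textrm{Tr}\,k=0$ with \eqref{frame1} (as is already done in the derivation of \eqref{r22}). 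This leaves $\lapa$ and $\divb(k_{N\cdot})$ as the only genuinely difficult contributions.

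For these two terms, the key tool is the heat-kernel representation of $P_{j}$ from Definition \ref{defLP}. Using $\lap U(\tau)=\partial_{\tau}U(\tau)$, write $P_{j}\lapa=\int_{0}^{\infty}m_{j}(\tau)\partial_{\tau}U(\tau)a\,d\tau$ and integrate by parts in $\tau$ against the vanishing moments of $m$. Combining the semigroup smoothing estimates of Propositions \ref{le:L2heat}--\ref{le:L2heatbis} with the control $\nabb a\in\li{\infty}{2}$ (and $\nabb^{2}a\in L^{2}(\Sigma)$) from Theorem \ref{thregx} produces a frequency-localized bound on $\|P_{j}\lapa\|_{\li{\infty}{2}}$ with a gain over the trivial Bernstein estimate, namely one that is square-summable against the $2^{-j}$ weight. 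The same argument applied with $\nabb k$ in place of $\nabb a$ (using $\nabla k\in L^{2}(\Sigma)$ from \eqref{small2} and $k\in\li{\infty}{4}$) handles $P_{j}\divb(k_{N\cdot})$. The low-frequency piece $P_{<0}K$ is then controlled directly by the $\li{\infty}{2}$ estimates above.

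The main obstacle is precisely this final heat-kernel step: the naive bound $\|P_{j}\lapa\|_{\li{\infty}{2}}\lesssim 2^{j}\|\nabb a\|_{\li{\infty}{2}}$ is a factor too weak to yield summability with the $2^{-j}$ weight, so one must genuinely exploit the parabolic smoothing encoded in the lapse equation \eqref{eqlapse1}---itself a consequence of the specific choice \eqref{choice} motivated in section \ref{sec:choice}---to trade half a derivative of normal regularity for half a derivative of tangential regularity. All the other ingredients are routine consequences of Theorem \ref{thregx} and the calculus inequalities established in Section \ref{sec:ineq}.
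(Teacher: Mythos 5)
Your reduction via the Gauss equation is correct as far as it goes: the quadratic terms $\trt^2$, $|\hth|^2$ and $R=|k|^2$ do land in $\li{\infty}{2}$ with size $\ep^2$ by \eqref{appboot1}, \eqref{appsmall2}, and this trivially controls their contribution to the weighted sum since $\sum_j2^{-j}<\infty$. But the proof collapses exactly where you flag the "main obstacle," and the mechanism you propose there does not close. After substituting the lapse equation, the terms $\lapa$ and $\divb(k_{N\c})$ must be placed in $L^\infty_u\hs{-\frac{1}{2}}$, i.e.\ one needs $\norm{P_j\lapa}_{\li{\infty}{2}}\les 2^{\frac{j}{2}-\delta}\ep$ square-summably. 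The heat-kernel/vanishing-moments manipulation you describe only produces bounds in terms of \emph{fixed-$u$ tangential} norms of $a$: either $2^j\norm{\nabb a}_{\lp{2}}$ (off by a full power of $2^{j/2}$, as you note) or $\norm{\lap a}_{\lp{2}}$ uniformly in $u$ --- and the latter is not available, since Theorem \ref{thregx} only gives $\nabb\nabla a\in\ll{2}$, i.e.\ $\lap a\in\l{2}{2}$, not $\l{\infty}{2}$. To interpolate between these one must bring in regularity in the $u$-direction, which forces an energy identity in $u$ for $P_j(\cdot)$ and hence the commutator $[\nabla_{aN},P_j]$; that commutator estimate is the entire content of the hard step and is nowhere supplied in your argument. (The ready-made commutator estimates of section \ref{sec:statepropprodcomm} cannot be invoked here: they rest on the sharp Bernstein and Bochner inequalities \eqref{eq:strongbernscalarbis}, \eqref{eq:Bochconseqbis}, which themselves require the bound \eqref{ad65} on $K_{\frac12}$ --- precisely the statement being proven. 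Likewise, any route through $\nabn\lap a$ or $\nabn^2a$ is circular, since Theorem \ref{thnabn2a} is proved only after, and using, Proposition \ref{propK}.)

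For comparison, the paper avoids the elliptic/Gauss decomposition entirely. It works with $K$ directly: using \eqref{gauss1}, the constraints and the twice-contracted Bianchi identity it derives a transport equation $\nabna K=\divb(B_1)+b_1$ with $B_1\in\ll{2}$ and $b_1\in\l{2}{\frac43}$, which yields $\La^{-1}\nabna K\in\ll{2}$ and hence $\sum_j2^{-2j}\norm{P_j\nabna K}^2_{\ll{2}}\les\ep^2$. The $\l{\infty}{2}$ bound on $P_jK$ then follows from the fundamental theorem of calculus in $u$ applied to $\norm{P_jK}^2_{\lp{2}}$, at the price of the commutator $[\nabla_{aN},P_j]K$, which is estimated by $2^{j/3}\ep$ in $\l{1}{2}$ via the Duhamel representation of the heat flow; crucially, the Bochner inequality needed inside that heat-flow analysis is obtained from a bootstrap assumption \eqref{ad26} on the very quantity being estimated. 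If you want to salvage your route, you would have to reproduce both of these devices (the $u$-energy identity with the commutator estimate, and the internal bootstrap) for $\lap a$ and $\divb(k_{N\c})$ separately --- at which point you have done strictly more work than the paper, not less.
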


\begin{proof}
Recall from \eqref{boot} that:
\begin{equation}\label{ad20}
\norm{K}_{\ll{2}}\lesssim\ep.
\end{equation}
Also, \eqref{const1} and \eqref{gauss1} yield:
$$\nabn K=\trt\nabn\trt-\th\nabn\th+k\nabn k -\nabn R_{NN},$$
which together with \eqref{r22ter} implies:
$$\nabn K=\divb(B)+b$$
where
$$B=R_{N.},\,b=\trt\nabn\trt-\th\nabn\th+R(\nabb a,N).$$
Multiplying by $a$, this implies:
\begin{equation}\label{ad22}
\nabna K=\divb(B_1)+b_1
\end{equation}
where
\begin{equation}\label{ad23}
B_1=aR_{N.},\,b_1=-\nabb a\cdot B+a\trt\nabn\trt-a\th\nabn\th+aR(\nabb a,N).
\end{equation} 
Using \eqref{small2}, \eqref{boot1}, \eqref{appboot} and \eqref{appboot1}, we obtain:
\begin{equation}\label{ad24}
\norm{B_1}_{\ll{2}}\leq\ep,
\end{equation}
and
\begin{equation}\label{ad25}
\norm{b_1}_{\l{2}{\frac{4}{3}}}\lesssim\norm{\th}_{\l{\infty}{4}}\norm{\nabn\th}_{\ll{2}}+(\norm{B_1}_{\ll{2}}\norm{R}_{\ll{2}})\norm{\nabb a}_{\l{\infty}{4}}\lesssim\ep.
\end{equation}
In particular, \eqref{La3}, \eqref{La5}, \eqref{ad22}, \eqref{ad24} and \eqref{ad25} yield:
\begin{equation}\label{ad25bis}
\norm{\La^{-1}\nabn K}_{\ll{2}}\lesssim\ep.
\end{equation}

We may assume the existence of $\widetilde{P_j}$ with the same properties than $P_j$ such that $P_j=\widetilde{P_j}^2$ (see \cite{LP}), and for simplicity we write $P_j=P_j^2$. Also, using the fact that $\La\La^{-1}=I$ and that $\La$ commutes with $P_j$, we obtain:
$$P_j\nabna K=\La P_j(P_j\La^{-1}\nabna K),$$
which together with property (iii) of Theorem \ref{thm:LP} yields:
$$\norm{P_j\nabna K}_{\ll{2}}\lesssim \norm{\La P_j(P_j\La^{-1}\nabna K)}_{\ll{2}}\lesssim 2^j\norm{P_j\La^{-1}\nabna K}_{\ll{2}}.$$
Using property (ii) of Theorem \ref{thm:LP}, we get:
\bee
\sum_{j\geq 0}2^{-2j}\norm{P_j\nabna K}^2_{\ll{2}} & \lesssim&\sum_{j\geq 0}\norm{P_j\La^{-1}\nabna K}^2_{\ll{2}}\\
&\lesssim&\norm{\La^{-1}\nabna K}^2_{\ll{2}}.
\eee
Together with \eqref{ad25bis}, we finally obtain:
\begin{equation}\label{ad35}
\sum_{j\geq 0}2^{-2j}\norm{P_j\nabna K}^2_{\ll{2}}\lesssim\ep^2.
\end{equation}

To prove Proposition \ref{propK}, we assume:
\begin{equation}\label{ad26}
\sum_{j\geq 0}2^{-j}\norm{P_jK}^2_{\l{\infty}{2}}+\norm{P_{<0}K}^2_{\l{\infty}{2}}\lesssim D^2\ep^2,
\end{equation}
where $D$ is a large enough constant. We will then try to improve \eqref{ad26}. Note that \eqref{eq:Bochconseq}, \eqref{La8} and \eqref{ad26} yield for any scalar function $f$ on $\p$:
\begin{equation}\label{ad26bis}
\norm{\nabb^2f}^2_{\lp{2}}\lesssim\norm{\lap f}^2_{\lp{2}}+(D\ep+D^4\ep^4)\norm{\nabb f}^2_{\lp{2}}.
\end{equation}

The term $\norm{P_{<0}K}_{\l{\infty}{2}}$ is easier to bound, so we concentrate on estimating the sum $\sum_{j\ge 0}2^{-j}\norm{P_jK}_{\ll{2}}$. We will use the following variant of \eqref{c0e1} where we 
do not yet use Cauchy-Schwarz in $u$ for the integral containing $\nabn F$:
\begin{equation}\label{ad27bis}
\begin{array}{ll}
\norm{F}^2_{\l{\infty}{2}}\lesssim & \ds\norm{F(-2,.)}^2_{L^2(P_{-2})}+\int_{-2}^{2}\norm{\nabn F}_{\lp{2}}\norm{F}_{\lp{2}}du\\
&\ds +\norm{\nabb F}_{\ll{2}}\norm{F}_{\ll{2}}.
\end{array}
\end{equation}
Using \eqref{ad27bis}, the fact that $P_jK\equiv 0$ on $u=-2$, and properties (ii) and (iii) of Theorem \ref{thm:LP}, we have:
\begin{equation}\label{ad27}
\begin{array}{ll}
&\ds\sum_{j\geq 0}2^{-j}\norm{P_jK}^2_{\l{\infty}{2}}\\
\ds\lesssim &\ds\sum_{j\ge 0} 2^{-j}\left(\int_{-2}^{2}\norm{P_jK}_{\lp{2}}\norm{\nabn P_jK}_{\lp{2}}du+\norm{P_jK}_{\ll{2}}\norm{\nabb P_jK}_{\ll{2}}\right)\\
\ds\lesssim &\ds\sum_{j\ge 0} 2^{-j}\left(\int_{-2}^{-2}\norm{P_jK}_{\lp{2}}\norm{\nabn P_jK}_{\lp{2}}du\right)+\sum_{j\ge 0}\norm{P_jK}^2_{\ll{2}}\\
\ds\lesssim &\ds\sum_{j\ge 0} 2^{-j}\left(\int_{-2}^{-2}\norm{P_jK}_{\lp{2}}\norm{\nabna P_jK}_{\lp{2}}du\right)+\ep^2,
\end{array}
\end{equation}
where we used in the last inequality the estimate \eqref{boot} for $a$ and the estimate \eqref{ad20} for $K$. We inject the estimate:
$$\norm{\nabna P_jK}_{\lp{2}}\les \norm{P_j\nabna K}_{\lp{2}}+\norm{[\nabna, P_j]K}_{\lp{2}}$$
in \eqref{ad27}. We obtain:
\bee
&&\sum_{j\geq 0}2^{-j}\norm{P_jK}^2_{\l{\infty}{2}}\\
\nn&\lesssim &\ds\sum_{j\ge 0}(\norm{P_jK}_{\ll{2}}^2+2^{-2j}\norm{P_j\nabna K}^2_{\ll{2}})\\
&&+\sum_{j\geq 0}2^{-j}\norm{P_jK}_{\l{\infty}{2}}\norm{[\nabna, P_j]K}_{\l{1}{2}}+\ep^2,
\eee
which together with the estimates \eqref{ad20} and \eqref{ad35} for $K$ implies:
\be\label{ad28}
\sum_{j\geq 0}2^{-j}\norm{P_jK}^2_{\l{\infty}{2}}\lesssim \ds\sum_{j\ge 0}2^{-j}\norm{[\nabna, P_j]K}^2_{\l{1}{2}}+\ep^2.
\ee
Now, we will prove:
\be\lab{ad29}
\norm{[\nabna, P_j]K}_{\l{1}{2}}\les 2^{\frac{j}{3}}(\ep+D\ep^2).
\ee
Together with \eqref{ad28}, this yields:
\bee
\sum_{j\geq 0}2^{-j}\norm{P_jK}^2_{\l{\infty}{2}}&\lesssim& \ep^2+\left(\sum_{j\geq 0}2^{-\frac{j}{3}}\right)(\ep^2+D^2\ep^4)\\
&\les& \ep^2+D^2\ep^4,
\eee
which is an improvement of \eqref{ad26}. Thus we have:
$$\sum_{j\geq 0}2^{-j}\norm{P_jK}^2_{\l{\infty}{2}}+\norm{P_{<0}K}^2_{\l{\infty}{2}}\lesssim \ep^2,$$
which concludes the proof of Proposition \ref{propK} provided \eqref{ad29} holds.

In the rest of the proof, we focus on obtaining \eqref{ad29}. We have:
\begin{equation}\label{ad30}
[\nabna, P_j]K=\int_0^\infty m_j(\tau)V(\tau) d\tau,
\end{equation}
where $V(\tau)$ is satisfies:
\begin{equation}\label{bis:clp22}
(\partial_{\tau}-\lap)V(\tau)=[\nabna,\lap]U(\tau)K,\, V(0)=0.
\end{equation}
In view of \eqref{ad30}, we have:
\be\lab{ad31}
\norm{[\nabna, P_j]K}_{\l{1}{2}}\les \int_0^\infty m_j(\tau)\norm{V(\tau)}_{\l{1}{2}} d\tau.
\ee
Now, using \eqref{interpolLa} and \eqref{La1}, we have:
\begin{equation}\lab{ad61}
\begin{array}{l}
\ds\int_0^\infty m_j(\tau)\norm{V(\tau)}_{\lp{2}} d\tau\lesssim\int_0^\infty m_j(\tau)\norm{\La^{-\frac{1}{3}}V(\tau)}^{\frac{2}{3}}_{\lp{2}}\norm{\nabb\La^{-\frac{1}{3}}V(\tau)}^{\frac{1}{3}}_{\lp{2}} d\tau\\
\ds\lesssim\left(\int_0^\infty m_j(\tau)\norm{\La^{-\frac{1}{3}}V(\tau)}_{\lp{2}} d\tau\right)^{\frac{2}{3}}\left(\int_0^\infty\norm{\nabb\La^{-\frac{1}{3}}V(\tau)}^{2}_{\lp{2}} d\tau\right)^{\frac{1}{6}}\\
\hspace{1cm}\ds\times\left(\int_0^\infty m_j(\tau)^2 d\tau\right)^{\frac{1}{6}}\\
\ds\lesssim 2^{\frac{j}{3}}\left(\sup_\tau\norm{\La^{-\frac{1}{3}}V(\tau)}_{\lp{2}} +\left(\int_0^\infty\norm{\nabb\La^{-\frac{1}{3}}V(\tau)}^{2}_{\lp{2}} d\tau\right)^{\frac{1}{2}}\right).
\end{array}
\end{equation}
Integrating in $u$ and using \eqref{ad31}, we obtain:
\bea\lab{ad32}
\norm{[\nabna, P_j]K}_{\l{1}{2}}&\lesssim& 2^{\frac{j}{3}}\Bigg(\sup_\tau\norm{\La^{-\frac{1}{3}}V(\tau)}_{\l{1}{2}} \\
\nn&&+\int_{-2}^2\left(\int_0^\infty\norm{\nabb\La^{-\frac{1}{3}}V(\tau)}^{2}_{\lp{2}} d\tau\right)^{\frac{1}{2}}du\Bigg).
\eea
Now, we will prove:
\be\lab{bis:clp23}
\norm{\La^{-\frac{1}{3}}V(\tau)}_{\l{1}{2}}+\int_{-2}^{2}\left(\int_0^\tau\norm{\nabb\La^{-\frac{1}{3}}V(\tau')}^2_{L^2(\p)}d\tau'\right)^{\frac{1}{2}}du\lesssim \ep+\ep^2D.
\ee
Together with \eqref{ad32}, this yields the wanted estimate \eqref{ad29}. 

In the rest of the proof, we focus on obtaining \eqref{bis:clp23}. In view of \eqref{bis:clp22} and the heat flow estimate \eqref{eq:l2heat1bis}, we have:
$$\norm{\La^{-\frac{1}{3}}V(\tau)}^2_{L^2(\p)}+\int_0^\tau\norm{\nabb\La^{-\frac{1}{3}} V(\tau')}^2_{L^2(\p)}d\tau'\lesssim \int_0^\tau\int_{\p}\La^{-\frac{2}{3}}V(\tau')[\nabna,\lap]U(\tau')d\mu_ud\tau'.$$
Injecting the commutator formula \eqref{dj3}, integrating by parts, we obtain the following estimate:
\bea\lab{bis:clp24}
&&\norm{\La^{-\frac{1}{3}}V(\tau)}^2_{L^2(\p)}+\int_0^\tau\norm{\nabb\La^{-\frac{1}{3}} V(\tau')}^2_{L^2(\p)}d\tau'\\
\nn&\lesssim& (\norm{a\nabb(\th)}_{\lp{2}}+\norm{\nabb(a)\th}_{\lp{2}}+\norm{aR}_{\lp{2}})\int_0^\tau\norm{\nabb U(\tau')}_{\lp{p}}\norm{\nabb\La^{-\frac{2}{3}} V(\tau')}_{\lp{2}}d\tau',
\eea
where 
$$2<p<3.$$ 
Now, we have in view of \eqref{La1} and \eqref{interpolLa}:
$$\norm{\nabb\La^{-\frac{2}{3}} V(\tau')}_{\lp{2}}\les \norm{\La^{-\frac{1}{3}} V(\tau')}^{\frac{1}{3}}_{\lp{2}}\norm{\nabb\La^{-\frac{2}{3}} V(\tau')}^{\frac{2}{3}}_{\lp{2}}$$
which together with \eqref{bis:clp24} implies:
\bea\lab{bis:clp25}
&&\norm{\La^{-\frac{1}{3}}V(\tau)}^2_{L^2(\p)}+\int_0^\tau\norm{\nabb\La^{-\frac{1}{3}} V(\tau')}^2_{L^2(\p)}d\tau'\\
\nn&\lesssim& (\norm{a\nabb(\th)}^2_{\lp{2}}+\norm{\nabb(a)\th}^2_{\lp{2}}+\norm{aR}^2_{\lp{2}})\int_0^\tau{\tau'}^{(\frac{1}{3})_-}\norm{\nabb U(\tau')}^2_{\lp{p}}d\tau'.
\eea
The Gagliardo-Nirenberg inequality \eqref{eq:GNirenberg} implies: 
\bee 
&&\int_0^\tau{\tau'}^{(\frac{1}{3})_-}\norm{\nabb U(\tau')}^2_{\lp{p}}d\tau'\\
&\les& \int_0^\tau{\tau'}^{(\frac{1}{3})_-}\norm{\nabb U(\tau')}^{\frac{4}{p}}_{\lp{2}}\norm{\nabb^2U(\tau')}^{2(1-\frac{2}{p})}_{\lp{2}}d\tau'\\
&\les& \left(\int_0^\tau\norm{\nabb U(\tau')}^2_{\lp{2}}d\tau'\right)^{\frac{2}{p}}\left(\int_0^\tau{\tau'}\norm{\nabb^2 U(\tau')}^2_{\lp{2}}d\tau'\right)^{1-\frac{2}{p}},
\eee
where we used in the last inequality the fact that:
$$\left(\frac{1}{3}\right)_--1+\frac{2}{p}>0$$
in view of the restriction $p<3$. Together with the Bochner inequality \eqref{ad26bis}, we obtain:
\bee 
&&\int_0^\tau{\tau'}^{(\frac{1}{3})_-}\norm{\nabb U(\tau')}^2_{\lp{p}}d\tau'\\
&\les& (1+D\ep+D^4\ep^4)^{1-\frac{2}{p}}\left(\int_0^\tau\norm{\nabb U(\tau')}^2_{\lp{2}}d\tau'+\int_0^\tau{\tau'}\norm{\lap U(\tau')}^2_{\lp{2}}d\tau'\right).
\eee
Thus, we obtain in view of the heat flow estimates \eqref{eq:l2heat1} and \eqref{eq:l2heat2}:
$$\int_0^\tau{\tau'}^{(\frac{1}{3})_-}\norm{\nabb U(\tau')}^2_{\lp{p}}d\tau'\les \norm{K}^2_{\lp{2}}.$$
Together with \eqref{bis:clp25}, this yields:
\bee
&&\norm{\La^{-\frac{1}{3}}V(\tau)}^2_{L^2(\p)}+\int_0^\tau\norm{\nabb\La^{-\frac{1}{3}} V(\tau')}^2_{L^2(\p)}d\tau'\\
\nn&\lesssim& (1+D\ep+D^4\ep^4)^{1-\frac{2}{p}}(\norm{a\nabb(\th)}^2_{\lp{2}}+\norm{\nabb(a)\th}^2_{\lp{2}}+\norm{aR}^2_{\lp{2}}) \norm{K}^2_{\lp{2}}.
\eee
Integrating in $u$, this yields:
\bee
&&\norm{\La^{-\frac{1}{3}}V(\tau)}_{\l{1}{2}}+\int_{-2}^{2}\left(\int_0^\tau\norm{\nabb\La^{-\frac{1}{3}}V(\tau')}^2_{L^2(\p)}d\tau'\right)^{\frac{1}{2}}du\\
\nn&\lesssim& (1+D\ep+D^4\ep^4)^{\frac{1}{2}-\frac{1}{p}}(\norm{a\nabb(\th)}_{\ll{2}}+\norm{\nabb(a)\th}_{\ll{2}}+\norm{aR}_{\ll{2}}) \norm{K}_{\ll{2}}\\
\nn&\les&(1+D\ep+D^4\ep^4)^{\frac{1}{2}-\frac{1}{p}}\ep^2,
\eee
where we used in the last inequality the estimate \eqref{thregx1} for $a$ and $\th$, the smallness assumption \eqref{small1} for $R$, and the estimate \eqref{ad20} for $K$. Now, since $2<p<3$, we obtain:
$$\norm{\La^{-\frac{1}{3}}V(\tau)}_{\l{1}{2}}+\int_{-2}^{2}\left(\int_0^\tau\norm{\nabb\La^{-\frac{1}{3}}V(\tau')}^2_{L^2(\p)}d\tau'\right)^{\frac{1}{2}}du \les \ep^2+D^{\frac{2}{3}}\ep^{\frac{8}{3}},$$
which implies \eqref{bis:clp23}. This concludes the proof of the proposition. 
\end{proof}

\begin{remark}
The following consequence of Proposition \ref{propK} will be useful in the next two sections. 
Proposition \ref{propK} and \eqref{La8} with the choice $\a=1/2$ imply:
\begin{equation}\label{ad65}
\norm{K_{\frac{1}{2}}}_{L^\infty(-2,2)}=\norm{\La^{-\frac{1}{2}}K}_{\l{\infty}{2}}\lesssim\ep,
\end{equation}
where $K_{1/2}$ has been defined in \eqref{La7}. Together with \eqref{eq:strongbernscalar} and \eqref{eq:strong-Bern-0}  with the choice $\gamma=1/2$, we obtain for any scalar function $f$ on $\p$ and any $j\ge 0$:
\bea
\|P_j f\|_{\lp{\infty}}&\lesssim & 2^j\|f\|_{\lp{2}},\label{eq:strongbernscalarbis}\\
\|P_{<0} f\|_{\lp{\infty}}&\lesssim &  
 \|f\|_{\lp{2}}.\label{eq:strong-Bern-0bis}
\eea
Also, \eqref{ad65} and \eqref{eq:Bochconseq} with the choice $\gamma=1/2$ imply:
\begin{equation}\label{eq:Bochconseqbis}
\int_{\p} |\nabb^2 f|^2\lesssim \int_{\p} |\lap f|^2  + \ep\int_{\p} |\nabb f|^2.
\end{equation}
\end{remark}

Using the Bochner inequality \eqref{eq:Bochconseqbis}, we may prove the following lemma. 
\begin{lemma}\lab{lemma:lbz5}
For any 1-form $F$ on $\p$, for any $1<p\leq 2$ and for all $j\geq 0$, we have:
\be\lab{lbz14bis}
\norm{P_j\divb(F)}_{\lp{2}}\les 2^{\frac{2}{p}j}\norm{F}_{\lp{p}}.
\ee
\end{lemma}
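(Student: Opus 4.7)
The plan is to prove the estimate by duality, reducing it to a Bernstein-type bound on $\nabb P_j g$ where $g$ is a scalar test function, and then to obtain that Bernstein bound by interpolating via Gagliardo-Nirenberg between the finite band property and the Bochner inequality \eqref{eq:Bochconseqbis}.

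First, since $P_j$ is self-adjoint and $\divb$ is (up to sign) the formal adjoint of $\nabb$, for any scalar $g\in L^2(\p)$ we have
\begin{equation*}
\langle P_j\divb(F),g\rangle=\langle\divb(F),P_j g\rangle=-\langle F,\nabb P_j g\rangle.
\end{equation*}
Hence, by H\"older's inequality on $\p$ with conjugate exponent $p'$ (where $2\le p'<\infty$ since $1<p\le 2$),
\begin{equation*}
\norm{P_j\divb(F)}_{\lp{2}}=\sup_{\norm{g}_{\lp{2}}\le 1}|\langle F,\nabb P_j g\rangle|\les \norm{F}_{\lp{p}}\sup_{\norm{g}_{\lp{2}}\le 1}\norm{\nabb P_j g}_{\lp{p'}}.
\end{equation*}
Thus the lemma is reduced to showing $\norm{\nabb P_j g}_{\lp{p'}}\les 2^{\frac{2}{p}j}\norm{g}_{\lp{2}}$ for every scalar $g$.

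To bound $\norm{\nabb P_j g}_{\lp{p'}}$, I would apply the Gagliardo-Nirenberg inequality \eqref{eq:GNirenberg} to the 1-form $\nabb P_j g$ with exponent $p'\in[2,\infty)$, yielding
\begin{equation*}
\norm{\nabb P_j g}_{\lp{p'}}\les \norm{\nabb^2 P_j g}_{\lp{2}}^{1-\frac{2}{p'}}\norm{\nabb P_j g}_{\lp{2}}^{\frac{2}{p'}}+\norm{\nabb P_j g}_{\lp{2}}.
\end{equation*}
The finite band property of Theorem \ref{thm:LP} gives $\norm{\nabb P_j g}_{\lp{2}}\les 2^j\norm{g}_{\lp{2}}$ and $\norm{\lap P_j g}_{\lp{2}}\les 2^{2j}\norm{g}_{\lp{2}}$, while the Bochner inequality \eqref{eq:Bochconseqbis} (which is available thanks to Proposition \ref{propK} through \eqref{ad65}) applied to the scalar $P_j g$ yields
\begin{equation*}
\norm{\nabb^2 P_j g}_{\lp{2}}\les \norm{\lap P_j g}_{\lp{2}}+\ep^{\frac{1}{2}}\norm{\nabb P_j g}_{\lp{2}}\les 2^{2j}\norm{g}_{\lp{2}}.
\end{equation*}
Plugging these in and using $2(1-2/p')+2/p'=2-2/p'=2/p$, we obtain $\norm{\nabb P_j g}_{\lp{p'}}\les 2^{\frac{2}{p}j}\norm{g}_{\lp{2}}$, which combined with the duality estimate above yields \eqref{lbz14bis}.

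The main point of the argument is the availability of the Bochner inequality with an absolute ($\ep$-controlled) constant on $\p$; this is the only nontrivial input besides the Littlewood-Paley and Gagliardo-Nirenberg machinery and is precisely what Proposition \ref{propK} was established for. Apart from that the proof is essentially algebraic: the interpolation exponents combine to produce exactly $2^{(2/p)j}$, reflecting the gain of one derivative from the divergence and the standard $L^{p}\to L^2$ Bernstein loss of $2^{(2/p-1)j}$.
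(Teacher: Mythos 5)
Your proof is correct and follows essentially the same route as the paper's: duality to reduce to $\norm{\nabb P_j g}_{\lp{p'}}\les 2^{\frac{2}{p}j}\norm{g}_{\lp{2}}$, then Gagliardo--Nirenberg combined with the Bochner inequality \eqref{eq:Bochconseqbis} and the finite band property. The exponent bookkeeping matches the paper's exactly.
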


\begin{proof}
By duality, it suffices to prove for any scalar function $f$ on $\p$, for any $2\leq p<+\infty$ and for all $j\geq 0$ the following inequality:
\be\lab{lbz14ter}
\norm{\nabb P_jf}_{\lp{p}}\les 2^{2(1-\frac{1}{p})j}\norm{f}_{\lp{2}}.
\ee
Now, using the Gagliardo-Nirenberg inequality \eqref{eq:GNirenberg}, the Bochner inequality for scalar functions \eqref{eq:Bochconseqbis}, and the property iii) of Theorem \ref{thm:LP} for Littlewood-Paley projections, we have:
\bee
\norm{\nabb P_jf}_{\lp{p}}&\les& \norm{\nabb^2P_jf}^{1-\frac{2}{p}}_{\lp{2}}\norm{\nabb P_jf}^{\frac{2}{p}}_{\lp{2}}\\
&\les& (\norm{\lap P_jf}_{\lp{2}}+\norm{\nabb P_jf}_{\lp{2}})^{1-\frac{2}{p}}\norm{\nabb P_jf}^{\frac{2}{p}}_{\lp{2}}\\
&\les& 2^{2j(1-\frac{1}{p})}\norm{f}_{\lp{2}},
\eee
which is \eqref{lbz14ter}. This concludes the proof of Lemma \ref{lemma:lbz5}.
\end{proof}

Let us state another consequence of the Bochner inequality \eqref{eq:Bochconseqbis}. 
\begin{lemma}\lab{lemma:vacances:1}
Let $0<b<2$. Let $f$ a scalar on $\p$ such that $f\in\hs{b}$. Then, $\nabb f\in\hs{b-1}$.
\end{lemma}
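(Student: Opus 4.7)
The plan is to mirror the argument of Lemma \ref{lemma:vacances}, but to push the range from $0<b<1$ up to $0<b<2$ by exploiting the fact that for scalars we have access to two genuine derivatives through the Bochner-type inequality \eqref{eq:Bochconseqbis} and, more importantly, through the scalar commutator identity \eqref{commut2}. For $0<b\le 1$ there is nothing new: either we apply Lemma \ref{lemma:vacances} directly to the scalar $f$, or, at $b=1$, we note that $\nabb f\in\lp{2}$ follows trivially from $f\in\hs{1}$. So the real content is the range $1<b<2$.

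For $1<b<2$, decompose $P_j\nabb f=\sum_{l\ge 0}P_j\nabb P_l f+P_j\nabb P_{<0}f$ and estimate $2^{(b-1)j}\|P_j\nabb P_l f\|_{\lp{2}}$ in two regimes. When $l>j$, boundedness of $P_j$ on $\lp{2}$ together with the finite band property for $P_l$ (Theorem \ref{thm:LP}) gives
$$2^{(b-1)j}\|P_j\nabb P_l f\|_{\lp{2}}\lesssim 2^{(b-1)j}\cdot 2^l\|P_l f\|_{\lp{2}}=2^{-(b-1)(l-j)}\cdot 2^{bl}\|P_l f\|_{\lp{2}},$$
which is the same mechanism as in Lemma \ref{lemma:vacances} and is summable because $b-1>0$.

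The main obstacle is the regime $l\le j$, where the argument of Lemma \ref{lemma:vacances} fails because one can no longer afford to throw away the smoothness of $\nabb P_l f$. Here the idea is to exploit that for a \emph{scalar}, $\nabb$ almost commutes with $\lap$: by the finite band property applied to $\nabb P_l f$,
$$\|P_j\nabb P_l f\|_{\lp{2}}\lesssim 2^{-2j}\|\lap\nabb P_l f\|_{\lp{2}},$$
and the commutator formula \eqref{commut2} gives $\lap\nabb P_l f=\nabb\lap P_l f+K\nabb P_l f$. The first piece is estimated by the scalar integration by parts $\|\nabb g\|_{\lp{2}}^2=-\int_{\p}g\lap g\,d\muu$ applied to $g=\lap P_l f$, combined with two applications of the finite band property, yielding $\|\nabb\lap P_l f\|_{\lp{2}}\lesssim 2^{3l}\|P_l f\|_{\lp{2}}$; the second piece is handled using the $\lp{2}$ bound on $K$ from Proposition \ref{propK} together with the sharp Bernstein inequality \eqref{eq:strongbernscalarbis}, again giving a bound $\lesssim 2^{3l}\|P_l f\|_{\lp{2}}$. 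Together,
$$2^{(b-1)j}\|P_j\nabb P_l f\|_{\lp{2}}\lesssim 2^{(b-3)(j-l)}\cdot 2^{bl}\|P_l f\|_{\lp{2}},$$
and this is summable since $b-3<0$ and $l\le j$.

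Combining the two regimes and applying the Schur test to the kernel $K(j,l)=2^{-\min(b-1,3-b)|j-l|}$ produces the desired inequality
$$\sum_{j\ge 0}2^{2(b-1)j}\|P_j\nabb f\|_{\lp{2}}^2\lesssim\sum_{l\ge 0}2^{2bl}\|P_l f\|_{\lp{2}}^2,$$
and the low-frequency piece $\|P_{<0}\nabb f\|_{\lp{2}}$ is treated by the same argument applied to $P_{<0}f$, with the finite band property replaced by its low-frequency analogue. This gives $\|\nabb f\|_{\hs{b-1}}\lesssim\|f\|_{\hs{b}}$, as claimed.
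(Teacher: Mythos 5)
Your high-frequency regime ($l>j$) and the reduction to $1<b<2$ are fine, but there is a genuine gap in the low-frequency regime $l\le j$, precisely at the point you flag as the ``main obstacle''. After writing $\lap\nabb P_lf=\nabb\lap P_lf-K\nabb P_lf$ via \eqref{commut2}, you claim the term $K\nabb P_lf$ is controlled in $\lp{2}$ by $2^{3l}\norm{P_lf}_{\lp{2}}$ ``using the $\lp{2}$ bound on $K$ together with the sharp Bernstein inequality''. This does not go through: the sharp Bernstein inequality \eqref{eq:strongbernscalarbis} bounds $\norm{P_jf}_{\lp{\infty}}$ for a \emph{scalar} $f$, whereas here you would need an $\lp{\infty}$ bound on the 1-form $\nabb P_lf$ to pair against $K\in\lp{2}$. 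No such bound is available at this level of regularity (it would require $L^4$ control of two derivatives of $\nabb P_lf$, i.e. essentially three derivatives of $P_lf$, which is exactly the kind of estimate the geometry does not permit when $K$ is merely in $L^2$). Since $K$ is not in any $\lp{p}$ with $p>2$, you also cannot trade integrability the other way. As written, the estimate $\norm{\lap\nabb P_lf}_{\lp{2}}\les 2^{3l}\norm{P_lf}_{\lp{2}}$ is therefore unjustified.

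The fix — and the paper's actual argument — is to spend only \emph{one} factor of $2^{-j}$ rather than two, which removes the need for the commutator \eqref{commut2} altogether. For $l\le j$ use the finite band property of Theorem \ref{thm:LP} in the form $\norm{P_jF}_{\lp{2}}\les 2^{-j}\norm{\nabb F}_{\lp{2}}$ with $F=\nabb P_lf$, then the Bochner inequality for scalars \eqref{eq:Bochconseqbis} to get $\norm{\nabb^2P_lf}_{\lp{2}}\les\norm{\lap P_lf}_{\lp{2}}+\ep^{1/2}\norm{\nabb P_lf}_{\lp{2}}\les 2^{2l}\norm{P_lf}_{\lp{2}}$; this yields the factor $2^{-(2-b)|j-l|}2^{bl}\norm{P_lf}_{\lp{2}}$, summable for every $b<2$. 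For $l>j$, the dual finite band estimate $\norm{P_j\nabb F}_{\lp{2}}\les 2^j\norm{F}_{\lp{2}}$ gives $2^{-b|j-l|}2^{bl}\norm{P_lf}_{\lp{2}}$, summable for every $b>0$, so the case split at $b=1$ and the appeal to Lemma \ref{lemma:vacances} are also unnecessary.
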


\begin{proof}
We have:
\be\lab{vacances:1}
\norm{P_j\nabb f}_{\lp{2}}\les \sum_{l\geq 0}\norm{P_j\nabb P_lf}_{\lp{2}}.
\ee
If $l\leq j$, we use the finite band property of $P_j$ and $P_l$ and the Bochner inequality \eqref{eq:Bochconseqbis} for scalars to obtain:
\bea\lab{vacances1:1}
2^{j(b-1)}\norm{P_j\nabb P_lf}_{\lp{2}} &\les& 2^{j(b-2)}\norm{\nabb^2P_lf}_{\lp{2}}\\
\nn &\les& 2^{j(b-2)}2^{2l}\norm{P_lf}_{\lp{2}}\\
\nn &\les& 2^{-|j-l|(2-b)}2^{bl}\norm{P_lf}_{\lp{2}},
\eea
where we used in the last inequality the fact that $l\leq j$ and $b<2$. 

If $l>j$, we use the finite band property for $P_j$ to obtain:
\bea\lab{vacances2:1}
2^{j(b-1)}\norm{P_j\nabb P_lf}_{\lp{2}} &\les& 2^{jb}\norm{P_lf}_{\lp{2}}\\
\nn &\les& 2^{-|j-l|b}2^{bl}\norm{P_lf}_{\lp{2}},
\eea
where we used in the last inequality the fact that $l> j$ and $b>0$. Finally, \eqref{vacances:1}, \eqref{vacances1:1} and 
\eqref{vacances2:1} imply:
\bee
\sum_{j\geq 0}2^{2(b-1)j}\norm{P_j\nabb f}^2_{\lp{2}}&\les& \sum_{j\geq 0}\left(\sum_{l\geq 0}2^{-\min(b,2-b)|j-l|}2^{lb}\norm{P_lf}_{\lp{2}}\right)^2\\
&\les& \sum_{l\geq 0}2^{2lb}\norm{P_lf}^2_{\lp{2}}\\
&\les& \norm{f}_{\hs{b}}^2,
\eee
where we used the fact that $\min(b, 2-b)>0$. This concludes the proof of the lemma.
\end{proof}

Finally, the bound \eqref{ad65} allows us to prove the following Hodge inequality.
\begin{lemma}\label{lemma:bieber}
Let $F$ a symmetric 2-tensor such that tr$F=0$. Then:
\begin{equation}\label{bieber}
\norm{\nabb F}_{\lp{2}}\lesssim \norm{\divb F}_{\lp{2}}+\ep\norm{F}_{\lp{2}}.
\end{equation}
\end{lemma}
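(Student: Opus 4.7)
The plan is to start from the Hodge identity \eqref{hodge}, rewritten on a single leaf $\p$ as
$$\norm{\nabb F}_{\lp{2}}^2 = 2\norm{\divb F}_{\lp{2}}^2 - 2\int_{\p} K|F|^2\, d\muu,$$
so everything reduces to showing that
$$\left|\int_{\p} K|F|^2\right|\;\lesssim\; \tfrac{1}{2}\norm{\nabb F}_{\lp{2}}^2 + C\ep^2\norm{F}_{\lp{2}}^2,$$
after which Young absorbs the first term. The essential point is that $K$ is not uniformly in $L^2(\p)$ — only $\norm{K}_{\ll{2}}\les\ep$ is available — so I cannot simply pair $K$ with $|F|^2$ in $L^2\times L^2$. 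Instead I will use the $H^{-\frac12}(\p)$–$H^{\frac12}(\p)$ duality, exploiting that Proposition \ref{propK} together with \eqref{La8} (taking $\a=1/2$) yields the uniform bound $\norm{K}_{H^{-1/2}(\p)}\les \ep$ for every $u\in(-2,2)$, as already noted in \eqref{ad65}.

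Concretely, I would decompose $K=P_{<0}K+\sum_{j\ge 0}P_jK$. The low-frequency piece is bounded directly by
$$\left|\int_{\p}P_{<0}K\,|F|^2\right|\le \norm{P_{<0}K}_{\lp{2}}\norm{F}_{\lp{4}}^2\les \ep\norm{F}_{\lp{4}}^2,$$
and the Gagliardo--Nirenberg estimate \eqref{eq:GNirenberg} gives $\norm{F}_{\lp{4}}^2\les\norm{\nabb F}_{\lp{2}}\norm{F}_{\lp{2}}+\norm{F}_{\lp{2}}^2$, which after Young is absorbable. For the high-frequency part, I use that each $P_j$ is self-adjoint and can be split as $P_j=\widetilde P_j\widetilde P_j$, then apply Cauchy--Schwarz in $j$ with weights $2^{\mp j/2}$:
$$\sum_{j\ge 0}\left|\int_{\p}P_jK\,|F|^2\right|\;\le\;\Bigl(\sum_{j\ge 0}2^{-j}\norm{\widetilde P_jK}_{\lp{2}}^2\Bigr)^{1/2}\Bigl(\sum_{j\ge 0}2^{j}\norm{\widetilde P_j(|F|^2)}_{\lp{2}}^2\Bigr)^{1/2}.$$
By Proposition \ref{propK} the first factor is $\les\ep$, while the second factor is equivalent to $\norm{|F|^2}_{H^{1/2}(\p)}$ by Definition \ref{def:Hs}.

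It remains to estimate $\norm{|F|^2}_{H^{1/2}(\p)}$ purely in terms of $\norm{F}_{\lp{2}}$ and $\norm{\nabb F}_{\lp{2}}$. The plan is to interpolate $H^{1/2}$ between $L^2$ and $H^1$, so that
$$\norm{|F|^2}_{H^{1/2}(\p)}^2\les \norm{|F|^2}_{\lp{2}}\bigl(\norm{|F|^2}_{\lp{2}}+\norm{\nabb(|F|^2)}_{\lp{2}}\bigr)\les \norm{F}_{\lp{4}}^2\bigl(\norm{F}_{\lp{4}}^2+\norm{F}_{\lp{4}}\norm{\nabb F}_{\lp{4}}\bigr),$$
and then control $\norm{F}_{\lp{4}}$ and $\norm{\nabb F}_{\lp{4}}$ via Gagliardo--Nirenberg together with the Bochner consequence \eqref{eq:Bochconseqbis}; the outcome is a bound of the form
$$\norm{|F|^2}_{H^{1/2}(\p)}\les \norm{\nabb F}_{\lp{2}}^{3/2}\norm{F}_{\lp{2}}^{1/2}+\norm{\nabb F}_{\lp{2}}\norm{F}_{\lp{2}}+\norm{F}_{\lp{2}}^2.$$
Multiplying by $\ep$ and applying Young's inequality ($\ep\,a^{3/2}b^{1/2}\le \tfrac14 a^2+C\ep^4 b^2$) produces exactly the desired absorption, yielding \eqref{bieber}. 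The main obstacle is the last step: extracting a $H^{1/2}$-product estimate for $|F|^2$ from purely $L^2$-based data on $F$ and $\nabb F$, since no $L^{\infty}(\p)$ control of $F$ is available — this is where the two-dimensionality of $\p$, the sharp Gagliardo--Nirenberg inequality, and the Bochner bound \eqref{eq:Bochconseqbis} must be combined in a tight way to avoid losing derivatives.
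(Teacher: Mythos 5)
Your overall strategy coincides with the paper's: both proofs start from the Hodge identity \eqref{hodge} and estimate the Gauss-curvature term by the duality $\left|\int_{\p}K|F|^2\right|\les\norm{K}_{\hs{-\frac{1}{2}}}\norm{|F|^2}_{\hs{\frac{1}{2}}}\les\ep\norm{|F|^2}_{\hs{\frac{1}{2}}}$, which rests on \eqref{ad65}. The difference, and the gap, lies in the final step where you must bound $\norm{|F|^2}_{\hs{\frac{1}{2}}}$. Your interpolation $\norm{|F|^2}_{\hs{\frac{1}{2}}}^2\les\norm{|F|^2}_{\lp{2}}(\norm{|F|^2}_{\lp{2}}+\norm{\nabb(|F|^2)}_{\lp{2}})$ forces you to place $\nabb(|F|^2)=2F\c\nabb F$ in $\lp{2}$, which by H\"older requires either $F\in L^\infty(\p)$ or $\nabb F\in\lp{4}$. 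Neither is available: the lemma assumes only $F,\divb F\in\lp{2}$, and $\nabb F$ is precisely the quantity being estimated, so it may only be used in $\lp{2}$. Upgrading $\nabb F$ to $\lp{4}$ via Gagliardo--Nirenberg would require $\nabb^2F\in\lp{2}$, i.e.\ two derivatives of $F$; and the Bochner consequence \eqref{eq:Bochconseqbis} you invoke is stated for scalars and in any case only trades $\nabb^2$ for $\lap$, still a second derivative of $F$. So the ``main obstacle'' you flag at the end is a genuine obstruction to the $L^2$--$H^1$ interpolation route, not a technicality.

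The paper circumvents it by exploiting the divergence structure before distributing derivatives: it writes $P_j(|F|^2)=2^{-2j}P_j\lap(|F|^2)=2^{-2j}P_j\divb(\nabb(|F|^2))$ and invokes Lemma \ref{lemma:lbz5} (estimate \eqref{lbz14bis}) with $p=\frac{3}{2}$, which costs only a factor $2^{\frac{4j}{3}}$ and --- crucially --- accepts $\nabb(|F|^2)=2F\c\nabb F$ merely in $\lp{\frac{3}{2}}$. That requires only $F\in\lp{6}$ (obtained from \eqref{eq:GNirenberg}) paired with $\nabb F\in\lp{2}$, giving $2^{\frac{j}{2}}\norm{P_j(|F|^2)}_{\lp{2}}\les 2^{-\frac{j}{6}}\norm{F}_{\lp{2}}^{\frac{1}{3}}\norm{\nabb F}_{\lp{2}}^{\frac{5}{3}}$, which is summable in $j$ and absorbable by Young exactly as you intend. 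If you replace your interpolation step by this one, the rest of your argument (the low-frequency piece, the Cauchy--Schwarz in $j$, the final absorption) goes through.
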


\begin{proof}
Recall the identity \eqref{hodge} for Hodge systems:
\begin{equation}\label{bieber1}
\int_{\p}(|\nabb F|^2+2K|F|^2)=2\int_{\p}|\divb F|^2.
\end{equation}
We have:
\bee
\left|\int_{\p}K|F|^2\right|&\les & \norm{K}_{\lhs{\infty}{-\frac{1}{2}}}\norm{|F|^2}_{\hs{\frac{1}{2}}}\\
&\les & \ep\norm{|F|^2}_{\hs{\frac{1}{2}}},
\eee
where we used the bound \eqref{ad65} in the last inequality. Together with \eqref{bieber1}, this implies:
\be\lab{bieber2}
\norm{\nabb F}_{\lp{2}}\les \norm{\divb F}_{\lp{2}}+\ep^{\frac{1}{2}}\norm{|F|^2}_{\hs{\frac{1}{2}}}^{\frac{1}{2}}.
\ee

Next, we estimate the last term in the right-hand side of \eqref{bieber2}. We have:
$$P_j(|F|^2)=2^{-2j}P_j\lap(|F|^2)=2^{-2j}P_j\divb(\nabb(|F|^2)).$$
Together with \eqref{lbz14bis}, we obtain:
\bee
2^{\frac{j}{2}}\norm{P_j(|F|^2)}_{\lp{2}}&\les &2^{\frac{j}{2}}2^{-2j}2^{\frac{4j}{3}}\norm{F\c \nabb F}_{\lp{\frac{3}{2}}}\\
&\les& 2^{-\frac{j}{6}}\norm{F}_{\lp{6}}\norm{\nabb F}_{\lp{2}}\\
&\les& 2^{-\frac{j}{6}}\norm{F}^{\frac{1}{3}}_{\lp{2}}\norm{\nabb F}_{\lp{2}}^{\frac{5}{3}},
\eee
where we used in the last inequality the Gagliardo-Nirenberg estimate \eqref{eq:GNirenberg}. This yields:
$$\norm{|F|^2}_{\hs{\frac{1}{2}}}\les \norm{F}^{\frac{1}{3}}_{\lp{2}}\norm{\nabb F}_{\lp{2}}^{\frac{5}{3}}.$$
Together with \eqref{bieber2}, we obtain \eqref{bieber}. This concludes the proof of the lemma.
\end{proof}

\subsection{Estimates for the commutator $[\nabla_{aN},P_j]$}\lab{sec:statepropprodcomm}

In this section, we state several estimates for the commutator $[\nabla_{aN},P_j]$.  To simplify the exposition, the proof are postponed to Appendix \ref{sec:proofpropcomm}. The reason we prefer to consider 
$[\nabla_{aN},P_j]$ instead of $[\nabla_{aN},P_j]$ is because the former does not contain any $N$ derivative in view of the commutator estimates \eqref{commutnabna2} and \eqref{dj3}.
We start with a first commutator estimate.
\begin{proposition}\lab{greveinf1}
Let $f$ a scalar function on $S$. Then, for any $j\geq 0$ and for any $\delta>0$, we have the following commutator estimate:
\be\lab{commLP1}
\norm{[\nabna, P_j]f}_{\ll{2}}\les \ep\norm{\La^{\frac{1}{2}+\delta}f}_{L^2(S)}+\ep\norm{\La^\delta f}_{\l{\infty}{2}}.
\ee
\end{proposition}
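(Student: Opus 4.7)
The plan is to use the heat flow representation of $P_j$: writing $P_j f = \int_0^\infty m_j(\tau)\, U(\tau) f \,d\tau$ and setting $V(\tau) = [\nabna, U(\tau)]f$, one has $[\nabna, P_j]f = \int_0^\infty m_j(\tau) V(\tau)\, d\tau$, where $V$ solves the inhomogeneous heat equation $(\partial_\tau - \lap)V(\tau) = [\nabna, \lap]U(\tau)f$ with $V(0)=0$. The scalar commutator formula \eqref{dj3} gives
$$[\nabna, \lap]U(\tau)f = -a\trt\, \lap U(\tau)f - 2a\hth \c \nabb^2 U(\tau)f + \big(-2aR_{N\c} - a\nabb\trt + 2\hth\c\nabb a\big)\c \nabb U(\tau)f,$$
so it suffices to control the three types of contributions generated by these terms after integration against $m_j$.

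Following the strategy used to prove \eqref{ad29} in Proposition \ref{propK}, I would apply the heat flow estimate \eqref{eq:l2heat1bis} (with $\a = 0$) in duality, and then integrate by parts once in $\nabb$ to move the top-order derivatives off $U(\tau')f$. H\"older together with the Gagliardo--Nirenberg inequality \eqref{eq:GNirenberg} on $\p$ would then bound the resulting integrals in terms of the coefficients $a\trt$, $a\hth$, $aR_{N\c}$, $a\nabb\trt$, $\hth\c\nabb a$ and their first tangential derivatives, all of which are of size $\ep$ by Theorem \ref{thregx} (using the defining relation $\trt - k_{NN} = 1 - a$ to convert $\trt$ bounds into $a$ bounds, and Corollary \ref{c0} to upgrade $\th$ to $\li{\infty}{4}$). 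This should yield
$$\norm{V(\tau)}^2_{\lp 2} + \int_0^\tau \norm{\nabb V(\tau')}^2_{\lp 2}\,d\tau' \les \ep^2 \int_0^\tau \Big(\norm{\nabb U(\tau')f}^2_{\lp 2} + \norm{\lap U(\tau')f}^2_{\lp 2}\Big)\,d\tau'$$
up to easily absorbed cross terms.

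The final step would be to integrate in $u$, apply Cauchy--Schwarz against $m_j(\tau)$, and invoke the fractional weighted heat flow estimates \eqref{eq:heat1}--\eqref{eq:heat4}, which convert $\tau^{\alpha}$-weighted $L^2_\tau$ integrals of $\nabb U(\tau)f$ and $\lap U(\tau)f$ into norms of $\La^{\beta}f$. Matching the weights against $m_j(\tau)$ then produces the two norms $\ep\norm{\La^{1/2+\delta}f}_{L^2(S)}$ and $\ep\norm{\La^{\delta}f}_{\l\infty 2}$ on the right-hand side; the split between them reflects whether the integration by parts lands the extra derivative on $V$ (pairing against an $L^2(S)$-type heat estimate and yielding $\La^{1/2+\delta}f$) or keeps it on the coefficient (pairing against an $\l\infty 2$-type norm and yielding $\La^\delta f$), while the arbitrarily small $\delta>0$ accommodates the borderline integrability of $m_j(\tau)\,\tau^{-1/2-\delta}$. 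The main obstacle I expect is the $a\hth$ contribution: since $\hth$ has no $\lp\infty$ bound, one must perform two integrations by parts and allocate the resulting $\nabb(a\hth)$ factor (controlled only in $\ll 2$ by Theorem \ref{thregx}) against the available heat-flow bounds on $\nabb V$ and $\nabb U(\tau')f$, and it is precisely this asymmetric allocation that forces the two distinct $u$-norms appearing on the right-hand side of the proposition.
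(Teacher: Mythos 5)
Your proposal follows essentially the same route as the paper's proof: the Duhamel representation $[\nabna,P_j]f=\int_0^\infty m_j(\tau)V(\tau)\,d\tau$ with $V$ solving the inhomogeneous heat equation forced by $[\nabna,\lap]U(\tau)f$, the commutator formula \eqref{dj3}, a single integration by parts inside the energy identity \eqref{heatF2}, Gagliardo--Nirenberg, and the weighted heat-flow estimates; in particular your account of how the two right-hand-side norms arise (derivative landing on $V$ versus staying on the coefficient) matches the paper's split into \eqref{clp5} and \eqref{clp6} exactly. The one caveat is that your unweighted intermediate display would only produce $\norm{\nabb f}_{L^2(S)}$ via \eqref{eq:l2heatnab}, so the ${\tau'}$-weights must already be introduced at the Gagliardo--Nirenberg/Young step (as in \eqref{clp4}) rather than only at the end; since you do invoke the weighted estimates \eqref{eq:heat1}--\eqref{eq:heat4} in your final step, this is presentational rather than substantive.
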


We state a second commutator estimate.
\begin{proposition}\lab{prop:commLP2}\lab{greveinf2}
Let $F$ a tensor on $S$. Then, for any $j\geq 0$ and for any $\delta>0$, we have the following commutator estimate:
\be\lab{commLP2}
\norm{[\nabna, P_j]F}_{\l{1}{2}}\les 2^{-j(1-\delta)}\ep\left(\norm{\nabb F}_{L^2(S)}+\norm{F}_{\l{\infty}{2}}\right).
\ee
\end{proposition}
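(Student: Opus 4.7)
The plan is to represent the commutator through the heat flow used to define $P_j$ and then run a weighted energy estimate for the resulting inhomogeneous heat equation. Since $\nabna$ is time-independent on $\p$, differentiating under the integral gives
\begin{equation*}
[\nabna, P_j]F = \int_0^\infty m_j(\tau)\, V(\tau)\, d\tau, \qquad V(\tau) := [\nabna, U(\tau)]F,
\end{equation*}
where $V$ satisfies the inhomogeneous heat equation
\begin{equation*}
(\partial_\tau - \lap)V = [\nabna, \lap]\, U(\tau)F, \qquad V(0) = 0.
\end{equation*}
The commutator formula \eqref{commutnabna2} then decomposes the source into a divergence plus a lower order piece,
\begin{equation*}
[\nabna, \lap]\, U(\tau)F = \divb(H(\tau)) + B(\tau),
\end{equation*}
with $H$ schematically of the form $\theta\c\nabb U(\tau)F + R_{N.}\c U(\tau)F + \theta\c\nabb a\c U(\tau)F$ and $B$ of the form $\nabb\theta\c\nabb U(\tau)F + R_{N.}\c\nabb U(\tau)F + \theta\c\nabb a\c\nabb U(\tau)F$. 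This divergence structure is essential, since it allows an integration by parts that transfers one derivative onto $V$ rather than onto the coefficients $\theta$, $R$, $\nabb a$, for which only $L^2$-type bounds are available from Theorem \ref{thregx} and \eqref{small1}.

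Next I would establish a weighted estimate of the form
\begin{equation*}
\norm{V(\tau)}_{\ll{2}} \les \tau^{(1-\delta)/2}\,\ep\,\big(\norm{\nabb F}_{L^2(S)} + \norm{F}_{\l{\infty}{2}}\big),
\end{equation*}
which together with Cauchy-Schwarz in $u$ suffices to control $\norm{V(\tau)}_{\l{1}{2}}$. To obtain it, I would test the $V$-equation against $\tau'^{\sigma} V$ on $\p$ and integrate in $(\tau', u) \in (0,\tau)\times (-2,2)$, using the coarea formula \eqref{coarea}. The divergence term yields, after integration by parts on $\p$, an expression $-\int \tau'^\sigma H\c\nabb V$ which is absorbed via Young's inequality into $\int \tau'^\sigma\norm{\nabb V}_{\lp{2}}^2$. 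The factors $\norm{H(\tau')}_{\lp{2}}$ and $\langle B(\tau'),V(\tau')\rangle$ are estimated by H\"older combined with the Gagliardo-Nirenberg inequality \eqref{eq:GNirenberg} on $\p$ and the weighted heat-flow decay bounds \eqref{eq:l2heat1}--\eqref{eq:l2heat2} and \eqref{eq:heat1}--\eqref{eq:heat4} applied to $U(\tau')F$. The Bochner inequality \eqref{eq:Bochconseqbis}, whose validity rests on the control of $K_{1/2}$ from Proposition \ref{propK}, is what closes these manipulations at the present regularity; every term carries the overall factor $\ep$ through $\theta$, $\nabb\theta$, $\nabb a$ and $R$.

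The conclusion then follows by Cauchy-Schwarz in $\tau$ against $m_j$: since $m_j(\tau) = 2^{2j}m(2^{2j}\tau)$ concentrates around $\tau \sim 2^{-2j}$, the change of variables $\rho = 2^{2j}\tau$ yields
\begin{equation*}
\int_0^\infty |m_j(\tau)|\, \tau^{(1-\delta)/2}\, d\tau \les 2^{-j(1-\delta)},
\end{equation*}
from which the stated inequality follows.

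The main obstacle is realizing the $\tau^{(1-\delta)/2}$ gain in the weighted estimate: the source $B$ pairs $L^2$ coefficients ($\nabb\theta$, $R$, $\theta\c\nabb a$) with $\nabb U(\tau')F$, and no pointwise $L^\infty$ control is available on either $U(\tau')F$ or $V$, so the argument must be closed entirely within the mixed scale $L^p_u L^q(\p)$, exploiting the smallness of $K$ coming from Proposition \ref{propK}. The $\delta$-loss in the final exponent is the inevitable price of Gr\"onwalling away the residual term $\tau'^{\sigma-1}\norm{V}_{\lp{2}}^2$ that arises when differentiating the weight $\tau'^{\sigma}$ in the energy identity.
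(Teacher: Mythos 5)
Your overall strategy coincides with the paper's: represent $[\nabna,P_j]F=\int_0^\infty m_j(\tau)V(\tau)d\tau$ with $V$ solving the inhomogeneous heat equation sourced by $[\nabna,\lap]U(\tau)F$, exploit the divergence structure of \eqref{commutnabna2} through an integration by parts on $\p$, close the energy estimate with Gagliardo--Nirenberg, the Bochner inequality and the heat--flow decay bounds, and finally integrate $m_j(\tau)\tau^{\frac{1}{2}-\frac{\delta}{2}}$ to produce $2^{-j(1-\delta)}$. The cosmetic difference is that you put the weight $\tau'^{\sigma}$ into the energy identity for $V$, whereas the paper uses the unweighted identity \eqref{heatF2} for $V$ and extracts the $\tau$--power from a H\"older splitting of $\int_0^\tau\norm{\nabb^2U}^{2(1-\frac 2p)}\norm{\nabb U}^{\frac 4p}d\tau'$ using \eqref{eq:l2heat1}--\eqref{eq:l2heat2}; also, your remark that the residual $\tau'^{\sigma-1}\norm{V}^2_{\lp{2}}$ term is simply ``Gr\"onwalled away'' is too quick --- as \eqref{eq:l2heat2bis} shows, that term survives on the right-hand side and must be controlled by a separate lower-weight estimate.

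The genuine gap is your intermediate claim $\norm{V(\tau)}_{\ll{2}}\les\tau^{(1-\delta)/2}\ep(\norm{\nabb F}_{L^2(S)}+\norm{F}_{\l{\infty}{2}})$, which is strictly stronger than what the method can deliver and is not needed. The per-leaf energy estimate yields
$$\norm{V(\tau)}_{\lp{2}}\les \big(\norm{a\nabb\th}_{\lp{2}}+\norm{\nabb(a)\th}_{\lp{2}}+\norm{aR}_{\lp{2}}\big)\left(\int_0^\tau\norm{\nabb^2U}_{\lp{2}}^{2(1-\frac 2p)}\norm{\nabb U}_{\lp{2}}^{\frac 4p}d\tau'\right)^{\frac 12},$$
and \emph{both} factors on the right are controlled only in $L^2_u$: the coefficients $\nabb\th$, $R$, $\th\c\nabb a$ lie merely in $\ll{2}$ (not $\l{\infty}{2}$), and the heat-flow integral is bounded leafwise by powers of $\norm{\nabb F}_{\lp{2}}$, which is again only square-integrable in $u$. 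The product of two $L^2_u$ functions lies in $L^1_u$ but not in $L^2_u$, so squaring and integrating in $u$ --- which is what a $\ll{2}$ bound on $V(\tau)$ would require --- does not close. This is precisely why the proposition is stated, and must be proved, in the $\l{1}{2}$ norm: one integrates $\norm{V(\tau)}_{\lp{2}}$ in $u$ and applies Cauchy--Schwarz to pair the two $L^2_u$ factors. Once you replace your target by the direct $\l{1}{2}$ bound $\norm{V(\tau)}_{\l{1}{2}}\les\tau^{\frac{1}{2}-\frac{\delta}{2}}\ep(\norm{\nabb F}_{L^2(S)}+\norm{F}_{\l{\infty}{2}})$, the rest of your argument goes through and reproduces the paper's proof.
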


Proposition \ref{prop:commLP2} yields the following corollary.
\begin{corollary}\lab{cor:commLP1}\lab{greveinf3}
For any $\p$-tangent tensor $F$ on $S$ such that $F\equiv 0$ on $u=-2$, and for all $j\geq 0$, we have:
\be\lab{clp12}
\norm{P_jF}_{\lhs{\infty}{\frac{1}{2}}}\les \norm{F}_{H^1(S)}.
\ee
\end{corollary}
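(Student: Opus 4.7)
The plan is to reduce the corollary to an $L^\infty_u L^2(P_u)$ estimate on $P_jF$ with a $2^{-j/2}$ gain. Indeed, by the interpolation inequality \eqref{interpolLa} (with $\a=0$, $\b=1$, $\mu=1/2$) and the finite band property of Theorem \ref{thm:LP}(iii), one has pointwise in $u$
\begin{equation*}
\norm{P_jF}_{H^{\frac{1}{2}}(P_u)}=\norm{\La^{\frac{1}{2}}P_jF}_{\lp{2}}\lesssim\norm{P_jF}_{\lp{2}}^{\frac{1}{2}}\norm{\La P_jF}_{\lp{2}}^{\frac{1}{2}}\lesssim 2^{j/2}\norm{P_jF}_{\lp{2}},
\end{equation*}
since $\norm{\La P_jF}_{\lp{2}}^2=\norm{P_jF}_{\lp{2}}^2+\norm{\nabb P_jF}_{\lp{2}}^2\lesssim 2^{2j}\norm{P_jF}_{\lp{2}}^2$ for $j\ge 0$. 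Hence it will suffice to establish $\norm{P_jF}_{\lhs{\infty}{0}}^2\lesssim 2^{-j}\norm{F}_{H^1(S)}^2$.

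To prove this $L^\infty_u L^2$ bound, I would apply the differentiation formula \eqref{du} to the scalar $|P_jF|^2$. Since $F\equiv 0$ on $P_{-2}$ forces $P_jF|_{u=-2}\equiv 0$, integrating from $-2$ to $u$ yields the energy identity
\begin{equation*}
\norm{P_jF(u,\cdot)}_{\lp{2}}^2=\int_{-2}^u\int_{P_{u'}}\bigl(2\,g(\nabna P_jF,P_jF)+\trt\,|P_jF|^2\bigr)d\mu_{u'}du'.
\end{equation*}
Splitting $\nabna P_jF=P_j(\nabna F)+[\nabna,P_j]F$, the main contribution is bounded, using Cauchy--Schwarz, the $L^2$-boundedness of $P_j$, the estimate $\norm{a-1}_{\ll{\infty}}\lesssim\ep$ from \eqref{thregx1} and the finite band inequality $\norm{P_jF}_{\ll{2}}\lesssim 2^{-j}\norm{\nabb F}_{\ll{2}}$, by $\lesssim 2^{-j}\norm{F}_{H^1(S)}^2$. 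For the commutator contribution I would apply Proposition \ref{prop:commLP2} with $\delta=1/4$, after first noting that $\norm{F}_{\l{\infty}{2}}\lesssim\norm{F}_{H^1(S)}$ (a consequence of the same energy identity applied to $F$ itself together with the vanishing boundary condition); this yields at most $2^{-3j/4}\ep\,\norm{F}_{H^1(S)}\norm{P_jF}_{\lhs{\infty}{0}}$, which AM--GM absorbs into $\tfrac12\norm{P_jF}_{\lhs{\infty}{0}}^2+C\ep^2 2^{-3j/2}\norm{F}_{H^1(S)}^2$.

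Finally the $\trt$-term is controlled via H\"older, the bound $\norm{\trt}_{\l{\infty}{4}}\lesssim\ep$ from \eqref{thregx1}, the Gagliardo--Nirenberg inequality \eqref{eq:GNirenberg} and the finite band property $\norm{\nabb P_jF}_{\lp{2}}\lesssim 2^j\norm{P_jF}_{\lp{2}}$, producing at most $\ep\,2^{-3j/2}\norm{F}_{H^1(S)}^2$. Collecting everything, taking the supremum in $u$, and absorbing the $\tfrac12$-term yields $\norm{P_jF}_{\lhs{\infty}{0}}^2\lesssim 2^{-j}\norm{F}_{H^1(S)}^2$, which combined with the frequency-localized interpolation above proves the corollary. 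The only non-routine ingredient is the commutator bound of Proposition \ref{prop:commLP2}, stated just above; once that is granted, the remainder is a standard energy estimate adapted to the $u$-foliation on $S$.
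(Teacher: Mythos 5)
Your proof is correct and follows essentially the same route as the paper: an energy identity in $u$ for $\norm{P_jF}_{\lp{2}}^2$, the splitting $\nabna P_jF=P_j(\nabna F)+[\nabna,P_j]F$, and Proposition \ref{prop:commLP2} to absorb the commutator. The only cosmetic difference is that the paper sums the resulting bounds over $j$ with weight $2^j$ (obtaining the square-summed statement directly), whereas you work at fixed $j$ and add an explicit interpolation/finite-band step to convert $2^{-j/2}\norm{F}_{H^1(S)}$ control of $\norm{P_jF}_{\l{\infty}{2}}$ into the $H^{\frac{1}{2}}(\p)$ bound; both are fine.
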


We state a third commutator estimate.
\begin{proposition}\lab{prop:commLP3}\lab{greveinf4}
Let $f$ a scalar function on $S$. Then, for any $j\geq 0$ and for any $0<\delta<\alpha<1$, we have the following commutator estimate:
\be\lab{commLP3}
\norm{[\nabna, P_j]f}_{\l{1}{2}}\les 2^{j\alpha}\ep\norm{\La^{-\delta}f}_{L^2(S)}.
\ee
\end{proposition}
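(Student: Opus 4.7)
My strategy mirrors the heat-flow/interpolation argument already used in the proofs of Proposition \ref{greveinf1}-\ref{greveinf2} and especially in the control of the commutator $[\nabna,P_j]K$ inside the proof of Proposition \ref{propK}. The idea is to gain the factor $2^{j\alpha}$ by interpolating between $\norm{\La^{-\alpha}V(\tau)}_{\lp{2}}$ and $\norm{\nabb\La^{-\alpha}V(\tau)}_{\lp{2}}$, where
\begin{equation*}
[\nabna,P_j]f=\int_0^{\infty}m_j(\tau)V(\tau)\,d\tau,\qquad (\partial_\tau-\lap)V(\tau)=[\nabna,\lap]U(\tau)f,\quad V(0)=0,
\end{equation*}
and then to close the estimate by using the negative Sobolev norm $\norm{\La^{-\delta}f}_{L^2(S)}$ on the right-hand side through the smoothing estimates \eqref{eq:heat1}-\eqref{eq:heat4}.

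\textbf{Step 1: divergence form of the source.} Using the scalar commutator identity \eqref{dj3}, I rewrite
\begin{equation*}
[\nabna,\lap]U(\tau)f=\divb(B(\tau))+b(\tau)\c\nabb U(\tau)f,
\end{equation*}
where $B(\tau)=-a\,\trt\,\nabb U(\tau)f-2a\hth\c\nabb U(\tau)f$ and $b$ collects $\nabb(a\trt)$, $\divb(a\hth)$, $aR_{N\c}$, $a\nabb\trt$ and $\hth\c\nabb a$. Theorem \ref{thregx} together with Corollary \ref{c0}, Proposition \ref{p1} and the smallness assumption \eqref{small1} give $\norm{a\trt}_{\l{\infty}{4}}+\norm{a\hth}_{\l{\infty}{4}}\les\ep$ and $\norm{b}_{\ll{2}}\les\ep$.

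\textbf{Step 2: weighted heat estimate for $V$.} Applying \eqref{eq:l2heat1bis} with $\a=-\alpha$, pairing with $\La^{-2\alpha}V$ and integrating by parts on $\p$ to throw the $\divb$ back onto $\La^{-2\alpha}V$, I obtain schematically
\begin{equation*}
\norm{\La^{-\alpha}V(\tau)}_{\lp{2}}^2+\int_0^\tau\norm{\nabb\La^{-\alpha}V(\tau')}_{\lp{2}}^2d\tau'\les\int_0^\tau\!\int_{\p}\!\big(|\nabb\La^{-2\alpha}V|\,|B|+|\La^{-2\alpha}V|\,|b|\,|\nabb U(\tau')f|\big)d\mu_u d\tau'.
\end{equation*}
The operators $\La^{-2\alpha}$ and $[\nabb,\La^{-2\alpha}]$ are controlled via \eqref{La1}-\eqref{La6} (together with Lemma \ref{lemma:vacances} and Proposition \ref{propK}, which makes the Bochner inequality \eqref{eq:Bochconseqbis} available). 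Using Hölder, the $\lp{4}$-bound on $a\trt$, $a\hth$ and Gagliardo-Nirenberg on $\nabb U(\tau')f$, this reduces to a time-weighted norm of $\nabb U(\tau')f$.

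\textbf{Step 3: smoothing and interpolation.} The negative-Sobolev heat flow inequalities \eqref{eq:heat1}-\eqref{eq:heat3} give, for any $0<\eta<\alpha$,
\begin{equation*}
\int_0^{\infty}{\tau'}^{\delta'}\norm{\nabb U(\tau')f}_{\lp{2}}^2 d\tau'\les\norm{\La^{-\eta}f}_{\lp{2}}^2,\qquad \delta'=\delta'(\eta),
\end{equation*}
so that choosing $\eta=\delta$ (and using $\delta<\alpha$ to absorb the resulting $\tau'$-weight against the heat flow weight produced in Step 2) yields, after integrating in $u$,
\begin{equation*}
\sup_\tau\norm{\La^{-\alpha}V(\tau)}_{\l{1}{2}}+\int_{-2}^{2}\!\Big(\!\int_0^\infty\!\norm{\nabb\La^{-\alpha}V(\tau')}_{\lp{2}}^2 d\tau'\Big)^{1/2}\!du\les\ep\,\norm{\La^{-\delta}f}_{L^2(S)}.
\end{equation*}
Finally, the interpolation $\norm{V(\tau)}_{\lp{2}}\les\norm{\La^{-\alpha}V(\tau)}_{\lp{2}}^{1-\alpha}\norm{\nabb\La^{-\alpha}V(\tau)}_{\lp{2}}^{\alpha}$ combined with Hölder in $\tau$ against $m_j$ (exactly as in \eqref{ad61}, but with the exponents $1-\alpha,\alpha$ in place of $2/3,1/3$) produces the factor $2^{j\alpha}$:
\begin{equation*}
\int_0^\infty m_j(\tau)\norm{V(\tau)}_{\lp{2}}d\tau\les 2^{j\alpha}\Big(\sup_\tau\norm{\La^{-\alpha}V(\tau)}_{\lp{2}}+\big(\!\int_0^\infty\norm{\nabb\La^{-\alpha}V(\tau')}_{\lp{2}}^2 d\tau'\big)^{1/2}\Big).
\end{equation*}
Taking the $L^1_u$ norm and combining with Steps 1-3 gives \eqref{commLP3}.

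\textbf{Expected main difficulty.} The core obstruction is that the $b\c\nabb U(\tau)f$ piece of the source contains $\nabb\trt$, which sits only in $\ll{2}$; handling it requires integrating by parts one more time to generate a second power of $\nabb U(\tau)f$, which is then paired against $\La^{-2\alpha}V$ (not $\nabb\La^{-2\alpha}V$), and this is the step where the precise range $0<\delta<\alpha<1$ must be respected in order to have both the interpolation of Step 3 and the time-weighted smoothing of $\nabb U(\tau)f$ work together without a logarithmic loss. The intermediate manipulations of $[\nabb,\La^{-\alpha}]$ will be handled with the bounds \eqref{La1}-\eqref{La6} together with Proposition \ref{propK}.
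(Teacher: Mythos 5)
Your proposal is correct and follows essentially the same route as the paper's proof in Appendix B: the heat-flow representation $[\nabna,P_j]f=\int_0^\infty m_j(\tau)V(\tau)d\tau$, the weighted energy estimate \eqref{eq:l2heat1bis} for $\La^{-\alpha}V$ with the source handled via \eqref{dj3} and one integration by parts, the smoothing estimates \eqref{eq:heat1}--\eqref{eq:heat3} to bring in $\norm{\La^{-\delta}f}_{L^2(S)}$ (using $\delta<\alpha$ to choose the Lebesgue/weight exponents), and the interpolation-plus-H\"older step modeled on \eqref{ad61} with exponents $(1-\alpha,\alpha)$ to produce the factor $2^{j\alpha}$. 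The only cosmetic difference is that the paper does not integrate by parts a second time on the $\nabb\trt\c\nabb U(\tau)f$ term but simply pairs it against $\La^{-2\alpha}V$ placed in $\lp{r}$ via Sobolev embedding; both treatments are fine.
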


We state a fourth commutator estimate.
\begin{proposition}\lab{prop:commLP4}\lab{greveinf5}
Let $f$ a scalar function on $S$. Then, for any $j\geq 0$ and for any $\delta>0$, we have the following commutator estimate:
\be\lab{commLP4}
\norm{[\nabna, P_j]f}_{\ll{2}}\les 2^{-(1-\delta)j}\ep(\norm{\lap f}_{L^2(S)}+\norm{\nabb f}_{\l{\infty}{2}}).
\ee
\end{proposition}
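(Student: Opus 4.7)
The plan is to use the heat flow representation of the Littlewood--Paley projection. Writing
$$[\nabna, P_j]f=\int_0^\infty m_j(\tau)\,V(\tau)\,d\tau,\qquad (\partial_\tau-\lap)V(\tau)=[\nabna,\lap]U(\tau)f,\; V(0)=0,$$
and applying the commutator formula \eqref{dj3}, the source is
$$[\nabna,\lap]U(\tau)f=-a\trt\,\lap U(\tau)f-2a\hth\c\nabb^2 U(\tau)f+G\c\nabb U(\tau)f,$$
where $G=-2aR_{N.}-a\nabb\trt+2\hth\c\nabb a$. The right-hand side of \eqref{commLP4} already features $\lap f$ and $\nabb f$ in exactly the expected orders, so the strategy is to pair $V$ against this source, integrate by parts in space to balance the derivatives between $V(\tau)$ and $U(\tau)f$, and exploit the localization $\tau\sim 2^{-2j}$ of $m_j$ to convert an unweighted $\tau^{(1-\delta)/2}$ factor into the desired $2^{-(1-\delta)j}$.

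Concretely, I would bound $\norm{V(\tau)}_{\lp{2}}$ via \eqref{heatF2}, so that
$$\norm{V(\tau)}_{\lp{2}}^2+\int_0^\tau\norm{\nabb V(\tau')}_{\lp{2}}^2 d\tau'\les\int_0^\tau\int_{\p}V(\tau')\,[\nabna,\lap]U(\tau')f\,d\mu_u\,d\tau'.$$
For the $\lap U(\tau')f$ term the idea is to integrate by parts once, trading $\lap U(\tau')f$ for $\nabb U(\tau')f$ and producing a pairing with $\nabb V$ that is absorbed on the left. For the $\nabb^2 U(\tau')f$ term I would integrate by parts again, producing either $\nabb V\c a\hth\c\nabb U(\tau')f$ or $V\c\nabb(a\hth)\c\nabb U(\tau')f$. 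Using Theorem \ref{thregx} ($\hth\in\l{\infty}{4}$, $\nabb\th\in\ll{2}$, $a-1\in\ll{\infty}$, $R,\nabla k\in\ll{2}$) in combination with the Gagliardo--Nirenberg inequality \eqref{eq:GNirenberg}, the sharp Bernstein \eqref{eq:strongbernscalarbis} and the scalar Bochner inequality \eqref{eq:Bochconseqbis} (both made available by Proposition \ref{propK}), all the geometric coefficients contribute a factor $\ep$. This yields a bound of the schematic form
$$\norm{V(\tau)}_{\lp{2}}\les \ep\,\tau^{(1-\delta)/2}\,\bigl(\norm{\lap U(\tau')f}_{\l{2}{2}}+\norm{\nabb U(\tau')f}_{\lp{2}}+\cdots\bigr).$$

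To conclude, I would integrate against $m_j$ and then in $u$. The $\tau$ integral against $m_j$ produces the gain $2^{-(1-\delta)j}$ because $\int_0^\infty|m_j(\tau)|\tau^{(1-\delta)/2}d\tau\les 2^{-(1-\delta)j}$. The resulting heat-flow integrals in $\tau'$ are controlled by the right-hand side of \eqref{commLP4}: \eqref{eq:l2heatnab} gives $\int_0^\tau\norm{\lap U(\tau')f}_{\lp{2}}^2 d\tau'\les\norm{\nabb f}_{\lp{2}}^2$, and the maximum principle $\norm{\lap U(\tau)f}_{\lp{2}}\les\norm{\lap f}_{\lp{2}}$ handles the companion term, yielding the $\norm{\lap f}_{L^2(S)}$ contribution after Cauchy--Schwarz in $u$. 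The $\norm{\nabb f}_{\l{\infty}{2}}$ piece will appear via a boundary term generated by the estimate at $\tau'=0$ when squeezing $\nabb U(\tau')f$ into an $L^2(P_u)$ norm uniformly in $\tau'$, i.e., from a term that cannot be integrated in $u$ and must be kept in $L^\infty_u$.

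The main obstacle is the $2a\hth\c\nabb^2 U(\tau)f$ term, because $\hth$ only has $\ll{4}$ regularity while $\nabb\hth\in\ll{2}$, neither of which is $L^\infty$. Whichever way one integrates by parts the available factors of $\hth$ or $\nabb\hth$ must be absorbed into a Hölder product with $\nabb V\in L^2_\tau L^2(\p)$ and $\nabb U(\tau')f$ whose $\tau'$-dependence dictates the final $\delta$ loss. The arbitrarily small $\delta>0$ precisely absorbs the borderline logarithm generated by this product combined with the weight $\tau^{(1-\delta)/2}$ needed to secure the Bernstein gain; it is the same $\delta$ mechanism already visible in Propositions \ref{greveinf2} and \ref{greveinf3}.
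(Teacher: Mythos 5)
Your proposal is correct and follows the paper's proof essentially step for step: the Duhamel representation of the commutator through $V(\tau)$, the energy estimate \eqref{heatF2}, the commutator formula \eqref{dj3}, Gagliardo--Nirenberg with weighted absorption of $\nabb V$, the heat-flow inputs $\int_0^\tau\norm{\lap U(\tau')f}_{\lp{2}}^2d\tau'\les\norm{\nabb f}_{\lp{2}}^2$ and $\sup_{\tau}\norm{\lap U(\tau)f}_{\lp{2}}\les\norm{\lap f}_{\lp{2}}$ (whence the $\sup_u$ producing $\norm{\nabb f}_{\l{\infty}{2}}$), and finally $\int_0^\infty m_j(\tau)\tau^{(1-\delta)/2}d\tau\les 2^{-(1-\delta)j}$. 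The only local deviation is that you integrate the top-order term $a\hth\c\nabb^2U(\tau)f$ by parts, whereas the paper keeps $\nabb^2U$ intact and pairs it against $\norm{V(\tau')}_{\lp{4}}$ (so that, unlike in Propositions \ref{greveinf2} and \ref{greveinf3}, no integration by parts occurs here); both variants close with the same inputs, so this is a cosmetic difference rather than a different route.
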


Proposition \ref{prop:commLP4} yields the following corollary.
\begin{corollary}\lab{cor:commLP4}\lab{greveinf6}
Let a tensor $F$ on $S$ such that $F\equiv 0$ on $u=-2$, $\nabb^2F\in\ll{2}$ and $\nabn F\in\lhs{2}{b}$ for $b>0$. Then, $F \in \ll{\infty}$.
\end{corollary}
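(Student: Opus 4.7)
The idea is to decompose $F$ via the geometric Littlewood--Paley projections $P_j$ and to show that each high-frequency piece $P_jF$ decays in $\li{\infty}{2}$ strictly faster than $2^{-j}$, so that the sharp Bernstein inequality produces a summable series for $\|F\|_{\ll{\infty}}$. First I would record the preliminary regularity of $F$: since $\nabn F\in\lhs{2}{b}\subset \ll{2}$ and $\nabb^2F\in\ll{2}$ with $F=\nabb F=0$ on $\{u=-2\}$, Proposition \ref{p2bis} gives $\nabb F\in\li{\infty}{2}\cap\ll{4}$ and Corollary \ref{c0} gives $F\in\li{\infty}{2}$; the tensor Bochner inequality \eqref{Bochtensorineq} combined with \eqref{ad65} yields $\|\lap F\|_{\ll{2}}\lesssim \|\nabb^2F\|_{\ll{2}}$ up to absorbable $\varepsilon$-small remainders. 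Thus the quantities $A:=\|a\nabn F\|_{\lhs{2}{b}}$, $B:=\|\lap F\|_{\ll{2}}$ and $Y:=\|\nabb F\|_{\ll{2}}+\|F\|_{\li{\infty}{2}}$ are all finite. The low-frequency piece $P_{<0}F$ is controlled in $\ll{\infty}$ by the weak Bernstein inequality of Theorem \ref{thm:LP} applied to $F\in\li{\infty}{2}$, extended from scalars to tensors via local components.

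For each $j\geq 0$, since $P_jF$ vanishes on $\{u=-2\}$, the energy identity obtained from \eqref{coarea}--\eqref{du} together with the decomposition $aN(P_jF)=P_j(a\nabn F)+[\nabna,P_j]F$ yields, after Cauchy--Schwarz in $u$ and absorption of the $\trt$ contribution via Gagliardo--Nirenberg \eqref{eq:GNirenberg} and the smallness of $\trt-k_{NN}$ from \eqref{thregx1},
$$
\|P_jF\|_{\li{\infty}{2}}^2 \lesssim \|P_j(a\nabn F)\|_{\l{2}{2}}\,\|P_jF\|_{\l{2}{2}} + \|[\nabna,P_j]F\|_{\l{2}{2}}\,\|P_jF\|_{\l{2}{2}}.
$$
Three ingredients then enter. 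The finite band property of Theorem \ref{thm:LP} gives $\|P_jF\|_{\l{2}{2}}\lesssim 2^{-2j}B$. The definition of $H^b(P_u)$ together with $\nabn F\in\lhs{2}{b}$ gives $\|P_j(a\nabn F)\|_{\l{2}{2}}\lesssim 2^{-jb}A$. For the commutator, I would extend the proof of Proposition \ref{prop:commLP4} from scalars to tensors: the commutator formula \eqref{commutnabna2} has exactly the same algebraic structure as its scalar counterpart, and the relevant smallness inputs for $\theta$, $a-1$ and $R_N$ are provided by \eqref{thregx1}. This yields $\|[\nabna,P_j]F\|_{\l{2}{2}}\lesssim 2^{-j(1-\delta)}\varepsilon Y$ for any $\delta>0$.

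Combining these bounds one obtains
$$
\|P_jF\|_{\li{\infty}{2}} \lesssim 2^{-j(1+b/2)}\sqrt{AB}+ 2^{-j(3/2-\delta/2)}\sqrt{\varepsilon BY}\lesssim 2^{-j(1+\sigma)}
$$
for some $\sigma=\min(b/2,\,1/2-\delta/2)>0$, provided $\delta$ is chosen small enough. The strong Bernstein inequality \eqref{eq:strongbernscalarbis} (its tensor version following by reducing to scalar components and exploiting \eqref{ad65}) then yields $\|P_jF\|_{\ll{\infty}}\lesssim 2^j\|P_jF\|_{\li{\infty}{2}}\lesssim 2^{-j\sigma}$, which sums over $j\geq 0$; together with the estimate for $P_{<0}F$ this gives $F\in \ll{\infty}$. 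The main obstacle is that neither hypothesis alone suffices: with only $\nabn F\in\lhs{2}{b}$ the energy estimate would yield decay $2^{-jb}$, insufficient after Bernstein unless $b>1$, while $\nabb^2F\in\ll{2}$ alone only controls $\|P_jF\|_{\l{2}{2}}$ without any $L^\infty_u$ bound. The coupling---namely the factor $2^{-2j}$ from $\nabb^2F\in\ll{2}$ multiplying the factor $2^{-jb}$ from $\nabn F\in\lhs{2}{b}$ inside the energy inequality---provides the extra power of $2^{-j}$ that is needed to close the argument.
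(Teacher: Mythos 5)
Your treatment of the scalar case is essentially the paper's own argument: the energy inequality \eqref{ad27bis} for $P_jf$, the splitting $\nabla_{aN}P_jf=P_j(\nabla_{aN}f)+[\nabla_{aN},P_j]f$, the finite band property giving $\|P_jf\|_{\ll{2}}\lesssim 2^{-2j}\|\lap f\|_{\ll{2}}$, the commutator estimate \eqref{commLP4}, and finally the sharp Bernstein inequality \eqref{eq:strongbernscalarbis} to sum $\sum_j 2^j\|P_jf\|_{\l{\infty}{2}}$. That part is fine.

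The genuine gap is the final step for \emph{tensors}: you invoke ``the strong Bernstein inequality \eqref{eq:strongbernscalarbis} (its tensor version following by reducing to scalar components)''. No such tensor version is available, and the reduction to components does not work: the projections $P_j$ are defined through the heat flow acting on tensors, so $P_jF$ is \emph{not} obtained by applying the scalar $P_j$ to the components of $F$ in any frame, and the hypotheses $\nabb^2F\in\ll{2}$, $\nabn F\in\lhs{2}{b}$ are covariant statements that do not transfer to componentwise scalar hypotheses without introducing connection terms. The inequality \eqref{eq:strongbernscalar} is proved in \cite{LP} only for scalars (its proof uses the pointwise bounds for the scalar heat kernel together with the control of $K_\gamma$); for tensors, Theorem \ref{thm:LP} only provides the weak Bernstein inequality, i.e.\ $L^2\to L^p$ for $p<\infty$, which loses $2^{(1-2/p)j}$ and cannot be pushed to $p=\infty$. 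The paper closes the tensor case by a different device: it applies the scalar estimate (with exponent $b/2$) to the \emph{scalar} function $|F|^2$, which produces $\|F\|_{\ll{\infty}}^2\lesssim \|F\cdot\nabn F\|_{\lhs{2}{b/2}}+\|F\nabb^2F\|_{\ll{2}}+\|\nabb F\|_{\ll{4}}^2$, and then proves the product estimate $\|F\cdot\nabn F\|_{\lhs{2}{b/2}}\lesssim(\|\nabb F\|_{\l{\infty}{2}}+\|F\|_{\ll{\infty}})\|\nabn F\|_{\lhs{2}{b}}$ by a paraproduct decomposition, absorbing the resulting $\|F\|_{\ll{\infty}}$ on the left (using Proposition \ref{p2bis} for $\|\nabb F\|_{\l{\infty}{2}}$ and $\|\nabb F\|_{\ll{4}}$). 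This passage through $|F|^2$ and the accompanying bilinear estimate are a substantive part of the proof that your proposal is missing; as written, your argument only establishes the corollary for scalar functions.
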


We state a fifth commutator estimate.
\begin{proposition}\lab{prop:commLP5}\lab{greveinf7}
Let $f$ a scalar function on $S$. Then, for any $j\geq 0$ and for any $\delta>0$, we have the following commutator estimate:
\be\lab{commLP5}
\norm{[\nabna, P_j]f}_{\ll{2}}\les 2^j\ep\norm{\La^{-(1-\delta)}f}_{\l{\infty}{2}}.
\ee
\end{proposition}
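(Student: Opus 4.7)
The plan is to adapt the heat-flow-based strategy used in the proofs of Propositions \ref{greveinf1}--\ref{greveinf4}. First, represent
$$[\nabna, P_j]f = \int_0^\infty m_j(\tau)\, V(\tau)\, d\tau,$$
where $V(\tau)$ solves the inhomogeneous heat equation $(\partial_\tau - \lap)V(\tau) = [\nabna,\lap]U(\tau)f$ with $V(0) = 0$. Using the commutator identity \eqref{dj3},
$$[\nabna,\lap]U(\tau)f = -a\trt\,\lap U(\tau)f - 2a\hth\cdot\nabb^{2}U(\tau)f + A\cdot\nabb U(\tau)f,$$
where $A = -2aR_{N\cdot} - a\nabb\trt + 2\hth\,\nabb a$ is controlled in $\l{2}{4/3}$ by $\ep$ via Theorem \ref{thregx}, while $a\trt$ and $a\hth$ lie in $\l{\infty}{4}$ with the same weight.

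Second, estimate $\norm{V(\tau)}_{\lp{2}}$ by pairing the equation for $V$ with $V$ itself (the energy identity \eqref{eq:l2heat1bis} with $\alpha=0$). The term $-2a\hth\cdot\nabb^2 U(\tau)f$ is handled by integrating by parts to place one derivative onto the product $V\cdot a\hth$, after which the factor $\nabb V$ is absorbed into the coercive $\int_0^\tau\!\norm{\nabb V(\tau')}_{\lp{2}}^2\,d\tau'$; the top-order $-a\trt\,\lap U(\tau)f$ is treated analogously, integrating by parts once in the pairing so that a single derivative lands on the coefficient $a\trt$ (producing $\nabb\trt$, controlled by Theorem \ref{thregx}) and the other on $V$.

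Third, to produce the norm $\norm{\La^{-(1-\delta)}f}_{\l{\infty}{2}}$ on the right-hand side, invoke the heat-smoothing estimates \eqref{eq:heat1}, \eqref{eq:heat3}, \eqref{eq:heat4} with $\eta = 1-\delta'$ for a small auxiliary $\delta'>0$. Schematically, these give $\tau$-weighted $L^2_\tau$ bounds for $\nabb U(\tau')f$ and $\lap U(\tau')f$ in terms of $\norm{\La^{-(1-\delta')}f}_{\lp{2}}$, ultimately yielding a pointwise bound of the form
$$\norm{V(\tau)}_{\lp{2}} \les \ep\, \tau^{-\frac{1}{2} + O(\delta)} \norm{\La^{-(1-\delta)}f}_{\l{\infty}{2}}$$
(after integrating in $u$). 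Finally, the scaling $m_j(\tau) = 2^{2j} m(2^{2j}\tau)$ gives
$$\int_0^\infty |m_j(\tau)|\, \tau^{-\frac{1}{2} + O(\delta)}\, d\tau \les 2^{j(1 - O(\delta))},$$
and hence, after relabelling $\delta$, one obtains the announced factor $2^j$.

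The main obstacle is arranging the pairings and $\tau$-weights so that only a negative regularity factor $\La^{-(1-\delta)}$ on $f$ is needed, despite the presence of a second-order derivative $\lap U(\tau)f$ in the commutator source: the $\lap$ must be split between one integration by parts in the coefficient $a\trt$ and one absorbed into $\nabb V$ via the heat energy identity, so that the final smoothing estimate \eqref{eq:heat3} applies only to $\nabb U(\tau')f$ (and a small $\lap$-weighted remainder handled through \eqref{eq:heat1}). Once this distribution of derivatives is carried out, the coefficient bounds from Theorem \ref{thregx} and the sharp Bernstein inequality \eqref{eq:strongbernscalarbis} close the estimate with the weight $\ep$.
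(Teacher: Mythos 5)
Your skeleton (Duhamel representation of $[\nabna,P_j]f$ through $V(\tau)$, the commutator formula \eqref{dj3}, integration by parts, heat-flow smoothing) matches the paper's, but the two mechanisms that actually make the estimate close are missing, and the step you rely on instead does not work. First, pairing the equation for $V$ with $V$ itself and absorbing $\nabb V$ into the coercive term $\int_0^\tau\norm{\nabb V(\tau')}^2_{\lp{2}}d\tau'$ forces you, after Cauchy--Schwarz, to control $\int_0^\tau\norm{\nabb U(\tau')f}^2_{\lp{p}}d\tau'$ with \emph{no} $\tau$-weight; that quantity is only bounded by positive-regularity norms of $f$ (this is exactly what happens in the proof of Proposition \ref{greveinf1}, which outputs $\norm{\La^{\frac12+\delta}f}$, not $\norm{\La^{-(1-\delta)}f}$). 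To reach the negative-order norm you must place the full weight ${\tau'}^{1_-}$ on $\norm{\nabb U(\tau')}^2_{\lp{p}}$, and the complementary factor ${\tau'}^{-1_+}\norm{\cdot}^2$ is integrable only if the $V$-factor can be pulled out as a supremum in $\tau$ --- which is impossible for $\nabb V$ but possible for $\La^{-1}V$. The paper therefore runs the energy identity for $\La^{-1}V$ (i.e.\ tests against $\La^{-2}V$, via \eqref{eq:l2heat1bis} with $\a=-1$), uses $\norm{\nabb\La^{-2}V}_{\lp{2}}\les\norm{\La^{-1}V}_{\lp{2}}$ from \eqref{La1} so that the dangerous factor becomes $\sup_\tau\norm{\La^{-1}V}$, and only then recovers $\norm{V}_{\lp{2}}$ from $\norm{\La^{-1}V}_{\lp{2}}+\norm{\nabb\La^{-1}V}_{\lp{2}}$. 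Your "split the $\lap$ between the coefficient and $\nabb V$" does not circumvent this obstruction.

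Second, your claimed pointwise decay $\norm{V(\tau)}_{\lp{2}}\les\ep\,\tau^{-\frac12+O(\delta)}\norm{\La^{-(1-\delta)}f}_{\l{\infty}{2}}$ is both unproved and stronger than what the method delivers: it would give $\int_0^\infty m_j(\tau)\tau^{-\frac12+O(\delta)}d\tau\les 2^{j(1-O(\delta))}$, i.e.\ a \emph{better} constant than the stated $2^j$, which should already make you suspicious. What the energy argument actually produces is only the square-integrated bound $\int_0^\infty\norm{V(\tau)}^2_{\ll{2}}d\tau\les\ep^2\norm{\La^{-(1-\delta)}f}^2_{\l{\infty}{2}}$ (the coercive term $\int_0^\tau\norm{\nabb\La^{-1}V}^2$ is what controls $\int\norm{V}^2$, not $\sup_\tau\norm{V}$). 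The factor $2^j$ then comes from Cauchy--Schwarz in $\tau$ against $\bigl(\int_0^\infty m_j(\tau)^2d\tau\bigr)^{1/2}\sim 2^j$, not from an $L^1_\tau$ pairing with a pointwise decay rate. You need to replace your second and third steps by this $L^2_\tau$ argument for the proof to go through.
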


\subsection{Product estimates}\lab{sec:statepropprodprod}

In this section, we derive several product estimates. To simplify the exposition, the proof are postponed to Appendix \ref{sec:proofpropprod}. Note that all product estimates in this section are sharp except the first one. 

\begin{proposition}\lab{prop:fichtre}\lab{greveinff1}
Let $0<b<\frac{1}{2}$. For any tensors $F$, $G$ and $H$ on $\p$ such that $F\c G\c H$ is a scalar, we have:
\be\lab{fichtre}
\norm{F\c G\c H}_{\hs{b}}\les \norm{F}_{\hs{1}}\norm{G}_{\hs{1}}\norm{H}_{\hs{\frac{1}{2}}}.
\ee
\end{proposition}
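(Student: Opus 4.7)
The plan is a Littlewood--Paley paraproduct analysis on $\p$, using the heat-flow projections $P_j$ of Section \ref{sec:LP} and exploiting the strict inequality $b<\frac{1}{2}$ to close dyadic sums. Starting from the characterization
\[
\norm{FGH}_{\hs{b}}^2 = \sum_{j\ge 0}2^{2jb}\norm{P_j(FGH)}_{\lp{2}}^2 + \norm{P_{<0}(FGH)}_{\lp{2}}^2,
\]
I would decompose each factor, $F=\sum_{j_1}F_{j_1}$ with $F_{j_1}=P_{j_1}F$ (and similarly for $G,H$), and expand $P_j(FGH)=\sum_{j_1,j_2,j_3}P_j(F_{j_1}G_{j_2}H_{j_3})$, with the paraproduct split according to which of the frequency indices (including $j$ itself, via duality) is the largest.

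The key pointwise estimate comes from Hölder with $L^8\cdot L^8\cdot L^4\subset L^2$ and the weak Bernstein inequality of Theorem \ref{thm:LP} iv), after applying the self-factorization $P_j=\widetilde{P}_j^2$ available in \cite{LP}:
\[
\norm{F_{j_1}G_{j_2}H_{j_3}}_{\lp{2}} \les 2^{3j_1/4}\norm{\widetilde{P}_{j_1}F}_{\lp{2}}\cdot 2^{3j_2/4}\norm{\widetilde{P}_{j_2}G}_{\lp{2}}\cdot 2^{j_3/2}\norm{\widetilde{P}_{j_3}H}_{\lp{2}}.
\]
Setting $\alpha_{j_1}=2^{j_1}\norm{\widetilde{P}_{j_1}F}_{\lp{2}}$, $\beta_{j_2}=2^{j_2}\norm{\widetilde{P}_{j_2}G}_{\lp{2}}$, $\gamma_{j_3}=2^{j_3/2}\norm{\widetilde{P}_{j_3}H}_{\lp{2}}$, these $\ell^2$-sequences are bounded respectively by $\norm{F}_{\hs{1}}$, $\norm{G}_{\hs{1}}$, $\norm{H}_{\hs{1/2}}$, and the estimate becomes $\norm{F_{j_1}G_{j_2}H_{j_3}}_{\lp{2}} \les 2^{-j_1/4-j_2/4}\alpha_{j_1}\beta_{j_2}\gamma_{j_3}$. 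The exponents $(8,8,4)$ are dictated by the Sobolev embeddings $\hs{1}\hookrightarrow\lp{p}$ for all $p<\infty$ and $\hs{1/2}\hookrightarrow\lp{4}$ on the two-dimensional surface $\p$, producing a genuine gain $2^{-j_1/4-j_2/4}$ on the $\hs{1}$-factors while extracting the sharp $2^{j_3/2}$ weight on the $\hs{1/2}$-factor.

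To assemble these pieces into the $\hs{b}$-bound, I would dualize: $\norm{FGH}_{\hs{b}} = \sup_{\norm{\phi}_{\hs{-b}}\le 1}\bigl|\int FGH\phi\,d\mu_u\bigr|$. Expanding $\phi=\sum_{j_4}\phi_{j_4}$ introduces a fourth Littlewood--Paley index, and the near-orthogonality encoded in the finite band property of Theorem \ref{thm:LP} iii) --- namely $\norm{P_j\varphi}_{\lp{2}}\les 2^{-2j}\norm{\lap\varphi}_{\lp{2}}$ iterated as needed, with the Laplacian of a frequency-localized piece costing at most a dyadic factor matching its highest frequency --- collapses the four-fold sum to configurations where the two largest indices are comparable. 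Combined with the pointwise Hölder--Bernstein estimate above, the summation then reduces to three structurally similar cases (classified by which of $j_1,j_2,j_3,j_4$ is the maximum), and in each one Cauchy--Schwarz in the dyadic indices closes the estimate provided the residual geometric series converge.

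The main obstacle is the intricate bookkeeping of the quadruple dyadic sum and the three case analyses, together with the fact that the heat-flow projections on the curved surface $\p$ do not enjoy exact frequency localization, so that near-orthogonality must be tracked quantitatively through the finite band property rather than invoked as a black box. The strict inequality $b<\frac{1}{2}$ is essential throughout: the estimate is scale-critical at $b=\frac{1}{2}$ (since the sum of Sobolev regularities $1+1+\frac{1}{2}=\frac{5}{2}$ exceeds $\dim(\p)+b=2+b$ by exactly $\frac{1}{2}-b$), and it is precisely this gap that guarantees convergence of every residual dyadic series. The low-frequency piece $P_{<0}(FGH)$ is handled by the same Hölder--Bernstein estimate without dyadic weights and introduces no new difficulty.
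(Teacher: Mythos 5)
Your route (trilinear paraproduct with a duality argument and a fourth dyadic index) is genuinely different from the paper's, which decomposes only the $\hs{\frac{1}{2}}$-factor $H$ into Littlewood--Paley pieces and treats $F,G$ wholesale through the embedding $\hs{1}\hookrightarrow\lp{r}$ for every $r<\infty$. But as written your plan has a gap in the regime where $H$ carries the dominant frequency.

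The problem is the commitment to the single H\"older split $L^8\cdot L^8\cdot L^4$. It yields exactly the critical weight $2^{j_3/2}$ on the $H$-factor with no margin, and all of the excess regularity $\frac{1}{2}-b$ is then stored as the gain $2^{-j_1/4-j_2/4}$ on the $F,G$ factors. This is fine when the output frequency $j$ dominates (there the finite band property supplies the extra decay $2^{-2(j-\max)}$), but it fails when $j\leq j_3=\max$. Concretely, take $j_1=j_2=0$ and $\gamma_{j_3}$ a spike at $j_3=N$: your estimate bounds the contribution to $2^{jb}\norm{P_j(F\c G\c H)}_{\lp{2}}$ for each $j\leq N$ by $2^{jb}\alpha_0\beta_0\gamma_N$ with no decay in $N-j$, so that
$$\sum_{j\leq N}2^{2jb}\Big(\sum_{j_3\geq j}\gamma_{j_3}\Big)^2\sim 2^{2Nb},$$
i.e.\ you end up controlling $\norm{F\c G\c H}_{\hs{b}}$ by $\norm{H}_{\hs{\frac{1}{2}+b}}$ rather than $\norm{H}_{\hs{\frac{1}{2}}}$. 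The near-orthogonality you invoke does not rescue this: the configuration $j_3\sim j_4\gg j_1,j_2$ has its two largest indices comparable and therefore survives the collapse, and even inserting the gain $2^{-2(j_3-j)}$ from the finite band property the sum $\sum_{j\leq j_3}2^{2jb}2^{-2(j_3-j)}$ still produces $2^{2j_3b}$. The fix is to use a \emph{second} H\"older configuration in this regime, placing $P_{j_3}H$ in $\lp{q}$ with $2<q<\frac{2}{b+\frac{1}{2}}$ (so that Bernstein costs only $2^{j_3(1-\frac{2}{q})}$ with $1-\frac{2}{q}<\frac{1}{2}-b$, leaving a margin that beats the weight $2^{jb}\leq 2^{j_3b}$) and the factors $F,G$ in correspondingly large $\lp{r}$, which $\hs{1}$ affords on the two-dimensional surface $\p$. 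This is precisely what the paper does in its case $l>j$, and it is the only place where the hypothesis $b<\frac{1}{2}$ is actually consumed; your scaling heuristic correctly identifies the total excess $\frac{1}{2}-b$, but the $(8,8,4)$ split spends none of it where it is needed.
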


\begin{proposition}\lab{prop:kei}\lab{greveinff2}
For any $\p$-tangent tensor $G$ and $H$ on $S$ such that $G\c H$ is a scalar, we have:
\be\lab{kei}
\norm{G\c H}_{\lp{2}}\les \norm{G}_{\hs{\frac{1}{2}}}\norm{H}_{\hs{\frac{1}{2}}}.
\ee
\end{proposition}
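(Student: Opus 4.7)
The claim is a fiber-wise bilinear estimate on the $2$-dimensional leaves $\p$ and, at the level of Sobolev scaling, is equivalent to the critical Sobolev embedding $\hs{\frac{1}{2}}\hookrightarrow \lp{4}$ in two dimensions. The plan is therefore to reduce it to this Sobolev embedding via H\"older's inequality, and then to establish the embedding using the Littlewood-Paley apparatus of Section \ref{sec:LP}.

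Fix a leaf $\p$. By H\"older applied to the scalar $G\c H$, $\norm{G\c H}_{\lp{2}}\leq \norm{G}_{\lp{4}}\norm{H}_{\lp{4}}$, so it suffices to show $\norm{F}_{\lp{4}}\les \norm{F}_{\hs{\frac{1}{2}}}$ for any $\p$-tangent tensor $F$. For each dyadic block one combines the Gagliardo-Nirenberg inequality \eqref{eq:GNirenberg} with $p=4$ and the finite band property of Theorem \ref{thm:LP} to obtain
$$\norm{P_jF}_{\lp{4}}^2\les \norm{\nabb P_jF}_{\lp{2}}\norm{P_jF}_{\lp{2}}+\norm{P_jF}_{\lp{2}}^2\les 2^{j}\norm{P_jF}_{\lp{2}}^2,$$
and analogously $\norm{P_{<0}F}_{\lp{4}}\les \norm{P_{<0}F}_{\lp{2}}$. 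Summing these bounds into a global $\lp{4}$ control requires an almost-orthogonality / square function estimate of the form
$$\norm{F}_{\lp{4}}^2\les \norm{P_{<0}F}_{\lp{4}}^2+\sum_{j\ge 0}\norm{P_jF}_{\lp{4}}^2,$$
which combined with the dyadic bound immediately yields $\norm{F}_{\lp{4}}\les \norm{F}_{\hs{\frac{1}{2}}}$ and hence the bilinear estimate.

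To establish this square function bound I would expand the scalar $F\c F$ paraproduct-style as $F\c F = 2\sum_{k\ge 0}(P_{<k}F)\c(P_k F)+\sum_{k\ge 0}|P_kF|^2$, plus the $P_{<0}$ contribution, then observe that each block $(P_{<k}F)\c(P_k F)$ is essentially localized at frequency $2^k$; applying a thickened projection $\widetilde{P}_k$ followed by Bessel's inequality (property (ii) of Theorem \ref{thm:LP}) would isolate the $k$-th contribution and bound $\norm{F^2}_{\lp{2}}=\norm{F}_{\lp{4}}^2$ by $\sum_k\norm{(P_{<k}F)\c(P_kF)}_{\lp{2}}\les \sum_k\norm{P_{<k}F}_{\lp{4}}\norm{P_kF}_{\lp{4}}$, followed by Cauchy-Schwarz in $k$. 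The main obstacle is that the heat-based projections $P_k$ do not have sharp frequency support, so the localization and the Bessel step are only approximately valid; the resulting commutator errors between $\widetilde{P}_k$ and multiplication by $P_{<k}F$ must be controlled using the commutator-style estimates of Section \ref{sec:statepropprodcomm} together with the $K_{\frac{1}{2}}$ control from Proposition \ref{propK}, which is precisely the ingredient that makes the Bochner inequality \eqref{eq:Bochconseqbis} and the sharp Bernstein inequalities \eqref{eq:strongbernscalarbis}--\eqref{eq:strong-Bern-0bis} available in this low-regularity setting.
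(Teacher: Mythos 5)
There is a genuine gap. Your reduction via H\"older to the embedding $\norm{F}_{\lp{4}}\les\norm{F}_{\hs{\frac{1}{2}}}$ is fine as a reduction, and the per-block bound $\norm{P_jF}_{\lp{4}}\les 2^{j/2}\norm{P_jF}_{\lp{2}}$ is available (Gagliardo--Nirenberg plus the finite band property, or directly the weak Bernstein inequality). But the whole proof then rests on the square-function estimate
$\norm{F}_{\lp{4}}^2\les\norm{P_{<0}F}_{\lp{4}}^2+\sum_{j\ge 0}\norm{P_jF}_{\lp{4}}^2$,
and this is precisely what the geometric, heat-flow-based projections of \cite{LP} do \emph{not} provide: the only reconstruction tools in Theorem \ref{thm:LP} are $\sum_jP_j=I$ (which via the triangle inequality only yields the $\ell^1$-Besov bound $\sum_j2^{j/2}\norm{P_jF}_{\lp{2}}$, strictly stronger than $\hs{\frac{1}{2}}$) and the $L^2$ Bessel inequality, which goes in the wrong direction and only in $L^2$. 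Your sketch for repairing this does not close: the paraproduct blocks $(P_{<k}F)\c(P_kF)$ are not frequency-localized in any usable sense because the heat-flow $P_k$ have no support properties, there is no statement that a thickened $\widetilde{P}_k$ "isolates" such a block, and the commutator estimates of section \ref{sec:statepropprodcomm} concern $[\nabla_{aN},P_j]$ — they say nothing about commuting $P_k$ past multiplication by a rough function. So the central analytic step of your argument is unavailable in this framework.

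For comparison, the paper avoids any $L^p$ square-function input by working directly on $P_j(G\c H)$: it expands $G$ and $H$ into $P_lG$ and $P_mH$, splits into the cases $l\le m\le j$, $l\le j<m$, $j<l\le m$, and in the low-low-to-high case writes $P_j=2^{-2j}P_j\lap=2^{-2j}P_j\divb\nabb$ so as to invoke the $L^p\to L^2$ bound \eqref{lbz14bis} for $P_j\divb$ (itself a consequence of the Bochner inequality \eqref{eq:Bochconseqbis}, hence of the $K_{\frac12}$ control). Each case produces a kernel $2^{-c|j-l|-c|j-m|}$ against $2^{l/2}\norm{P_lG}_{\lp{2}}\,2^{m/2}\norm{P_mH}_{\lp{2}}$, and Schur/Cauchy--Schwarz in $(j,l,m)$ gives \eqref{kei}. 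If you want to salvage your route, you would essentially have to prove the diagonal case $\norm{F\c F}_{\lp{2}}\les\norm{F}_{\hs{\frac12}}^2$ first — but that is exactly the proposition with $G=H=F$, so the detour through $\lp{4}$ buys nothing; the trilinear case analysis on $P_j(P_lG\c P_mH)$ is the step that cannot be skipped.
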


\begin{proposition}\lab{prop:bale}\lab{greveinff3}
For any scalars $f$ and $h$ on $\p$, we have:
\be\lab{bale}
\norm{fh}_{\hs{-\frac{1}{2}}}\les (\norm{f}_{\lp{\infty}}+\norm{\nabb f}_{\lp{2}})\norm{h}_{\hs{-\frac{1}{2}}}.
\ee
\end{proposition}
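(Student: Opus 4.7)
The plan is to use duality. Since $\hs{-\frac{1}{2}}$ is dual to $\hs{\frac{1}{2}}$ with respect to the $L^2(\p)$ pairing, the estimate \eqref{bale} is equivalent to the multiplier bound
$$\norm{f\varphi}_{\hs{\frac{1}{2}}} \les \left(\norm{f}_{\lp{\infty}}+\norm{\nabb f}_{\lp{2}}\right)\norm{\varphi}_{\hs{\frac{1}{2}}}$$
for every smooth scalar function $\varphi$ on $\p$. I would establish this via a standard paraproduct decomposition adapted to the geometric Littlewood-Paley projections of Section \ref{sec:LP}.

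Writing $f=P_{<0}f+\sum_{j\geq 0}P_jf$ and similarly for $\varphi$, I would split the product $f\varphi$ into paraproducts organized by the relative sizes of the frequencies $j,k$: the low-high piece (LH, $j<k-5$), the high-low piece (HL, $j>k+5$), and the high-high piece (HH, $|j-k|\leq 5$). For a fixed dyadic scale $m\geq 0$, the almost-orthogonality of the LP projections localizes $P_m(f\varphi)$ to $\max(j,k)\approx m$. In the (LH) and (HH) regimes, I would bound $P_jf$ in $\lp{\infty}$ by $\norm{f}_{\lp{\infty}}$ using the $L^p$-boundedness from Theorem \ref{thm:LP}(i), pair it with a single $\norm{P_k\varphi}_{\lp{2}}$ factor, and conclude by a Schur-type summation against the weights $2^{m/2}$, gaining the contribution $\norm{f}_{\lp{\infty}}\norm{\varphi}_{\hs{\frac{1}{2}}}$. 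In the (HL) regime, the finite-band estimate gives $\norm{P_mf}_{\lp{2}}\les 2^{-m}\norm{\nabb f}_{\lp{2}}$, and the sharp scalar Bernstein inequalities \eqref{eq:strongbernscalarbis}-\eqref{eq:strong-Bern-0bis} give $\norm{P_\ell\varphi}_{\lp{\infty}}\les 2^\ell\norm{P_\ell\varphi}_{\lp{2}}$ for $\ell\geq 0$, so that
$$\norm{P_m(\mathrm{HL})}_{\lp{2}}\les 2^{-m}\norm{\nabb f}_{\lp{2}}\left(\norm{\varphi}_{\lp{2}}+\sum_{0\leq \ell<m-5}2^\ell\norm{P_\ell\varphi}_{\lp{2}}\right);$$
a Cauchy-Schwarz argument exploiting the geometric weight $2^{(\ell-m)/2}$ then sums in both $\ell$ and $m$ to produce $\norm{\nabb f}_{\lp{2}}\norm{\varphi}_{\hs{\frac{1}{2}}}$. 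The low-frequency piece $\norm{P_{<0}(f\varphi)}_{\lp{2}}$ is handled trivially by $\norm{f}_{\lp{\infty}}\norm{\varphi}_{\lp{2}}\les\norm{f}_{\lp{\infty}}\norm{\varphi}_{\hs{\frac{1}{2}}}$.

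The main obstacle is the high-low regime: on the $2$-dimensional leaf $\p$, $H^1(\p)$ fails to embed in $L^\infty(\p)$, so the naive pairing $\norm{P_mf}_{\lp{\infty}}\norm{P_{<m-5}\varphi}_{\lp{2}}$ is not available. This explains the asymmetric form of the hypothesis: the $L^\infty$ control of $f$ is just right for (LH) and (HH), while the $L^2$ control of $\nabb f$, combined with the sharp scalar Bernstein inequality (itself depending on the bound $K_{\frac{1}{2}}\les\ep$ from Proposition \ref{propK}), is exactly what makes the Schur summation in (HL) converge at the critical exponent $\frac{1}{2}$. Any weaker combination on the left-hand side would cause a logarithmic divergence at this precise frequency interaction.
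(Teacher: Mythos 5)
Your argument is correct, but it is organized quite differently from the paper's. The paper does not dualize: it expands only the second factor, $\norm{P_j(fh)}_{\lp{2}}\les\sum_l\norm{P_j(fP_lh)}_{\lp{2}}$, handles $l\le j$ trivially with $\norm{f}_{\lp{\infty}}$ and the weight $2^{-j/2}\le 2^{-|j-l|/2}2^{-l/2}$, and for $l>j$ writes $P_lh$ as $2^{-2l}\lap P_lh$, applies Leibniz to get $\divb(f\nabb P_lh)-\nabb f\c\nabb P_lh$, and then uses the dual finite band property together with the dual of the sharp Bernstein inequality \eqref{eq:strongbernscalarbis} (to absorb the $\lp{1}$ term $\nabb f\c\nabb P_lh$); a Schur summation finishes. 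Your route — dualize to a multiplier bound on $\hs{\frac{1}{2}}$ and run a full Bony trichotomy in both factors — reaches the same estimate with the same two mechanisms ($L^\infty$ control of $f$ at comparatively low frequency, finite band plus Bernstein when $f$ carries the high frequency), and it has the advantage of working verbatim for the multiplier norm on $\hs{b}$, $0<b<1$, and of making transparent which hypothesis is used in which regime. What it costs you is two pieces of infrastructure the paper's proof never needs: (i) the duality characterization of $\hs{-\frac{1}{2}}$ through the geometric projections $P_j$ (true, via $\sum_jP_j=I$ and almost-orthogonality, but not stated in the paper), and (ii) a quantitative version of the claim that $P_m(P_jf\,P_k\varphi)$ is negligible for $m\gg\max(j,k)$ — in the geometric calculus this is not automatic and must be extracted exactly as the paper does, by writing $P_m=2^{-2m}\lap\widetilde{P}_m$ and distributing the derivatives, at which point one has to check that the resulting terms are still controlled by $\norm{f}_{\lp{\infty}}+\norm{\nabb f}_{\lp{2}}$ only. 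Both points are fillable, so I do not regard this as a gap, only as deferred work. One small inaccuracy in your closing commentary: the high-low summation is not at a critical exponent — after Bernstein you gain a geometric factor $2^{(\ell-m)/2}$, and the same scheme works for any $b\in(0,1)$, with the endpoints $b=0$ and $b=1$ (not $b=\frac{1}{2}$) being the genuinely borderline cases.
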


\begin{proposition}\lab{cor:commLP2}\lab{greveinff4}
For any $\p$-tangent tensor $G$ and $H$ on $S$ such that $G\c H$ is a scalar, and for all $j\geq 0$, we have:
\be\lab{clp17}
\sum_{j\geq 0}2^{-j}\norm{P_j(G\c H)}^2_{\ll{2}}\les \norm{G}^2_{H^1(S)}\norm{H}^2_{\ll{2}}.
\ee
\end{proposition}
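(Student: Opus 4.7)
The plan is to recognize that the weighted sum on the left is (up to the low-frequency piece) the integrated $H^{-1/2}(\p)$-norm of the scalar $G\cdot H$ in the sense of Definition \ref{def:Hs} with $b=-\tfrac12$, estimate this Sobolev norm pointwise in $u$ by duality against $H^{1/2}(\p)$, and then reduce the whole estimate to an $L^\infty_u L^4(\p)$ bound on $G$ supplied by Corollary \ref{c0}.

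For the pointwise step, I use the duality characterization
\[ \norm{(G\cdot H)(u)}_{H^{-1/2}(\p)} \;\sim\; \sup_{\phi}\frac{\bigl|\int_\p (G\cdot H)\phi\, d\mu_u\bigr|}{\norm{\phi}_{H^{1/2}(\p)}}, \]
the supremum ranging over scalar test functions $\phi$ on $\p$. The key structural observation is that, since $\phi$ is a scalar, $G\phi$ is a $\p$-tangent tensor of the same type as $G$, so $(G\phi)\cdot H$ is again a scalar and H\"older gives
\[ \Bigl|\int_\p (G\cdot H)\phi\, d\mu_u\Bigr| \;=\; \Bigl|\int_\p (G\phi)\cdot H\, d\mu_u\Bigr| \;\le\; \norm{G}_{L^4(\p)}\norm{\phi}_{L^4(\p)}\norm{H}_{L^2(\p)}. \]
The 2D Sobolev embedding $H^{1/2}(\p)\hookrightarrow L^4(\p)$ (which holds with a constant uniform in $u$ by the admissible coordinate systems on the leaves) disposes of the $\phi$-factor, yielding the pointwise-in-$u$ bound $\sum_{j\ge 0}2^{-j}\norm{P_j(G\cdot H)(u)}^2_{L^2(\p)} \lesssim \norm{G(u)}^2_{L^4(\p)}\norm{H(u)}^2_{L^2(\p)}$.

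Integrating in $u$ and applying H\"older,
\[ \sum_{j\ge 0}2^{-j}\norm{P_j(G\cdot H)}^2_{L^2(S)} \;\lesssim\; \norm{G}^2_{L^\infty_u L^4(\p)}\,\norm{H}^2_{L^2(S)}, \]
so the proof reduces to the bound $\norm{G}_{L^\infty_u L^4(\p)} \lesssim \norm{G}_{H^1(S)}$. Corollary \ref{c0} gives $\norm{G}_{L^\infty_u L^4(\p)}\lesssim \norm{G(-2,\cdot)}_{L^4(P_{-2})}+\norm{\nabla G}_{L^2(S)}$, and the boundary term is controlled by the classical one-slice trace inequality $\norm{G(-2,\cdot)}_{L^4(P_{-2})} \lesssim \norm{G(-2,\cdot)}_{H^{1/2}(P_{-2})} \lesssim \norm{G}_{H^1(S)}$ on $P_{-2}$. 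I expect the single-slice trace step to be the only delicate point, but since $P_{-2}$ has uniformly controlled geometry this reduces to the flat Euclidean trace theorem; all other ingredients --- duality, the 2D Sobolev embedding on the leaves, and Corollary \ref{c0} --- are already recalled earlier in the section, so the proposition should follow essentially at once.
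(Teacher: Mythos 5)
Your route is genuinely different from the paper's. The paper proves \eqref{clp17} by decomposing $H=\sum_l P_lH$ (and, for the high frequencies of $H$, also $G=\sum_m P_mG$), running a case analysis on the relative sizes of $j,l,m$ with the finite band property, Bernstein, Proposition \ref{p1} and Corollary \ref{cor:commLP1}, and then summing the almost-orthogonal pieces. You instead interpret the left-hand side as an $L^2_uH^{-\frac{1}{2}}(\p)$ norm, dualize against $H^{\frac{1}{2}}(\p)$, and reduce everything to $H^{\frac{1}{2}}(\p)\hookrightarrow L^4(\p)$ together with $\norm{G}_{\l{\infty}{4}}\les\norm{G}_{H^1(S)}$. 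This is cleaner and avoids the trichotomy, at the price of needing two facts about the intrinsic calculus: the duality characterization of $\sum_{j\ge 0}2^{-j}\norm{P_jf}^2_{\lp{2}}$ (which does hold, but requires the almost-orthogonality $\norm{P_jP_l}_{\mathcal{L}(\lp{2})}\les 2^{-2|j-l|}$ coming from the finite band property, since the geometric $P_j$ are not sharply localized) and the endpoint embedding itself.

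The one step I would not accept as written is your justification of $H^{\frac{1}{2}}(\p)\hookrightarrow L^4(\p)$ ``by the admissible coordinate systems on the leaves.'' The space $H^{\frac{1}{2}}(\p)$ here is defined intrinsically through the heat-flow projections (Definition \ref{def:Hs}), and its equivalence with a coordinate-based fractional Sobolev space on these rough leaves is precisely the kind of statement the paper never establishes and cannot take for granted; moreover the paper's own tools stop just short of this endpoint (summing the weak Bernstein inequality over frequencies only gives $\norm{\phi}_{L^4}\les\sum_j2^{j/2}\norm{P_j\phi}_{\lp{2}}$, an $\ell^1$ rather than $\ell^2$ sum, and \eqref{La6} excludes $p=\frac{4}{3}$). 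The gap is repairable from within the paper: Proposition \ref{prop:kei} applied with $G=H=\phi$ a scalar gives $\norm{\phi}^2_{\lp{4}}=\norm{\phi^2}_{\lp{2}}\les\norm{\phi}^2_{\hs{\frac{1}{2}}}$, which is exactly the embedding you need, and its proof is independent of the present proposition. Finally, your detour through Corollary \ref{c0} plus a trace inequality on $P_{-2}$ is unnecessary: Proposition \ref{p1} gives $\norm{G}_{\l{\infty}{4}}\les\norm{G}_{\h}$ directly, which is also how the paper handles this factor.
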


\begin{lemma}\lab{lemma:prod1}\lab{greveinff5}
Let $F$ and $G$ two tensors on $\p$ such that the contraction $F\c G$ is a scalar. Then, we have:
\be\lab{prod1}
\sup_{j\geq 0}2^{-j}\norm{P_j(F\c G)}_{\lp{2}}\les \norm{F}_{\hs{\frac{1}{2}}}\norm{G}_{\hs{-\frac{1}{2}}}.
\ee
\end{lemma}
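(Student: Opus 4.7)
The plan is to prove Lemma \ref{lemma:prod1} by duality combined with a Littlewood-Paley decomposition of $F$, in a spirit parallel to Proposition \ref{greveinff4}. Using the self-adjointness of $P_j$ on $\lp{2}$, the scalar character of $F\cdot G$, and the $\hs{1/2}$-$\hs{-1/2}$ duality, one has
\[
\norm{P_j(F\cdot G)}_{\lp{2}}
=\sup_{\phi:\,\norm{\phi}_{\lp 2}\le 1}\bigl|\langle G,\,F\,P_j\phi\rangle\bigr|
\le\norm{G}_{\hs{-\frac{1}{2}}}\,\sup_\phi\norm{F\,P_j\phi}_{\hs{\frac{1}{2}}},
\]
where $F\,P_j\phi$ denotes the tensor obtained by multiplying $F$ by the scalar $P_j\phi$. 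The lemma therefore reduces to the bilinear estimate
\[
\norm{F\,P_j\phi}_{\hs{\frac{1}{2}}}\lesssim 2^j\,\norm{F}_{\hs{\frac{1}{2}}}\,\norm{\phi}_{\lp{2}}\qquad(\star)
\]
for every scalar $\phi$ on $\p$.

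To prove $(\star)$ I would split $F=P_{<j}F+\sum_{j_1\ge j}P_{j_1}F$ and treat the low- and high-frequency pieces separately. For the low-frequency part $P_{<j}F\cdot P_j\phi$, I would establish the two endpoint estimates
\[
\norm{P_{<j}F\cdot P_j\phi}_{\lp{2}}\lesssim 2^{\frac{j}{2}}\norm{F}_{\hs{\frac{1}{2}}}\norm{\phi}_{\lp 2},
\qquad
\norm{P_{<j}F\cdot P_j\phi}_{\hs{1}}\lesssim 2^{\frac{3j}{2}}\norm{F}_{\hs{\frac{1}{2}}}\norm{\phi}_{\lp 2},
\]
and then interpolate at the level $b=1/2$ via the elementary bound $\norm{u}_{\hs{1/2}}\lesssim\norm{u}_{\lp 2}^{1/2}\norm{u}_{\hs 1}^{1/2}+\norm{u}_{\lp 2}$, which itself follows from summing $2^k\min(\norm{u}_{\lp 2}^2,\,2^{-2k}\norm{\nabb u}_{\lp 2}^2)$ geometrically in $k\ge 0$ using the $L^2$-boundedness and the finite-band property of $P_k$. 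The $\lp{2}$ endpoint uses H\"older with the weak tensor Bernstein inequality applied to $P_{<j}F$, producing $\norm{P_{<j}F}_{\lp p}\lesssim 2^{(\frac{1}{2}-\frac{2}{p})j}\norm{F}_{\hs{1/2}}$ for any $p>4$ after Cauchy-Schwarz against the $\hs{1/2}$-weights, combined with the sharp scalar Bernstein inequality $\norm{P_j\phi}_{\lp q}\lesssim 2^{(1-\frac{2}{q})j}\norm{\phi}_{\lp 2}$ from \eqref{eq:strongbernscalarbis} (itself available thanks to Proposition \ref{propK}), with $\frac{2}{p}+\frac{2}{q}=1$ so the $j$-powers add to $1/2$. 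The $\hs 1$ endpoint is obtained analogously after expanding $\nabb(P_{<j}F\cdot P_j\phi)=\nabb P_{<j}F\cdot P_j\phi+P_{<j}F\cdot\nabb P_j\phi$, using $\norm{\nabb P_{<j}F}_{\lp 2}\lesssim 2^{j/2}\norm{F}_{\hs{1/2}}$ (Cauchy-Schwarz against the finite-band inequality) together with the scalar Bernstein bound for $P_j\phi$ and $\nabb P_j\phi$.

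The main obstacle is the high-frequency piece $F_{\ge j}\cdot P_j\phi=\sum_{j_1\ge j}P_{j_1}F\cdot P_j\phi$, whose per-block $\hs 1$-endpoint bound $\norm{P_{j_1}F\cdot P_j\phi}_{\hs 1}\lesssim 2^{j+\frac{j_1}{2}}\norm{F}_{\hs{1/2}}\norm{\phi}_{\lp 2}$ fails to be summable in $j_1$. To recover summability I would estimate the $\hs{1/2}$-norm of the whole high-frequency contribution directly, writing
\[
\norm{F_{\ge j}\cdot P_j\phi}_{\hs{\frac{1}{2}}}^2=\sum_{k\ge 0}2^k\,\bigl\|P_k(F_{\ge j}\cdot P_j\phi)\bigr\|_{\lp 2}^2+\bigl\|P_{<0}(F_{\ge j}\cdot P_j\phi)\bigr\|_{\lp 2}^2,
\]
and for each $k$ using both the $L^2$-boundedness $\norm{P_ku}_{\lp 2}\lesssim\norm{u}_{\lp 2}$ and the dual finite-band $\norm{P_ku}_{\lp 2}\lesssim 2^{-2k}\norm{\lap u}_{\lp 2}$, then geometrically summing in $k$ across the crossover. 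The sharp scalar Bernstein bounds on $P_j\phi$, $\nabb P_j\phi$ and $\lap P_j\phi$ (from \eqref{eq:strongbernscalarbis} and the finite-band property, again resting on Proposition \ref{propK}) produce the missing decay $2^{-\eta(j_1-j)}$ for some $\eta>0$, restoring summability and yielding the desired $2^j$ factor. Combining the low- and high-frequency estimates then gives $(\star)$, completing the proof.
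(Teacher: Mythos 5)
Your route is genuinely different from the paper's. The paper never passes through a bilinear $H^{1/2}$ product estimate: it decomposes \emph{both} factors, bounds $\norm{P_j(P_lF\c P_mG)}_{\lp{2}}$ in the three cases $j=\max$, $l=\max$, $m=\max$ (using $L^p$-boundedness, the sharp scalar Bernstein inequality, and the reverse finite-band identity $P_m=2^{-2m}\lap \bar{P}_m$ respectively), obtains off-diagonal decay $2^{-|l-m|/6}$, and concludes by Schur/Cauchy--Schwarz in $(l,m)$. Crucially, since the conclusion is only a supremum over $j$, no summation over the output frequency is ever needed. Your duality reduction to $(\star)$ is legitimate in spirit (the paper uses the same $H^{1/2}$--$H^{-1/2}$ pairing in Lemma \ref{lemma:bieber}), and your treatment of the low-frequency piece $P_{<j}F\c P_j\phi$ is correct. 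But $(\star)$ forces you to control a full $H^{1/2}$ norm of a product, i.e.\ to sum over the output frequency $k$, and that is where the proposal breaks down.

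The gap is in the high-frequency piece. The decay $2^{-\eta(j_1-j)}$ you hope to extract from Bernstein bounds on $P_j\phi$, $\nabb P_j\phi$, $\lap P_j\phi$ does not exist: the per-block estimate
\begin{equation*}
\norm{P_{j_1}F\c P_j\phi}_{\hs{\frac{1}{2}}}\approx 2^{\frac{j_1}{2}}\,2^{j}\norm{P_{j_1}F}_{\lp{2}}\norm{\phi}_{\lp{2}}
\end{equation*}
is sharp (take $P_j\phi$ of size $2^j\norm{\phi}_{\lp{2}}$ on the support of a single high-frequency bump $P_{j_1}F$), and summing $2^{j_1/2}\norm{P_{j_1}F}_{\lp{2}}$ over $j_1\ge j$ is an $\ell^1$ sum that $\norm{F}_{\hs{1/2}}$ does not control. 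The only way to restore summability is almost-orthogonality in the output frequency: one needs $\norm{P_k(P_{j_1}F\c P_j\phi)}_{\lp{2}}\les 2^{-\delta|k-j_1|}2^{j}\norm{P_{j_1}F}_{\lp{2}}\norm{\phi}_{\lp{2}}$. Your two tools ($L^2$-boundedness of $P_k$ and $\norm{P_ku}_{\lp{2}}\les 2^{-2k}\norm{\lap u}_{\lp{2}}$) give the decay only for $k>j_1$; for $k<j_1$ they give nothing, and the resulting sum $\sum_k 2^k\big(\sum_{j_1>k}\norm{P_{j_1}F}_{\lp{2}}\big)^2\gtrsim\sum_k\norm{F}_{\hs{1/2}}^2$ diverges. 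The missing ingredient is the regime $k<j_1$, which requires exploiting the vanishing moments of $P_{j_1}$ acting on $F$ — writing $P_{j_1}=2^{-2j_1}\lap\bar{P}_{j_1}$ and integrating the Laplacian by parts onto $P_k$ and $P_j\phi$, exactly as the paper does in its case $m=\max$. Without that step the high-frequency part of $(\star)$, and hence the lemma, is not established.
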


\begin{lemma}\lab{greveinff6}
Let $-1<b<1$. Let $f$ a scalar function on $\p$, and $G$ a 1-form on $\p$. Then, we have:
\be\lab{prod6}
\norm{\divb(fG)}_{\hs{b-1}}\les \norm{f}_{\hs{b}}(\norm{G}_{\lp{\infty}}+\norm{\nabb G}_{\lp{2}}).
\ee
\end{lemma}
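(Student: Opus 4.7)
The plan is to argue by duality and reduce the claim to a multiplier estimate for $G$ on $\hs{\sigma}$ with $\sigma\in(-1,1)$. Since $\p$ is closed, integration by parts produces no boundary term, and the $\hs{b-1}\leftrightarrow \hs{1-b}$ pairing via $L^2(\p)$ yields
\begin{equation*}
\|\divb(fG)\|_{\hs{b-1}} \;\lesssim\; \sup_{\|h\|_{\hs{1-b}}\le 1}\; \Big|\int_{\p} fG\cdot\nabb h\, d\mu_u\Big|.
\end{equation*}
Applying the $\hs{b}\leftrightarrow \hs{-b}$ pairing to the scalar $\int_\p f\,(G\cdot\nabb h)\, d\mu_u$ and invoking Lemma \ref{lemma:vacances:1} (applicable because $1-b\in(0,2)$) to replace $\|\nabb h\|_{\hs{-b}}$ by $\|h\|_{\hs{1-b}}$, the problem is reduced to the multiplier estimate
\begin{equation*}
\|G\cdot w\|_{\hs{\sigma}} \;\lesssim\; \big(\|G\|_{\lp{\infty}}+\|\nabb G\|_{\lp{2}}\big)\,\|w\|_{\hs{\sigma}}, \qquad \sigma=-b\in(-1,1),
\end{equation*}
to be applied to the 1-form $w=\nabb h$.

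This multiplier estimate I would prove by a Bony-type paraproduct decomposition built on the intrinsic Littlewood--Paley projections of section \ref{sec:LP}. Decomposing $G=\sum_j P_jG$, $w=\sum_k P_k w$ (with the $P_{<0}$ pieces handled analogously via Theorem \ref{thm:LP}(i) and the Bessel inequality), I would split the product into low-high ($j<k-C$), high-low ($k<j-C$), and diagonal ($|j-k|\le C$) regimes. For the low-high regime, almost orthogonality localizes $(P_{<k-C}G)(P_k w)$ at frequency $\sim 2^k$, while Theorem \ref{thm:LP}(i) bounds $\|P_{<k-C}G\|_{\lp{\infty}}$ uniformly by $\|G\|_{\lp{\infty}}$; summing in $k$ against the weights $2^{2k\sigma}$ recovers $\|G\|_{\lp{\infty}}\|w\|_{\hs{\sigma}}$. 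For the high-low and diagonal regimes, the finite-band property (Theorem \ref{thm:LP}(iii)) combined with the sharp Bernstein inequality \eqref{eq:strongbernscalarbis} -- available thanks to the $\lhs{\infty}{-1/2}$-bound on $K$ provided by Proposition \ref{propK} -- permits one to trade a derivative between the two factors at the cost of $\|\nabb G\|_{\lp{2}}$. By a further duality, the estimate for $\sigma\in(-1,0)$ follows from the estimate for $\sigma\in[0,1)$.

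The hard part is the paraproduct bookkeeping in the high-low regime: the geometric series one must sum converge precisely when $|\sigma|<1$, which is why the restriction $b\in(-1,1)$ is sharp. Transferring a derivative in this regime requires the strengthened Bernstein inequality \eqref{eq:strongbernscalarbis}, not the weak version of Theorem \ref{thm:LP}(iv), so Proposition \ref{propK} (and hence the $\lhs{\infty}{-1/2}$-control of $K$) plays an essential role. The low-high contribution and the $P_{<0}$ pieces, by contrast, are routine.
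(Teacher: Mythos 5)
Your strategy is genuinely different from the paper's: you dualize twice to reduce \eqref{prod6} to a self-adjoint multiplier bound $\norm{G\c w}_{\hs{\sigma}}\les(\norm{G}_{\lp{\infty}}+\norm{\nabb G}_{\lp{2}})\norm{w}_{\hs{\sigma}}$ for $|\sigma|<1$, proved by a Bony paraproduct in which \emph{both} factors are Littlewood--Paley decomposed. The paper instead estimates $\norm{P_j(\divb(fG))}_{\lp{2}}$ directly, decomposing only the scalar $f=\sum_lP_lf$ and keeping $G$ whole: for $l\le j$ it applies Leibniz plus the scalar sharp Bernstein inequality to $P_lf$ (giving the constraint $b<1$), and for $l>j$ it writes $P_lf=2^{-2l}\lap P_lf$ and integrates by parts through $\divb\divb$ using the operator bound \eqref{lbz14bis} (giving $b>-1$). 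The two approaches locate the sharpness of $-1<b<1$ in the same geometric series, as you correctly observe.

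However, as written your proposal has two gaps that are precisely where the non-Fourier setting bites. First, you use the duality $\hs{b-1}\leftrightarrow\hs{1-b}$ and then $\hs{b}\leftrightarrow\hs{-b}$ as if the pairing realizes the norm; for the heat-flow projections $P_j$, which are not sharp frequency cutoffs, the inequality $\norm{F}_{\hs{s}}\les\sup_{\norm{h}_{\hs{-s}}\le1}|\langle F,h\rangle|$ requires an almost-orthogonality argument (decay of $\norm{P_jP_l}$ in $|j-l|$ via the finite band property) that is nowhere stated in the paper and must be supplied; the paper deliberately avoids norming $\hs{s}$ by duality, using only adjoints of concrete operators such as $P_j\nabb$. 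Second, and more seriously, your high-low and diagonal regimes need sharp Bernstein or $\lp{\infty}$ control on Littlewood--Paley pieces of the $1$-forms $G$ and $w=\nabb h$, but \eqref{eq:strongbernscalarbis} is a \emph{scalar} inequality; for tensors the paper only has the weak Bernstein inequality of Theorem \ref{thm:LP}(iv) and the tensor Bochner inequality \eqref{Bochtensorineq} with extra curvature terms. Substituting $\lp{4}$ bounds via Gagliardo--Nirenberg costs a factor $2^{j/2}$ which, in the high-low regime, only closes for $\sigma<\frac12$, so reaching the full range $|\sigma|<1$ requires additional structure (e.g.\ a further duality in the opposite direction from the one you invoke, or the paper's trick of applying Bernstein only to the scalar factor). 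The paper's proof sidesteps both issues by never decomposing the tensor factor and never dualizing the Sobolev norm; if you pursue your route, these two points must be addressed explicitly.
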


\begin{lemma}\lab{greveinff7}
Let $1\leq b<2$. Let $f$ a scalar function on $S$, and $G$ a 1-form on $S$. Then, we have:
\be\lab{prod11}
\norm{\divb(fG)}_{\lhs{2}{b-1}}\les (\norm{f}_{\lhs{2}{b}}+\norm{f}_{\lhs{\infty}{b-1}})(\norm{G}_{\ll{\infty}}+\norm{\nabb G}_{\l{\infty}{2}}).
\ee
\end{lemma}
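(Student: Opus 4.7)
The plan is to extend Lemma~\ref{greveinff6} from a single leaf $\p$ to the $L^2_u$-integrated strip setting on $S$, and to raise the differentiability exponent by one, into the range $1\le b<2$. The two hypotheses on $f$, namely $\norm{f}_{\lhs{2}{b}}$ and $\norm{f}_{\lhs{\infty}{b-1}}$, are required because on the $2$-dimensional leaves $\p$ the Sobolev embedding $H^1(\p)\hookrightarrow L^\infty(\p)$ fails: these two norms together serve as an anisotropic $L^\infty$ surrogate, since by interpolation they yield $f\in L^{p}_uH^{s}(\p)$ along a continuum of intermediate regularities.

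I would first split via the product rule $\divb(fG)=\nabb f\cdot G+f\,\divb G$. The term $\nabb f\cdot G$ is handled by writing $\nabb f\in\lhs{2}{b-1}$ (by Lemma~\ref{lemma:vacances:1} applied leafwise to $f\in H^b(\p)$ with $0<b<2$), and by observing that multiplication by $G$ is bounded on $\hs{b-1}$ uniformly in $u$ with constant $\lesssim\norm{G}_{\ll{\infty}}+\norm{\nabb G}_{\l{\infty}{2}}$; this uses a Bony paraproduct on $\p$ together with the sharp Bernstein inequalities \eqref{eq:strongbernscalarbis}--\eqref{eq:strong-Bern-0bis} for scalars.

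For the more delicate term $f\,\divb G$, I would Littlewood--Paley decompose both factors on each leaf $\p$: write $f=P_{<0}f+\sum_{j\ge 0}P_jf$ and $\divb G=P_{<0}\divb G+\sum_{l\ge 0}P_l\divb G$, where each $\norm{P_l\divb G}_{\lp{2}}\lesssim\norm{\nabb G}_{\lp{2}}$ uniformly in $l$ (by almost commutation of $P_l$ with $\divb$ and the finite-band property of Theorem~\ref{thm:LP}(iii)). The low-frequency piece $P_{<0}f\cdot\divb G$ is handled by the sharp Bernstein \eqref{eq:strong-Bern-0bis}, which gives $\norm{P_{<0}f}_{\lp{\infty}}\lesssim\norm{P_{<0}f}_{\lp{2}}$, together with the $\lhs{\infty}{b-1}$ hypothesis. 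The high-frequency pieces $P_jf\cdot P_l\divb G$ are split into the regimes $j\ll l$, $j\gg l$, and $j\sim l$; in each regime one uses Bernstein to move factors of $2^j$ or $2^l$ between $L^2$ and $L^\infty$ on $\p$, H\"older in $u$ to pair $L^2_u$-norms of $f$ with $L^\infty_u$-norms coming from $G$, and the summability in the output frequency $m$ which, after weighting by $2^{2m(b-1)}$, converges because $b<2$.

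The main obstacle is the crossover regime in which $\norm{P_jf}_{\lp\infty}$ would naturally be needed to pair with $\norm{\divb G}_{\lp{2}}$, but $f\in H^{b-1}(\p)$ with $b-1<1$ does not embed into $L^\infty(\p)$. The resolution exploits the two hypotheses on $f$ in tandem: one uses the $\lhs{\infty}{b-1}$ norm on low frequencies of $f$ and the $\lhs{2}{b}$ norm with sharp Bernstein on high frequencies, with H\"older in $u$ pairing them against the corresponding high- or low-frequency pieces of $\nabb G\in\l{\infty}{2}$. The exponent $b<2$ is exactly what is needed for the resulting geometric series in the Littlewood--Paley output frequency to sum.
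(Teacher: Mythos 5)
Your route is genuinely different from the paper's, and the difference is where the trouble lies. The paper never applies the Leibniz rule to $\divb(fG)$: it decomposes only $f$, writing $\norm{P_j(\divb(fG))}_{\ll{2}}\les\sum_{l}\norm{P_j(\divb(P_l(f)G))}_{\ll{2}}$, and keeps the divergence intact so that the finite band property of $P_j$ can act on it. For $l>j$ this gives $\norm{P_j(\divb(P_l(f)G))}_{\lp{2}}\les 2^j\norm{P_l(f)G}_{\lp{2}}\les 2^j\norm{G}_{\lp{\infty}}\norm{P_lf}_{\lp{2}}$, which sums because $b>0$; for $l\le j$ it trades two powers via $\norm{P_jF}_{\lp{2}}\les 2^{-2j}\norm{\nabb\divb(P_l(f)G)}_{\lp{2}}$ (up to a Bochner step), and the gain $2^{-(2-b)|j-l|}$ is where the restriction $b<2$ actually enters. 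Your first term $\nabb f\cdot G$ is fine: for $b=1$ it is trivial, and for $1<b<2$ it is Lemma \ref{lemma:vacances:1} combined with the analogue of Lemma \ref{greveinff8} at exponent $b-1\in(0,1)$, which is essentially what the paper extracts from the terms where derivatives fall on $P_lf$.

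The gap is in $f\,\divb G$, and it is not repaired by juggling the two norms of $f$. In the regime where the output frequency $m$ is comparable to the frequency $l$ of $\divb G$ and dominates that of $f$, the best available bound is $2^{m(b-1)}\norm{P_jf}_{\lp{\infty}}\norm{P_l\divb G}_{\lp{2}}$ with $m\sim l$; since $\divb G$ is only controlled in $\lp{2}$ (square-summable $\norm{P_l\divb G}_{\lp{2}}$ with no decay), the weight $2^{l(b-1)}$ makes the sum over $l$ diverge for every $b>1$. The threshold for this piece is $b\le 1$, not $b<2$, so your closing claim about the geometric series is wrong precisely for the range $1<b<2$ that the lemma is meant to cover. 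The obstruction is structural: taking $f\equiv 1$ (so both norms of $f$ are finite and $\divb(fG)=\divb G$), the asserted estimate would place $\divb G$ in $H^{b-1}(\p)$ using only $\nabb G\in\lp{2}$, which is false for $b>1$. The paper's own proof confronts the same difficulty in its $l\le j$ case and resolves it by paying with $\norm{P_l(f)\,\nabb\divb(G)}_{\ll{2}}$ and $\norm{\nabb(P_lf)\,\divb(G)}_{\ll{2}}$, i.e.\ with $\norm{\nabb^2G}_{\ll{2}}$ and $\norm{\divb G}_{\l{2}{2_+}}$ — norms that appear explicitly in the concluding display of its proof (and, as $\norm{\divb G}_{\lhs{2}{1}}$, in the application \eqref{ommm21}) but not in the statement. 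So to make your argument (or any argument) close for $b>1$ you must add some control of $\divb G$ beyond $\l{\infty}{2}$ to the hypotheses; once you do, your $f\,\divb G$ term can be handled by the product estimates of section \ref{sec:statepropprodprod}.
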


\begin{lemma}\lab{greveinff8}
Let $0<b<1$. Let $F$ a tensor on $\p$ and $h$ a scalar function on $\p$. Then, we have:
\be\lab{prod15}
\norm{Fh}_{\hs{b}}\les (\norm{F}_{\lp{\infty}}+\norm{\nabb F}_{\lp{2}})\norm{h}_{\hs{b}}.
\ee
\end{lemma}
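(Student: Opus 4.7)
My plan is to use a paraproduct decomposition with respect to the geometric Littlewood--Paley projections $P_j$ of section \ref{sec:LP}. With a large fixed integer $N$, I split
$$Fh \;=\; \underbrace{\sum_{l\ge 0} P_{<l-N}F\cdot P_l h}_{\mathrm{I}} \;+\; \underbrace{\sum_{j\ge 0} P_j F\cdot P_{<j-N} h}_{\mathrm{II}} \;+\; \underbrace{\sum_{|j-l|\le N,\,j,l\ge 0} P_j F\cdot P_l h}_{\mathrm{III}},$$
plus the crude low-frequency piece $P_{<0}(Fh)$ which is trivially bounded by $\|F\|_{\lp{\infty}}\|h\|_{\lp{2}}\les \|F\|_{\lp{\infty}}\|h\|_{\hs{b}}$. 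To control each of $\mathrm{I}$, $\mathrm{II}$, $\mathrm{III}$ in the $\hs{b}$ norm of Definition \ref{def:Hs}, I estimate $\|P_k(\cdot)\|_{\lp{2}}$ and sum against the weights $2^{2bk}$, relying on the $L^p$-boundedness and finite band property of Theorem \ref{thm:LP}, on the almost orthogonality coming from the vanishing moment property of $m$, and, crucially, on the sharp Bernstein inequality \eqref{eq:strongbernscalarbis}--\eqref{eq:strong-Bern-0bis}, which itself rests on the control \eqref{ad65} of $K_{1/2}$ obtained in Proposition \ref{propK}.

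Terms $\mathrm{I}$ and $\mathrm{III}$ are the easier ones. For $\mathrm{I}$, each summand is essentially frequency-localized at $2^l$, so $P_k(\mathrm{I})$ picks up only $l\sim k$; using $\|P_{<l-N}F\|_{\lp{\infty}}\les \|F\|_{\lp{\infty}}$ from property (i) of Theorem \ref{thm:LP} and $\|P_l h\|_{\lp{2}}$ directly from the definition of $\hs{b}$, I obtain $\|\mathrm{I}\|_{\hs{b}}\les\|F\|_{\lp{\infty}}\|h\|_{\hs{b}}$. For $\mathrm{III}$, only pairs with $l\sim j\ge k-C$ contribute to $P_k(\mathrm{III})$; bounding $\|P_l F\|_{\lp{\infty}}\les\|F\|_{\lp{\infty}}$ and applying Schur's lemma to the kernel $2^{(k-l)b}\mathbf{1}_{l\ge k-C}$, which is summable exactly because $b>0$, yields $\|\mathrm{III}\|_{\hs{b}}\les\|F\|_{\lp{\infty}}\|h\|_{\hs{b}}$.

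The delicate term is $\mathrm{II}$, where I need an $\lp{\infty}$ bound on $P_{<j-N}h$ in terms of $\|h\|_{\hs{b}}$. This is the only place the scalar assumption on $h$ is used: applying the sharp Bernstein inequality \eqref{eq:strongbernscalarbis} to each $P_l h$ and Cauchy--Schwarz in $l$,
$$\|P_{<j-N}h\|_{\lp{\infty}}\;\les\;\|P_{<0}h\|_{\lp{\infty}}+\sum_{0\le l<j-N}2^l\|P_l h\|_{\lp{2}}\;\les\;2^{j(1-b)}\|h\|_{\hs{b}},$$
where the geometric series in $l$ converges precisely because $1-b>0$. Combining this with the dual finite band estimate $\|P_j F\|_{\lp{2}}\les 2^{-j}\|\nabb F\|_{\lp{2}}$ and using that $P_j F\cdot P_{<j-N}h$ is almost localized at frequency $2^j$, I obtain
$$\sum_{j\ge 0}2^{2bj}\|P_j(\mathrm{II})\|^2_{\lp{2}}\;\les\;\|h\|^2_{\hs{b}}\sum_{j\ge 0}2^{2j}\|P_j F\|^2_{\lp{2}}\;\les\;\|\nabb F\|^2_{\lp{2}}\|h\|^2_{\hs{b}}.$$
The main obstacle is thus term $\mathrm{II}$: both endpoints $0<b<1$ are seen to be sharp in this scheme, with $b>0$ needed for the Schur estimate in $\mathrm{III}$ and $b<1$ forced by the convergence of the low-frequency geometric sum in $\mathrm{II}$, and the entire argument depends essentially on the sharp Bernstein inequality for scalars and hence on the bound for $K$ in Proposition \ref{propK}.
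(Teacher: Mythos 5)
Your overall plan (decompose into paraproduct pieces, use the sharp Bernstein inequality \eqref{eq:strongbernscalarbis} on the scalar factor, the finite band property on $F$, and sum with Schur, with $b>0$ and $b<1$ each controlling one regime) is in the right spirit, and you correctly identify that the argument ultimately rests on the bound \eqref{ad65} for $K_{1/2}$. But there is a genuine gap at the heart of terms $\mathrm{I}$ and $\mathrm{II}$: you assert that $P_{<l-N}F\cdot P_l h$ is ``essentially frequency-localized at $2^l$'' and that $P_jF\cdot P_{<j-N}h$ is ``almost localized at frequency $2^j$.'' For the heat-flow projections of Definition \ref{defLP} there is no Fourier support calculus, so these localization statements are not available; they are precisely the hard part in this geometric setting, and none of the properties listed in Theorem \ref{thm:LP} delivers them. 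Without them, the diagonal restriction $l\sim k$ in $\mathrm{I}$ and the passage from $\|P_jF\cdot P_{<j-N}h\|_{\lp{2}}\les 2^{-jb}\|\nabb F\|_{\lp{2}}\|h\|_{\hs{b}}$ to the square-summed bound on $\|P_k(\mathrm{II})\|_{\lp{2}}$ are unjustified, and a naive bound $\|P_k(\cdot)\|_{\lp{2}}\les\|\cdot\|_{\lp{2}}$ loses exactly the off-diagonal decay needed to sum when the output frequency $k$ exceeds the input frequency. Repairing this forces you to estimate $\|P_k(P_{<l-N}F\cdot P_lh)\|_{\lp{2}}$ for \emph{all} pairs $(k,l)$, gaining factors of $2^{l-k}$ for $l<k$ by writing $P_k=2^{-2k}P_k\lap$ (or $2^{-k}$ per derivative via \eqref{lbz14bis}) and distributing derivatives — i.e., exactly the kind of two-case analysis you only carry out for term $\mathrm{III}$.

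Once that repair is made, your argument essentially collapses onto the paper's proof, which is simpler because it never decomposes $F$: one writes $\|P_j(Fh)\|_{\lp{2}}\les\sum_l\|P_j(FP_lh)\|_{\lp{2}}$ and treats two cases. For $l\le j$ the finite band property applied to the whole product gives $\|P_j(FP_lh)\|_{\lp{2}}\les 2^{-j}\|\nabb(FP_lh)\|_{\lp{2}}\les 2^{-j}2^l(\|F\|_{\lp{\infty}}+\|\nabb F\|_{\lp{2}})\|P_lh\|_{\lp{2}}$, using \eqref{eq:strongbernscalarbis} and the finite band property for $P_l$; this yields the factor $2^{-|j-l|(1-b)}$ (where $b<1$ enters). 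For $l>j$ the plain $\lp{2}$-boundedness of $P_j$ gives the factor $2^{-|j-l|b}$ (where $b>0$ enters), and Schur's test concludes. Note also that your final inequality $\sum_j 2^{2j}\|P_jF\|^2_{\lp{2}}\les\|\nabb F\|^2_{\lp{2}}$ is not one of the stated properties of Theorem \ref{thm:LP}; it is used elsewhere in the paper (cf. \eqref{clp14}) and is provable, but in your write-up it should be justified rather than attributed to the pointwise finite band property, which by itself only gives the non-summable bound $2^{2j}\|P_jF\|^2_{\lp{2}}\les\|\nabb F\|^2_{\lp{2}}$ for each fixed $j$.
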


\subsection{Estimates for parabolic equations on $S$}

Consider the following parabolic equation:
\be\lab{parab0}
(\nabn-a^{-1}\lap)f=h\textrm{ on }S,
\ee
where $f$ and $h$ are scalar functions on $S$. In Proposition \ref{p7} and Proposition \ref{p8}, we obtained estimates for such equations. In this section, we derive additional estimates involving the Littlewood Paley projections of section \ref{sec:LP}. We start with the following commutation lemma. 
\begin{lemma}\lab{lemma:parabPj}
Let $f$ satisfying equation \eqref{parab0}. Then, $P_jf$ satisfies the following parabolic equation:
\be\lab{parab1}
(\nabn-a^{-1}\lap)(P_jf)=a^{-1}P_j(ah)+a^{-1}[\nabla_{aN},P_j]f\textrm{ on }S.
\ee
\end{lemma}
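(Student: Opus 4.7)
The plan is a direct computation, exploiting the fact that the geometric Littlewood--Paley projections $P_j$ defined in \eqref{eq:LP} are built from the heat semigroup $U(\tau)$ on each leaf $\p$, so they commute with $\lap$ but not with the transversal derivative $\nabn$.

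First I would rewrite the parabolic equation in a form where no $a^{-1}$ stands in front of $\lap$. Since $f$ is a scalar and $\nabla_{aN}f=aN(f)=a\nabn f$, multiplying \eqref{parab0} by $a$ turns it into
\begin{equation*}
\nabna f - \lap f = ah \quad \text{on } S.
\end{equation*}
This is the right form because the natural commutator to control (as advertised in section \ref{sec:statepropprodcomm} and used in \eqref{commutnabna1}--\eqref{dj3}) is the one with $\nabna$, not with $\nabn$.

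Next, apply $P_j$ to this identity. Since $P_j=\int_0^\infty m_j(\tau)U(\tau)\,d\tau$ is a function of $\lap$ acting tangentially on each leaf $\p$, it commutes with $\lap$: $P_j\lap=\lap P_j$. Rewriting $P_j\nabna f=\nabna P_j f+[P_j,\nabna]f=\nabna P_j f-[\nabna,P_j]f$, we obtain
\begin{equation*}
\nabna(P_j f) - \lap(P_j f) = P_j(ah) + [\nabna,P_j]f.
\end{equation*}

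Finally, dividing by $a$ and using once more $\nabna(P_j f)=a\nabn(P_j f)$, we get \eqref{parab1}. There is no substantial obstacle here; the only thing to check carefully is the commutation $[P_j,\lap]=0$, which is immediate from the heat-flow definition of $P_j$ in Definition \ref{defLP}, and the elementary identity $\nabla_{aN}=a\nabn$ on scalars.
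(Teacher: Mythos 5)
Your proof is correct and follows exactly the paper's own argument: multiply by $a$ to get $(\nabla_{aN}-\lap)f=ah$, commute with $P_j$ using $[P_j,\lap]=0$, and divide by $a$. Nothing further is needed.
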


\begin{proof}
We multiply equation \eqref{parab0} with $a$. We obtain:
$$(\nabla_{aN}-\lap)f=ah.$$
Next, we commute with $P_j$, using the fact that $P_j$ commutes with $\lap$. We obtain:
$$(\nabla_{aN}-\lap)(P_jf)=P_j(ah)+[\nabla_{aN},P_j]f.$$
Finally, multiplying with $a^{-1}$, we get \eqref{parab1}. This concludes the proof of Lemma \ref{lemma:parabPj}.
\end{proof}

\begin{proposition}\label{prop:parab1}
Let $f$ be a scalar function on $S$ satisfying \eqref{parab0} and such that $f\equiv 0$ on $u=-2$. Assume that there exists two tensors $G$ and $H$ on $S$ on $S$ on $S$ tangent to $\p$ such that:
\begin{equation}\label{parab2}
h=G\c H\textrm{ with }\norm{H}_{\ll{2}}\les\ep\textrm{ and }\norm{G}_{H^1(S)}\les\ep.
\end{equation}
Then, we have:
\begin{equation}\label{parab3}
\norm{f}_{\l{\infty}{4}}\lesssim\ep,
\end{equation}
and:
\be\lab{parab3bis}
\sum_{j\geq 0}\left(2^{3j}\norm{P_jf}_{\ll{2}}^2+2^j\norm{P_jf}_{\l{\infty}{2}}^2+2^{-j}\norm{P_j(\nabn f)}_{\ll{2}}^2\right)\les\ep^2.
\ee
\end{proposition}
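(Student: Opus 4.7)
The plan is to apply the geometric Littlewood--Paley projection $P_j$ to the equation and run a frequency-localized parabolic energy estimate, then sum with the weight $2^j$. I would begin with a preliminary bound by invoking Proposition~\ref{p8} directly: writing $h=GH$ in the form \eqref{p8e2} with $H=0$ and $h_1=GH$, and observing that
\begin{equation*}
\|GH\|_{\li{2}{\frac{4}{3}}}\le\|G\|_{\li{\infty}{4}}\|H\|_{\ll{2}}\lesssim \ep^{2},
\end{equation*}
since Proposition~\ref{p1} yields $\|G\|_{\li{\infty}{4}}\lesssim\|G\|_{\h}\lesssim\ep$. Proposition~\ref{p8} then gives $\|f\|_{\li{\infty}{2}}+\|\nabb f\|_{\ll{2}}\lesssim\ep^{2}$, which provides the low-frequency information and the input for the commutator estimates below.

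Next, by Lemma~\ref{lemma:parabPj} the projection $P_jf$ satisfies $(\nabn-a^{-1}\lap)(P_jf)=a^{-1}P_j(aGH)+a^{-1}[\nabna,P_j]f$ with $P_jf\equiv 0$ on $u=-2$. Repeating on $P_jf$ the $L^{2}$ energy argument used in the proofs of Propositions~\ref{p7}--\ref{p8}, I obtain
\begin{equation*}
\|P_jf\|^{2}_{\li{\infty}{2}}+\|\nabb P_jf\|^{2}_{\ll{2}}\lesssim \Bigl|\int\!\!\int P_j(aGH)\,P_jf\Bigr|+\Bigl|\int\!\!\int [\nabna,P_j]f\cdot P_jf\Bigr|.
\end{equation*}
The first term I bound by Cauchy--Schwarz together with the finite band property $\|P_jf\|_{\ll{2}}\lesssim 2^{-j}\|\nabb P_jf\|_{\ll{2}}$, leaving $2^{-2j}\|P_j(aGH)\|^{2}_{\ll{2}}$ after absorbing a fraction of $\|\nabb P_jf\|^{2}_{\ll{2}}$ into the left. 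For the commutator term I apply \eqref{commLP2} of Proposition~\ref{prop:commLP2}, obtaining the bound $2^{-j(1-\delta)}\ep(\|\nabb f\|_{\ll{2}}+\|f\|_{\li{\infty}{2}})\|P_jf\|_{\li{\infty}{2}}$. Multiplying through by $2^{j}$, summing in $j\ge 0$, and using the finite band property to convert $2^{j}\|\nabb P_jf\|^{2}_{\ll{2}}\gtrsim 2^{3j}\|P_jf\|^{2}_{\ll{2}}$, the product contribution is controlled by the product estimate of Proposition~\ref{cor:commLP2}: $\sum 2^{-j}\|P_j(aGH)\|^{2}_{\ll{2}}\lesssim\|aG\|^{2}_{\h}\|H\|^{2}_{\ll{2}}\lesssim\ep^{4}$; the commutator contribution is handled by Cauchy--Schwarz in $j$ (with $\sum 2^{j(2\delta-1)}<\infty$ for $\delta<\tfrac{1}{2}$), Young's inequality, and absorption of $\tfrac{1}{2}\sum 2^{j}\|P_jf\|^{2}_{\li{\infty}{2}}$ into the left-hand side, producing the first two pieces of \eqref{parab3bis}.

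For the $\nabn f$ piece I would again use Lemma~\ref{lemma:parabPj} to write $\nabn P_jf=a^{-1}\lap P_jf+a^{-1}P_j(aGH)+a^{-1}[\nabna,P_j]f$, together with $P_j\nabn f=\nabn P_jf-[\nabn,P_j]f$. The finite band property gives $2^{-j}\|\lap P_jf\|^{2}_{\ll{2}}\lesssim 2^{3j}\|P_jf\|^{2}_{\ll{2}}$, which is summable by the estimate just obtained; the $P_j(aGH)$ term is summable in $j$ with weight $2^{-j}$ by Proposition~\ref{cor:commLP2}; and the two commutator pieces are controlled using Proposition~\ref{greveinf1} (estimate \eqref{commLP1}) for $[\nabna,P_j]f$ together with $\nabn=a^{-1}\nabna$ and the smallness of $a-1$ in $\ll{\infty}$ for $[\nabn,P_j]f$, where the input norms $\|\La^{1/2+\delta}f\|_{\ll{2}}$ and $\|\La^\delta f\|_{\li{\infty}{2}}$ are controlled by $\sum 2^{3j}\|P_jf\|^{2}_{\ll{2}}$ and $\sum 2^{j}\|P_jf\|^{2}_{\li{\infty}{2}}$ respectively.

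Finally, the estimate $\|f\|_{\li{\infty}{4}}\lesssim\ep$ follows directly from \eqref{parab3bis} by a square-function argument: applying the triangle inequality in $L^{2}(\p)$ to $|f|^{2}=\bigl(\sum_jP_jf+P_{<0}f\bigr)^{2}$ and invoking the Weak Bernstein inequality,
\begin{equation*}
\|f(u)\|^{2}_{\lp{4}}\lesssim \sum_{j\ge 0}\|P_jf(u)\|^{2}_{\lp{4}}+\|P_{<0}f(u)\|^{2}_{\lp{4}}\lesssim \sum_{j\ge 0}2^{j}\|P_jf(u)\|^{2}_{\lp{2}}+\|P_{<0}f(u)\|^{2}_{\lp{2}},
\end{equation*}
and then taking the supremum in $u$. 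The main obstacle is the bootstrap nature of the commutator term $\sum 2^{j}\int\!\!\int|[\nabna,P_j]f||P_jf|$: it is controlled only in terms of the very norm $\|f\|_{\li{\infty}{H^{1/2}}}$ one is trying to bound, so the interplay between the precise weight $2^{j}$ on the left and the $2^{-j(1-\delta)}$ gain from Proposition~\ref{prop:commLP2} must be delicate enough to leave a summable series in $j$ while still allowing the resulting $L^{\infty}_{u}H^{1/2}$ term to be absorbed on the left.
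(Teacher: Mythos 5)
Your treatment of \eqref{parab3bis} is essentially the paper's argument: project the equation with $P_j$ via Lemma~\ref{lemma:parabPj}, run the frequency-localized parabolic energy estimate, control $\sum_j 2^{-j}\norm{P_j(ah)}^2_{\ll{2}}$ by the product estimate \eqref{clp17}, and absorb the commutator contribution into the left-hand side using the smallness of $\ep$ (the paper uses \eqref{commLP1} where you use \eqref{commLP2} plus a preliminary application of Proposition~\ref{p8}, but both routes close). The genuine problem is your last step, the derivation of \eqref{parab3}.

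The inequality you invoke there, $\norm{f(u)}^2_{\lp{4}}\les\sum_{j\ge 0}\norm{P_jf(u)}^2_{\lp{4}}+\norm{P_{<0}f(u)}^2_{\lp{4}}$, is not a consequence of the triangle inequality: the triangle inequality gives $\norm{f}_{\lp{4}}\le\sum_j\norm{P_jf}_{\lp{4}}+\norm{P_{<0}f}_{\lp{4}}$, i.e.\ the \emph{square of the sum}, and passing to the \emph{sum of the squares} is a reverse square-function estimate which is false in general and, for the heat-flow Littlewood--Paley projections of \cite{LP}, is not among the available tools. If you instead use the correct direction together with weak Bernstein, you get $\norm{f}_{\l{\infty}{4}}\le\sum_j 2^{j/2}\norm{P_jf}_{\l{\infty}{2}}+\dots$, and Cauchy--Schwarz against the quantity $\sum_j 2^j\norm{P_jf}^2_{\l{\infty}{2}}$ controlled by \eqref{parab3bis} produces the divergent factor $\big(\sum_j 1\big)^{1/2}$: you are exactly at the critical embedding $\hs{\frac12}\hookrightarrow\lp{4}$, which cannot be reached this way (nor via \eqref{eq:GNirenberg}, which would require $\nabb f\in\l{\infty}{2}$, a norm not controlled by \eqref{parab3bis}). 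The paper avoids this endpoint entirely: it multiplies the equation by $f^3$ to get $\norm{f}_{\l{\infty}{4}}\les\ep+\ep\norm{f}_{\l{6}{12}}$, bounds $\norm{f}_{L^4_{[-2,2]}H^1(\p)}$ from \eqref{parab3bis} via the Bessel inequality, and closes with the interpolation $\norm{f}_{\l{6}{12}}\les\norm{f}^{1/3}_{\l{\infty}{4}}\norm{\nabb f}^{2/3}_{\l{4}{2}}$. You need some such additional argument; \eqref{parab3} does not follow ``directly'' from \eqref{parab3bis}.
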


\begin{proof}
We multiply \eqref{parab0} by $f^3$ and integrate on $-2<u'<u$ where $u\leq 2$. Using integration by parts together with \eqref{coarea} and \eqref{du}, we obtain:
\begin{equation}\label{parab4}
\begin{array}{ll}
& \ds\frac{1}{4}\norm{f(u,.)}_{L^4(P_{u})}^4+\norm{a^{-1/2}f\nabb f}_{\ll{2}}^2\\[2mm]
\ds = & \ds\frac{1}{4}\norm{f(-2,.)}_{L^4(P_{-2})}^4+\frac{1}{2}\int_{-2}^u\int_{P_{u'}}a^{-1}\trt f^4d\mu_{u'}du'+\int_{-2}^u\int_{P_{u'}} hf^3 d\mu_{u'}du'\\[1mm]
\ds\lesssim & \ds\norm{\trt}_{\l{\infty}{4}}\norm{f}^4_{\l{4}{\frac{16}{3}}}\\
\ds&+\norm{h}_{\l{2}{\frac{4}{3}}}\norm{f}^3_{\l{6}{12}},
\end{array}
\end{equation}
where we used in the last inequality the fact that $f\equiv 0$ on $u=-2$. In view of the assumptions \eqref{parab2} on $h$, we have $h=G\c H$, and thus:
\bee
\norm{h}_{\l{2}{\frac{4}{3}}}&\les& \norm{G}_{\l{\infty}{4}}\norm{H}_{\ll{2}}\\
&\les&  \norm{G}_{H^1(S)}\norm{H}_{\ll{2}}\\
&\les&\ep,
\eee
where we used Proposition \ref{p1}, and the estimates for $G$ and $H$ provided by \eqref{parab2}. Together 
with \eqref{parab4}, and the estimate \eqref{thregx1} for $\trt$, we obtain:
$$\norm{f(u,.)}_{L^4(P_{u})}\les \ep+\ep(\norm{f}_{\l{\infty}{4}}+\norm{f}_{\l{6}{12}}).$$
Taking the supremum in $u$ on the left-hand side, we get:
\be\lab{parab5}
\norm{f}_{\l{\infty}{4}}\les \ep+\ep\norm{f}_{\l{6}{12}}.
\ee

Next, we derive an estimate for $P_jf$. In view of Lemma \ref{lemma:parabPj} and since $f$ satisfies \eqref{parab0}, $P_jf$ satisfies the following parabolic equation:
\be\lab{parab6bis}
(\nabn-a^{-1}\lap)(P_jf)=a^{-1}P_j(ah)+a^{-1}[\nabla_{aN},P_j]f\textrm{ on }S.
\ee
Together with the estimate \eqref{p7e2}, we obtain:
\bee
&&\norm{P_jf}_{\l{\infty}{2}}+\norm{\nabb(P_jf)}_{\l{\infty}{2}}+\norm{\nabn(P_jf)}_{\ll{2}}+\norm{\nabb^2(P_jf)}_{\ll{2}}\\
\nn&\lesssim&\norm{a^{-1}P_j(ah)}_{\ll{2}}+\norm{a^{-1}[\nabla_{aN},P_j]f}_{\ll{2}}+\norm{P_jf(-2,.)}_{L^2(P_{-2})}\\
\nn&&+\norm{\nabb(P_jf)(-2,.)}_{L^2(P_{-2})}\\
\nn&\lesssim&\norm{a^{-1}P_j(ah)}_{\ll{2}}+\norm{a^{-1}[\nabla_{aN},P_j]f}_{\ll{2}},
\eee
where we used in the last inequality the fact that $P_f\equiv 0$ in $u=-2$. Using the finite band property for $P_j$ and the estimate \eqref{thregx1} for $a$, we obtain:
\be\lab{parab6}
2^j\norm{P_jf}_{\l{\infty}{2}}+\norm{\nabn(P_jf)}_{\ll{2}}+2^{2j}\norm{P_jf}_{\ll{2}}\lesssim\norm{P_j(ah)}_{\ll{2}}+\norm{[\nabla_{aN},P_j]f}_{\ll{2}}.
\ee

Next, we estimate the two terms in the right-hand side in \eqref{parab6} starting with the first one. Since $ah=G\c (aH)$, and in view of Proposition \ref{cor:commLP2}, we have:
\bea\lab{parab7}
\sum_{j\geq 0}2^{-j}\norm{P_j(ah)}_{\ll{2}}^2&\les& \norm{G}^2_{H^1(S)}\norm{aH}^2_{L^2(S)}\\ 
\nn&\les& \ep^2,
\eea
where we used in the last inequality the assumption \eqref{parab2} for $G$ and $H$, and the estimate \eqref{thregx1} for $a$. For the second term in the right-hand side in \eqref{parab6}, we used the commutator estimate \eqref{commLP1}, which yields for any $\delta>0$:
$$\norm{[\nabna, P_j]f}_{\ll{2}}\les \ep\norm{\La^{\frac{1}{2}+\delta}f}_{L^2(S)}+\ep\norm{\La^\delta f}_{\l{\infty}{2}}.$$
Together with \eqref{parab6} and \eqref{parab7}, we obtain:
\bea\lab{parab8}
&&\sum_{j\geq 0}\left(2^{3j}\norm{P_jf}_{\ll{2}}^2+2^j\norm{P_jf}_{\l{\infty}{2}}^2+2^{-j}\norm{P_j(\nabn f)}_{\ll{2}}^2\right)\\
\nn&\les&\ep^2(1+\norm{\La^{\frac{1}{2}+\delta}f}^2_{L^2(S)}+\norm{\La^\delta f}^2_{\l{\infty}{2}}).
\eea
Now, since $\delta>0$, we have:
\bea\lab{parab9}
&&\norm{\La^{\frac{1}{2}+\delta}f}^2_{L^2(S)}+\norm{\La^\delta f}^2_{\l{\infty}{2}}\\
\nn&\les& \left(\sum_{j\geq 0}(2^{j(\frac{1}{2}+\delta)}\norm{P_jf}_{L^2(S)}+2^{j\delta}\norm{P_jf}_{\l{\infty}{2}})\right)^2\\
\nn&\les& \sum_{j\geq 0}2^{j(1+3\delta)}\norm{P_jf}^2_{L^2(S)}+\sum_{j\geq 0}2^{j3\delta}\norm{P_jf}^2_{\l{infty}{2}}\\
\nn&\les& \sum_{j\geq 0}\left(2^{3j}\norm{P_jf}_{\ll{2}}^2+2^j\norm{P_jf}_{\l{\infty}{2}}^2\right)
\eea
where we chose in the last estimate $0<\delta\leq\frac{1}{3}$. Finally, \eqref{parab8} and \eqref{parab9} imply:
\be\lab{parab10}
\sum_{j\geq 0}\left(2^{3j}\norm{P_jf}_{\ll{2}}^2+2^j\norm{P_jf}_{\l{\infty}{2}}^2+2^{-j}\norm{P_j(\nabn f)}_{\ll{2}}^2\right)\les\ep^2.
\ee

Next, we estimate the $L^4_{[-2,2]}H^1(p)$ norm of $f$. Using the Bessel inequality for $P_j$, we have:
\bee
\norm{f}^4_{L^4_{[-2,2]}H^1(\p)}&= & \normm{\norm{f}^2_{H^1(\p)}}^2_{L^2_{[-2,2]}}\\
&\les& \normm{\sum_{j\geq 0}2^{2j}\norm{P_jf}^2_{\ll{2}}}^2_{L^2_{[-2,2]}}§\\
&\les& \left(\sum_{j\geq 0}2^{2j}\norm{P_jf}^2_{\l{4}{2}}\right)^2.
\eee
Thus, in view of \eqref{parab10}, we have:
\be\lab{parab11}
\norm{f}^4_{L^4_{[-2,2]}H^1(\p)}\les \left(\sum_{j\geq 0}2^{3j}\norm{P_jf}_{\ll{2}}^2\right)\left(\sum_{j\geq 0}2^j\norm{P_jf}_{\l{\infty}{2}}^2\right)\les \ep^4.
\ee
In view of \eqref{parab5}, we need to estimate $\norm{f}_{\l{6}{12}}$. Now, note that applying the Sobolev embedding  \eqref{eq:isoperimetric} with $f^q$ for some integer $q\geq 2$ yields:
$$\norm{f}_{\lp{2q}}^q=\norm{f^q}_{\lp{2}}\les \norm{\nabb(f^{q-1})}_{\lp{1}}+\norm{f^q}_{\lp{1}}\les \norm{f}^{q-1}_{\lp{2(q-1)}}\norm{\nabb f}_{\lp{2}}+\norm{f}_{\lp{q}}^q.$$
Using the previous inequality successively with $q=3, 4, 5, 6$ implies the following variant of the Gagliardo-Nirenberg inequality \eqref{eq:GNirenberg}:
$$\norm{f}_{\lp{12}}\les \norm{f}^{\frac{1}{3}}_{\lp{4}}\norm{\nabb f}^{\frac{2}{3}}_{\lp{2}}+\norm{f}_{\lp{2}}.$$
In particular, we obtain:
\be\lab{parab12}
\norm{f}_{\l{6}{12}}\les \norm{f}_{\l{\infty}{4}}^{\frac{1}{3}}\norm{\nabb f}^{\frac{2}{3}}_{\l{4}{2}}\les \ep^{\frac{2}{3}}\norm{f}_{\l{\infty}{4}}^{\frac{1}{3}},
\ee
where we used \eqref{parab11} in the last inequality. Finally, \eqref{parab12} and \eqref{parab5} yield:
$$\norm{f}_{\l{\infty}{4}}\les \ep,$$
which together with \eqref{parab10} implies \eqref{parab3} and \eqref{parab3bis}. This concludes the proof of the proposition.
\end{proof}

We have the following extension of Proposition \ref{p7}. 
\begin{proposition}\label{p7bis}
Let $f$ be a scalar function on $S$ such that $P_jf$ satisfies:
\be\lab{parab0bis}
(\nabn-a^{-1}\lap)(P_jf)=h\textrm{ on }S.
\ee
Assume also that $f\equiv 0$ on $u=-2$, and that we have a decomposition for $h$:
$$h=h_1+h_2.$$ 
Then, we have:
\begin{equation}\label{p7e2bis}
2^j\norm{P_jf}_{\l{\infty}{2}}+2^{2j}\norm{P_jf}_{\ll{2}}\lesssim\norm{h_1}_{\ll{2}}+2^j\norm{h_2}_{\l{1}{2}}.
\end{equation}
\end{proposition}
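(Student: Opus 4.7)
The plan is to carry out the $L^2$ energy estimate of Proposition \ref{p7} for $g:=P_jf$ in place of $f$, trading a derivative for a factor $2^j$ each time we invoke the finite band property of Theorem \ref{thm:LP}(iii). The key observation is that, since $P_j$ is defined via the heat semigroup on each slice $P_u$, it acts tangentially, so $g\equiv 0$ on $u=-2$, and \eqref{parab0bis} is already written in the form appropriate for a direct energy argument. In particular no commutator between $\nabn$ (or $\lap$) and $P_j$ appears, so the constants in the absorbing arguments below will be independent of $j$.

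I first multiply \eqref{parab0bis} by $g$ and integrate on $-2<u'<u$ against $d\mu_{u'}\,du'$. Integration by parts in the Laplacian together with \eqref{du} yields the energy identity
\begin{displaymath}
\tfrac12\|g(u,\cdot)\|_{L^2(P_u)}^2+\|a^{-1/2}\nabb g\|_{L^2_{[-2,u]}L^2(P_{u'})}^2
= E+\int_{-2}^u\!\int_{P_{u'}}(h_1+h_2)\,g\,d\mu_{u'}\,du',
\end{displaymath}
where the error $E$ collects a $\tfrac12 a^{-1}\trt\,g^2$ piece (from \eqref{du}) and a $\nabb(a^{-1})\cdot g\,\nabb g$ piece (from integrating $a^{-1}\lap g$ against $g$). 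Both are absorbed exactly as in the proof of Proposition \ref{p7}, using the $\ep$-smallness of $\trt$ and $\nabb a$ in $L^\infty_uL^4(P_u)$ provided by Theorem \ref{thregx} together with the Gagliardo-Nirenberg inequality \eqref{eq:GNirenberg}. The source term I split with the two dualities dictated by the target norms in \eqref{p7e2bis}:
\begin{displaymath}
\Big|\int\!\!\int h_1\,g\Big|\leq \|h_1\|_{L^2(S)}\|g\|_{L^2(S)},\qquad
\Big|\int\!\!\int h_2\,g\Big|\leq \|h_2\|_{L^1_{[-2,2]}L^2(P_u)}\|g\|_{L^\infty_{[-2,2]}L^2(P_u)}.
\end{displaymath}

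I then invoke the finite band property in the form $2^j\|g\|_{L^2(P_u)}\lesssim\|\nabb g\|_{L^2(P_u)}$ to rewrite $\|g\|_{L^2(S)}\lesssim 2^{-j}\|\nabb g\|_{L^2(S)}$ in the $h_1$ bound. Taking the supremum in $u$ on the left, applying Young's inequality to each product on the right, and absorbing $\tfrac12\|\nabb g\|_{L^2(S)}^2$ and $\tfrac12\|g\|_{L^\infty_uL^2(P_u)}^2$ into the left-hand side gives
\begin{displaymath}
\|g\|_{L^\infty_{[-2,2]}L^2(P_u)}+\|\nabb g\|_{L^2(S)}\lesssim 2^{-j}\|h_1\|_{L^2(S)}+\|h_2\|_{L^1_{[-2,2]}L^2(P_u)}.
\end{displaymath}
A second use of the finite band property, $2^{2j}\|g\|_{L^2(S)}\lesssim 2^j\|\nabb g\|_{L^2(S)}$, followed by multiplication of the whole inequality by $2^j$, produces precisely \eqref{p7e2bis}.

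The main obstacle is purely technical: one has to verify that the error pieces $a^{-1}\trt\,g^2$ and $\nabb(a^{-1})\,g\,\nabb g$ can be absorbed with constants that do not deteriorate in $j$. This reduces, via the Gagliardo-Nirenberg inequality \eqref{eq:GNirenberg} on $P_u$, to the $\ep$-smallness of $\trt$ and $\nabb a$ in $L^\infty_uL^4(P_u)$ from Theorem \ref{thregx}; because $P_j$ commutes with $\lap$ and acts tangentially, no frequency-dependent commutator enters the identity, and $j$-uniformity of the absorption is automatic.
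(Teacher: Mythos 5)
Your proof is correct, but it runs along a genuinely different (and more elementary) track than the paper's. The paper multiplies \eqref{parab0bis} by $a\lap P_jf$ and redoes the second-order energy estimate \eqref{p7e5}--\eqref{p7e8}, splitting the source pairing as $\norm{h_1}_{\ll{2}}\norm{\lap P_jf}_{\ll{2}}+2^j\norm{h_2}_{\l{1}{2}}\norm{\nabb P_jf}_{\l{\infty}{2}}$; this forces it to handle the $[\nabn,\nabb]$ commutator and to invoke the Bochner inequality \eqref{eq:Bochconseqbis} to close the absorption, after which the finite band property converts $\norm{\lap P_jf}_{\ll{2}}$ and $\norm{\nabb P_jf}_{\l{\infty}{2}}$ into $2^{2j}\norm{P_jf}_{\ll{2}}$ and $2^j\norm{P_jf}_{\l{\infty}{2}}$. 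You instead stay at the $L^2$ level (the analogue of \eqref{p7e3}--\eqref{p7e4}), which avoids both Bochner and the $[\nabn,\nabb]$ commutator, and you compensate by using the reverse finite band inequality twice: once on the $h_1$ pairing, $\norm{P_jf}_{\ll{2}}\lesssim 2^{-j}\norm{\nabb P_jf}_{\ll{2}}$, to gain the factor $2^{-j}$, and once at the end, $2^{2j}\norm{P_jf}_{\ll{2}}\lesssim 2^j\norm{\nabb P_jf}_{\ll{2}}$, before multiplying through by $2^j$. Both uses of the reverse finite band property rest on writing $P_j=\widetilde{P_j}^2$, exactly as the paper does (it adopts the convention $P_j=P_j^2$ before Proposition \ref{propK} and uses the same inequalities in the last line of its own proof), so you are not assuming anything the paper does not; the price of your shortcut is only that it works because $P_jf$ is already frequency-localized, whereas the paper's version also outputs the genuine top-order quantities $\norm{\lap P_jf}_{\ll{2}}$ and $\norm{\nabb P_jf}_{\l{\infty}{2}}$, which it does not need here. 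Your absorption of the $a^{-1}\trt\,g^2$ and $\nabb(a^{-1})g\nabb g$ errors via \eqref{eq:GNirenberg} and the $\ep$-smallness from Theorem \ref{thregx} is $j$-uniform for the reason you give, so the argument closes.
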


\begin{proof}
We multiply \eqref{parab0bis} by $\lap P_jf$ and integrate on $-2<u'<u$ where $u\leq 2$. We proceed as in \eqref{p7e5} \eqref{p7e6} \eqref{p7e7} \eqref{p7e8}, except that we estimate the integral in of \eqref{p7e3} involving $h$ as:
\bee
&&\left|\int_{-2}^{u}\int_{P_{u'}}h\lap P_jfad\mu_{u'}du'\right|\\
&\les & \left|\int_{-2}^{u}\int_{P_{u'}}h_1\lap P_jfad\mu_{u'}du'\right|+\left|\int_{-2}^{u}\int_{P_{u'}}h_2\lap P_jfad\mu_{u'}du'\right|\\
&\les & \norm{h_1}_{\ll{2}}\norm{\lap P_jf}_{\ll{2}}+\norm{h_2}_{\l{1}{2}}\norm{\lap(P_jf)}_{\l{\infty}{2}}\\
&\les & \norm{h_1}_{\ll{2}}\norm{\lap P_jf}_{\ll{2}}+2^j\norm{h_2}_{\l{1}{2}}\norm{\nabb(P_jf)}_{\l{\infty}{2}},
\eee
where we used the estimate \eqref{thregx1} for $a$ and the finite band property for $P_j$. We obtain the analog of \eqref{p7e8}:
\begin{equation}\label{p7e8bis}
\begin{array}{ll}
& \ds\norm{\nabb P_jf}_{\l{\infty}{2}}^2+\norm{\lap P_jf}_{\ll{2}}^2\\
\ds\lesssim & \ds\ep(\norm{\nabb P_jf}_{\ll{2}}^2+\norm{\nabb^2(P_jf)}_{\ll{2}}^2)+\norm{h_1}^2_{\ll{2}}+2^{2j}\norm{h_2}_{\l{1}{2}}^2.
\end{array}
\end{equation}
Finally, \eqref{p7e8bis} together with the Bochner inequality \eqref{eq:Bochconseqbis} and the finite band property for $P_j$ yields \eqref{p7e2bis}. This concludes the proof of the proposition.
\end{proof}

\begin{proposition}\label{prop:parab2}
Let $f$ be a scalar function on $S$ satisfying \eqref{parab0} and such that $f\equiv 0$ on $u=-2$. Assume that $h$ satisfies:
\begin{equation}\label{parab13}
h=h_1+h_2\textrm{ with }\sup_{j\geq 0}\norm{P_j(ah_1)}_{\ll{2}}\les 2^{2j}\ep\textrm{ and }\sup_{j\geq 0}\norm{P_j(ah_2)}_{\l{1}{2}}\les\ep 2^j.
\end{equation}
Then, we have:
\be\lab{parab14}
\sup_{j\geq 0}\norm{P_jf}_{\ll{2}}+\sup_{j\geq 0}2^{-j}\norm{P_jf}_{\l{\infty}{2}}\les\ep.
\ee
\end{proposition}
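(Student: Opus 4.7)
The plan is to localize the parabolic equation at frequency $2^j$, apply the sharp parabolic estimate of Proposition~\ref{p7bis}, and absorb the resulting commutator $[\nabna,P_j]f$ through a bootstrap argument based on Proposition~\ref{prop:commLP3}.

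First, Lemma~\ref{lemma:parabPj} asserts that $P_j f$ satisfies
$$(\nabn-a^{-1}\lap)(P_j f)=a^{-1}P_j(ah_1)+\bigl(a^{-1}P_j(ah_2)+a^{-1}[\nabna,P_j]f\bigr).$$
I would treat the first term on the right as the $\ll{2}$ piece and the two terms in parentheses as the $\l{1}{2}$ piece. The bound on $a-1$ from \eqref{thregx1} together with the hypothesis on $h_1,h_2$ yields
$$\norm{a^{-1}P_j(ah_1)}_{\ll{2}}\les 2^{2j}\ep,\qquad \norm{a^{-1}P_j(ah_2)}_{\l{1}{2}}\les 2^j\ep.$$
Since $f\equiv 0$ on $u=-2$, the same holds for $P_jf$ there, so Proposition~\ref{p7bis} gives
$$2^j\norm{P_jf}_{\l{\infty}{2}}+2^{2j}\norm{P_j f}_{\ll{2}}\les 2^{2j}\ep+2^j\norm{[\nabna,P_j]f}_{\l{1}{2}}.$$

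To handle the commutator, I would apply Proposition~\ref{prop:commLP3} with some fixed exponents $0<\delta<\alpha<1$, giving
$$\norm{[\nabna,P_j]f}_{\l{1}{2}}\les 2^{j\alpha}\ep\,\norm{\La^{-\delta}f}_{L^2(S)}.$$
By the Littlewood--Paley characterization \eqref{La8} and geometric summation of $\sum_j 2^{-2\delta j}$, the norm on the right is controlled by $\norm{P_{<0}f}_{\ll{2}}+\sup_{j\ge 0}\norm{P_j f}_{\ll{2}}$. Setting
$$Q:=\sup_{j\ge 0}\norm{P_j f}_{\ll{2}}+\sup_{j\ge 0}2^{-j}\norm{P_j f}_{\l{\infty}{2}},$$
and observing that $\norm{P_{<0}f}_{\ll{2}}$ can be bounded separately (either by running the same localized argument directly on $P_{<0}f$, whose frequency support is already compact, or by a weak Bernstein bound of the low-frequency piece against $Q$), I arrive at $\norm{\La^{-\delta}f}_{L^2(S)}\les \ep+Q$.

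Dividing the parabolic estimate through by $2^{2j}$ then produces, for every $j\ge 0$,
$$2^{-j}\norm{P_j f}_{\l{\infty}{2}}+\norm{P_j f}_{\ll{2}}\les \ep+2^{-j(1-\alpha)}\ep(\ep+Q),$$
and taking the supremum over $j\ge 0$ closes the bootstrap as $Q\les \ep+\ep Q$, giving $Q\les \ep$ once $\ep$ is small enough. The a priori finiteness of $Q$ needed to initiate the bootstrap is obtained by truncating at frequency $J$ and sending $J\to\infty$ at the end. The hard part is selecting a commutator estimate sharp enough to be absorbed: the $\ll{2}$ bound of Proposition~\ref{greveinf1} would produce the term $\ep\,\norm{\La^{1/2+\delta}f}_{L^2(S)}$ whose Littlewood--Paley weights diverge against the best possible scaling of $\norm{P_j f}_{\ll{2}}$, whereas Proposition~\ref{prop:commLP3} provides precisely the gain $2^{-j(1-\alpha)}$ with an arbitrarily small loss $\alpha>0$, exactly matching the $2^j$ weight generated by Proposition~\ref{p7bis}.
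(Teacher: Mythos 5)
Your proposal is correct and follows essentially the same route as the paper: localize via Lemma \ref{lemma:parabPj}, apply Proposition \ref{p7bis} with $a^{-1}P_j(ah_1)$ as the $\ll{2}$ piece and $a^{-1}P_j(ah_2)+a^{-1}[\nabna,P_j]f$ as the $\l{1}{2}$ piece, then absorb the commutator using Proposition \ref{prop:commLP3} together with $\norm{\La^{-\delta}f}_{L^2(S)}\les\sup_{j\ge 0}\norm{P_jf}_{\ll{2}}$. Your extra care with the low-frequency piece $P_{<0}f$ and the a priori finiteness of $Q$ goes slightly beyond what the paper writes down, but the argument is the same.
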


\begin{proof}
Recall from \eqref{parab6bis} that $P_jf$ satisfies the following parabolic equation:
$$(\nabn-a^{-1}\lap)(P_jf)=a^{-1}P_j(ah)+a^{-1}[\nabla_{aN},P_j]f\textrm{ on }S.$$
Together with the estimate \eqref{p7e2bis}, we obtain:
\bee
&&2^j\norm{P_jf}_{\l{\infty}{2}}+2^{2j}\norm{P_jf}_{\ll{2}}\\
\nn&\lesssim&\norm{a^{-1}P_j(ah_1)}_{\ll{2}}+2^j\norm{a^{-1}P_j(ah_2)}_{\l{1}{2}}+2^j\norm{a^{-1}[\nabla_{aN},P_j]f}_{\l{1}{2}}\\
\nn&\lesssim&\ep 2^{2j}+2^j\norm{[\nabla_{aN},P_j]f}_{\l{1}{2}},
\eee
where we used the estimate \eqref{thregx1} for $a$ and the assumption \eqref{parab13} on $h$. This yields:
\be\lab{parab15}
2^{-j}\norm{P_jf}_{\l{\infty}{2}}+\norm{P_jf}_{\ll{2}}\lesssim\ep +2^{-j}\norm{[\nabla_{aN},P_j]f}_{\l{1}{2}}.
\ee

Next, we use the commutator estimate \eqref{commLP3}. We have:
\be\lab{parab16}
\norm{[\nabna, P_j]f}_{\l{1}{2}}\les 2^{j\alpha}\ep\norm{\La^{-\delta}F}_{L^2(S)},
\ee
for any $0<\delta<\alpha<1$. Now, for any $\delta>0$, we have:
\bea\lab{parab17}
\norm{\La^{-\delta}F}_{L^2(S)}&\les& \sum_{j\geq 0}\norm{\La^{-\delta}P_jF}_{L^2(S)}\\
\nn&\les& \sum_{j\geq 0}2^{-j\delta}\norm{P_jF}_{L^2(S)}\\
\nn&\les&\sup_{j\geq 0}\norm{P_jf}_{\ll{2}}.
\eea
Finally, \eqref{parab15}, \eqref{parab16} and \eqref{parab17} imply for any $j\geq 0$:
$$2^{-j}\norm{P_jf}_{\l{\infty}{2}}+\norm{P_jf}_{\ll{2}}\lesssim\ep +\ep\sup_{j\geq 0}\norm{P_jf}_{\ll{2}},$$
which yields \eqref{parab14}. This concludes the proof of the proposition.
\end{proof}

\begin{proposition}\label{prop:parab3}
Let $f$ be a scalar function on $S$ satisfying \eqref{parab0} and such that $f\equiv 0$ on $u=-2$. Assume that $h$ satisfies:
\be\lab{parab18}
\sum_{j\geq 0}2^{-3j}\norm{P_j(ah)}^2_{\ll{2}}\les\ep^2.
\ee
Then, we have:
\be\lab{parab19}
\sum_{j\geq 0}\left(2^j\norm{P_jf}_{\ll{2}}^2+2^{-j}\norm{P_jf}_{\l{\infty}{2}}^2+2^{-3j}\norm{P_j(\nabn f)}_{\ll{2}}^2\right)\les\ep^2.
\ee
\end{proposition}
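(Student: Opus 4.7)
The plan follows the scheme of Propositions \ref{prop:parab1} and \ref{prop:parab2}: pass the equation through $P_j$, apply the parabolic energy estimate to $P_jf$, sum in $j$ against the weight $2^{-3j}$, and absorb the resulting commutator term using the smallness of $\ep$.

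First, by Lemma \ref{lemma:parabPj}, since $f\equiv 0$ on $u=-2$, the function $P_jf$ satisfies
\[
(\nabn - a^{-1}\lap)(P_jf) = a^{-1}P_j(ah) + a^{-1}[\nabna, P_j]f \quad\text{on } S,
\]
with zero data on $u=-2$. Applying Proposition \ref{p7}, together with the finite band property (Theorem \ref{thm:LP}) and the bound $\norm{a}_{\ll{\infty}}\les 1$ from \eqref{thregx1}, gives
\[
2^j\norm{P_jf}_{\l{\infty}{2}} + 2^{2j}\norm{P_jf}_{\ll{2}} + \norm{\nabn(P_jf)}_{\ll{2}} \les \norm{P_j(ah)}_{\ll{2}} + \norm{[\nabna, P_j]f}_{\ll{2}}.
\]
Multiplying by $2^{-3j/2}$, squaring and summing over $j\ge 0$,
\[
\sum_{j\ge 0}\Bigl(2^{-j}\norm{P_jf}_{\l{\infty}{2}}^2 + 2^j\norm{P_jf}_{\ll{2}}^2 + 2^{-3j}\norm{\nabn(P_jf)}_{\ll{2}}^2\Bigr) \les \sum_{j\ge 0} 2^{-3j}\norm{P_j(ah)}_{\ll{2}}^2 + \sum_{j\ge 0} 2^{-3j}\norm{[\nabna, P_j]f}_{\ll{2}}^2,
\]
and the first term on the right is $\les \ep^2$ by hypothesis \eqref{parab18}.

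For the commutator sum, apply Proposition \ref{prop:commLP5} with $\delta=1/4$ to obtain $\norm{[\nabna, P_j]f}_{\ll{2}}\les 2^j\ep\norm{\La^{-3/4}f}_{\l{\infty}{2}}$, whence $\sum_{j\ge 0} 2^{-3j}\norm{[\nabna, P_j]f}_{\ll{2}}^2 \les \ep^2\norm{\La^{-3/4}f}_{\l{\infty}{2}}^2$. Since $\La^{-3/4}$ commutes with $P_j$ and acts essentially as multiplication by $2^{-3j/2}$ on the $j$-th block,
\[
\norm{\La^{-3/4}f}_{\l{\infty}{2}}^2 \les \sum_{j\ge 0}2^{-3j/2}\norm{P_jf}_{\l{\infty}{2}}^2 + \norm{P_{<0}f}_{\l{\infty}{2}}^2 \les \sum_{j\ge 0}2^{-j}\norm{P_jf}_{\l{\infty}{2}}^2 + \ep^2,
\]
where we used $2^{-3j/2}\le 2^{-j}$ for $j\ge 0$, and the low-frequency term is bounded by applying Proposition \ref{p7} directly to $P_{<0}f$ using the $L^2$-boundedness of $P_{<0}$ from Theorem \ref{thm:LP}. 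Combining everything and absorbing the $\ep^2\sum_{j\ge 0}2^{-j}\norm{P_jf}_{\l{\infty}{2}}^2$ term into the left-hand side by smallness of $\ep$ yields the desired sum estimate, with $\nabn(P_jf)$ in place of $P_j(\nabn f)$.

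Finally, to pass from $\nabn(P_jf)$ to $P_j(\nabn f)$ in the conclusion, write $P_j(\nabn f)-\nabn(P_jf) = [P_j,\nabn]f$ and decompose via $\nabn = a^{-1}\nabna$ into $a^{-1}[\nabna,P_j]f$ (already estimated) and $[P_j, a^{-1}]\nabna f$, which is handled by the commutator and product estimates of Sections \ref{sec:statepropprodcomm} and \ref{sec:statepropprodprod} combined with \eqref{thregx1}; both contribute $\les\ep^2$ to the $2^{-3j}$-weighted $\ll{2}$-sum. The main subtlety is the choice $\delta<1/2$ in Proposition \ref{prop:commLP5}: the factor $2^j$ in the commutator bound combined with the $2^{-3j}$ weight produces a summable geometric series, while $2(1-\delta)=3/2>1$ ensures that $\norm{\La^{-(1-\delta)}f}_{\l{\infty}{2}}^2$ is controlled by $\sum_{j\ge 0}2^{-j}\norm{P_jf}_{\l{\infty}{2}}^2$ and hence absorbable.
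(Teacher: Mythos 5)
Your proposal is correct and follows essentially the same route as the paper: pass the equation through $P_j$ via Lemma \ref{lemma:parabPj}, apply the parabolic estimate of Proposition \ref{p7} to get \eqref{parab6}, sum against the weight $2^{-3j}$, control the commutator with Proposition \ref{prop:commLP5} (your choice $\delta=1/4$ plays the role of the paper's $0<\delta\le 1/3$), and absorb $\ep^2\sum_j 2^{-j}\norm{P_jf}^2_{\l{\infty}{2}}$ into the left-hand side. Your explicit remark on passing from $\nabn(P_jf)$ to $P_j(\nabn f)$ is a point the paper treats implicitly, but it is handled by the same commutator terms and does not change the argument.
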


\begin{proof}
Recall the estimate \eqref{parab6}:
$$2^j\norm{P_jf}_{\l{\infty}{2}}+\norm{\nabn(P_jf)}_{\ll{2}}+2^{2j}\norm{P_jf}_{\ll{2}}\lesssim\norm{P_j(ah)}_{\ll{2}}+\norm{[\nabla_{aN},P_j]f}_{\ll{2}}.$$
This yields:
\bea\lab{parab20}
&&\sum_{j\geq 0}\left(2^j\norm{P_jf}_{\ll{2}}^2+2^{-j}\norm{P_jf}_{\l{\infty}{2}}^2+2^{-3j}\norm{P_j(\nabn f)}_{\ll{2}}^2\right)\\
\nn&\les& \sum_{j\geq 0}2^{-3j}\norm{P_j(ah)}^2_{\ll{2}}+\sum_{j\geq 0}2^{-3j}\norm{[\nabla_{aN},P_j]f}_{\ll{2}}^2\\
\nn&\les& \ep^2+\sum_{j\geq 0}2^{-3j}\norm{[\nabla_{aN},P_j]f}_{\ll{2}}^2,
\eea
where we used the assumption \eqref{parab18} in the last inequality.

Next, we use the commutator estimate \eqref{commLP5}, which yields for any $\delta>0$:
$$\norm{[\nabna, P_j]f}_{\ll{2}}\les 2^j\ep\norm{\La^{-(1-\delta)}f}_{\l{\infty}{2}}.$$
Together with \eqref{parab20}, we obtain:
\bea\lab{parab21}
&&\sum_{j\geq 0}\left(2^j\norm{P_jf}_{\ll{2}}^2+2^{-j}\norm{P_jf}_{\l{\infty}{2}}^2+2^{-3j}\norm{P_j(\nabn f)}_{\ll{2}}^2\right)\\
\nn&\les& \ep^2+\left(\sum_{j\geq 0}2^{-j}\right)\ep^2\norm{\La^{-(1-\delta)}f}_{\l{\infty}{2}}^2\\
\nn&\les& \ep^2(1+\norm{\La^{-(1-\delta)}f}_{\l{\infty}{2}}^2).
\eea
Now, since $\delta>0$, we have:
\bea\lab{parab22}
\norm{\La^{-(1-\delta)}f}_{\l{\infty}{2}}^2&\les& \left(\sum_{j\geq 0}2^{-j(1-\delta)}\norm{P_jf}_{\l{\infty}{2}})\right)^2\\
\nn&\les& \sum_{j\geq 0}2^{-(2-3\delta)j}\norm{P_jf}^2_{\l{infty}{2}}\\
\nn&\les& \sum_{j\geq 0}2^{-j}\norm{P_jf}_{\l{\infty}{2}}^2,
\eea
where we chose in the last estimate $0<\delta\leq\frac{1}{3}$. Finally, \eqref{parab21} and \eqref{parab22} imply 
\eqref{parab19}. This concludes the proof of the proposition.
\end{proof}

\begin{proposition}\label{prop:parab4}
Let $f$ be a scalar function on $S$ satisfying \eqref{parab0} and such that $f\equiv 0$ on $u=-2$. Assume that $h$ satisfies:
\be\lab{parab23}
\sum_{j\geq 0}2^{-j}\norm{P_j(ah)}^2_{\ll{2}}\les\ep^2.
\ee
Then, we have:
\be\lab{parab24}
\sum_{j\geq 0}\left(2^{3j}\norm{P_jf}_{\ll{2}}^2+2^j\norm{P_jf}_{\l{\infty}{2}}^2+2^{-j}\norm{P_j(\nabn f)}_{\ll{2}}^2\right)\les\ep^2.
\ee
\end{proposition}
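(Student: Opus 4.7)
The strategy is parallel to that of Proposition~\ref{prop:parab3}, but we place the weight $2^{-j/2}$ on each estimate \eqref{parab6} (rather than $2^{-3j/2}$) and, consequently, must use a sharper commutator estimate for $[\nabna, P_j]f$ whose loss in $j$ is compensated by the better regularity we are allowed to assume on the right-hand side.

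First, by Lemma \ref{lemma:parabPj} the projection $P_jf$ satisfies
$$(\nabn - a^{-1}\lap)(P_jf) = a^{-1}P_j(ah) + a^{-1}[\nabna, P_j]f \textrm{ on } S,$$
with $P_jf \equiv 0$ on $u=-2$. Applying the basic parabolic estimate \eqref{parab6} (which is \eqref{p7e2} rewritten for $P_jf$ via the finite band property), multiplying by $2^{-j/2}$, squaring, and summing over $j\geq 0$ yields
\begin{equation}\label{plan:parab4main}
\sum_{j\geq 0}\left(2^{3j}\norm{P_jf}_{\ll 2}^2+2^j\norm{P_jf}_{\l\infty 2}^2+2^{-j}\norm{\nabn(P_jf)}_{\ll 2}^2\right)\les \sum_{j\geq 0}2^{-j}\norm{P_j(ah)}_{\ll 2}^2+\sum_{j\geq 0}2^{-j}\norm{[\nabna, P_j]f}_{\ll 2}^2.
\end{equation}
The first sum on the right is $\les\ep^2$ directly from assumption \eqref{parab23}. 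Note also that $\norm{P_jf}_{\l\infty 2}^2$ may be replaced by $\norm{P_j(\nabn f)}_{\ll 2}^2$ on the left via $\norm{P_j(\nabn f)}_{\ll 2}\les \norm{\nabn(P_jf)}_{\ll 2}+\norm{[\nabn, P_j]f}_{\ll 2}$, and the latter commutator is handled analogously.

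To estimate the commutator sum, I would apply Proposition \ref{greveinf1}: for any $\delta>0$,
$$\norm{[\nabna, P_j]f}_{\ll 2}\les \ep\,\norm{\La^{\frac12+\delta}f}_{L^2(S)}+\ep\,\norm{\La^{\delta}f}_{\l\infty 2}.$$
Since the right-hand side is independent of $j$ while $\sum_j 2^{-j}<\infty$, we obtain
\begin{equation}\label{plan:commsum}
\sum_{j\geq 0}2^{-j}\norm{[\nabna, P_j]f}_{\ll 2}^2\les \ep^2\bigl(\norm{\La^{\frac12+\delta}f}_{L^2(S)}^2+\norm{\La^{\delta}f}_{\l\infty 2}^2\bigr).
\end{equation}
Using $L^2(S)\simeq\l 2 2$ together with the characterization of $\La^s$ through Littlewood--Paley projections, and the elementary bound $\sup_u\sum_j\leq\sum_j\sup_u$ for nonnegative terms, we get, for any $0<\delta\leq 1/2$,
$$\norm{\La^{\frac12+\delta}f}_{L^2(S)}^2\les \sum_{j\geq 0}2^{(1+2\delta)j}\norm{P_jf}_{\ll 2}^2\les \sum_{j\geq 0}2^{3j}\norm{P_jf}_{\ll 2}^2,$$
$$\norm{\La^{\delta}f}_{\l\infty 2}^2\les \sum_{j\geq 0}2^{2\delta j}\norm{P_jf}_{\l\infty 2}^2\les \sum_{j\geq 0}2^j\norm{P_jf}_{\l\infty 2}^2.$$
Combining these two inequalities with \eqref{plan:parab4main} and \eqref{plan:commsum}, we arrive at
$$\sum_{j\geq 0}\left(2^{3j}\norm{P_jf}_{\ll 2}^2+2^j\norm{P_jf}_{\l\infty 2}^2+2^{-j}\norm{\nabn(P_jf)}_{\ll 2}^2\right)\les \ep^2+\ep^2\sum_{j\geq 0}\left(2^{3j}\norm{P_jf}_{\ll 2}^2+2^j\norm{P_jf}_{\l\infty 2}^2\right),$$
and absorbing the last term by smallness of $\ep$ yields \eqref{parab24}. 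The only mildly delicate point is checking that the residual contributions $\norm{P_{<0}f}$ and the $P_j\nabn$ vs.\ $\nabn P_j$ discrepancy (handled by the same commutator bound) fit in; these are routine given the estimates we have already collected.
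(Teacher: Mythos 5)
Your proposal is correct and follows essentially the same route as the paper, which simply invokes the chain \eqref{parab6bis}--\eqref{parab10} from Proposition \ref{prop:parab1}: the equation for $P_jf$, the weighted parabolic estimate \eqref{parab6}, the hypothesis \eqref{parab23} in place of \eqref{parab7}, the commutator bound \eqref{commLP1}, and the absorption step \eqref{parab9}--\eqref{parab10}. The only cosmetic difference is your range $0<\delta\le 1/2$ versus the paper's $0<\delta\le 1/3$ (coming from its Cauchy--Schwarz treatment in \eqref{parab9}), which is immaterial since $\delta$ may be taken arbitrarily small.
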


\begin{proof}
The proof follows from \eqref{parab6bis} \eqref{parab6} \eqref{parab7} \eqref{parab8} \eqref{parab9} \eqref{parab10}. 
\end{proof}

\begin{proposition}\label{prop:parab5}
Let $0<b<1$. Let $f$ be a scalar function on $S$ satisfying \eqref{parab0} and such that $f\equiv 0$ on $u=-2$. Assume that $h$ satisfies:
\be\lab{parab25}
\sum_{j\geq 0}2^{2jb}\norm{P_j(ah)}^2_{\ll{2}}\les\ep^2.
\ee
Then, we have:
\be\lab{parab26}
\sum_{j\geq 0}\left(2^{(4+2b)j}\norm{P_jf}_{\ll{2}}^2+2^{(2+2b)j}\norm{P_jf}_{\l{\infty}{2}}^2+2^{2bj}\norm{P_j(\nabn f)}_{\ll{2}}^2\right)\les\ep^2.
\ee
\end{proposition}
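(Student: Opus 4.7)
The approach adapts the strategies of Propositions \ref{prop:parab3} and \ref{prop:parab4} to higher regularity through weighted dyadic summation. Since $f \equiv 0$ on $u = -2$, the same vanishing holds for $P_j f$, and by Lemma \ref{lemma:parabPj} each $P_j f$ satisfies
\[
(\nabn - a^{-1}\lap)(P_j f) = a^{-1} P_j(ah) + a^{-1} [\nabna, P_j] f \quad \text{on } S.
\]
Applying the parabolic $L^2$ estimate of Proposition \ref{p7} together with the finite band property yields, exactly as in \eqref{parab6},
\[
2^j \norm{P_j f}_{\l{\infty}{2}} + \norm{\nabn P_j f}_{\ll{2}} + 2^{2j}\norm{P_j f}_{\ll{2}} \lesssim \norm{P_j(ah)}_{\ll{2}} + \norm{[\nabna, P_j] f}_{\ll{2}}.
\]
Squaring, multiplying by $2^{2bj}$, and summing over $j \geq 0$, the source term is bounded by $\ep^2$ through hypothesis \eqref{parab25}, and the problem reduces to estimating the commutator sum $\sum_{j \geq 0} 2^{2bj} \norm{[\nabna, P_j] f}^2_{\ll{2}}$.

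For this commutator, I would invoke Proposition \ref{prop:commLP4}: for any $\delta > 0$,
\[
\norm{[\nabna, P_j] f}_{\ll{2}} \lesssim 2^{-(1-\delta)j} \ep \left( \norm{\lap f}_{L^2(S)} + \norm{\nabb f}_{\l{\infty}{2}} \right).
\]
Choosing $\delta \in (0, 1-b)$---available precisely because $b < 1$---the geometric series $\sum_j 2^{2(b-1+\delta)j}$ converges, so
\[
\sum_{j \geq 0} 2^{2bj} \norm{[\nabna, P_j] f}^2_{\ll{2}} \lesssim \ep^2 \left(\norm{\lap f}^2_{L^2(S)} + \norm{\nabb f}^2_{\l{\infty}{2}}\right).
\]
By Bessel-type orthogonality and the finite band property, the high-frequency parts of the norms on the right yield $\sum_{j \geq 0} 2^{4j}\norm{P_j f}^2_{\ll{2}}$ and $\sum_{j \geq 0} 2^{2j}\norm{P_j f}^2_{\l{\infty}{2}}$, which since $b \geq 0$ are dominated by the corresponding terms in the left-hand side of \eqref{parab26}. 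The residual low-frequency contributions are handled via a direct application of Proposition \ref{p7} to $f$ itself, using that \eqref{parab25} with $b \geq 0$ gives $\norm{h}_{\ll{2}} \lesssim \ep$. Absorbing the resulting $\ep^2 \cdot \text{LHS}$ contribution into the left-hand side---valid for $\ep$ sufficiently small---closes the estimate; the bound on $P_j \nabn f$ follows from that on $\nabn P_j f$ by rewriting $\nabn f = a^{-1}\lap f + h$ via the equation and invoking the finite band property together with the product estimate of Lemma \ref{greveinff8} if needed.

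The crucial point---and the main technical obstacle---is the matching between the $2^{-(1-\delta)j}$ decay of the commutator and the weight $2^{2bj}$: convergence of the weighted dyadic series demands $b + \delta < 1$, and so the restriction $b < 1$ is intrinsic to this strategy. For $b \geq 1$ one would need a commutator bound with strictly faster decay than $2^{-j}$, which is not among the available tools in Section \ref{sec:statepropprodcomm}.
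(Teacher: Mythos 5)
Your proposal is correct and follows essentially the same route as the paper: starting from the estimate \eqref{parab6} for $P_jf$ obtained via Lemma \ref{lemma:parabPj} and Proposition \ref{p7}, weighting by $2^{2bj}$ and summing, invoking the commutator estimate \eqref{commLP4} with $0<\delta<1-b$, and then closing by bounding $\norm{\lap f}^2_{L^2(S)}+\norm{\nabb f}^2_{\l{\infty}{2}}$ through the finite band property by the weighted sums on the left-hand side (using $b>0$ for summability) before absorbing. The only cosmetic difference is your detour through the equation to pass from $\nabn(P_jf)$ to $P_j(\nabn f)$, where the direct commutator identity $P_j(\nabn f)=\nabn(P_jf)+[P_j,\nabn]f$ (already controlled by \eqref{commLP4}) is simpler.
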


\begin{proof}
Recall the estimate \eqref{parab6}:
$$2^j\norm{P_jf}_{\l{\infty}{2}}+\norm{\nabn(P_jf)}_{\ll{2}}+2^{2j}\norm{P_jf}_{\ll{2}}\lesssim\norm{P_j(ah)}_{\ll{2}}+\norm{[\nabla_{aN},P_j]f}_{\ll{2}}.$$
This yields:
\bea\lab{parab27}
\nn&&\sum_{j\geq 0}\left(2^{(4+2b)j}\norm{P_jf}_{\ll{2}}^2+2^{(2+2b)j}\norm{P_jf}_{\l{\infty}{2}}^2+2^{2bj}\norm{P_j(\nabn f)}_{\ll{2}}^2\right)\\
\nn&\les& \sum_{j\geq 0}2^{2bj}\norm{P_j(ah)}^2_{\ll{2}}+\sum_{j\geq 0}2^{2bj}\norm{[\nabla_{aN},P_j]f}_{\ll{2}}^2\\
&\les& \ep^2+\sum_{j\geq 0}2^{2bj}\norm{[\nabla_{aN},P_j]f}_{\ll{2}}^2,
\eea
where we used the assumption \eqref{parab25} in the last inequality.

Next, we use the commutator estimate \eqref{commLP4}, which yields for any $\delta>0$:
$$\norm{[\nabna, P_j]f}_{\ll{2}}\les 2^{-(1-\delta)j}\ep(\norm{\lap f}_{L^2(S)}+\norm{\nabb f}_{\l{\infty}{2}}).$$
Together with \eqref{parab27}, we obtain:
\bea\lab{parab28}
\nn&&\sum_{j\geq 0}\left(2^{(4+2b)j}\norm{P_jf}_{\ll{2}}^2+2^{(2+2b)j}\norm{P_jf}_{\l{\infty}{2}}^2+2^{2bj}\norm{P_j(\nabn f)}_{\ll{2}}^2\right)\\
\nn&\les& \ep^2+\left(\sum_{j\geq 0}2^{-2j(1-b-\delta)}\right)\ep^2(\norm{\lap f}^2_{L^2(S)}+\norm{\nabb f}^2_{\l{\infty}{2}})\\
&\les& \ep^2(1+\norm{\lap f}^2_{L^2(S)}+\norm{\nabb f}^2_{\l{\infty}{2}}),
\eea
where we chose in the last inequality $0<\delta<1-b$ which is possible since $b<1$. Now, the finite band property for $P_j$ yields:
\bee
\norm{\lap f}^2_{L^2(S)}+\norm{\nabb f}^2_{\l{\infty}{2}}&\les& \left(\sum_{j\geq 0}(2^{2j}\norm{P_jf}_{\ll{2}}+2^j\norm{P_jf}_{\l{\infty}{2}})\right)^2\\
&\les&\sum_{j\geq 0}\left(2^{(4+2b)j}\norm{P_jf}_{\ll{2}}^2+2^{(2+2b)j}\norm{P_jf}_{\l{\infty}{2}}^2\right),
\eee
where we used in the last inequality the fact that $b>0$. Together with \eqref{parab28}, this implies \eqref{parab26}. This concludes the proof of the proposition.
\end{proof}

\section{Estimates for $\nabn a$ and $\nabn^2a$ (proof of  Theorem \ref{thnabn2a})}\label{sec:nabla2a}

This section is dedicated to the proof of Theorem \ref{thnabn2a}. We recall the decomposition \eqref{r23} \eqref{r23bis} \eqref{r24}:  
\begin{equation}\label{zoc}
(\nabn - a^{-1}\lap)\nabn a=\divb(H)+h, 
\end{equation}
where the tensor $H$ is given by
\begin{equation}\label{zoc1}
H=-\nabn k_{.N}-R_{.N}, 
\end{equation}
and where the scalar $h$ is given schematically by
\begin{equation}\label{zoc2}
\begin{array}{ll}
\ds h= &  -a^{-1}\trt\lap a-2a^{-1}\hth\nabb^2a+2a^{-2}\nabb a\nabb\nabn a-2R_{N.}\ana -\nabb\trt\ana\\
& \ds +2\hth |\ana|^2+2\th\nabn\th+\ana\nabn k_{N.}+\nabla\th k+\theta\nabla k+R_{N.}k \\
&\ds+\theta\ana k_{N.}+2k\nabn k+\nabn k(\nabb a,N)+k(\nabb\nabn a,N)\\
&\ds+\nabn a k(\nabb a,N)+\th k+ k(\nabb a,\nabb a)+\th R.
\end{array}
\end{equation}
We introduce the scalar functions on $S$ $a_1$ and $a_2$ solutions of:
\begin{equation}\label{zoc3}
(\nabn - a^{-1}\lap)a_1=h\textrm{ on S},\, a_1(-2,.)=0,
\end{equation}
and:
\begin{equation}\label{zoc4}
(\nabn - a^{-1}\lap)a_2=\divb(H)\textrm{ on S},\, a_2(-2,.)=0,
\end{equation}
which yields, in view of \eqref{zoc}, the fact that $\nabn a(-2,.)=0$, the decomposition:
\be\lab{zoc5}
\nabn a=a_1+a_2.
\ee

\begin{remark}
In the right-hand side of \eqref{zoc}, the regularity of $h$ is better than the regularity of $\divb(H)$ (see \eqref{zoc11}). On the other hand, we can not make sense of $\nabn h$, while the contracted Bianchi identities on $\Sigma$ allow us to make sense of $\nabn\divb(H)$ (see in particular \eqref{zoc30}). Thus, the idea behind the decomposition \eqref{zoc5} is to take advantage of the regularity of $h$ for $a_1$, and to use the structure of $\divb(H)$ to obtain a useful equation for $\nabn a_2$ (see \eqref{zoc36}). We carry out this strategy in the rest of the section.
\end{remark}

The following two propositions state the regularity of $a_1$ and $a_2$.
\begin{proposition}\label{prop:zoc1}
Let $a_1$ be the solution of \eqref{zoc3}, where $h$ is defined in \eqref{zoc2}. Then, we have:
\begin{equation}\label{zoc6}
\norm{a_1}_{\l{\infty}{4}}\lesssim\ep,
\end{equation}
and:
\be\lab{zoc7}
\sum_{j\geq 0}\left(2^{3j}\norm{P_ja_1}_{\ll{2}}^2+2^j\norm{P_ja_1}_{\l{\infty}{2}}^2+2^{-j}\norm{P_j(\nabn a_1)}_{\ll{2}}^2\right)\les\ep^2.
\ee
\end{proposition}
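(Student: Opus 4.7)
The plan is to apply Proposition \ref{prop:parab1} to the parabolic problem \eqref{zoc3}; the only substantive task is to verify its hypothesis by exhibiting a decomposition $h=\sum_i G_i\cdot H_i$ with $\norm{H_i}_{\ll{2}}\les\ep$ and $\norm{G_i}_{H^1(S)}\les\ep$. By linearity of $\nabn-a^{-1}\lap$ and the vanishing initial data, this allows splitting $a_1=\sum_i a_1^i$, where each $a_1^i$ solves \eqref{zoc3} with right-hand side $G_i H_i$; Proposition \ref{prop:parab1} applied to each $a_1^i$ then yields \eqref{zoc6} and \eqref{zoc7} after summing the finitely many contributions.

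To produce the decomposition I would traverse the schematic expression \eqref{zoc2} term by term, guided by the principle: place a full derivative or a curvature factor into $H$, keep the undifferentiated factor in $G$. For the curvature pieces $R_{N\cdot}\ana$, $\th R$, and $R_{N\cdot}k$, take $H=R$ (with $\norm{R}_{\ll{2}}\les\ep$ by \eqref{small2}) and let the remaining small factor play the role of $G$, whose $H^1(S)$-norm is controlled using Theorem \ref{thregx} together with the bootstrap consequences \eqref{appboot}, \eqref{appboot1}, and \eqref{appsmall2}. For the derivative-heavy pieces $\hth\,\nabb^2a$, $\nabb a\cdot\nabb\nabn a$, $k(\nabb\nabn a,N)$, $\th\nabn\th$, $\ana\,\nabn k_{N\cdot}$, $\theta\nabla k$, $\nabla\th\cdot k$, $k\nabn k$, and $\nabn k(\nabb a,N)$, assign the fully-differentiated factor to $H$ (controlled in $\ll{2}$ by Theorem \ref{thregx}, by the normal bound \eqref{r28}, or by $\norm{\nabla k}_{\ll{2}}\les\ep$ from \eqref{small2}) and let the undifferentiated factor serve as $G$. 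The remaining product terms $\hth|\ana|^2$, $\theta\ana k_{N\cdot}$, $\nabn a\,k(\nabb a,N)$, $k(\nabb a,\nabb a)$, and $\th k$ are treated by H\"older and the Gagliardo--Nirenberg estimates of Section \ref{sec:ineq}: one small $\ll{2}$-factor becomes $H$, while the remaining factors assemble into $G$ with $H^1(S)$-smallness coming from $\norm{\nabb\nabla a}_{\ll{2}}+\norm{\nabla\th}_{\ll{2}}+\norm{\nabla k}_{\ll{2}}\les\ep$.

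The term that genuinely resists this mechanical splitting is $a^{-1}\trt\lap a$: unlike every other factor in $h$, $\trt$ itself is not small. Here I would invoke the defining identity \eqref{choice} for the foliation, namely $\trt=(1-a)+k_{NN}$, to rewrite
\begin{equation*}
a^{-1}\trt\lap a=a^{-1}(1-a)\lap a+a^{-1}k_{NN}\lap a.
\end{equation*}
In both summands take $H=\lap a$, which lies in $\ll{2}$ with $\norm{\lap a}_{\ll{2}}\les\norm{\nabb\nabla a}_{\ll{2}}\les\ep$. In the first summand, $G=a^{-1}(1-a)$ is $H^1(S)$-small via $\norm{a-1}_{\ll{\infty}}+\norm{\nabla a}_{\l{\infty}{2}}\les\ep$. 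In the second, $G=a^{-1}k_{NN}$ is $H^1(S)$-small via $\norm{k}_{\l{\infty}{4}}+\norm{\nabla k}_{\ll{2}}\les\ep$ together with the fact that $\nabla N=\th$, with $\th$ controlled in $\l{\infty}{4}$.

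The main obstacle of the proof is precisely this last term: it is the only place where the specific choice \eqref{choice} enters in an essential way, and it explains why the analogous proposition would fail for the naive minimal-surface alternative $\trt=k_{NN}$ discussed in the introduction. Once this algebraic rewriting is in place, every summand has been matched to the hypothesis of Proposition \ref{prop:parab1}, and the proof concludes by linearity.
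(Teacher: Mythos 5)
Your proposal is correct and follows essentially the same route as the paper: the paper writes $h$ schematically as $F\c G$ with $F=\nabb^2a+\nabb\nabn a+R+\nabla\th+|\ana|^2+\nabla k+\ana k+\nabla(a)k$ small in $\ll{2}$ and $G=\th+\ana+k$ small in $H^1(S)$, and then invokes Proposition \ref{prop:parab1} exactly as you do. The only cosmetic difference is your handling of $a^{-1}\trt\lap a$: the paper simply treats $\trt$ as part of the small factor $G=\th$ (its smallness in $H^1(S)$ following from \eqref{thregx1} and \eqref{appboot1}, which in turn rest on the choice \eqref{choice}), which is the same fact you re-derive by substituting $\trt=1-a+k_{NN}$.
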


\begin{proposition}\label{prop:zoc2}
et $a_2$ be the solution of \eqref{zoc4}, where $H$ is defined in \eqref{zoc1}. Then, we have:
\begin{equation}\label{zoc8}
\norm{a_2}_{\l{\infty}{4}}\lesssim\ep,
\end{equation}
\be\lab{zoc9}
\sum_{j\geq 0}2^{2j}\norm{P_ja_2}_{\l{\infty}{2}}^4\les\ep^4.
\ee
and:
\be\lab{zoc10}
\sup_{j\geq 0}\norm{P_j(\nabn a_2)}_{\ll{2}}\les\ep.
\ee
\end{proposition}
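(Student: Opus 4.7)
The strategy is to exploit the divergence structure of the source in \eqref{zoc4}: writing $h=\divb(H)$ with $H=-\nabn k_{\cdot N}-R_{\cdot N}$, we have $\|H\|_{\ll{2}}\lesssim\|\nabla k\|_{\ll{2}}+\|R\|_{\ll{2}}\lesssim\ep$ by \eqref{small2}.

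\textbf{Step 1: the $L^\infty_uL^4$ bound \eqref{zoc8}.} The plan is to apply Proposition \ref{p8} directly to \eqref{zoc4} with $h_1\equiv 0$ and $H$ as above, obtaining $\|a_2\|_{\l{\infty}{2}}+\|\nabb a_2\|_{\ll{2}}\lesssim\ep$. To upgrade to the $L^4$ integrability in $u$, I would then multiply \eqref{zoc4} by $a_2^3$, integrate over $\{-2<u'<u\}$ using \eqref{coarea}--\eqref{du}, and integrate by parts in $P_{u'}$ to exploit the divergence structure of the source. Combined with the Gagliardo--Nirenberg inequality \eqref{eq:GNirenberg}, this gives \eqref{zoc8} along the lines of the proof of Proposition \ref{prop:parab1}.

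\textbf{Step 2: LP-projected equation and source decomposition.} By Lemma \ref{lemma:parabPj},
\[(\nabn-a^{-1}\lap)(P_ja_2)=a^{-1}P_j(a\divb H)+a^{-1}[\nabna,P_j]a_2,\]
and I decompose the source using $a\divb H=\divb(aH)-\nabb a\cdot H$. The duality estimate of Lemma \ref{lemma:lbz5} with $p=2$ gives $\|P_j\divb(aH)\|_{\l{1}{2}}\lesssim 2^j\|aH\|_{\l{1}{2}}\lesssim 2^j\ep$, while the weak Bernstein inequality of Theorem \ref{thm:LP} combined with the bound $\|\nabb a\|_{\ll{6}}\lesssim\ep$ from \eqref{appboot} gives $\|P_j(\nabb a\cdot H)\|_{\ll{2}}\lesssim 2^{j/3}\ep^2$. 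Feeding these into Proposition \ref{prop:parab2} with $h_1=-a^{-1}\nabb a\cdot H$ and $h_2=a^{-1}\divb(aH)$ produces the baseline bound $\sup_j\|P_ja_2\|_{\ll{2}}+\sup_j 2^{-j}\|P_ja_2\|_{\l{\infty}{2}}\lesssim\ep$.

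\textbf{Step 3: estimates \eqref{zoc9} and \eqref{zoc10}.} For \eqref{zoc10}, I would use the algebraic identity
\[P_j\nabn a_2=\nabn P_ja_2-a^{-1}[\nabna,P_j]a_2+[P_j,a^{-1}]\lap a_2-a^{-1}[P_j,a]\divb H\]
obtained by combining \eqref{zoc4} with its LP projection. The cancellation between $P_j\divb H$ and the dissipation is now manifest, and each commutator on the right is bounded uniformly in $j$ by $\ep$ via Propositions \ref{greveinf1}--\ref{greveinf7}; the remaining term $\|\nabn P_ja_2\|_{\ll{2}}\lesssim\ep$ follows from the standard parabolic $L^2$ energy estimate applied to the $P_j$-projected equation together with the source bounds of Step 2. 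For \eqref{zoc9}, I would perform a sharpened LP energy estimate for the $P_j$-projected equation: testing against $P_ja_2$ and moving the divergence across the inner product, Young's inequality absorbs $\|\nabb P_ja_2\|^2_{\ll{2}}$ to yield $\|P_ja_2\|_{\l{\infty}{2}}^2\lesssim\|P_j(aH)\|^2_{\ll{2}}+(\textrm{commutators})^2$. The target \eqref{zoc9} then follows by combining this with the uniform pointwise bound $\sup_j\|P_j(aH)\|_{\ll{2}}\lesssim\ep$ and the Bessel-type summability $\sum_j\|P_j(aH)\|^2_{\ll{2}}\lesssim\ep^2$ via the elementary inequality $\sum_j 2^{2j}A_j^4\le (\sup_j 2^{j/2}A_j)^2\sum_j 2^{j}A_j^2$ applied after the interpolation produced by Step 2.

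\textbf{Main obstacle.} The hardest point is the uniform $\ell^\infty$-in-$j$ estimate \eqref{zoc10}: neither $\lap a_2$ nor $\divb H$ admits a uniform bound on its LP projections in $\ll{2}$ (each scales as $2^j\ep$ at worst), and only their particular combination $\nabn a_2=a^{-1}\lap a_2+\divb H$ is controlled. The cancellation identity in Step 3 isolates this at the LP level, reducing the estimate to the standard parabolic $L^2$ regularity plus purely commutator errors, which in turn require the delicate machinery of Section \ref{sec:estimatesLP}. A secondary difficulty is the unusual $\ell^4$ summation in \eqref{zoc9}, which sits strictly between the $\ell^\infty$ bound of Proposition \ref{prop:parab2} and the $\ell^2$ bound naturally produced by the divergence-form energy estimate, and so requires combining the two in a careful interpolation.
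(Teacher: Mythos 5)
Your Step 1 and Step 2 are broadly workable (Proposition \ref{p8} does give $\norm{a_2}_{\l{\infty}{2}}+\norm{\nabb a_2}_{\ll{2}}\lesssim\ep$, which the paper instead obtains for free from $a_2=\nabn a-a_1$ and the bounds already proved for $\nabn a$ and $a_1$), but Step 3 contains the decisive gap, precisely at the point you flag as the ``main obstacle.'' Your algebraic identity for $P_j\nabn a_2$ only relocates the difficulty: the ``standard parabolic $L^2$ energy estimate applied to the $P_j$-projected equation'' controls $\norm{\nabn P_ja_2}_{\ll{2}}$ only through $\norm{P_j(a\divb H)}_{\ll{2}}$ plus commutators (see \eqref{parab6}), and by Lemma \ref{lemma:lbz5} the best available bound on $\norm{P_j(a\divb H)}_{\ll{2}}$ for $H\in\ll{2}$ is $2^j\ep$, not $\ep$; equivalently $\norm{a^{-1}\lap P_ja_2}_{\ll{2}}\lesssim 2^{2j}\norm{P_ja_2}_{\ll{2}}\lesssim 2^{2j}\ep$. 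No rearrangement of commutators fixes this, because for a generic $H\in\ll{2}$ the quantity $\sup_j\norm{P_j\nabn a_2}_{\ll{2}}$ is simply not controlled by the data you have used. The missing idea is structural: one must differentiate the equation \eqref{zoc4} with $\nabla_{aN}$ (Lemma \ref{lemma:commnabna}) and then use the maximal-foliation constraint equations \eqref{const1} together with the twice-contracted Bianchi identity \eqref{bianchi} to show that $\nabla_{aN}H=\divb(H_1)+H_2$ with $H_1\in\ll{2}$ and $H_2\in\l{2}{\frac{4}{3}}$ (equations \eqref{zoc26}--\eqref{zoc32}). This puts the equation for $\nabla_{aN}a_2$ back into the form required by Proposition \ref{prop:parab2}, which is what actually delivers $\sup_j\norm{P_j(\nabn a_2)}_{\ll{2}}\lesssim\ep$. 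Your proposal never invokes the constraint or Bianchi equations, so this cannot be recovered from what you have written.

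Two secondary issues. First, your direct $L^4$ energy estimate for \eqref{zoc8} does not close: after integrating $\divb(H)\,a_2^3$ by parts you face $\int H\cdot a_2^2\nabb a_2$, which requires $\nabb a_2\in\l{2}{4}$ or $a_2\in\ll{\infty}$, neither of which is available at that stage. The paper instead obtains \eqref{zoc8} from Lemma \ref{lemma:zoc3}, i.e.\ $\norm{a_2}_{\l{\infty}{4}}\lesssim\norm{\nabb a_2}_{\ll{2}}+\sup_j\norm{P_j(\nabn a_2)}_{\ll{2}}$, so \eqref{zoc8} and \eqref{zoc10} are coupled in a bootstrap and both rest on the Bianchi/constraint structure above. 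Second, your interpolation for \eqref{zoc9} uses $\sup_j2^{j/2}\norm{P_ja_2}_{\l{\infty}{2}}$ and $\sum_j2^j\norm{P_ja_2}^2_{\l{\infty}{2}}$, neither of which follows from your Step 2 (which only yields $\norm{P_ja_2}_{\l{\infty}{2}}\lesssim 2^j\ep$, and $a_2\notin H^1(S)$ since $\nabn a_2$ is not in $\ll{2}$); the paper derives \eqref{zoc9} from the refined trace estimate of Lemma \ref{lemma:zoc2}, which again takes $\sup_j\norm{P_j(\nabn a_2)}_{\ll{2}}$ as input. In short, all three conclusions hinge on \eqref{zoc10}, and \eqref{zoc10} hinges on the divergence structure of $\nabla_{aN}H$ coming from the Einstein constraints, which is the idea absent from your argument.
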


The proof of Proposition \ref{prop:zoc1} is postponed to section \ref{sec:zoc1}, while the proof of Proposition \ref{prop:zoc2} is postponed to section \ref{sec:zoc2}. In view of the decomposition \eqref{zoc5} for $\nabn a$, the estimates \eqref{zoc6} \eqref{zoc7} for $a_1$, and the estimates \eqref{zoc8} \eqref{zoc9} \eqref{zoc10} for $a_2$, we immediately obtain the estimate \eqref{nabn2a1} for $\nabn a$ and $\nabn^2a$. This concludes the proof of Theorem  \ref{thnabn2a}. 

\subsection{Proof of Proposition \ref{prop:zoc1}}\lab{sec:zoc1}

In view of the definition \eqref{zoc2}, the scalar function $h$ may be written as a linear combination of terms of the form $h=F\c G$, where $F$ is schematically given by:
$$F=   \nabb^2a+\nabb\nabn a+R +\nabla\th+|\ana|^2+\nabla k+\ana k+\nabla( a)k,$$
and $G$ is schematically given by:
$$G=   \th+\ana+k.$$
In view of the estimate \eqref{thregx1} for $a$ and $\th$, and in view of the assumption \eqref{small1} for $R$ and $k$, we deduce:
\begin{equation}\label{zoc11}
h=F\c G\textrm{ with }\norm{F}_{\ll{2}}\les\ep\textrm{ and }\norm{G}_{H^1(S)}\les\ep.
\end{equation}
Now, in view of the equation \eqref{zoc3} satisfied by $a_1$, and the decomposition \eqref{zoc11} for $h$, the estimates \eqref{zoc6} and \eqref{zoc7} are a consequence of the estimates \eqref{parab3} and \eqref{parab3bis} of proposition \ref{prop:parab1}. This concludes the proof of Proposition \ref{prop:zoc1}.

\subsection{Proof of Proposition \ref{prop:zoc2}}\lab{sec:zoc2}

In view of the decomposition \eqref{zoc5} of $\nabn a$, we have:
$$a_2=\nabn a-a_1$$
which together with the estimate \eqref{thregx1} for $a$ and the estimates \eqref{zoc6} and \eqref{zoc7} for $a_1$ implies:
\be\lab{zoc12}
\norm{\nabb a_2}_{\ll{2}}+\norm{a_2}_{\l{\infty}{2}}\les \ep.
\ee

Next, we derive en equation for $\nabna a_2$. We use the following commutation lemma. 
\begin{lemma}\lab{lemma:commnabna}
Let $f$ satisfying the following parabolic equation:
$$(\nabla_{aN}-\lap)f=ah.$$
Then, $\nabna f$ satisfies the following parabolic equation:
\be\lab{zoc13}
(\nabn-a^{-1}\lap)(\nabna f)=\nabn(ah)+a^{-1}[\nabla_{aN},\lap]f\textrm{ on }S.
\ee
\end{lemma}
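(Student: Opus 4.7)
The proof is a direct commutator computation. The key observation is that for any scalar $g$, one has $\nabna g = aN(g) = a\nabn g$, so that $a^{-1}\nabna g = \nabn g$. Note that both $f$ and $ah$ are scalars, so $\nabna f$ and $\nabna(ah)$ are scalars as well; this identity will be applied to them.

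First I would apply the operator $\nabna$ to both sides of the hypothesis $(\nabna-\lap)f = ah$ to get
$$\nabna^2 f - \nabna\lap f = \nabna(ah).$$
To convert the second term on the left-hand side into a parabolic operator acting on $\nabna f$, I would invoke the commutator identity $\nabna\lap = \lap\nabna + [\nabna,\lap]$, which rearranges the equation as
$$(\nabna-\lap)(\nabna f) = \nabna(ah) + [\nabna,\lap]f.$$
Finally, multiplying through by $a^{-1}$ and applying the scalar identity $a^{-1}\nabna g = \nabn g$ to the two scalar quantities $\nabna f$ and $ah$ on the left and right, respectively, gives
$$(\nabn - a^{-1}\lap)(\nabna f) = \nabn(ah) + a^{-1}[\nabna,\lap]f,$$
which is precisely \eqref{zoc13}.

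There is no real obstacle: the argument is a purely algebraic manipulation, and I do not need to expand the commutator $[\nabna,\lap]$ explicitly to prove the lemma (it is recorded separately in \eqref{dj3}, and that explicit form is what makes \eqref{zoc13} useful in the subsequent analysis of $\nabn a_2$).
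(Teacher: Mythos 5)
Your proof is correct and follows exactly the same route as the paper: commute $\nabla_{aN}$ through the equation $(\nabla_{aN}-\lap)f=ah$, absorb the commutator $[\nabla_{aN},\lap]f$, and multiply by $a^{-1}$ using $a^{-1}\nabla_{aN}g=\nabla_N g$ for scalars. Nothing is missing.
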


\begin{proof}
We multiply equation the equation satisfied by $f$ with $a$. We obtain:
$$(\nabla_{aN}-\lap)f=ah.$$
Next, we commute with $\nabna$. We obtain:
$$(\nabla_{aN}-\lap)(\nabna f)=\nabna(ah)+[\nabla_{aN},\lap]f.$$
Finally, multiplying with $a^{-1}$, we get \eqref{zoc13}. This concludes the proof of Lemma \ref{lemma:parabPj}.
\end{proof}

In view of the equation \eqref{zoc4} satisfied by $a_2$, and in view of the commutation Lemma \ref{lemma:commnabna}, $\nabna a_2$ satisfies:
\be\lab{zoc14}
(\nabn-a^{-1}\lap)(\nabna a_2)=\nabn(a\divb(H))+a^{-1}[\nabla_{aN},\lap]a_2.
\ee
Next, we evaluate both terms in the right-hand side of \eqref{zoc14} starting with the second one. Using the commutation formula \eqref{dj3}, we have:
\be\lab{zoc15}
a^{-1}[\nabla_{aN},\lap]a_2=h_1+h_2,
\ee
where $h_1$ and $h_2$ are given schematically by:
\be\lab{zoc16}
h_1=\divb(\th\c \nabb a_2),
\ee
and:
\be\lab{zoc17}
h_2=(R+\nabb\th+\hth\nabb a)\nabb a_2.
\ee
We first estimate $h_1$. We have:
$$ah_1=\divb(a\th\c\nabb a_2)-\nabb(a)\c\th\nabb a_2.$$
In view of the estimate \eqref{lbz14bis} and the sharp Bernstein inequality \eqref{eq:strongbernscalarbis}, we obtain:
\bea\lab{zoc18}
&&\norm{P_j(ah_1)}_{\ll{2}}\\
\nn&\les& 2^{\frac{3j}{2}}\norm{a\th\c\nabb a_2}_{\l{2}{\frac{4}{3}}}+2^j\norm{\nabb(a)\c\th\nabb a_2}_{\ll{2}}\\
\nn&\les& 2^{\frac{3j}{2}}\norm{a}_{\ll{\infty}}\norm{\th}_{\l{\infty}{4}}\norm{\nabb a_2}_{\ll{2}}+2^j\norm{\nabb(a)}_{\l{\infty}{4}}\norm{\th}_{\l{\infty}{4}}\norm{\nabb a_2}_{\ll{2}}\\
\nn&\les& 2^{\frac{3j}{2}}\ep,
\eea
where we used in the last inequality the estimate \eqref{thregx1} for $\th$ and $a$, and the estimate \eqref{zoc12} for $\nabb a_2$. Next, we estimate $h_2$. In view of the estimate \eqref{thregx1} for $\th$ and $a$, the assumption \eqref{small1} for $R$, and the estimate \eqref{zoc12} for $\nabb a_2$, we have:
$$\norm{h_2}_{\ll{1}}\les (\norm{R}_{\ll{2}}+\norm{\nabb\th}_{\ll{2}}+\norm{\hth}_{\ll{4}}\norm{\nabb a}_{\ll{4}})\norm{\nabb a_2}_{\ll{2}}\les \ep^2,$$
which together with the dual of the sharp Bernstein inequality \eqref{eq:strongbernscalarbis} yields:
\be\lab{zoc19}
\norm{P_j(ah_2)}_{\l{1}{2}}\les 2^j\norm{a}_{\ll{\infty}}\norm{h_1}_{\ll{1}}\les 2^j\ep,
\ee
where we used in the last inequality the estimate \eqref{thregx1} for $a$. 

Next, we estimate the first term in the right-hand side of \eqref{zoc14}. We have:
\bee
a\nabn(a\divb(H))&=&  \nabna(a\divb(H))\\
&=& \nabna(a)\divb(H)+a[\nabna, \divb](H)+a\divb(\nabna H),
\eee
which together with the commutator formula \eqref{commutnabna1} yields:
\be\lab{zoc20}
a\nabn(a\divb(H))=a\divb(\nabna H)+h_3+h_4,
\ee
where $h_3$ and $h_4$ are given by:
\be\lab{zoc21}
h_3=\divb(\nabn(a)H-a\th H),
\ee
and 
\be\lab{zoc22}
h_4=-(\nabb(\nabna(a))+\nabb(a\th) +  a R_{N.} + a\th\c\nabb(a))\c H
\ee
Now, in view of the definition of $H$ \eqref{zoc1}, we have:
\be\lab{zoc23}
\norm{H}_{\ll{2}}\les \norm{\nabla k}_{\ll{2}}+\norm{R}_{\ll{2}}\les \ep,
\ee
where we used in the last inequality the assumption \eqref{small1} on $R$ and $k$. We first estimate $h_3$. 
In view of the estimate \eqref{lbz14bis}, we obtain:
\bea\lab{zoc24}
&&\norm{P_j(h_3)}_{\ll{2}}\\
\nn&\les& 2^{\frac{3j}{2}}(\norm{\nabn(a)H}_{\l{2}{\frac{4}{3}}}+\norm{a\th H}_{\l{2}{\frac{4}{3}}})\\
\nn&\les& 2^{\frac{3j}{2}}\norm{a}_{\ll{\infty}}(\norm{\nabn(a)}_{\l{\infty}{4}}+\norm{\th}_{\l{\infty}{4}})\norm{H}_{\ll{2}}\\
\nn&\les& 2^{\frac{3j}{2}}\ep(1+\norm{a_2}_{\l{\infty}{4}}),
\eea
where we used in the last inequality the estimate \eqref{thregx1} for $\th$ and $a$, the estimate \eqref{zoc23} for $H$, the decomposition $\nabn a=a_1+a_2$, and the estimate \eqref{zoc6} for $a_1$. Next, we estimate $h_4$. In view of the estimate \eqref{thregx1} for $\th$ and $a$, the assumption \eqref{small1} for $R$, and the estimate \eqref{zoc23} for $H$, we have:
$$\norm{h_4}_{\ll{1}}\les (\norm{\nabb\nabn a}_{\ll{2}}+\norm{\nabb(a\th)}_{\ll{2}}+\norm{aR}_{\ll{2}}+\norm{\hth}_{\ll{4}}\norm{\nabb a}_{\ll{4}})\norm{H}_{\ll{2}}\les \ep^2,$$
which together with the dual of the sharp Bernstein inequality \eqref{eq:strongbernscalarbis} yields:
\be\lab{zoc25}
\norm{P_j(h_4)}_{\l{1}{2}}\les 2^j\norm{h_4}_{\ll{1}}\les 2^j\ep.
\ee

Next, we estimate the first term in the right-hand side of \eqref{zoc20}, i.e. $a\divb(\nabna H)$. Recall the definition of $H$ \eqref{zoc1}:
$$H=-\nabn k_{.N}-R_{.N}.$$
We take the $\nabna$ derivative of each of the two terms in the definition of $H$ starting with the first one. 
Using the constraint equations \eqref{const} and the fact that we have a maximal foliation yields:
\begin{equation}\label{zoc26}
\nabn k_{NA}=-\nabla^Bk_{BA}.  
\end{equation}
Now, we have:
$$\nabla^Bk_{BA}=\divb k_A+\trt k_{NA}+\th_{AB}k_{NB}.$$
Together with \eqref{zoc26}, we obtain, schematically:
\be\lab{rabouam}
\nabn k_{N.}=-\divb k-\th\c k_{N.}.
\ee
Taking the $\nabna$ derivative, we obtain:
$$\nabna(\nabn k_{N.})=-\divb(\nabna k)-[\nabna, \divb]k-\nabna(\th)\c k_{N.}-\th\c\nabna k_{N.}-\th\c k_{. \nabna N}.$$
Using the commutation formula \eqref{commutnabna1} and the structure equation \eqref{frame1}, we obtain, schematically:
\be\lab{zoc27}
\nabna(\nabn k_{N.})=-\divb(\nabna k)+a\th\nabla k+a(R+\nabla\th+\th\nabb(a))k.
\ee
Next, we take the $\nabna$ derivative of the second term in the definition of $H$. The  twice-contracted Bianchi identity on $\s$ yields:
$$\nabn R_{NA}=-\nabla^BR_{AB}+\frac{1}{2}\nabla_AR,$$
which together with the constraint equations \eqref{const} implies:
\be\lab{zoc28}
\nabn R_{NA}=-\nabla^BR_{AB}+k\c\nabla_Ak.
\ee
Now, we have:
$$\nabla^BR_{AB}=\nabb^BR_{AB}+\trt R_{NA}+\th_{AB}R_{NB}.$$
Together with \eqref{zoc28}, we obtain schematically:
\be\lab{rabouam1}
\nabn R_{N.}=-\divb R-\th\c R+k\c\nabla k.
\ee
This yields:
\be\lab{zoc29}
\nabna (R_{N.})=-\divb (aR)+\nabb(a)\c R-a\th\c R+ak\c\nabla k.
\ee
Finally, in view of the definition \eqref{zoc1} of $H$, \eqref{zoc27} and \eqref{zoc29}, we obtain schematically:
$$\nabna H=\divb(\nabna k+aR)+a(\th+k)\nabla k+a(R+\nabla\th+\th\nabb(a))k+(\nabb(a)+a\th)\c R.$$
This yields:
\be\lab{zoc30}
\nabna H=\divb(H_1)+H_2,
\ee
where $H_1$ is a 2-tensor given by:
$$H_1=\nabna k+aR,$$
and $H_2$ is a 1-form given by:
$$H_2=a(\th+k)\nabla k+a(R+\nabla\th+\th\nabb(a))k+(\nabb(a)+a\th)\c R.$$
In particular, we have:
\be\lab{zoc31}
\norm{H_1}_{\ll{2}}\les \norm{a}_{\ll{\infty}}(\norm{\nabla k}_{\ll{2}}+\norm{R}_{\ll{2}})\les\ep,
\ee
where we used in the last inequality the assumption \eqref{small1} on $k$ and $R$. Also, we have:
\bea
\nn\norm{H_2}_{\l{2}{\frac{4}{3}}}&\les& \norm{a}_{\ll{\infty}}(\norm{\th}_{\l{\infty}{4}}+\norm{k}_{\l{\infty}{4}}+\norm{\ana}_{\l{\infty}{4}})\\
\nn&&(\norm{\nabla k}_{\ll{2}}+\norm{R}_{\ll{2}}+\norm{\nabla\th}_{\ll{2}})\\
\lab{zoc32}&\les& \ep,
\eea
where we used in the last inequality the assumption \eqref{small1} on $k$ and $R$ and the estimate \eqref{thregx1} for $a$ and $\th$. In view of \eqref{zoc30}, we obtain schematically:
\bea\lab{zoc33}
\nn a\divb(\nabna H)&=&\divb\divb(aH_1)+\divb(\nabb(a)\c H_1)+\nabb^2(a)\c H_1+\divb(aH_2)+\nabla(a)\c H_2\\
&=& h_5+h_6,
\eea 
where the scalar functions on $S$ $h_5$ and $h_6$ are given by:
$$h_5=\divb\divb(aH_1)+\divb(\nabb(a)\c H_1)+\divb(aH_2)+\nabla(a)\c H_2,$$
and:
$$h_6=\nabb^2(a)\c H_1.$$
Using the finite band property for $P_j$, the sharp Bernstein inequality \eqref{eq:strongbernscalarbis} and the estimate \eqref{lbz14bis}, we obtain:
\bee
&&\norm{P_j(h_5)}_{\ll{2}}\\
&\les& 2^{2j}\norm{\lap^{-1}\divb\divb}_{\mathcal{L}(\lp{2})}\norm{aH_1}_{\ll{2}}+2^{\frac{3j}{2}}\norm{\nabb(a)\c H_1}_{\ll{2}}+2^{\frac{3j}{2}}\norm{aH_2}_{\l{2}{\frac{4}{3}}}\\
&&+2^j\norm{\nabla(a)\c H_2}_{\l{2}{1}}\\
&\les& 2^{2j}\norm{\nabb^2\lap^{-1}}_{\mathcal{L}(\lp{2})}\norm{a}_{\ll{\infty}}\norm{H_1}_{\ll{2}}+2^{\frac{3j}{2}}\norm{\nabb(a)}_{\l{\infty}{4}}\norm{H_1}_{\ll{2}}\\
&&+2^{\frac{3j}{2}}\norm{a}_{\ll{\infty}}\norm{H_2}_{\l{2}{\frac{4}{3}}}+2^j\norm{\nabla(a)}_{\l{\infty}{4}}\norm{H_2}_{\l{2}{\frac{4}{3}}},
\eee
which together with the Bochner inequality for scalars \eqref{eq:Bochconseqbis}, the estimate \eqref{thregx1} for $a$, and the estimates \eqref{zoc31} and \eqref{zoc32} for $H_1$ and $H_2$ implies:
\be\lab{zoc34}
\norm{P_j(h_5)}_{\ll{2}}\les 2^{2j}\ep.
\ee
Next, we estimate $h_6$. In view of the estimate \eqref{thregx1} for $a$ and the estimate \eqref{zoc31} for $H_1$, we have:
$$\norm{h_6}_{\ll{1}}\les\norm{\nabb^2a}_{\ll{2}}\norm{H_1}_{\ll{2}}\les \ep.$$
Together with the sharp Bernstein inequality \eqref{eq:strongbernscalarbis}, we obtain:
\be\lab{zoc35}
\norm{P_j(h_6)}_{\l{1}{2}}\les 2^j\ep.
\ee

Finally, in view of \eqref{zoc14}, \eqref{zoc15}, \eqref{zoc20} and \eqref{zoc33}, we have:
\be\lab{zoc36}
(\nabn-a^{-1}\lap)(\nabna a_2)=h_7+h_8,
\ee
where $h_7$ and $h_8$ are defined by:
$$h_7=h_1+a^{-1}h_3+a^{-1}h_5,$$
and:
$$h_8=h_2+a^{-1}h_4+a^{-1}h_6.$$
In view of \eqref{zoc18}, \eqref{zoc24} and \eqref{zoc34}, we have:
\be\lab{zoc37}
\norm{P_j(ah_7)}_{\ll{2}}\les 2^{2j}\ep(1+\norm{a_2}_{\l{\infty}{4}}).
\ee
Also, in view of \eqref{zoc19}, \eqref{zoc25} and \eqref{zoc35}, we have:
\be\lab{zoc38}
\norm{P_j(ah_8)}_{\l{1}{2}}\les 2^j\ep.
\ee
Now, in view of \eqref{zoc36}, \eqref{zoc37} and \eqref{zoc38}, \eqref{parab14} implies:
\be\lab{zoc39}
\sup_{j\geq 0}\norm{P_j(\nabna a_2)}_{\ll{2}}\les\ep(1+\norm{a_2}_{\l{\infty}{4}}).
\ee

Next, we state three lemma.
\begin{lemma}\lab{lemma:zoc1}
For any scalar function $f$ on $S$, and for any $0\leq b<1$, we have:
\be\lab{zoc40}
\sup_{j\geq 0}2^{jb}\norm{P_j(a^{\pm 1}f)}_{\ll{2}}\les \sup_{j\geq 0}2^{jb}\norm{P_jf}_{\ll{2}}.
\ee
\end{lemma}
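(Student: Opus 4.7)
Writing $a^{\pm 1}=1+\tilde a$ with $\tilde a:=a^{\pm 1}-1$, we have $P_j(a^{\pm 1}f)=P_jf+P_j(\tilde af)$, so the task reduces to showing
$$\sup_{j\ge 0}2^{jb}\norm{P_j(\tilde a f)}_{\ll{2}}\lesssim \sup_{j\ge 0}2^{jb}\norm{P_jf}_{\ll{2}}.$$
The bound $\norm{a-1}_{\lli{\infty}}\lesssim \ep$ from Theorem \ref{thregx} together with a Neumann expansion of $a^{\pm 1}$ implies that $\tilde a$ inherits the analogues $\norm{\tilde a}_{\lli{\infty}}+\norm{\nabla\tilde a}_{\li{\infty}{2}}+\norm{\nabb\nabla\tilde a}_{\lli{2}}\lesssim\ep$.

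The plan is to perform a paraproduct decomposition on each leaf $P_u$,
$$P_j(\tilde af)=P_j\bigl(P_{<j-N_0}\tilde a\cdot f\bigr)+\sum_{l\ge j-N_0}P_j\bigl(P_l\tilde a\cdot f\bigr),$$
for a fixed integer $N_0$. For the low-high piece I further split $P_j(P_{<j-N_0}\tilde a\cdot f)=P_{<j-N_0}\tilde a\cdot P_j f+[P_j,\,P_{<j-N_0}\tilde a]f$. The first summand is bounded pointwise in $u$ by $\norm{P_{<j-N_0}\tilde a}_{\lp{\infty}}\norm{P_jf}_{\lp{2}}\lesssim\ep\,\norm{P_jf}_{\lp{2}}$, using the sharp Bernstein estimate \eqref{eq:strong-Bern-0bis} (legitimate thanks to Proposition \ref{propK}), which reproduces the right-hand side with a harmless $\ep$ factor. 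The commutator $[P_j,\,P_{<j-N_0}\tilde a]f$ is handled via the heat-flow representation $P_j=\int_0^\infty m_j(\tau)U(\tau)d\tau$: the quantity $V(\tau):=U(\tau)(P_{<j-N_0}\tilde a\cdot g)-P_{<j-N_0}\tilde a\cdot U(\tau)g$ satisfies
$$(\partial_\tau-\lap)V=(\lap P_{<j-N_0}\tilde a)\,U(\tau)g+2\nabb P_{<j-N_0}\tilde a\cdot\nabb U(\tau)g,\qquad V(0)=0,$$
and the inhomogeneous heat-flow estimates of Proposition \ref{le:L2heatbis}, combined with $\norm{P_{<j-N_0}\tilde a}_{\lp{\infty}}\lesssim\ep$ and $\norm{\nabb P_{<j-N_0}\tilde a}_{\lp{\infty}}\lesssim 2^j\ep$ (via finite band and Bochner), yield a commutator estimate of the form $2^{jb}\norm{[P_j,\,P_{<j-N_0}\tilde a]f}_{\ll{2}}\lesssim \ep\,\sup_l 2^{lb}\norm{P_lf}_{\ll{2}}$ exactly as in the proofs of Propositions \ref{prop:commLP1}--\ref{prop:commLP5}.

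For the high-frequency piece $\sum_{l\ge j-N_0}P_j(P_l\tilde a\cdot f)$, the plan is to combine the bound $\norm{P_l\tilde a}_{L^2_uL^\infty(P_u)}\lesssim 2^{-l}\ep$ (obtained from $\norm{\nabb^2\tilde a}_{\ll{2}}\lesssim\ep$, finite band for $P_l$, and the sharp Bernstein \eqref{eq:strongbernscalarbis}) with the $L^2$-boundedness of $P_j$ and a further decomposition $f=\sum_m P_mf$ via almost-orthogonality, which reduces the estimate to dyadic sums of the form $\sum_{l\ge j}2^{(b-1)(l-j)}$; the restriction $b<1$ is precisely what makes these series summable. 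The main obstacle will be the high-high interaction ($P_l\tilde a$ and $P_mf$ both at frequency $\sim 2^l\gg 2^j$), where to avoid losing a power of $2^j$ one has to use Bernstein in $L^p$ for $p$ slightly above $2$ (Theorem \ref{thm:LP}(iv)) together with the hypothesis $2^{mb}\norm{P_mf}_{\ll{2}}\le C$; here again $b<1$ is essential so that the resulting geometric series in $l-j$ converges and reproduces the sup-in-$j$ norm on the right-hand side. All required tools (sharp Bernstein, Bochner, heat-flow estimates, $\La^\alpha$-boundedness) are available from Section \ref{sec:LP}.
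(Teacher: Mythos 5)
Your strategy (peel off $a^{\pm1}=1+\tilde a$ and run a paraproduct decomposition on $\tilde a$) is genuinely different from the paper's argument, which never decomposes $a$ at all: the paper writes $\norm{P_j(a^{\pm1}f)}_{\ll{2}}\les\sum_{l}\norm{P_j(a^{\pm1}P_lf)}_{\ll{2}}$ and, for $l\leq j$, converts $P_j$ into $2^{-j}\nabb$ by the finite band property and applies Leibniz, while for $l>j$ it writes $P_lf=2^{-2l}\lap P_lf=2^{-2l}\divb(\nabb P_lf)$ and moves the divergence onto $P_j$ via the dual finite band/Bernstein inequalities. This yields the gains $2^{-|j-l|(1-b)}$ and $2^{-|j-l|(1+b)}$ directly, with only the $\ll{\infty}$ and $\l{\infty}{4}$ bounds on $a$ and $\nabb a$ as input. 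Your route is heavier, and as sketched it has two concrete gaps.

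First, the high-frequency piece $\sum_{l\geq j-N_0}P_j(P_l\tilde a\cdot f)$ does not close in the norms you propose. The hypothesis controls $P_mf$ only in $L^2_uL^2(P_u)$, so pairing $\norm{P_l\tilde a}_{L^2_uL^\infty(P_u)}\lesssim 2^{-l}\ep$ with $P_mf$ requires $\norm{P_mf}_{\li{\infty}{2}}$, which $\sup_m2^{mb}\norm{P_mf}_{\ll{2}}$ does not give (and $\norm{P_l\tilde a}_{\lli{\infty}}\lesssim 2^{-l}\ep$ is not available either, since $\nabb\nabla a$ is only in $L^2(\Sigma)$, not $L^\infty_uL^2(P_u)$). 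If instead you pair $\norm{P_l\tilde a}_{\li{\infty}{4}}\lesssim 2^{-l/2}\ep$ with $\norm{P_mf}_{\l{2}{4}}\lesssim2^{m/2}\norm{P_mf}_{\ll{2}}$, the resulting exponent is $2^{jb-l/2+m(1/2-b)}$, whose sum over $m\leq l$ and $l\geq j$ fails for $b\geq\tfrac12$ and degenerates at $b=0$; the advertised geometric series $\sum_{l\geq j}2^{(b-1)(l-j)}$ does not materialize without the integration-by-parts/finite-band trick that the paper uses. Second, in the commutator term your bound $\norm{\nabb P_{<j-N_0}\tilde a}_{\lp{\infty}}\lesssim2^j\ep$ is not justified: the sharp Bernstein inequality \eqref{eq:strongbernscalarbis} applies to scalars, whereas $\nabb P_l\tilde a$ is a one-form, and the tensorial $L^\infty$ bound \eqref{Linfty} would require $\nabb^2P_l\tilde a$ in $L^4(P_u)$, i.e.\ more derivatives of $a$ than are controlled. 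Moreover, a heat-flow energy estimate for $V$ of the type used in Propositions \ref{prop:commLP1}--\ref{prop:commLP5} produces right-hand sides of the form $\norm{\La^{s}f}_{L^2(S)}$ or $\norm{\nabb f}_{L^2(S)}$, which are not controlled by $\sup_l2^{lb}\norm{P_lf}_{\ll{2}}$ at the endpoint $b=0$ — and $b=0$ is exactly the case needed in the application \eqref{zoc44}. I recommend abandoning the decomposition of $a$ and following the paper's shorter argument.
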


\begin{lemma}\lab{lemma:zoc2}
For any scalar function $f$ on $S$ such that $f\equiv 0$ on $u=-2$, we have:
\be\lab{zoc41}
\left(\sum_{j\geq 0}2^{2j}\norm{P_jf}^4_{\l{\infty}{2}}\right)^{\frac{1}{4}}\les \norm{\nabb f}_{\ll{2}}+\norm{f}_{\l{\infty}{2}}+\sup_{j\geq 0}\norm{P_j(\nabn f)}_{\ll{2}}.
\ee
\end{lemma}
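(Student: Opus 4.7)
The plan is to combine an energy-type identity on each Littlewood--Paley piece with Cauchy--Schwarz in the dyadic parameter $j$. Because $f\equiv 0$ on $u=-2$, so does each $P_jf$. Applying the transport identity $\frac{d}{du}\int_{\p}|P_jf|^2 d\mu_u=\int_{\p}(2P_jf\,\nabn(P_jf)+\trt|P_jf|^2)d\mu_u$, integrating from $u=-2$, and splitting $\nabn(P_jf)=P_j\nabn f+[\nabn,P_j]f$, one obtains after Cauchy--Schwarz in $u$
\[
\norm{P_jf}^2_{\l{\infty}{2}}\les \norm{P_jf}_{\ll{2}}\bigl(\norm{P_j\nabn f}_{\ll{2}}+\norm{[\nabn,P_j]f}_{\ll{2}}\bigr)+T_j,
\]
where $T_j$ bundles the trace contribution. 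I would control $T_j$ using the Besov bound $\norm{\trt-k_{NN}}_{\l{\infty}{\infty}}\les\ep$ from Proposition \ref{propbesov}, $\norm{k}_{\l{\infty}{4}}\les\ep$, and the Gagliardo--Nirenberg/sharp Bernstein estimate $\norm{P_jf}_{\lp{8/3}}\les 2^{j/4}\norm{P_jf}_{\lp{2}}$, obtaining $T_j\les\ep\,2^{j/2}\norm{P_jf}^2_{\ll{2}}$.

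Next I would multiply through by $2^{2j}\norm{P_jf}^2_{\l{\infty}{2}}$, sum over $j$, and apply Cauchy--Schwarz in $j$. The crucial input is the Littlewood--Paley characterisation of $H^1(\p)$, which pointwise in $u$ gives $\sum_j 2^{2j}\norm{P_jf}^2_{\lp{2}}\les\norm{\nabb f}^2_{\lp{2}}+\norm{f}^2_{\lp{2}}$, so after integration in $u$,
\[
\sum_{j\geq 0}2^{2j}\norm{P_jf}^2_{\ll{2}}\les\norm{\nabb f}^2_{\ll{2}}+\norm{f}^2_{\l{\infty}{2}}.
\]
The main term of $\sum_j 2^{2j}\norm{P_jf}^4_{\l{\infty}{2}}$ then reduces, via $\sum a_j b_j c_j\leq\sup_j a_j\cdot(\sum b_j^2)^{1/2}(\sum c_j^2)^{1/2}$, to $\sup_j\bigl(\norm{P_j\nabn f}^2_{\ll{2}}+\norm{[\nabn,P_j]f}^2_{\ll{2}}\bigr)\cdot\bigl(\norm{\nabb f}^2_{\ll{2}}+\norm{f}^2_{\l{\infty}{2}}\bigr)$, and taking fourth roots with AM--GM produces the desired right-hand side. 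The trace contribution collapses to $\ep\,\sup_j 2^{j/2}\norm{P_jf}_{\l{\infty}{2}}\cdot(\text{RHS})$, which is absorbable since $\sup_j 2^{j/2}\norm{P_jf}_{\l{\infty}{2}}\leq(\sum_j 2^{2j}\norm{P_jf}^4_{\l{\infty}{2}})^{1/4}$.

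The main technical step is the commutator. Writing $\nabn=a^{-1}\nabna$ and $a[\nabn,P_j]f=[\nabna,P_j]f+[P_j,a]\nabn f$, Proposition \ref{greveinf1} gives $\norm{[\nabna,P_j]f}_{\ll{2}}\les\ep\norm{\La^{1/2+\delta}f}_{L^2(S)}+\ep\norm{\La^{\delta}f}_{\l{\infty}{2}}$ for any $\delta>0$. Pointwise-in-$u$ interpolation between $L^2(\p)$ and $H^1(\p)$ yields $\norm{\La^{1/2+\delta}f}_{L^2(S)}\les\norm{\nabb f}_{\ll{2}}+\norm{f}_{\l{\infty}{2}}$ for $0<\delta<1/2$, while Cauchy--Schwarz in the dyadic index gives
\[
\norm{\La^{\delta}f}^2_{\l{\infty}{2}}\leq\sum_l 2^{2l\delta}\norm{P_lf}^2_{\l{\infty}{2}}\les\Bigl(\sum_l 2^{2l}\norm{P_lf}^4_{\l{\infty}{2}}\Bigr)^{1/2}
\]
under the same restriction on $\delta$. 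The $\ep$ factor renders this self-referential contribution absorbable into the LHS, while the auxiliary piece $[P_j,a]\nabn f$ is handled by standard paraproduct estimates using $\nabla a\in\l{\infty}{4}$ from Theorem \ref{thregx}.

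The main obstacle is precisely this circular appearance of the LHS of the lemma in the commutator bound, which closes only by exploiting the smallness of $\ep$; the restriction $\delta<1/2$ is dictated by the convergence of the dyadic geometric series, and the choice of $\ell^4$ rather than $\ell^2$ summability in $j$ on the LHS is forced because only $\sup_j\norm{P_j\nabn f}_{\ll{2}}$ (not $\norm{\nabn f}_{\ll{2}}$) is available on the right-hand side.
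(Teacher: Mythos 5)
Your proposal is correct and follows the same skeleton as the paper's proof: the transport/energy identity for $P_jf$ initialized at $u=-2$, the splitting of the normal derivative into $P_j(\nabn f)$ plus a commutator, and then squaring, weighting by $2^{2j}$ and summing with Cauchy--Schwarz in $j$, using Bessel plus the finite band property to convert $\sum_j 2^{2j}\norm{P_jf}^2_{\ll{2}}$ into $\norm{\nabb f}^2_{\ll{2}}$ while only $\sup_j\norm{P_j(\nabn f)}_{\ll{2}}$ survives from the main term (your remark that this is what forces $\ell^4_j$ rather than $\ell^2_j$ on the left is exactly the point of the lemma). The one substantive divergence is the commutator step. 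The paper keeps $\nabla_{aN}$ throughout (removing the $a$ only at the end via Lemma \ref{lemma:zoc1}), pairs the commutator against $\norm{P_jf}_{\l{\infty}{2}}$ by H\"older in $u$, and invokes Proposition \ref{prop:commLP2}, i.e.\ the $\l{1}{2}$ estimate \eqref{commLP2} with its gain $2^{-j(1-\delta)}\ep$; after absorbing the resulting $\norm{P_jf}_{\l{\infty}{2}}$ factor, the commutator contribution $\sum_j 2^{2j}\norm{[\nabna,P_j]f}^4_{\l{1}{2}}$ is directly summable for $\delta<\frac{1}{2}$ and bounded by $\ep^4(\norm{\nabb f}_{\ll{2}}+\norm{f}_{\l{\infty}{2}})^4$ with no self-reference. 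You instead pair against $\norm{P_jf}_{\ll{2}}$ and use the $\ll{2}$ estimate \eqref{commLP1} of Proposition \ref{greveinf1}, whose term $\ep\norm{\La^\delta f}_{\l{\infty}{2}}$ is controlled by $\ep$ times the left-hand side of \eqref{zoc41} itself and must be reabsorbed using the smallness of $\ep$ (and, implicitly, the a priori finiteness of the left-hand side, which holds here by the qualitative smoothness of the construction). Both closings work and you identified correctly that $\delta<\frac{1}{2}$ is what makes the dyadic series converge in either version; the paper's choice of commutator estimate simply avoids the circularity, at the price of needing the $\l{1}{2}$ variant of the commutator bound.
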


\begin{lemma}\lab{lemma:zoc3}
For any scalar function $f$ on $S$ such that $f\equiv 0$ on $u=-2$, we have:
\be\lab{zoc42}
\norm{f}_{\l{\infty}{4}}\les \norm{\nabb f}_{\ll{2}}+\sup_{j\geq 0}\norm{P_j(\nabn f)}_{\ll{2}}.
\ee
\end{lemma}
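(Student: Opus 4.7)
The plan is to bootstrap $\norm{f}_{\l{\infty}{4}}$ via an energy identity on $f^2$, using Lemma \ref{lemma:zoc2} as the key input. Write $A=\norm{\nabb f}_{\ll{2}}$ and $B=\sup_{j\geq 0}\norm{P_j\nabn f}_{\ll{2}}$.

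First I would establish the preliminary bound $\norm{f}_{\l{\infty}{2}}\les A+B$. Starting from the energy identity
\[\norm{f(u)}^2_{\lp{2}}=2\int_{-2}^u\!\!\int f\,\nabn f\,d\mu du'+\int_{-2}^u\!\!\int\trt\,f^2\,d\mu du'\]
(which uses $f\equiv 0$ at $u=-2$), I decompose $\nabn f=\sum_{j}P_j\nabn f$ plus a low-frequency piece and exploit the self-adjoint factorization $P_j=\tilde P_j^2$ (as in the proof of Proposition \ref{propK}) to write $\int f\,P_j\nabn f=\int\tilde P_j f\,\tilde P_j\nabn f$. The finite band property of Theorem \ref{thm:LP} gives $\norm{\tilde P_j f}_{\ll{2}}\les 2^{-j}A$, which paired against $\norm{\tilde P_j\nabn f}_{\ll{2}}\leq B$ produces a convergent geometric series. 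The $\trt$-term is absorbed using $\norm{\trt}_{\ll{\infty}}\les\ep$ from Theorem \ref{thregx}.

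Plugging this bound into Lemma \ref{lemma:zoc2} removes the $\norm{f}_{\l{\infty}{2}}$ on its right-hand side, so that
\[\Big(\sum_{j\geq 0}2^{2j}\norm{P_j f}^4_{\l{\infty}{2}}\Big)^{1/4}\les A+B.\]

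To upgrade to the $L^\infty_u L^4$ bound, I would use that $f^2$ also vanishes at $u=-2$ and apply the same energy identity to $f^2$:
\[\norm{f(u)}^4_{\lp{4}}=\norm{f^2(u)}^2_{\lp{2}}=4\int_{-2}^u\!\!\int f^3\nabn f+\int_{-2}^u\!\!\int\trt\,f^4.\]
The trace term is absorbable by $\ep$-smallness. For the main term, decomposing $\nabn f$ dyadically and again invoking self-adjointness yields
\[\Big|\int_{-2}^u\!\!\int f^3\nabn f\Big|\les B\sum_j\norm{\tilde P_j(f^3)}_{\l{2}{2}}.\]
I would estimate $\sum_j\norm{\tilde P_j(f^3)}_{\l{2}{2}}\les\norm{f^3}_{\lhs{2}{\sigma}}$ for some small $\sigma>0$ via weighted Cauchy--Schwarz, and then use a product estimate from Section \ref{sec:statepropprodprod} (in the spirit of Proposition \ref{prop:fichtre}) to bound $\norm{f^3}_{\lhs{2}{\sigma}}$ by $\norm{f}^2_{\l{\infty}{4}}$ times a Besov quantity of $f$ controlled by the output of Lemma \ref{lemma:zoc2}, i.e. by $A+B$. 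The resulting inequality has the form $\norm{f}^4_{\l{\infty}{4}}\les(A+B)^2\norm{f}^2_{\l{\infty}{4}}$, closed by absorbing $\norm{f}^2_{\l{\infty}{4}}$.

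The hard part will be the product estimate controlling $\sum_j\norm{\tilde P_j(f^3)}_{\l{2}{2}}$: it must precisely match the scale $\norm{f}^2_{\l{\infty}{4}}\cdot(A+B)$ without appealing to norms of $f$ (such as $\norm{f}_{\l{\infty}{\infty}}$) that are not at our disposal under the hypotheses. This requires a careful dyadic paraproduct decomposition of $f^3=f\cdot f\cdot f$ balanced against the $\ell^4$ Besov bound from Lemma \ref{lemma:zoc2} and the tangential regularity $\nabb f\in L^2(S)$, using the sharp Bernstein inequality \eqref{eq:strongbernscalarbis} to pass between $\lp{\infty}$ and $\lp{2}$ on each frequency while keeping all sums absolutely convergent.
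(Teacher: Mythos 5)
Your overall architecture matches the paper's: the energy identity for $\norm{f}^4_{\lp{4}}$ (using $f\equiv 0$ at $u=-2$), the frequency-by-frequency dualization $\int f^3\,\nabn f=\sum_j\int \tilde P_j(\nabn f)\,\tilde P_j(f^3)$ producing the factor $\big(\sup_j\norm{P_j(\nabn f)}_{\ll{2}}\big)\sum_j\norm{P_j(f^3)}_{\ll{2}}$, and closure via Lemma \ref{lemma:zoc2}; the preliminary bound $\norm{f}_{\l{\infty}{2}}\les A+B$ is a sensible addition. However, there is a genuine gap exactly where you flag ``the hard part'': the estimate of $\sum_j\norm{\tilde P_j(f^3)}_{\ll{2}}$ is not carried out, and the route you sketch does not close with the norms available. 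Passing to $\norm{f^3}_{\lhs{2}{\sigma}}$ and invoking Proposition \ref{prop:fichtre} gives, on each leaf, $\norm{f^3}_{\hs{\sigma}}\les\norm{f}^2_{\hs{1}}\norm{f}_{\hs{\frac{1}{2}}}$; after integrating in $u$ this demands $L^4_u H^1(\p)$-type control of $f$, which is not implied by $\nabb f\in L^2(S)$ together with the $\ell^4$ output of Lemma \ref{lemma:zoc2} (the natural H\"older in $j$ with exponents $(2,4,4)$ leaves a divergent $\sum_j 1$). Nor can one reach your stated target $\norm{f^3}_{\lhs{2}{\sigma}}\les\norm{f}^2_{\l{\infty}{4}}(A+B)$ by placing two factors in $L^4(P_u)$: the third factor would then have to carry all the positive regularity of the product, which is again not at your disposal. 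Note also that Proposition \ref{prop:fichtre} is the one product estimate the paper explicitly declares non-sharp.

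What the paper actually does at this point (displays \eqref{zoc52bis}--\eqref{zoc59}) is a bespoke trilinear paraproduct: expand $f^3=\sum_{l\geq m\geq q}P_lf\,P_mf\,P_qf$, treat the cases $q>j$, $q\leq j<l$ and $l\leq j$ separately using Bernstein, the finite band property and the Bochner inequality, and arrive at the off-diagonal bound $\norm{P_j(P_lf\,P_mf\,P_qf)}_{\ll{2}}\les 2^{-\frac{|j-l|}{6}-\frac{|j-m|}{6}-\frac{|j-q|}{6}}\big(2^l\norm{P_lf}_{\ll{2}}\big)\big(2^{\frac{m}{2}}\norm{P_mf}_{\l{\infty}{2}}\big)\big(2^{\frac{q}{2}}\norm{P_qf}_{\l{\infty}{2}}\big)$. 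The key structural point is the asymmetric distribution of norms: one factor is measured in $\ell^2_j$ of $2^j\norm{P_jf}_{\ll{2}}$ (hence by $\norm{\nabb f}_{\ll{2}}$ via Bessel) and the other two in $\ell^4_j$ of $2^{\frac{j}{2}}\norm{P_jf}_{\l{\infty}{2}}$ (hence by Lemma \ref{lemma:zoc2}), so that the quadruple sum closes by H\"older with exponents $(2,4,4)$, yielding $\sum_j\norm{P_j(f^3)}_{\ll{2}}\les\norm{\nabb f}_{\ll{2}}\big(\sum_j2^{2j}\norm{P_jf}^4_{\l{\infty}{2}}\big)^{\frac{1}{2}}$. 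You correctly located the difficulty, but this combinatorial estimate is the substance of the lemma and must be supplied rather than delegated to a product estimate that does not fit.
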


The proof of Lemma \ref{lemma:zoc1} is postponed to section \ref{sec:lemmazoc1}, the proof of Lemma \ref{lemma:zoc2} is postponed to section \ref{sec:lemmazoc2} and the proof of Lemma \ref{lemma:zoc1} is postponed to section \ref{sec:lemmazoc3}. Let us now conclude the proof of Proposition \ref{prop:zoc2}. In view of \eqref{zoc12} and Lemma \ref{lemma:zoc3}, we have:
\be\lab{zoc43}
\norm{a_2}_{\l{\infty}{4}}\les \ep+\sup_{j\geq 0}\norm{P_j(\nabn a_2)}_{\ll{2}}.
\ee
Together with \eqref{zoc39}, we obtain:
$$\sup_{j\geq 0}\norm{P_j(\nabna a_2)}_{\ll{2}}\les\ep(1+\sup_{j\geq 0}\norm{P_j(\nabn a_2)}_{\ll{2}}).$$
Together with Lemma \ref{lemma:zoc1} and \eqref{zoc43}, we obtain:
\be\lab{zoc44}
\sup_{j\geq 0}\norm{P_j(\nabn a_2)}_{\ll{2}}+\norm{a_2}_{\l{\infty}{4}}\les\ep.
\ee
Finally, \eqref{zoc12}, \eqref{zoc44} and Lemma \ref{lemma:zoc2} imply:
$$\left(\sum_{j\geq 0}2^{2j}\norm{P_jf}^4_{\l{\infty}{2}}\right)^{\frac{1}{4}}\les\ep.$$
This concludes the proof of Proposition \ref{prop:zoc2}.

\subsection{Proof of Lemma \ref{lemma:zoc1}}\lab{sec:lemmazoc1}

We have:
\be\lab{zoc45}
\norm{P_j(a^{\pm 1}f)}_{\ll{2}}\les \sum_{l\geq 0}\norm{P_j(a^{\pm 1}P_lf)}_{\ll{2}}.
\ee
If $l\leq j$, we use the finite band property for $P_j$, and we obtain:
\bea\lab{zoc46}
&&2^{jb}\norm{P_j(a^{\pm 1}P_lf)}_{\ll{2}}\\
\nn&\les& 2^{-j(1-b)}\norm{\nabb(a^{\pm 1}P_lf)}_{\ll{2}}\\
\nn&\les& 2^{-j(1-b)}\norm{a^{\pm 1}\nabb P_lf}_{\ll{2}}+2^{-j(1-b)}\norm{\nabb(a^{\pm 1})P_lf}_{\ll{2}}\\
\nn&\les& 2^{-j(1-b)}\norm{a^{\pm 1}}_{\ll{\infty}}\norm{\nabb P_lf}_{\ll{2}}+2^{-j(1-b)}\norm{\nabb(a^{\pm 1})}_{\l{\infty}{4}}\norm{P_lf}_{\l{2}{4}}\\
\nn&\les& 2^{-(j-l)(1-b)}\sup_{q\geq 0}2^{qb}\norm{P_qf}_{\ll{2}},
\eea
where we used in the last inequality the finite band property and Bernstein for $P_l$, and the estimate \eqref{thregx1} for $a$. 

If $l>j$, we use the fact that:
\bee
P_j(a^{\pm 1}P_lf)&=&2^{-2l}P_j(a^{\pm 1}\lap P_lf)\\
&=&2^{-2l}P_j(\divb(a^{\pm 1}\nabb P_lf))-2^{-2l}P_j(\nabb(a^{\pm 1})\nabb P_lf),
\eee
which together with the finite band property and Bernstein for $P_j$ yields:
\bea\lab{zoc47}
\nn 2^{jb}\norm{P_j(a^{\pm 1}P_lf)}_{\ll{2}}&\les& 2^{-2l+j(1+b)}\norm{a^{\pm 1}\nabb P_lf}_{\ll{2}}+2^{-2l+\frac{j}{2}}\norm{\nabb(a^{\pm 1})\nabb P_lf}_{\l{2}{4}}\\
\nn&\les& 2^{-2l+j(1+b)}(\norm{a^{\pm 1}}_{\ll{\infty}}+\norm{\nabb(a^{\pm 1})}_{\l{\infty}{4}})\norm{\nabb P_lf}_{\ll{2}}\\
&\les& 2^{-(l-j)(1+b)}\sup_{q\geq 0}2^{qb}\norm{P_qf}_{\ll{2}},
\eea
where we used in the last inequality the finite band property for $P_l$, and the estimate \eqref{thregx1} for $a$.

Finally, \eqref{zoc45}, \eqref{zoc46} and \eqref{zoc47} yield for all $j\geq 0$:
$$2^{jb}\norm{P_j(a^{\pm 1}f)}_{\ll{2}}\les \left(\sum_{l\geq 0}(2^{-|j-l|(1-b)}+2^{-|j-l|(1+b)})\right)\sup_{q\geq 0}\norm{P_qf}_{\ll{2}}\les \sup_{q\geq 0}\norm{P_qf}_{\ll{2}},$$
where we used in the last inequality the fact that $0\leq b<1$. Taking the supremum in $j$ yields \eqref{zoc40}. This concludes the proof of Lemma \ref{lemma:zoc1}.

\subsection{Proof of Lemma \ref{lemma:zoc2}}\lab{sec:lemmazoc2}

We follow the proof of Corollary \ref{cor:commLP1}. Proceeding as in \eqref{clp14}, we obtain for all $j\geq 0$:
\begin{equation}\label{zoc48}
2^j\norm{P_jf}^2_{\l{\infty}{2}}\lesssim 2^j\left(\int_{-2}^{-2}\norm{P_jf}_{\lp{2}}\norm{\nabna P_jf}_{\lp{2}}du\right)+2^{2j}\norm{P_jf}_{\ll{2}}^2.
\end{equation}
Then, proceeding as in \eqref{clp14bis}, we obtain in view of \eqref{zoc48}:
\bee
&&2^j\norm{P_jf}^2_{\l{\infty}{2}}\\
&\les& 2^j\norm{P_jf}_{\ll{2}}\norm{P_j(\nabna f)}_{\ll{2}}+2^j\norm{P_jf}_{\l{\infty}{2}}\norm{[\nabna, P_j]F}_{\l{1}{2}}\\
&&+2^{2j}\norm{P_jf}_{\ll{2}}^2.
\eee
This implies:
\bee
2^j\norm{P_jf}^2_{\l{\infty}{2}}&\les& 2^j\norm{P_jf}_{\ll{2}}\norm{P_j(\nabna f)}_{\ll{2}}\\
\nn&&+2^j\norm{[\nabna, P_j]F}^2_{\l{1}{2}}+2^{2j}\norm{P_jf}_{\ll{2}}^2.
\eee
Taking the square on both side, summing in $j\geq 0$, and using the Bessel inequality yields:
\bee
\sum_{j\geq 0}2^{2j}\norm{P_jf}^4_{\l{\infty}{2}}&\les& \left(\sum_{j\geq 0}2^{2j}\norm{P_jf}^2_{\ll{2}}\right)\left(\sup_{j\geq 0}\norm{P_j(\nabna f)}^2_{\ll{2}}\right)\\
\nn&&+\sum_{\j\geq 0}2^{2j}\norm{[\nabna, P_j]F}^4_{\l{1}{2}}+\sum_{j\geq 0}2^{4j}\norm{P_jf}_{\ll{2}}^4.
\eee
Using the Bessel inequality for $P_j$ and Lemma \ref{lemma:zoc1}, we finally obtain:
\bea\label{zoc49}
&&\sum_{j\geq 0}2^{2j}\norm{P_jf}^4_{\l{\infty}{2}}\\
\nn&\les& \norm{\nabb f}^4_{\ll{2}}+\left(\sup_{j\geq 0}\norm{P_j(\nabn f)}_{\ll{2}}\right)^4+\sum_{\j\geq 0}2^{2j}\norm{[\nabna, P_j]F}^4_{\l{1}{2}}.
\eea

Now, we have in view of the commutator estimate \eqref{commLP2}:
$$\norm{[P_j,\nabna]f}_{\l{1}{2}}\les 2^{-j(1-\delta)}\ep(\norm{\nabb f}_{\ll{2}}+\norm{f}_{\l{\infty}{2}}),$$
for any $\delta>0$. Proceeding as in \eqref{clp16}, we obtain:
\be\lab{zoc50}
\sum_{j\geq 0}2^{2j}\norm{P_jf}^4_{\l{\infty}{2}}\les \norm{\nabb f}^4_{\ll{2}}+\norm{f}^4_{\l{\infty}{2}}.
\ee
Finally, \eqref{zoc49} and \eqref{zoc50} yield \eqref{zoc41}. This concludes the proof of Lemma \ref{lemma:zoc3}.

\subsection{Proof of Lemma \ref{lemma:zoc3}}\lab{sec:lemmazoc3}

Lemma \ref{lemma:zoc3} is an improvement of Proposition \ref{p1} where one has a slightly weaker assumption on $\nabn f$. Proceeding as in \eqref{p1e1}, we have:
\bea\label{zoc51}
&&\norm{f(u,.)}_{\lp{4}}^4\\
\nn&=&\norm{f(-2,.)}_{L^4(P_{-2})}^4+4\int_{-2}^u\int_{P_{u'}}\nabn f(u',x')f^3du'd\mu_{u'}+\int_{-2}^u\int_{P_{u'}}\trt f(u',x')^4du'd\mu_{u'}\\
\nn&\lesssim & \sum_{j\geq 0}\left|\int_{-2}^u\int_{P_{u'}}P_j(\nabn f)(u',x')P_j(f^3)du'd\mu_{u'}\right|+\norm{\trt}_{\l{\infty}{4}}\norm{f}^4_{\l{4}{\frac{16}{3}}}\\
\nn&\lesssim & \left(\sup_{j\geq 0}\norm{P_j(\nabn f)}_{\ll{2}}\right)\left(\sum_{j\geq 0}\norm{P_j(f^3)}_{\ll{2}}\right)+\ep\norm{f}^2_{\l{\infty}{4}}\norm{f}_{\l{2}{8}}^2,
\eea
where we used the fact that $f(-2,.)=0$ and the estimate \eqref{thregx1} for $\trt$. \eqref{zoc51} yields:
$$\norm{f(u,.)}_{\lp{4}}^4\lesssim  \left(\sup_{j\geq 0}\norm{P_j(\nabn f)}_{\ll{2}}\right)^4+\left(\sum_{j\geq 0}\norm{P_j(f^3)}_{\ll{2}}\right)^{\frac{4}{3}}+\norm{\nabb f}^4_{\ll{2}},$$
which after taking the supremum in $u$ on the left-hand side implies:
\be\label{zoc52}
\norm{f}_{\l{\infty}{4}}\lesssim  \sup_{j\geq 0}\norm{P_j(\nabn f)}_{\ll{2}}+\left(\sum_{j\geq 0}\norm{P_j(f^3)}_{\ll{2}}\right)^{\frac{1}{3}}+\norm{\nabb f}_{\ll{2}}.
\ee

Next, we estimate the second term in the right-hand side of \eqref{zoc52}. We have:
\be\lab{zoc52bis}
\norm{P_j(f^3)}_{\ll{2}}\les \sum_{l, m, q}\norm{P_j(P_lf P_mf P_qf)}_{\ll{2}}.
\ee
We may assume:
$$l\geq m\geq q,$$
and we consider the following three cases:
$$q>j,\, q\leq j<l\textrm{ and }l\leq j.$$
We start with the case $q>j$. Then, using the strong Bernstein inequality for scalars \eqref{eq:strongbernscalar} for  $P_j$, we have:
\bea\lab{zoc53}
\norm{P_j(P_lf P_mf P_qf)}_{\ll{2}}&\les& 2^j\norm{P_lf P_mf P_qf}_{\l{2}{1}}\\
\nn &\les& 2^j\norm{P_lf}_{\ll{2}} \norm{P_mf}_{\l{\infty}{4}} \norm{P_qf}_{\l{\infty}{4}}\\
\nn &\les& 2^{j+\frac{m}{2}+\frac{q}{2}}\norm{P_lf}_{\ll{2}} \norm{P_mf}_{\l{\infty}{2}} \norm{P_qf}_{\l{\infty}{2}},
\eea
where we used Bernstein for $P_m$ and $P_q$ in the last inequality.

Next, we consider the case $q\leq j<l$. Then, the boundedness of $P_j$ on $\lp{2}$ yields:
\bea\lab{zoc54}
\norm{P_j(P_lf P_mf P_qf)}_{\ll{2}}&\les& \norm{P_lf P_mf P_qf}_{\ll{2}}\\
\nn &\les& \norm{P_lf}_{\ll{2}} \norm{P_mf}_{\ll{\infty}} \norm{P_qf}_{\ll{\infty}}\\
\nn &\les& 2^{m+q}\norm{P_lf}_{\ll{2}}\norm{P_mf}_{\l{\infty}{2}} \norm{P_qf}_{\l{\infty}{2}},
\eea
where we used in the last inequality the strong Bernstein inequality for scalars \eqref{eq:strongbernscalar} for $P_m$ and $P_q$. 

Finally, we consider the case $l\leq j$. Using the finite band property for $P_j$, we have:
\bea\lab{zoc55}
&& P_j(P_lf P_mf P_qf)\\
\nn&=&2^{-2j}P_j(\lap(P_lf P_mf P_qf))\\
\nn&=& 2^{-2j}P_j(\lap(P_lf) P_mf P_qf)+2^{-2j}P_j(\nabb(P_lf)\nabb(P_mf) P_qf)+\textrm{ permutations of }(l, m, p).
\eea
Using the boundedness of $P_j$ on $\lp{2}$, we have:
\bea\lab{zoc56}
\norm{P_j(\lap(P_lf) P_mf P_qf)}_{\ll{2}}&\les& \norm{\lap(P_lf) P_mf P_qf}_{\ll{2}}\\
\nn&\les& \norm{\lap P_lf}_{\ll{2}}\norm{P_mf}_{\ll{\infty}}\norm{P_qf}_{\ll{\infty}}\\
\nn&\les& 2^{2l+m+q}\norm{P_lf}_{\ll{2}}\norm{P_mf}_{\l{\infty}{2}}\norm{P_qf}_{\l{\infty}{2}},
\eea
where we used in the last inequality the finite band property for $P_l$, and the strong Bernstein inequality for scalars \eqref{eq:strongbernscalar} for $P_m$ and $P_q$. Using again the boundedness of $P_j$ on $\lp{2}$, we have:
\bea\lab{zoc57}
\norm{P_j(\nabb(P_l)f \nabb(P_mf) P_qf)}_{\ll{2}}&\les& \norm{\nabb(P_lf) \nabb(P_mf) P_qf}_{\ll{2}}\\
\nn&\les& \norm{\nabb P_lf}_{\l{2}{4}}\norm{\nabb P_mf}_{\l{\infty}{4}}\norm{P_qf}_{\ll{\infty}}\\
\nn&\les& 2^{\frac{3l}{2}+\frac{3m}{2}+q}\norm{P_lf}_{\ll{2}}\norm{P_mf}_{\l{\infty}{2}}\norm{P_qf}_{\l{\infty}{2}},
\eea
where we used in the last inequality the Gagliardo-Nirenberg inequality \eqref{eq:GNirenberg}, the Bochner inequality for scalars \eqref{eq:Bochconseqbis}, the finite band property for $P_l$ and $P_m$, and the strong Bernstein inequality for scalars \eqref{eq:strongbernscalar} for $P_q$. Now, since we assumed that $l\geq m$, \eqref{zoc55}, \eqref{zoc56} and \eqref{zoc57} imply:
\be\lab{zoc58}
\norm{P_j(P_lf P_mf P_qf)}_{\ll{2}}\les 2^{-2j+2l+m+q}\norm{P_lf}_{\ll{2}}\norm{P_mf}_{\l{\infty}{2}}\norm{P_qf}_{\l{\infty}{2}}.
\ee

Finally, in view of \eqref{zoc52bis}, \eqref{zoc53}, \eqref{zoc54} and \eqref{zoc58}, and since we assumed that $l\geq m\geq q$, we obtain:
\bee
&&\norm{P_j(f^3)}_{\ll{2}}\\
\nn&\les& \sum_{l, m, q}2^{-\frac{|j-l|}{6}-\frac{|j-m|}{6}-\frac{|j-q|}{6}}(2^l\norm{P_lf}_{\ll{2}})(2^{\frac{m}{2}}\norm{P_mf}_{\l{\infty}{2}})(2^{\frac{q}{2}}\norm{P_qf}_{\l{\infty}{2}}).
\eee
This yields:
\bea\lab{zoc59}
&&\sum_{j\geq 0}\norm{P_j(f^3)}_{\ll{2}}\\
\nn&\les& \sum_{j\geq 0}\left(\sum_{l\geq 0}2^{-\frac{|j-l|}{6}}2^{2l}\norm{P_lf}_{\ll{2}}^2\right)^{\frac{1}{2}}\left(\sum_{m\geq 0}2^{-\frac{|j-m|}{6}}2^{2m}\norm{P_mf}_{\l{\infty}{2}}^4\right)^{\frac{1}{4}}\\
\nn&&\times\left(\sum_{q\geq 0}2^{-\frac{|j-q|}{6}}2^{2q}\norm{P_qf}_{\l{\infty}{2}}^4\right)^{\frac{1}{4}}\\
\nn&\les & \left(\sum_{j\geq 0}2^{2j}\norm{P_jf}_{\ll{2}}^2\right)^{\frac{3}{2}}+\left(\sum_{j\geq 0}2^{2j}\norm{P_jf}_{\l{\infty}{2}}^4\right)^{\frac{3}{4}}\\
\nn&\les & \norm{\nabb f}_{\ll{2}}\left(\sum_{j\geq 0}2^{2j}\norm{P_jf}_{\l{\infty}{2}}^4\right)^{\frac{1}{2}},
\eea
where we used in the last inequality the Bessel inequality. Now, \eqref{zoc52} and \eqref{zoc59} imply:
$$\norm{f}_{\l{\infty}{4}}\lesssim  \sup_{j\geq 0}\norm{P_j(\nabn f)}_{\ll{2}}+\norm{\nabb f}_{\ll{2}}^{\frac{1}{3}}\left(\sum_{j\geq 0}2^{2j}\norm{P_jf}_{\l{\infty}{2}}^4\right)^{\frac{1}{6}}+\norm{\nabb f}_{\ll{2}},$$
which together with Lemma \ref{lemma:zoc2} yields \eqref{zoc42}. This concludes the proof of  Lemma \ref{lemma:zoc3}. 

\section{Regularity of the foliation with respect to $\o$}\label{regomega}

Let $u(x,\o)$ the function constructed in section \ref{regx}. In this section, we prove Theorem \ref{thregomega} which deals with the control of the derivatives with respect to $\o$ of the foliation generated by $u(x,\o)$ 
on $\s$. Recall that $(\s,g,k)$ coincides with $(\R^3,\de,0)$ in $|x|\geq 2$. Also, $u(x,\o)$ coincides with $x.\o$ in $|x|\geq 2$, and so $a\equiv 1$, $N\equiv\o$ and $\th\equiv 0$ in this region. Thus, $u$ clearly satisfies the estimates of Theorem \ref{thregomega} in $|x|\geq 2$ and it is enough to control the derivatives with respect to $\o$ of the function $u(x,\o)$ solution to: 
\begin{equation}\label{choice6}
\left\{\begin{array}{l}
\trt-k_{NN}=1-a,\textrm{ on }-2<u<2,\\
u(.,\o)=-2\textrm{ on }x.\o=-2,
\end{array}\right.
\end{equation}
in the strip $S=\{x/\,-2<u<2\}$.
%In section \ref{regx}, we have constructed a smooth solution $u(.,\o)$ of \eqref{choice6} using 
%a priori estimates for higher order derivatives together with a Nash Moser procedure. We could 
%easily modify the proof to show the smoothness of our solution with respect to $\o$ as  well. Thus, 
%in this section, we only need to prove a priori estimates for the solution $u(x,\o)$ of \eqref{choice6} consistent with the estimates of Theorem \ref{thregomega}. 

To $u(x,\o)$, we associate the quantities $N$, $a$, $\th$ and $K$ as in section \ref{sec:foliation}.
We will have to differentiate these quantities several times with respect to $\o$. Since $a$ and $K$ 
(resp. $N$) are scalars (resp. is a vectorfield) defined on $-2<u<2$,  the meaning 
of $\po N$, $\po a$ and $\po K$ is clear. On the other hand, $\th$ is a 2-tensor on $\p$, and we need 
to extend it to a 2-tensor on $-2<u<2$ for $\po\th$ to be properly defined. We choose the trivial extension:
\begin{equation}\label{extth}
\th(N,.)=\th(.,N)\equiv 0,
\end{equation} 
so that $\th$ is a symmetric 2-tensor on $-2<u<2$. For consistency, we extend its traceless part $\hth$ in 
the same way:
\begin{equation}\label{extth1}
\hth(N,.)=\hth(.,N)\equiv 0,
\end{equation} 
so that $\hth$ is a symmetric 2-tensor on $-2<u<2$ satisfying:
\begin{equation}\label{extth2}
\hth(X,Y)=\th(X,Y)-\frac{1}{2}\trt (X.Y-(X.N)(Y.N)),
\end{equation} 
where $X, Y$ are two vectorfields on $\s$.

\subsection{First order derivatives with respect to $\o$}\lab{sec:regomega1} 

The goal of this section is to prove \eqref{threomega1}. We first give an outline of the proof. Differentiating the second equation of \eqref{struct1} with respect to $\o$, we obtain:
\begin{equation}\label{diffom1}
(\nabn-a^{-1}\lap)\po a = 2\nabb\nabn a+2R_{N \po N}+\cdots
\end{equation}
where the first term on the right-hand side comes from the commutator $[\po,\lap]$ (see \eqref{commutom3}). 
Since $\nabb\nabn a$ and $R$ are in $\ll{2}$ respectively by \eqref{boot} and \eqref{small2}, this suggests in view of Proposition \ref{p7} that: 
\begin{equation}\label{diffom2}
\norm{\nabn\po a}_{\ll{2}}+\norm{\nabb\po a}_{\l{\infty}{2}}+\norm{\nabb^2\po a}_{\ll{2}} \lesssim \ep.
\end{equation}
Next, we differentiate \eqref{diffom1} with respect to $\nabn$. We obtain:
\begin{equation}\label{diffom3}
(\nabn-a^{-1}\lap)\nabn\po a = 2\nabb\nabn^2a+2\nabn R_{N \po N}+\cdots.
\end{equation} 
The term $\nabn R_{N \po N}$ may be treated using the contracted Bianchi identity for $R$ - as we did for $\nabn R_{NN}$ in section \ref{thregx} - and turns out to be in $\lhs{2}{-1-\delta}$. On the other hand, in view of the estimate \eqref{nabn2a1} for $\nabn^2a$, $\nabb\nabn^2a$ belongs to $\lhs{2}{-\frac{3}{2}}$. This suggest in view of Proposition \ref{prop:parab3} that:
\begin{equation}\label{diffom4bis}
\norm{\nabn\po a}_{\lhs{2}{\frac{1}{2}}}+\norm{\nabn^2\po a}_{\lhs{2}{-\frac{3}{2}}} \lesssim \ep.
\end{equation}
By interpolation between \eqref{diffom2} and \eqref{diffom4bis}, we should obtain $\po a$ in $\lhs{\infty}{\frac{5}{4}}$ which embeds in $\ll{\infty}$ since $\p$ has dimension 2.
 
We now turn to the estimates for $\po\th$. Since $\trt=a-1+k_{NN}$, we differentiate in $\o$, and we easily obtain from the assumption on $k$ \eqref{small2}  and the estimate \eqref{diffom2} that $\nabla\po\trt\in\ll{2}$. To obtain estimates for $\po\th$, we differentiate the last two equations of 
\eqref{struct1} with respect to $\o$:
\be\lab{diffom8}
\left\{\begin{array}{l}
\nabb^B\po\hth_{AB}=\frac{1}{2}\nabb_A\po\trt+\cdots,\\[1mm]
\nabn\po\th_{AB}=-\nabb\nabn a-\nabb_A\nabb_B\po a+\cdots,
\end{array}\right.
\ee
where the first term on the right-hand side of the second equation comes from the commutator $[\po,\nabb^2]$ (see \eqref{commutom2}). Using the fact that $\nabla\po\trt\in\ll{2}$, $\nabb\nabn a\in\ll{2}$ 
and $\nabb^2\po a\in\ll{2}$, we then obtain $\nabla\po\th\in\ll{2}$.

Finally, we turn to the estimates for $\po N$. Differentiating \eqref{frame1} with respect to $\o$, we obtain:
\be\lab{diffom9}
\left\{\begin{array}{l}
\nabb\po N=\po\th +\cdots,\\
\nabn\po N=-\nabb\po a+\cdots.
\end{array}\right.
\ee
Together with the fact that $\nabb\po\th$ and $\nabb^2\po a$ belong to $\ll{2}$, this implies that 
$\nabb^2\po N$ and $\nabb\nabn\po N$ belong to $\ll{2}$. Using Proposition \ref{p3}, we obtain that 
$\po N$ belongs to $\ll{\infty}$.

The rest of this section is as follows. We start by deriving commutator formulas for $[\po,\nabb]$, 
$[\po,\lap]$ and $[\po,\nabb^2]$. Then, we prove the estimates for $\po a$. We continue with 
the estimates for $\po\th$. And we conclude with the estimates for $\po N$.

\subsubsection{Commutator formulas}

We have the following commutator formulas:
\begin{lemma}\label{commutom}
Let $f$ a scalar on $\s$. We have:
\begin{equation}\label{commutom1}
[\po,\nabb]f=-\nabb_{\po N}f N-\nabn f \po N,
\end{equation}
\begin{equation}\label{commutom2}
\begin{array}{ll}
[\po,\nabb^2]f(e_A,e_B)= & -(\po N)_A\nabla^2f(N,e_B)-(\po N)_B\nabla^2f(N,e_A)\\
& -\po\th_{AB}\nabn f-\th_{AB}\nabb_{\po N}f. 
\end{array}
\end{equation}
and
\begin{equation}\label{commutom3}
[\po,\lap]f=-2\nabla^2f(N,\po N)-\po\trt\nabn f-\trt\nabb_{\po N}f. 
\end{equation}
\end{lemma}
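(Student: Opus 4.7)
The proofs of the three formulas are standard algebraic manipulations, resting on two observations: (i) the Levi-Civita connection of $g$ on $\s$ does not depend on $\o$, so $\po$ commutes with $\nabla$ (and with $\nabla^2$) when applied to $\o$-independent vector arguments; and (ii) differentiating $g(N,N)=1$ in $\o$ yields $g(N,\po N)=0$, so $\po N$ is tangent to $\p$ and $(\po N)(f) = \nabb_{\po N} f$ for every scalar $f$. I would prove the three identities in the order in which they are stated.

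For \eqref{commutom1}, I would use the orthogonal decomposition $\nabla f = (\nabn f)\, N + \nabb f$, identifying $\nabb f$ with its $\p$-tangent vector representative. Applying $\po$ to both sides and using $\po \nabla = \nabla \po$, I get
\[
(\nabn \po f)\, N + \nabb \po f \;=\; \po(\nabn f)\, N + \nabn f \cdot \po N + \po \nabb f.
\]
Since $\po(\nabn f) = \po(N(f)) = (\po N)(f) + N(\po f) = \nabb_{\po N}f + \nabn \po f$, rearranging yields $\po\nabb f - \nabb \po f = -\nabb_{\po N}f\, N - \nabn f\, \po N$, which is \eqref{commutom1}.

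For \eqref{commutom2}, the starting point is the Gauss-type identity
\[
\nabb^2 f(X,Y) \;=\; \nabla^2 f(\Pi X, \Pi Y) - \th(X,Y)\, \nabn f,
\]
valid for arbitrary vector fields $X,Y$ on $\s$, where $\Pi = I - N\otimes N$ and $\th$ is extended by $\th(N,\cdot)=0$; this follows from the Gauss formula $\nabla_X Y = \nabb_X Y - \th(X,Y) N$ for $\p$-tangent $X,Y$. I would then take $X,Y$ to be $\o$-independent vectors coinciding with the orthonormal frame vectors $e_A, e_B$ of $\p$ at the base point $(x,\o_0)$, so that at that point $\Pi e_A = e_A$ and $\po(\Pi e_A) = -(e_A \cdot \po N)\, N$. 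Differentiating the identity in $\o$, using $[\po,\nabla^2]=0$ on $\o$-independent arguments, the symmetry of $\nabla^2 f$, and $\po(\nabn f) = \nabb_{\po N}f + \nabn \po f$, and then subtracting the same identity applied to $\po f$, the terms $\nabla^2\po f(e_A,e_B)$ and $\th_{AB}\nabn\po f$ cancel, while the remaining terms reassemble into exactly the right-hand side of \eqref{commutom2}.

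For \eqref{commutom3}, I would sum the Gauss identity above over an orthonormal frame $(N, e_1, e_2)$ of $\s$ to obtain
\[
\lap f \;=\; \textrm{tr}_g \nabla^2 f - \nabla^2 f(N,N) - \trt\, \nabn f.
\]
Since $\textrm{tr}_g$ and $\nabla^2$ are $\o$-independent, $\po(\textrm{tr}_g \nabla^2 f) = \textrm{tr}_g \nabla^2 \po f$. Applying $\po$ to the remaining two terms using the symmetry of $\nabla^2 f$ and $\po(\nabn f) = \nabb_{\po N}f + \nabn \po f$, and subtracting the analogous expansion of $\lap \po f$, produces exactly \eqref{commutom3}. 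The only point requiring care throughout is the interpretation of $\po$ acting on $\p$-tangent tensors; this is resolved once and for all by extending such tensors to $\s$-tensors via $\Pi$ and the convention \eqref{extth}--\eqref{extth2}, which the paper has already introduced, so there is no genuine obstacle in the argument.
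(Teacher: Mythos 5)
Your proposal is correct and follows essentially the same route as the paper: differentiate the orthogonal decomposition $\nabla f=\nabn f\,N+\nabb f$ for \eqref{commutom1}, and differentiate the Gauss-type identity $\nabb^2f(\Pi X,\Pi Y)=\nabla^2f(\Pi X,\Pi Y)-\th_{XY}\nabn f$ with $\o$-independent $X,Y$ for \eqref{commutom2}, using in both cases that $\po$ commutes with $\nabla$ and that $g(\po N,N)=0$. The only (harmless) divergence is for \eqref{commutom3}, which the paper obtains by tracing \eqref{commutom2} whereas you rederive it from the traced identity $\lap f=\textrm{tr}_g\nabla^2f-\nabla^2f(N,N)-\trt\,\nabn f$; the two computations are trivially equivalent, and your version has the minor virtue of not needing the identity $\textrm{tr}(\po\th)=\po\trt$ which the paper only establishes later.
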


\begin{proof}
Differentiating with respect to $\o$ the equality 
\begin{equation}\label{comom1}
\nabb f=\nabla f-\nabn f N,
\end{equation}
and using the fact that $\po$ commutes with $\nabla$ since $g$ is independent of $\o$, we obtain:
\begin{equation}\label{comom1bis}
[\po,\nabb]f=-\nabla_{\po N}f N-\nabn f \po N.
\end{equation}
Now, we have:
\begin{equation}\label{comom1ter}
g(\po N,N)=0,
\end{equation}
which follows from the differentiation of $g(N,N)=1$ with respect to $\o$. Thus, $\po N$ is tangent to $\p$ which implies that $\nabla_{\po N}f=\nabb_{\po N}f$. Together with \eqref{comom1bis}, this yields 
\eqref{commutom1}.

We now turn to the proof of \eqref{commutom2}. Differentiating \eqref{comom1} by $\nabb$, we obtain:
\begin{equation}\label{comom2}
\nabb^2f(e_A,e_B)=\nabla^2f(e_A,e_B)-\nabn f\th_{AB}.
\end{equation}
Let $\Pi$ denote the projection of vectorfields of $\s$ on vectorfields tangent to $\p$:
\begin{equation}\label{comom3}
\Pi X = X- (X.N) N.
\end{equation}
The commutator $[\po,\Pi]$ satisfies:
\begin{equation}\label{comom4}
[\po,\Pi]X = - (X.\po N) N-(X.N)\po N.
\end{equation}
For $X, Y$ two vectorfields on $\s$ independent of $\o$, we differentiate $\nabb^2f(\Pi X,\Pi Y)$ with respect to $\o$ using \eqref{comom2}, \eqref{comom4} and the fact that $\po$ commutes with $\nabla$:
\begin{equation}\label{comom5}
\begin{array}{l}
\ds\po(\nabb^2f(\Pi X,\Pi Y))=\nabla^2\po f(\Pi X,\Pi Y)- (X.\po N) \nabla^2f(N,\Pi Y)\\
\ds -(X.N)\nabla^2f(\po N,\Pi Y)- (Y.\po N) \nabla^2f(N,\Pi X)-(Y.N)\nabla^2f(\po N,\Pi X)\\
-\nabn\po f\th_{XY}-\nabb_{\po N}f\th_{XY}-\nabn f\po\th_{XY},
\end{array}
\end{equation}
where we have used the fact that $\th_{\Pi X\Pi Y}=\th_{XY}$ from \eqref{extth}. evaluating \eqref{comom5} at $X=e_A, Y=e_B$ yields \eqref{commutom2}. Finally, taking the trace of \eqref{commutom2} 
yields \eqref{commutom3}.
\end{proof}

\begin{lemma}
Let $\rho$ a symmetric 2-tensor on $\s$ such that $\rho(N,.)\equiv 0$. Then, we have:
\begin{equation}\label{commutom4}
\begin{array}{ll}
\ds ([\po,\divb\,]\rho)_A= & \ds -\trt\rho_{\po N A}-\th_{AB}\rho_{B\po N}-
\nabla_N\rho_{\po N B}+\th_{\po N C}\rho_{CA}\\
& \ds +(\po N)_A\th_{BC}\rho_{CB}.
\end{array}
\end{equation}
\end{lemma}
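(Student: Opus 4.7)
The idea is to derive \eqref{commutom4} by direct computation in the adapted orthonormal frame $\{N, e_A\}$, in the same spirit as the proof of Lemma \ref{commutom}. The starting point is to rewrite the tangential divergence in terms of the ambient connection. Since $\rho(N,\cdot)\equiv 0$, one has $\nabla_B\rho_{CA} = \nabb_B\rho_{CA}$ on tangential indices, and therefore
\[
(\divb\rho)_A = \delta^{BC}\nabla_B\rho_{CA} = \nabla^j\rho_{jA} - \nabla_N\rho_{NA} = \nabla^j\rho_{jA} + \rho(\nabla_NN, e_A),
\]
where the last equality uses $\rho(N,\cdot)\equiv 0$ twice (to kill $N(\rho(N,e_A))$ and $\rho(N,\nabla_Ne_A)$). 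This identity has the virtue that the first summand involves only the ambient Levi-Civita connection $\nabla$, which commutes with $\po$ because the metric $g$ is $\omega$-independent.

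To compute the commutator, I would test both sides of \eqref{commutom4} against an $\omega$-independent vector field $X$ which is $P_u$-tangent at the reference value of $\omega$. Applying $\po$ to the identity above and using $[\po,\nabla]=0$ on tensor components, the first term produces $\nabla^j(\po\rho)_{jX}$, while the second contributes Leibniz terms in $\po\rho$, $\po N$, $\po a$ and $\po(\nabb a)$. On the other side, $\divb(\po\rho)(X)$ requires first projecting $\po\rho$ onto $TP_u$ because $\po\rho$ is not $P_u$-tangent: differentiating the identity $\rho(N,\cdot)\equiv 0$ gives $(\po\rho)(N,\cdot) = -\rho(\po N,\cdot)$, and the correction between $\delta^{BC}\nabla_B(\po\rho)_{CA}$ and $\nabla^j(\po\rho)_{jA}$ is thus controlled by $\rho(\po N,\cdot)$. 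Subtracting the two expressions produces the commutator.

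The final step is to collect the resulting terms using the structure equations \eqref{frame1}, notably $\nabla_AN = \th_{AB}e_B$ and $\nabn N = -\ana$, and the extension convention \eqref{extth} to make sense of $\th(\po N, \cdot)$. Each of the five terms on the right-hand side has a clean origin: the Weingarten terms $\trt\rho_{\po N,A}$ and $\th_{AB}\rho_{B,\po N}$ come from the $\omega$-derivative of the projection built into $\divb(\po\rho)$; the term $\nabla_N\rho_{\po N,\cdot}$ comes from $\po$ hitting $\nabla_NN$ in the boundary correction; the $\th_{\po N,C}\rho_{CA}$ term arises when $\po$ hits a tangential frame vector through $\nabla N$; and the trace term $(\po N)_A\th_{BC}\rho_{CB}$ is produced by the $N$-component that $X$ acquires when its projection is differentiated.

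The main obstacle is bookkeeping the two intertwined sources of non-commutativity, namely the $\omega$-dependence of the frame (which feeds in through $\po N$) and the $\omega$-dependence of the tangentiality constraint $\rho(N,\cdot)=0$ (which causes $\po\rho$ to acquire a nonzero $N$-component $-\rho(\po N,\cdot)$). A naive attempt mixes these contributions and produces cancellations or double-counting; the correct accounting uses that the difference between the ambient divergence $\nabla^j\rho_{jA}$ and the tangential $\divb$ is exactly the $\rho(\nabla_NN,\cdot)$ boundary term, so that all $\po N$-contributions can be traced to either that boundary term or to the projection of $\po\rho$ and matched cleanly with the structure equations \eqref{frame1}.
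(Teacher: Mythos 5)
Your plan is correct and would produce \eqref{commutom4}, but it organizes the computation differently from the paper. The paper keeps the tangential frame contraction $(\divb\rho)_{\Pi X}=\nabla_A\rho_{A\Pi X}$ and differentiates it directly, so it must track how $\po$ acts on the frame vectors $e_A$ through \eqref{om20}; the resulting terms $\nabla_{\po e_A}\rho_{A\Pi X}+\nabla_A\rho_{\po e_A\Pi X}=-\nabla_N\rho_{\po N\Pi X}-\nabla_{\po N}\rho_{N\Pi X}$ then yield the third and fourth terms of \eqref{commutom4} via $\nabla_A\rho_{NB}=-\th_{AC}\rho_{CB}$. You instead contract with the ambient metric, which is $\o$-independent, so the divergence piece commutes cleanly with $\po$ and all the non-commutativity is displaced into the boundary term $\rho(\nabla_NN,\cdot)$, the projection $\Pi X$, and the normal component $\nabla_N(\po\rho)_{NA}$ that appears when converting $\nabla^j(\po\rho)_{jA}$ back to $(\divb\,\po\rho)_A$. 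This buys you the absence of frame derivatives $\po e_A$ (and of the auxiliary identity \eqref{om20}) at the cost of an extra conversion step at the end; both routes rest on the same ingredients, namely \eqref{comom7}--\eqref{comom8}, $[\po,\Pi]$, and $(\po\rho)(N,\cdot)=-\rho(\po N,\cdot)$. Two bookkeeping caveats for when you carry this out. First, the term $-\nabla_N\rho_{\po N A}$ does not come solely from $\po$ hitting $\nabla_NN$ in the boundary correction: that differentiation gives $\rho(\nabla_{\po N}N,e_A)+\rho(\nabla_N\po N,e_A)=\th_{\po N C}\rho_{CA}+\rho(\nabla_N\po N,e_A)$, and the remaining pieces $-N(\rho_{\po N A})$ and $\rho(\po N,\nabla_Ne_A)$ of $-\nabla_N\rho_{\po N A}$ arise from $\nabla_N(\po\rho)_{NA}$ through $(\po\rho)(N,\cdot)=-\rho(\po N,\cdot)$. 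Second, differentiate $\nabla_NN$ by the Leibniz rule on the two copies of $N$ rather than by differentiating the expression $-\ana$; otherwise you introduce $\po a$ and $\po\nabb a$ terms, absent from \eqref{commutom4}, which must then be converted back using $\po(-\ana)=\nabla_{\po N}N+\nabla_N(\po N)$.
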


\begin{proof}
For any symmetric 2-tensor $\nu$ on $\s$, we have:
\begin{equation}\label{comom7}
\begin{array}{l}
\nabb_C\nu_{AB}=e_C(\nu_{AB})-\nu(\nabb_Ce_A,e_B)-\nu(e_A,\nabb_C e_B)\\
=e_C(\nu_{AB})-\nu(\nabla_Ce_A-g(\nabla_Ce_A,N)N,e_B)-\nu(e_A,\nabla e_B-g(\nabla_Ce_B,N)N)\\
=e_C(\nu_{AB})-\nu(\nabla_Ce_A,e_B)-\nu(e_A,\nabla_Ce_B)-\th_{AC}\nu_{NB}-\th_{BC}\nu_{NA}\\
=\nabla_C\nu_{AB}-\th_{AC}\nu_{NB}-\th_{BC}\nu_{NA}.
\end{array}
\end{equation}
Applying \eqref{comom7} to $\rho$ and using the fact that $\rho(N,.)=\rho(.,N)\equiv 0$, we obtain:
\begin{equation}\label{comom8}
\nabb_C\rho_{AB}=\nabla_C\rho_{AB}.
\end{equation}
Let $X$ a vectorfield on $\s$ independent of $\o$. Using \eqref{comom4} and \eqref{comom8}, we have:
\begin{equation}\label{comom9}
\begin{array}{lll}
\ds\po((\divb\rho)_{\Pi X}) & = & \ds\po(\nabb_A\rho_{A\Pi X})=\po(\nabla_A\rho_{A \Pi X})\\
& \ds = & \ds\nabla_{\po e_A}\rho_{A \Pi X}+\nabla_A\rho_{\po e_A \Pi X}+\nabla_A\po\rho_{A \Pi X}
-(X.N)\nabla_A\rho_{A\po N}\\
& & \ds -(X.\po N)\nabla_A\rho_{AN}.
\end{array}
\end{equation}
Now, differentiating $g(e_A,e_B)=\delta_{AB}$ and $g(e_A,N)=0$ with respect to $\o$, we obtain:
\begin{equation}\label{om20}
\left\{\begin{array}{l}
\po e_1=g(\po e_1,e_2)e_2-g(\po N,e_1)N,\\
\po e_2=-g(\po e_1,e_2)e_1-g(\po N,e_2)N.
\end{array}\right.
\end{equation}
This yields
\begin{equation}\label{comom10}
\nabla_{\po e_A}\rho_{A \Pi X}+\nabla_A\rho_{\po e_A \Pi X}=-\nabla_N\rho_{\po N \Pi X}-\nabla_{\po N}\rho_{N \Pi X}.
\end{equation}
Since $\rho(N,.)=\rho(.,N)\equiv 0$, we have:
\begin{equation}\label{comom11}
\nabla_A\rho_{N B}=e_A(\rho_{NB})-\rho_{\nabla_AN B}-\rho_{N \nabla_Ae_B}=-\th_{AC}\rho_{CB}.
\end{equation}
Using again $\rho(N,.)=\rho(.,N)\equiv 0$, we have:
\begin{equation}\label{comom11bis}
\po\rho(N,.)=-\rho(\po N,.),
\end{equation}
which together with \eqref{comom7} applied to $\po\rho$ yields:
\begin{equation}\label{comom12}
\begin{array}{ll}
\ds \nabla_A\po\rho_{AB} & =\nabb_A\po\rho_{AB}+\th_{AA}\po\rho_{NB}+\th_{AB}\po\rho_{AN}\\
& \ds =(\divb\po\rho)_{B}-\trt\rho_{\po N B}-\th_{AB}\rho_{\po N A}.
\end{array}
\end{equation}
Finally, \eqref{comom9}, \eqref{comom10}, \eqref{comom11} and \eqref{comom12} yield:
\begin{equation}\label{comom13}
\begin{array}{ll}
\ds\po((\divb\rho)_{\Pi X})= & \ds (\divb\po\rho)_{\Pi X}-\trt\rho_{\po N \Pi X}-\th_{B \Pi X}\rho_{B \po N}-
\nabla_N\rho_{\po N \Pi X}\\
& \ds +\th_{\po N B}\rho_{B \Pi X}-(X.N)\nabla_B\rho_{B\po N}+(X.\po N)\th_{BC}\rho_{CB}.
\end{array}
\end{equation}
Taking $X=e_A$ in \eqref{comom13} yields \eqref{commutom4}.
\end{proof}

We conclude this section by recalling the link between $\nabb_A\nabn f$ and $\nabla^2f(e_A,N)$ for a 
scalar function $f$:
\begin{equation}\label{comom6}
\nabb_A\nabn f=\nabla^2f(e_A,N)+\th(e_A,\nabb f).
\end{equation}
  
\subsubsection{Estimates for $\po a$}

Note that the first equation of \eqref{struct1}, \eqref{init} and the fact that $(g,k,\s)$ coincides with $(\delta,0,\R^3)$ for $|x|\geq 2$ yields:
\begin{equation}\label{initom}
\nabla^p(\po a)=0,\,\nabla^p\po\theta=0,\,\nabla^p(\po N-\po\o)=0\textrm{ for all }p\in\N\textrm{ on }u=-2,
\end{equation} 
so that integrations by parts will not create boundary terms at $u=-2$. 

Differentiating the second equation of \eqref{struct1} with respect to $\o$, and using the commutator formula \eqref{commutom1}, \eqref{commutom3}, the fact that $\po N$ is tangent to $\p$ by \eqref{comom1ter}, and 
\eqref{comom6}, we obtain:
\begin{equation}\label{om1}
\nabn\po a -a^{-1}\lap\po a = h,
\end{equation}
where $h$ is given by:
\begin{equation}\label{om2}
\begin{array}{ll}
h= &-\nabb_{\po N}a-a^{-2}\po a\lap a -2\nabb_{\po N}\nabn a+2\th(\po N,\nabb a)\\
& -\po\trt\nabn a-\trt\nabb_{\po N} a +2\th\po\th+\po(\nabn(k_{NN}))+2R_{N\po N}.
\end{array}
\end{equation}
Using \eqref{frame1}, we have:
\begin{equation}\label{om3}
\nabn (k_{NN})=\nabn k_{NN}- 2k(\nabb a,N),
\end{equation}
which together with \eqref{commutom1} yields:
\begin{equation}\label{om4}
\begin{array}{ll}
\ds\po(\nabn (k_{NN}))= & \nabla_{\po N}k_{NN}+2\nabn k_{N \po N}- 2k(\nabb a,\po N)\\
& -\ds 2k(\nabb\po a,N)+ 2\nabb_{\po N} ak_{NN}+ 2\nabn ak_{N\po N}.
\end{array}
\end{equation} 
Using \eqref{om2} and \eqref{om4}, we estimate the norm of $h$ in $\ll{2}$:
\begin{equation}\label{om5}
\begin{array}{l}
\ds\norm{h}_{\ll{2}}\lesssim\norm{\nabb a}_{\ll{2}}\norm{\po N}_{\ll{\infty}}+\norm{a^{-2}}_{L^\infty}\norm{\lap a}_{\ll{2}}\norm{\po a}_{L^\infty} \\
\ds +\norm{\nabb\nabn a}_{\ll{2}}\norm{\po N}_{\ll{\infty}}+\norm{\th}_{\ll{4}}\norm{\nabb a}_{\ll{4}}\norm{\po N}_{\ll{\infty}} \\
\ds +\norm{\po\trt}_{\ll{4}}\norm{\nabn a}_{\ll{4}}+\norm{\trt}_{\ll{4}}\norm{\nabb a}_{\ll{4}}\norm{\po N}_{\ll{\infty}}\\
\ds \times\norm{\po N}_{\ll{\infty}}+\norm{\th}_{\ll{4}}\norm{\po\th}_{\ll{4}}+\norm{\nabla k}_{\ll{2}}\norm{\po N}_{\ll{\infty}}\\
\ds +\norm{k}_{\ll{4}}\norm{\nabla a}_{\ll{2}}\norm{\po N}_{\ll{\infty}}+\norm{k}_{\ll{4}}\norm{\nabb\po a}_{\ll{4}}\\
\ds +\norm{R}_{\ll{2}}\norm{\po N}_{\ll{\infty}}.
\end{array}
\end{equation}
Together with \eqref{small2}, \eqref{boot}, \eqref{appboot}, \eqref{appboot1} and \eqref{appsmall2}, this yields:
\begin{equation}\label{om6}
\ds\norm{h}_{\ll{2}}\lesssim\ep\norm{\po N}_{\ll{\infty}}+\ep\norm{\po a}_{\ll{\infty}}+\ep\norm{\nabb\po a}_{\ll{4}}+\ep\norm{\po\th}_{\ll{4}}.
\end{equation}
Proposition \ref{p7}, \eqref{initom}, \eqref{om1} and \eqref{om6} yield:
\begin{equation}\label{om15}
\begin{array}{r}
\ds\norm{\po a}_{\l{\infty}{2}}+\norm{\nabla\po a}_{\ll{2}}+\norm{\nabb\po a}_{\l{\infty}{2}}\\
\ds +\norm{\nabb^2\po a}_{\ll{2}}\lesssim \ep(\norm{\po a}_{L^\infty}+\norm{\po N}_{\ll{\infty}}+\norm{\po\th}_{\ll{4}}).
\end{array}
\end{equation}

Next, we differentiate equation \eqref{om1} by $\nabn$. We obtain:
\begin{equation}\label{zoo}
\nabn(\nabn\po a) -a^{-1}\lap(\nabn\po a) = [\nabn, a^{-1}\lap](\po a)+\nabn(h),
\end{equation}
where $h$ is given by \eqref{om2}. Next, we estimate each term in the right-hand side of \eqref{zoo} starting with the first one. In view of the commutator formula \eqref{commut1}, we have:
\be\label{zoo1}
a[\nabn,a^{-1}\lap]\po a=h_1+2a^{-1}\nabb a\nabb\nabn \po a+a^{-1}\lap a\nabn\po a,
\ee
where the scalar function $h_1$ is given by:
$$h_1=-(\trt+a^{-1}\nabn a)\lap \po a-2\hth\c\nabb^2\po a-2R_{N.}\c\nabb \po a-\nabb\trt\c\nabb \po a+2\hth\c a^{-1}\nabb a\c\nabb \po a.$$
$h_1$ satisfies the following estimate:
\bee
\norm{h_1}_{\l{2}{1}}&\les& (\norm{\th}_{\l{\infty}{2}}+\norm{a^{-1}\nabn a}_{\l{\infty}{2}})\norm{\nabb^2\po a}_{\ll{2}}\\
\nn&&+(\norm{R}_{\ll{2}}+\norm{\nabb\trt}_{\ll{2}}+\norm{\hth a^{-1}\nabb a}_{\ll{2}})\norm{\nabb\po a}_{\l{\infty}{2}}
\eee
Together with the estimates \eqref{om15} for $\po a$, \eqref{small2} for $R$, \eqref{appboot} for $a$, and \eqref{appboot1} for $\th$, we obtain:
\be\lab{zoo2}
\norm{h_1}_{\l{2}{1}}\les \ep(\norm{\po a}_{\ll{\infty}}+\norm{\po N}_{\ll{\infty}}+\norm{\po\th}_{\ll{4}}).
\ee
Also, the second term in \eqref{zoo1} satisfies in view of the product estimate \eqref{prod1}:
$$\sup_{j\geq 0}2^{-j}\norm{P_j(a^{-1}\nabb a\nabb\nabn \po a)}_{\ll{2}}\les  \norm{\nabb\nabn \po a}_{\lhs{2}{-\frac{1}{2}}}\norm{a^{-1}\nabb a}_{\lhs{\infty}{\frac{1}{2}}},$$
which together with the Lemma \ref{lemma:vacances}, the embedding \eqref{clp12}, and the estimate \eqref{boot} for $a$ yield:
\be\lab{zoo2bis}
\sup_{j\geq 0}2^{-j}\norm{P_j(a^{-1}\nabb a\nabb\nabn \po a)}_{\ll{2}}\les \ep\norm{\nabn \po a}_{\lhs{2}{\frac{1}{2}}}.
\ee
The third term in the right-hand side of \eqref{zoo1} satisfies in view of the product estimate \eqref{prod1}:
\bee
&&\sup_{j\geq 0}2^{-j}\norm{P_j(a^{-1}\lap a\nabn \po a)}_{\ll{2}}\\
&\les&  \norm{\nabn \po a}_{\lhs{2}{\frac{1}{2}}}\norm{a^{-1}\lap a}_{\lhs{\infty}{-\frac{1}{2}}}\\
&\les&  \norm{\nabn \po a}_{\lhs{2}{\frac{1}{2}}}(\norm{\divb(a^{-1}\nabb a)}_{\lhs{\infty}{-\frac{1}{2}}}+\norm{a^{-2}|\nabb a|^2}_{\lhs{\infty}{-\frac{1}{2}}})
\eee
which together with the Lemma \ref{lemma:vacances}, the embedding \eqref{clp12}, and the estimate \eqref{boot} for $a$ yield:
\be\lab{zoo2ter}
\sup_{j\geq 0}2^{-j}\norm{P_j(a^{-1}\lap a\nabn \po a)}_{\ll{2}}\les \ep\norm{\nabn \po a}_{\lhs{2}{\frac{1}{2}}}.
\ee

Next, we estimate the second term in \eqref{zoo}, i.e. $\nabn(h)$. In view of \eqref{om2}, we have:
\be\lab{zoo3}
\nabn(h)= h_2+h_3-2a^{-1}\divb(a\po N\nabn^2 a)+\nabn(\po(\nabn (k_{NN})))+2\nabn R_{N\po N},
\ee
where $h_2$ and $h_3$ are given respectively by:
\bee
h_2&=& a^{-2}\nabb\po a\c\nabb\nabn a+a^{-3}\po a\nabb a\c\nabb\nabn a+a^{-2}\po a[\nabn, \lap]a-2a^{-3}\nabn a\po a\lap a \\
&&+2a^{-3}|\nabb a|^2\nabn\po a+2\nabn\th(\po N, \nabb a)+2\th(\nabn\po N, \nabb a)+\th(\po N, \nabn\nabb a)\\
&&-\nabn(\po\trt)\nabn a-\nabn\trt\nabb_{\po N}( a)-\trt\nabn\nabb_{\po N} a+2\nabn\nabb a\c \nabb\po a\\
&&-2\nabn\nabb_{\po N} a\nabn a+2\nabn\th\c\po\th+2\th\nabn\po\th-2R_{\ana \po N}+2R_{N \nabn\po N},
\eee
and:
\bee
h_3&=& a^{-1}\divb(a^{-1}\po a\nabb\nabn a)+a^{-2}\nabb a\c\nabb\nabn\po a+a^{-2}\nabn(\po a)\lap a+2\divb(\po N)\nabn^2 a\\
&&-6a^{-1}\nabb_{\po N}a\nabn^2 a-\po\trt\nabn^2 a+a\nabb a\c\nabn\nabb(\po a).
\eee
Let us first estimate $h_2$ and $h_3$. In view of the definition of $h_2$, we have:
\bee
&&\norm{h_2}_{\l{2}{1}}\\
&\les & \norm{a^{-2}}_{L^\infty}(\norm{\nabb\po a}_{\l{\infty}{2}}+\norm{\po a}_{\l{\infty}{4}}\norm{\nabb a}_{\l{\infty}{4}})\norm{\nabb\nabla a}_{\ll{2}}\\
&&+\norm{a^{-3}\po a}_{\ll{\infty}}(\norm{[\nabn, \lap]a}_{\l{2}{1}}+\norm{\nabn a}_{\ll{\infty}{2}}\norm{\lap a}_{\ll{2}})\\
&&+\norm{a^{-3}}_{L^\infty}\norm{\nabb a}^2_{\l{\infty}{4}}\norm{\nabn\po a}_{\ll{2}}\\
&&+\norm{\nabn\th}_{\ll{2}}\norm{\po N\nabb a}_{\l{\infty}{2}}+\norm{\th}_{\l{\infty}{2}}\norm{\nabn\po N}_{\ll{4}}\norm{\nabb a)}_{\ll{4}}\\
&&+\norm{\th\po N}_{\l{\infty}{2}}\norm{\nabn\nabb a}_{\ll{2}}+\norm{\nabn(\po\trt)}_{\ll{2}}\norm{\nabn a}_{\l{\infty}{2}}\\
&&+\norm{\nabn\trt}_{\ll{2}}\norm{\nabb_{\po N}( a)}_{\l{\infty}{2}}+\norm{\trt}_{\l{\infty}{2}}\norm{\nabn\nabb_{\po N} a}_{\ll{2}}\\
&&+\norm{\nabn\nabb a}_{\ll{2}}\norm{\nabb\po a}_{\l{\infty}{2}}+\norm{\nabn\nabb_{\po N} a}_{\ll{2}}\norm{\nabn a}_{\l{\infty}{2}}\\
&&+\norm{\nabn\th}_{\ll{2}}\norm{\po\th}_{\l{\infty}{2}}+\norm{\th}_{\l{\infty}{2}}\norm{\nabn\po\th}_{\ll{2}}\\
&&+\norm{R}_{\ll{2}}\norm{\ana}_{\l{\infty}{2}}\norm{\po N}_{\ll{\infty}}+\norm{R}_{\ll{2}}\norm{\nabn\po N}_{\l{\infty}{2}},
\eee
which together with the commutator formula \eqref{ad12} for $[\nabn, \lap]$, the estimates \eqref{boot} \eqref{appboot} for $a$, \eqref{boot1} \eqref{appboot1} for $\th$, the estimate \eqref{small2} for $R$, and the estimate \eqref{om15} for $\po a$ yields:
\bea\lab{zoo4}
\nn\norm{h_2}_{\l{2}{1}}&\les& \ep(\norm{\po a}_{\ll{\infty}}+\norm{\nabn\po N}_{\ll{4}}+\norm{\po N}_{\ll{\infty}}+\norm{\po\th}_{\ll{4}}\\
&&+\norm{\nabn\po\th}_{\ll{2}}+\norm{\po\th}_{\l{\infty}{2}}).
\eea
Also, in view of the definition of $h_3$ and the product estimate \eqref{prod1}, the finite band property for $P_j$ and the estimates \eqref{zoo2bis} and \eqref{zoo2ter}, we have:
\bee
&&\sup_{j\geq 0}2^{-j}\norm{P_j(ah_3)}_{\lp{2}}\\
&\les & \norm{a^{-1}\po a}_{\ll{\infty}}\norm{\nabb\nabn a}_{\ll{2}}+\ep\norm{\nabn\po a}_{\lhs{2}{\frac{1}{2}}}+(\norm{\divb(\po N)}_{\lhs{\infty}{\frac{1}{2}}}\\
&&+\norm{\nabb_{\po N}a}_{\lhs{\infty}{\frac{1}{2}}}+\norm{a\po\trt}_{\lhs{\infty}{\frac{1}{2}}})\norm{\nabn^2 a}_{\lhs{2}{-\frac{1}{2}}},
\eee
which together with the estimate \eqref{nabn2a1} for $\nabn a$ and $\nabn^2a$, the estimates  \eqref{boot} \eqref{appboot} for $a$, the estimate \eqref{om15} for $\po a$, the embedding \eqref{clp12} and the product estimate \eqref{prod15} yields:
\bea\lab{zoo5}
\sup_{j\geq 0}2^{-j}\norm{P_j(ah_3)}_{\lp{2}}&\les& \ep(\norm{\po a}_{\ll{\infty}}+\norm{\nabb\po N}_{\l{\infty}{2}}+\norm{\po N}_{\ll{\infty}}\\
\nn&&+\norm{\divb(\po N)}_{\lhs{\infty}{\frac{1}{2}}}+\norm{\po\th}_{\ll{4}}+\norm{\nabla\po\trt}_{\ll{2}}).
\eea

Next, we estimate the third term in \eqref{zoo3}. In view of the product estimate \eqref{prod6} with $b=\frac{1}{2}$, $f=\nabn^2 a$ and $G=a\po N$, we have:
\bee
&&\norm{\divb(a\po N\nabn^2 a)}_{\lhs{2}{-\frac{3}{2}}}\\
&\les& \norm{\nabn^2 a}_{\lhs{2}{-\frac{1}{2}}}(\norm{a\po N}_{\ll{\infty}}+\norm{\nabb (a\po N)}_{\l{\infty}{2}}).
\eee
Together with the estimate \eqref{nabn2a1} for $\nabn^2a$, the estimate \eqref{nabn2a1} for $\nabn a$, and the estimates \eqref{boot} \eqref{appboot} for $a$, this yields:
\be\lab{zoo6}
\norm{\divb(a\po N\nabn^2 a)}_{\lhs{2}{-\frac{3}{2}}}\les\ep(\norm{\po N}_{\ll{\infty}}+\norm{\nabb\po N}_{\l{\infty}{2}}).
\ee

Next, we estimate the third term in \eqref{zoo3}. We have:
\bee
&&\nabn(\po(\nabn(k_{NN})))\\
&=&\nabla_{\po N}(\nabn k_{NN})+[\nabn, \nabla_{\po N}](k_{NN})+2\nabn(\nabn(k_{N \po N}))\\
&=&\divb(\po N \nabn (k_{NN}))-\divb(\po N)\nabn(k_{NN})+\nabla_{\nabn\po N}(k_{NN})+a^{-1}\nabb_{\po N}a\nabn (k_{NN})\\
&&-\th(\po N,A)\nabb_A(k_{NN})+2\nabn(\nabn k_{N \po N}-k(\ana, \po N)+k(N, \nabn\po N))\\
&=&\divb(\po N (\nabn k_{NN}-2k(\ana, N)))-\divb(\po N)(\nabn k_{NN}-2k(\ana, N))\\
&&+\nabla_{\nabn\po N}(k_{NN})+a^{-1}\nabb_{\po N}a(\nabn k_{NN}-2k(\ana, N))\\
&&-\th(\po N,A)(\nabla_Ak_{NN}+2\th_{AB}k_{BN})+2\nabn(\nabn k_{N \po N})-2\nabn k(\ana, \po N)\\
&& -2k(\nabn(\ana), \po N) -4k(\ana, \nabn\po N)+2\nabn k(N, \nabn\po N)\\
&&+2k(N, \nabn\nabn\po N),
\eee
where we used the structure equations \eqref{frame1} for $N$ and the commutator formula \eqref{scommut}. This yields:
\be\lab{zoo7}
\nabn(\po(\nabn (k_{NN})))= a^{-1}\divb(a\po N \nabn k_{NN})+2\nabn(\nabn k_{N \po N})+2k(N, \nabn\nabn\po N)+h_4,
\ee
with $h_4$ satisfying:
\bea\lab{zoo8}
&&\norm{h_4}_{\l{2}{1}}\\
\nn&\les& \Big(\norm{\nabla k}_{\ll{2}}(\norm{\ana}_{\l{\infty}{2}}+\norm{\th}_{\l{\infty}{2}})\\
\nn&&+\norm{k}_{\l{\infty}{2}}(\norm{\nabb\nabla a}_{\ll{2}}+\norm{\ana}^2_{\ll{4}}+\norm{\th}^2_{\ll{4}})\Big)\norm{\po N}_{\ll{\infty}}\\
\nn&&+\Big(\norm{\nabla k}_{\ll{2}}+\norm{k}_{\l{\infty}{4}}\norm{\ana}_{\l{\infty}{4}}\Big)\norm{\nabla\po N}_{\l{\infty}{2}})\\
\nn&\les& \ep(\norm{\po N}_{\ll{\infty}}+\norm{\nabla\po N}_{\l{\infty}{2}}),
\eea
where we used in the last inequality the estimate \eqref{small2} for $k$, the estimate \eqref{boot} for $a$ and the estimate \eqref{appboot1} for $\th$. Now, in view of the constraint equations \eqref{const1}, we have
$$\nabn k_{NA}=-\nabla_Bk_{BA}.$$
This yields:
\bee
\nabn k_{N \po N}&=&-\nabla_Bk_{B \po N}\\
&=&-\divb(k_{\po N .})+k_{AB}(\nabla_A\po N)_B-\trt k_{N \po N}.
\eee
Differentiating with respect to $\nabn$, we obtain:
\bee
&&\nabn(\nabn k_{N \po N})\\
&=&-\divb(\nabn (k_{\po N .}))-[\nabn, \divb](k_{\po N .})+\nabn k_{AB}(\nabla_A\po N)_B+k_{AB}(\nabn\nabla_A\po N)_B\\
&&-\nabn\trt k_{N \po N}-\trt \nabn k_{N \po N}+\trt k(\ana, \po N)-\trt k_{N \nabn\po N}\\
&=&-\divb(\nabn k_{\po N .})-(\ana\nabn+\th\nabb+R+\ana\c\th)\c k_{\po N .}\\
&&+\nabn k_{AB}(\nabla_A\po N)_B+k_{AB}([\nabn,\nabb]_A\po N)_B-\nabn\trt k_{N \po N}-\trt \nabn k_{N \po N}\\
&&+\trt k(\ana, \po N)-\trt k_{N \nabn\po N}\\
&=&-\divb(\nabn k_{\po N .})-(\ana\nabn+\th\nabb+R+\ana\c\th)\c k_{\po N .}\\
&&+\nabn k_{AB}(\nabla_A\po N)_B+k\c (\ana\nabn+\th\nabb+R+\ana\c\th)\c\po N\\
&&-\nabn\trt k_{N \po N}-\trt \nabn k_{N \po N}+\trt k(\ana, \po N)-\trt k_{N \nabn\po N},
\eee
where we used the commutator estimate \eqref{commut}. This yields:
\be\lab{zoo9}
\nabn(\nabn k_{N \po N})= -a^{-1}\divb(a\nabn k_{\po N .})+h_5,
\ee
with $h_5$ satisfying:
\bea\lab{zoo10}
&&\norm{h_5}_{\l{2}{1}}\\
\nn&\les& \norm{\nabla k}_{\ll{2}}((\norm{\ana}_{\l{\infty}{2}}+\norm{\th}_{\l{\infty}{2}})\norm{\po N}_{\ll{\infty}}+\norm{\nabla\po N}_{\l{\infty}{2}})\\
\nn&&+\norm{\nabla\po N}_{\l{\infty}{2}}\norm{k}_{\ll{4}}(\norm{\th}_{\ll{4}}+\norm{\ana}_{\ll{4}})\\
\nn&&+\norm{\nabn\th}_{\ll{2}}\norm{k}_{\l{\infty}{2}}\norm{\po N}_{\ll{\infty}}\\
\nn&\les& \ep(\norm{\po N}_{\ll{\infty}}+\norm{\nabla\po N}_{\l{\infty}{2}}),
\eea
where we used in the last inequality the estimate \eqref{small2} for $k$, the estimates \eqref{boot} \eqref{appboot} for $a$, and the estimates \eqref{boot1} \eqref{appboot1} for $\th$. 

Next, we estimate the fourth term in \eqref{zoo3}. Using the twice contracted Bianchi identities \eqref{bianchi}, we have
$$\nabn R_{AN}=-\nabla_BR_{AB}+k\c\nabla_Ak.$$
This yields:
$$\nabn R_{N \po N}=-\divb(R_{\po N .})+\trt R_{N \po N}-R_{AB}(\nabla_A\po N)_B+k \c\nabla_{\po N}k.$$
We obtain:
\be\lab{zoo11}
\nabn R_{N \po N}=-a^{-1}\divb(aR_{\po N .})+h_6,
\ee
with $h_6$ satisfying:
\bea\lab{zoo11bis}
\norm{h_6}_{\l{2}{1}}&\les& \norm{R}_{\ll{2}}(\norm{\nabb\po N}_{\l{\infty}{2}}+(\norm{\ana}_{\l{\infty}{2}}\\
\nn&&+\norm{\th}_{\l{\infty}{2}})\norm{\po N}_{\ll{\infty}})+\norm{\nabla k}_{\ll{2}}\norm{k}_{\l{\infty}{2}}\norm{\po N}_{\ll{\infty}}\\
\nn&\les& \ep(\norm{\po N}_{\ll{\infty}}+\norm{\nabb\po N}_{\l{\infty}{2}}),
\eea
where we used in the last inequality the estimate \eqref{small2} for $R$ and $k$, the estimate \eqref{appsmall2} for $k$, and the estimate \eqref{appboot1} for $\th$. 

Finally, \eqref{zoo3}, \eqref{zoo7}, \eqref{zoo9} and \eqref{zoo11} imply:
\bee
\nabn(h)&=& h_2+h_3+h_4+2h_5+2h_6-2a^{-1}\divb(a\po N\nabn^2 a)+2k(N, \nabn\nabn\po N)\\
&&+a^{-1}\divb(a\po N \nabn k_{NN})-2a^{-1}\divb(a\nabn k_{\po N .})-2a^{-1}\divb(aR_{\po N .}).
\eee
Together with \eqref{zoo} and \eqref{zoo1}, this implies:
\begin{equation}\label{zoo12}
\nabn(\nabn\po a) -a^{-1}\lap(\nabn\po a) = h_7,
\end{equation}
where $h_7$ is given by:
\bee
h_7&=& a^{-1}(h_1+2a^{-1}\nabb a\nabb\nabn \po a+a^{-1}\lap a\nabn\po a)+h_2+h_3+h_4+2h_5+2h_6\\
&&-2a^{-1}\divb(a\po N\nabn^2 a)+2k(N, \nabn\nabn\po N)+a^{-1}\divb(a\po N \nabn k_{NN})\\
&&-2a^{-1}\divb(a\nabn k_{\po N .})-2a^{-1}\divb(aR_{\po N .}).
\eee
Together with the strong Bernstein inequality for scalars \eqref{eq:strongbernscalarbis}, the finite band property for $P_j$, and the estimate \eqref{boot} for $a$, this yields:
\bee
&&\norm{P_j(ah_7)}_{\ll{2}}\\
&\les& 2^j\Big(\norm{h_1}_{\l{2}{1}}+\norm{h_2}_{\l{2}{1}}+\norm{h_4}_{\l{2}{1}}+\norm{h_5}_{\l{2}{1}}\\
&&+\norm{h_6}_{\l{2}{1}}\Big)+\norm{P_j(ah_3)}_{\ll{2}}+\norm{P_j(a^{-1}\nabb a\c\nabb\nabn \po a)}_{\ll{2}}\\
&&+\norm{P_j(a^{-1}\lap a\nabn \po a)}_{\ll{2}}+\norm{P_j(\divb(a\po N\nabn^2 a))}_{\ll{2}}\\
&&+\norm{P_j(\divb(ak(N, \nabn\nabn\po N)))}_{\ll{2}}+2^j(\norm{\nabn k}_{\ll{2}}+\norm{R}_{\ll{2}})\norm{a}_{\ll{\infty}}\norm{\po N}_{\ll{\infty}}.
\eee
Together with the estimates \eqref{om6}, \eqref{zoo2}, \eqref{zoo2bis}, \eqref{zoo2ter}, \eqref{zoo4}, \eqref{zoo8}, \eqref{zoo10} and \eqref{zoo11bis}, the estimate \eqref{boot} for $a$, the estimate \eqref{om15} for $\po a$, and the estimate \eqref{small2} for $R$ and $k$, we obtain:
\bee
&&\norm{P_j(ah_7)}_{\ll{2}}\\
&\les& \ep 2^j\Big(\norm{\po a}_{\ll{\infty}}+\norm{\po N}_{\ll{\infty}}+\norm{\nabn\po N}_{\ll{4}}+\norm{\nabla(\po N)}_{\l{\infty}{2}}\\
&&+\norm{\divb(\po N)}_{\lhs{\infty}{\frac{1}{2}}}+\norm{\po\th}_{\ll{4}}+\norm{\nabla\po\th}_{\ll{2}}+\norm{\po\th}_{\l{\infty}{2}}\\
&&+\norm{\nabn \po a}_{\lhs{2}{\frac{1}{2}}}\Big)+\norm{P_j(\divb(a\po N\nabn^2 a))}_{\ll{2}}+\norm{P_j(ak(N, \nabn\nabn\po N))}_{\ll{2}}.
\eee
Together with the estimate \eqref{zoo6}, we finally obtain:
\bea\lab{zoo13}
&&\norm{ah_7}_{\lhs{2}{-\frac{3}{2}}}\\
\nn&\les& \ep\Big(\norm{\po a}_{\ll{\infty}}+\norm{\po N}_{\ll{\infty}}+\norm{\nabn\po N}_{\ll{4}}+\norm{\nabla\po N}_{\l{\infty}{2}}\\
\nn&&+\norm{\divb(\po N)}_{\lhs{\infty}{\frac{1}{2}}}+\norm{\nabla\po\th}_{\ll{2}}+\norm{\nabn \po a}_{\lhs{2}{\frac{1}{2}}}\Big)\\
\nn&&+\norm{ak(N, \nabn\nabn\po N)}_{\lhs{2}{-\frac{3}{2}}}.
\eea

Now, in view of \eqref{zoo12} and Proposition \ref{prop:parab3}, we have:
$$\norm{\nabn\po a}_{\lhs{2}{\frac{1}{2}}}+\norm{\nabn\po a}_{\lhs{\infty}{-\frac{1}{2}}}+\norm{\nabn^2\po a}_{\lhs{2}{-\frac{3}{2}}}\les \norm{ah_7}_{\lhs{2}{-\frac{3}{2}}}.$$
Together with \eqref{zoo13}, this yields:
\bea\lab{xxxooo}
&&\norm{\nabn\po a}_{\lhs{2}{\frac{1}{2}}}+\norm{\nabn\po a}_{\lhs{\infty}{-\frac{1}{2}}}+\norm{\nabn^2\po a}_{\lhs{2}{-\frac{3}{2}}}\\
\nn&\les& \ep\Big(\norm{\po a}_{\ll{\infty}}+\norm{\po N}_{\ll{\infty}}+\norm{\nabn\po N}_{\ll{4}}+\norm{\nabla\po N}_{\l{\infty}{2}}\\
\nn&&+\norm{\divb(\po N)}_{\lhs{\infty}{\frac{1}{2}}}+\norm{\nabla\po\th}_{\ll{2}}+\norm{\nabn \po a}_{\lhs{2}{\frac{1}{2}}}\Big)\\
\nn&&+\norm{ak(N, \nabn\nabn\po N)}_{\lhs{2}{-\frac{3}{2}}}.
\eea
In view of Corollary \ref{cor:commLP4}, we have
\be\lab{xoox}
\norm{\po a}_{\ll{\infty}}\les \norm{\nabb^2\po a}_{\ll{2}}+\norm{\nabn\po a}_{\lhs{2}{\frac{1}{2}}}.
\ee
Thus, we finally obtain, in view of \eqref{xxxooo} and \eqref{xoox}:
\bea\lab{zoo14}
&&\norm{\nabn\po a}_{\lhs{2}{\frac{1}{2}}}+\norm{\nabn\po a}_{\lhs{\infty}{-\frac{1}{2}}}+\norm{\nabn^2\po a}_{\lhs{2}{-\frac{3}{2}}}\\
\nn&\les& \ep\Big(\norm{\po N}_{\ll{\infty}}+\norm{\nabn\po N}_{\ll{4}}+\norm{\nabla\po N}_{\l{\infty}{2}}\\
\nn&&+\norm{\divb(\po N)}_{\lhs{\infty}{\frac{1}{2}}}+\norm{\nabla\po\th}_{\ll{2}}\Big)+\norm{ak(N, \nabn\nabn\po N)}_{\lhs{2}{-\frac{3}{2}}}.
\eea

\subsubsection{Estimates for $\po\th$}

Let us start by showing that $\po\hth$ is traceless when seen as a tensor on $\p$. Differentiating 
\eqref{extth2} with respect to $\o$, we obtain:
\begin{equation}\label{om16}
\begin{array}{ll}
\po\hth(X,Y)= & \ds\po\th(X,Y)-\frac{1}{2}\po\trt (X.Y-(X.N)(Y.N))\\
& \ds+\frac{1}{2}\trt ((X.\po N)(Y.N)+(X.N)(Y.\po N)).
\end{array}
\end{equation}
which yields:
\begin{equation}\label{om17}
\po\hth_{AB}=\po\th_{AB}-\frac{1}{2}\po\trt\delta_{AB},
\end{equation}
so that:
\begin{equation}\label{om18}
\textrm{tr}(\po\hth)=\textrm{tr}(\po\th)-\po\trt.
\end{equation}
We compute $\po\trt$:
\begin{equation}\label{om19}
\po\trt=\po(\th_{AA})=\textrm{tr}(\po\th)+2\th(e_A,\po e_A).
\end{equation}
Together with \eqref{extth}, \eqref{om19} and \eqref{om20}, this yields
\begin{equation}\label{om21}
\po\trt=\textrm{tr}(\po\th).
\end{equation}
Finally, \eqref{om18} and \eqref{om21} imply that $\po\hth$ is traceless:
\begin{equation}\label{om22}
\textrm{tr}(\po\hth)=0.
\end{equation}

We now turn to the estimates for $\po\trt$. Differentiating the first equation of \eqref{struct1} with 
respect to $\o$, we obtain:
\begin{equation}\label{om23}
\po\trt=-\po a+2k_{N\po N},
\end{equation}
so that:
\begin{equation}\label{om24}
\begin{array}{ll}
\ds\nabla\po\trt= & \ds -\nabla\po a+2\nabla k(N,\po N)+2k(\nabla N,\po N)\\
& \ds +2k(N,\nabla\po N),
\end{array}
\end{equation}
which in turn yields:
\begin{equation}\label{om25}
\begin{array}{ll}
\ds\norm{\nabla\po\trt}_{\ll{2}}\lesssim &\norm{\nabla\po a}_{\ll{2}}+\norm{\nabla k}_{\ll{2}}\norm{\po N}_{\ll{\infty}}\\
&\ds+\norm{k}_{\ll{4}}\norm{\nabla N}_{\ll{4}}\norm{\po N}_{\ll{\infty}}+\norm{k}_{\ll{4}}\norm{\nabla\po N}_{\ll{4}},
\end{array}
\end{equation}
Together with Proposition \ref{p2}, \eqref{small2}, \eqref{appboot} and \eqref{appsmall2}, we obtain:
\begin{equation}\label{om26}
\ds\norm{\nabla\po\trt}_{\ll{2}}\lesssim \ds\norm{\nabla\po a}_{\ll{2}}+\ep\norm{\po N}_{\ll{\infty}}+\ep\norm{\nabla\po N}_{\ll{4}}.
\end{equation}

We now turn to the estimates for $\nabb\po\hth$. We differentiate the third equation of \eqref{struct1} with respect to $\o$. Using \eqref{commutom1}, and 
\eqref{commutom4}, we obtain:
\begin{equation}\label{om28}
(\divb\po\hth)_A = h,
\end{equation}
where $h$ is given by:
\begin{equation}\label{om29}
\begin{array}{ll}
\ds h = & \ds \trt\hth_{\po N A}+\th_{AB}\hth_{B\po N}+
\nabla_N\hth_{\po N A}-\th_{\po N C}\hth_{CA}\\
& \ds -(\po N)_A\th_{BC}\hth_{CB}+\frac{1}{2}\nabb_A\po\trt-\frac{1}{2}\nabn\trt(\po N)_A+R_{A \po N}\\
& \ds -(\po N)_AR_{NN}.
\end{array}
\end{equation}
Differentiating \eqref{extth2} with respect to $\nabn$, we obtain:
\begin{equation}\label{om27}
\nabn\hth_{AB}=\nabn\th_{AB}-\frac{1}{2}\nabn\trt\delta_{AB}
\end{equation}
Also, the definition of $\trt$ and $\hth$ implies:
\begin{equation}\label{om30}
\th_{AC}\hth_{CB}-\hth_{AC}\th_{CB}=0,
\end{equation}
which together with \eqref{om29} and \eqref{om27} yields:
\begin{equation}\label{om31}
\begin{array}{ll}
\ds h = & \ds \frac{1}{2}\nabb_A\po\trt+\nabn\th_{\po N B}-\frac{1}{2}\nabn\trt(\po N)_A+\trt\hth_{\po N A}-(\po N)_A\th_{BC}\hth_{CB}\\
& \ds +R_{A \po N}-(\po N)_AR_{NN}.
\end{array}
\end{equation}
We estimate the norm of $h$ in $\ll{2}$:
\begin{equation}\label{om32}
\begin{array}{ll}
\ds \norm{h}_{\ll{2}} \lesssim & \ds \norm{\nabb\po\trt}_{\ll{2}}+\norm{\nabn\th}_{\ll{2}}\norm{\po N}_{\ll{\infty}}+\norm{\th}^2_{\ll{4}}\norm{\po N}_{\ll{\infty}}\\
& \ds +\norm{R}_{\ll{2}}\norm{\po N}_{\ll{\infty}}.
\end{array}
\end{equation}
Together with \eqref{small2}, \eqref{boot1} and \eqref{appboot1}, this yields:
\begin{equation}\label{om33}
\norm{h}_{\ll{2}} \lesssim\norm{\nabb\po\trt}_{\ll{2}}+\ep\norm{\po N}_{\ll{\infty}}.
\end{equation}
Proposition \ref{p6}, \eqref{om22}, \eqref{om28} and \eqref{om33} imply:
\begin{equation}\label{om34}
\norm{\nabb\po\hth}_{\ll{2}}\lesssim\norm{\nabb\po\trt}_{\ll{2}}+\ep\norm{\po N}_{\ll{\infty}}+\norm{K}^{\frac{1}{2}}_{\ll{2}}\norm{\po\hth}_{\ll{4}}.
\end{equation}
Finally, Corollary \ref{c0}, \eqref{boot}, \eqref{initom}, \eqref{om26} and \eqref{om34} yield:
\begin{equation}\label{om35}
\begin{array}{ll}
\ds\norm{\nabb\po\hth}_{\ll{2}}\lesssim & \ds\ep\norm{\po a}_{\l{\infty}{2}}+\norm{\nabla\po a}_{\ll{2}}\\
& \ds +\ep\norm{\po N}_{\ll{\infty}}+\ep\norm{\nabla\po N}_{\ll{4}}+\ep^{\frac{1}{2}}\norm{\nabla\po\th}_{\ll{2}}.
\end{array}
\end{equation}

We now turn to the estimates for $\nabn\po\th$. Let $X, Y$ two vectorfields on $\s$ independent of $\o$. \eqref{extth} and the last equation of \eqref{struct1} imply:
\begin{equation}\label{om37}
a^{-1}\nabb_{\Pi X}\nabb_{\Pi Y} a+\nabn\th_{\Pi X \Pi Y}+\th_X^j\th_{j Y}+K g(\Pi X,\Pi Y)=R(\Pi X,\Pi Y).
\end{equation}
We differentiate \eqref{om37} with respect to $\o$. Using \eqref{commutom2},  
\eqref{comom4} and , and evaluating at $X=e_A, Y=e_B$, we obtain:
\begin{equation}\label{om38}
\nabn\po\th_{AB}=h,
\end{equation}
where $h$ is given by:
\begin{equation}\label{om39}
\begin{array}{ll}
\ds h= & \ds -a^{-1}\nabb_A\nabb_B\po a+(\po N)_Aa^{-1}\nabla^2 a(e_B,N)+(\po N)_Ba^{-1}\nabla^2 a(e_A,N)\\
& \ds +\po\th_{AB}a^{-1}\nabn a
+\th_{AB}a^{-1}\nabb_{\po N} a+a^{-2}\po a\nabb_A\nabb_B\po a-\nabla_{\po N}\th_{AB}\\
&\ds -(\po N)_A\nabn\th_{NB} -(\po N)_B\nabn\th_{NA}-\po\th_A^C\th_{CB}-\th_A^C\po\th_{CB}
-\po K\gamma_{AB}\\
&\ds -(\po N)_AR_{NB}-(\po N)_BR_{NA}.
\end{array}
\end{equation}
Using \eqref{frame1} and \eqref{extth}, we have:
\begin{equation}\label{om40}
\nabn\th_{NA}=\th(\nabb a,e_A).
\end{equation}
Using \eqref{comom7}, \eqref{comom6} and \eqref{om40}, we rewrite \eqref{om39} as: 
\begin{equation}\label{om41}
\begin{array}{ll}
\ds h= & \ds -a^{-1}\nabb_A\nabb_B\po a+(\po N)_Aa^{-1}\nabb_B\nabn a+(\po N)_Ba^{-1}\nabb_A\nabn a\\
& \ds +\po\th_{AB}a^{-1}\nabn a+\th_{AB}a^{-1}\nabb_{\po N} a+a^{-2}\po a\nabb_A\nabb_B\po a-\nabb_{\po N}\th_{AB}\\
&\ds -2(\po N)_A\th(\nabb a,e_B)-2(\po N)_B\th(\nabb a,e_A)-\po\th_A^C\th_{CB}-\th_A^C\po\th_{CB}\\
&\ds -\po K\gamma_{AB}-(\po N)_AR_{NB}-(\po N)_BR_{NA}.
\end{array}
\end{equation}
Thus, we estimate the norm of $h$ in $\ll{2}$ by:
\bea\label{om42}
\norm{h}_{\ll{2}}&\lesssim&\norm{a^{-1}}_{\ll{\infty}}\Big(\norm{\nabb^2\po a}_{\ll{2}}+\norm{\nabb\nabn a}_{\ll{2}}\norm{\po N}_{\ll{\infty}}\\
\nn&& +\norm{\po\th}_{\ll{4}}\norm{\nabn a}_{\ll{4}}+\norm{\th}_{\ll{4}}\norm{\nabb a}_{\ll{4}}\norm{\po N}_{\ll{\infty}}\Big)\\
\nn&& +\norm{a^{-2}}_{\ll{\infty}}\norm{\po a}_{\ll{\infty}}\norm{\nabb a}_{\ll{2}}+\norm{\nabb\th}_{\ll{2}}\norm{\po N}_{\ll{\infty}}\\
\nn&& +\norm{\th}_{\ll{4}}\norm{\nabb a}_{\ll{4}}\norm{\po N}_{\ll{\infty}}+\norm{\po\th}_{\ll{4}}\norm{\th}_{\ll{4}}+\norm{\po K}_{\ll{2}}\\
\nn&& +\norm{R}_{\ll{2}}\norm{\po N}_{\ll{\infty}}.
\eea
Differentiating \eqref{gauss1} with respect to $\o$ and using Corollary \ref{c0}, \eqref{small2}, \eqref{appboot1} and \eqref{initom}, we obtain:
\begin{equation}\label{om43}
\begin{array}{ll}
\ds\norm{\po K}_{\ll{2}} & \ds\lesssim\norm{R}_{\ll{2}}\norm{\po N}_{\ll{\infty}}+\norm{\th}_{\ll{4}}\norm{\po\th}_{\ll{4}}\\
& \ds\lesssim\ep\norm{\po N}_{\ll{\infty}}+\ep\norm{\po\th}_{\ll{4}}\\
& \ds\lesssim\ep\norm{\po N}_{\ll{\infty}}+\ep\norm{\nabla\po\th}_{\ll{2}}.
\end{array}
\end{equation}
\eqref{small2}, \eqref{boot}, \eqref{appboot}, \eqref{appboot1}, \eqref{om42} and \eqref{om43} yield:
\begin{equation}\label{om44}
\norm{h}_{\ll{2}}\lesssim  \norm{\nabb^2\po a}_{\ll{2}}+\ep\Big(\norm{\po a}_{\ll{\infty}}+\norm{\po N}_{\ll{\infty}}+\norm{\po\th}_{\ll{4}}+\norm{\nabla\po\th}_{\ll{2}}\Big).
\end{equation}
Corollary \ref{c0}, Proposition \ref{p2}, \eqref{initom}, \eqref{om38} and \eqref{om44} yield:
\begin{equation}\label{om45}
\norm{\nabn\po\th}_{\ll{2}}\lesssim  \norm{\nabb^2\po a}_{\ll{2}}+\ep\Big(\norm{\po a}_{\ll{\infty}}+\norm{\po N}_{\ll{\infty}}+\norm{\nabla\po\th}_{\ll{2}}\Big).
\end{equation}
Finally, \eqref{om26}, \eqref{om35} and \eqref{om45} yield:
\begin{equation}\label{om46}
\norm{\nabla\po\th}_{\ll{2}}\lesssim  \norm{\nabb^2\po a}_{\ll{2}}+\ep\Big(\norm{\po a}_{\ll{\infty}}+\norm{\po N}_{\ll{\infty}}+\norm{\nabla\po N}_{\ll{4}}\Big).
\end{equation}

\subsubsection{Estimates for $\po N$}

We start by estimating the norm of $\nabla\po N$ in $\ll{4}$. Let $X, Y$ two vectorfields on $\s$ independent of $\o$. We rewrite the first equation of \eqref{frame1} 
as:
\begin{equation}\label{om47}
g(\nabla_{\Pi X}N,\Pi Y)=\th(\Pi X,\Pi Y).
\end{equation}
We differentiate \eqref{om47} with respect to $\o$. Using \eqref{comom4} and evaluating at $X=e_A, Y=e_B$, we obtain:
\begin{equation}\label{om48}
g(\nabla_A\po N,e_B)=\po\th_{AB}-(\po N)_Aa^{-1}\nabb_B a.
\end{equation}
Also, using \eqref{comom1ter}, we have:
\begin{equation}\label{om49}
g(\nabla_A\po N,N)=-g(\po N,\nabla_AN)=-\th(\po N,e_A).
\end{equation}
Differentiating the second equation of \eqref{frame1} and using \eqref{commutom1}, we obtain:
\begin{equation}\label{om50}
\nabn\po N=-\th(\po N,e_A)e_A-a^{-1}\nabb\po a+a^{-1}\nabn a \po N+a^{-1}\nabb_{\po N} a N+a^{-2}\po a\nabb a.
\end{equation}
\eqref{om48}, \eqref{om49} and \eqref{om50} yield:
\begin{equation}\label{om51}
\begin{array}{ll}
\ds\norm{\nabla\po N}_{\ll{4}}\lesssim & \ds\norm{\po\th}_{\ll{4}}+\norm{\nabb\po a}_{\ll{4}}+(\norm{\nabb a}_{\ll{4}}+\norm{\th}_{\ll{4}}\\
&\ds+\norm{\nabn a}_{\ll{4}})(\norm{\po N}_{\ll{\infty}}+\norm{\po a}_{\ll{\infty}}).
\end{array}
\end{equation}
Together with \eqref{appboot} and \eqref{appboot1}, this yields:
\begin{equation}\label{om51bis}
\norm{\nabla\po N}_{\ll{4}}\lesssim\norm{\po\th}_{\ll{4}}+\norm{\nabb\po a}_{\ll{4}}+\ep\norm{\po N}_{\ll{\infty}}+\ep\norm{\po a}_{\ll{\infty}}.
\end{equation}
Finally, using Corollary \ref{c0} and Proposition \ref{p2}, we obtain:
\begin{equation}\label{om51ter}
\begin{array}{ll}
\ds\norm{\nabla\po N}_{\ll{4}}\lesssim & \norm{\nabla\po\th}_{\ll{2}}+\norm{\nabb\po a}_{\l{\infty}{2}}+\norm{\nabb^2\po a}_{\ll{2}}\\
&\ds +\ep\norm{\po N}_{\ll{\infty}}+\ep\norm{\po a}_{\ll{\infty}}.
\end{array}
\end{equation}

Next, we estimate the norm of $\nabla\po N$ in $\l{\infty}{2}$. In view of \eqref{om48}, \eqref{om49} and \eqref{om50}, we have:
\begin{displaymath}
\begin{array}{ll}
&\ds\norm{\nabla\po N}_{\l{\infty}{2}}\\
\lesssim & \ds\norm{\po\th}_{\l{\infty}{2}}+\norm{a^{-1}\nabb\po a}_{\l{\infty}{2}}+(\norm{a^{-1}\nabb a}_{\l{\infty}{2}}\\
&\ds +\norm{\th}_{\l{\infty}{2}}+\norm{a^{-1}\nabn a}_{\l{\infty}{2}})(\norm{\po N}_{\ll{\infty}}+\norm{\po a}_{\ll{\infty}}).
\end{array}
\end{displaymath}
Together with \eqref{appboot} for $a$ and \eqref{appboot1} for $\th$, this yields:
$$\norm{\nabla\po N}_{\l{\infty}{2}}\lesssim\norm{\po\th}_{\l{\infty}{2}}+\norm{\nabb\po a}_{\l{\infty}{2}}+\ep(\norm{\po N}_{\ll{\infty}}+\norm{\po a}_{\ll{\infty}}).$$
Finally, we obtain:
\begin{equation}\label{om51quatre}
\ds\norm{\nabla\po N}_{\l{\infty}{2}}\lesssim  \norm{\nabla\po\th}_{\ll{2}}+\norm{\nabb\po a}_{\l{\infty}{2}}+\ep(\norm{\po N}_{\ll{\infty}}+\norm{\po a}_{\ll{\infty}}).
\end{equation}

Next, we estimate the norm of $\divb(\po N)$ in $\lhs{\infty}{\frac{1}{2}}$. In view of \eqref{om48}, we have:
$$\divb(\po N)=\textrm{tr}(\po\th)-a^{-1}\nabb_{\po N} a.$$
This yields:
$$\norm{\divb(\po N)}_{\lhs{\infty}{\frac{1}{2}}}\les \norm{\po\th}_{\lhs{\infty}{\frac{1}{2}}}+\norm{a^{-1}\nabb_{\po N} a}_{\lhs{\infty}{\frac{1}{2}}}.$$
In view of Corollary \ref{cor:commLP1}, we finally obtain:
\bea\lab{om51cinq}
&&\norm{\divb(\po N)}_{\lhs{\infty}{\frac{1}{2}}}\\
\nn&\les& \norm{\nabla\po\th}_{\ll{2}}+\norm{\nabla(a^{-1}\nabb_{\po N} a)}_{\ll{2}}\\
\nn&\les& \norm{\nabla\po\th}_{\ll{2}}+\norm{\nabla(a^{-1}\nabb a)}_{\ll{2}}\norm{\po N}_{\ll{\infty}}+\norm{a^{-1}\nabb a}_{\ll{4}}\norm{\nabla\po N}_{\ll{4}}\\
\nn&\les& \norm{\nabla\po\th}_{\ll{2}}+\ep(\norm{\po N}_{\ll{\infty}}+\norm{\nabla\po N}_{\ll{4}}),
\eea
where we used in the last inequality the estimates \eqref{boot} and \eqref{appboot} for $a$.

In view of the right-hand side of \eqref{zoo14}, we need to control $\norm{ak(N, \nabn\nabn\po N)}_{\lhs{2}{-\frac{3}{2}}}$. In view of \eqref{om50}, we have:
\bee
&&\nabn\nabn\po N\\
&=&-\nabn\th(\po N,e_A)e_A-\th(\nabn\po N,e_A)e_A-a^{-1}\nabb\nabn\po a-a^{-1}[\nabn,\nabb]\po a\\
&&+a^{-2}\nabn a\nabb\po a+a^{-1}\nabn^2a \po N+a^{-1}\nabn a \nabn\po N-a^{-2}(\nabn a)^2 \po N\\
&&+\nabn(a^{-1}\nabb a)_{\po N} N+a^{-1}\nabb_{\nabn\po N} a N-a^{-1}\nabb_{\po N} a \ana+\nabn(a^{-2}\nabb a)\po a\\
&&+a^{-2}\nabn\po a\nabb a.
\eee
This yields
\be\lab{mezul}
\nabn\nabn\po N=-a^{-1}\nabb\nabn\po a-a^{-2}\nabb a\nabn\po a+a^{-1}\nabn^2a \po N+H,
\ee
where, in view of the commutator formula \eqref{scommut}, the vectorfield $H$ is given by
\bee
H &=& -\nabn\th(\po N,e_A)e_A-\th(\nabn\po N,e_A)e_A+a^{-1}\th\c\nabb\po a+a^{-2}\nabn a\nabb\po a\\
&&+a^{-1}\nabn a \nabn\po N-a^{-2}(\nabn a)^2 \po N+\nabn(a^{-1}\nabb a)_{\po N} N+a^{-1}\nabb_{\nabn\po N} a N\\
&&-a^{-1}\nabb_{\po N} a \ana+\nabn(a^{-2}\nabb a)\po a.
\eee
We have
\bee
\norm{H}_{\ll{2}}&\les& \Big(\norm{\nabla\th}_{\ll{2}}+\norm{\nabn(a^{-2}\nabb a)}_{\ll{2}}+(\norm{\th}_{\ll{4}}+\norm{\nabla\po N}_{\ll{4}}\\
&&+\norm{a^{-1}\nabb\po a}_{\ll{4}}+\norm{a^{-1}\nabla a}_{\ll{4}})^2\Big)(1+\norm{\po N}_{\ll{\infty}}+\norm{\po a}_{\ll{\infty}})
\eee
which together with \eqref{boot}, \eqref{boot1}, \eqref{appboot} and \eqref{appboot1} yields:
\be\lab{mezul1}
\norm{H}_{\ll{2}}\les \ep(\norm{\nabb\po a}_{\ll{4}}+\norm{\nabla\po N}_{\ll{4}}+\norm{\po N}_{\ll{\infty}}+\norm{\po a}_{\ll{\infty}}).
\ee
Using the finite band property for $P_j$, we obtain
\bea\lab{mezul2}
&&\norm{P_j(ak(N, H))}_{\ll{2}}\\
\nn&\les& 2^j\norm{ak(N, H)}_{\l{2}{1}}\\
\nn&\les& \norm{a}_{\ll{\infty}}\norm{k}_{\l{\infty}{2}}\norm{H}_{\ll{2}}\\
\nn&\les& \ep(\norm{\nabb\po a}_{\ll{4}}+\norm{\nabla\po N}_{\ll{4}}+\norm{\po N}_{\ll{\infty}}+\norm{\po a}_{\ll{\infty}}),
\eea
where we used in the last inequality the estimate \eqref{mezul1} for $H$, the estimate \eqref{boot} for $a$ and the estimate \eqref{appsmall2} for $k$. Next, we estimate the other terms generated by the right-hand side of \eqref{mezul}. In view of the product estimate \eqref{prod1}, the embeddings \eqref{clp12} and Lemma \ref{lemma:vacances}, we have
\bea\lab{mezul3}
\norm{P_j(ak(N, a^{-1}\nabb\nabn\po a))}_{\ll{2}}&\les& 2^j\norm{kN}_{\lhs{\infty}{\frac{1}{2}}}\norm{\nabb\nabn\po a}_{\lhs{2}{-\frac{1}{2}}}\\
\nn&\les & 2^j\norm{kN}_{\h}\norm{\nabn\po a}_{\lhs{2}{\frac{1}{2}}}\\
\nn&\les & 2^j\ep\norm{\nabn\po a}_{\lhs{2}{\frac{1}{2}}},
\eea
where we used in the last inequality the estimates \eqref{small2} and \eqref{appsmall2} for $k$, \eqref{appboot} and \eqref{frame1}. Using the finite band property for $P_j$, we have
\bea\lab{mezul4}
\norm{P_j(ak(N, a^{-2}\nabb a\nabn\po a))}_{\ll{2}}&\les& 2^j\norm{ak(N, a^{-2}\nabb a\nabn\po a)}_{\l{2}{1}}\\
\nn&\les& 2^j\norm{k a^{-1}\nabb a}_{\l{\infty}{2}}\norm{\nabn\po a}_{\ll{2}}\\
\nn&\les& 2^j\norm{k}_{\l{\infty}{4}}\norm{a^{-1}\nabb a}_{\l{\infty}{4}}\norm{\nabn\po a}_{\ll{2}}\\
\nn&\les& 2^j\ep\norm{\nabn\po a}_{\ll{2}},
\eea
where we used in the last inequality the estimate \eqref{appsmall2} for $k$ and the estimate \eqref{appboot} for $a$. In view of the product estimate \eqref{prod1} and the embeddings \eqref{clp12}, we have
\bea\lab{mezul5}
\norm{P_j(ak(N, a^{-1}\nabn^2a \po N))}_{\ll{2}}&\les& 2^j\norm{kN\po N}_{\lhs{\infty}{\frac{1}{2}}}\norm{\nabn^2a}_{\lhs{2}{-\frac{1}{2}}}\\
\nn&\les& 2^j\norm{kN\po N}_{\h}\norm{\nabn^2a}_{\lhs{2}{-\frac{1}{2}}}\\
\nn&\les& 2^j\ep(\norm{\po N}_{\ll{\infty}}+\norm{\nabla\po N}_{\ll{4}}),
\eea
where we used in the last inequality the estimates \eqref{small2} and \eqref{appsmall2} for $k$, \eqref{appboot}, \eqref{frame1}, and the estimate \eqref{nabn2a1} for $\nabn^2a$. Finally, \eqref{mezul}-\eqref{mezul5} imply
\bea\lab{mezul6}
&&\norm{ak(N, \nabn\nabn\po N)}_{\lhs{2}{-\frac{3}{2}}}\\
\nn&\les&  \ep\Big(\norm{\nabb\po a}_{\ll{4}}+\norm{\nabla\po N}_{\ll{4}}+\norm{\po N}_{\ll{\infty}}+\norm{\po a}_{\ll{\infty}}+\norm{\nabn\po a}_{\lhs{2}{\frac{1}{2}}}\Big).
\eea

We now estimate the norm of $\po N$ in $\ll{\infty}$. Using \eqref{initom} and the fact that $(\s,g)$  coincides with $(\R^3,\delta)$ for $|x|\geq 1$ by section \ref{reducsmall}, we have:
\begin{equation}\label{om52}
g(\po N,\po N)=I\textrm{ on }x.\o=-2,
\end{equation}
where $I$ is the $2\times 2$ identity matrix. We will estimate the $\ll{\infty}$ norm of $g(\po N,\po N)-I$ using Proposition \ref{p3}. To this end, 
we need to estimate the norm of:
\begin{equation}\label{om53}
\nabb^2(g(\po N,\po N)-I)=2\nabb(g(\nabb\po N,\po N)),
\end{equation}
and 
\begin{equation}\label{om54}
\nabb\nabn(g(\po N,\po N)-I)=2\nabb(g(\nabn\po N,\po N)),
\end{equation}
in $\ll{2}$. First we estimate the norm of \eqref{om53} in $\ll{2}$. Using \eqref{om48}, we have:
\begin{equation}\label{om55}
g(\nabb_A\po N,\po N)=\po\th(\po N,e_A)-(\po N)_Aa^{-1}\nabb_{\po N} a,
\end{equation}
which together with \eqref{om53} yields:
\begin{equation}\label{om56}
\begin{array}{l}
\ds\nabb^2_{AB}(g(\po N,\po N)-I)=2\nabb_A(\po\th)(e_B,\po N)+2\po\th(e_B,\nabb_A\po N)\\
\ds -2g(\nabb_A\po N,e_B)\nabb_{\po N} a-2(\po N)_B\nabb(\ana)(e_A,\po N)\\
\ds -2(\po N)_Bg(\nabb_A\po N,\ana).
\end{array}
\end{equation}
We estimate the norm of $\nabb^2(g(\po N,\po N)-I)$:
\begin{equation}\label{om57}
\begin{array}{r}
\ds\norm{\nabb^2(g(\po N,\po N)-I)}_{\ll{2}}\lesssim \norm{\nabb\po\th}_{\ll{2}}\norm{\po N}_{\ll{\infty}}\hspace{4cm}\\
\ds +(\norm{\po\th}_{\ll{4}}+\norm{\po N}_{\ll{\infty}}\norm{\ana}_{\ll{4}})\norm{\nabb\po N}_{\ll{4}}\\
\ds +\norm{\po N}^2_{\ll{\infty}}\norm{\nabb(\ana)}_{\ll{2}},
\end{array}
\end{equation}
which together with \eqref{boot} and \eqref{appboot} yields:
\begin{equation}\label{om58}
\begin{array}{r}
\ds\norm{\nabb^2(g(\po N,\po N)-I)}_{\ll{2}}\lesssim \norm{\nabb\po\th}_{\ll{2}}\norm{\po N}_{\ll{\infty}}\hspace{4cm}\\
\ds +(\norm{\po\th}_{\ll{4}}+\ep\norm{\po N}_{\ll{\infty}})\norm{\nabb\po N}_{\ll{4}}+\ep\norm{\po N}^2_{\ll{\infty}}.
\end{array}
\end{equation}

We turn to the estimate of the norm of \eqref{om54} in $\ll{2}$. Using \eqref{om50}, we have:
\begin{equation}\label{om59}
g(\nabn\po N,\po N)=-\th(\po N,\po N)-a^{-1}\nabb_{\po N}(\po a)+a^{-1}\nabn a|\po N|^2+a^{-2}\po a\nabb_{\po N}a,
\end{equation}
which together with \eqref{om54} yields:
\bea\label{om60}
&&\nabb_A\nabn(g(\po N,\po N)-I)\\
\nn&=&-2\nabb_A\th(\po N,\po N)-4\th(\nabb_A\po N,\po N)-2\nabb(a^{-1}\nabb(\po a))(\po N,e_A)\\
\nn&&-2g(\nabb_A\po N,a^{-1}\nabb\po a)+2\nabb(a^{-1}\nabn a)|\po N|^2+4a^{-1}\nabn ag(\nabb\po N,\po N)\\
\nn&&+\nabb(a^{-2}\nabb a)\po a\c \po N+a^{-2}\po a\nabb_{\nabb\po N}a+a^{-2}\nabb(\po a)\nabb_{\po N}a.
\eea
We estimate the norm of $\nabb\nabn(g(\po N,\po N)-I)$:
\bea\label{om61}
&&\norm{\nabb\nabn(g(\po N,\po N)-I)}_{\ll{2}}\\
\nn&\lesssim& \norm{\nabb^2\po a}_{\ll{2}}\norm{\po N}_{\ll{\infty}}+(\norm{\nabb\th}_{\ll{2}}+\norm{\nabb\nabn a}_{\ll{2}})\norm{\po N}^2_{\ll{\infty}}\\
\nn&&+(\norm{\th}_{\ll{4}}+\norm{\nabn a}_{\ll{4}})\norm{\nabb\po N}_{\ll{4}}\norm{\po N}_{\ll{\infty}}+\norm{\nabb\po N}_{\ll{4}}\norm{\nabb\po a}_{\ll{4}}
\eea
which together with \eqref{boot}, \eqref{boot1}, \eqref{appboot} and \eqref{appboot1} yields:
\bea\label{om62}
&&\norm{\nabb\nabn(g(\po N,\po N)-I)}_{\ll{2}}\\
\nn&\lesssim& \norm{\nabb^2\po a}_{\ll{2}}\norm{\po N}_{\ll{\infty}}+\ep\norm{\po N}^2_{\ll{\infty}}+\ep\norm{\nabb\po N}_{\ll{4}}\norm{\po N}_{\ll{\infty}}\\
\nn&&+\norm{\nabb\po N}_{\ll{4}}\norm{\nabb\po a}_{\ll{4}}.
\eea
Proposition \ref{p3}, \eqref{om58} and \eqref{om62} yield:
\bea\label{om63}
&&\norm{g(\po N,\po N)-I}_{\ll{\infty}}\\
\nn&\lesssim&\norm{\nabb\po N}_{\ll{4}}(\norm{\nabb\po a}_{\ll{4}}+\norm{\po\th}_{\ll{4}})+(\norm{\nabb^2\po a}_{\ll{2}}+\ep\norm{\po N}_{\ll{\infty}}\\
\nn&&+\ep\norm{\nabb\po N}_{\ll{4}}+\norm{\nabb\po\th}_{\ll{2}})\norm{\po N}_{\ll{\infty}}.
\eea
\eqref{om63} implies:
\begin{equation}\label{om64}
\norm{\po N}_{\ll{\infty}}\lesssim  1+\norm{\nabb\po N}_{\ll{4}}+\norm{\nabb\po a}_{\ll{4}}+\norm{\po\th}_{\ll{4}}+\norm{\nabb^2\po a}_{\ll{2}}+\norm{\nabb\po\th}_{\ll{2}}.
\end{equation}
Together with Corollary \ref{c0}, Proposition \ref{p2} and \eqref{om51ter}, we obtain:
\begin{equation}\label{om65}
\norm{\po N}_{\ll{\infty}}\lesssim  1+\norm{\nabb\po a}_{\l{\infty}{2}}+\norm{\nabb^2\po a}_{\ll{2}} +\norm{\nabla\po\th}_{\ll{2}}.
\end{equation}

Finally, \eqref{om15}, \eqref{zoo14}, \eqref{om46}, \eqref{xoox}, \eqref{om51ter}, \eqref{om51quatre}, \eqref{om51cinq}, \eqref{mezul6} and \eqref{om65} yield:
\bea\label{om66}
\nn&&\norm{\po a}_{\l{\infty}{2}}+\norm{\nabb\po a}_{\l{\infty}{2}}+\norm{\nabb^2\po a}_{\ll{2}} +\norm{\nabn\po a}_{\lhs{2}{\frac{1}{2}}}\\
\nn&&+\norm{\nabn\po a}_{\lhs{\infty}{-\frac{1}{2}}}+\norm{\nabn^2\po a}_{\lhs{2}{-\frac{3}{2}}}+\norm{\po a}_{\ll{\infty}}+\norm{\nabla\po\th}_{\ll{2}}\\
\nn&&+\norm{\nabla\po N}_{\ll{4}}\\
&\lesssim&\ep,
\eea
and
\begin{equation}\label{om67}
\norm{\po N}_{\ll{\infty}}\lesssim 1,
\end{equation}
which concludes the proof of \eqref{threomega1}.

\subsection{Second order derivatives with respect to $\o$}\lab{sec:regomega2}

The goal of this section is to prove \eqref{threomega2}. We first give an outline of the proof. Differentiating the equation \eqref{om1} for $\po a$ with respect to $\o$, we obtain:
\begin{equation}\label{diffomm1}
(\nabn-a^{-1}\lap)\po^2a = 2\nabn^2a+\nabb\nabn a+2R_{\po N \po N}+\cdots
\end{equation}
where the first two terms on the right-hand side come respectively from the commutators $[\po,\nabb]$ and $[\po,\lap]$ (see \eqref{commutom1} and \eqref{commutom3}). 
Since $R$ is in $\ll{2}$ by \eqref{small2}, $\nabn^2a$ is in $\lhs{2}{-\frac{1}{2}}$ by \eqref{nabn2a1}, and $\nabn\po a$ is in $\lhs{2}{\frac{1}{2}}$ by \eqref{threomega1}, this suggests in view of Proposition \ref{prop:parab4} that: 
\begin{equation}\label{diffomm2}
\norm{\po^2 a}_{\lhs{2}{\frac{3}{2}}}+\norm{\po^2 a}_{\lhs{\infty}{\frac{1}{2}}}+\norm{\nabn\po^2a}_{\lhs{2}{-\frac{1}{2}}} \lesssim \ep.
\end{equation}

\begin{remark}\lab{theend}
Note that we may not differentiate the equation \eqref{diffomm1} for $\po^2 a$ with respect to $\nabn$. Indeed, 
the term $\nabn R_{\po N \po N}$ has no structure: unlike $R_{NN}$ and $R_{N \po N}$ which were involved in the equation for $a$ and $\po a$, $R_{\po N \po N}$ does not contain any contraction with $N$ since $\po N$ is tangent to $\p$. Thus, unlike $\nabn R_{NN}$ and $\nabn R_{N \po N}$, we can not write $\nabn R_{\po N\po N}$ as a tangential derivative using the contracted Bianchi identities for $R$. In turn, we can not obtain any estimate for $\nabn^2\po^2 a$. 
\end{remark}

Next, we turn to the estimates for $\po^2\th$. Differentiating the equation \eqref{om23} for $\po\trt$ and the equation \eqref{om28} for $\po\hth$ with respect to $\o$, we obtain:
\be\lab{diffomm8}
\left\{\begin{array}{l}
\nabb\po^2\trt=\nabla k_{N \po^2N}+\cdots,\\[1mm]
\nabb^B\po^2\hth_{AB}=R_{N \po^2N}+\cdots,
\end{array}\right.
\ee
which together with the estimate \eqref{small2} for $R$ and $k$ yields $\nabb\po^2\th\in\ll{2}$ provided $\po^2N$ belongs to $\ll{\infty}$.

Finally, we turn to the estimates for $\po^2N$. Differentiating the equations \eqref{om48}, \eqref{om49} and \eqref{om50} for $\po N$ with respect to $\o$, we obtain:
\be\lab{diffomm9}
\left\{\begin{array}{l}
\nabb\po^2N=\po^2\th +\cdots,\\
\nabn\po^2N=-a^{-1}\nabb\po^2 a+\cdots.
\end{array}\right.
\ee
Together with the fact that $\nabb\po\th$ belong to $\ll{2}$ and $\po a$ belongs to $\lhs{1}{\frac{3}{2}}$, this suggests using interpolation that $\po^2N$ belongs to $\lhs{\infty}{\frac{5}{4}}$. Since $\frac{5}{4}>1$, and since $\p$ is 2 dimensional, we obtain that $\po^2N$ belongs indeed to $\ll{\infty}$.

The rest of this section is as follows. We first prove the estimates for $\po^2 a$. Then, we prove the estimates for $\po^2\th$. Finally, we conclude with the estimates for $\po^2N$. 

\subsubsection{Estimates for $\po^2 a$}

Recall \eqref{om1} and \eqref{om2}. $\po a$ satisfies:
\begin{equation}\label{zaa1}
\nabn\po a -a^{-1}\lap\po a = a^{-1}h,
\end{equation}
where $h$ is given by:
\begin{equation}\label{zaa2}
\begin{array}{ll}
h= &-\nabb_{\po N}a-a^{-2}\po a\lap a -2\nabb_{\po N}\nabn a+2\th(\po N,\nabb a)\\
& -\po\trt\nabn a-\trt\nabb_{\po N} a +2\th\po\th+\po(\nabn(k_{NN}))+2R_{N\po N}.
\end{array}
\end{equation}
Now, differentiating \eqref{zaa1} with respect to $\o$ and using the commutator formula \eqref{commutom3}, we obtain:
\begin{equation}\label{zaa4}
\nabn\po^2 a -a^{-1}\lap\po^2 a = -2a^{-1}\divb(\nabn(\po a)\po N)+\po h+a^{-1}h_1,
\end{equation}
where $h_1$ is given by:
\bee
h_1&=& -a\nabb_{\po N}(\po a)-a^{-1}\po a\lap(\po a)+2\divb(\po N)\nabn(\po a)\\
&&+2\th(\po N,\nabb(\po a))-\po\trt\nabn(\po a)-\trt\nabb_{\po N}(\po a).
\eee
Together with the product estimate \eqref{kei}, this yields:
\bea\lab{zaa5}
&&\norm{h_1}_{\ll{2}}\\
\nn&\les& \norm{a}_{\ll{\infty}}\norm{\po N}_{\ll{\infty}}\norm{\nabb(\po a)}_{\ll{2}}+\norm{\po a}_{\ll{\infty}}\norm{\lap(\po a)}_{\ll{2}}\\
\nn&&+\norm{\divb(\po N)}_{\lhs{\infty}{\frac{1}{2}}}\norm{\nabn(\po a)}_{\lhs{2}{\frac{1}{2}}}\\
\nn&&+\norm{\th}_{\ll{4}}\norm{\po N}_{\ll{\infty}}\norm{\nabb(\po a)}_{\ll{4}}+\norm{\po\trt}_{\lhs{\infty}{\frac{1}{2}}}\norm{\nabn(\po a)}_{\lhs{2}{\frac{1}{2}}}\\
\nn&&+\norm{\trt}_{\ll{4}}\norm{\po N}_{\ll{\infty}}\norm{\nabb(\po a)}_{\ll{4}}+\norm{\po a}_{\ll{\infty}}\norm{h}_{\ll{2}}\\
\nn&\les& \ep(1+\norm{\divb(\po N)}_{\lhs{\infty}{\frac{1}{2}}}+\norm{\po\trt}_{\lhs{\infty}{\frac{1}{2}}}),
\eea
where we used in the last inequality the estimate \eqref{boot} for $a$, the estimate \eqref{appboot1} for $\th$, and the estimate \eqref{threomega1} for $\po a$ and $\po N$. In view of Corollary \ref{cor:commLP1} and the estimate \eqref{om51cinq}, we have:
\bee
&&\norm{\divb(\po N)}_{\lhs{\infty}{\frac{1}{2}}}+\norm{\po\trt}_{\lhs{\infty}{\frac{1}{2}}}\\
\nn&\les& \norm{\nabla\po\th}_{\ll{2}}+\norm{\nabla(\nabb_{\po N} a)}_{\ll{2}}\\
\nn&\les& \norm{\nabla\po\th}_{\ll{2}}+\norm{\nabla\nabb a}_{\ll{2}}\norm{\po N}_{\ll{\infty}}+\norm{\nabb a}_{\ll{4}}\norm{\nabla\po N}_{\ll{4}}\\
\nn&\les& \ep,
\eee
where we used in the last inequality the estimates \eqref{boot} and \eqref{appboot} for $a$, and the estimate \eqref{threomega1} for $\po\th$ and $\po N$. Together with \eqref{zaa5}, this finally yields:
\be\lab{zaa6}
\norm{h_1}_{\ll{2}}\les\ep.
\ee

Next, we estimate the first term in the right-hand side of \eqref{zaa4}. In view of the product estimate \eqref{prod6}, we have:
\bea\lab{zaa7}
&&\norm{\divb(\nabn(\po a)\po N)}_{\lhs{2}{-\frac{1}{2}}}\\
\nn&\les& \norm{\nabn(\po a)}_{\lhs{2}{\frac{1}{2}}}(\norm{\po N}_{\ll{\infty}}+\norm{\nabb\po N}_{\l{\infty}{2}})\\
\nn&\les&\ep,
\eea
where we used in the last inequality the estimate \eqref{threomega1} for $\po a$ and $\po N$.

Finally, we estimate the second term in the right-hand side of \eqref{zaa4}. We first provide a decomposition of $\po^2N$. Differentiating \eqref{comom1ter} with respect to $\o$, we obtain:
\be\lab{zaa8}
g(\po^2N,N)=-g(\po N, \po N),
\ee
which yields:
\begin{equation}\label{zaa9}
\po^2 N=\Pi(\po^2N)-|\po N|^2N.
\end{equation}
Next, we compute $\po h$. Differentiating \eqref{zaa2} with respect to $\o$ and using \eqref{zaa9} and the commutator formula \eqref{commutom1}, we obtain:
\be\lab{zaa10}
\po h=2|\po N|^2\nabn^2 a-2a^{-1}\divb(\nabn(\po a)\po N)+h_2,
\ee
where $h_2$ is given by:
\bee
h_2&=&-\nabb_{\Pi(\po^2N)}a-\nabb_{\po N}(\po a)-a^{-2}\po a\lap(\po a)+2a^{-3}(\po a)^2\lap a-a^{-2}\po a[\po,\lap]a\\
&&-2\nabb_{\Pi(\po^2N)}\nabn a-2\nabb_{\po N}\nabb_{\po N} a+2\th(\po^2N,\nabb a)+2\po\th(\po N,\nabb a)+2\th(\po N,\nabb\po a\\
&&-\nabn( a)\po N)-\po^2\trt\nabn a-\po\trt\nabn(\po a)-2\po\trt\nabb_{\po N} a-\trt\nabla_{\po^2N} a\\
&&-\trt\nabb_{\po N}(\po a)+2\nabb a\nabb\po^2 a+2|\nabb\po a|^2-2\nabn a\nabb_{\po N}\po a-2\nabb_{\po N}( a)\nabn\po a\\
&& -2\nabb_{\po^2N} a\nabn a-2|\nabb_{\po N} a|^2-2\nabb_{\po N}(\po a)\nabn a-2\nabb_{\po N} a\nabn(\po a)+2\th\po^2\th\\
&& +2|\po\th|^2+\nabla_{\po^2N}k_{NN}+2\nabn k_{N \po^2N}+4\nabla_{\po N}k_{N \po N}+2\nabn k_{\po N \po N}+2R_{N \po^2N}\\
&&+2R_{\po N \po N}.
\eee
Together with the product estimate \eqref{kei}, this yields:
\bee
&&\norm{h_2}_{\ll{2}}\\
&\les&  \norm{a^{-2}\po a}_{\ll{\infty}}\norm{\lap\po a}_{\ll{2}}+\norm{a^{-3}(\po a)^2}_{\ll{\infty}}\norm{\lap a}_{\ll{2}}\\
&&+\norm{a^{-2}\po a}_{\ll{\infty}}\norm{[\po,\lap]a}_{\ll{2}}+\norm{\nabb a}_{\ll{2}}\norm{\po^2N}_{\ll{\infty}}\\
&&+\norm{\nabb\po a}_{\ll{2}}\norm{\po N}_{\ll{\infty}}+\norm{\nabb\nabn a}_{\ll{2}}\norm{\po^2N}_{\ll{\infty}}+\norm{\nabb^2a}_{\ll{2}}\norm{\po N}^2_{\ll{\infty}}\\
&&+\norm{\nabb a}_{\ll{4}}\norm{\nabb\po N}_{\ll{4}}\norm{\po N}_{\ll{\infty}}+\norm{\th}_{\ll{4}}\norm{\po^2N}_{\ll{\infty}}\norm{\nabb a}_{\ll{4}}\\
&&+\norm{\po\th}_{\ll{4}}\norm{\po N}_{\ll{\infty}}\norm{\nabb a}_{\ll{4}}+\norm{\th}_{\ll{4}}\norm{\po N}_{\ll{\infty}}(\norm{\nabb\po a}_{\ll{4}}\\
&&+\norm{\nabn a}_{\ll{4}}\norm{\po N}_{\ll{\infty}}) +\norm{\po^2\trt}_{\l{2}{4}}\norm{\nabn a}_{\l{\infty}{4}}\\
&&+\norm{\po\trt}_{\lhs{\infty}{\frac{1}{2}}}\norm{\nabn\po a}_{\lhs{2}{\frac{1}{2}}}+\norm{\po\trt}_{\ll{4}}\norm{\nabb a}_{\ll{4}}\norm{\po N}_{\ll{\infty}}\\
&&+\norm{\trt}_{\ll{4}}\norm{\nabla a}_{\ll{4}}\norm{\po^2N}_{\ll{\infty}}+\norm{\trt}_{\ll{4}}\norm{\nabb\po a}_{\ll{4}}\norm{\po N}_{\ll{\infty}}\\
&&+\norm{\nabb a}_{\lhs{\infty}{\frac{1}{2}}}\norm{\nabb\po^2 a}_{\lhs{2}{\frac{1}{2}}}+\norm{\nabb\po a}_{\ll{4}}^2\\
&&+\norm{\nabn a}_{\ll{4}}\norm{\nabb\po a}_{\ll{4}}\norm{\po N}_{\ll{\infty}}+\norm{\nabb_{\po N}( a)}_{\lhs{\infty}{\frac{1}{2}}}\norm{\nabn\po a}_{\lhs{2}{\frac{1}{2}}}\\
&&+\norm{\nabb a}_{\ll{4}}\norm{\nabn a}_{\ll{4}}\norm{\po^2N}_{\ll{\infty}} +\norm{\nabb a}_{\ll{4}}^2\norm{\po N}^2_{\ll{\infty}}\\
&&+\norm{\nabb\po a}_{\ll{4}}\norm{\nabn a}_{\ll{4}}\norm{\po N}_{\ll{\infty}}+\norm{\nabb_{\po N}( a)}_{\lhs{\infty}{\frac{1}{2}}}\norm{\nabn\po a}_{\lhs{2}{\frac{1}{2}}}\\
&&+\norm{\th}_{\l{\infty}{4}}\norm{\po^2\th}_{\l{2}{4}}+\norm{\po\th}_{\ll{4}}^2\\
&&+(\norm{\nabla k}_{\ll{2}}+\norm{R}_{\ll{2}})(\norm{\po^2N}_{\ll{\infty}}+\norm{\po N}^2_{\ll{\infty}})\\
&\les& \ep(1+\norm{\po^2 a}_{\l{2}{4}}+\norm{\nabb\po^2 a}_{\lhs{2}{\frac{1}{2}}}+\norm{\po^2N}_{\ll{\infty}}+\norm{\po^2\th}_{\l{2}{4}}),
\eee
where we used in the last inequality the commutator formula \eqref{commutom3} and the identity \eqref{comom6}, the estimates \eqref{boot} \eqref{appboot} for $a$, the estimate \eqref{nabn2a1} for $\nabn a$, the estimate \eqref{appboot1} for $\po\th$, the estimate \eqref{small2} for $k$ and $R$, the estimate \eqref{threomega1} for $\po a$, $\po\th$ and $\po N$, and Corollary \ref{cor:commLP1}. Together with the Gagliardo-Nirenberg inequality \eqref{eq:GNirenberg} and the Lemma \ref{lemma:vacances:1}, this yields:
\be\lab{zaa11}
\norm{h_2}_{\ll{2}}\les \ep(1+\norm{\po^2a}_{\lhs{2}{\frac{3}{2}}}+\norm{\po^2N}_{\ll{\infty}}+\norm{\nabb\po^2\th}_{\ll{2}}).
\ee

Next, we evaluate the first term in the right-hand side of \eqref{zaa10}. In view of Proposition \ref{prop:bale}, we have:
\bea\lab{zaa12}
\nn\norm{a|\po N|^2\nabn^2 a}_{\lhs{2}{-\frac{1}{2}}}&\les& (\norm{a|\po N|^2}_{\ll{\infty}}+\norm{\nabb(a|\po N|^2)}_{\l{\infty}{2}})\norm{\nabn^2 a}_{\hs{-\frac{1}{2}}}\\
\nn&\les& \ep(\norm{a}_{\ll{\infty}}\norm{\po N}^2_{\ll{\infty}}+\norm{\nabb a}_{\l{\infty}{2}}\norm{\po N}^2_{\ll{\infty}}\\
\nn&&+\norm{\nabb\po N}_{\l{\infty}{2}}\norm{a}_{\ll{\infty}}\norm{\po N}_{\ll{\infty}})\\
&\les&\ep,
\eea
where we used the estimate \eqref{nabn2a1} for $\nabn^2a$, the estimates \eqref{boot} \eqref{appboot} for $a$, and the estimate \eqref{threomega1} for $\po N$.

Finally, in view of \eqref{zaa4} and \eqref{zaa10}, we have:
\be\label{zaa13}
\nabn\po^2 a -a^{-1}\lap\po^2 a = h_3,
\ee
where $h_3$ is given by:
$$h_3= -4a^{-1}\divb(\nabn(\po a)\po N)+2|\po N|^2\nabn^2 a+a^{-1}h_1+h_2.$$
Together with the estimates \eqref{zaa6}, \eqref{zaa7}, \eqref{zaa11}, \eqref{zaa12} and the estimate \eqref{boot} for $a$, this yields:
\bea\lab{zaa14}
&&\norm{ah_3}_{\lhs{2}{-\frac{1}{2}}}\\
\nn&\les & \norm{\divb(\nabn(\po a)\po N)}_{\lhs{2}{-\frac{1}{2}}}+\norm{a|\po N|^2\nabn^2 a}_{\lhs{2}{-\frac{1}{2}}}+\norm{h_1}_{\ll{2}}+\norm{ah_2}_{\ll{2}}\\
\nn&\les & \ep(1+\norm{\po^2 a}_{\lhs{2}{\frac{3}{2}}}+\norm{\po^2N}_{\ll{\infty}}+\norm{\nabb\po^2\th}_{\ll{2}}).
\eea

Now, in view of \eqref{zaa13} and Proposition \ref{prop:parab4}, we have:
$$\norm{\po^2 a}_{\lhs{2}{\frac{3}{2}}}+\norm{\po^2 a}_{\lhs{\infty}{\frac{1}{2}}}+\norm{\nabn\po^2 a}_{\lhs{2}{-\frac{1}{2}}}\les \norm{h_3}_{\lhs{2}{-\frac{1}{2}}}.$$
Together with \eqref{zaa14}, this yields:
\bee
&&\norm{\po^2 a}_{\lhs{2}{\frac{3}{2}}}+\norm{\po^2 a}_{\lhs{\infty}{\frac{1}{2}}}+\norm{\nabn\po^2 a}_{\lhs{2}{-\frac{1}{2}}}\\
\nn&\les & \ep(1+\norm{\po^2 a}_{\lhs{2}{\frac{3}{2}}}+\norm{\po^2N}_{\ll{\infty}}+\norm{\nabb\po^2\th}_{\ll{2}}).
\eee
Thus, we finally obtain:
\bea\lab{zaa15}
&&\norm{\po^2 a}_{\lhs{2}{\frac{3}{2}}}+\norm{\po^2 a}_{\lhs{\infty}{\frac{1}{2}}}+\norm{\nabn\po^2 a}_{\lhs{2}{-\frac{1}{2}}}\\
\nn&\les & \ep(1+\norm{\po^2N}_{\ll{\infty}}+\norm{\nabb\po^2\th}_{\ll{2}}).
\eea

\subsubsection{Estimates for $\po^2\th$}

Let us start by computing the trace of $\po\hth$ when seen as a tensor on $\p$. Differentiating 
\eqref{om16} with respect to $\o$, we obtain:
\bee
\po^2\hth(X,Y)&= & \ds\po^2\th(X,Y)-\frac{1}{2}\po^2\trt (X.Y-(X.N)(Y.N))\\
\nn&&+\po\trt ((X.\po N)(Y.N)+(X.N)(Y.\po N))\\
\nn&&+\frac{1}{2}\trt ((X.\po^2N)(Y.N)+(X.N)(Y.\po^2N)+2(\po N\c X)(\po N\c Y)),
\eee
which yields:
\begin{equation}\label{toc}
\po^2\hth_{AB}=\po^2\th_{AB}-\frac{1}{2}\po^2\trt\delta_{AB}+\trt(\po N)_A(\po N)_B,
\end{equation}
so that:
\begin{equation}\label{toc1}
\textrm{tr}(\po^2\hth)=\textrm{tr}(\po^2\th)-\po^2\trt+\trt|\po N|^2.
\end{equation}
We compute $\po^2\trt$. In view of \eqref{om21}, we have:
\begin{equation}\label{toc2}
\po^2\trt=\po(\po\th_{AA})=\textrm{tr}(\po^2\th)+2\po\th(e_A,\po e_A).
\end{equation}
Now, in view of \eqref{extth}, we have:
\be\lab{toc3}
\po\th(N,.)=-\th(\po N,.).
\ee
\eqref{toc2}, \eqref{toc3} and \eqref{om20} yield
\begin{equation}\label{toc4}
\po^2\trt=\textrm{tr}(\po^2\th)+2\th(\po N, \po N).
\end{equation}
Finally, \eqref{toc1} and \eqref{toc4} imply:
\begin{equation}\label{toc5}
\textrm{tr}(\po^2\hth)=\trt|\po N|^2-2\th(\po N, \po N).
\end{equation}

We now turn to the estimates for $\po^2\trt$. Differentiating \eqref{om23} with 
respect to $\o$, we obtain:
$$\po^2\trt=-\po^2 a+2k_{N \po^2N}+2k_{\po N \po N},$$
so that:
$$\nabb\po^2\trt = -\nabb\po^2 a+2k_{N \nabb\po^2N}+2k_{\nabb N \po^2N}+2\nabb k_{N \po^2N}+4k_{\po N \nabb\po N} +2\nabb k_{\po N \po N}$$
which in turn yields:
\bee
&&\norm{\nabb\po^2\trt}_{\ll{2}} \\
&\les& \norm{\nabb\po^2 a}_{\ll{2}}+\norm{k}_{\l{\infty}{4}}(\norm{\nabb\po^2N}_{\l{2}{4}}+\norm{\nabb N}_{\l{2}{4}}\norm{\po^2N}_{\ll{\infty}}\\
&&+\norm{\nabb\po N}_{\l{2}{4}}\norm{\po N}_{\ll{\infty}})+\norm{\nabb k}_{\ll{2}}(\norm{\po^2N}_{\ll{\infty}}+\norm{\po N}^2_{\ll{\infty}}).
\eee
Together with the Gagliardo-Nirenberg inequality \eqref{eq:GNirenberg}, Lemma \ref{lemma:vacances:1}, the estimate \eqref{threomega1} for $\po a$ and $\po N$, and the estimate \eqref{small2} for $k$, we obtain:
\be\lab{toc6}
\norm{\nabb\po^2\trt}_{\ll{2}}\les \norm{\nabb\po^2 a}_{\ll{2}}+\ep(1+\norm{\nabb^2\po^2N}_{\ll{2}}+\norm{\po^2N}_{\ll{\infty}}).
\ee

We now turn to the estimates for $\nabb\po^2\hth$. We differentiate the third equation of \eqref{struct1} with respect to $\o$. We introduce the symmetric tensor $\sigma$ on $S$ defined by:
\bea\lab{toc7}
\sigma(X, Y)&=&\po\hth(X,Y)+\left(\th(\po N,Y)-\frac{1}{2}\trt \po N\c Y\right)N\c X\\
\nn&&+\left(\th(\po N,X)-\frac{1}{2}\trt \po N\c X\right)N\c Y,
\eea
which in view of \eqref{om16} and \eqref{toc3} satisfies:
$$\sigma(N,.)=\sigma(.,N)=0.$$
We may thus apply the commutator formula \eqref{commutom4} to $\sigma$. We obtain:
\bea\lab{toc8}
([\po,\divb\,]\sigma)_A &= &  -\trt\sigma_{\po N A}-\th_{AB}\sigma_{B\po N}-
\nabla_N\sigma_{\po N B}+\th_{\po N C}\sigma_{CA}\\
\nn&& \ds +(\po N)_A\th_{BC}\sigma_{CB},
\eea
Now, in view of the definition \eqref{toc7} of $\sigma$, and the structure equation for $N$ \eqref{frame1}, we have:
\bea\lab{toc9}
\sigma_{AB}&=&\po\hth_{AB},\\
\nn\nabn\sigma_{AB}&=&\nabn\po\hth_{AB}-\hth_{\po N B}\nabb_A a-\hth_{\po N A}\nabb_B a\\
\nn\nabb_C\sigma_{AB}&=&\nabb_C\po\hth_{AB}+\hth_{\po N B}\th_{AC}+\hth_{\po N A}\th_{BC},
\eea
which together with \eqref{toc8} implies:
\bea
\nn ([\po,\divb\,]\sigma)_A &= &  -\trt\po\hth_{\po N A}-\th_{AB}\po\hth_{B\po N}
-\nabla_N\po\hth_{\po N B}+\hth_{\po N B}\nabb_{\po N} a\\
\lab{toc10}&&+\hth_{\po N \po N}\nabb_A a+\th_{\po N C}\po\hth_{CA}+(\po N)_A\th_{BC}\po\hth_{CB}.
\eea

Now, we have in view of \eqref{toc9}, \eqref{om28} and  \eqref{om31}:
\begin{equation}\label{toc11}
(\divb\sigma)_A = h,
\end{equation}
where $h$ is given by:
\begin{equation}\label{toc12}
\begin{array}{ll}
\ds h = & \ds \frac{1}{2}\nabb_A\po\trt+\nabn\th_{\po N B}-\frac{1}{2}\nabn\trt(\po N)_A+2\trt\hth_{\po N A}+\th_{AB}\hth_{B\po N}\\
&\ds -(\po N)_A\th_{BC}\hth_{CB}+R_{A \po N}-(\po N)_AR_{NN}.
\end{array}
\end{equation}
Differentiating \eqref{toc11} and \eqref{toc12} with respect to $\o$, and using \eqref{toc10} and the commutator formula \eqref{commutom1}, we obtain:
\begin{equation}\label{toc13}
(\divb\po\sigma)_A = h_1,
\end{equation}
where $h_1$ is given by:
\bee
h_1 & = & \frac{1}{2}\nabb_A\po^2\trt -\frac{1}{2}\nabn\po\trt(\po N)_A+\nabn\th_{\po^2N A}+2\nabn\po\th_{\po N A}+\nabb_{\po N}\th_{\po N A}\\
\nn&&-\frac{1}{2}\nabn\trt(\po^2N)_A-\frac{1}{2}\nabn\po\trt(\po N)_A-\frac{1}{2}\nabb_{\po N}\trt(\po N)_A+2\trt\hth_{\po^2N A}\\
\nn&&+3\trt\po\hth_{\po N A}+2\po\trt\hth_{\po N A}+\th_{AB}\hth_{B\po^2N}+\po\th_{AB}\hth_{B\po N}+2\th_{AB}\po\hth_{B\po N}\\
\nn&&-(\po^2N)_A\th_{BC}\hth_{CB}-(\po N)_A\po\th_{BC}\hth_{CB}-2(\po N)_A\th_{BC}\po\hth_{CB}-\th_{\po N C}\po\hth_{CA}\\
\nn&& -\hth_{\po N B}\nabb_{\po N} a-\hth_{\po N \po N}\nabb_A a+R_{A \po^2N}-(\po^2N)_AR_{NN}-2(\po N)_AR_{N \po N}.
\eee
$h_1$ satisfies:
\bee
&&\norm{h_1}_{\ll{2}}\\
&\les& \norm{\nabb\po^2\trt}_{\ll{2}}+\norm{\nabla\th}_{\ll{2}}(\norm{\po^2N}_{\ll{\infty}}+\norm{\po N}^2_{\ll{\infty}})
+\norm{\nabla\po\th}_{\ll{2}}\norm{\po N}_{\ll{\infty}}\\
&& +\norm{\th}_{\ll{4}}\norm{\hth}_{\ll{4}}(\norm{\po^2N}_{\ll{\infty}}+\norm{\po N}^2_{\ll{\infty}})+\norm{\po\hth}_{\ll{4}}\norm{\th}_{\ll{4}}\norm{\po N}_{\ll{\infty}}\\
&&+\norm{\po\th}_{\ll{4}}\norm{\hth}_{\ll{4}}\norm{\po N}_{\ll{\infty}}+\norm{\hth}_{\ll{4}}\norm{\nabb a}_{\ll{4}}\norm{\po N}^2_{\ll{\infty}}\\
&&+\norm{R}_{\ll{2}}(\norm{\po^2N}_{\ll{\infty}}+\norm{\po N}^2_{\ll{\infty}}).
\eee
Together with the estimate \eqref{toc6} for $\po^2\trt$, the estimates \eqref{boot1} \eqref{appboot1} for $\th$, the estimate \eqref{threomega1} for $\po\th$ and $\po N$, the estimate \eqref{appboot} for $a$,  and the estimate \eqref{small2} for $R$, we obtain:
\be\lab{toc14}
\norm{h_1}_{\ll{2}}\les \norm{\nabb\po^2 a}_{\ll{2}}+\ep(1+\norm{\nabb^2\po^2N}_{\ll{2}}+\norm{\po^2N}_{\ll{\infty}}).
\ee

Next, we compare $\divb\po\sigma$ to $\divb\po^2\th$. Differentiating the definition \eqref{toc7} of $\sigma$ with respect to $\o$ first, and then $\divb$, we obtain:
\be\lab{toc15}
\divb\po\sigma=\divb\po^2\hth+h_2,
\ee
where $h_2$ is given schematically by:
$$h_2= \nabb\th(\po^2N+(\po N)^2)+\nabb\po\th \po N+\po\th \nabb\po N+\th(\nabb\po^2N+\po N\nabb\po N).$$
$h_2$ satisfies:
\bee
&&\norm{h_2}_{\ll{2}}\\
&\les& \norm{\nabb\th}_{\ll{2}}(\norm{\po^2N}_{\ll{\infty}}+\norm{\po N}^2_{\ll{\infty}})+\norm{\nabb\po\th}_{\ll{2}}\norm{\po N}_{\ll{\infty}}\\
&&+\norm{\po\th}_{\l{\infty}{4}}\norm{\nabb\po N}_{\l{2}{4}}\\
&&+\norm{\th}_{\l{\infty}{4}}(\norm{\nabb\po^2N}_{\l{2}{4}}+\norm{\po N}_{\ll{\infty}}\norm{\nabb\po N}_{\l{2}{4}}).
\eee
Together with the Gagliardo-Nirenberg inequality \eqref{eq:GNirenberg}, the estimates \eqref{boot1} \eqref{appboot1} for $\th$ and the estimate \eqref{threomega1} for $\po\th$ and $\po N$, we obtain
\be\lab{toc16}
\norm{h_2}_{\ll{2}}\les \ep(1+\norm{\po^2N}_{\ll{\infty}}+\norm{\nabb^2\po^2N}_{\ll{2}}).
\ee

Finally, in view of \eqref{toc13}, \eqref{toc14}, \eqref{toc15} and \eqref{toc16}, we have:
\be\lab{toc17}
\norm{\divb(\po^2\hth)}_{\ll{2}}\les  \norm{\nabb\po^2 a}_{\ll{2}}+\ep(1+\norm{\po^2N}_{\ll{\infty}}+\norm{\nabb^2\po^2N}_{\ll{2}}).
\ee
Next, we estimate $\nabb\textrm{tr}(\po^2\hth)$. In view of \eqref{toc5}, we have:
\bee
\nabb\textrm{tr}(\po^2\hth)=|\po N|^2\nabb\trt+2\trt\po N\nabb\po N-2\nabb\th(\po N, \po N)-4\th(\po N, \nabb\po N).
\eee
This yields:
$$\norm{\nabb\textrm{tr}(\po^2\hth)}_{\ll{2}}\les \norm{\nabb\th}_{\ll{2}}\norm{\po N}^2_{\ll{\infty}}+\norm{\th}_{\ll{4}}\norm{\nabb\po N}_{\ll{4}}\norm{\po N}_{\ll{\infty}},$$
which together with the estimates \eqref{boot1} \eqref{appboot1} for $\th$, and the estimate \eqref{threomega1} for $\po N$ and $\po\th$ implies:
\be\lab{toc18}
\norm{\nabb\textrm{tr}(\po^2\hth)}_{\ll{2}}\les \ep.
\ee
Together with \eqref{toc17}, we obtain:
$$\norm{\divb(\po^2\hth-\textrm{tr}(\po^2\hth))}_{\ll{2}}\les  \norm{\nabb\po^2 a}_{\ll{2}}+\ep(1+\norm{\po^2N}_{\ll{\infty}}+\norm{\nabb^2\po^2N}_{\ll{2}}).$$
In view of the Hodge estimate \eqref{bieber}, this implies:
$$\norm{\nabb(\po^2\hth-\textrm{tr}(\po^2\hth))}_{\ll{2}}\les  \norm{\nabb\po^2 a}_{\ll{2}}+\ep(1+\norm{\po^2N}_{\ll{\infty}}+\norm{\nabb^2\po^2N}_{\ll{2}}),$$
which together with \eqref{toc18} yields:
\be\lab{toc19}
\norm{\nabb\po^2\hth}_{\ll{2}}\les  \norm{\nabb\po^2 a}_{\ll{2}}+\ep(1+\norm{\po^2N}_{\ll{\infty}}+\norm{\nabb^2\po^2N}_{\ll{2}}).
\ee
Now, in view of \eqref{toc}, we have:
\bee
\norm{\nabb\po^2\th}_{\ll{2}}&\les& \norm{\nabb\po^2\hth}_{\ll{2}}+\norm{\nabb\po^2\trt}_{\ll{2}}+\norm{\nabb\trt}_{\ll{2}}\norm{\po N}^2_{\ll{\infty}}\\
&&+\norm{\trt}_{\ll{4}}\norm{\nabb\po N}_{\ll{4}}\norm{\po N}_{\ll{\infty}}
\eee
Together with the estimate \eqref{toc6} for $\po^2\trt$, the estimate \eqref{toc19} for $\po^2\hth$, the estimates \eqref{boot1} \eqref{appboot1} for $\trt$ and the estimate \eqref{threomega1} for $\po N$, we finally obtain:
\be\lab{toc20}
\norm{\nabb\po^2\th}_{\ll{2}}\les  \norm{\nabb\po^2 a}_{\ll{2}}+\ep(1+\norm{\po^2N}_{\ll{\infty}}+\norm{\nabb^2\po^2N}_{\ll{2}}).
\ee

\subsubsection{Estimates for $\po^2N$}

Let $X, Y$ two vectorfields on $\s$ independent of $\o$. We rewrite \eqref{om48}:
\begin{equation}\label{zia}
g(\nabla_{\Pi X}\po N,\Pi Y)=\po\th_{\Pi X\Pi Y}-(\po N)_{\Pi X}\nabla_{\Pi Y} a.
\end{equation}
We differentiate \eqref{zia} with respect to $\o$. Using \eqref{comom4} and evaluating at $X=e_A, Y=e_B$, we obtain:
\bee
&& g(\nabla_A\po^2N,e_B)-g(\nabla_N\po N,e_B)(\po N)_A-g(\nabla_A\po N,N)(\po N)_B\\
&=& \po^2\th_{AB}-\po\th_{NB}(\po N)_A-\po\th_{AN}(\po N)_B-(\po^2N)_A\nabb_B a\\
&&-(\po N)_A\nabb_B(\po a)+(\po N)_A(\po N)_B\nabn a.
\eee
Together with the identities \eqref{om49}, \eqref{om50}, \eqref{toc3}, we obtain:
\bea\lab{zia1}
&& g(\nabla_A\po^2N,e_B)\\
\nn&=& \po^2\th_{AB}-(\po^2N)_A\nabb_B a-2(\po N)_A\nabb_B(\po a)+2(\po N)_A(\po N)_B\nabn a.
\eea
Next, we differentiate the identity \eqref{zaa8}. We obtain:
$$g(\nabla_A\po^2N, N)+g(\po^2N, \nabla_AN)=-2g(\nabla_A\po N, \po N).$$
Together with \eqref{frame1} and \eqref{om48}, we obtain:
\be\lab{zia2}
g(\nabla_A\po^2N, N) = -\th_{A \po^2N}-2\po\th_{A \po N}+(\po N)_A\nabb_{\po N} a.
\ee
Finally, differentiating \eqref{om50}, and using the commutator formula \eqref{commutom1}, and the identities \eqref{extth} and \eqref{om20}, we obtain:
\bea\lab{zia3}
&&\nabn\po^2N\\
\nn&=&-\th(\po^2N, e_A)e_A-\nabb(\po^2 a)+\nabla_{\po^2N} a+\nabn a\po^2N-\nabla_{\po N}\po N\\
\nn&&-\po\th(\po N, e_A)e_A+2\nabb_{\po N}(\po a)N+2\nabn(\po a)\po N+2\nabb_{\po N}( a)\po N.
\eea

Next, we estimate $\nabb^2\po^2N$. Differentiating \eqref{zia1} and \eqref{zia2}, we obtain:
\bee
&&\norm{\nabb^2\po^2N}_{\ll{2}}\\
&\les& \norm{\nabb\po^2\th}_{\ll{2}}+(\norm{\po^2N}_{\ll{\infty}}+\norm{\nabb\po^2N}_{\l{2}{4}})(\norm{\nabb a}_{\l{\infty}{4}}\\
&&+\norm{\nabb^2 a}_{\ll{2}}+\norm{\nabb\th}_{\ll{2}}+\norm{\th}_{\l{\infty}{4}})+(\norm{\po N}_{\ll{\infty}}+\norm{\nabb\po N}_{\l{\infty}{4}})\\
&&\times (\norm{\nabb\po a}_{\l{2}{4}}+\norm{\nabb^2\po a}_{\ll{2}}+\norm{\nabb\po\th}_{\ll{2}}+\norm{\th}_{\l{\infty}{4}})\\
&&+(\norm{\po N}_{\ll{\infty}}+\norm{\nabb\po N}_{\l{\infty}{4}})^2(\norm{\nabb\nabla  a}_{\ll{2}}+\norm{\nabla a}_{\l{2}{4}}).
\eee
Together with the Gagliardo-Nirenberg inequality \eqref{eq:GNirenberg}, the estimates \eqref{boot} \eqref{appboot} for $a$, the estimates \eqref{boot1} \eqref{appboot1} for $\th$, and the estimate \eqref{threomega1} for $\po a$, $\po\th$ and $\po N$, we obtain:
$$\norm{\nabb^2\po^2N}_{\ll{2}}\les \norm{\nabb\po^2\th}_{\ll{2}}+\ep(1+\norm{\po^2N}_{\ll{\infty}}+\norm{\nabb^2\po^2N}_{\ll{2}}),$$
and thus:
\be\lab{zia4}
\norm{\nabb^2\po^2N}_{\ll{2}}\les \norm{\nabb\po^2\th}_{\ll{2}}+\ep(1+\norm{\po^2N}_{\ll{\infty}}).
\ee

Next, we estimate $\nabn\po^2N$. In view of \eqref{zia3}, we have:
\be\lab{zia5}
\nabn\po^2N=-\nabb(\po^2 a)+2\nabn(\po a)\po N+H,
\ee
where $H$ is given by:
\bee
H &=& -\th(\po^2N, e_A)e_A+\nabla_{\po^2N} a+\nabn a\po^2N-\nabla_{\po N}\po N\\
\nn&&-\po\th(\po N, e_A)e_A+2\nabb_{\po N}(\po a)N+2\nabb_{\po N}( a)\po N.
\eee
We have:
\bee
&&\norm{\nabb H}_{\ll{2}}\\
&\les& (\norm{\po^2N}_{\ll{\infty}}+\norm{\nabb\po^2N}_{\l{2}{4}})(\norm{\th}_{\l{\infty}{4}}\\
&&+\norm{\nabb\th}_{\ll{2}}+\norm{\nabla a}_{\l{\infty}{4}}+\norm{\nabb\nabla a}_{\ll{2}})+\norm{\nabb\nabla_{\po N}\po N}_{\ll{2}}\\
&&+ (\norm{\po N}_{\ll{\infty}}+\norm{\nabb\po N}_{\l{\infty}{4}})(\norm{\po\th}_{\l{\infty}{4}}+\norm{\nabb\po\th}_{\ll{2}}\\
&&+\norm{\nabb\po a}_{\l{\infty}{4}}+\norm{\nabb^2\po a}_{\ll{2}}) +(\norm{\po N}_{\ll{\infty}}\\
&&+\norm{\nabb\po N}_{\l{\infty}{4}})^2(\norm{\nabb a}_{\l{\infty}{4}}+\norm{\nabb^2 a}_{\ll{2}}). 
\eee
Together with the Gagliardo-Nirenberg inequality \eqref{eq:GNirenberg}, the estimates \eqref{boot} \eqref{appboot} for $a$, the estimates \eqref{boot1} \eqref{appboot1} for $\th$, and the estimate \eqref{threomega1} for $\po a$, $\po\th$ and $\po N$, we obtain:
\be\lab{zia6}
\norm{\nabb H}_{\ll{2}}\les \ep(1+\norm{\po^2N}_{\ll{\infty}}+\norm{\nabb^2\po^2N}_{\ll{2}}).
\ee
Also, Lemma \ref{lemma:vacances:1} yields:
\be\lab{zia7}
\norm{\nabb(\po^2 a)}_{\lhs{2}{\frac{1}{2}}}\les \norm{\po^2 a}_{\lhs{2}{\frac{3}{2}}}.
\ee
The product estimate \eqref{prod15} implies:
$$\norm{\nabn(\po a)\po N}_{\lhs{2}{\frac{1}{2}}}\les \norm{\nabn(\po a)}_{\lhs{2}{\frac{1}{2}}}(\norm{\po N}_{\ll{\infty}}+\norm{\nabb\po N}_{\l{\infty}{2}}),$$
which together with the estimate \eqref{threomega1} for $\po a$ and $\po N$ yields:
\be\lab{zia8}
\norm{\nabn(\po a)\po N}_{\lhs{2}{\frac{1}{2}}}\les\ep.
\ee
Finally, \eqref{zia5}, \eqref{zia6}, \eqref{zia7} and \eqref{zia8} imply:
\be\lab{zia9}
\norm{\nabn\po^2N}_{\lhs{2}{\frac{1}{2}}}\les \norm{\po^2 a}_{\lhs{2}{\frac{3}{2}}}+\ep(1+\norm{\po^2N}_{\ll{\infty}}+\norm{\nabb^2\po^2N}_{\ll{2}}).
\ee

Next, we estimate the $\ll{\infty}$ norm of $\po^2N$. In view of Corollary \ref{cor:commLP4}, we have:
$$\norm{\po^2N}_{\ll{\infty}}\les \norm{\nabb^2\po^2N}_{\ll{2}}+\norm{\nabn\po^2N}_{\lhs{2}{\frac{1}{2}}}.$$
Together with \eqref{zia4} and \eqref{zia9}, we finally obtain:
\bea\lab{zia10}
&&\norm{\nabb^2\po^2N}_{\ll{2}}+\norm{\nabn\po^2N}_{\lhs{2}{\frac{1}{2}}}+\norm{\po^2N}_{\ll{\infty}}\\
\nn&\les& \norm{\nabb\po^2\th}_{\ll{2}}+\norm{\po^2 a}_{\lhs{2}{\frac{3}{2}}}+\ep.
\eea

Finally, \eqref{zaa15}, \eqref{toc20} and \eqref{zia10} yield:
\bea
\norm{\po^2 a}_{\lhs{2}{\frac{3}{2}}}+\norm{\po^2 a}_{\lhs{\infty}{\frac{1}{2}}}+\norm{\nabn\po^2 a}_{\lhs{2}{-\frac{1}{2}}}&&\\
\nn+\norm{\nabb\po^2\th}_{\ll{2}}+\norm{\nabb^2\po^2N}_{\ll{2}}+\norm{\nabn\po^2N}_{\lhs{2}{\frac{1}{2}}}&\les&\ep,
\eea
and: 
\be
\norm{\po^2N}_{\ll{\infty}}\les 1,
\ee
which concludes the proof of \eqref{threomega2}.

\subsection{Third order derivatives with respect to $\o$}\label{thirdorom}

The goal of this section is to prove \eqref{threomega3}. We first give an outline of the proof. We start 
with the derivation of an equation for $\po^3u$. Recall that  
div$(N)=\trt$, $N=\nabla u/|\nabla u|$, $a=1/|\nabla u|$ and $\trt=1-a+k_{NN}$, so that:
\begin{equation}\label{diffommm1}
\textrm{div}\left(\frac{\nabla u}{|\nabla u|}\right)=1-\frac{1}{|\nabla u|}+k_{NN}.
\end{equation}
Differentiating \eqref{diffommm1} three times with respect to $\o$ yields:
\be\lab{diffommm2}
(\nabn-a^{-1}\lap)\po^3u=\nabb\po^2 a+\cdots.
\ee
In view of the estimate \eqref{threomega2} for $\po^2 a$ and the parabolic estimate \eqref{parab26}, this 
suggests that $\po^3u$ satisfies the following estimate:
\be\lab{diffommm3}
\norm{\po^3u}_{\lhs{2}{\frac{5}{2}}}+\norm{\po^3u}_{\lhs{\infty}{\frac{3}{2}}}+\norm{\nabn\po^3u}_{\lhs{2}{\frac{1}{2}}}\les 1.
\ee
Now, since $\po^3u\in \lhs{\infty}{\frac{3}{2}}$ and $\p$ is 2-dimensional, we obtain that $\po^3u$ belongs to $\ll{\infty}$.

The rest of this section is as follows. We start by deriving the equations for $\po^3u$ and $\po^3N$.  Then, we prove the estimates for $\po^3u$. 

\begin{remark}\label{decay}
Note that $\po^3u=\po^3(x.\o)$ on $x.\o=-2$, which yields: 
\begin{equation}
|\po^3u|\sim |x|\textrm{ when }|x|\rightarrow +\infty\textrm{ on }x.\o=-2.
\end{equation}
This lack of decay is a problem when one tries to solve \eqref{diffommm2}. However, recall 
from section \ref{regx} that the final solution will be equal to $x.\o$ in the region $|x|\geq 2$ so that the estimate \eqref{threomega3} is clearly satisfied there. Thus, we may estimate $\varphi\po^3u$ instead of $\po^3u$, where $\varphi$ is a smooth function on $\s$ equal to 1 on $|x|\leq 2$, $\varphi>0$ on $\s$, and $\varphi\sim |x|^{-3}$ when $|x|$ goes to infinity. Then, $\varphi\po^3u$ is $L^2$ on $x.\o=-2$. Also, the lower order terms generated by commuting \eqref{diffomm2} with the multiplication by $\varphi$ are all under control since they are localized in a compact region of $|x|\geq 2$ where $u$ is explicitly given by $u=x\cdot\o$. In the rest of the section, we omit this 
detail and we assume that the decay of $\po^3u$ is sufficient at $x.\o=-2$.
\end{remark}

\begin{remark}\lab{bestpossible}
One may ask whether it is possible to obtain estimates for higher order derivatives of $u$ and $a$ with respect to $\o$. Consider first $\po^4 a$. Differentiating the equation \eqref{zaa4} for $\po^2 a$ twice would yield:
$$(\nabn -a^{-1}\lap)\po^4 a=\nabn^2\po^2 a+\cdots.$$
Now, we notice in Remark \ref{theend} that one can not obtain an estimate for $\nabn^2 a$, so that the above equation for $\po^4 a$ is useless. On the other hand, differentiating the equation \eqref{diffommm2} with respect to $\o$, we obtain:
\be\lab{diffommm4}
(\nabn-a^{-1}\lap)\po^4u=\nabb\po^3 a+\cdots.
\ee
Now, differentiating $\nabla u=a^{-1}N$ three times with respect to $\o$, we obtain:
$$\po^3 a=-a\nabn\po^3u+\cdots$$
which together with \eqref{diffommm3} suggests that $\po^3 a$ belongs to $\lhs{2}{\frac{1}{2}}$. Thus, in view of \eqref{diffommm4} and the parabolic estimate \eqref{parab24}, we see that $\po^4u$ is at best in $\lhs{\infty}{\frac{1}{2}}$ which does not embed in $\ll{\infty}$. Interpolating with \eqref{diffommm3}, we see that the best estimate we might hope for is:
\be\lab{diffommm5}
\po^{3+\delta}u\in\ll{\infty}\textrm{ for all }\delta<\frac{1}{2}.
\ee
\end{remark}

\begin{remark}\lab{loeb2011}
Note in conjunction with Remark \ref{paslinfty} that the estimate \eqref{diffommm5} would still be at least half a derivative away from allowing to apply the $TT^*$ method in step {\bf C2}. 
\end{remark}

\subsubsection{Derivation of the equation for $\po^3N$ and $\po^3u$}

We first establish the link between $\po^2\log(a)$ and $\po^2u$:
\begin{lemma}\label{omm1b}
$\po^2\log(a)$ and $\po^2u$ are linked by the following equality:
\begin{equation}\label{omm2b}
\po^2\log(a)=-a\nabn(\po^2u)-|\po N|^2+(\po\log(a))^2.
\end{equation}
\end{lemma}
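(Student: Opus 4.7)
The identity is purely algebraic once one differentiates the eikonal-type relation $|\nabla u|^2=a^{-2}$ twice with respect to $\o$, so the plan is simply to carry out this computation carefully.

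First, I would differentiate $a^{-2}=g(\nabla u,\nabla u)$ once in $\o$, using the fact that $g$ is independent of $\o$ and that $\nabla u=a^{-1}N$. This gives
\[
-2a^{-3}\po a=2g(\nabla\po u,\nabla u)=2a^{-1}N(\po u)=2a^{-1}\nabn(\po u),
\]
hence the basic first-order relation
\begin{equation}\label{eq:plan1}
\po\log(a)=-a\,\nabn(\po u).
\end{equation}
Along the way, I would also record the decomposition obtained by applying $\po$ to $\nabla u=a^{-1}N$:
\begin{equation}\label{eq:plan2}
\nabla(\po u)=-a^{-2}\po a\,N+a^{-1}\po N=\nabn(\po u)\,N+a^{-1}\po N,
\end{equation}
where the second equality uses \eqref{eq:plan1}.

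Next, I would differentiate \eqref{eq:plan1} once more. Since $\po$ commutes with $\nabla$ and $\nabn=N^\alpha\partial_\alpha$, one has the basic commutator identity $\po(\nabn f)=\nabla_{\po N}f+\nabn(\po f)$ for any scalar $f$. Applied to $f=\po u$, this yields
\[
\po^2\log(a)=-\po a\,\nabn(\po u)-a\,\nabla_{\po N}(\po u)-a\,\nabn(\po^2u).
\]
The first term is $-\po a\,\nabn(\po u)=a\,\po\log(a)\cdot(-a^{-1}\po\log(a))\cdot(-1)=(\po\log(a))^2$ after invoking \eqref{eq:plan1}. For the middle term, I would use \eqref{eq:plan2} together with $g(\po N,N)=0$ (from \eqref{comom1ter}) to compute
\[
\nabla_{\po N}(\po u)=g(\po N,\nabla\po u)=\nabn(\po u)\,g(\po N,N)+a^{-1}|\po N|^2=a^{-1}|\po N|^2,
\]
so that $-a\,\nabla_{\po N}(\po u)=-|\po N|^2$. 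Combining these three contributions produces exactly \eqref{omm2b}.

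There is no real obstacle here: the proof is a two-line differentiation, and the only mild subtlety is remembering that $\po$ and $\nabla$ commute (because $g$ does not depend on $\o$), while $\po$ and $\nabn$ do not (because $N$ depends on $\o$), which is precisely what produces the term $\nabla_{\po N}(\po u)$ and, through \eqref{eq:plan2}, the $|\po N|^2$ contribution.
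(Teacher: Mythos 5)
Your computation is correct and follows essentially the same route as the paper: the paper also derives the first-order relation $\nabn\po u=-a^{-1}\po\log(a)$ (by differentiating $\nabla u=a^{-1}N$ and projecting with $g(\po N,N)=0$, which is equivalent to your differentiation of $|\nabla u|^2=a^{-2}$), then differentiates it once more in $\o$, picking up the commutator term $\nabb_{\po N}\po u=a^{-1}|\po N|^2$ exactly as you do. No gaps.
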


\begin{proof}
We start with the equality $\nabla u=a^{-1}N$. Differentiating it with respect to $\o$, we obtain:
\begin{equation}\label{omm3}
\nabla\po u=a^{-1}\po N-a^{-1}\po\log(a) N,
\end{equation}
which together with \eqref{comom1ter} yields:
\begin{equation}\label{omm4}
\left\{\begin{array}{l}
\nabb\po u=a^{-1}\po N,\\
\nabn\po u=-a^{-1}\po\log(a).
\end{array}\right.
\end{equation}

Differentiating the second equation of \eqref{omm4} with respect to $\o$ yields:
\begin{equation}\label{omm10b}
\begin{array}{l}
\nabn\po^2u+\nabb_{\po N}\po u=-a^{-1}\po^2\log(a)+a^{-1}(\po\log(a))^2.
\end{array}
\end{equation}
Together with \eqref{omm4}, this yields \eqref{omm2b}.
\end{proof}

Next, we establish the link between $\po^3N$ and $\po^3u$:
\begin{lemma}\label{ommm6bis} 
$\po^3N$ and $\po^3u$ are linked by the following equality:
\begin{equation}\label{ommm6ter}
\begin{array}{ll}
\ds\po^3N= & a\nabb(\po^3u)+(3\po^2\log(a)-3(\po\log(a))^2)\po N+3\po\log(a)\po^2N\\
& \ds +(-3g(\po N,\po^2N)+3\po\log(a)|\po N|^2)N.
\end{array}
\end{equation}
\end{lemma}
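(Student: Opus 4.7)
The plan is to obtain \eqref{ommm6ter} by differentiating the relation $N=a\,\nabla u$ three times in $\o$ and then sorting the resulting expression into its $\p$--tangential and $N$--normal components, using the constraint $g(N,N)=1$ to pin down the normal part. Since the metric $g$ is independent of $\o$, the operator $\po$ commutes with $\nabla$, so Leibniz applied to $N=a\nabla u$ gives
\begin{equation*}
\po^3 N = (\po^3 a)\nabla u + 3(\po^2 a)\nabla\po u + 3(\po a)\nabla\po^2 u + a\,\nabla\po^3 u.
\end{equation*}
Each factor $\nabla\po^j u$ can be written as $\nabb\po^j u + \nabn\po^j u\, N$; the $\nabb\po^j u$ for $j=0,1,2$ are determined by $\po N$, $\po^2 N$ via \eqref{omm4} and its $\o$-derivatives (obtained exactly as in the proof of Lemma \ref{omm1b}), and the $\nabn\po^j u$ for $j=0,1,2$ are expressible in terms of $\po\log a$ and $\po^2\log a$, again via \eqref{omm4} and \eqref{omm2b}.

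First I would isolate $a\,\nabla\po^3 u$ and split it as $a\,\nabb\po^3 u + a\nabn\po^3 u\, N$. The tangential part $a\,\nabb\po^3 u$ already furnishes the leading term in \eqref{ommm6ter}. Next, I would rewrite the three Leibniz terms with $j\le 2$ by substituting $\nabla\po^j u = a^{-1}(\text{combination of } \po^{j}N \text{ and } \po^{j}\log a\cdot N)$ computed from \eqref{omm3} and its successive $\o$-derivatives; the coefficients $\po a, \po^2 a, \po^3 a$ then convert to $\po\log a, \po^2\log a$ after multiplication by $a^{-1}$, so that the terms $3\po\log a\,\po^2 N$ and $\bigl(3\po^2\log a-3(\po\log a)^2\bigr)\po N$ emerge from the combinations $3\po^2 a \cdot a^{-1}\po N$ and $3\po a\cdot(-2a^{-1}\po\log a)\po N$ together with the $\po^3 a\cdot a^{-1}N$-type term after using \eqref{omm2b} to trade $\nabn\po^2 u$ against $\po^2\log a$.

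The remaining task is to identify the coefficient of $N$, which by construction equals $a\nabn\po^3 u$ plus the contributions from the $j\le 2$ Leibniz terms along $N$. Rather than compute $a\nabn\po^3 u$ directly (which would require one more differentiation of \eqref{omm2b}), I would use the constraint $g(N,N)=1$: differentiating three times in $\o$ gives $g(\po^3 N,N) = -3\,g(\po^2 N,\po N)$, so the full $N$--component of $\po^3 N$ is fixed. Equating this with the $N$--component produced by the Leibniz expansion then yields exactly $\bigl(-3g(\po N,\po^2N)+3\po\log a\,|\po N|^2\bigr)N$, after using $g(\po N,N)=0$ from \eqref{comom1ter} and $g(\po^2N,N)=-|\po N|^2$ from \eqref{zaa8} to simplify the cross terms $3\po a\cdot\nabla\po^2 u$ and $3\po^2 a\cdot\nabla\po u$ along $N$.

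The main obstacle is purely the bookkeeping of Leibniz: verifying that the three sources of $N$--components (namely $3\po^2 a\,\nabn\po u\,N$, $3\po a\,\nabn\po^2 u\,N$, and $a\nabn\po^3 u\,N$) combine, after substituting \eqref{omm4} and \eqref{omm2b}, into the specific scalar $-3g(\po N,\po^2 N)+3\po\log(a)|\po N|^2$ predicted by the constraint $g(N,N)=1$. Once this consistency check is performed, \eqref{ommm6ter} follows.
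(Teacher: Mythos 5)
Your argument is correct, and it reaches \eqref{ommm6ter} by a route that differs from the paper's mainly in how the normal component is produced. The paper first derives the intermediate identity \eqref{omm2} for $\po^2 N$ by differentiating $\nabb\po u=a^{-1}\po N$ once (the $N$-terms being generated by the commutator \eqref{commutom1}), and then differentiates \eqref{omm2} once more in $\o$, again invoking \eqref{commutom1} together with \eqref{omm2} and \eqref{omm2b}, so the coefficient of $N$ comes out of the computation directly. You instead expand $\po^3(a\nabla u)$ by Leibniz, read off the tangential part from \eqref{omm4}, \eqref{omm2} and \eqref{omm2b}, and fix the normal part a priori from the thrice-differentiated constraint $g(N,N)=1$, i.e. $g(\po^3N,N)=-3g(\po^2N,\po N)$ (this identity appears as \eqref{ommm19} in the paper, used there for a different purpose). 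This is legitimate: the tangential part of your expansion involves neither $\po^3 a$ nor $\nabn\po^3 u$, and the constraint then determines the coefficient of $N$ to be $-3g(\po N,\po^2N)+3\po\log(a)|\po N|^2$ once one accounts for the normal piece $-|\po N|^2 N$ hidden inside the term $3\po\log(a)\po^2N$ of \eqref{ommm6ter}. What your approach buys is that you never have to manipulate $\po^3 a$ or $\nabn\po^3 u$; what the paper's buys is that the same iteration also produces \eqref{omm2}, which is needed independently in section \ref{thirdorom}. One small remark: you describe matching the $N$-components as a ``consistency check,'' but in your scheme it is not a check --- the constraint simply \emph{defines} the normal component, so nothing remains to verify there beyond the tangential bookkeeping you already describe.
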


\begin{proof}
Differentiating the first equation of \eqref{omm4} with respect to $\o$ and using \eqref{commutom1} yields:
\begin{equation}\label{omm10}
\begin{array}{l}
\nabb\po^2u-\nabb_{\po N}(\po u) N-\nabn(\po u)\po N=a^{-1}\po^2N-a^{-1}\po\log(a)\po N.
\end{array}
\end{equation}
Together with \eqref{omm4}, this yields:
\begin{equation}\label{omm2}
\po^2N=a\nabb(\po^2u)+2\po\log(a)\po N -|\po N|^2N.
\end{equation}

Differentiating \eqref{omm2} with respect to $\o$, we obtain:
\begin{equation}\label{ommm7}
\begin{array}{ll}
\ds\po^3N= & a\po(\nabb(\po^2u))+a\po\log(a)\nabb(\po^2u)+2\po^2\log(a)\po N+2\po\log(a)\po^2N\\
& \ds -2g(\po^2N,\po N)N-|\po N|^2\po N.
\end{array}
\end{equation}
\eqref{commutom1}, \eqref{omm2}, \eqref{omm2b} and \eqref{ommm7} yield:
\begin{equation}\label{ommm8}
\begin{array}{lll}
\ds\po^3N & = & \ds a(\nabb(\po^3u)-\nabb_{\po N}(\po^2u)N-\nabn(\po^2u)\po N)+a\po\log(a)\nabb(\po^2u)\\
& & \ds +2\po^2\log(a)\po N+2\po\log(a)\po^2N -2g(\po^2N,\po N)N-|\po N|^2\po N\\
& = & \ds a\nabb(\po^3u)+(3\po^2\log(a)-3(\po\log(a))^2)\po N+3\po\log(a)\po^2N\\
& & \ds +(-3g(\po N,\po^2N)+3\po\log(a)|\po N|^2)N,
\end{array}
\end{equation}
which implies \eqref{ommm6ter}. 
\end{proof}

We finally derive an equation for $\po^3u$: 
\begin{lemma}\label{ommm9}
$\po^3u$ satisfies the following equation:
\bea\label{ommm10}
&&(\nabn-a^{-1}\lap)\po^3u\\
\nn&=&2a^{-2}\nabla_{\po^3N}(\log(a))-2a^{-2}k(N,\po^3N)-a^{-1}\po\log(a)\lap\po^2u+2a^{-1}\nabb_{\po N}\nabn\po^2u\\
\nn&& +\po^2\log(a)(3a^{-2}\po\trt-2a^{-1}\po\log(a) -8a^{-2}\nabb_{\po N}\log(a)+4a^{-2}k(N,\po N))\\
\nn&& +4a^{-2}\nabb_{\po N}\po^2\log(a)+6a^{-2}\nabla_{\po^2N}\po\log(a)-12a^{-2}\po\log(a)\nabla_{\po^2N}\log(a)\\
\nn&& +12a^{-2}\po\log(a)k(N,\po^2N)+6a^{-2}\th(\po N,\po^2N)-3a^{-2}\trt g(\po N,\po^2N)\\
\nn&& -6a^{-2}k(\po N,\po^2N)-3a^{-1}g(\po N,\po^2N)-5a^{-2}(\po\log(a))^2\po\trt \\
\nn&& +2a^{-2}\po\th(\po N,\po N)-16a^{-2}\po\log(a)\nabb_{\po N}(\po\log(a))-2a^{-1}(\po\log(a))^3\\
\nn&& +a^{-2}(\po\log(a))^2(16\nabb_{\po N}(\log(a))-8k(N,\po N))+\po\log(a)(4a^{-2}\trt |\po N|^2\\
\nn&& -8a^{-2}\th(\po N,\po N)+12a^{-2}k(\po N,\po N)+3a^{-1}|\po N|^2).
\eea
\end{lemma}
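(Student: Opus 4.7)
The plan is to derive \eqref{ommm10} by differentiating three times in $\o$ the scalar equation
\[
\mathrm{div}(N)-k(N,N)=1-a
\]
that defines our foliation (see \eqref{choice1}), and then reorganizing the result into a parabolic equation for $\po^3u$ by means of the two kinematic identities just established in Lemma \ref{omm1b} and Lemma \ref{ommm6bis}. Throughout we use without comment that $\po$ commutes with the three--dimensional Levi--Civita connection $\nabla$ (the metric $g$ is independent of $\o$), together with the commutator formulas \eqref{commutom1}--\eqref{commutom3}, the structure equations \eqref{frame1}, and the elementary identities $g(\po N,N)=0$, $g(\po^2N,N)=-|\po N|^2$, $g(\po^3N,N)=-3g(\po N,\po^2N)$ obtained by successive differentiation of $g(N,N)=1$.

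\smallskip

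\textbf{Step 1 (produce $\nabn\po^3u$).} Differentiating \eqref{omm2b} once more in $\o$, using $[\po,\nabn]f=\nabb_{\po N}f$ (the scalar specialization of \eqref{commutom1}), and the relation $\po a=-a^2\nabn\po u$ read off from the second line of \eqref{omm4}, one obtains
\[
\po^3a \,=\, -a^2\nabn\po^3u-a^2\nabb_{\po N}\po^2u+\mathcal R_1,
\]
where $\mathcal R_1$ is an algebraic expression in $a$, $\po\log a$, $\po^2\log a$, $\po N$, $\po^2N$, $\nabb a$ and $\nabn\po u,\nabn\po^2u$. This is the mechanism by which $\po^3u$ appears parabolically.

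\smallskip

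\textbf{Step 2 (produce $\lap\po^3u$).} Projecting \eqref{ommm6ter} tangentially and taking $\divb$, and using $\divb(a\,\nabb\po^3u)=a\lap\po^3u+\nabb a\cdot\nabb\po^3u$, one gets
\[
\divb(\Pi\po^3N)\,=\,a\lap(\po^3u)+\mathcal R_2,
\]
with $\mathcal R_2$ built from $\nabb(\po^2\log a)$, $\nabb(\po\log a)$, $\nabb(\po N)$, $\nabb(\Pi\po^2N)$, $\nabb a$, $\po N$, and $\Pi\po^2N$, plus products thereof.

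\smallskip

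\textbf{Step 3 (exploit the scalar equation).} Differentiating $\mathrm{div}(N)-k(N,N)=1-a$ three times in $\o$ yields
\[
\mathrm{div}(\po^3N)-\po^3\bigl(k(N,N)\bigr)=-\po^3a.
\]
Decomposing the three--dimensional divergence in the adapted frame $(N,e_1,e_2)$,
\[
\mathrm{div}(\po^3N)=\divb(\Pi\po^3N)+\trt\,g(\po^3N,N)+g(\nabla_N\po^3N,N),
\]
and evaluating the last two summands via the identities $g(\po^3N,N)=-3g(\po N,\po^2N)$ and $\nabla_NN=-a^{-1}\nabb a$ (from \eqref{frame1}), all terms in which $\po^3N$ sits behind an $N$--contraction are converted into algebraic expressions in $\po N,\po^2N,a,\nabb a$. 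The term $\po^3(k(N,N))$ is expanded by Leibniz into $\nabla^i k$ contracted with combinations of $\po^{j}N$ for $i+\sum j_\ell\le 3$.

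\smallskip

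\textbf{Step 4 (assembly).} Inserting the expression for $\divb(\Pi\po^3N)$ from Step 2 on the left, and the expression for $\po^3a$ from Step 1 on the right, of the identity in Step 3, then dividing by $a^2$ and moving $\nabn\po^3u$ to the left, yields the desired parabolic equation
\[
(\nabn-a^{-1}\lap)\po^3u=\mathrm{RHS}.
\]
The two distinguished top--order terms $2a^{-2}\nabla_{\po^3N}\log a$ and $-2a^{-2}k(N,\po^3N)$ in \eqref{ommm10} record the contributions in which $\po^3N$ is not hit by a divergence: the first comes from the $g(\nabla_N\po^3N,N)$ piece of the adapted--frame divergence (rewritten via $\nabla_N(\log a)=-\nabn a/a$), the second is the top Leibniz term in $\po^3(k(N,N))$. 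All remaining terms of \eqref{ommm10} arise from $\mathcal R_1$, $\mathcal R_2$, and the lower--order Leibniz contributions in $\po^3(k(N,N))$.

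\smallskip

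\textbf{Main obstacle.} The proof is conceptually straightforward but algebraically heavy: the real work is the bookkeeping in Step 4, where dozens of products of $\po^j\log a$, $\po^jN$ and derivatives of $k$ must be regrouped into the precise combination displayed on the right--hand side of \eqref{ommm10}. The delicate point is that one must consistently use the constraints on $g(\po^jN,N)$ to recognize that it is $\po^3N$ itself (and not $\Pi\po^3N$) that appears in the two top--order terms, via the splitting $\nabla_{\po^3N}\log a=\nabb_{\Pi\po^3N}\log a+g(\po^3N,N)\nabn\log a$ and an analogous splitting for $k(N,\po^3N)$; this is what fixes the numerical coefficients in the final expression.
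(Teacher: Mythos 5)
Your route is sound and does differ in organization from the paper's. The paper proves \eqref{ommm10} iteratively: it first derives the parabolic equation \eqref{omm11} for $\po u$ from the once-differentiated constraint, then differentiates that equation in $\o$ (using the scalar commutators \eqref{commutom3} and $[\po,\nabn]f=\nabb_{\po N}f$) to get \eqref{omm2bis} for $\po^2u$, and differentiates once more, simplifying with \eqref{omm2}, \eqref{omm2b}, \eqref{ommm12}--\eqref{ommm15}, to reach \eqref{ommm10}. You instead differentiate $\mathrm{div}(N)-k(N,N)=1-a$ three times in one stroke and then convert $\po^3a$ and $\mathrm{div}(\po^3N)$ into $\nabn\po^3u$ and $\lap\po^3u$ via Lemmas \ref{omm1b} and \ref{ommm6bis}; this matches the paper's own heuristic outline \eqref{diffommm1}--\eqref{diffommm2} rather than its actual proof. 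Both mechanisms correctly produce the two top-order terms (in particular the coefficient $2$ in $2a^{-2}\nabla_{\po^3N}\log a$ arises, in your scheme, from one copy in the frame decomposition of $\mathrm{div}(\po^3N)$ and one copy from $\nabb a\cdot\nabb\po^3u$ inside $\divb(\Pi\po^3N)$). The paper's iterative organization has the practical advantage that the intermediate equations for $\po u$ and $\po^2u$ are needed elsewhere and each stage only requires scalar commutators, whereas your route forces you to handle $g(\nabla_N\po^3N,N)$ directly, which in turn requires the differentiated Fermi equations \eqref{om50} and \eqref{zia3}.

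That said, be aware that for a lemma of this kind the statement \emph{is} the list of coefficients, and your Step 4 defers exactly that verification: nothing in the proposal certifies, say, the $-5a^{-2}(\po\log a)^2\po\trt$ or $-16a^{-2}\po\log(a)\nabb_{\po N}(\po\log a)$ terms, which is where errors actually occur. Three smaller inaccuracies: (i) $[\po,\nabn]f=\nabb_{\po N}f$ is correct but is not a specialization of \eqref{commutom1}, which concerns $[\po,\nabb]$; (ii) since $k$ is independent of $\o$ and $\po$ acts at fixed $x$, no covariant derivatives of $k$ can appear --- the exact expansion is
\begin{equation*}
\po^3\bigl(k(N,N)\bigr)=2k(N,\po^3N)+6k(\po N,\po^2N),
\end{equation*}
so your ``$\nabla^ik$'' bookkeeping is spurious; (iii) $\nabla_N\log a=a^{-1}\nabn a$, not $-\nabn a/a$. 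Also, the terms $\nabn\bigl(g(\po^3N,N)\bigr)=-3\nabn g(\po N,\po^2N)$ are not purely algebraic in $\po N,\po^2N,a,\nabb a$: they bring in $\nabb\po a$ and $\nabb\po^2a$, which is precisely why $-a^{-1}\po\log(a)\lap\po^2u$, $2a^{-1}\nabb_{\po N}\nabn\po^2u$ and $6a^{-2}\nabla_{\po^2N}\po\log(a)$ survive on the right-hand side of \eqref{ommm10}.
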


\begin{proof}
We start by obtaining an equation for $\po u$. We differentiate the first equation of 
\eqref{struct1} by $\o$:
\begin{equation}\label{omm5}
\po\trt-2k(N,\po N)=-a\po\log(a).
\end{equation}
By \eqref{frame1}, we have $\trt=$div$(N)$, and differentiating with respect to $\o$, we obtain:
\begin{equation}\label{omm6}
\po\trt=\textrm{div}(\po N).
\end{equation}
Now, for any vectorfield $X$ tangent to $\p$, we have:
\begin{equation}\label{omm7}
\textrm{div}(X)=\divb(X)+\nabb_X\log(a),
\end{equation}
which together with \eqref{comom1ter} and \eqref{omm6} yields:
\begin{equation}\label{omm8}
\po\trt=\divb(\po N)+\nabb_{\po N}\log(a).
\end{equation}
\eqref{omm4}, \eqref{omm5} and \eqref{omm8} imply:
\begin{equation}\label{omm9}
(\nabn -a^{-1}\lap)\po u=a^{-1}\nabb\log(a)\nabb\po u+a^{-2}\nabb_{\po N}\log(a)-2a^{-2}k(N,\po N).
\end{equation}
which together with the first equation of \eqref{omm4} yields:
\begin{equation}\label{omm11}
(\nabn -a^{-1}\lap)\po u=2a^{-2}\nabb_{\po N}\log(a)-2a^{-2}k(N,\po N).
\end{equation}

We differentiate \eqref{omm11} with respect to $\o$ to obtain an equation for $\po^2$:
\bea\label{omm12}
\nn&&\nabn(\po^2u)+\nabb_{\po N}(\po u)-a^{-1}\lap(\po^2u)-a^{-1}[\po,\lap](\po u)+a^{-1}\po(\log(a))\lap(\po u)\\
\nn&=&2a^{-2}\nabb_{\po N}(\po\log(a))+2a^{-2}\nabla_{\po^2N}(\log(a))-4a^{-2}\po\log(a)\nabb_{\po N}\log(a)\\
&& -2a^{-2}k(N,\po^2N)-2a^{-2}k(\po N,\po N)+4a^{-2}\po\log(a)k(N,\po N).
\eea
The first equation of \eqref{omm4} and \eqref{omm8} yield:
\begin{equation}\label{omm13}
\lap(\po u)=a^{-1}\po\trt-2a^{-1}\nabb_{\po N}\log(a).
\end{equation}
\eqref{commutom3}, \eqref{omm4}, \eqref{omm12} and \eqref{omm13} imply:
\begin{equation}\label{omm14}
\begin{array}{r}
\ds (\nabn-a^{-1}\lap)\po^2u= -2a^{-1}\nabla^2(\po u)(N,\po N)+2a^{-2}\nabla_{\po^2N}(\log(a))\\
\ds -2a^{-2}k(N,\po^2N)+2a^{-2}\po(\log(a))\po\trt+2a^{-2}\nabb_{\po N}(\po\log(a))\\
\ds -6a^{-2}\po\log(a)\nabb_{\po N}\log(a)+4a^{-2}\po\log(a)k(N,\po N)\\
\ds -a^{-2}\trt|\po N|^2-2a^{-2}k(\po N,\po N)-a^{-1}|\po N|^2.
\end{array}
\end{equation}
Using \eqref{omm4}, we rewrite $\nabla^2(\po u)(N,\po N)$ as:
\begin{equation}\label{omm15}
\begin{array}{lll}
\ds \nabla^2(\po u)(N,\po N) & = & \ds\nabla_{\po N}(\nabla_N(\po u))-\nabla_{\nabla_{\po N}N}(\po u)\\
& = & \ds\nabb_{\po N}(-a^{-1}\po\log(a))-\th(\po N,e_A)\nabb_A(\po u)\\
& = & \ds -a^{-1}\nabb_{\po N}(\po\log(a))+a^{-1}\po\log(a)\nabb_{\po N}(\log(a))\\
& & \ds -a^{-1}\th(\po N,\po N).
\end{array}
\end{equation}
\eqref{omm14} and \eqref{omm15} yield:
\begin{equation}\label{omm2bis}
\begin{array}{r}
\ds (\nabn-a^{-1}\lap)\po^2u= 2a^{-2}\nabla_{\po^2N}(\log(a))-2a^{-2}k(N,\po^2N)+2a^{-2}\po(\log(a))\po\trt\\
\ds +4a^{-2}\nabb_{\po N}(\po\log(a))-8a^{-2}\po\log(a)\nabb_{\po N}\log(a)+4a^{-2}\po\log(a)k(N,\po N)\\
\ds -a^{-2}\trt|\po N|^2+2a^{-2}\th(\po N,\po N)-2a^{-2}k(\po N,\po N)-a^{-1}|\po N|^2.
\end{array}
\end{equation}

Differentiating \eqref{omm2bis} with respect to $\o$, we obtain:
\begin{equation}\label{ommm11}
\begin{array}{ll}
& \ds (\nabn-a^{-1}\lap)\po^3u+\nabb_{\po N}\po^2u+a^{-1}\po\log(a)\lap\po^2u-a^{-1}[\po,\lap]\po^2u\\
\ds = & \ds 2a^{-2}\nabla_{\po^3N}(\log(a))-2a^{-2}k(N,\po^3N)+2a^{-2}\po\log(a)\po^2\trt\\
& \ds +\po^2\log(a)(2a^{-2}\po\trt-8a^{-2}\nabb_{\po N}\log(a)+4a^{-2}k(N,\po N))\\
& \ds +4a^{-2}\nabb_{\po N}\po^2\log(a)+6a^{-2}\nabla_{\po^2N}\po\log(a)-12a^{-2}\po\log(a)\nabla_{\po^2N}\log(a)\\
& \ds +8a^{-2}\po\log(a)k(N,\po^2N)+4a^{-2}\th(\po N,\po^2N)-2a^{-2}\trt g(\po N,\po^2N)\\
& \ds -6a^{-2}k(\po N,\po^2N)-2a^{-1}g(\po N,\po^2N)-4a^{-2}(\po\log(a))^2\po\trt \\
& \ds -a^{-2}\po\trt|\po N|^2+2a^{-2}\po\th(\po N,\po N)-16a^{-2}\po\log(a)\nabb_{\po N}(\po\log(a))\\
& \ds +a^{-2}(\po\log(a))^2(16\nabb_{\po N}(\log(a))-8k(N,\po N))+\po\log(a)(2a^{-2}\trt |\po N|^2\\
& \ds -4a^{-2}\th(\po N,\po N)+8a^{-2}k(\po N,\po N)+a^{-1}|\po N|^2).
\end{array}
\end{equation}
Using \eqref{omm2}, we have:
\begin{equation}\label{ommm12}
\nabb_{\po N}\po^2u=a^{-1}g(\po N,\po^2N)-2a^{-1}\po\log(a)|\po N|^2.
\end{equation}
\eqref{commutom3} and \eqref{comom6} yield:
\begin{equation}\label{ommm13}
\begin{array}{ll}
\ds [\po,\lap]\po^2u & =-2\nabla^2\po^2u(N,\po N)-\po\trt\nabn\po^2u-\trt\nabb_{\po N}\po^2u\\
\ds & =-2\nabb_{\po N}\nabn\po^2u+2\th(\po N,\nabb\po^2u)-\po\trt\nabn\po^2u-\trt\nabb_{\po N}\po^2u,
\end{array}
\end{equation}
which together with \eqref{omm2} and \eqref{omm2b} implies:
\begin{equation}\label{ommm14}
\begin{array}{ll}
\ds [\po,\lap]\po^2u= & -2\nabb_{\po N}\nabn\po^2u+a^{-1}\po\trt\po^2\log(a)+2a^{-1}\th(\po N,\po^2N)\\
& \ds -a^{-1}\trt g(\po N,\po^2N)-a^{-1}\po\trt(\po\log(a))^2+a^{-1}\po\trt |\po N|^2\\
& \ds -4a^{-1}\po\log(a)\th(\po N,\po N)+2\trt\po\log(a)|\po N|^2.
\end{array}
\end{equation}
Differentiating the first equation of \eqref{struct1} twice with respect to $\o$, we obtain:
\begin{equation}\label{ommm15}
\po^2\trt=-a\po^2\log(a)-a(\po\log(a))^2+2k(N,\po^2N)+2k(\po N,\po N).
\end{equation}
Finally, \eqref{ommm11}, \eqref{ommm12}, \eqref{ommm14} and \eqref{ommm15} imply \eqref{ommm10}.
\end{proof}

\subsubsection{Estimates for $\po^3u$}

The equation \eqref{ommm10} takes the form:
\begin{equation}\label{ommm16}
\ds (\nabn-a^{-1}\lap)\po^3u=h,
\end{equation}
where $h$ is given by:
\begin{displaymath}
\begin{array}{l}
\ds h= 2a^{-2}\nabla_{\po^3N}(\log(a))-2a^{-2}k(N,\po^3N)\\
\ds -a^{-1}\po\log(a)\lap\po^2u+2a^{-1}\nabb_{\po N}\nabn\po^2u+\po^2\log(a)(3a^{-2}\po\trt-2a^{-1}\po\log(a)\\
\ds -8a^{-2}\nabb_{\po N}\log(a)+4a^{-2}k(N,\po N))+4a^{-2}\nabb_{\po N}\po^2\log(a)\\
\ds +6a^{-2}\nabla_{\po^2N}\po\log(a)-12a^{-2}\po\log(a)\nabla_{\po^2N}\log(a)\\
\ds +12a^{-2}\po\log(a)k(N,\po^2N)+6a^{-2}\th(\po N,\po^2N)-3a^{-2}\trt g(\po N,\po^2N)\\
\ds -6a^{-2}k(\po N,\po^2N)-3a^{-1}g(\po N,\po^2N)-5a^{-2}(\po\log(a))^2\po\trt \\
\ds +2a^{-2}\po\th(\po N,\po N)-16a^{-2}\po\log(a)\nabb_{\po N}(\po\log(a))-2a^{-1}(\po\log(a))^3\\
\ds +a^{-2}(\po\log(a))^2(16\nabb_{\po N}(\log(a))-8k(N,\po N))+\po\log(a)(4a^{-2}\trt |\po N|^2\\
\ds -8a^{-2}\th(\po N,\po N)+12a^{-2}k(\po N,\po N)+3a^{-1}|\po N|^2).
\end{array}
\end{displaymath}
Let $0<b<\frac{1}{2}$. We estimate the norm of $h$ in $\lhs{2}{b}$. Using the product estimate \eqref{fichtre}, we have:
\bee
&&\norm{h}_{\lhs{2}{b}}\\
&\les& \norm{a^{-2}\nabla_{\po^3N}(\log(a))}_{\lhs{2}{b}}+\norm{\po^3N}_{\lhs{2}{1}}\norm{a^{-2}k}_{\lhs{\infty}{\frac{1}{2}}}\\
&&+\norm{\po a}_{\lhs{\infty}{\frac{1}{2}}}\norm{\lap\po^2u}_{\lhs{2}{1}}+\norm{a^{-1}\nabb_{\po N}\nabn\po^2u}_{\lhs{2}{b}}\\
&&+\norm{\po^2\log(a)}_{\lhs{2}{1}}(\norm{a^{-2}\po\trt}_{\lhs{\infty}{\frac{1}{2}}}+\norm{a^{-1}\po\log(a)}_{\lhs{\infty}{\frac{1}{2}}}\\
&&+\norm{a^{-2}\nabb_{\po N}\log(a)}_{\lhs{\infty}{\frac{1}{2}}}+\norm{a^{-2}k(N,\po N)}_{\lhs{\infty}{\frac{1}{2}}})\\
&&+\norm{a^{-2}\nabb_{\po N}\po^2\log(a)}_{\lhs{2}{b}}+\norm{a^{-2}\nabla_{\po^2N}\po\log(a)}_{\lhs{2}{b}}\\
&&+\norm{a^{-2}\po\log(a)}_{\lhs{\infty}{1}}\norm{\po^2N}_{\lhs{\infty}{\frac{1}{2}}}\norm{\nabla\log(a)}_{\lhs{2}{1}}\\
&&+\norm{\po^2N}_{\lhs{2}{1}}(\norm{a^{-2}\po\log(a)}_{\lhs{\infty}{1}}\norm{k}_{\lhs{\infty}{\frac{1}{2}}}+\norm{a^{-2}\th\po N}_{\lhs{\infty}{\frac{1}{2}}}\\
&&+\norm{a^{-2}\trt\po N}_{\lhs{\infty}{\frac{1}{2}}}+\norm{a^{-2}k\po N}_{\lhs{\infty}{\frac{1}{2}}})\\
&&+\norm{a^{-1}\po\log(a)}_{\lhs{\infty}{1}}^2\norm{\po\trt}_{\lhs{2}{\frac{1}{2}}}+\norm{\po\th}_{\lhs{2}{\frac{1}{2}}}\norm{a^{-1}\po N}^2_{\lhs{\infty}{1}}\\
&&+\norm{\po\log(a)}_{\lhs{\infty}{\frac{1}{2}}}\norm{a^{-2}\po N}_{\l{\infty}{1}}\norm{\nabb(\po\log(a))}_{\lhs{2}{1}}\\
&&+\norm{a^{-1}\po\log(a)}_{\lhs{2}{\frac{1}{2}}}\norm{\po\log(a)}_{\lhs{\infty}{1}}^2+\norm{a^{-1}\po\log(a)}^2_{\lhs{\infty}{1}}\\
&&\times(\norm{\nabb_{\po N}(\log(a)}_{\lhs{2}{\frac{1}{2}}}+\norm{k\po N}_{\lhs{2}{\frac{1}{2}}})\\
&&+\norm{\po\log(a)}_{\lhs{\infty}{1}}(\norm{\th}_{\lhs{\infty}{\frac{1}{2}}}+\norm{k}_{\lhs{\infty}{\frac{1}{2}}})\norm{a^{-2}(\po N)^2}_{\lhs{2}{1}}.
\eee
Together with the embedding \eqref{clp12}, the estimates \eqref{boot} and \eqref{appboot} for $a$, the estimate \eqref{boot1} and \eqref{appboot1} for $\th$, the estimates \eqref{small2} \eqref{appsmall2} for $k$, the estimate \eqref{threomega1} for $\po a$, $\po N$ and $\po\th$, and the estimate \eqref{threomega2} for $\po^2\log(a)$ and $\po^2N$, we obtain:
\bea\lab{ommm17}
&&\norm{h}_{\lhs{2}{b}}\\
\nn&\les& \norm{a^{-2}\nabla_{\po^3N}(\log(a))}_{\lhs{2}{b}}+\ep\norm{\lap\po^2u}_{\lhs{2}{1}}+\norm{a^{-1}\nabb_{\po N}\nabn\po^2u}_{\lhs{2}{b}}\\
\nn&&+\norm{a^{-2}\nabb_{\po N}\po^2\log(a)}_{\lhs{2}{b}}+\norm{a^{-2}\nabla_{\po^2N}\po\log(a)}_{\lhs{2}{b}}+\ep+\ep\norm{\po^3N}_{\lhs{2}{1}}.
\eea

Next, we estimate the various terms in the right-hand side of \eqref{ommm17} starting with the fifth one. Using the decomposition \eqref{zaa9} of $\po^2N$, we have:
$$a^{-2}\nabla_{\po^2N}\po\log(a)=-a^{-2}|\po N|^2\nabn\po\log(a)+a^{-2}\nabb_{\Pi(\po^2N)}\po\log(a).$$
Together with the product estimate \eqref{fichtre}, this yields:
\bea\lab{ommm18}
&&\norm{a^{-2}\nabla_{\po^2N}\po\log(a)}_{\lhs{2}{b}}\\
\nn&\les& \norm{a^{-1}\po N}_{\lhs{\infty}{1}}^2\norm{\nabn\po\log(a)}_{\lhs{2}{\frac{1}{2}}}+\norm{a^{-2}\po^2N}_{\lhs{\infty}{\frac{1}{2}}}\norm{\nabb\po\log(a)}_{\lhs{2}{1}}\\
\nn&\les& \ep,
\eea
where we used in the last inequality the embedding \eqref{clp12}, the estimate \eqref{boot} for $a$, the estimate \eqref{threomega1} for $\po a$, $\po N$, and the estimate \eqref{threomega2} for $\po^2N$.

Next, we estimate the first term in the right-hand side of \eqref{ommm17}. We first provide a decomposition of $\po^3N$. Differentiating \eqref{zaa8} with respect to $\o$, we obtain:
$$g(\po^3N,N)=-3g(\po^2N, \po N),$$
which yields:
\begin{equation}\label{ommm19}
\po^3 N=\Pi(\po^3N)-3g(\po^2N, \po N)N.
\end{equation}
We obtain:
$$a^{-2}\nabla_{\po^3N}(\log(a))=-3a^{-2}g(\po^2N, \po N)\nabn(\log(a))+a^{-2}\nabb_{\Pi(\po^3N)}(\log(a)).$$
Together with the product estimate \eqref{fichtre}, this yields:
\bea
\nn\norm{a^{-2}\nabla_{\po^3N}(\log(a))}_{\lhs{2}{b}}&\les& \norm{\po^2N}_{\lhs{\infty}{\frac{1}{2}}}\norm{a^{-2}\po N}_{\lhs{\infty}{1}}\norm{\nabn(\log(a))}_{\lhs{2}{1}}\\
\nn&&+\norm{\po^3N}_{\lhs{2}{1}}\norm{a^{-2}\nabb\log(a)}_{\lhs{\infty}{\frac{1}{2}}}\\
\lab{ommm20}&\les& \ep(1+\norm{\po^3N}_{\lhs{2}{1}}),
\eea
where we used in the last inequality the embedding \eqref{clp12}, the estimates \eqref{boot} \eqref{appboot} for $a$, the estimate \eqref{threomega1} for $\po N$, and the estimate \eqref{threomega2} for $\po^2N$.

Next, we estimate the fourth term in the right-hand side of \eqref{ommm17}. We have:
$$a^{-2}\nabb_{\po N}\po^2\log(a)=\divb(a^{-2}\po^2\log(a)\po N)-\divb(a^{-2}\po N)\po^2\log(a).$$
Together with the product estimates \eqref{prod11} and \eqref{fichtre}, this yields:
\bea\lab{ommm21}
&&\norm{a^{-2}\nabb_{\po N}\po^2\log(a)}_{\lhs{2}{b}}\\
\nn&\les& \norm{\divb(a^{-2}\po^2\log(a)\po N)}_{\lhs{2}{b}}+\norm{\divb(a^{-2}\po N)\po^2\log(a)}_{\lhs{2}{b}}\\
\nn&\les& (\norm{\po^2\log(a)}_{\lhs{2}{\frac{3}{2}}}+\norm{\po^2\log(a)}_{\lhs{\infty}{\frac{1}{2}}})(\norm{a^{-2}\po N}_{\ll{\infty}}\\
\nn&&+\norm{\nabb(a^{-2}\po N)}_{\l{\infty}{2}}+\norm{\divb(a^{-2}\po N)}_{\lhs{2}{1}})\\
\nn&\les& \ep,
\eea
where we used in the last inequality the estimate \eqref{boot} for $a$, the estimate \eqref{threomega1} for $\po N$, and the estimate \eqref{threomega2} for $\po^2\log(a)$.

Next, we estimate the second term in the right-hand side of \eqref{ommm17}. In view of \eqref{omm2}, we have:
$$\Pi(\po^2N)=a\nabb(\po^2u)+2\po\log(a)\po N.$$
Differentiating, we obtain:
$$\divb(\Pi(\po^2N))=a\lap(\po^2u)+\nabb(a)\c\nabb(\po^2u)+2\nabb_{\po N}(\po\log(a))+2\po\log(a)\divb(\po N),$$
which together with \eqref{omm2} implies:
\bee
\lap(\po^2u)&=&a^{-1}\divb(\Pi(\po^2N))-a^{-2}\nabb_{\Pi(\po^2N)}a+2a^{-1}\nabb_{\po N}(a)\po\log(a)\\
&&-2a^{-1}\nabb_{\po N}(\po\log(a))-2a^{-1}\po\log(a)\divb(\po N).
\eee
This yields:
\bea\lab{ommm22}
&&\norm{\lap(\po^2u)}_{\lhs{2}{1}}\\
\nn&\les& (\norm{a^{-1}}_{\ll{\infty}}+\norm{\nabb(a^{-1})}_{\l{\infty}{4}})(\norm{\divb(\Pi(\po^2N))}_{\lhs{2}{1}}\\
\nn&&+\norm{\nabb^2\po\log(a)}_{\ll{2}}\norm{\po N}_{\ll{\infty}})+\norm{\nabb^2a}_{\ll{2}}(\norm{\po^2N}_{\ll{\infty}}\\
\nn&&+\norm{\po\log(a)}_{\ll{\infty}}\norm{\po N}_{\ll{\infty}})+\norm{\nabb^2\po\log(a)}_{\ll{2}}\norm{a^{-1}\po N}_{\ll{\infty}}\\
\nn&&+ \norm{\po\log(a)}_{\ll{\infty}}\norm{\nabb\po N}_{\lhs{2}{1}}+\norm{\nabb^2\po\log(a)}_{\ll{2}}\norm{\nabb\po N}_{\l{\infty}{4}}\\
\nn&\les&\ep,
\eea
where we used in the last inequality the estimates \eqref{boot} \eqref{appboot1} for $a$, the estimate \eqref{threomega1} for $\po a$ and $\po N$, and the estimate \eqref{threomega2} for $\po^2N$.

Next, we estimate the third term in the right-hand side of \eqref{ommm17}. Differentiating the identity \eqref{omm2b}, we have:
\bee
&& a^{-1}\nabb_{\po N}\nabn\po^2u\\
&=& a^{-1}\nabb_{\po N}\left(-a^{-1}\po^2\log(a)-a^{-1}|\po N|^2+a^{-1}(\po\log(a))^2\right)\\
&=& -a^{-2}\nabb_{\po N}(\po^2\log(a))+a^{-3}\nabb_{\po N}\log(a)(\po^2\log(a)+|\po N|^2-(\po\log(a))^2)\\
&&-2a^{-2}\po N\c\nabb_{\po N}\po N+2a^{-2}\po\log(a)\nabb_{\po N}(\po\log(a)).
\eee
Together with the product estimate \eqref{fichtre}, we obtain:
\bea\lab{ommm23}
&&\norm{a^{-1}\nabb_{\po N}\nabn\po^2u}_{\lhs{2}{b}}\\
\nn&\les& \norm{a^{-2}\nabb_{\po N}(\po^2\log(a))}_{\lhs{2}{b}}+\norm{a^{-3}\nabb_{\po N}\log(a)}_{\lhs{\infty}{\frac{1}{2}}}(\norm{\po^2\log(a)}_{\lhs{2}{1}}\\
\nn&&+\norm{\po N}_{\lhs{\infty}{1}}\norm{\po N}_{\lhs{2}{1}}+\norm{\po\log(a)}_{\lhs{\infty}{1}}\norm{\po\log(a)}_{\lhs{2}{1}})\\
\nn&&+\norm{\po N}_{\lhs{\infty}{1}}\norm{\nabb\po N}_{\lhs{2}{1}}\\
\nn&&+\norm{\po\log(a)}_{\lhs{\infty}{\frac{1}{2}}}\norm{\nabb\po\log(a)}_{\lhs{2}{1}}\norm{\po N}_{\lhs{\infty}{1}}\\
\nn&\les&\ep,
\eea
where we used in the last inequality the estimate \eqref{ommm21}, the embedding \eqref{clp12}, the estimates \eqref{boot} \eqref{appboot} for $a$, the estimate \eqref{threomega1} for $\po a$ and $\po N$, and the estimate \eqref{threomega2} for $\po^2\log(a)$.

Finally, in view of \eqref{ommm17}, \eqref{ommm18}, \eqref{ommm20}, \eqref{ommm21}, \eqref{ommm22} and \eqref{ommm23}, we obtain:
\be\lab{ommm24}
\norm{h}_{\lhs{2}{b}}\les \ep+\ep\norm{\po^3N}_{\lhs{2}{1}}.
\ee
In view of the equation \eqref{ommm16} for $\po^3u$, the estimate \eqref{ommm24}, and the estimate \eqref{parab26} for parabolic equations, we obtain:
\be\lab{ommm25}
\norm{\po^3u}_{\lhs{2}{2+b}}+\norm{\po^3u}_{\lhs{\infty}{1+b}}+\norm{\nabn\po^3u}_{\lhs{2}{b}}\les  \ep+\ep\norm{\po^3N}_{\lhs{2}{1}},
\ee
 for any $0<b<\frac{1}{2}$.

Next, we estimate $\po^3N$. Recall \eqref{ommm6ter}:
\bee
\po^3N &= & a\nabb(\po^3u)+(3\po^2\log(a)-3(\po\log(a))^2)\po N+3\po\log(a)\po^2N\\
&&+(-3g(\po N,\po^2N)+3\po\log(a)|\po N|^2)N.
\eee
Together with the Gagliargdo-Nirenberg inequality \eqref{eq:GNirenberg}, this yields:
\bea\lab{ommm26}
&&\norm{\po^3N}_{\lhs{2}{1}}\\
\nn&\les& (\norm{a}_{\ll{\infty}}+\norm{\nabb a}_{\l{\infty}{4}})\norm{\po^3u}_{\lhs{2}{2}}\\
\nn&&+(\norm{\po^2\log(a)}_{\lhs{2}{1}}+(\norm{\po\log(a)}_{\lhs{\infty}{1}}+\norm{\po\log(a)}_{\ll{\infty}})^2)\\
\nn&&\times(\norm{\po N}_{\lhs{\infty}{1}}+\norm{\po N}_{\ll{\infty}})+(\norm{\po\log(a)}_{\lhs{\infty}{1}}+\norm{\po\log(a)}_{\ll{\infty}}\\
\nn&&+\norm{\po N}_{\lhs{\infty}{1}}+\norm{\po N}_{\ll{\infty}})(\norm{\po^2N}_{\lhs{2}{1}}+\norm{\po^2N}_{\ll{\infty}})\\
\nn&&+(\norm{\po\log(a)}_{\lhs{\infty}{1}}+\norm{\po\log(a)}_{\ll{\infty}})(\norm{\po N}_{\lhs{\infty}{1}}+\norm{\po N}_{\ll{\infty}})^2\\
\nn&\les& \ep(1+\norm{\po^3u}_{\lhs{2}{2}}),
\eea
where we used in the last inequality the estimates \eqref{boot} \eqref{appboot1} for $a$, the estimate \eqref{threomega1} for $\po a$ and $\po N$, and the estimate \eqref{threomega2} for $\po^2\log(a)$ and $\po^2N$. 
\eqref{ommm25} and \eqref{ommm26} imply:
$$\norm{\po^3u}_{\lhs{2}{2+b}}+\norm{\po^3u}_{\lhs{\infty}{1+b}}+\norm{\nabn\po^3u}_{\lhs{2}{b}}\les  \ep(1+\norm{\po^3u}_{\lhs{2}{2}}),$$
 for any $0<b<\frac{1}{2}$. This yields:
\be\lab{ommm27}
\norm{\po^3u}_{\lhs{2}{2+b}}+\norm{\po^3u}_{\lhs{\infty}{1+b}}+\norm{\nabn\po^3u}_{\lhs{2}{b}}\les  \ep,
\ee
 for any $0<b<\frac{1}{2}$. Now, the strong Bernstein inequality for scalars \eqref{eq:strongbernscalarbis} yields:
\bee
\norm{\po^3u}_{\ll{\infty}}&\les& \sum_{j\geq 0}\norm{P_j\po^3u}_{\ll{\infty}}\\
&\les& \sum_{j\geq 0}2^j\norm{P_j\po^3u}_{\l{\infty}{2}}\\
&\les& \left(\sum_{j\geq 0}2^{-jb}\right)\norm{\po^3u}_{\lhs{\infty}{1+b}}\\
&\les& \norm{\po^3u}_{\lhs{\infty}{1+b}},
\eee
where the last inequality hods for any $b>0$. Together with \eqref{ommm27}, we finally obtain:
$$\norm{\po^3u}_{\ll{\infty}}\les 1.$$
This concludes the proof of \eqref{threomega3}.

\section{A global coordinate system on $\p$ and $\Sigma$}\label{sec:globalcoord}

The inequalities in section \ref{sec:ineq} and \ref{sec:addition} have been derived under the assumption that $\p$ can be covered by a finite number of charts satisfying the conditions \eqref{eq:coordchart} 
and \eqref{eq:gammaL2} such that the constant $c>0$ in \eqref{eq:coordchart} and \eqref{eq:gammaL2} and the number of charts is independent of $u$. In this section, we prove that a covering of $\p$ by such  
coordinate systems exists. We first prove the existence of a global coordinate system on $\p$, which corresponds 
to the proof of Proposition \ref{gl0}. We then show that \eqref{eq:coordchart} and \eqref{eq:gammaL2} hold for this global coordinate system on $\p$ with a constant $c>0$ independent of $u$. Finally, we also introduce a global coordinate system on $\s$ for which we control the determinant of the corresponding Jacobian, which corresponds to the proof of Proposition \ref{gl20}.

\subsection{Proof of Proposition \ref{gl0}}

Recall the definition \eqref{gl1} of $\Phi_u:\p\rightarrow T_\o\S$:
$$\Phi_u(x):=\po u(x,\o).$$
where $T_\o\S$ is the tangent space to $\S$ at $\o$.\\

{\bf step1} $\Phi_u$ is a local $C^1$ diffeomorphism\\ 

We first prove that $\Phi_u$ is a local $C^1$ diffeomorphism. Using \eqref{omm4} we obtain a formula for $d\Phi_u$:
\begin{equation}\label{gl1bis}
d\Phi_u=\nabb\po u=a^{-1}\po N.
\end{equation}
In particular, if $e_1, e_2$ is an orthonormal frame on $T\p$ and $(\varphi,\psi)$ are the usual spherical coordinates on $\S$, we have:
\begin{equation}\label{gl2}
\textrm{Jac}\Phi_u=a^{-1}
\left(\begin{array}{cc}
g(\partial_\varphi N,e_1) & g(\partial_\psi N,e_1)\\
g(\partial_\varphi N,e_2) & g(\partial_\psi N,e_2)
\end{array}\right).
\end{equation}
Our estimates for $a$ and $\po N$ together with \eqref{gl2} imply that we 
control $\Phi_u$ in $C^1$. We deduce a formula for (Jac$\Phi_u)^*$Jac$\Phi_u$ from \eqref{gl2}:
\begin{equation}\label{gl3}
(\textrm{Jac}\Phi_u)^*\textrm{Jac}\Phi_u=a^{-2}
\left(\begin{array}{cc}
g(\partial_\varphi N,\partial_\varphi N) & g(\partial_\psi N,\partial_\varphi N)\\
g(\partial_\psi N,\partial_\varphi N) & g(\partial_\psi N,\partial_\psi N)
\end{array}\right),
\end{equation}
which we denote for simplicity by:
\begin{equation}\label{gl4}
(\textrm{Jac}\Phi_u)^*\textrm{Jac}\Phi_u=a^{-2}g(\po N,\po N).
\end{equation}
Recall that $u$ coincides with $x.\o$ in $|x|\geq 2$, so that $(\textrm{Jac}\Phi_u)^*\textrm{Jac}\Phi_u$ is equal to the $2\times 2$ identity matrix I in this region. According to \eqref{thregx1} and \eqref{om63}, we have:
\begin{equation}\label{gl5}
\norm{(\textrm{Jac}\Phi_u)^*\textrm{Jac}\Phi_u-I}_{\lli{\infty}}\lesssim\ep,
\end{equation}
so that $|\det((\textrm{Jac}\Phi_u)^*\textrm{Jac}\Phi_u)-1|\lesssim\ep$. In turn, this yields:
\begin{equation}\label{gl6}
\norm{|\det(\textrm{Jac}\Phi_u)|-1}_{\lli{\infty}}\lesssim\ep.
\end{equation}
From the fact that $\Phi_u$ is $C^1$ and \eqref{gl6}, we deduce that $\Phi_u$ is a $C^1$ local diffeomorphism.\\

{\bf step2} $\Phi_u$ is onto\\ 

We continue by showing that $\Phi_u$ is onto. The image of $\Phi_u$ is a nonempty subset of $T_\o\S$ which is open since it is a local diffeomorphism at each point in $\p$. Let us show that the image of $\Phi_u$ is also closed in $T_\o\S$. Indeed, consider a subsequence $\Phi_u(x_n)=y_n$ that converges to some $\underline{y}$ in $T_\o\S$. In particular, $y_n$ is a bounded sequence. Since $u$ coincides with $x.\o$ in the region $|x|\geq 2$, it is easy to check that
$$\lim_{x\in\p,\,|x|\rightarrow +\infty}|\Phi_u(x)|=+\infty,$$
so that $x_n$ must be a bounded sequence too. Thus, we may extract a subsequence from $x_n$ that converges towards some $\underline{x}\in\p$. Finally, we have $\Phi_u(\underline{x})=\underline{y}$ by the continuity of $\Phi_u$, so that the image of $\Phi_u$ is closed. Thus, the image of $\Phi_u$ is a nonempty open and closed subset of $T_\o\S$. Since $T_\o\S$ is connex, the image of $\Phi_u$ coincides with $T_\o\S$, and $\Phi_u$ is onto.\\

{\bf step3} $\Phi_u$ is one-to-one\\ 

We conclude the proof of Proposition \ref{gl0} by showing that $\Phi_u$ is one-to-one. Let us assume the contrary. Then, there exists $x_1$ and $x_2$ in $\p$ such that $x_1\neq x_2$ and $\Phi_u(x_1)=
\Phi_u(x_2)$. In particular, using the definition \eqref{gl1} of $\Phi_u$ and the usual spherical coordinates $(\varphi,\psi)$ on $\S$, we have:
\begin{equation}\label{gl7}
\partial_\varphi u(x_1,\o)=\partial_\varphi u(x_2,\o)\textrm{ and }\partial_\psi u(x_1,\o)=\partial_\psi u(x_2,\o).
\end{equation}
We define $\a:=\partial_\varphi u(x_1,\o)$ and $\b:=\partial_\psi u(x_1,\o)$. \eqref{gl7} implies that:
\begin{equation}\label{gl8}
\{\partial_\varphi u(.,\o)=\a\}\textrm{ and }\{\partial_\psi u(.,\o)=\b\}\textrm{ intersect at two distinct points in }\p.
\end{equation}

Our goal from now on is to prove that the situation described in \eqref{gl8} can not happen. Let us first show that the level curve $\{\partial_\varphi u(.,\o)=\a\}$ is connex in $\p$. Note that $\{\partial_\varphi u(.,\o)=\a\}$ coincides with the union of two half straight lines in the region $|x|\geq 2$ since $u$ coincides with $x.\o$ there. Let us call $C_-$ and $C_+$ the connex component containing each of these half straight lines. Let $x_0$ a point on $\{\partial_\varphi u(.,\o)=\a\}$. We consider the following curve:
\begin{equation}\label{gl9}
\frac{d\mu}{d\tau}=\partial_\varphi N(\mu(\tau)),\,\mu(0)=x_0.
\end{equation}
Since $\partial_\varphi N$ is tangent to $\{\partial_\varphi u(.,\o)=\a\}$, we see that the curve $\mu$ is contained inside $\{\partial_\varphi u(.,\o)=\a\}$. Note also that according to \eqref{gl3} and \eqref{gl5}, we have 
$|\partial_\varphi N|\simeq 1$ everywhere, so that $\mu$ exists for all $\tau\in\R$ and does not have a limit in $\p$ when $\tau\rightarrow \pm\infty$. Let us prove that:
\begin{equation}\label{gl10}
\lim_{\tau\rightarrow\pm\infty}|\mu(\tau)|=+\infty.
\end{equation}

Indeed, if \eqref{gl10} does not hold, then we can construct a sequence $(\tau_n)_{n\in\N}$ such that $\tau_n\rightarrow\pm\infty$ and $\mu(\tau_n)\rightarrow\underline{x}$ for some $\underline{x}$ in 
$\{\partial_\varphi u(.,\o)=\a\}$. Now, since $\nabb\partial_\varphi u=a^{-1}\partial_\varphi N(\underline{x})\neq 0$, the implicit function Theorem implies the existence of a neighborhood $V$ of $\underline{x}$ in $\p$ such that 
$\{\partial_\varphi u(.,\o)=\a\}$ coincides with a single arc of curve in $V$. Let $n_0\in\N$ large enough such that $\mu(\tau_n)\in V$ for all $n\geq n_0$. Then, for each $n\geq n_0$ and for $\tau$ sufficiently close to $\tau_n$, 
$\mu(\tau)$ lies inside $V$ and is therefore on this arc of curve. Since $\mu$ does not have a limit in $\p$ when $\tau\rightarrow \pm\infty$, this implies that $\mu(\tau)$ covers the whole arc of curve inside $V$ for each $n\geq n_0$ and for $\tau$ sufficiently close to $\tau_n$. Thus, $\mu(\tau)$ must be periodic. 

Let us now consider the connex components of $\p\setminus\{\mu(\tau),\,\tau\in\R\}$. If there is only one such component, then there is a neighborhood $W$ in $\p$ of $\{\mu(\tau),\,\tau\in\R\}$ where $\partial_\varphi u\neq \alpha$ on $W\setminus\{\mu(\tau),\,\tau\in\R\}$ and $W\setminus\{\mu(\tau),\,\tau\in\R\}$ is connex. Thus, either $\partial_\varphi u>\a$ everywhere on $W\setminus\{\mu(\tau),\,\tau\in\R\}$, or $\partial_\varphi u<\a$ everywhere on $W\setminus\{\mu(\tau),\,\tau\in\R\}$. In both cases, $\partial_\varphi u$ reaches a local extrema on $\{\mu(\tau),\,\tau\in\R\}$, and its gradient vanishes. This is impossible since $\nabb\partial_\varphi u=a^{-1}\partial_\varphi N(\underline{x})\neq 0$ everywhere.

Assume now that $\p\setminus\{\mu(\tau),\,\tau\in\R\}$ has at least two connex components. Since $\{\mu(\tau),\,\tau\in\R\}$ is periodic, it is compact, and at least one connex component must be precompact. The boundary of this connex component is $\{\mu(\tau),\,\tau\in\R\}$ where $\partial_\varphi u=\a$. So $\partial_\varphi u$ reaches a local extrema inside this precompact connex component, and its gradient  vanishes there. This is impossible since $a^{-1}\partial_\varphi N(\underline{x})\neq 0$ everywhere. This concludes the proof of \eqref{gl10}.

Since \eqref{gl10} holds, this means that any point $x_0$ in $\{\partial_\varphi u(.,\o)=\a\}$ belongs  either to $C_-$ or to $C_+$. We now prove that $C_-=C+$. Assume the contrary. Consider $x_0$ for example on $C_+$. Then since $C_+$ coincides with a half straight line in the region $|x|\geq 2$, 
\eqref{gl10} implies that $C_+\cap\{|x|\geq 2\}$ is covered at least twice by $\mu(\tau)$ (when $\tau\rightarrow -\infty$ and when $\tau\rightarrow +\infty$). Thus, $\mu(\tau)$ takes at least one value twice and must be periodic, which is in contradiction with \eqref{gl10}. Thus $C_-=C_+$ and the level curve $\{\partial_\varphi u(.,\o)=\a\}$ is connex in $\p$.

We now prove that the situation described in \eqref{gl8} can not happen. Let $x_1$ and $x_2$  the two distinct points of \eqref{gl8} where $\{\partial_\varphi u(.,\o)=\a\}$ and $\{\partial_\psi u(.,\o)=\b\}$ intersect. Since the level curve $\{\partial_\varphi u(.,\o)=\a\}$ is connex in $\p$, $\{\partial_\varphi u(.,\o)=\a\}\setminus (\{x_1\}\cup\{x_2\})$ has three connex components in $\p$. Also, since $\{\partial_\varphi u(.,\o)=\a\}$ coincides with the union of two half straight lines in the region $|x|\geq 2$, one of these three connex components is precompact. Let us call $\underline{C}$ this precompact connex component of 
$\{\partial_\varphi u(.,\o)=\a\}\setminus (\{x_1\}\cup\{x_2\})$. Note that its boundary $\partial\underline{C}$ consists of $\{x_1\}\cup\{x_2\}$. Then, since $\partial_\psi u(x_1)=\partial_\psi u(x_2)=\b$ by \eqref{gl7}, $\partial_\psi u$ reaches a local extrema at a point $\underline{x}$ inside $\underline{C}$. 
Thus, the tangent vector to $\{\partial_\varphi u(.,\o)=\a\}$ and $\{\partial_\psi u(.,\o)=\b\}$ at 
$\underline{x}$ must be collinear. This implies that $\partial_\varphi N(\underline{x})$ and 
$\partial_\psi N(\underline{x})$ must be collinear. It is impossible since \eqref{gl3} and \eqref{gl5} yield 
$|\partial_\varphi N|\simeq 1$, $|\partial_\psi N|\simeq 1$ and $|g(\partial_\varphi N,\partial_\psi N)|\lesssim\ep$.

Finally, we have proved that the situation in \eqref{gl8} can not happen so that $\Phi_u$ is one-to-one. This concludes the proof of Proposition \ref{gl0}. 

\subsection{The control of the Christoffel symbols}

We now show that the global coordinate system induced by $\Phi_u$ on $\p$ satisfies \eqref{eq:coordchart} and \eqref{eq:gammaL2} such that the constant $c>0$ in \eqref{eq:coordchart} and \eqref{eq:gammaL2} is independent of $u$. 
\begin{proposition}\label{gl11}
Let $\o\in\S$. Let $\Phi_u:\p\rightarrow T_\o\S$ defined by \eqref{gl1}. Then, it induces a global coordinate system on $\p$ which satisfies:
\be\lab{gl12}
|\ga_{AB}(p)\xi^A\xi^B-|\xi|^2|\lesssim\ep |\xi|^2, \qquad \mbox{uniformly for  all }
\,\, p\in \R^2.
\end{equation}
Moreover, the Christoffel symbols $\Gamma^A_{BC}$ verify,
\be\lab{gl13}
\sum_{A,B,C}\int_{\R^2}|\Gamma^A_{BC}|^2 dx^1dx^2\lesssim \ep^2.
\end{equation}
\end{proposition}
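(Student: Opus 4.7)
My plan is to transport the flat metric on $T_\o\S\simeq\R^2$ and the physical metric on $\p$ through $\Phi_u$, and show that in the $y=\Phi_u(x)$ coordinates the induced metric is a small $L^\infty$-perturbation of $\delta$, while its derivatives (and hence the Christoffel symbols) are controlled in $L^2(\R^2)$ by $\nabb a$ and $\nabb\po N$ in $L^2(\p)$.

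First, I would set up the coordinates precisely: pick an orthonormal basis of $T_\o\S$, write $y^A=(\po u)^A$, and let $J_u=d\Phi_u$, which by \eqref{gl1bis} equals $a^{-1}\po N$ in an orthonormal frame on $\p$. The $y$-coordinate vectors on $\p$ are $\partial_{y^A}=(J_u^{-1})^B{}_A e_B$, and the components of $\gamma$ in these coordinates are $\gamma_{AB}=(J_u^{-T}J_u^{-1})_{AB}=\bigl(a^{-2}g(\po N,\po N)\bigr)^{-1}_{AB}$. The bound \eqref{gl5} already says $\norm{a^{-2}g(\po N,\po N)-I}_{L^\infty(\p)}\lesssim\ep$, and for $\ep$ small the matrix inverse is still an $O(\ep)$ perturbation of $I$, which gives \eqref{gl12} directly (uniformly in $u$, since all constants in Theorem \ref{thregx} and \eqref{om63} are uniform in $u$).

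For \eqref{gl13}, I would differentiate the explicit formula $\gamma^{AB}=a^{-2}g(\po N,\po N)^{AB}$ using $\partial_{y^C}=(J_u^{-1})^D{}_C\,\nabb_{e_D}$ (chain rule), which produces a schematic expression
\[
\Gamma^A_{BC}\;=\;\text{(bounded matrix factors depending on }a,\po N)\cdot\bigl(\nabb(a^{-1})\cdot\po N\;+\;a^{-1}\nabb\po N\bigr).
\]
Since the bounded factors are pointwise $O(1)$ (from \eqref{gl12} and \eqref{thregx1}), one gets $|\Gamma|\lesssim |\nabb a|+|\nabb\po N|$ pointwise on $\p$. Changing variables back to $\p$ via $dy^1dy^2=|\det J_u|\,d\mu_u$, and using \eqref{gl6} to absorb the Jacobian, reduces the claim to
\[
\int_{\p}\bigl(|\nabb a|^2+|\nabb\po N|^2\bigr)\,d\mu_u\;\lesssim\;\ep^2,
\]
which is a uniform-in-$u$ statement. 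The estimate for $\nabb a$ is contained in \eqref{thregx1}, and the estimate for $\nabb\po N$ is precisely \eqref{om51quatre}, whose proof gave $\norm{\nabla\po N}_{\l{\infty}{2}}\lesssim\ep$.

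The main (though mild) obstacle is bookkeeping: carefully tracking how the $\partial_{y^A}$ derivatives rewrite as $\nabb$-derivatives on $\p$, and verifying that each term produced when differentiating $\gamma_{AB}=(J_u^TJ_u)^{-1}_{AB}$ involves only quantities for which we already have $L^\infty$ control (namely $a$, $a^{-1}$, $\po N$, and $(J_u^TJ_u)^{-1}$ from step one) multiplying a single factor in $\{\nabb a,\nabb\po N\}$ that lies in $L^2(\p)$ uniformly in $u$. Once this accounting is done, \eqref{gl13} follows immediately from the uniform $L^2(\p)$-bounds already proved.
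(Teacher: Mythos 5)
Your proposal is correct and follows essentially the same route as the paper: the metric bound \eqref{gl12} comes from \eqref{gl1bis} and the $L^\infty$ closeness of $a^{-2}g(\po N,\po N)$ to the identity (the paper works with the Gram matrix itself rather than its inverse, which is immaterial since both are $I+O(\ep)$), and the Christoffel symbols reduce to terms of the form (bounded factors in $a^{\pm1},\po N$) times a single factor of $\nabb a$ or $\nabb\po N$ — the paper writes this via \eqref{om55} as $a^{-3}\po\th(\po N,\po N)-2a^{-3}\nabb_{\po N}a\,|\po N|^2$ and invokes the $\li{\infty}{2}$ bounds on $\po\th$ and $\nabb a$, which is the same content as your appeal to \eqref{om51quatre}. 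Your explicit handling of the Jacobian in the change of variables $dx^1dx^2=|\det J_u|\,d\mu_u$ via \eqref{gl6} is a detail the paper leaves implicit but is needed and correct.
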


\begin{proof}
The coordinates functions on $\p$ induced by the global $C^1$ diffeomorphism $\Phi_u$ defined 
in \eqref{gl1} are given by:
\be\lab{gl14}
x_1=\partial_\varphi u(.,\o),\,x_2=\partial_\psi u(.,\o),
\end{equation}
which using \eqref{gl1bis} implies:
\be\lab{gl15}
\frac{\partial}{\partial x_1}=a^{-1}\partial_\varphi N,\,\frac{\partial}{\partial x_2}=a^{-1}\partial_\psi N.
\end{equation}
Since $\ga_{AB}=g(\frac{\partial}{\partial x_A},\frac{\partial}{\partial x_B})$, \eqref{gl3}, \eqref{gl5} and \eqref{gl15} imply \eqref{gl12}.

We now turn to the proof of \eqref{gl13}. By definition of the Christoffel symbols $\Gamma^A_{BC}$, we have:
\be\lab{gl16}
\Gamma^A_{BC}=g\left(\nabla_B\frac{\partial}{\partial x_C},\frac{\partial}{\partial x_A}\right).
\end{equation}
In view of \eqref{gl15} and \eqref{gl16}, the Christoffel symbols are of the form:
\be\lab{gl17}
\Gamma=a^{-3}g(\nabla_{\po N}\po N,\po N)-a^{-3}\nabb_{\po N} a g(\po N,\po N),
\end{equation}
which together with \eqref{om55} implies:
\be\lab{gl18}
\Gamma=a^{-3}\po\th(\po N,\po N)-2a^{-3}\nabb_{\po N} a |\po N|^2.
\end{equation}
\eqref{thregx1}, \eqref{threomega1} and \eqref{gl18} imply:
\be\lab{gl19}
\ds\norm{\Gamma}_{\li{\infty}{2}}\lesssim\norm{\po\th}_{\li{\infty}{2}}\norm{\po N}_{\lli{\infty}}^2
\ds +\norm{\nabb a}_{\li{\infty}{2}}\norm{\po N}_{\lli{\infty}}^3\lesssim\ep,
\end{equation}
which is \eqref{gl13}. This concludes the proof of Proposition \ref{gl11}.
\end{proof}

\subsection{Proof of Proposition \ref{gl20}}

Let $\o\in\S$. Recall the definition \eqref{gl21} of $\Phi:\s\rightarrow\R^3$:
$$\Phi(x):=u(x,\o)\o+\po u(x,\o)=u(x,\o)\o+\Phi_u(x),$$
where $\Phi_u$ has been defined in Proposition \ref{gl0}. 

We start by showing that $\Phi$ is one-to-one. Assume that $\Phi(x_1)=\Phi(x_2)$ for $x_1$ and $x_2$ in $\s$. Then, since the image of $\Phi_u$ is contained in $T_\o\S$, $\o.\Phi_u(x)=0$ for all $x\in\s$, so we have from \eqref{gl21}:
\begin{equation}\label{gl23}
u(x_1,\o)=u(x_2,\o)\textrm{ and }\Phi_{u(x_1,\o)}(x_1)=\Phi_{u(x_2,\o)}(x_2).
\end{equation}
Since $\Phi_u$ is one-to-one by Proposition \ref{gl0}, \eqref{gl23} implies $x_1=x_2$. Thus, $\Phi$ is one-to-one.

We now prove that $\Phi$ is onto. Let $y\in\R^3$. Then $y=(y.\o)\o+y'$ where $y'$ belongs to $T_\o\S$. Let $u=y.\o$. Since $\Phi_u$ is onto by Proposition \ref{gl0}, there exists $x\in\p$ such that $\Phi_u(x)=y'$. Thus, $u(x,\o)=y.\o$ and $\Phi_u(x)=y'$ so that $\Phi(x)=y$ by \eqref{gl21}. Therefore, $\Phi$ is onto.

We now turn to the proof of \eqref{gl22}. Using the fact that $\nabla u=a^{-1}N$ together with \eqref{omm4} we obtain a formula for $d\Phi$:
\begin{equation}\label{gl24}
d\Phi=(\nabla u)\o+\nabla\po u=a^{-1}N\o+ a^{-1}\po N-a^{-1}\po a N.
\end{equation}
In particular, if $e_1, e_2$ is an orthonormal frame on $T\p$ and $(\varphi,\psi)$ are the usual spherical coordinates on $\S$, we have:
\begin{equation}\label{gl25}
\textrm{Jac}\Phi=a^{-1}
\left(\begin{array}{ccc}
1 & -\partial_\varphi a & -\partial_\psi a\\
0 & g(\partial_\varphi N,e_1) & g(\partial_\psi N,e_1)\\
0 & g(\partial_\varphi N,e_2) & g(\partial_\psi N,e_2)
\end{array}\right).
\end{equation}
We deduce from \eqref{gl25} a formula for (Jac$\Phi_u)^*$Jac$\Phi_u$:
\begin{displaymath}
\begin{array}{l}
(\textrm{Jac}\Phi_u)^*\textrm{Jac}\Phi_u=a^{-2}\\
\times\left(\begin{array}{ccc}
1 & -\partial_\varphi a & -\partial_\psi a\\
-\partial_\varphi a & (\partial_\varphi a)^2+g(\partial_\varphi N,\partial_\varphi N) & \partial_\varphi a\partial_\psi a g(\partial_\psi N,\partial_\varphi N)\\
-\partial_\psi a & \partial_\varphi a\partial_\psi a+ g(\partial_\psi N,\partial_\varphi N) & (\partial_\psi a)^2+g(\partial_\psi N,\partial_\psi N)
\end{array}\right).
\end{array}
\end{displaymath}
Taking the determinant yields:
\begin{equation}\label{gl27}
\det((\textrm{Jac}\Phi)^*\textrm{Jac}\Phi)=a^{-2}\det((\textrm{Jac}\Phi_u)^*\textrm{Jac}\Phi_u),
\end{equation}
which together with \eqref{gl5} implies:
\begin{equation}\label{gl28}
\norm{\det((\textrm{Jac}\Phi)^*\textrm{Jac}\Phi)-1}_{\lli{\infty}}\lesssim\ep.
\end{equation}
\eqref{gl28} yields \eqref{gl22}. This concludes the proof of Proposition \ref{gl20}.

\section{Additional estimates}\label{sec:addition} 

This section is dedicated to the proof of Proposition \ref{prop:estimatesadded}, Proposition \ref{propbesov} and Proposition \ref{cordecfr}.

\subsection{Proof of Proposition \ref{prop:estimatesadded}}

We start with the proof of the estimate \eqref{gogol1}. We first derive an estimate for $\nabla N$ and $\nabla^2N$. In view of the structure equation \eqref{frame1}, we have:
\bea
\nn\norm{\nabla N}_{\ll{2}}+\norm{\nabla^2N}_{\ll{2}}&\les& \norm{\th}_{\ll{2}}+\norm{\ana}_{\ll{2}}+\norm{\nabla\th}_{\ll{2}}+\norm{\nabla(\ana)}_{\ll{2}}\\
\lab{thu}&\les& \ep+\norm{\nabla(\ana)}_{\ll{2}},
\eea
where we used in the last inequality the estimate \eqref{boot} for $a$ and the estimate \eqref{boot1} for $\th$. Now, we have:
\bee
\norm{\nabla(\ana)}_{\ll{2}}&\les& \norm{\nabb(\ana}_{\ll{2}}+\norm{\nabn\ana}_{\ll{2}}\\
\nn&\les& \norm{a^{-1}}_{\ll{\infty}}(\norm{\nabb\nabla a}_{\ll{2}}+\norm{[\nabn, \nabb]a}_{\ll{2}})+\norm{a^{-2}}_{\ll{\infty}}\norm{\nabla a}_{\ll{4}}^2\\
\nn&\les& \ep+\norm{[\nabn, \nabb]a}_{\ll{2}},
\eee
where we used in the last inequality the estimates \eqref{boot} \eqref{appboot} for $a$. Together with the commutator estimate \eqref{scommut}, we deduce:
\bee
\norm{\nabla(\ana)}_{\ll{2}}&\les& \ep+\norm{[\nabn, \nabb]a}_{\ll{2}}\\
&\les& \ep+(\norm{\th}_{\ll{4}}+\norm{\ana}_{\ll{4}})\norm{\nabla a}_{\ll{4}}\\
&\les& \ep,
\eee
where we used in the last inequality the estimates \eqref{boot} \eqref{appboot} for $a$ and the estimate \eqref{boot1} for $\th$. In view of \eqref{thu}, we finally obtain:
\be\lab{thu1}
\norm{\nabla N}_{\ll{2}}+\norm{\nabla^2N}_{\ll{2}}\les\ep.
\ee

Next, recall from Proposition \ref{p3} the following bound on the $\ll{\infty}$ norm of a tensor $F$ on $S$. We have:
\be\lab{thu2}
\norm{F}_{\ll{\infty}}\les \norm{F(-2,.)}_{L^4(P_{-2})}+\norm{\nabla F}_{\ll{2}}+\norm{\nabb\nabla F}_{\ll{2}}.
\ee
Now, recall that $u=x\cdot\o$ in $|x|\geq 2$, and thus $P_{u=-2}=\{x\cdot\o=-2\}$. Therefore, $P_{-2}$ is included in the region $|x|\geq 2$. In particular, if $F\equiv 0$ in $|x|\geq 2$, we may use \eqref{thu2} and obtain:
\be\lab{thu2bis}
\norm{F}_{\lli{\infty}}\les \norm{\nabla F}_{\lli{2}}+\norm{\nabb\nabla F}_{\lli{2}}\textrm{ for all }F\textrm{ such that }F=0\textrm{ in }|x|\geq 2.
\ee
Also, working in the global coordinate system on $\p$ given by Proposition \ref{gl0}, we easily derive
$$\norm{f}_{\lp{2}}\les\norm{\nabb f}_{\lp{2}}\textrm{ for any  scalar }f\textrm{ such that }f=0\textrm{ in }\p\cap\{ |x|\geq 2\}.$$
Integrating in $u$, and in view of coarea formula \eqref{coarea}, we deduce
$$\norm{f}_{\lli{2}}\les\norm{\nabb f}_{\lli{2}}\textrm{ for any scalar }f\textrm{ such that }f=0\textrm{ in }|x|\geq 2.$$
With the choice $f=|F|$, this yields
$$\norm{F}_{\lli{2}}\les\norm{\nabb F}_{\lli{2}}\textrm{ for any tensor }F\textrm{ such that }F=0\textrm{ in }|x|\geq 2.$$
Together with \eqref{thu2bis}, we finally obtain
\be\lab{thu3}
\norm{F}_{\lli{\infty}}\les \norm{\nabb\nabla F}_{\lli{2}}\textrm{ for all }F\textrm{ such that }F=0\textrm{ in }|x|\geq 2.
\ee  
Since  $u=x\cdot\o$ in $|x|\geq 2$, we have in particular $N=\o$ in $|x|\geq 2$. This yields:
$$N(x,\o)+N(x,-\o)=\o-\o=0\textrm{ in }|x|\geq 2.$$
Thus, using the estimate \eqref{thu3} with $F=N(.,\o)+N(.,-\o)$ implies:
\bee
&&\norm{N(.,\o)+N(.,-\o)}_{\lli{\infty}}\\
&\les& \norm{\nabla^2N(.,\o)}_{\lli{2}}+\norm{\nabla^2N(.,-\o)}_{\lli{2}}\\
&\les& \ep,
\eee
where we used the fact that $\nabla N\equiv 0$ in $|x|\geq 2$ and the estimate \eqref{thu1}. This concludes the proof of the estimate \eqref{gogol1}.

Next, we prove the estimate \eqref{threomega1ter}. We have:
$$N(x,\o')=N(x,\o)+\po N(x,\o)(\o-\o')+\int_{[\o,\o']}\po^2N(.,\o'')d\o''(\o-\o')^2.$$
This yields:
\bea\lab{thu3bis}
||N(x,\o)-N(x,\o')|-|\po N(x,\o)(\o-\o')||&\les& \norm{\po^2N}_{\ll{\infty}}|\o-\o'|^2\\
\nn&\les& |\o-\o'|^2
\eea
where we used in the last inequality the estimate \eqref{threomega2} for $\po^2N$. Now, the estimate \eqref{om63}  implies:
$$\norm{g(\po N,\po N)-I}_{\ll{\infty}}\lesssim\ep.$$
This yields:
$$||\po N(x,\o)(\o-\o')|-|\o-\o'||\les \ep|\o-\o'|.$$
Together with \eqref{thu3bis}, we obtain:
$$||N(x,\o)-N(x,\o')|-|\o-\o'||\les |\o-\o'|(\ep+|\o-\o'|).$$
This concludes the proof of the estimate \eqref{threomega1ter}. 

Finally, we prove the estimate \eqref{gogol2}. We first estimate $u$, $\po u$ and $\po^2u$. Differentiating the equality $\nabla u=a^{-1}N$, and using the structure equation \eqref{frame1}, we obtain:
\be\lab{thu4}
\norm{\nabla^2u}_{\ll{2}}\les \norm{a^{-2}\nabla a}_{\ll{2}}+\norm{a^{-1}\th}_{\ll{2}}\les\ep,
\ee
where we used in the last inequality the estimate \eqref{boot} for $a$ and the estimate \eqref{boot1} for $\th$. 
Also, differentiating the identity \eqref{omm3} for $\nabla\po u$, and using the structure equation \eqref{frame1}, we obtain:
\bea\lab{thu5}
&&\norm{\nabla^2\po u}_{\ll{2}}\\
\nn&\les& \norm{a^{-1}\nabla\po N}_{\ll{2}}+\norm{a^{-1}\nabla\po a}_{\ll{2}}+(\norm{a^{-2}\nabla a}_{\ll{4}}+\norm{a^{-1}\th}_{\ll{4}})\\
\nn&&\times(\norm{\po a}_{\ll{\infty}}+\norm{\po N}_{\ll{\infty}})\\
\nn&\les& \ep,
\eea
where we used in the last inequality the estimate \eqref{boot} for $a$, the estimate \eqref{boot1} for $\th$ and the estimate \eqref{threomega1} for $\po N$ and $\po a$. Finally, differentiating \eqref{omm3} with respect to $\o$, we obtain:
$$\nabla(\po^2u)=a^{-1}\po^2N-a^{-1}\po^2 aN-2a^{-1}\po N\po a+a^{-2}(\po a)^2N.$$
Differentiating with respect to $\nabb$, we obtain:
\bea\lab{thu6}
&&\norm{\nabb\nabla\po^2u}_{\ll{2}}\\
\nn&\les& \norm{a^{-1}}_{\ll{\infty}}(\norm{\nabla\po^2N}_{\ll{2}}+\norm{\nabb\po^2 a}_{\ll{2}})+(\norm{a^{-2}\nabb a}_{\l{\infty}{4}}\\
\nn&&+\norm{a^{-1}\th}_{\l{\infty}{4}})(\norm{\po^2N}_{\l{2}{4}}+\norm{\po^2 a}_{\l{2}{4}}\\
\nn&&+\norm{\po a}_{\l{2}{4}}\norm{\po N}_{\ll{\infty}}+\norm{\po a}_{\l{4}{8}}^2)\\
\nn&&+\norm{\nabb\po a}_{\ll{2}}\norm{\po N}_{\ll{\infty}}\\
\nn&&+\norm{\po a}_{\ll{\infty}}(\norm{a^{-2}\nabb a}_{\ll{2}}+\norm{a^{-1}\nabb\po N}_{\ll{2}})\\
\nn&\les&\ep,
\eea
where we used in the last inequality the estimates \eqref{boot} \eqref{appboot} for $a$, the estimates \eqref{boot1} \eqref{appboot1} for $\th$, the estimate \eqref{threomega1} for $\po N$ and $\po a$, and the estimate \eqref{threomega2} for $\po^2N$ and $\po^2 a$.

Recall that for $\o\in\S$, the map $\Phi_\o:\s\rightarrow\R^3$ is defined by:
$$\Phi_\o(x):=u(x,\o)\o+\po u(x,\o).$$
Since  $u=x\cdot\o$ in $|x|\geq 2$, we have:
$$\Phi_\o(x)=x\textrm{ for }|x|\geq 2,$$
which yields:
\bea\lab{thu7}
&&u(x,\o)-\Phi_\nu(x)\c\o=0,\, \po u(x,\o)-\po(\Phi_\nu(x)\c\o)=0,\\
\nn&& \textrm{and }\po^2 u(x,\o)-\po^2(\Phi_\nu(x)\c\o)=0\textrm{ in }|x|\geq 2.
 \eea 

Now, let $\nu\in\S$. We first estimate $\po^2 u(x,\o)-\po^2(\Phi_\nu(x)\c\o)$. In view of \eqref{thu7} and \eqref{thu3}, we have:
\bea\lab{thu9}
&&\norm{\po^2 u(.,\o)-\po^2(\Phi_\nu(.)\c\o)}_{\lli{\infty}}\\
\nn&\les& \norm{\nabb\nabla\po^2u(.,\o)}_{\lli{2}}+\norm{\nabla^2\po u(.,\nu)}_{\lli{2}}+\norm{\nabla^2u(.,\nu)}_{\lli{2}}\\
\nn&\les& \ep,
\eea
where we used in the last inequality the estimate \eqref{thu4} for $u$, the estimate \eqref{thu5} for $\po u$,  and the estimate \eqref{thu6} for $\po^2u$. 

Next, we estimate $\po u(x,\o)-\po(\Phi_\nu(x)\c\o)$. We have:
\bea\lab{thu10}
\Phi_\nu(x)\cdot\o &=& u(x,\nu)\nu\c\o+\po u(x,\nu)\o\\
\nn&=& u(x,\nu)+\po u(x,\nu)(\o-\nu)-\frac{|\o-\nu|^2}{2}u(x,\nu),
\eea
where we used in the last equality the fact that $\po u(x,\nu)\nu=0$. Thus, we obtain:
\bee
\po u(x,\o)-\po(\Phi_\nu(x)\c\o)&=&\po u(x,\o)-\po u(x, \nu)-(\o-\nu)u(x,\nu)\\
&=&\int \po^2u(x,\o')(\o-\nu)-(\o-\nu)u(x,\nu),
\eee
where $\o'$ is on the arc $[\o,\nu]$ of $\S$. Together with \eqref{thu7} and \eqref{thu3}, this implies:
\bea\lab{thu11}
&&\norm{\po u(.,\o)-\po(\Phi_\nu(.)\c\o)}_{\lli{\infty}}\\
\nn&\les& |\o-\nu|(\norm{\nabb'\nabla\po^2u(.,\o')}_{\lli{2}}+\norm{\nabla^2 u(.,\nu)}_{\lli{2}})\\
\nn&\les& \ep|\o-\nu|,
\eea
where we used in the last inequality the estimate \eqref{thu4} for $u$ and the estimate \eqref{thu6} for $\po^2u$.

Finally, we estimate $u(x,\o)-\Phi_\nu(x)\c\o$. In view of \eqref{thu10}, we have:
\bee
u(x,\o)-\Phi_\nu(x)\c\o&=& u(x,\o)-u(x,\nu)-\po u(x,\nu)(\o-\nu)+\frac{|\o-\nu|^2}{2}u(x,\nu)\\
&=&\int \po^2u(x,\o')(\o-\nu)^2+\frac{|\o-\nu|^2}{2}u(x,\nu),
\eee
where $\o'$ is on the arc $[\o,\nu]$ of $\S$. Together with \eqref{thu7} and \eqref{thu3}, this implies:
\bea\lab{thu12}
&&\norm{u(.,\o)-\Phi_\nu(.)\c\o}_{\lli{\infty}}\\
\nn&\les& |\o-\nu|^2(\norm{\nabb'\nabla\po^2u(.,\o')}_{\lli{2}}+\norm{\nabla^2 u(.,\nu)}_{\lli{2}})\\
\nn&\les& \ep|\o-\nu|^2,
\eea
where we used in the last inequality the estimate \eqref{thu4} for $u$ and the estimate \eqref{thu6} for $\po^2u$.

Finally, \eqref{thu9}, \eqref{thu11} and \eqref{thu12} imply \eqref{gogol2}. This concludes the proof of Proposition \ref{prop:estimatesadded}.

\subsection{Proof of Proposition \ref{propbesov}}

Recall from the first equation of \eqref{struct1} that $\trt-k_{NN}=1-a$. Now, since $a$ satisfies \eqref{thregx1}, $\trt-k_{NN}$ satisfies:
$$\norm{\nabn(\trt-k_{NN})}_{\ll{2}}+\norm{\nabb(\trt-k_{NN})}_{\l{\infty}{2}}+\norm{\nabb\nabla(\trt-k_{NN})}_{\ll{2}}\lesssim\ep.$$
Thus, Proposition \ref{propbesov} is a direct consequence of the following proposition: 
\begin{proposition}\label{ad0}
Let a scalar function $f$ on $\s$ such that $f\equiv 0$ on $u=-2$ and:
\begin{equation}\label{ad1}
\norm{\nabb f}_{\l{\infty}{2}}+\norm{\nabn f}_{\ll{2}}+\norm{\nabb\nabla f}_{\ll{2}}\lesssim\ep.
\end{equation}
Then, we have:
\begin{equation}\label{ad2}
\norm{f}_{\mathcal{B}}\lesssim\ep.
\end{equation}
\end{proposition}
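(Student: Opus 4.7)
The strategy is to combine an $L^2$ energy estimate applied frequency-by-frequency to $P_jf$ with the commutator bounds of Section \ref{sec:statepropprodcomm} and an interpolation-style Cauchy--Schwarz argument, exploiting that we have two spatial derivatives of $f$ in $L^2(S)$ and one tangential derivative in $L^\infty_uL^2(P_u)$. First, combining $f\equiv 0$ on $u=-2$ with $\nabn f\in L^2(S)$ via the energy identity \eqref{du} yields $\norm{f}_{\l{\infty}{2}}\lesssim\ep$, so together with the hypothesis $\nabb f\in\l{\infty}{2}$ we have $f\in L^\infty_u H^1(P_u)$. Likewise, $\nabb\nabna f=a\nabb\nabn f+\nabb a\cdot\nabn f$ together with \eqref{thregx1}, \eqref{appboot}, and Gagliardo--Nirenberg gives $\norm{\nabna f}_{H^1(S)}\lesssim\ep$, and the bound $\norm{\lap f}_{\ll{2}}\lesssim\norm{\nabb^2 f}_{\ll{2}}\lesssim\ep$ follows directly. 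The contribution $\norm{P_{<0}f}_{\l{\infty}{2}}\lesssim\norm{f}_{\l{\infty}{2}}\lesssim\ep$ of the low frequencies is then immediate from the $L^p$-boundedness of $P_{<0}$ (Theorem \ref{thm:LP}(i)).

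For $j\geq 0$, since $P_j$ acts fiberwise we have $P_jf\equiv 0$ on $u=-2$, and the same energy computation used in \eqref{c0e1} (with $F=P_jf$) yields
$$\norm{P_jf}^2_{\l{\infty}{2}}\lesssim\norm{\nabna P_jf}_{\ll{2}}\norm{P_jf}_{\ll{2}}+\text{l.o.t.},$$
where the lower-order term arising from $\trt$ is handled by writing $\trt=(\trt-k_{NN})+k_{NN}$, using $\norm{\trt-k_{NN}}_{\ll{\infty}}\lesssim\ep$ from \eqref{thregx1}, and treating the $k_{NN}$ piece by Gagliardo--Nirenberg. Next, the finite band property gives $\norm{P_jf(u,\cdot)}_{\lp{2}}\lesssim 2^{-2j}\norm{\lap f(u,\cdot)}_{\lp{2}}$, hence after integration in $u$
$$\norm{P_jf}_{\ll{2}}\lesssim 2^{-2j}\ep.$$
We decompose $\nabna P_jf=P_j(\nabna f)+[\nabna,P_j]f$: for the commutator, Proposition \ref{prop:commLP4} (with $\delta>0$ small, using $\lap f,\nabb f\in L^2$ bounded by $\ep$) furnishes
$$\norm{[\nabna,P_j]f}_{\ll{2}}\lesssim 2^{-(1-\delta)j}\ep^2,$$
while Bessel's inequality applied to $\nabna f$ together with the finite band property gives $\sum_{j\geq 0}4^j\norm{P_j(\nabna f)}^2_{\ll{2}}\lesssim\norm{\nabna f}^2_{H^1(S)}\lesssim\ep^2$. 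Substituting back yields
$$2^j\norm{P_jf}_{\l{\infty}{2}}\lesssim\ep^{1/2}\,\norm{P_j(\nabna f)}^{1/2}_{\ll{2}}+2^{-(1-\delta)j/2}\ep^{3/2}.$$

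The second term is summable in $j$ and contributes $\lesssim\ep^{3/2}\lesssim\ep$. For the first, pick $0<\alpha<1/2$ and estimate by two successive Cauchy--Schwarz inequalities:
$$\sum_{j\geq 0}\norm{P_j(\nabna f)}^{1/2}_{\ll{2}}\leq\Bigl(\sum 2^{-2\alpha j}\Bigr)^{1/2}\Bigl(\sum 2^{2\alpha j}\norm{P_j(\nabna f)}_{\ll{2}}\Bigr)^{1/2},$$
and then $\sum 2^{2\alpha j}\norm{P_j(\nabna f)}_{\ll{2}}\leq(\sum 2^{2(2\alpha-1)j})^{1/2}(\sum 4^j\norm{P_j(\nabna f)}^2_{\ll{2}})^{1/2}\lesssim\ep$, the first factor converging precisely because $\alpha<1/2$. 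Thus $\sum_j\norm{P_j(\nabna f)}^{1/2}\lesssim\ep^{1/2}$, and adding the low-frequency contribution gives $\norm{f}_{\mathcal{B}}\lesssim\ep$. The principal difficulty in this argument is the $\ell^1$-in-$j$ summability with weight $2^j$: the plain $L^\infty_uH^1$ control provided by the hypothesis on $\nabb f$ would only embed into the $\ell^2$ Besov space (as in Corollary \ref{cor:commLP1}), and the extra gain required to reach $\ell^1$ comes entirely from the $L^2(S)$-control of $\nabb\nabla f$ fed through the energy estimate and absorbed via the $\epsilon^{1/2}\cdot\norm{P_j\nabna f}^{1/2}$ Cauchy--Schwarz step above.
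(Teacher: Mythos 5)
Your proposal is correct and its core is the same as the paper's proof: apply the energy identity to $P_jf$ (using $P_jf\equiv 0$ on $u=-2$), use the finite band property to convert $2^{2j}\norm{P_jf}_{\ll{2}}$ into $\norm{\lap f}_{\ll{2}}\lesssim\ep$, split $\nabna P_jf=P_j(\nabna f)+[\nabna,P_j]f$, and control the commutator with Proposition \ref{prop:commLP4}; this is exactly the chain \eqref{ad3}--\eqref{cosco}. The one place where you diverge is the summation of $\sum_j\norm{P_j(\nabna f)}_{\ll{2}}^{1/2}$: you invoke the weighted square-function bound $\sum_j 4^j\norm{P_j(\nabna f)}^2_{\ll{2}}\lesssim\norm{\nabna f}^2_{H^1(S)}$ and then run a double Cauchy--Schwarz with an auxiliary exponent $\alpha<1/2$. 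Be aware that this square-function bound does not follow from "Bessel plus finite band" applied termwise (that combination only yields the supremum over $j$, and summing the termwise bound diverges); it requires the splitting $P_j=\widetilde{P_j}^2$ together with self-adjointness and $\sum_jP_j=I$, as in the proof of Proposition \ref{propK}, so as written this step is under-justified even though the claim is true. The paper avoids the issue entirely: the termwise finite band estimate $\norm{P_j(\nabna f)}_{\ll{2}}\lesssim 2^{-j}\norm{\nabb\nabna f}_{\ll{2}}\lesssim 2^{-j}\ep$ already gives geometric decay of $\norm{\nabna P_jf}_{\ll{2}}^{1/2}$ in $j$, so the whole double Cauchy--Schwarz interpolation is an unnecessary detour. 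Either repair the justification of your square-function claim or, more simply, replace that block by the one-line termwise bound.
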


The rest of this section is dedicated to the proof of Proposition \ref{ad0}.

\begin{proof}
Using the definition \eqref{defbesov}, \eqref{c0e1}, property (iii) of Theorem \ref{thm:LP} and \eqref{ad1}, we have:
\begin{equation}\label{ad3}
\begin{array}{lll}
\ds\norm{f}_{\mathcal{B}} & = & \ds\sum_{j\ge 0} 2^{j}\norm{P_jf}_{\l{\infty}{2}} +
\norm{P_{<0}f}_{\l{\infty}{2}}\\
& \lesssim & \ds\sum_{j\ge 0} 2^{j}\norm{P_jf}^{\frac{1}{2}}_{\ll{2}}(\norm{\nabn P_jf}_{\ll{2}}+\norm{\nabb P_jf}_{\ll{2}})^{\frac{1}{2}}\\
& & \ds +\norm{P_{<0}f}^{\frac{1}{2}}_{\ll{2}}(\norm{\nabn P_{<0}f}_{\ll{2}}+\norm{\nabb P_{<0}f}_{\ll{2}})^{\frac{1}{2}}\\
& \lesssim & \ds\sum_{j\ge 0}\norm{\lap f}^{\frac{1}{2}}_{\ll{2}}(\norm{\nabn P_jf}_{\ll{2}}+2^{-j}\norm{\lap f}_{\ll{2}})^{\frac{1}{2}}\\
& & \ds +\norm{f}^{\frac{1}{2}}_{\ll{2}}(\norm{\nabn P_{<0}f}_{\ll{2}}+\norm{f}_{\ll{2}})^{\frac{1}{2}}\\
& \lesssim & \ds\ep^{\frac{1}{2}}\left(\sum_{j\ge 0}\norm{\nabna P_jf}_{\ll{2}}^{\frac{1}{2}}
+\norm{\nabna P_{<0}f}_{\ll{2}}^{\frac{1}{2}}\right)+\ep,
\end{array}
\end{equation}
where we used the estimate \eqref{threomega1} for $a$ in the last estimate. The term $\norm{\nabna P_{<0}f}_{\ll{2}}$ is easier to bound, so we concentrate on estimating the sum $\sum_{j\ge 0}\norm{\nabna P_jf}_{\ll{2}}^{\frac{1}{2}}$. 

Let $0<\delta<1$. In view of the finite band property for $P_j$, and the commutator estimate \eqref{commLP4}, we have:
\bea\lab{cosco}
&&\norm{\nabna P_jf}_{\ll{2}}\\
\nn&\les& \norm{P_j(\nabna f)}_{\ll{2}}+\norm{[\nabna, P_j]f}_{\ll{2}}\\
\nn&\les& 2^{-j}\norm{\nabb(\nabna f)}_{\ll{2}}+2^{-(1-\delta)j}\ep(\norm{\lap f}_{L^2(S)}+\norm{\nabb f}_{\l{\infty}{2}})\\
\nn&\les& 2^{-(1-\delta)j}\ep(\norm{a}_{\ll{\infty}}\norm{\nabb\nabn f}_{\ll{2}}+\norm{\nabb a}_{\l{\infty}{4}}\norm{\nabn f}_{\l{2}{4}}\\
\nn&&+\norm{\lap f}_{L^2(S)}+\norm{\nabb f}_{\l{\infty}{2}})\\
\nn&\les& 2^{-(1-\delta)j}\ep,
\eea
where we used in the last inequality the Gagliardo-Nirenberg inequality \eqref{eq:GNirenberg}, the estimate \eqref{thregx1} for $a$ and the estimate \eqref{ad1} for $f$. Since $\delta<1$, \eqref{ad3} and \eqref{cosco} imply \eqref{ad2}. This concludes the proof of the proposition.
\end{proof}

\subsection{Proof of Proposition \ref{cordecfr}}

We decompose $\nabn a$ in the following way:
\begin{equation}\label{ad66}
\nabn a=a_1^j+a_2^j,\textrm{ where }a_1^j=P_{>j/2}(\nabn a)\textrm{ and }a_2^j=P_{\leq j/2}(\nabn a).
\end{equation}
Using the estimate \eqref{boot} for $a$ and the finite band property for $P_j$, we obtain:
\begin{equation}\label{ad67}
\norm{a_1^j}_{\ll{2}}\leq\sum_{l>j/2}\norm{P_l\nabn a}_{\ll{2}}\lesssim\sum_{l>j/2}2^{-l}\norm{\nabb\nabn a}_{\ll{2}}\lesssim 2^{-j/2}\ep.
\end{equation}
We also have:
\begin{equation}\label{ad68}
\norm{\nabn a_2^j}_{\ll{2}}\leq\sum_{l\leq j/2}\norm{\nabn P_l\nabn a}_{\ll{2}}\les \sum_{l\leq j/2}\norm{\nabna P_l\nabn a}_{\ll{2}},
\end{equation}
where we used in the last inequality the estimate \eqref{boot} for $a$. 

Next, we estimate $\nabna P_l\nabn a$. Let $\delta>0$. In view of the finite band property for $P_l$ and the commutator estimate \eqref{commLP1}, we have:
\bee
\norm{\nabna P_l\nabn a}_{\ll{2}}&\les&  \norm{P_l(a\nabn^2a)}_{\ll{2}}+\norm{[\nabna, P_l]\nabn a}_{\ll{2}}\\
\nn&\les&  2^{\frac{l}{2}}\norm{a\nabn^2a}_{\lhs{2}{-\frac{1}{2}}}+ \ep\norm{\La^{\frac{1}{2}+\delta}\nabn a}_{L^2(S)}+\ep\norm{\La^\delta \nabn a}_{\l{\infty}{2}}.
\eee
Together with the product estimate \eqref{bale}, we obtain:
\bea\lab{cocococo}
&&\norm{\nabna P_l\nabn a}_{\ll{2}}\\
\nn&\les&  2^{\frac{l}{2}}(\norm{a}_{\ll{\infty}}+\norm{\nabb a}_{\l{\infty}{2}})\norm{\nabn^2a}_{\lhs{2}{-\frac{1}{2}}}+ \ep\norm{\nabb\nabn a}_{L^2(S)}+\ep\norm{\nabn a}_{\lhs{\infty}{\delta}}\\
\nn&\les&2^{\frac{l}{2}}\ep+\ep\norm{\nabn a}_{\lhs{\infty}{\delta}},
\eea
where we used in the last inequality the estimate \eqref{thregx1} for $a$ and the estimate \eqref{nabn2a1} for $\nabn^2a$. Now, in view of the decomposition \eqref{zoc5} of $\nabn a$, and the estimates \eqref{zoc7} and \eqref{zoc9}, we have for all $j\geq 0$:
$$\norm{P_j\nabn a}_{\l{\infty}{2}}\les 2^{-\frac{j}{2}}\ep,$$
which yields:
\be\lab{crocro}
\norm{\nabn a}_{\lhs{\infty}{(\frac{1}{2})_-}}\les \ep.
\ee
Choosing $0<\delta<\frac{1}{2}$ in \eqref{cocococo} and using \eqref{crocro} finally yields:
$$\norm{\nabna P_l\nabn a}_{\ll{2}}\les 2^{\frac{l}{2}}\ep.$$
Together with \eqref{ad68}, we obtain:
\begin{equation}\label{xxxxxxx}
\norm{\nabn a_2^j}_{\ll{2}}\les \sum_{l\leq j/2} 2^{\frac{l}{2}}\ep\les 2^{\frac{j}{4}}\ep.
\end{equation}
Finally, in view of \eqref{ad66}, \eqref{ad67} and \eqref{xxxxxxx}, we obtain the conclusion of the proposition.

\appendix

\section{Proof of Proposition \ref{h2}}\lab{sec:proofhigherregapp}

Remark first that \eqref{h3} for $j=1$ has already been obtained in the section \ref{sec:apriorilow}. We prove \eqref{h3} by iteration on $j$. Let us first start with the case $j=2$. 

\subsection{Proof of \eqref{h3} for $j=2$} 

We start by estimating $\norm{\nabn^2a}_{\ll{2}}$ and $\norm{\nabb^2\nabn a}_{\ll{2}}$. By \eqref{r20} and \eqref{commut1}, we have:
\begin{equation}\label{h4}
(\nabn - a^{-1}\lap)\nabn a=h, 
\end{equation}
where $h$ is defined by:
\begin{equation}\label{h5}
\begin{array}{ll}
\ds h= &  -a^{-1}\trt\lap a-2a^{-1}\hth\nabb^2a+2a^{-2}\nabb a\nabb\nabn a-2R_{N.}\ana -\nabb\trt\ana\\
& \ds +2\hth |\ana|^2+2\th\nabn\th+\nabn^2k_{NN}+\nabn R_{NN}.
\end{array}
\end{equation}
We estimate $\norm{h}_{\ll{2}}$:
\begin{equation}\label{h9}
\begin{array}{ll}
\ds \norm{h}_{\ll{2}}\lesssim & \norm{\th}_{\ll{\infty}}\norm{\nabb^2a}_{\ll{2}}+\norm{\nabb a}_{\l{\infty}{4}}\norm{\nabb\nabn a}_{\l{2}{4}}\\
& \ds+\norm{R}_{\ll{\infty}}\norm{\nabb a}_{\ll{2}}+ \norm{\nabb\trt}_{\l{2}{4}}\norm{\nabb a}_{\l{\infty}{4}}\\
& \ds +\norm{\th}_{\ll{6}}\norm{\nabb a}_{\ll{6}}^2+\norm{\th}_{\l{\infty}{4}}\norm{\nabn\th}_{\l{2}{4}}\\
& \ds+\norm{\nabn^2k_{NN}}_{\ll{2}}+\norm{\nabn R_{NN}}_{\ll{2}},
\end{array}
\end{equation}
which together with \eqref{boot}, \eqref{appboot}, \eqref{appboot1}, \eqref{h1} and \eqref{h9} yields:
\begin{equation}\label{h11}
\begin{array}{r}
\ds \norm{h}_{\ll{2}}\lesssim \ep(\norm{\th}_{\ll{\infty}}+\norm{\nabb\nabn a}_{\l{2}{4}}+\norm{\nabb\trt}_{\l{2}{4}}\\
+\norm{\nabn\th}_{\l{2}{4}}+\ep^2)+M.
\end{array}
\end{equation}
Together with \eqref{eq:GNirenberg}, Proposition \ref{p3}, \eqref{boot} and \eqref{boot1}, this implies:
\begin{equation}\label{h12}
\begin{array}{r}
\ds \norm{h}_{\ll{2}}\lesssim \ep(\norm{\nabla^2\th}_{\ll{2}}+\norm{\nabb^2\nabn a}_{\ll{2}}+\ep)+M.
\end{array}
\end{equation}
Proposition \ref{p7}, \eqref{init1}, \eqref{boot}, \eqref{appboot}, \eqref{appboot1}, \eqref{h4} and \eqref{h12} yield: 
\begin{equation}\label{h14}
\norm{\nabb\nabn a}_{\l{\infty}{2}}+\norm{\nabn^2a}_{\ll{2}}+\norm{\nabb^2\nabn a}_{\ll{2}}\lesssim 
\ep\norm{\nabla^2\th}_{\ll{2}}+M.
\end{equation}

Let us now estimate $\norm{\nabb^3a}_{\ll{2}}$. We differentiate the second equation of \eqref{struct1} with respect to $\nabb$ and we obtain, in view of the commutator formula \eqref{commut2}:
\begin{equation}\label{hh1}
a^{-1}\lap\nabb a=h+\nabb\nabn a, 
\end{equation}
where $h$ is defined by:
\begin{equation}\label{hh2}
\begin{array}{ll}
\ds h= &  -a^{-2}\nabb a\lap a+K\ana+2\th\nabb\th+\nabb\nabn k_{NN}+\nabb R_{NN}.
\end{array}
\end{equation}
\eqref{hh1} yields:
\begin{equation}\label{hh3}
\norm{a^{-1}\lap\nabb a}_{\ll{2}}\leq \norm{h}_{\ll{2}}+\norm{\nabb\nabn a}_{\ll{2}}.
\end{equation}
We estimate $\norm{h}_{\ll{2}}$:
\begin{equation}\label{hh6}
\begin{array}{ll}
\ds \norm{h}_{\ll{2}}\lesssim & \norm{\nabb a}_{\l{\infty}{4}}\norm{\nabb^2 a}_{\l{2}{4}}+\norm{K}_{\ll{3}}\norm{\nabb a}_{\ll{6}}\\
& \ds +\norm{\th}_{\l{\infty}{4}}\norm{\nabb\th}_{\l{2}{4}}+\norm{\nabb\nabn k_{NN}}_{\ll{2}}\\
& \ds +\norm{\nabb R_{NN}}_{\ll{2}}.
\end{array}
\end{equation}
Together with \eqref{gauss1}, \eqref{appboot}, \eqref{appboot1} and \eqref{h1}, this yields:
\begin{equation}\label{hh7}
\norm{h}_{\ll{2}}\lesssim\ep(\norm{\nabb^2 a}_{\l{2}{4}}+\ep^2+M)+\ep\norm{\nabb\th}_{\l{2}{4}}+M.
\end{equation}
Together with \eqref{eq:GNirenberg}, \eqref{boot} and \eqref{boot1}, this implies:
\begin{equation}\label{hh8}
\norm{h}_{\ll{2}}\lesssim\ep(\norm{\nabb^3 a}_{\ll{2}}+\norm{\nabb^2\th}_{\ll{2}}+M)+M.
\end{equation}
Now, \eqref{gauss1}, \eqref{appboot1} and \eqref{h1} imply:
\begin{equation}\label{h21}
\norm{K}_{\ll{3}}+\norm{K}_{\l{\infty}{2}} \lesssim \norm{\th}^2_{\ll{6}}+\norm{\th}^2_{\l{\infty}{4}}+M\lesssim M+\ep^2.
\end{equation}
\eqref{vboch} and \eqref{h21} yield:
\begin{equation}\label{hh9}
\begin{array}{lll}
\ds\norm{\nabb^3a}_{\ll{2}} & \lesssim & \ds\norm{\lap\nabb a}_{\ll{2}}+\norm{K}_{\l{\infty}{2}}^{1/2}\norm{\nabb^2a}_{\l{2}{4}}\\
& & \ds +\norm{K}_{\ll{3}}\norm{\nabb a}_{\ll{6}}\\
& \ds\lesssim & \ds \norm{\lap\nabb a}_{\ll{2}}+(M+\ep^2)\norm{\nabb^2a}_{\l{2}{4}}+(M+\ep^2)\ep.
\end{array}
\end{equation}
Together with \eqref{eq:GNirenberg} and \eqref{boot}, this implies:
\begin{equation}\label{hh10}
\norm{\nabb^3a}_{\ll{2}}\lesssim \norm{\lap\nabb a}_{\ll{2}}+(M^2+\ep^2)\ep.
\end{equation}
\eqref{boot}, \eqref{hh3}, \eqref{hh8} and  \eqref{hh10} yield:
\begin{equation}\label{hh11}
\norm{\nabb^3a}_{\ll{2}}\lesssim \ep\norm{\nabb^2\th}_{\ll{2}}+M.
\end{equation}

Let us now estimate $\norm{\nabb^2\th}_{\ll{2}}$. We differentiate the first equation of \eqref{choice1}, which yields together with \eqref{boot} and \eqref{h1}:
\begin{equation}\label{h17}
\norm{\nabb^2\trt}_{\ll{2}}\leq \norm{\nabb^2a}_{\ll{2}}+\norm{\nabb^2k_{NN}}\lesssim\ep +M.
\end{equation}
Let us now estimate $\norm{\nabb^2\hth}_{\ll{2}}$. We consider the Hodge operator $\mathcal{D}_2$ 
which takes any symmetric traceless 2-tensor $F$ on $\p$ into the 1-form $\divb F$. Let 
$^*\mathcal{D}_2$ its adjoint which takes 1-forms on $\p$ into the 2-covariant symmetric traceless 
tensor $(^*\mathcal{D}_2F)_{AB}=\nabb_BF_A+\nabb_AF_B-(\divb F)\gamma_{AB}$. We have the 
following identity:
\begin{equation}\label{h18}
^*\mathcal{D}_2\mathcal{D}_2=-\frac{1}{2}\lap +K.
\end{equation}
Thus, applying $^*\mathcal{D}_2$ to the third equation of \eqref{struct1}, we obtain:
\begin{equation}\label{h19}
\lap\hth = 2K\hth-^*\mathcal{D}_2(\nabb\trt)-2^*\mathcal{D}_2(R_{N.}).
\end{equation}
\eqref{h19} together with \eqref{h1} and \eqref{h17} yields:
\begin{equation}\label{h20}
\norm{\lap\hth}_{\ll{2}} \lesssim \norm{K\hth}_{\ll{2}}+M+\ep.
\end{equation}
The analog of \eqref{vboch} for 2-tensors, \eqref{appboot1}, \eqref{h21} and \eqref{h20} yield:
\begin{equation}\label{h22}
\begin{array}{ll}
\ds\norm{\nabb^2\hth}_{\ll{2}} & \ds\lesssim \norm{K}_{\l{\infty}{2}}^{1/2}\norm{\nabb\hth}_{\l{2}{4}}+\norm{K}_{\ll{3}}\norm{\hth}_{\ll{6}}+M+\ep\\
& \ds\lesssim (M+\ep^2)\norm{\nabb\hth}_{\l{2}{4}}+(M+\ep^2)\ep+M+\ep.
\end{array}
\end{equation}
Together with \eqref{eq:GNirenberg} and \eqref{boot1}, this implies:
\begin{equation}\label{h23}
\norm{\nabb^2\hth}_{\ll{2}} \lesssim M.
\end{equation}
Finally, \eqref{h17} and \eqref{h23} yield:
\begin{equation}\label{h24}
\norm{\nabb^2\th}_{\ll{2}} \lesssim M.
\end{equation}

Let us now estimate $\norm{\nabla\nabn\th}_{\ll{2}}$. Differentiating the last equation of 
\eqref{struct1} by $\nabla$, taking the norm in $\ll{2}$, using \eqref{commut}, and estimating 
the various quantities in the same fashion as previously, we obtain:
\begin{equation}\label{h25}
\norm{\nabla\nabn\th}_{\ll{2}}\lesssim \norm{\nabb^2\nabn a}_{\ll{2}}+\ep\norm{\nabla^2\th}_{\ll{2}}+M.
\end{equation}

Finally, \eqref{h14}, \eqref{hh11}, \eqref{h24} and \eqref{h25} yield the proof of \eqref{h3} 
for $j=2$. 

\subsection{Proof of \eqref{h3} for $j+1$ assuming \eqref{h3} for $j$ with $j\geq 2$}

We state two lemmas which will be used in the course of the proof. 
\begin{lemma}\label{h26}
Let $F$ a tensor on $S$ and $l\in\N$. Assume that \eqref{h3} holds with $j=2$. Assume also that 
$\norm{\nabb\nabn^2a}_{\ll{2}}\leq C(M)$. Then, we have the following inequality: 
\begin{equation}\label{h27}
\norm{\nabla^lF}_{\ll{2}}\leq C(M)\left(\norm{\nabn^lF}_{\ll{2}}+\norm{\nabb^lF}_{\ll{2}}+\sum_{m=0}^{l-1}\norm{\nabla^mF}_{\ll{2}}\right). 
\end{equation}
\end{lemma}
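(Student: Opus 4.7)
The plan is to prove the lemma by induction on $l$, using the orthogonal decomposition of $\nabla$ into tangential and normal parts. The base case $l=1$ is immediate: for any tensor $F$ we have $\nabla F = \nabb F + N \otimes \nabn F$ (schematically, coming from $\nabla_X F = \nabb_{\Pi X}F + (X\cdot N)\nabn F$), and since $|N|=1$, the pointwise Pythagorean identity yields $|\nabla F|^2 \leq |\nabb F|^2 + |\nabn F|^2$, i.e., \eqref{h27} with constant $1$.

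For the inductive step, I write $\nabla^l F = \nabla(\nabla^{l-1} F)$ and apply the base-case decomposition to the outermost derivative, obtaining $\nabb(\nabla^{l-1} F) + N \otimes \nabn(\nabla^{l-1}F)$. Repeatedly expanding in this fashion and using the structure equations \eqref{frame1} to rewrite $\nabla N$, $\nabn N$ in terms of $\th$ and $\ana$, I express $\nabla^l F$ as a sum of terms of the form $C_{ab}\cdot \nabb^{a}\nabn^{b} F$ with $a+b=l$, where each coefficient $C_{ab}$ is a polynomial in $\nabla^{m} a$ and $\nabla^{m}\th$ for $m < l$. Then every mixed term $\nabb^{a}\nabn^{b}F$ with $a,b \geq 1$ is processed with the commutator formula \eqref{commut}: each application of $[\nabn,\nabb]$ introduces lower-order factors built from $\th$, $\ana$, and $R_{N\cdot}$. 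Iterating these commutations until all $\nabn$ (resp.\ all $\nabb$) derivatives are pulled to the outside reduces $\nabb^{a}\nabn^{b} F$ to a combination of $\nabb^l F$, $\nabn^l F$, and a finite sum of terms $\text{(polynomial in lower derivatives of }a,\th,R\text{)}\cdot\nabla^{m}F$ with $m\leq l-1$.

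To carry out the $\ll{2}$ estimate, the coefficients so produced must be controlled in $\ll{\infty}$ (or in some $L^q$ paired with $L^{q'}$ bounds on the $\nabla^m F$ factor). The hypothesis \eqref{h3} for $j=2$ together with $\norm{\nabb\nabn^2 a}_{\ll{2}}\leq C(M)$ supplies, via Proposition \ref{p3} and \eqref{eq:GNirenberg}, the bounds $\norm{\nabla a}_{\ll{\infty}}+\norm{\th}_{\ll{\infty}}\leq C(M)$, and the hypothesis \eqref{h1} combined with a standard Sobolev embedding gives $\norm{R}_{\ll{\infty}}\leq C(M)$. When the lemma is invoked inside the outer induction on $j$, the required higher derivatives of $a$ and $\th$ appearing in $C_{ab}$ (up to order $l-1$) are already controlled by the previously established levels of \eqref{h3}, and the factor $\nabla^{m}F$ with $m\leq l-1$ is exactly what appears on the right-hand side of \eqref{h27}.

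The main obstacle is purely combinatorial bookkeeping: tracking, in each iteration, how many derivatives land on the coefficients (built from $a$, $\th$, $R$) versus how many remain on $F$, and verifying that each resulting multilinear expression admits an $\ll{\infty}\times \ll{2}$ (or, via \eqref{eq:GNirenberg}, an $L^{p}\times L^{p'}$) splitting in which the coefficient factor is uniformly bounded by $C(M)$ and the $F$-factor is one of the quantities $\nabb^l F$, $\nabn^l F$, or $\nabla^m F$ ($m<l$). Once the commutation scheme is set up systematically, no new analytical input is required beyond the estimates of the preceding sections.
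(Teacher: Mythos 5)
There is a genuine gap at the heart of the inductive step. You claim that a mixed derivative $\nabb^{a}\nabn^{b}F$ with $a,b\geq 1$, $a+b=l$, can be "reduced to a combination of $\nabb^l F$, $\nabla_N^l F$, and lower order terms" by iterating the commutator formula \eqref{commut}. This is impossible: commuting $\nabn$ past $\nabb$ only reorders the derivative string (up to lower-order corrections involving $\th$, $\ana$, $R_{N\cdot}$); it never changes the number of tangential versus normal derivatives acting on $F$. Pulling all the $\nabn$'s to the outside of $\nabb^{a}\nabn^{b}F$ produces $\nabn^{b}\nabb^{a}F$ plus commutators — still a mixed derivative, never $\nabb^{l}F$ or $\nabn^{l}F$. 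The inequality "mixed derivatives are controlled by the two pure ones plus lower order" is not an algebraic identity but a genuine elliptic/interpolation statement: even in flat space with constant $N$ it is the Fourier-multiplier inequality $|\xi_1|^{a}|\xi_2|^{b}\lesssim |\xi_1|^{l}+|\xi_2|^{l}$, proved by Plancherel, not by commutation. Your proposal never supplies this ingredient, so the induction does not close for any $l\geq 2$.

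The paper's proof addresses exactly this point with a different mechanism: it introduces the symbol $a(x,\xi)=2^{l+1}\bigl((N\cdot\xi/|\xi|)^{2l}+(e\cdot\xi/|\xi|)^{2l}\bigr)-1\geq 1$ and applies a G{\aa}rding inequality $(a(x,D)v,v)\geq -C\norm{v}_{H^{-1}}$ to $v=|D|^{l}F$, which is the variable-coefficient version of the multiplier inequality above. This is also where the hypotheses are really used: \eqref{h3} with $j=2$ together with $\norm{\nabb\nabn^2a}_{\ll{2}}\leq C(M)$ give $\norm{\nabla^3N}_{\ll{2}}\leq C(M)$, hence $N\in H^{5/2+\delta}$, which is the regularity of the symbol needed for the symbolic calculus and the G{\aa}rding inequality to hold with constant $C(M)$. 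In your argument the hypotheses are only invoked to put coefficients in $\ll{\infty}$, which is a symptom of the fact that the analytic core of the lemma is missing. The base case $l=1$ and the general bookkeeping of coefficient regularity are fine, but they are not where the difficulty lies.
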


\begin{lemma}\label{h28}
Let $f$ a scalar function on $S$. We have the following commutator formula:
\begin{equation}\label{h29}
\begin{array}{l}
\ds[\nabn^j,a^{-1}\lap]f=j(2\ana\nabb+a^{-1}\lap a+(j-1)|\ana|^2)\nabn^jf\\
\ds +\left(\prod_{l=1}^p\nabb^{t^1_l}\nabn^{t^2_l}a\right)\left(\prod_{m=1}^q\nabb^{v^1_m}\nabn^{v^2_m}\th\right)
\left(\prod_{n=1}^r\nabb^{w^1_n}\nabn^{w^2_n}R\right)\nabb^{s_1}\nabn^{s_2}f
\end{array}
\end{equation}
where $t^1_l, t^2_l, v^1_m, v^2_m, w^1_n$ and $w^2_n$ satisfy:
\begin{equation}\label{h30}
\begin{array}{l}
t^1_1+\cdots+t^1_p+v^1_1+\cdots+v^1_q+w^1_1+\cdots+w^1_r+s_1=2,\\
t^2_1+\cdots+t^2_p+v^2_1+\cdots+v^2_q+w^2_1+\cdots+w^2_r+s_2=j-q-r,\\
t^2_l\leq j-1, 0\leq l\leq p,\,s_2\leq j-1.
\end{array}
\end{equation}
\end{lemma}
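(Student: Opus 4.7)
\textbf{Proof plan for Lemma \ref{h28}.}
I would proceed by induction on $j\ge 1$. The base case $j=1$ is already available: dividing \eqref{commut1} by $a$ gives
\[
 [\nabn,a^{-1}\lap]f = (2\ana\nabb + a^{-1}\lap a)\nabn f + (\text{lower order}),
\]
where the lower order terms are a sum of contractions of $\hth$, $R$, $\nabn\trt$, $\nabb\trt$, $\ana$, $\trt$, $\nabb\nabn a$ and $\lap a$ with $\nabb f$ or $\nabb^2 f$. Each such term is of the schematic form $\nabb^{t^1}\nabn^{t^2}a\cdot\nabb^{v^1}\nabn^{v^2}\th\cdot\nabb^{w^1}\nabn^{w^2}R\cdot\nabb^{s_1}\nabn^{s_2}f$, and a direct check verifies the index conditions \eqref{h30} with $j=1$: the total tangential count is $2$ (two derivatives always land somewhere), and the normal count equals $j-q-r=1-q-r$ (one $\nabn$ either hits $a$ or is absorbed by producing a $\th$ or $R$ factor through \eqref{commut1}).

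For the inductive step, I would use the telescoping identity
\[
 [\nabn^{j+1},a^{-1}\lap]f = \nabn\bigl([\nabn^j,a^{-1}\lap]f\bigr) + [\nabn,a^{-1}\lap]\nabn^jf.
\]
The second summand is handled by applying the base case with $\nabn^jf$ in place of $f$, which yields $2\ana\nabb\nabn^{j+1}f + a^{-1}\lap a\,\nabn^{j+1}f$ plus admissible lower order pieces. For the first summand I would insert the induction hypothesis and apply $\nabn$, the crucial point being to commute $\nabn$ past $\nabb$ acting on $\nabn^j f$ via \eqref{scommut}: $[\nabn,\nabb]\nabn^jf = \ana\,\nabn^{j+1}f-\th\c\nabb\nabn^jf$. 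This is where the $|\ana|^2$ coefficient is generated, since
\[
 \nabn\bigl[2\ana\nabb\nabn^jf\bigr] = 2\ana\nabb\nabn^{j+1}f + 2|\ana|^2\nabn^{j+1}f + (\text{lower order}).
\]

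The bookkeeping for the leading coefficient is then the key algebraic check: collecting $2\ana\nabb\nabn^{j+1}f$, $a^{-1}\lap a\,\nabn^{j+1}f$, $|\ana|^2\nabn^{j+1}f$ contributions from both summands gives respective coefficients $2j+2=2(j+1)$, $j+1$, and $2j+j(j-1)=(j+1)j$, which is exactly the leading term of \eqref{h29} at level $j+1$. All remaining terms produced have the schematic form prescribed, and I would verify \eqref{h30} by a simple accounting: each application of $\nabn$ either raises $s_2$ or one of the $t^2_l,v^2_m,w^2_n$ by one (preserving the identity $\sum t^2+\sum v^2+\sum w^2+s_2=(j+1)-q-r$), or it produces via \eqref{scommut} a new $\th$ or $R$ factor (increasing $q$ or $r$ by one and simultaneously decreasing the normal count by one, again preserving the relation); the tangential total stays equal to $2$ since at each step at most two $\nabb$'s are present in the lower-order terms generated by \eqref{commut1}.

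The main obstacle I anticipate is not any deep estimate but the careful combinatorial tracking of the constraints \eqref{h30} through the inductive step, in particular verifying the bounds $t^2_l\le j$ and $s_2\le j$ for the new level. These reflect the structural fact that $f$ cannot receive all $j+1$ normal derivatives simultaneously with its tangential derivatives, and that no single factor of $a$ can absorb all normal derivatives either; both features are preserved by $\nabn$ because \eqref{scommut} and \eqref{commut1} always split a $\nabn$ between the coefficient and the argument. Once this bookkeeping is established, the remaining checks are routine applications of the Leibniz rule and the already-proved scalar commutation formulas \eqref{scommut} and \eqref{commut1}.
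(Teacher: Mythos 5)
Your proposal is correct and follows essentially the same route as the paper: the paper writes $[\nabn^j,a^{-1}\lap]f=\sum_{l=1}^j\nabn^{l-1}[\nabn,a^{-1}\lap]\nabn^{j-l}f$, which is exactly your induction unrolled, and likewise feeds the one-step formula \eqref{commut1} together with the iterated commutators $[\nabn^{l-1},\nabb]$, $[\nabn^{l-1},\nabb^2]$ (built from \eqref{commut}/\eqref{scommut}) into the sum. Your collection of the leading coefficients $2(j+1)$, $j+1$ and $(j+1)j$, and the accounting for \eqref{h30} (tangential total fixed at $2$, each new $\th$ or $R$ factor trading for one normal derivative), matches the paper's computation in \eqref{ap13}--\eqref{ap14}.
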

We postpone the proof of Lemma \ref{h26} to section \ref{sec:applemma1}, and the proof of Lemma \ref{h28} to section \ref{sec:applemma2}. We now continue the proof of Proposition \ref{h2}. We differentiate the second equation of 
\eqref{struct1} by $\nabn^j$:
\begin{equation}\label{h31}
(\nabn-a^{-1}\lap)\nabn^ja=h,
\end{equation}
where $h$ is defined by:
\begin{equation}\label{h32}
h=[\nabn^j,a^{-1}\lap]a+\nabn^j(|\th|^2)+\nabn^{j+1}k_{NN}+\nabn^jR_{NN}.
\end{equation}
We estimate $\norm{h}_{\ll{2}}$. Using \eqref{h1} and \eqref{h32}, we obtain:
\begin{equation}\label{h33}
\norm{h}_{\ll{2}}\lesssim\norm{[\nabn^j,a^{-1}\lap]a}_{\ll{2}}+\norm{\nabn^j(|\th|^2)}_{\ll{2}}+M.
\end{equation}
If $j=2$, we have:
\begin{equation}\label{h34}
\begin{array}{l}
\norm{\nabn^2(|\th|^2)}_{\ll{2}}\lesssim \norm{\th}_{\ll{\infty}}\norm{\nabla^2\th}_{\ll{2}}
+\norm{\nabla\th}^2_{\ll{4}}\lesssim M^2,
\end{array}
\end{equation}
where we have used Proposition \ref{p3} to bound the $\ll{\infty}$ norm. If $j\geq 3$, using \eqref{h3} for $j$ and Leibnitz formula yields:
\begin{equation}\label{h35}
\norm{\nabn^j(|\th|^2)}_{\ll{2}}\lesssim\sum_{0\leq p\leq j/2}\norm{\nabla^p\th}_{\ll{\infty}}\norm{\nabla^{j-p}\th}_{\ll{2}}\leq C(M).
\end{equation}
We now estimate $\norm{[\nabn^j,a^{-1}\lap]a}_{\ll{2}}$ with the help of \eqref{h29}. We have:
\begin{equation}\label{h36}
\begin{array}{ll}
\ds\norm{\ana\nabb\nabn^ja}_{\ll{2}} & \ds\lesssim \norm{\ana}_{\l{\infty}{4}}\norm{\ana\nabb\nabn^ja}_{\l{2}{4}}\\
& \ds\lesssim \ep\norm{\nabb^2\nabn^ja}^{1/2}_{\ll{2}}\norm{\nabb\nabn^ja}^{1/2}_{\ll{2}},
\end{array}
\end{equation}
where we have used \eqref{eq:GNirenberg} and \eqref{appboot}. Using the estimate \eqref{boot} for $a$, and the Gagliardo-Nirenberg inequality \eqref{eq:GNirenberg}, we have:
\bea\label{h39bis}
\norm{a^{-1}\lap a\nabn^ja}_{\ll{2}}&\lesssim& \norm{a^{-1}}_{\ll{\infty}}\norm{\lap a}_{\l{\infty}{4}}\norm{\nabn^ja}_{\l{2}{4}}\\
\nn&\les& \norm{\lap a}_{\ll{2}}^{\frac{1}{2}}\norm{\nabb\lap a}_{\ll{2}}^{\frac{1}{2}}\norm{\nabn^ja}_{\ll{2}}^\frac{1}{2}\norm{\nabb\nabn^ja}_{\ll{2}}^\frac{1}{2}\\
\nn&\les& C(M)\norm{\nabb\nabn^ja}_{\ll{2}}^\frac{1}{2},
\eea
where we used in the last inequality  \eqref{h3} for $j$ and for 2. Using \eqref{h3} for $j$ and for 2 yields:
\begin{equation}\label{h39}
\norm{|\ana|^2\nabn^ja}_{\ll{2}}\lesssim \norm{\ana}_{\ll{\infty}}^2\norm{\nabn^ja}_{\ll{2}}\leq C(M),
\end{equation}
where we have used Proposition \ref{p3} to bound $\norm{\ana}_{\ll{\infty}}$. Using \eqref{h3} for $j$ and for 2 together with \eqref{h30} yields:
\begin{equation}\label{h40}
\normm{\left(\prod_{l=1}^p\nabb^{t^1_l}\nabn^{t^2_l}a\right)\left(\prod_{m=1}^q\nabb^{v^1_l}\nabn^{v^2_l}\th\right)
\left(\prod_{n=1}^r\nabb^{w^1_l}\nabn^{w^2_l}R\right)\nabb^{s_1}\nabn^{s_2}a}_{\ll{2}}\leq C(M).
\end{equation}
\eqref{h29}, \eqref{h36}, \eqref{h39bis}, \eqref{h39} and \eqref{h40} yield:
\begin{equation}\label{h41}
\norm{[\nabn^j,a^{-1}\lap]a}_{\ll{2}}\lesssim C(M)(1+\norm{\nabb\nabn^ja}_{\ll{2}}^\frac{1}{2})+\ep(\norm{\nabb^2\nabn^ja}_{\ll{2}}+\norm{\nabb\nabn^{j}a}_{\ll{2}}).
\end{equation}
Finally, \eqref{h33}, \eqref{h34}, \eqref{h35} and \eqref{h41} yield:
\begin{equation}\label{h42}
\norm{h}_{\ll{2}}\lesssim C(M)(1+C(M)\norm{\nabb\nabn^ja}_{\ll{2}}^\frac{1}{2})+\ep(\norm{\nabb^2\nabn^ja}_{\ll{2}}+\norm{\nabb\nabn^{j}a}_{\ll{2}}).
\end{equation}
Proposition \ref{p7}, \eqref{h31} and \eqref{h42} yield:
\begin{equation}\label{h46}
\norm{\nabb\nabn^j a}_{\l{\infty}{2}}+\norm{\nabb^2\nabn^ja}_{\ll{2}}+\norm{\nabn^{j+1}a}_{\ll{2}}\leq C(M). 
\end{equation}
Now, \eqref{h3} for $j$, \eqref{h27} and \eqref{h46} yield:
\begin{equation}\label{hh46}
\norm{\nabla^{j+1}a}_{\ll{2}}\leq C(M). 
\end{equation}

Let us now estimate $\norm{\nabb^{j+1}\th}_{\ll{2}}$. We differentiate the first equation of \eqref{struct1} by $\nabb^{j+1}$, which yields together with \eqref{h1} and \eqref{h3} for $j$:
\begin{equation}\label{h47}
\norm{\nabb^{j+1}\trt}_{\ll{2}}\leq \norm{\nabb^{j+1}a}_{\ll{2}}+\norm{\nabb^{j+1}k_{NN}}_{\ll{2}}\leq C(M). 
\end{equation}
Differentiating \eqref{h19} by $\nabb^{j-1}$, we obtain:
\begin{equation}\label{h48}
\nabb^{j-1}\lap\hth = 2\nabb^{j-1}(K\hth)-\nabb^{j-1}(^*\mathcal{D}_2(\nabb\trt))-2\nabb^{j-1}(^*\mathcal{D}_2(R_{N.})).
\end{equation}
\eqref{h1} and \eqref{h47} yield:
\begin{equation}\label{h49}
\norm{\nabb^{j-1}(^*\mathcal{D}_2(\nabb\trt))}_{\ll{2}}+\norm{\nabb^{j-1}(^*\mathcal{D}_2(R_{N.}))}_{\ll{2}}\leq C(M).
\end{equation}
Using Leibnitz formula together with \eqref{gauss1}, \eqref{h1} and \eqref{h3} for $j$, we obtain:
\begin{equation}\label{h50}
\norm{\nabb^{j-1}(K\hth)}_{\ll{2}}\leq C(M).
\end{equation}
\eqref{h48}, \eqref{h49} and \eqref{h50} yield:
\begin{equation}\label{h51}
\norm{\nabb^{j-1}\lap\hth}_{\ll{2}}\leq C(M).
\end{equation}
Now, \eqref{commut2} yields:
\begin{equation}\label{h52}
\lap\nabb^{j-1}\hth = \nabb^{j-1}\lap\hth+\sum_{p=1}^{j-1}\nabb^{j-1-p}K\nabb^{p}\hth,
\end{equation}
which together with \eqref{gauss1}, \eqref{h1}, \eqref{h3} and \eqref{h51} implies:
\begin{equation}\label{h53}
\norm{\lap\nabb^{j-1}\hth}_{\ll{2}} \leq C(M).
\end{equation}
The analog of \eqref{vboch} for 2-tensors, \eqref{h21} and \eqref{h53} yield:
\begin{equation}\label{h54}
\begin{array}{ll}
& \ds\norm{\nabb^{j+1}\hth}_{\ll{2}}\\
\ds\lesssim & \ds\norm{\lap\nabb^{j-1}\hth}_{\ll{2}}+\norm{K}_{\l{\infty}{2}}^{1/2}\norm{\nabb^j\hth}_{\l{2}{4}}+\norm{K}_{\ll{3}}\norm{\nabb^{j-1}\hth}_{\ll{6}}\\
\ds\lesssim & \ds (M+\ep^2)(\norm{\nabb^j\hth}_{\l{2}{4}}+\norm{\nabb^{j-1}\hth}_{\ll{6}})+C(M).
\end{array}
\end{equation}
Together with \eqref{eq:GNirenberg} and \eqref{h3} for $j$, this implies:
\begin{equation}\label{h55}
\norm{\nabb^{j+1}\hth}_{\ll{2}} \leq C(M).
\end{equation}
Finally, \eqref{h47} and \eqref{h55} yield:
\begin{equation}\label{h56}
\norm{\nabb^{j+1}\th}_{\ll{2}} \leq C(M).
\end{equation}

Let us now estimate $\norm{\nabn^{j+1}\th}_{\ll{2}}$. Differentiating the last equation of \eqref{struct1} 
by $\nabn^j$, taking the norm in $\ll{2}$, using the computation \eqref{ap11} of $[\nabn^j,\nabb^2]$ proved in the  Appendix, \eqref{h3} for $j$, \eqref{h46}, and estimating the various quantity in the same fashion as previously, we obtain:
\begin{equation}\label{h57}
\norm{\nabn^{j+1}\th}_{\ll{2}} \leq C(M).
\end{equation}

Now, \eqref{h3} for $j$, \eqref{h27}, \eqref{h56} and \eqref{h57} yield:
\begin{equation}\label{h58}
\norm{\nabla^{j+1}\th}_{\ll{2}} \leq C(M).
\end{equation}
Also, differentiating the last equation of \eqref{struct1} by $\nabla^j$, taking the norm in $\ll{2}$, 
\eqref{h3} for $j$, \eqref{h58}, and estimating the various quantity in the same fashion as previously, we obtain:
\begin{equation}\label{h59}
\norm{\nabb^2\nabla^{j-1}a}_{\ll{2}} \leq C(M).
\end{equation}
Finally, \eqref{hh46}, \eqref{h58} and \eqref{h59} yield \eqref{h3} for $j+1$ so that \eqref{h3} is true 
for all $j$. This concludes the proof of Proposition \ref{h2}. 

\subsection{Proof of Lemma \ref{h26}}\lab{sec:applemma1}

Let us first recall the following result (see for instance \cite{CoMe}). If 
the symbol $a(x,\xi)$ satisfies:
\begin{equation}\label{ap1}
\sup_{\xi}\norm{a(.,\xi)}_{H^{3/2+\delta}(\R^3)}<+\infty
\end{equation}
for some $\delta>0$, then the pseudodifferential operator $a(x,D)$ acting on $\R^3$ is bounded on $L^2(\R^3)$. Now, assume that the symbol $a(x,\xi)$ satisfies:
\begin{equation}\label{ap2}
\sup_{\xi}\norm{a(.,\xi)}_{H^{5/2+\delta}(\R^3)}<+\infty
\end{equation}
for some $\delta>0$ and:
\begin{equation}\label{ap3}
a(x,\xi)\geq 1\textrm{ for all }(x,\xi).
\end{equation}
Then, using the previous result and the symbolic calculus for the adjoint and the composition of pseudodifferential operators, one can show that:
\begin{equation}\label{ap4}
a(x,D)-\sqrt{a}(x,D)^*\sqrt{a}(x,D)\textrm{ is bounded from }H^{-1}(\R^3)\textrm{ to }L^2(\R^3).
\end{equation}
Thus, under the assumptions \eqref{ap2} \eqref{ap3}, the Garding inequality holds:
\begin{equation}\label{ap5}
(a(x,D)v,v)\geq -C\norm{v}_{H^{-1}(\R^3)},
\end{equation}
where $v$ is in $L^2(\R^3)$ and $C>0$ is a constant depending in the quantity in \eqref{ap2}. 

Now, consider 
\begin{equation}\label{ap6}
a(x,\xi)=2^{l+1}\left(\left(N.\frac{\xi}{|\xi|}\right)^{2l}+\left(e.\frac{\xi}{|\xi|}\right)^{2l}\right)-1.
\end{equation}
Then, we clearly have \eqref{ap3}. We also have \eqref{ap2}:
\begin{equation}\label{ap7}
\sup_{\xi}\norm{a(.,\xi)}_{H^{5/2+\delta}}\leq C(\norm{N}_{H^{5/2+\delta}})
\leq C(\norm{\nabla^3N}_{\ll{2}})\leq C(M),
\end{equation}
where we have used \eqref{frame1}, \eqref{h3} with $j=2$ and $\norm{\nabb\nabn^2a}_{\ll{2}}\leq C(M)$. Thus, $a$ defined by \eqref{ap6} satisfies \eqref{ap5}, which together with the choice $v=|D|^lF$ concludes the proof of Lemma \ref{h26}. 

\subsection{Proof of Lemma \ref{h28}}\lab{sec:applemma2} 

We start by deriving a formula for the 
commutator $[\nabn^j,\nabb]$. Let $F$ a tensor on $S$. Using \eqref{commut}, 
one proves the following commutator formula by iteration:
\begin{equation}\label{ap8}
\begin{array}{ll}
\ds[\nabn^j,\nabb]F= & \ds j\nabb a\nabn^jF\\
& \ds +\nabb\nabn^{t_1} a\left(\prod_{l=2}^p\nabn^{t_l} a\right)\left(\prod_{m=1}^q\nabn^{v_m}\th\right)
\left(\prod_{n=1}^r\nabn^{w_n}R\right)\nabn^{s}F\\
& \ds +\left(\prod_{l=1}^p\nabn^{t_l} a\right)\left(\prod_{m=1}^q\nabn^{v_m}\th\right)
\left(\prod_{n=1}^r\nabn^{w_n}R\right)\nabb\nabn^{s}F,
\end{array}
\end{equation}
where $t_l, v_m$ and $w_n$ satisfy:
\begin{equation}\label{ap9}
\begin{array}{l}
t_1+\cdots+t_p+v_1+\cdots+v_q+w_1+\cdots+w_r+s=j-q-r,\\
t_l\leq j-1, 1\leq l\leq p, s\leq j-1.
\end{array}
\end{equation}
Then, using the fact that:
\begin{equation}\label{ap10}
[\nabn^j,\nabb^2]=[\nabn^j,\nabb]\nabb+\nabb[\nabn^j,\nabb],
\end{equation}
we deduce from \eqref{ap8} and \eqref{ap9} the following commutator formula:
\begin{equation}\label{ap11}
\begin{array}{l}
\ds[\nabn^j,\nabb^2]F=\left(\prod_{l=1}^p\nabb^{t^1_l}\nabn^{t^2_l} a\right)\left(\prod_{m=1}^q\nabb^{v^1_m}\nabn^{v^2_m}\th\right)
\left(\prod_{n=1}^r\nabb^{w^1_n}\nabn^{w^2_n}R\right)\nabb^{s_1}\nabn^{s_2}F,
\end{array}
\end{equation}
where $t^1_l, t^2_l, v^1_m, v^2_m, w^1_n$ and $w^2_n$ satisfy:
\begin{equation}\label{ap12}
\begin{array}{l}
t^1_1+\cdots+t^1_p+v^1_1+\cdots+v^1_q+w^1_1+\cdots+w^1_r+s_1=2,\\
t^2_1+\cdots+t^2_p+v^2_1+\cdots+v^2_q+w^2_1+\cdots+w^2_r+s_2=j-q-r,\\
t^2_l\leq j-1, 0\leq l\leq p,\,s_1+s_2\leq j+1.
\end{array}
\end{equation}
Now, using \eqref{commut1}, we have for any scalar $f$ on $S$:
\begin{equation}\label{ap13}
\begin{array}{lll}
\ds [\nabn^j,a^{-1}\lap]f & = & \sum_{l=1}^j\nabn^{l-1}[\nabn,a^{-1}\lap]\nabn^{j-l}f\\
\ds & = & \ds\sum_{l=1}^j\nabn^{l-1}(-(\trt+a^{-1}\nabn a)\lap -2\hth\c\nabb^2+2a^{-1}\nabb a\c\nabb\nabn\\
\ds & & \ds +a^{-1}\lap\nabn-2R_{N.}\c\nabb-\nabb\trt\c\nabb+2\hth\c a^{-1}\nabb a\c\nabb)\nabn^{j-l}f\\
\ds & = & \ds 2\sum_{l=1}^j\nabn^{l-1}(a^{-1}\nabb a\nabb\nabn^{j+1-l}f+a^{-1}\lap a\nabn^{j+1-l}f)\\
& & \ds+\sum_{l=1}^j\nabn^{l-1}(-(\trt+a^{-1}\nabn a)\lap -2\hth\c\nabb^2-2R_{N.}\c\nabb\\
&&\ds -\nabb\trt\c\nabb+2\hth\c a^{-1}\nabb a\c\nabb)\nabn^{j-l}f.
\end{array}
\end{equation}
We rewrite the first term in the right-hand side of \eqref{ap13}:
\begin{equation}\label{ap14}
\begin{array}{ll}
&\ds \sum_{l=1}^j\nabn^{l-1}(a^{-1}\nabb a\nabb\nabn^{j+1-l}f+a^{-1}\lap a\nabn^{j+1-l}f)\\
= & ja^{-1}\nabb a\nabb\nabn^jf+ja^{-1}\lap a\nabn^jf\\
&\ds +\sum_{l=1}^ja^{-1}\nabb a[\nabn^{l-1},\nabb]\nabn^{j+1-l}f+\sum_{l=1}^j\sum_{m=1}^{l-1}\nabn^m(a^{-1}\nabb a)\nabn^{l-1-m}\nabb\nabn^{j+1-l}f\\
\ds &\ds +\sum_{l=1}^j\sum_{m=1}^{l-1}\nabn^m(a^{-1}\lap a)\nabn^{j-m}f\\
\ds = & \ds ja^{-1}\nabb a\nabb\nabn^jf+ja^{-1}\lap a\nabn^jf+\frac{j(j-1)}{2}|\nabb a|^2\nabn^jf\\
&\ds+\sum_{l=1}^ja^{-1}\nabb a([\nabn^{l-1},\nabb]-(l-1)\nabb a\nabn^{l-1})\nabn^{j+1-l}f\\
&\ds +\sum_{l=1}^j\sum_{m=1}^{l-1}\nabn^m(a^{-1}\nabb a)\nabn^{l-1-m}\nabb\nabn^{j+1-l}f+\sum_{l=1}^j\sum_{m=1}^{l-1}\nabn^m(a^{-1}\lap a)\nabn^{j-m}f.
\end{array}
\end{equation}
Finally, \eqref{ap8}, \eqref{ap9}, \eqref{ap11}, \eqref{ap12}, \eqref{ap13} and \eqref{ap14} yield \eqref{h29} and \eqref{h30}. 

\section{Proof of the estimates for the commutator $[\nabla_{aN},P_j]$}\lab{sec:proofpropcomm}

In this section, we prove the commutator estimates stated in section \ref{sec:statepropprodcomm}.

\subsection{Proof of Proposition \ref{greveinf1}}

Proceeding as in \eqref{ad28} \eqref{ad29} \eqref{ad30}, we have:
\begin{equation}\label{clp1}
[\nabna, P_j]f=\int_0^\infty m_j(\tau)V(\tau) d\tau.
\end{equation}
where $V$ is given by:
\begin{equation}\label{clp2}
(\partial_{\tau}-\lap)V(\tau)=[\nabna,\lap]U(\tau)f,\,V(0)=0.
\end{equation}

In view of \eqref{clp1}, we have:
$$\norm{[\nabna, P_j]f}_{\ll{2}}\les \int_0^\infty m_j(\tau)\norm{V(\tau)}_{\ll{2}} d\tau.$$
Thus, to obtain \eqref{commLP1}, it suffices to show:
\be\lab{clp3}
\sup_\tau\norm{V(\tau)}_{\ll{2}}\les  \ep\norm{\La^{\frac{1}{2}+\delta}f}_{L^2(S)}+\ep\norm{\La^\delta f}_{\l{\infty}{2}}.
\ee
From now on, we focus on proving \eqref{clp3}. 

In view of \eqref{clp2} and the heat flow estimate \eqref{heatF2}, we have:
$$\norm{V(\tau)}^2_{L^2(\p)}+\int_0^\tau\norm{\nabb V(\tau')}^2_{L^2(\p)}d\tau'\lesssim \int_0^\tau\int_{\p}V(\tau')[\nabna,\lap]U(\tau')d\mu_ud\tau'.$$
Using the commutator formula \eqref{dj3}, and integrating the second order derivative by parts, we obtain the following estimate:
\bee
&&\norm{V(\tau)}^2_{L^2(\p)}+\int_0^\tau\norm{\nabb V(\tau')}^2_{L^2(\p)}d\tau'\\
\nn&\lesssim& (\norm{a\nabb(\th)}_{\lp{2}}+\norm{\nabb(a)\th}_{\lp{2}}+\norm{aR}_{\lp{2}})\int_0^\tau\norm{\nabb U(\tau')}_{\lp{4}}\norm{V(\tau')}_{\lp{4}}d\tau'\\
\nn&& \norm{a\th}_{\li{\infty}{4}}\int_0^\tau\norm{\nabb U(\tau')}_{\lp{4}}\norm{\nabb V(\tau')}_{\lp{2}}d\tau.
\eee
Together with the Gagliardo-Nirenberg inequality \eqref{eq:GNirenberg}, Proposition \ref{p1}, and the estimate \eqref{thregx1} for $a$ and $\th$, we obtain:
\bee
&&\norm{V(\tau)}^2_{L^2(\p)}+\int_0^\tau\norm{\nabb V(\tau')}^2_{L^2(\p)}d\tau'\\
\nn&\lesssim& (\norm{a\nabb(\th)}^2_{\lp{2}}+\norm{\nabb(a)\th}^2_{\lp{2}}+\norm{aR}^2_{\lp{2}})\\
\nn&&\times\int_0^\tau {\tau'}^{\frac{1}{2}-\delta}\norm{\nabb^2U(\tau')}_{\lp{2}}\norm{\nabb U(\tau')}_{\lp{2}}d\tau\\
\nn&&+\ep^2\int_0^\tau\norm{\nabb^2U(\tau')}_{\lp{2}}\norm{\nabb U(\tau')}_{\lp{2}}d\tau+\frac{1}{2}\int_0^\tau \norm{\nabb V(\tau')}^2_{\lp{2}}d\tau'\\
\nn&&+\frac{1}{2}\int_0^\tau {\tau'}^{-1+\delta}\norm{V(\tau')}^2_{\lp{2}}d\tau,
\eee
for any $\delta>0$. This yields:
\bee
\norm{V(\tau)}^2_{L^2(\p)}+\int_0^\tau\norm{\nabb V(\tau')}^2_{L^2(\p)}d\tau'&\lesssim& (\norm{a\nabb(\th)}^2_{\lp{2}}+\norm{\nabb(a)\th}^2_{\lp{2}}+\norm{aR}^2_{\lp{2}})\\
\nn&&\times\int_0^\tau {\tau'}^{\frac{1}{2}-\delta}\norm{\nabb^2U(\tau')}_{\lp{2}}\norm{\nabb U(\tau')}_{\lp{2}}d\tau\\
\nn&&+\ep^2\int_0^\tau\norm{\nabb^2U(\tau')}_{\lp{2}}\norm{\nabb U(\tau')}_{\lp{2}}d\tau
\eee
and integrating in $u$, we obtain:
\bea\lab{clp4}
&&\norm{V(\tau)}^2_{L^2(S)}+\int_0^\tau\norm{\nabb V(\tau')}^2_{L^2(S)}d\tau'\\
\nn&\lesssim& \ep^2\sup_u\left(\int_0^\tau {\tau'}^{\frac{1}{2}-\delta}\norm{\nabb^2U(\tau')}_{\lp{2}}\norm{\nabb U(\tau')}_{\lp{2}}d\tau\right)\\
\nn&&+\ep^2\int_0^\tau\norm{\nabb^2U(\tau')}_{\ll{2}}\norm{\nabb U(\tau')}_{\ll{2}}d\tau,
\eea
where we used the estimate \eqref{thregx1} for $a$ and $\th$, and the smallness assumption \eqref{small1} for $R$. 
Now, we have:
\bee
&&\int_0^\tau {\tau'}^{\frac{1}{2}-\delta}\norm{\nabb^2U(\tau')}_{\lp{2}}\norm{\nabb U(\tau')}_{\lp{2}}d\tau'\\
&\les& \int_0^\tau {\tau'}^{1-2\delta}\norm{\lap U(\tau')}^2_{\lp{2}}d\tau'+\int_0^\tau\norm{\nabb U(\tau')}^2_{\lp{2}}d\tau',
\eee
where we used the Bochner inequality for scalars \eqref{eq:Bochconseqbis}. Together with the heat flow estimate \eqref{eq:heat3}, we obtain:
\be\lab{clp5}
\sup_u\left(\int_0^\tau {\tau'}^{\frac{1}{2}-\delta}\norm{\nabb^2U(\tau')}_{\lp{2}}\norm{\nabb U(\tau')}_{\lp{2}}d\tau'\right)\les \norm{\La^{3\delta}f}^2_{\l{\infty}{2}}.
\ee
Also, we have:
\bea\lab{clp6}
&&\int_0^\tau\norm{\nabb^2U(\tau')}_{\ll{2}}\norm{\nabb U(\tau')}_{\ll{2}}d\tau'\\
\nn&\les&\int_0^\tau{\tau'}^{\frac{1}{2}-\delta}\norm{\lap U(\tau')}^2_{\ll{2}}d\tau'+\left(\sup_\tau{\tau'}^{\frac{1}{2}}\norm{\nabb U(\tau)}^2_{\ll{2}}\right)\left(\int_0^\tau{\tau'}^{-1+\delta}\right)\\
\nn&\les& \norm{\La^{\frac{1}{2}+2\delta}f}^2_{\ll{2}},
\eea
where we used in the last inequality the Bochner inequality for scalars \eqref{eq:Bochconseqbis} and a heat flow estimate. Finally, \eqref{clp4}, \eqref{clp5} and \eqref{clp6} imply:
$$\sup_\tau\norm{V(\tau)}_{L^2(S)}\les  \ep\norm{\La^{3\delta}f}_{\l{\infty}{2}}+\ep\norm{\La^{\frac{1}{2}+2\delta}f}_{\ll{2}}.$$ 
Since $\delta>0$ is arbitrary, this yields \eqref{clp3}, which concludes the proof of the proposition.

\subsection{Proof of Proposition \ref{greveinf2}}

Proceeding as in \eqref{clp1} \eqref{clp2}, we have:
\begin{equation}\label{clp7}
[\nabna, P_j]F=\int_0^\infty m_j(\tau)V(\tau) d\tau.
\end{equation}
where $V$ is given by:
\begin{equation}\label{clp8}
(\partial_{\tau}-\lap)V(\tau)=[\nabna,\lap]U(\tau)F,\,V(0)=0.
\end{equation}

In view of \eqref{clp7}, we have:
$$\norm{[\nabna, P_j]F}_{\l{1}{2}}\les \int_0^\infty m_j(\tau)\norm{V(\tau)}_{\l{1}{2}} d\tau.$$
Thus, to obtain \eqref{commLP2}, it suffices to show:
\be\lab{clp9}
\norm{V(\tau)}_{\l{1}{2}}\les  \tau^{\frac{1}{2}-\frac{\delta}{2}}\ep\left(\norm{\nabb F}_{L^2(S)}+\norm{F}_{\l{\infty}{2}}\right).
\ee
From now on, we focus on proving \eqref{clp9}. 

In view of \eqref{clp8} and the heat flow estimate \eqref{heatF2}, we have:
\be\lab{clp10}
\norm{V(\tau)}^2_{L^2(\p)}+\int_0^\tau\norm{\nabb V(\tau')}^2_{L^2(\p)}d\tau'\lesssim \int_0^\tau\int_{\p}V(\tau')[\nabna,\lap]U(\tau')d\mu_ud\tau'.
\ee
Injecting the commutator formula \eqref{commutnabna2} in \eqref{clp10}, integrating by parts, and using the $\lp{\infty}$ estimate \eqref{Linfty}, we obtain the following estimate:
\bee
&&\norm{V(\tau)}^2_{L^2(\p)}+\int_0^\tau\norm{\nabb V(\tau')}^2_{L^2(\p)}d\tau'\\
\nn&\lesssim& (\norm{a\nabb(\th)}_{\lp{2}}+\norm{\nabb(a)\th}_{\lp{2}}+\norm{aR}_{\lp{2}})\int_0^\tau\norm{\nabb U(\tau')}_{\lp{p}}\norm{\nabb V(\tau')}_{\lp{2}}d\tau',
\eee
where $2<p<4$ will be chosen later. Together with the Gagliardo-Nirenberg inequality \eqref{eq:GNirenberg}, we obtain:
\bee
&&\norm{V(\tau)}^2_{L^2(\p)}+\int_0^\tau\norm{\nabb V(\tau')}^2_{L^2(\p)}d\tau'\\
\nn&\lesssim& (\norm{a\nabb(\th)}^2_{\lp{2}}+\norm{\nabb(a)\th}^2_{\lp{2}}+\norm{aR}^2_{\lp{2}})\int_0^\tau\norm{\nabb^2U(\tau')}^{2(1-\frac{2}{p})}_{\lp{2}}\norm{\nabb U(\tau')}^{\frac{4}{p}}_{\lp{2}}d\tau'.
\eee
Taking the square root, and integrating in $u$, this yields:
\bea\lab{clp11}
\norm{V(\tau)}_{\l{1}{2}}&\lesssim& (\norm{a\nabb(\th)}_{\ll{2}}+\norm{\nabb(a)\th}_{\ll{2}}+\norm{aR}_{\ll{2}})\\
\nn&&\times\normm{\left(\int_0^\tau\norm{\nabb^2U(\tau')}^{2(1-\frac{2}{p})}_{\lp{2}}\norm{\nabb U(\tau')}^{\frac{4}{p}}_{\lp{2}}d\tau'\right)^{\frac{1}{2}}}_{L^2_u}\\
\nn&\les& \ep\normm{\left(\int_0^\tau\norm{\nabb^2U(\tau')}^{2(1-\frac{2}{p})}_{\lp{2}}\norm{\nabb U(\tau')}^{\frac{4}{p}}_{\lp{2}}d\tau'\right)^{\frac{1}{2}}}_{L^2_u},
\eea
where we used in the last inequality the estimate \eqref{thregx1} for $a$ and $\th$, and the smallness assumption \eqref{small1} for $R$. Now, in view of the Bochner identity for tensors \eqref{Bochtensorineq}, we have:
\bee
&&\int_0^\tau\norm{\nabb^2U(\tau')}^{2(1-\frac{2}{p})}_{\lp{2}}\norm{\nabb U(\tau')}^{\frac{4}{p}}_{\lp{2}}d\tau'\\
&\les& \int_0^\tau\left(\norm{\lap U(\tau')}_{\lp{2}}+\norm{K}_{\lp{2}}\norm{\nabb U(\tau')}_{\lp{2}}+\norm{K}^2_{\lp{2}}\norm{U(\tau')}_{\lp{2}}\right)^{2(1-\frac{2}{p})}\\
\nn&&\times\norm{\nabb U(\tau')}^{\frac{4}{p}}_{\lp{2}}d\tau'\\
&\les& \left( \int_0^\tau\norm{\lap U(\tau')}^2_{\lp{2}}d\tau'\right)^{1-\frac{2}{p}}\left(\int_0^\tau\norm{\nabb U(\tau')}^2_{\lp{2}}d\tau'\right)^{\frac{2}{p}}\\
\nn&&+\norm{K}_{\lp{2}}^{4(1-\frac{2}{p})}\int_0^\tau(\norm{\nabb U(\tau')}^2_{\lp{2}}+\norm{U(\tau')}^2_{\lp{2}})d\tau'.
\eee
Integrating in $u$, and using the fact that $2<p<4$, this yields:
\bee
&&\int_{-2}^2\int_0^\tau\norm{\nabb^2U(\tau')}^{2(1-\frac{2}{p})}_{\lp{2}}\norm{\nabb U(\tau')}^{\frac{4}{p}}_{\lp{2}}d\tau'du\\
\nn&\les& \tau^{\frac{2}{p}}\left(\sup_\tau\norm{\nabb U(\tau')}^2_{\ll{2}}+\int_0^\tau\norm{\lap U(\tau')}^2_{\ll{2}}d\tau'\right)\\
\nn&&+\norm{K}_{\ll{2}}^{4(1-\frac{2}{p})}\tau^{\frac{4}{p}-1}\bigg(\sup_\tau(\norm{\nabb U(\tau')}^2_{\ll{2}}+\norm{U(\tau')}^2_{\ll{2}})\\
\nn&&+\sup_u\left(\int_0^\tau(\norm{\nabb U(\tau')}^2_{\lp{2}}+\norm{U(\tau')}^2_{\lp{2}})d\tau'\right)\bigg).
\eee
Together with the estimate \eqref{thregx1} for $K$, and the heat flow estimates \eqref{eq:l2heat1} and \eqref{eq:l2heatnab}, we obtain:
\bee
\int_{-2}^2\int_0^\tau\norm{\nabb^2U(\tau')}^{2(1-\frac{2}{p})}_{\lp{2}}\norm{\nabb U(\tau')}^{\frac{4}{p}}_{\lp{2}}d\tau'du\les \left(\tau^{\frac{2}{p}}+\tau^{\frac{4}{p}-1}\right)(\norm{\nabb F}^2_{\ll{2}}+\norm{F}^2_{\l{\infty}{2}}).
\eee
Together with \eqref{clp11}, we finally obtain:
$$\norm{V(\tau)}_{\l{1}{2}}\lesssim\ep \left(\tau^{\frac{1}{p}}+\tau^{\frac{2}{p}-\frac{1}{2}}\right)(\norm{\nabb F}_{\ll{2}}+\norm{F}_{\l{\infty}{2}}).$$
Since $\delta>0$, we may choose $p$ such that:
$$2<p<\min\left(4, \frac{4}{2-\delta}\right),$$
which yields \eqref{clp9}. This concludes the proof of the proposition.

\subsection{Proof of Corollary \ref{greveinf3}}

Using the inequality \eqref{ad27bis}, the fact that $P_jF\equiv 0$ on $u=-2$, and properties (ii) and (iii) of Theorem \ref{thm:LP}, we have:
\begin{equation}\label{clp14}
\begin{array}{ll}
&\ds\sum_{j\geq 0}2^j\norm{P_jF}^2_{\l{\infty}{2}}\\
\ds\lesssim &\ds\sum_{j\ge 0} 2^j\left(\int_{-2}^{2}\norm{P_jF}_{\lp{2}}\norm{\nabn P_jF}_{\lp{2}}du+\norm{P_jF}_{\ll{2}}\norm{\nabb P_jF}_{\ll{2}}\right)\\
\ds\lesssim &\ds\sum_{j\ge 0} 2^j\left(\int_{-2}^{-2}\norm{P_jF}_{\lp{2}}\norm{\nabn P_jF}_{\lp{2}}du\right)+\sum_{j\ge 0}2^{2j}\norm{P_jF}^2_{\ll{2}}\\
\ds\lesssim &\ds\sum_{j\ge 0} 2^j\left(\int_{-2}^{-2}\norm{P_jF}_{\lp{2}}\norm{\nabna P_jF}_{\lp{2}}du\right)+\norm{\nabb F}_{\ll{2}}^2,
\end{array}
\end{equation}
where we used the estimate \eqref{thregx1} for $a$ in the last inequality. Now, we have:
$$\norm{\nabna P_jF}_{\lp{2}}\les \norm{P_j(\nabna F)}_{\lp{2}}+\norm{[\nabna, P_j]F}_{\lp{2}}$$
which together with \eqref{clp14}, and the properties (ii) and (iii) of Theorem \ref{thm:LP} implies:
\bea\lab{clp14bis}
&&\sum_{j\geq 0}2^j\norm{P_jF}^2_{\l{\infty}{2}}\\
\nn&\lesssim &\sum_{j\ge 0} 2^j\norm{P_jF}_{\ll{2}}\norm{P_j(\nabna F)}_{\ll{2}}+\sum_{j\ge 0} 2^j\norm{P_jF}_{\l{\infty}{2}}\norm{[\nabna, P_j]F}_{\l{1}{2}}\\
\nn&&+\norm{\nabb F}_{\ll{2}}^2\\
\nn&\lesssim &\sum_{j\ge 0} 2^{2j}\norm{P_jF}^2_{\ll{2}}+\sum_{j\ge 0}\norm{P_j(\nabna F)}^2_{\ll{2}}\\
\nn&&+\left(\sum_{j\ge 0}2^j\norm{P_jF}^2_{\l{\infty}{2}}\right)^{\frac{1}{2}}\left(\sum_{j\ge 0}2^j\norm{[P_j,\nabna]F}^2_{\l{1}{2}}\right)^{\frac{1}{2}}+\norm{\nabb F}_{\ll{2}}^2\\
\nn&\lesssim & \left(\sum_{j\ge 0}2^j\norm{P_jF}^2_{\l{\infty}{2}}\right)^{\frac{1}{2}}\left(\sum_{j\ge 0}2^j\norm{[P_j,\nabna]F}^2_{\l{1}{2}}\right)^{\frac{1}{2}}+\norm{\nabla F}_{\ll{2}}^2.
\eea
This yields:
\be\label{clp15}
\sum_{j\geq 0}2^j\norm{P_jF}^2_{\l{\infty}{2}}\lesssim  \sum_{j\ge 0}2^j\norm{[P_j,\nabna]F}^2_{\l{1}{2}}+\norm{\nabla F}_{\ll{2}}^2.
\ee

Now, we have in view of the commutator estimate \eqref{commLP2}:
$$\norm{[P_j,\nabna]F}^2_{\l{1}{2}}\les 2^{-j(1-\delta)}\ep(\norm{\nabb F}_{\ll{2}}+\norm{F}_{\l{\infty}{2}}),$$
for any $\delta>0$. In view of Corollary \ref{c0} and the fact that $F\equiv 0$ on $u=-2$, we obtain:
$$\norm{[P_j,\nabna]F}^2_{\l{1}{2}}\les 2^{-j(1-\delta)}\ep\norm{F}_{H^1(S)}.$$
Together with \eqref{clp15}, this yields:
$$\sum_{j\geq 0}2^j\norm{P_jF}^2_{\l{\infty}{2}}\lesssim  \left(1+\sum_{j\ge 0}2^{-j(1-2\delta)}\right)\norm{F}_{H^1(S)}^2.$$
Choosing $0<\delta<1/2$, we obtain:
\be\lab{clp16}
\sum_{j\geq 0}2^j\norm{P_jF}^2_{\l{\infty}{2}}\les \norm{F}^2_{H^1(S)},
\ee
which is the wanted estimate. This concludes the proof of the corollary. 

\subsection{Proof of Proposition \ref{greveinf4}}

In view of \eqref{clp1}, we have:
$$\norm{[\nabna, P_j]f}_{\l{1}{2}}\les \int_0^\infty m_j(\tau)\norm{V(\tau)}_{\l{1}{2}} d\tau,$$
where $V$ is given by:
\begin{equation}\label{clp22}
(\partial_{\tau}-\lap)V(\tau)=[\nabna,\lap]U(\tau)f,\,V(0)=0.
\end{equation}
Thus, to obtain \eqref{commLP3}, it suffices to show:
\be\lab{clp23}
\norm{\La^{-\a}V(\tau)}_{\l{1}{2}}+\int_{-2}^{2}\left(\int_0^\tau\norm{\nabb\La^{-\a}V(\tau')}^2_{L^2(\p)}d\tau'\right)^{\frac{1}{2}}du\lesssim \ep\norm{\La^{-\delta}F}_{L^2(S)}.
\ee
Indeed, once \eqref{clp23} is obtained, one proceeds as in \eqref{ad61} \eqref{ad32} to deduce \eqref{commLP3}. From now on, we focus on proving \eqref{clp23}. 

In view of \eqref{clp22} and the heat flow estimate \eqref{eq:l2heat1bis}, we have:
$$\norm{\La^{-\alpha}V(\tau)}^2_{L^2(\p)}+\int_0^\tau\norm{\nabb\La^{-\alpha} V(\tau')}^2_{L^2(\p)}d\tau'\lesssim \int_0^\tau\int_{\p}\La^{-2\a}V(\tau')[\nabna,\lap]U(\tau')d\mu_ud\tau'.$$
Injecting the commutator formula \eqref{dj3}, integrating by parts, we obtain the following estimate:
\bea\lab{clp24}
&&\norm{\La^{-\a}V(\tau)}^2_{L^2(\p)}+\int_0^\tau\norm{\nabb\La^{-\a} V(\tau')}^2_{L^2(\p)}d\tau'\\
\nn&\lesssim& (\norm{a\nabb(\th)}_{\lp{2}}+\norm{\nabb(a)\th}_{\lp{2}}+\norm{aR}_{\lp{2}})\\
\nn&&\times\int_0^\tau\norm{\nabb U(\tau')}_{\lp{p}}\norm{\nabb\La^{-2\a} V(\tau')}_{\lp{2}}d\tau',
\eea
where 
$$2<p<\frac{2}{1-\a}$$ 
will be chosen later. Now, we have in view of \eqref{La1} and \eqref{interpolLa}:
$$\norm{\nabb\La^{-2\a} V(\tau')}_{\lp{2}}\les \norm{\La^{-\a} V(\tau')}^\a_{\lp{2}}\norm{\nabb\La^{-2\a} V(\tau')}^{1-\a}_{\lp{2}}$$
which together with \eqref{clp24} implies:
\bea\lab{clp25}
&&\norm{\La^{-\a}V(\tau)}^2_{L^2(\p)}+\int_0^\tau\norm{\nabb\La^{-\a} V(\tau')}^2_{L^2(\p)}d\tau'\\
\nn&\lesssim& (\norm{a\nabb(\th)}^2_{\lp{2}}+\norm{\nabb(a)\th}^2_{\lp{2}}+\norm{aR}^2_{\lp{2}})\int_0^\tau{\tau'}^{\a_-}\norm{\nabb U(\tau')}^2_{\lp{p}}d\tau'.
\eea
The Gagliardo-Nirenberg inequality \eqref{eq:GNirenberg} and the Bochner inequality \eqref{eq:Bochconseqbis} imply: 
\bee 
&&\int_0^\tau{\tau'}^{\a_-}\norm{\nabb U(\tau')}^2_{\lp{p}}d\tau'\\
&\les& \int_0^\tau{\tau'}^{\a_-}\norm{\nabb U(\tau')}^{\frac{4}{p}}_{\lp{2}}\norm{\lap U(\tau')}^{2(1-\frac{2}{p})}_{\lp{2}}d\tau'\\
&\les& \int_0^\tau{\tau'}^b\norm{\nabb U(\tau')}^2_{\lp{2}}d\tau'+\int_0^\tau{\tau'}^{1+b}\norm{\lap U(\tau')}^2_{\lp{2}}d\tau'
\eee
where $b$ is given by: 
\be\lab{clp26}
b=\a_--1+\frac{2}{p}.
\ee
We have $0<b<1$ from the choice of $\a$ and $p$. Thus, we obtain in view of the heat flow estimates \eqref{eq:heat1} and \eqref{eq:heat3}:
$$\int_0^\tau{\tau'}^{\a_-}\norm{\nabb U(\tau')}^2_{\lp{p}}d\tau'\les \norm{\La^{-b_-}f}^2_{\lp{2}}.$$
Together with \eqref{clp25}, this yields:
\bee
&&\norm{\La^{-\a}V(\tau)}^2_{L^2(\p)}+\int_0^\tau\norm{\nabb\La^{-\a} V(\tau')}^2_{L^2(\p)}d\tau'\\
\nn&\lesssim& (\norm{a\nabb(\th)}^2_{\lp{2}}+\norm{\nabb(a)\th}^2_{\lp{2}}+\norm{aR}^2_{\lp{2}}) \norm{\La^{-b_-}f}^2_{\lp{2}}.
\eee
Integrating in $u$, this yields:
\bea\lab{clp27}
&&\norm{\La^{-\a}V(\tau)}_{\l{1}{2}}+\int_{-2}^{2}\left(\int_0^\tau\norm{\nabb\La^{-\a}V(\tau')}^2_{L^2(\p)}d\tau'\right)^{\frac{1}{2}}du\\
\nn&\lesssim& (\norm{a\nabb(\th)}_{\ll{2}}+\norm{\nabb(a)\th}_{\ll{2}}+\norm{aR}_{\ll{2}}) \norm{\La^{-b_-}f}_{\ll{2}}\\
\nn&\les&\ep\norm{\La^{-b_-}F}_{L^2(S)},
\eea
where we used in the last inequality the estimate \eqref{thregx1} for $a$ and $\th$, and the smallness assumption \eqref{small1} for $R$. Now, in view of the definition \eqref{clp26} of $b$, and since $\delta<\a$, we may choose $p>2$ close enough to 2 such that $b_->\delta$, which together with \eqref{clp27} implies \eqref{clp23}. This concludes the proof of the proposition. 

\subsection{Proof of Proposition \ref{greveinf5}}

Proceeding as in \eqref{clp1} \eqref{clp2}, we have:
\begin{equation}\label{clp28}
[\nabna, P_j]f=\int_0^\infty m_j(\tau)V(\tau) d\tau.
\end{equation}
where $V$ is given by:
\begin{equation}\label{clp29}
(\partial_{\tau}-\lap)V(\tau)=[\nabna,\lap]U(\tau)f,\,V(0)=0.
\end{equation}

In view of \eqref{clp28}, we have:
$$\norm{[\nabna, P_j]f}_{\ll{2}}\les \int_0^\infty m_j(\tau)\norm{V(\tau)}_{\ll{2}} d\tau.$$
Thus, to obtain \eqref{commLP3}, it suffices to show:
\be\lab{clp30}
\norm{V(\tau)}_{\ll{2}}\les  \tau^{\frac{1}{2}-\frac{\delta}{2}}\ep\left(\norm{\lap f}_{L^2(S)}+\norm{\nabb f}_{\l{\infty}{2}}\right).
\ee
From now on, we focus on proving \eqref{clp30}. 

In view of \eqref{clp29} and the heat flow estimate \eqref{heatF2}, we have:
$$\norm{V(\tau)}^2_{L^2(\p)}+\int_0^\tau\norm{\nabb V(\tau')}^2_{L^2(\p)}d\tau'\lesssim \int_0^\tau\int_{\p}V(\tau')[\nabna,\lap]U(\tau')d\mu_ud\tau'.$$
Using the commutator formula \eqref{dj3}, we obtain the following estimate:
\bee
&&\norm{V(\tau)}^2_{L^2(\p)}+\int_0^\tau\norm{\nabb V(\tau')}^2_{L^2(\p)}d\tau'\\
\nn&\lesssim& \norm{a\th}_{\l{\infty}{4}}\int_0^\tau\norm{\nabb^2U(\tau')}_{\lp{2}}\norm{V(\tau')}_{\lp{4}}d\tau'\\
\nn&&+(\norm{a\nabb\th}_{\lp{2}}+\norm{\nabb(a)\th}_{\lp{2}}+\norm{aR}_{\lp{2}})\int_0^\tau\norm{\nabb U(\tau')}_{\lp{4}}\norm{V(\tau')}_{\lp{4}}d\tau'.
\eee
Together with the Gagliardo-Nirenberg inequality \eqref{eq:GNirenberg}, Proposition \ref{p1}, and the estimate \eqref{thregx1} for $a$ and $\th$, we obtain:
\bee
&&\norm{V(\tau)}^2_{L^2(\p)}+\int_0^\tau\norm{\nabb V(\tau')}^2_{L^2(\p)}d\tau'\\
\nn&\lesssim& (\norm{a\nabb(\th)}^2_{\lp{2}}+\norm{\nabb(a)\th}^2_{\lp{2}}+\norm{aR}^2_{\lp{2}})\\
\nn&&\times\int_0^\tau {\tau'}^{\frac{1}{2}-\delta}\norm{\nabb^2U(\tau')}_{\lp{2}}\norm{\nabb U(\tau')}_{\lp{2}}d\tau+\ep^2\int_0^\tau{\tau'}^{1-2\delta}\norm{\nabb^2U(\tau')}_{\lp{2}}^2 d\tau\\
\nn&&+\frac{1}{2}\int_0^\tau \norm{\nabb V(\tau')}^2_{\lp{2}}d\tau'+\frac{1}{2}\int_0^\tau {\tau'}^{-1+\delta}\norm{V(\tau')}^2_{\lp{2}}d\tau,
\eee
for any $\delta>0$. This yields:
\bee
&&\norm{V(\tau)}^2_{L^2(\p)}+\int_0^\tau\norm{\nabb V(\tau')}^2_{L^2(\p)}d\tau'\\
\nn&\lesssim& (\norm{a\nabb(\th)}^2_{\lp{2}}+\norm{\nabb(a)\th}^2_{\lp{2}}+\norm{aR}^2_{\lp{2}})\\
\nn&&\times\int_0^\tau {\tau'}^{\frac{1}{2}-\delta}\norm{\nabb^2U(\tau')}_{\lp{2}}\norm{\nabb U(\tau')}_{\lp{2}}d\tau+\ep^2\int_0^\tau{\tau'}^{\frac{1}{2}-\delta}\norm{\nabb^2U(\tau')}^2_{\lp{2}}d\tau
\eee
and integrating in $u$, we obtain:
\bea\lab{clp31}
&&\norm{V(\tau)}^2_{L^2(S)}+\int_0^\tau\norm{\nabb V(\tau')}^2_{L^2(S)}d\tau'\\
\nn&\lesssim& \ep^2\sup_u\left(\int_0^\tau {\tau'}^{\frac{1}{2}-\delta}\norm{\nabb^2U(\tau')}_{\lp{2}}\norm{\nabb U(\tau')}_{\lp{2}}d\tau\right)+\ep^2\int_0^\tau{\tau'}^{\frac{1}{2}-\delta}\norm{\nabb^2U(\tau')}^2_{\ll{2}}d\tau,
\eea
where we used the estimate \eqref{thregx1} for $a$ and $\th$, and the smallness assumption \eqref{small1} for $R$. 
Now, we have:
\bea\lab{clp32}
&&\int_0^\tau {\tau'}^{\frac{1}{2}-\delta}\norm{\nabb^2U(\tau')}_{\lp{2}}\norm{\nabb U(\tau')}_{\lp{2}}d\tau'\\
\nn&\les& \tau^{1-\delta}\sup_\tau\norm{\nabb U(\tau)}_{\lp{2}}\left(\int_0^\tau \norm{\lap U(\tau')}^2_{\lp{2}}d\tau'\right)^{\frac{1}{2}}\\
\nn&\les& \tau^{1-\delta}\norm{\nabb f}^2_{\lp{2}},
\eea
where we used the Bochner inequality for scalars \eqref{eq:Bochconseqbis} and the heat flow estimate \eqref{eq:l2heatnab}.
Also, we have:
\bea\lab{clp33}
\int_0^\tau{\tau'}^{\frac{1}{2}-\delta}\norm{\nabb^2U(\tau')}^2_{\ll{2}}d\tau'&\les& \tau^{\frac{3}{2}-\delta}\sup_\tau\norm{\lap U(\tau)}^2_{\ll{2}}\\
\nn&\les& \tau^{\frac{3}{2}-\delta}\norm{\lap f}^2_{\ll{2}},
\eea
where we used the Bochner inequality for scalars \eqref{eq:Bochconseqbis} and a heat flow estimate. Finally, \eqref{clp31}, \eqref{clp32} and \eqref{clp33} yield \eqref{clp30}. This concludes the proof of the proposition.

\subsection{Proof of Proposition \ref{greveinf6}}

Let us start by proving the corollary in the case where $f$ is a scalar function on $S$ satisfying the same assumptions that $F$. We estimate $\norm{P_jf}^2_{\l{\infty}{2}}$. Using the inequality \eqref{ad27bis} and the fact that $P_jf\equiv 0$ on $u=-2$, we have:
\begin{equation}\label{clp34}
\begin{array}{ll}
&\norm{P_jf}^2_{\l{\infty}{2}}\\
\ds\lesssim &\ds\norm{P_jf}_{\ll{2}}\norm{\nabn P_jf}_{\ll{2}}+\norm{P_jf}_{\ll{2}}\norm{\nabb P_jf}_{\ll{2}}\\
\ds\lesssim &\ds\norm{P_jf}_{\ll{2}}\norm{\nabna P_jf}_{\ll{2}}+2^j\norm{P_jf}_{\ll{2}}^2,
\end{array}
\end{equation}
where we used  in the last inequality the estimate \eqref{thregx1} for $a$, and the finite band property for $P_j$. 
Now, we have:
$$\norm{\nabna P_jf}_{\lp{2}}\les \norm{P_j(\nabna f)}_{\lp{2}}+\norm{[\nabna, P_j]f}_{\lp{2}}$$
which together with \eqref{clp34} implies:
\bee
&&\norm{P_jf}^2_{\l{\infty}{2}}\\
\nn&\lesssim & \norm{P_jf}_{\ll{2}}\norm{P_j(\nabna f)}_{\ll{2}}+\norm{P_jf}_{\ll{2}}\norm{[\nabna, P_j]f}_{\ll{2}}+2^j\norm{P_jf}_{\ll{2}}^2\\
\nn&\lesssim & \Big(2^{-(2+b)j}\norm{\nabna f}_{\lhs{2}{b}}+2^{-2j}\norm{[\nabna, P_j]f}_{\ll{2}}+2^{-3j}\norm{\lap f}_{\ll{2}}\Big)\norm{\lap f}_{\ll{2}},
\eee
where we used in the last inequality the finite band property for $P_j$, and the definition of $\hs{b}$. Together with \eqref{zoc40} and the commutator estimate \eqref{commLP4}, we obtain:
\bea\lab{clp34bis}
&&\norm{P_jf}^2_{\l{\infty}{2}}\\
\nn&\lesssim & \Big(2^{-(2+b)j}\norm{\nabn f}_{\lhs{2}{b}}+2^{-(3-\delta)j}(\norm{\nabb f}_{\l{\infty}{2}}+\norm{\lap f}_{\ll{2}})\Big)\norm{\lap f}_{\ll{2}},
\eea
for any $\delta>0$. Now, in view of Proposition \ref{p5}, we have:
\be\label{clp35}
\norm{\nabb f}_{\l{\infty}{2}}\les \norm{\lap f}_{\ll{2}}+\norm{\nabn f}_{\ll{2}}.
\ee
Since $b\geq 0$, \eqref{clp34bis} and \eqref{clp35} imply:
\be\lab{clp36}
\norm{P_jf}_{\l{\infty}{2}}\les 2^{-(1+\frac{b}{2})j}(\norm{\nabn f}_{\lhs{2}{b}}+\norm{\nabb^2f}_{\ll{2}}).
\ee
Now, we have:
$$\norm{f}_{\ll{\infty}}\les \sum_{j\geq 0}\norm{P_jf}_{\ll{\infty}}\les \sum_{j\geq 0}2^j\norm{P_jf}_{\l{\infty}{2}},$$
where we used in the last inequality the strong Bernstein inequality for scalars \eqref{eq:strongbernscalarbis}. Together with \eqref{clp37} and the fact that $b>0$, we obtain:
\be\lab{clp37}
\norm{f}_{\ll{\infty}}\les \norm{\nabn f}_{\lhs{2}{b}}+\norm{\nabb^2f}_{\ll{2}}.
\ee

Next, we turn to the case where $F$ is a tensor. Using \eqref{clp37} with $\frac{b}{2}$ instead for $b$, and with $f=|F|^2$, we obtain:
\bee
\norm{F}^2_{\ll{\infty}}&\les& \norm{F\c \nabn F}_{\lhs{2}{\frac{b}{2}}}+\norm{F\nabb^2F}_{\ll{2}}+\norm{\nabb F}^2_{\ll{4}}\\
&\les& \norm{F\c \nabn F}_{\lhs{2}{\frac{b}{2}}}+\norm{F}_{\ll{\infty}}\norm{\nabb^2F}_{\ll{2}}+\norm{\nabb^2F}^2_{\ll{2}}+\norm{\nabn F}^2_{\ll{2}},
\eee
where we used in the last inequality Proposition \ref{p2bis} to estimate $\norm{\nabb F}_{\ll{4}}$. This yields:
\be\lab{clp38}
\norm{F}^2_{\ll{\infty}}\les \norm{F\c \nabn F}_{\lhs{2}{\frac{b}{2}}}+\norm{\nabb^2F}^2_{\ll{2}}+\norm{\nabn F}^2_{\ll{2}}.
\ee

Next, we estimate the first term in the right-hand side of \eqref{clp38}. We have:
\be\lab{clp39}
\norm{P_j(F\c \nabn F)}_{\ll{2}}\les \sum_{l\geq 0}\norm{P_j(F\c P_l\nabn F)}_{\ll{2}}.
\ee
In the case $l>j$, the boundedness of $P_j$ on $\lp{2}$ yields:
\bea\lab{clp40}
2^{\frac{bj}{2}}\norm{P_j(F\c P_l\nabn F)}_{\ll{2}}&\les& 2^{\frac{bj}{2}}\norm{F\c P_l\nabn F}_{\ll{2}}\\
\nn&\les& 2^{\frac{bj}{2}}\norm{F}_{\ll{\infty}}\norm{P_l\nabn F}_{\ll{2}}\\
\nn&\les & 2^{\frac{bj}{2}-bl}\norm{F}_{\ll{\infty}}\norm{\nabn F}_{\lhs{2}{b}}.
\eea
In the case $l\leq j$, we use the finite band property for $P_j$. We have:
\bee
P_j(F\c P_l\nabn F)&=&2^{-2j}P_j(\lap(F\c P_l\nabn F))\\
&=&2^{-2j}P_j(\divb(\nabb F\c P_l\nabn F))+2^{-2j}P_j(\divb(F\c \nabb P_l\nabn F)).
\eee
Together with \eqref{lbz14bis} -note that $\nabb F\c P_l\nabn F$ is a 1-form - and the finite band property for $P_j$, we obtain:
\bee
&&2^{\frac{bj}{2}}\norm{P_j(F\c P_l\nabn F)}_{\ll{2}}\\
\nn&\les& 2^{\frac{bj}{2}-\frac{j}{2}}\norm{\nabb F\c P_l\nabn F}_{\l{2}{\frac{4}{3}}}+2^{\frac{bj}{2}-j}\norm{F\c \nabb P_l\nabn F}_{\ll{2}}\\
\nn&\les& 2^{\frac{bj}{2}-\frac{j}{2}}\norm{\nabb F}_{\l{\infty}{2}}\norm{P_l\nabn F}_{\l{2}{4}}+2^{\frac{bj}{2}-j}\norm{F}_{\ll{\infty}}\norm{\nabb P_l\nabn F}_{\ll{2}}.
\eee
Using Bernstein and the finite band property for $P_l$, this yields for $l\leq j$:
\bea\lab{clp41}
&&2^{\frac{bj}{2}}\norm{P_j(F\c P_l\nabn F)}_{\ll{2}}\\
\nn&\les& 2^{\frac{bj}{2}}(2^{-\frac{j}{2}+\frac{l}{2}}\norm{\nabb F}_{\l{\infty}{2}}+2^{-j+l}\norm{F}_{\ll{\infty}})\norm{P_l\nabn F}_{\ll{2}}\\
\nn&\les& 2^{-\frac{1-b}{2}j+(\frac{1}{2}-b)l}(\norm{\nabb F}_{\l{\infty}{2}}+\norm{F}_{\ll{\infty}})\norm{\nabn F}_{\lhs{2}{b}}.
\eea
We may assume $b<\frac{1}{2}$. Then, using \eqref{clp39}, \eqref{clp40} for $l>j$, and \eqref{clp41} for $l\leq j$, we obtain:
$$2^{\frac{bj}{2}}\norm{P_j(F\c \nabn F)}_{\ll{2}}\les 2^{-\frac{b}{2}j}(\norm{\nabb F}_{\l{\infty}{2}}+\norm{F}_{\ll{\infty}})\norm{\nabn F}_{\lhs{2}{b}},$$
which yields:
$$\norm{F\c \nabn F}_{\lhs{2}{\frac{b}{2}}}\les (\norm{\nabb F}_{\l{\infty}{2}}+\norm{F}_{\ll{\infty}})\norm{\nabn F}_{\lhs{2}{b}}.$$
Together with \eqref{clp38}, we obtain:
$$\norm{F}^2_{\ll{\infty}}\les (\norm{\nabb F}_{\l{\infty}{2}}+\norm{F}_{\ll{\infty}})\norm{\nabn F}_{\lhs{2}{b}}+\norm{\nabb^2F}^2_{\ll{2}}+\norm{\nabn F}^2_{\ll{2}},$$
and thus:
$$\norm{F}_{\ll{\infty}}\les \norm{\nabb F}_{\l{\infty}{2}}+\norm{\nabn F}_{\lhs{2}{b}}+\norm{\nabb^2F}_{\ll{2}}.$$
Now, using Proposition \ref{p2bis} to estimate $\norm{\nabb F}_{\l{\infty}{2}}$, we finally get:
$$\norm{F}_{\ll{\infty}}\les \norm{\nabn F}_{\lhs{2}{b}}+\norm{\nabb^2F}_{\ll{2}}.$$
This concludes the proof of the corollary.

\subsection{Proof of Proposition \ref{greveinf7}}

In view of \eqref{clp1}, we have:
$$\norm{[\nabna, P_j]f}_{\ll{2}}\les \int_0^\infty m_j(\tau)\norm{V(\tau)}_{\ll{2}} d\tau,$$
where $V$ is given by:
\begin{equation}\label{clp42}
(\partial_{\tau}-\lap)V(\tau)=[\nabna,\lap]U(\tau)f,\,V(0)=0.
\end{equation}
Thus, to obtain \eqref{commLP3}, it suffices to show:
\be\lab{clp43}
\int_0^\tau\norm{V(\tau')}^2_{\ll{2}}d\tau'\lesssim \ep\norm{\La^{-(1-\delta)}f}^2_{\l{\infty}{2}}.
\ee
Indeed, \eqref{clp43} yields:
\bee
\norm{[\nabna, P_j]f}_{\ll{2}}&\les& \int_0^\infty m_j(\tau)\norm{V(\tau)}_{\ll{2}} d\tau\\
&\les& \left(\int_0^\infty m_j(\tau)^2 d\tau\right)^{\frac{1}{2}}\left(\int_0^\infty\norm{V(\tau)}^2_{\ll{2}} d\tau\right)^{\frac{1}{2}}\\
&\les& 2^j\ep\norm{\La^{-(1-\delta)}f}_{\l{\infty}{2}},
\eee
which is \eqref{commLP5}. From now on, we focus on proving \eqref{clp43}. 

In view of \eqref{clp42} and the heat flow estimate \eqref{eq:l2heat1bis}, we have:
$$\norm{\La^{-1}V(\tau)}^2_{L^2(\p)}+\int_0^\tau\norm{\nabb\La^{-1} V(\tau')}^2_{L^2(\p)}d\tau'\lesssim \int_0^\tau\int_{\p}\La^{-2}V(\tau')[\nabna,\lap]U(\tau')d\mu_ud\tau'.$$
Injecting the commutator formula \eqref{dj3}, integrating by parts, we obtain the following estimate:
\bea\lab{clp44}
&&\norm{\La^{-1}V(\tau)}^2_{L^2(\p)}+\int_0^\tau\norm{\nabb\La^{-1} V(\tau')}^2_{L^2(\p)}d\tau'\\
\nn&\lesssim& (\norm{a\nabb(\th)}_{\lp{2}}+\norm{\nabb(a)\th}_{\lp{2}}+\norm{aR}_{\lp{2}})\\
\nn&&\times\int_0^\tau\norm{\nabb U(\tau')}_{\lp{p}}\norm{\nabb\La^{-2} V(\tau')}_{\lp{2}}d\tau',
\eea
where 
$$2<p<4$$ 
will be chosen later. Now, we have in view of \eqref{La1}:
$$\norm{\nabb\La^{-2} V(\tau')}_{\lp{2}}\les \norm{\La^{-1} V(\tau')}_{\lp{2}}$$
which together with \eqref{clp44} implies:
\bea\lab{clp45}
&&\norm{\La^{-1}V(\tau)}^2_{L^2(\p)}+\int_0^\tau\norm{\nabb\La^{-1} V(\tau')}^2_{L^2(\p)}d\tau'\\
\nn&\lesssim& (\norm{a\nabb(\th)}^2_{\lp{2}}+\norm{\nabb(a)\th}^2_{\lp{2}}+\norm{aR}^2_{\lp{2}})\int_0^\tau{\tau'}^{1_-}\norm{\nabb U(\tau')}^2_{\lp{p}}d\tau'.
\eea
The Gagliardo-Nirenberg inequality \eqref{eq:GNirenberg} and the Bochner inequality \eqref{eq:Bochconseqbis} imply: 
\bee 
&&\int_0^\tau{\tau'}^{1_-}\norm{\nabb U(\tau')}^2_{\lp{p}}d\tau'\\
&\les& \int_0^\tau{\tau'}^{1_-}\norm{\nabb U(\tau')}^{\frac{4}{p}}_{\lp{2}}\norm{\lap U(\tau')}^{2(1-\frac{2}{p})}_{\lp{2}}d\tau'\\
&\les& \int_0^\tau{\tau'}^b\norm{\nabb U(\tau')}^2_{\lp{2}}d\tau'+\int_0^\tau{\tau'}^{1+b}\norm{\lap U(\tau')}^2_{\lp{2}}d\tau'
\eee
where $b$ is given by: 
\be\lab{clp46}
b=1_--1+\frac{2}{p}.
\ee
We have $0<b<1$ from the choice of $p$. Thus, we obtain in view of the heat flow estimates \eqref{eq:heat1} and \eqref{eq:heat3}:
$$\int_0^\tau{\tau'}^{\a_-}\norm{\nabb U(\tau')}^2_{\lp{p}}d\tau'\les \norm{\La^{-b_-}f}^2_{\lp{2}}.$$
Together with \eqref{clp25}, this yields:
\bee
&&\norm{\La^{-1}V(\tau)}^2_{L^2(\p)}+\int_0^\tau\norm{\nabb\La^{-1} V(\tau')}^2_{L^2(\p)}d\tau'\\
\nn&\lesssim& (\norm{a\nabb(\th)}^2_{\lp{2}}+\norm{\nabb(a)\th}^2_{\lp{2}}+\norm{aR}^2_{\lp{2}}) \norm{\La^{-b_-}f}^2_{\lp{2}}.
\eee
Integrating in $u$, this yields:
\bea\lab{clp47}
&&\norm{\La^{-1}V(\tau)}^2_{\ll{2}}+\int_0^\tau\norm{\nabb\La^{-1}V(\tau')}^2_{\ll{2}}d\tau'\\
\nn&\lesssim& (\norm{a\nabb(\th)}_{\ll{2}}+\norm{\nabb(a)\th}_{\ll{2}}+\norm{aR}_{\ll{2}}) \norm{\La^{-b_-}f}_{\l{\infty}{2}}\\
\nn&\les&\ep\norm{\La^{-b_-}f}_{\l{\infty}{2}},
\eea
where we used in the last inequality the estimate \eqref{thregx1} for $a$ and $\th$, and the smallness assumption \eqref{small1} for $R$. Now, in view of the definition \eqref{clp46} of $b$, and since $\delta>0$, we may choose $p>2$ close enough to 2 such that $b_->1-\delta$, which together with \eqref{clp47} implies \eqref{clp43}. This concludes the proof of the proposition. 

\section{Product estimates}\lab{sec:proofpropprod}

In this section, we prove the commutator estimates stated in section \ref{sec:statepropprodprod}.

\subsection{Proof of Proposition \ref{greveinff1}}

We have:
\be\lab{fichtre1}
\norm{P_j(F\c G\c H)}_{\lp{2}}\les \sum_{l\geq 0}\norm{P_j(F\c G\c P_lH)}_{\lp{2}}.
\ee
We first consider the case where $l\leq j$. Since $0<b<\frac{1}{2}$, there exists a real number $p$ such that:
\be\lab{fichtre2}
\frac{2}{\frac{3}{2}-b}<p<2.
\ee 
We have:
$$\norm{P_j(F\c G\c P_lH)}_{\lp{2}}=2^{-2j}\norm{P_j(\lap (F\c G\c P_lH))}_{\lp{2}}=2^{-2j}\norm{P_j(\divb (\nabb( F\c G\c P_lH)))}_{\lp{2}}.$$
Since $F\c G\c H$ is a scalar, we may use \eqref{lbz14bis}, and we obtain:
\bee
&&2^{jb}\norm{P_j(F\c G\c P_lH)}_{\lp{2}}\\
&\les& 2^{jb}2^{-2j}2^{\frac{2j}{p}}\norm{\nabb( F\c G\c P_lH)}_{\lp{p}}\\
&\les& 2^{j(b-2+\frac{2}{p})}(\norm{\nabb F}_{\lp{2}}\norm{G}_{\lp{r}}\norm{P_lH}_{\lp{r}}+\norm{F}_{\lp{r}}\norm{\nabb  G}_{\lp{2}}\norm{P_lH}_{\lp{r}}\\
&&+\norm{F}_{\lp{r}}\norm{G}_{\lp{r}}\norm{\nabb P_lH}_{\lp{2}}),
\eee
where $4<r<+\infty$ is given by:
$$\frac{2}{r}=\frac{1}{p}-\frac{1}{2}.$$
Together with the finite band property and Bernstein for $P_l$, and using the Gagliardo-Nirenberg inequality \eqref{eq:GNirenberg}, we obtain in the case $l\leq j$:
\bea\lab{fichtre3}
2^{jb}\norm{P_j(F\c G\c P_lH)}_{\lp{2}}&\les& 2^{j(b-2+\frac{2}{p})}\norm{F}_{\hs{1}}\norm{G}_{\hs{1}}2^l\norm{P_lH}_{\lp{2}}\\
\nn&\les & 2^{-\max(j,l)(\frac{3}{2}-b-\frac{2}{p})}\norm{F}_{\hs{1}}\norm{G}_{\hs{1}}\norm{H}_{\hs{\frac{1}{2}}},
\eea
where we used in the last inequality the fact that $l\leq j$ and the choice of $p$ \eqref{fichtre2}. 

Next, we consider the case $l>j$. Since $0<b<\frac{1}{2}$, there exists a real number $q$ such that:
\be\lab{fichtre4}
2<q<\frac{2}{b+\frac{1}{2}}.
\ee 
Then, let $4<r<+\infty$ such that:
$$\frac{2}{r}+\frac{1}{q}=\frac{1}{2}.$$
Using the boundedness of $P_j$ on $\lp{2}$, Bernstein for $P_l$, and the Gagliardo-Nirenberg inequality \eqref{eq:GNirenberg}, we have:
\bea\lab{fichtre5}
2^{jb}\norm{P_j(F\c G\c P_lH)}_{\lp{2}}&\les&  2^{jb}\norm{F\c G\c P_lH}_{\lp{2}}\\
\nn&\les&  2^{jb}\norm{F}_{\lp{r}}\norm{G}_{\lp{r}}\norm{P_lH}_{\lp{q}}\\
\nn&\les& 2^{jb}\norm{F}_{\hs{1}}\norm{G}_{\hs{1}}2^{l(1-\frac{2}{q}}\norm{P_lH}_{\lp{2}}\\
\nn&\les& 2^{-\max(j,l)(\frac{2}{q}-\frac{1}{2}-b)}\norm{F}_{\hs{1}}\norm{G}_{\hs{1}}\norm{H}_{\hs{\frac{1}{2}}},
\eea
where we used in the last inequality the fact that $l> j$ and the choice of $q$ \eqref{fichtre4}.

Let $\delta$ given by:
$$\delta=\min\left(\frac{3}{2}-b-\frac{2}{p}, \frac{2}{q}-\frac{1}{2}-b\right).$$
Then, we have $\delta>0$ in view of \eqref{fichtre2} and \eqref{fichtre4}. Now, in view of \eqref{fichtre1}, \eqref{fichtre3} and \eqref{fichtre5}, we have:
\bee
\sum_{j\geq 0}2^{2jb}\norm{P_j(G\c G\c H)}^2_{\lp{2}}&\les& \norm{F}^2_{\hs{1}}\norm{G}^2_{\hs{1}}\norm{H}^2_{\hs{\frac{1}{2}}}\sum_{j\geq 0}\left(\sum_{l\geq 0}2^{-\delta\max(l,j)}\right)^2\\
&\les & \norm{F}^2_{\hs{1}}\norm{G}^2_{\hs{1}}\norm{H}^2_{\hs{\frac{1}{2}}},
\eee
since $\delta>0$. This concludes the proof of the Proposition.

\subsection{Proof of Proposition \ref{greveinff2}}

We have:
\be\lab{kei1}
\norm{P_j(G\c H)}_{\lp{2}}\les \sum_{l, m\geq 0}\norm{P_j(P_lG\c P_mH)}_{\lp{2}}.
\ee
By symmetry, we may assume:
$$l\leq m.$$
We first consider the case where $l\leq m\leq j$. Then, we have:
\bee
\norm{P_j(P_lG\c P_mH)}_{\lp{2}}&=& 2^{-2j}\norm{P_j(\lap (P_lG\c P_mH))}_{\lp{2}}\\
&=&2^{-2j}\norm{P_j(\divb (\nabb( P_lG\c P_mH)))}_{\lp{2}}.
\eee
Since $G\c H$ is a scalar, we may use \eqref{lbz14bis}, and we obtain:
\bea\lab{kei2}
&&\norm{P_j(P_lG\c P_mH)}_{\lp{2}}\\
\nn&\les & 2^{-2j}2^{\frac{4j}{3}}\norm{\nabb (P_lG\c P_mH)}_{\lp{\frac{3}{2}}}\\
\nn&\les & 2^{-\frac{2j}{3}}\norm{\nabb P_lG}_{\lp{2}}\norm{P_mH}_{\lp{6}}+ 2^{-\frac{2j}{3}}\norm{P_lG}_{\lp{6}}\norm{\nabb P_mH}_{\lp{2}}\\
\nn&\les & 2^{-\frac{2j}{3}}(2^{l+\frac{2m}{3}}+2^{m+\frac{2l}{3}})\norm{P_lG}_{\lp{2}}\norm{P_mH}_{\lp{2}}\\
\nn&\les & 2^{-\frac{|j-m|}{6}-\frac{|j-l|}{6}}2^{\frac{l}{2}}\norm{P_lG}_{\lp{2}}2^{\frac{m}{2}}\norm{P_mH}_{\lp{2}},
\eea
where we used the finite band property and Bernstein for $P_l$ and $P_m$, and the fact that $l\leq m\leq j$.

Next, we consider the case where $l\leq j<m$. Then, we use the boundedness of $P_j$ on $\lp{2}$ which yields:
\bea\lab{kei3}
\norm{P_j(P_lG\c P_mH)}_{\lp{2}}&\les& \norm{P_lG\c P_mH}_{\lp{2}}\\
\nn&\les& \norm{P_lG}_{\lp{6}}\norm{P_mH}_{\lp{3}}\\
\nn&\les& 2^{\frac{2l}{3}}\norm{P_lG}_{\lp{2}}2^{\frac{m}{3}}\norm{P_mH}_{\lp{2}}\\
\nn&\les& 2^{-\frac{|j-m|}{12}-\frac{|j-l|}{12}}2^{\frac{l}{2}}\norm{P_lG}_{\lp{2}}2^{\frac{m}{2}}\norm{P_mH}_{\lp{2}}
\eea
where we used Bernstein for $P_l$ and $P_m$, and the fact that $l\leq j<m$.

Finally, we consider the case where $j<l\leq m$. Then, we use Bernstein for $P_j$ which yields:
\bea\lab{kei4}
\norm{P_j(P_lG\c P_mH)}_{\lp{2}}&\les& 2^{\frac{j}{3}}\norm{P_lG\c P_mH}_{\lp{\frac{3}{2}}}\\
\nn&\les& 2^{\frac{j}{3}}\norm{P_lG}_{\lp{3}}\norm{P_mH}_{\lp{3}}\\
\nn&\les& 2^{\frac{j}{3}}2^{\frac{l}{3}}\norm{P_lG}_{\lp{2}}2^{\frac{m}{3}}\norm{P_mH}_{\lp{2}}\\
\nn&\les& 2^{-\frac{|j-m|}{6}-\frac{|j-l|}{6}}2^{\frac{l}{2}}\norm{P_lG}_{\lp{2}}2^{\frac{m}{2}}\norm{P_mH}_{\lp{2}},
\eea
where we used Bernstein for $P_l$ and $P_m$, and the fact that $j<l\leq m$.

Finally, we have in view of \eqref{kei1}, \eqref{kei2}, \eqref{kei3} and \eqref{kei4}:
\bee
\sum_{j\geq 0}\norm{P_j(G\c H)}^2_{\lp{2}} &\les& \sum_{j\geq 0}\left(\sum_{l, m\geq 0}\norm{P_j(P_lG\c P_mH)}_{\lp{2}}\right)^2\\
&\les& \sum_{j\geq 0}\left(\sum_{l, m\geq 0}2^{-\frac{|j-m|}{12}-\frac{|j-l|}{12}}2^{\frac{l}{2}}\norm{P_lG}_{\lp{2}}2^{\frac{m}{2}}\norm{P_mH}_{\lp{2}}\right)^2\\
&\les& \left(\sum_{l\geq 0}2^l\norm{P_lG}^2_{\lp{2}}\right)\left(\sum_{m\geq 0}2^m\norm{P_mH}^2_{\lp{2}}\right)\\
&\les& \norm{G}^2_{\hs{\frac{1}{2}}}\norm{H}_{\hs{\frac{1}{2}}}^2.
\eee
This yields \eqref{kei} which concludes the proof of the proposition.

\subsection{Proof of Proposition \ref{greveinff3}}

We have:
\be\lab{bale1}
\norm{P_j(f h)}_{\lp{2}}\les \sum_{l\geq 0}\norm{P_j(fP_lh)}_{\lp{2}}.
\ee
If $l\leq j$, we use the boundedness of $P_j$ on $\lp{2}$ to obtain:
\bea\lab{bale2}
2^{-\frac{j}{2}}\norm{P_j(fP_lh)}_{\lp{2}}&\les & 2^{-\frac{j}{2}}\norm{fP_lh}_{\lp{2}}\\
\nn&\les & 2^{-\frac{j}{2}}\norm{f}_{\lp{\infty}}\norm{P_lh}_{\lp{2}}\\
\nn&\les& 2^{-\frac{|j-l|}{2}}\norm{f}_{\lp{\infty}}2^{-\frac{l}{2}}\norm{P_lh}_{\lp{2}},
\eea
where we used in the last inequality the fact that $l\leq j$.

If $l>j$, we use the following identity:
$$P_j(fP_lh)=2^{-2l}P_j(fP_l\lap h)=2^{-2l}P_j(\divb(f\nabb P_l h))+2^{-2l}P_j(\nabb f\c \nabb P_lh).$$
Together with the finite band property for $P_j$, the strong Bernstein inequality \eqref{eq:strongbernscalarbis} for scalars, and the finite band property for $P_l$, we obtain:
\bea\lab{bale3}
\nn2^{-\frac{j}{2}}\norm{P_j(fP_lh)}_{\lp{2}}&\les & 2^{-\frac{j}{2}-2l}(\norm{P_j\divb(f\nabb P_lh)}_{\lp{2}}+\norm{P_j(\nabb f\c\nabb P_lh)}_{\lp{2}})\\
\nn&\les & 2^{\frac{j}{2}-2l}(\norm{f\nabb P_lh}_{\lp{2}}+\norm{\nabb f\c\nabb P_lh}_{\lp{1}})\\
\nn&\les & 2^{\frac{j}{2}-2l}(\norm{f}_{\lp{\infty}}+\norm{\nabb f}_{\lp{2}})\norm{\nabb P_lh}_{\lp{2}}\\
\nn&\les & 2^{\frac{j}{2}-l}(\norm{f}_{\lp{\infty}}+\norm{\nabb f}_{\lp{2}})\norm{P_lh}_{\lp{2}}\\
&\les & 2^{-\frac{|j-l|}{2}}(\norm{f}_{\lp{\infty}}+\norm{\nabb f}_{\lp{2}})2^{-\frac{l}{2}}\norm{P_lh}_{\lp{2}},
\eea
where we used in the last inequality the fact that $l> j$.

Finally, \eqref{bale1}, \eqref{bale2} and \eqref{bale3} imply:
\bee
\sum_{j\geq 0}2^{-j}\norm{P_j(fh)}^2_{\lp{2}}&\les& (\norm{f}^2_{\lp{\infty}}+\norm{\nabb f}^2_{\lp{2}})\sum_{j\geq 0}\left(2^{-\frac{|j-l|}{2}}2^{-\frac{l}{2}}\norm{P_lh}_{\lp{2}}\right)^2\\
&\les& (\norm{f}^2_{\lp{\infty}}+\norm{\nabb f}^2_{\lp{2}})\sum_{l\geq 0}2^{-l}\norm{P_lh}_{\lp{2}}^2\\
&\les& (\norm{f}^2_{\lp{\infty}}+\norm{\nabb f}^2_{\lp{2}})\norm{h}^2_{\hs{-\frac{1}{2}}}.
\eee
This concludes the proof of the proposition. 

\subsection{Proof of Proposition \ref{greveinff4}}

We estimate $\norm{P_j(G\c P_lH)}_{\lp{2}}$ starting with the case where $j\geq l$. Using the boundedness of $P_j$ on $\lp{2}$ of $P_j$, we have:
\bee
\norm{P_j(G\c P_lH)}_{\ll{2}}&\les& \norm{G}_{\l{\infty}{4}}\norm{P_lH}_{\l{2}{4}}\\
\nn&\les& 2^{\frac{l}{2}}\norm{G}_{H^1(S)}\norm{P_lH}_{\ll{2}},
\eee
where we used in the last inequality the Proposition \ref{p1} and the Bernstein inequality for $P_l$. This yields in the case $j\geq l$:
\be\lab{clp18}
2^{-\frac{j}{2}}\norm{P_j(G\c P_lH)}_{\ll{2}}\les 2^{-\frac{|l-j|}{2}}\norm{G}_{H^1(S)}\norm{P_lH}_{\ll{2}}.
\ee

Next, we consider the case where $l>j$, and we estimate $\norm{P_j(P_mG\c P_lH)}_{\lp{2}}$ starting with the case where $m\geq l$. Using the sharp Bernstein inequality \eqref{eq:strongbernscalarbis}, we have:
\be\lab{clp19}
\norm{P_j(P_m G\c P_lH)}_{\ll{2}}\les 2^j\norm{P_mG}_{\l{\infty}{2}}\norm{P_lH}_{\ll{2}}.
\ee

Finally, we consider the case where $l>j$ and $l>m$. Using the finite band property for $P_l$, we have:
\bee
\norm{P_j(P_m G\c P_lH)}_{\ll{2}}&\les& 2^{-2l}\norm{P_j(P_m G\c \lap P_lH)}_{\ll{2}}\\
&\les& 2^{-2l}\norm{P_j(\nabb P_m G\c \nabb P_lH)}_{\ll{2}}+2^{-2l}\norm{P_j(\divb(\nabb P_m G\c P_lH))}_{\ll{2}}.
\eee
Using the sharp Bernstein inequality \eqref{eq:strongbernscalarbis} for the first term and the estimate \eqref{lbz14bis}  with $p=4/3$ for the second term, we obtain:
\bea\lab{clp20}
&&\norm{P_j(P_m G\c P_lH)}_{\ll{2}}\\
\nn&\les& 2^{j-2l}\norm{\nabb P_m G}_{\l{\infty}{2}}\norm{\nabb P_lH}_{\ll{2}}+2^{\frac{3j}{2}-2l}\norm{\nabb P_m G}_{\l{\infty}{2}}\norm{P_lH}_{\l{2}{4}}\\
\nn&\les& (2^{j-l+m}+2^{\frac{3j}{2}-\frac{3l}{2}+m})\norm{P_m G}_{\l{\infty}{2}}\norm{P_lH}_{\ll{2}},
\eea
where we used in the last inequality Bernstein for $P_l$ and $P_m$. \eqref{clp19} and \eqref{clp20} yield in the case $l>j$:
\be\lab{clp21}
2^{-\frac{j}{2}}\norm{P_j(P_m G\c P_lH)}_{\ll{2}}\les 2^{-\frac{|j-l|}{4}-\frac{|j-m|}{4}}(2^{\frac{m}{2}}\norm{P_m G}_{\l{\infty}{2}})\norm{P_lH}_{\ll{2}}.
\ee
Finally, \eqref{clp18} and \eqref{clp21} imply:
\bee
\sum_{j\geq 0}2^{-j}\norm{P_j(G\c H)}^2_{\ll{2}}&\les& \left(\norm{G}_{H^1(S)}^2+\left(\sum_{m\geq 0}2^m\norm{P_mG}^2_{\l{\infty}{2}}\right)\right)\left(\sum_{l\geq 0}\norm{P_lH}^2_{\ll{2}}\right)\\
&\les& \norm{G}^2_{H^1(S)}\norm{H}^2_{L^2(S)},
\eee
where we used in the last inequality Corollary \ref{cor:commLP1} for $G$ and the Bessel inequality for $H$. This concludes the proof of the proposition. 

\subsection{Proof of Lemma \ref{greveinff5}}

We have:
\be\lab{prod2}
\norm{P_j(F\c G)}_{\lp{2}}\les \sum_{l, m\geq 0}\norm{P_j(P_lF\c P_mG)}_{\lp{2}}.
\ee
If $j=\max(j,l,m)$, we use the boundedness of $P_j$ on $\lp{2}$ and the Bernstein inequality for $P_l$ and $P_m$ to obtain:
\bea\lab{prod3}
2^{-j}\norm{P_j(P_lF\c P_mG)}_{\lp{2}}&\les& 2^{-j}\norm{P_lF\c P_mG}_{\lp{2}}\\
\nn&\les& 2^{-j}\norm{P_lF}_{\lp{6}}\norm{P_mG}_{\lp{3}}\\
\nn&\les& 2^{-j}2^{\frac{2l}{3}}2^{\frac{m}{3}}\norm{P_lF}_{\lp{2}}\norm{P_mG}_{\lp{2}}\\
\nn&\les& 2^{-\frac{|l-m|}{6}}2^{\frac{l}{2}}\norm{P_lF}_{\lp{2}}2^{-\frac{m}{2}}\norm{P_mG}_{\lp{2}},
\eea
where we used in the last inequality the fact that $j=\max(j,l,m)$. 

If $l=\max(j,l,m)$, we use for $P_j$ the strong Bernstein inequality for scalars \eqref{eq:strongbernscalarbis} which yields:
\bea\lab{prod4}
2^{-j}\norm{P_j(P_lF\c P_mG)}_{\lp{2}}&\les& \norm{P_lF\c P_mG}_{\lp{1}}\\
\nn&\les& \norm{P_lF}_{\lp{2}}\norm{P_mG}_{\lp{2}}\\
\nn&\les& 2^{-\frac{|l-m|}{2}}2^{\frac{l}{2}}\norm{P_lF}_{\lp{2}}2^{-\frac{m}{2}}\norm{P_mG}_{\lp{2}},
\eea
where we used in the last inequality the fact that $l=\max(j,l,m)$.

If $m=\max(j,l,m)$, we use the following identity:
\bee
P_j(P_lF\c P_mG)&=& 2^{-2m}P_j(P_lF\c \lap P_mG)\\
&=& 2^{-2m}(P_j(\lap(P_lF\c P_mG))+P_j(\lap(P_lF)\c P_mG)+P_j(\divb(\nabb(P_lF)\c P_mG)))\\
&=&  2^{-2m}(2^{2j}P_j(P_lF\c P_mG)+2^{2l}P_j(P_lF\c P_mG)+P_j(\divb(\nabb(P_lF)\c P_mG))).
\eee
Together with the boundedness of $P_j$ on $\lp{2}$, the strong Bernstein inequality for scalars \eqref{eq:strongbernscalarbis}, and the estimate \eqref{lbz14bis}, we obtain:
\bea\lab{prod5}
&&2^{-j}\norm{P_j(P_lF\c P_mG)}_{\lp{2}}\\
\nn&\les& 2^{-j-2m}(2^{2j}\norm{P_lF\c P_mG}_{\lp{2}}+2^{2l}2^j\norm{P_lF\c P_mG}_{\lp{1}}+2^{\frac{2j}{3}}\norm{\nabb(P_lF)\c P_mG}_{\lp{\frac{4}{3}}}\\
\nn&\les& 2^{-j-2m}(2^{2j}\norm{P_lF}_{\lp{6}}\norm{P_mG}_{\lp{3}}+2^{2l+j}\norm{P_lF}_{\lp{2}}\norm{P_mG}_{\lp{2}}\\
\nn&&+2^{\frac{3j}{2}}\norm{\nabb(P_lF)}_{\lp{2}}\norm{P_mG}_{\lp{4}})\\
\nn&\les& 2^{-j-2m}(2^{2j+\frac{2l}{3}+\frac{m}{3}}+2^{2l+j}+2^{\frac{3j}{2}+l+\frac{m}{2}})\norm{P_lF}_{\lp{2}}\norm{P_mG}_{\lp{2}}\\
\nn&\les& 2^{-\frac{|l-m|}{6}}2^{\frac{l}{2}}\norm{P_lF}_{\lp{2}}2^{-\frac{m}{2}}\norm{P_mG}_{\lp{2}},
\eea
where we used Bernstein for $P_m$, the finite band property and Bernstein for $P_l$, and the fact that $m=\max(j,l,m)$. 

Finally, \eqref{prod2}, \eqref{prod3}, \eqref{prod4} and \eqref{prod5} imply for all $j\geq 0$:
\bee
2^{-j}\norm{P_j(F\c G)}_{\lp{2}}&\les& \sum_{l, m\geq 0}2^{-\frac{|l-m|}{6}}2^{\frac{l}{2}}\norm{P_lF}_{\lp{2}}2^{-\frac{m}{2}}\norm{P_mG}_{\lp{2}}\\
&\les& \left(\sum_{l\geq 0}2^l\norm{P_lF}_{\lp{2}}^2\right)^{\frac{1}{2}}\left(\sum_{m\geq 0}2^{-m}\norm{P_mG}_{\lp{2}}^2\right)^{\frac{1}{2}}\\
&\les& \norm{F}_{\hs{\frac{1}{2}}}\norm{G}_{\hs{-\frac{1}{2}}}.
\eee
This concludes the proof of the lemma.

\subsection{Proof of Lemma \ref{greveinff6}}

We have:
\be\lab{prod7}
\norm{P_j(\divb(f G)}_{\lp{2}}\les \sum_{l\geq 0}\norm{P_j(\divb(P_l(f)G))}_{\lp{2}}.
\ee
If $l\leq j$, we used the boundedness of $P_j$ on $\lp{2}$, and the strong Bernstein inequality for scalars \eqref{eq:strongbernscalarbis} and the finite band property for $P_l$. We obtain:
\bea\lab{prod8}
&& 2^{j(b-1)}\norm{P_j(\divb(P_l(f) G))}_{\lp{2}}\\
\nn&\les & 2^{j(b-1)}\norm{\divb(P_l(f) G)}_{\lp{2}}\\
\nn&\les& 2^{j(b-1)}(\norm{\nabb(P_lf) G}_{\lp{2}}+\norm{P_lf \nabb G}_{\lp{2}})\\
\nn&\les& 2^{j(b-1)}(\norm{\nabb(P_lf)}_{\lp{2}}\norm{G}_{\lp{\infty}}+\norm{P_lf}_{\lp{\infty}}\norm{\nabb G}_{\lp{2}})\\
\nn&\les& 2^{j(b-1)}2^l(\norm{G}_{\lp{\infty}}+\norm{\nabb G}_{\lp{2}})\norm{P_lf}_{\lp{2}}\\
\nn&\les& 2^{-|j-l|(1-b)}(\norm{G}_{\lp{\infty}}+\norm{\nabb G}_{\lp{2}})2^{lb}\norm{P_lf}_{\lp{2}}
\eea
where we used in the last inequality the fact that $l\leq j$ and $b<1$. 

If $l>j$, we use the following identity:
\bee
P_j(\divb(P_l(f) G))&=& 2^{-2l}P_j(\divb(\lap P_l(f) G))\\
&=& 2^{-2l}(P_j(\divb(\divb(\nabb P_l(f) G)))+P_j(\divb(\nabb P_l(f)\c \nabb G))).
\eee
Together with the estimate \eqref{lbz14bis} for $P_j$, we obtain:
\bee
&&2^{j(b-1)}\norm{P_j(\divb(P_l(f) G))}_{\lp{2}}\\
&\les & 2^{j(b-1)-2l}(\norm{P_j(\divb(\divb(\nabb P_l(f) G)))}_{\lp{2}}+\norm{P_j(\divb(\nabb P_l(f)\c \nabb G))}_{\lp{2}})\\
&\les & 2^{j(b-1)-2l}(\norm{P_j\divb\divb}_{\mathcal{L}(\lp{2})}\norm{\nabb P_l(f)G}_{\lp{2}}+2^{\frac{2j}{p}}\norm{\nabb P_l(f)\c \nabb G}_{\lp{2}}),
\eee
where $p$ satisfies:
\be\lab{prod9}
1<p<\frac{2}{1-b},
\ee
which is possible since $-1<b<1$. Together with the Bochner inequality for scalars \eqref{eq:Bochconseqbis} and the finite band property for $P_j$, this yields:
\bee
&&2^{j(b-1)}\norm{P_j(\divb(P_l(f) G))}_{\lp{2}}\\
&\les & 2^{j(b-1)-2l}(2^{2j}\norm{\nabb P_lf}_{\lp{2}}\norm{G}_{\lp{\infty}}+2^{\frac{2j}{p}}\norm{\nabb P_lf}_{\lp{r}}\norm{\nabb G}_{\lp{2}}),
\eee
where $r$ is given by:
$$\frac{1}{r}+\frac{1}{2}=\frac{1}{p}.$$
Together with the Gagliardo-Nirenberg inequality \eqref{eq:GNirenberg},  the Bochner inequality for scalars \eqref{eq:Bochconseqbis}, and the finite band property for $P_l$, we obtain:
\bea\lab{prod10}
&&2^{j(b-1)}\norm{P_j(\divb(P_l(f) G))}_{\lp{2}}\\
\nn&\les & 2^{j(b-1)-2l}(2^{2j}2^l+2^{\frac{2j}{p}}2^{l(2-\frac{2}{r})})(\norm{G}_{\lp{\infty}}+\norm{\nabb G}_{\lp{2}})\norm{P_lf}_{\lp{2}}\\
\nn&\les& (2^{-|j-l|(1+b)}+2^{-|j-l|(b-1+\frac{2}{p})})(\norm{G}_{\lp{\infty}}+\norm{\nabb G}_{\lp{2}})2^{lb}\norm{P_lf}_{\lp{2}}
\eea
where we used in the last inequality \eqref{prod9}, the fact that $b+1>0$, and the fact that $l>j$. 

Finally, \eqref{prod7}, \eqref{prod9} and \eqref{prod10} imply:
\bee
&&\sum_{j\geq 0}2^{2(b-1)j}\norm{P_j(\divb(P_l(f) G))}^2_{\lp{2}}\\
&\les& \sum_{j\geq 0}\left(\sum_{l\geq 0}2^{-|j-l|\min(1-b, 1+b, b-1+\frac{2}{p})}(\norm{G}_{\lp{\infty}}+\norm{\nabb G}_{\lp{2}})2^{lb}\norm{P_lf}_{\lp{2}}\right)^2\\
&\les&(\norm{G}_{\lp{\infty}}^2+\norm{\nabb G}_{\lp{2}}^2) \sum_{l\geq 0}2^{2lb}\norm{P_lf}_{\lp{2}}^2\\
&\les&(\norm{G}_{\lp{\infty}}^2+\norm{\nabb G}_{\lp{2}}^2)\norm{f}_{\hs{b}}^2.
\eee
This concludes the proof of the lemma.

\subsection{Proof of Lemma \ref{greveinff7}}

We have:
\be\lab{prod12}
\norm{P_j(\divb(f G)}_{\ll{2}}\les \sum_{l\geq 0}\norm{P_j(\divb(P_l(f)G))}_{\ll{2}}.
\ee
If $l\leq j$, we use the finite band property for $P_j$ to obtain:
\bee
&&2^{j(b-1)}\norm{P_j(\divb(P_l(f)G))}_{\ll{2}}\\
&\les& 2^{j(b-2)}\norm{\nabb\divb(P_l(f)G)}_{\ll{2}}\\
&\les& 2^{j(b-2)}(\norm{\nabb^2(P_l(f))G}_{\ll{2}}+\norm{\nabb(P_l(f))\divb(G)}_{\ll{2}}+\norm{P_l(f)\nabb\divb(G)}_{\ll{2}}\\
&\les& 2^{j(b-2)}(\norm{\nabb^2(P_l(f))}_{\ll{2}}\norm{G}_{\ll{\infty}}+\norm{\nabb(P_l(f))}_{\l{2}{p}}
\norm{\divb(G)}_{\l{2}{q}}\\
&&+\norm{P_l(f)}_{\ll{\infty}}\norm{\nabb^2(G)}_{\ll{2}}),
\eee
where $p$ and $q$ are such that:
$$\frac{2}{p}+\frac{2}{q}=\frac{1}{2},\,\, 2< q<p<+\infty.$$
Together with the Bochner inequality for scalars \eqref{eq:Bochconseqbis}, the Gagliardo-Nirenberg inequality \eqref{eq:GNirenberg}, the finite band property for $P_l$, and the strong Bernstein inequality for scalars \eqref{eq:strongbernscalarbis}, we obtain:
\bee
&&2^{j(b-1)}\norm{P_j(\divb(P_l(f)G))}_{\ll{2}}\\
&\les& 2^{j(b-2)}(2^{2l}(\norm{G}_{\ll{\infty}}+\norm{\divb(G)}_{\l{2}{q}})\norm{P_lf}_{\ll{2}}\\
&&+2^l\norm{\nabb^2(G)}_{\ll{2}}\norm{P_l(f)}_{\l{\infty}{2}})\\
&\les& 2^{-|j-l|(2-b)}(\norm{G}_{\ll{\infty}}+\norm{\divb(G)}_{\l{2}{q}}+\norm{\nabb^2(G)}_{\ll{2}})\\
&&\times(2^{lb}\norm{P_lf}_{\ll{2}}+2^{l(b-1)}\norm{P_l(f)}_{\l{\infty}{2}}),
\eee 
where we used in the last inequality the fact that $l\leq j$ and $b<2$. Since this holds for any $q>2$, we finally obtain:
\bea\lab{prod13}
&&2^{j(b-1)}\norm{P_j(\divb(P_l(f)G))}_{\ll{2}}\\
\nn&\les& 2^{-|j-l|(2-b)}(\norm{G}_{\ll{\infty}}+\norm{\divb(G)}_{\l{2}{2_+}}+\norm{\nabb^2(G)}_{\ll{2}})\\
\nn&&\times(2^{lb}\norm{P_lf}_{\ll{2}}+2^{l(b-1)}\norm{P_l(f)}_{\l{\infty}{2}}).
\eea

If $l>j$,  the finite band property for $P_j$ yields:
\bea\lab{prod14}
2^{j(b-1)}\norm{P_j(\divb(P_l(f)G))}_{\ll{2}}&\les & 2^{jb}\norm{P_l(f)G}_{\ll{2}}\\
\nn&\les& 2^{jb}\norm{G}_{\ll{\infty}}\norm{P_l(f)}_{\ll{2}}\\
\nn&\les& 2^{-|j-l|b}\norm{G}_{\ll{\infty}}2^{lb}\norm{P_l(f)}_{\ll{2}},
\eea
where we used in the last inequality the fact that $l>j$ and $b>0$. 

Finally, \eqref{prod12}, \eqref{prod13} and \eqref{prod14} imply:
\bee
&&\sum_{j\geq 0}2^{2(b-1)j}\norm{P_j(\divb(fG))}^2_{\ll{2}}\\
&\les& (\norm{G}_{\ll{\infty}}^2+\norm{\divb(G)}^2_{\l{2}{2_+}}+\norm{\nabb^2(G)}^2_{\ll{2}})\\
&&\times\sum_{j\geq 0}\left(\sum_{l\geq 0}2^{-|l-j|\min(2-b,b)}(2^{lb}\norm{P_l(f)}_{\ll{2}}+2^{l(b-1)}\norm{P_l(f)}_{\l{\infty}{2}})\right)^2\\
&\les& (\norm{G}_{\ll{\infty}}^2+\norm{\divb(G)}^2_{\l{2}{2_+}}+\norm{\nabb^2(G)}^2_{\ll{2}})\\
&&\times\sum_{l\geq 0}(2^{2lb}\norm{P_l(f)}_{\ll{2}}^2+2^{2l(b-1)}\norm{P_l(f)}^2_{\l{\infty}{2}})\\
&\les& (\norm{G}_{\ll{\infty}}^2+\norm{\divb(G)}^2_{\l{2}{2_+}}+\norm{\nabb^2(G)}^2_{\ll{2}})(\norm{f}^2_{\lhs{2}{b}}+\norm{f}_{\lhs{\infty}{b-1}}).
\eee
This concludes the proof of the lemma.

\subsection{Proof of Lemma \ref{greveinff8}}

We have:
\be\lab{prod16}
\norm{P_j(F h)}_{\lp{2}}\les \sum_{l\geq 0}\norm{P_j(FP_l(h))}_{\lp{2}}.
\ee
If $l\leq j$, we use the finite band property for $P_j$, and the strong Bernstein inequality for scalars \eqref{eq:strongbernscalarbis} and the finite band property for $P_l$, which yields:
\bea\lab{prod17}
2^{jb}\norm{P_j(FP_l(h))}_{\lp{2}}&\les & 2^{j(b-1)}\norm{\nabb(FP_l(h))}_{\lp{2}}\\
\nn&\les & 2^{j(b-1)}(\norm{\nabb F}_{\lp{2}}\norm{P_l(h)}_{\lp{\infty}}+\norm{F}_{\lp{\infty}}\norm{\nabb P_l(h)}_{\lp{2}})\\
\nn&\les & 2^{j(b-1)}(\norm{\nabb F}_{\lp{2}}2^l\norm{P_l(h)}_{\lp{2}}+\norm{F}_{\lp{\infty}}2^l\norm{P_l(h)}_{\lp{2}})\\
\nn&\les & 2^{-|j-l|(1-b)}(\norm{\nabb F}_{\lp{2}}+\norm{F}_{\lp{\infty}})2^{lb}\norm{P_l(h)}_{\lp{2}},
\eea
where we used in the last inequality the fact that $l\leq j$ and $b<1$.

If $l>j$, we use the boundedness of $P_j$ on $\lp{2}$ which yields:
\bea\lab{prod18}
2^{jb}\norm{P_j(FP_l(h))}_{\lp{2}}&\les & 2^{jb}\norm{FP_l(h)}_{\lp{2}}\\
\nn&\les & 2^{jb}\norm{F}_{\lp{\infty}}\norm{P_l(h)}_{\lp{2}})\\
\nn&\les & 2^{-|j-l|b}\norm{F}_{\lp{\infty}}2^{lb}\norm{P_l(h)}_{\lp{2}},
\eea
where we used in the last inequality the fact that $l>j$ and $b>0$. 

Finally, \eqref{prod16}, \eqref{prod17} and \eqref{prod18} imply:
\bee
&&\sum_{j\geq 0}2^{2jb}\norm{P_j(FP_l(h))}^2_{\lp{2}}\\
&\les& (\norm{F}_{\lp{\infty}}^2+\norm{\nabb F}^2_{\lp{2}})\sum_{j\geq 0}\left(\sum_{l\geq 0}2^{-|j-l|b}2^{lb}\norm{P_l(h)}_{\lp{2}}\right)^2\\
&\les& (\norm{F}_{\lp{\infty}}^2+\norm{\nabb F}^2_{\lp{2}})\sum_{l\geq 0}2^{2lb}\norm{P_l(h)}_{\lp{2}}^2\\
&\les & (\norm{F}_{\lp{\infty}}^2+\norm{\nabb F}^2_{\lp{2}})\norm{h}^2_{\hs{b}}.
\eee
This concludes the proof of the Lemma.

\end{document}